



 \documentclass[english]{smfbook}
\usepackage{CJKutf8}
\usepackage{epigraph}

\usepackage{lmodern}
\usepackage{imakeidx}
\makeindex[title=Glossary]
\makeindex[name=n, title=Index]

\usepackage{float}
\usepackage{placeins}
\usepackage{xcolor}
\usepackage{subcaption}

\usepackage{setspace}

\usepackage{tikz}
\usepackage{amsmath}
\usepackage[all]{xy}
\xyoption{pdf}
\usepackage{amscd}
\usepackage{stmaryrd}
\usepackage{amsfonts}
\usepackage{amssymb}
\usepackage{graphicx}
\usepackage{mathrsfs}
\usepackage{amsthm}
\usepackage{smfenum}
\usepackage{leftidx}
\usepackage{hyperref}
\usepackage{extarrows}
\usepackage{wasysym}
\usepackage{mathtools}
\usepackage{url}
\usepackage{epstopdf}
\usepackage {hyperref}

\setlength{\itemsep}{0pt}
 \setcounter{totalnumber}{3}
 \setcounter{topnumber}{1}
 \setcounter{bottomnumber}{3}
 \setcounter{secnumdepth}{3}


\newtheorem{thm}[subsection]{Theorem}
\newtheorem*{thm*}{Theorem}
\newtheorem{thm0}{Theorem}

\newtheorem{cor}[subsection]{Corollary}
\newtheorem{lem}[subsection]{Lemma}
\newtheorem{hypo}[subsection]{Hypothesis}
\newtheorem{convention}[subsection]{Convention}
\newtheorem{conj}{Conjecture}
\newtheorem{prop}[subsection]{Proposition}
\theoremstyle{definition}
\newtheorem{defn}[subsection]{Definition}
\theoremstyle{remark}
\newtheorem{rem0}{Remark}
\newtheorem{rem}[subsection]{Remark}
\newtheorem{eg}[subsection]{Example}
\numberwithin{equation}{subsection}


\DeclareMathOperator{\AJT}{AJT}
\DeclareMathOperator{\ab}{ab}
\DeclareMathOperator{\ad}{ad}

\DeclareMathOperator{\Aut}{Aut}
\DeclareMathOperator{\AJ}{AJ}
\DeclareMathOperator{\sign}{sgn}

\DeclareMathOperator{\bad}{bad}

\DeclareMathOperator{\Cent}{Cent}

\DeclareMathOperator{\Ell}{ell}
\DeclareMathOperator{\shall}{sh}
\DeclareMathOperator{\Wh}{Wh}
\DeclareMathOperator{\der}{der}
\DeclareMathOperator{\diag}{diag}
\DeclareMathOperator{\disc}{disc}

\DeclareMathOperator{\Endos}{eds}
\DeclareMathOperator{\End}{End}
\DeclareMathOperator{\et}{\acute{e}t }

\DeclareMathOperator{\Gal}{Gal}
\DeclareMathOperator{\coh}{\mathbf {H}}
\DeclareMathOperator{\icoh}{\mathbf{IH}}
\DeclareMathOperator{\IC}{IC}
\DeclareMathOperator{\tatecoh}{\widehat{\mathbf {H}}}
\DeclareMathOperator{\Hom}{Hom}

\DeclareMathOperator{\id}{id}
\DeclareMathOperator{\Int}{Int}
\DeclareMathOperator{\irr}{irr}

\DeclareMathOperator{\im}{im}
\DeclareMathOperator{\Out}{Out}

\DeclareMathOperator{\tasho}{\twonotes}

\DeclareMathOperator{\sat}{\mathcal{S}}
\DeclareMathOperator{\inv}{inv}
\DeclareMathOperator{\N}{N}
\DeclareMathOperator{\ch}{ch}

\DeclareMathOperator{\odd}{odd}
\DeclareMathOperator{\even}{even}
\DeclareMathOperator{\Nor}{Nor}
\DeclareMathOperator{\Res}{Res}

\DeclareMathOperator{\SC}{SC}
\DeclareMathOperator{\Sh}{Sh}
\DeclareMathOperator{\uni}{unit}

\DeclareMathOperator{\spec}{Spec}
\DeclareMathOperator{\sspec}{spec}
\DeclareMathOperator{\st}{st}
\DeclareMathOperator{\Frob}{Frob}

\DeclareMathOperator{\Tr}{Tr}

\DeclareMathOperator{\vol}{vol}
\DeclareMathOperator{\ur}{ur}
\DeclareMathOperator{\Lie}{Lie}
\DeclareMathOperator{\GL}{GL}

\DeclareMathOperator{\cost}{Const.}

\DeclareMathOperator{\SL}{SL}
\DeclareMathOperator{\SO}{SO}
\DeclareMathOperator{\SU}{SU}
\DeclareMathOperator{\Uni}{U}

\DeclareMathOperator{\Gspin}{GSpin}
\DeclareMathOperator{\Sp}{Sp}
\DeclareMathOperator{\std}{std}
\DeclareMathOperator{\stan}{Std}

\DeclareMathOperator{\WD}{WD}
\DeclareMathOperator{\RBC}{\mathcal{RBC}}

\DeclareMathOperator{\Pink}{Pink}
\DeclareMathOperator{\stdlev}{StdLev}
\DeclarePairedDelimiter\ceil{\lceil}{\rceil}
\DeclarePairedDelimiter\floor{\lfloor}{\rfloor}


\DeclareFontFamily{U}{stixbbit}{}
\DeclareFontShape{U}{stixbbit}{m}{it}{
	<-> stix-mathbbit
}{}
\DeclareSymbolFont{stixbbit}{U}{stixbbit}{m}{it}
\DeclareMathSymbol{\bend}{\mathord}{stixbbit}{"F6}
\newcommand*{\bendsymbol}{\ensuremath{\bend\hspace{-2mm}\bend\hspace{-2mm}\bend\hspace{-2mm}\bend}}

\newcommand{\norm}[1]{\left\Vert#1\right\Vert}
\newcommand{\dbp}[1]{[\! [ #1 ]\! ] }

\newcommand{\adele}{\mathbb A}
\newcommand{\DS}{\mathrm{symdiag}}

\newcommand{\admpar}{\mathrm{AdmPar}(G)}

\newcommand{\M}{\mathrm{M}}
\newcommand{\da}{{M,\diamond}}
\newcommand{\swap}{\mathtt{sw}}
\newcommand{\chH}{\check H}
\newcommand{\Irr}{\mathrm{Irr}(G(\mathbb A_f))}

\newcommand{\ED}{\mathrm {ED}}

\newcommand{\derD}{\underline {\mathcal D}}
\newcommand{\cD}{\mathcal D}
\newcommand{\derV}{\underline V}
\newcommand{\derT}{\underline T}
\newcommand{\derB}{\underline B}
\newcommand{\derj}{\underline j}

\newcommand{\EDNice}{\ED(V)^o_{\mathrm{nice}}}
\newcommand{\EDI}{\ED(\derV)^o_{\Wh\text{-}\mathrm{I}}}
\newcommand{\EDII}{\ED(\derV)^o_{\Wh\text{-}\mathrm{II}}}
\newcommand{\wI}{\mathfrak w_{\mathrm{I}}}
\newcommand{\wII}{\mathfrak w_{\mathrm{II}}}
\newcommand{\pintowh}{\mathscr W}
\newcommand{\Pinn}{\mathcal{S}plit}
\newcommand{\Whitt}{\mathcal{W}hitt}
\newcommand{\RI}{\mathcal{(I)}}

\newcommand{\RII}{\mathcal{(II)}}

\newcommand{\RIII}{\mathcal{(III)}}

\newcommand{\RIV}{\mathcal{(IV)}}

\newcommand{\RV}{\mathcal{(V)}}

\newcommand{\RVII}{\mathcal{(VII)}}

\newcommand{\RVIII}{\mathcal{(VIII)}}

\newcommand{\RA}{(\mathscr A)}
\newcommand{\RB}{(\mathscr B)}
\newcommand{\RC}{(\mathscr C)}
\newcommand{\RD}{(\mathscr D)}

\newcommand{\X}{\mathcal {X}}
\newcommand{\Y}{\mathcal {Y}}
\newcommand{\abs}[1]{\left\vert#1\right\vert}

\newcommand{\cW}{\mathcal W}
\newcommand{\fL}{\mathbf L}
\newcommand{\HZ}{\mathcal H_0}
\newcommand{\KS}[1]{\mathscr S_{#1 , p , \mathrm{can}}}
\newcommand{\MP}{\overline{\mathscr S_{\tilde K , p , \mathrm{can}}}}

\newcommand{\lprod}[1]{\langle #1  \rangle}
\newcommand{\set}[1]{\left\{#1\right\}}

\newcommand{\fkp}{\mathfrak p}
\newcommand{\fke}{\mathfrak e}

\newcommand{\To}{\longrightarrow}
\newcommand{\isom}{\overset{\sim}{\To}}
\newcommand{\lix}{\leftidx}

\newcommand{\lang}{\leftidx^L}

\newcommand{\NN}{\mathbb{Z}_{\geq 1}}

\newcommand{\ZZ}{\mathbb{Z}}
\newcommand{\QQ}{\mathbb{Q}}
\newcommand{\RR}{\mathbb{R}}
\newcommand{\FF}{\mathbb{F}}

\newcommand{\CC}{\mathbb{C}}
\newcommand{\GG}{\mathbb{G}}
\newcommand{\oo}{\mathcal{O}}
\newcommand{\D}{\mathfrak{D}}

\newcommand{\E}{\mathfrak{E}}

\newcommand{\Shh}{\mathscr{S}}
\newcommand{\tH}{\widetilde {\mathcal{H}}}
\newcommand{\Gm}{\mathbb G_m}
\newcommand{\GmR}{\mathbb G_{m,\RR}}
\newcommand{\GmC}{\mathbb G_{m,\CC}}

\newcommand{\W}{\mathfrak W}

\newcommand{\spl}{\mathbf{spl}}
\newcommand{\splI}{\mathbf{spl}_{\mathrm {I}}}

\newcommand{\AB}{\mathscr A_{\mathsf{B}}}

\newcommand{\AD}{\mathscr A_{\mathsf{D}}}

\newcommand{\footnotee}[1]{}
\newcommand{\ignore}[1]{}

\title[The stabilization of traces on orthogonal Shimura varieties]{The stabilization of the Frobenius--Hecke traces on the intersection cohomology of orthogonal Shimura varieties}
\author[Yihang Zhu]{Yihang Zhu}
\address{Yau Mathematical Sciences Center\\ Tsinghua University\\ 
Beijing \\ China}
\email{yhzhu@tsinghua.edu.cn}

\date{July, 2023}
\begin{document}
	
	\frontmatter

	\begin{abstract}
We study Shimura varieties associated with special orthogonal groups over the field of rational numbers. We prove a version of Morel's formula for the Frobenius--Hecke traces on the intersection cohomology of the Baily--Borel compactification. Our main result is the stabilization of this formula. As an application, we compute the Hasse--Weil zeta function of the intersection cohomology in some special cases, using the recent work of Arthur and Ta\"ibi on the endoscopic classification of automorphic representations of special orthogonal groups.  
	\end{abstract}
\begin{altabstract}Nous étudions les variétés de Shimura associées à des groupes spéciaux orthogonaux  sur le corps des nombres rationnels. Nous prouvons une version de la formule de Morel pour les traces de Frobenius--Hecke sur la cohomologie d'intersection de la compactification de Baily--Borel. Notre résultat principal est la stabilisation de cette formule. Comme application, nous calculons la fonction zêta de Hasse--Weil de la cohomologie d'intersection dans certains cas particuliers, en utilisant les travaux récents d'Arthur et Ta\"ibi sur la classification endoscopique des représentations automorphes des groupes  spéciaux orthogonaux. \end{altabstract}

	\keywords{orthogonal Shimura varieties, Baily--Borel compactification, intersection cohomology, Hasse--Weil zeta function, Langlands--Kottwitz method, stabilization of the trace formula, endoscopic classification of automorphic representations}
		\altkeywords{Variétés de Shimura orthogonales, compactification de Baily–Borel, cohomologie d'intersection, fonction zêta de Hasse–Weil, méthode de Langlands–Kottwitz, stabilisation de la formule de trace, classification endoscopique des représentations automorphes.}
	\thanks{The author was supported by NSF grant DMS-1802292.}
	
	
	\maketitle



	\renewcommand{\epigraphflush}{center}
 \renewcommand{\epigraphsize}{\Large}
 \setlength{\epigraphwidth}{0.7\textwidth}

\epigraph{ \begin{CJK*}{UTF8}{bsmi} 
	其始也，皆收視反聽，耽思傍訊，精騖八極，心遊萬仞。其致也，情曈曨而彌鮮，物昭晰而互進。
 \end{CJK*}}{ \begin{CJK*}{UTF8}{bsmi} 陸機《文賦》
	\end{CJK*}}

 \epigraph{In the beginning, \\ All external vision and sound are suspended, \\ Perpetual thought itself gropes in time and space; \\ Then, the spirit at full gallop reaches the eight \\ \qquad limits of the cosmos, \\ And the mind, self-buoyant, will ever soar to new\\ \qquad  insurmountable heights. \\ When the search succeeds, \\ Feeling, at first but a glimmer, will gradually\\ \qquad  gather into full luminosity, \\ Whence all objects thus lit up glow as if each the \\ \qquad  other's light reflects.\footnotemark} {Excerpt from \textit{Essay on Literature} \\by LU Ji (261--303 AD)}

\footnotetext{Translated from Chinese by CHEN Shixiang.}

		
	\tableofcontents

	 \mainmatter

	\chapter*{Introduction} \addtocontents{toc}{\protect\setcounter{tocdepth}{-5}}
\numberwithin{equation}{chapter}
Inspired by the early works of Eichler, Shimura, Kuga, Sato, and Ihara, the ongoing study of expressing Hasse–Weil zeta functions of Shimura varieties through automorphic $L$-functions remains a focal point within the Langlands program.  Langlands approached this problem by proposing a comparison of the Frobenius--Hecke traces on the cohomology of Shimura varieties with the stable Arthur--Selberg trace formulas, as detailed in  \cite{langlands1977shimura,langlandsmarchen,langlands1979zeta}. Kottwitz further formalized these ideas into precise conjectures  \cite{kottwitzannarbor,kottwitz1992points}. In this paper, we confirm a version of Kottwitz's conjecture specifically for the intersection cohomology of orthogonal Shimura varieties.  
	
	 \section*{The conjectures}\label{subsec: precise}
	 Let $(G,\X)$ be a Shimura datum with reflex field $E$. For each sufficiently small compact open subgroup $K\subset G(\adele_f)$, we have the Shimura variety $$\Sh_K= \Sh_K(G,\X),$$ which is a smooth quasi-projective algebraic variety over $E$. Let $\overline{\Sh_K}$  be the Baily--Borel compactification of $\Sh_K$. Let $\icoh^*$ be the intersection cohomology of $\overline {\Sh_K} \otimes_E \overline E$ with $\overline \QQ_{\ell}$-coefficients. (More generally, a non-trivial ``automorphic'' coefficient system is allowed, which we ignore in the introduction.) Let $p$ be a hyperspecial prime for $K$, i.e., $K = K_p K^p$ with $K_p \subset G(\QQ_p)$ a hyperspecial subgroup and $K^p \subset G(\adele_f^p)$ a compact open subgroup. (Here $\adele_f^p$ denotes the finite adeles away from $p$.) Assume that $p \neq \ell$. On $\icoh^*$, we have commuting actions of $\Gal(\overline E/E)$ and  the Hecke algebra $\mathcal H (G(\adele_f^p) \sslash K^p)_{\overline \QQ_{\ell}}$ consisting of the $\overline \QQ_{\ell}$-valued smooth compactly supported $K^p$-bi-invariant distributions on $G(\adele_f^p)$. Fix $f^{p,\infty}\in \mathcal H (G(\adele_f^p) \sslash K^p)_{\overline \QQ_{\ell}}$, and let $\Phi = \Phi_{\mathfrak p}$ be a geometric Frobenius at a place $\mathfrak p$ of $E$ above $p$. Let $a \in \ZZ_{\geq 1}$. 
	 
	  \begin{conj}[Kottwitz, see {\cite[\S 10]{kottwitzannarbor} }]\label{intro conj}
	The action of $\Gal(\overline E/E)$ on $\icoh^*$ is unramified at $\mathfrak p$, and under simplifying assumptions of a group-theoretic nature, we have \begin{align}\label{intro id}
	\sum_k (-1)^k \Tr(f^{p,\infty} \times \Phi^a \mid  \icoh^k ) = \sum_H \iota(G,H) ST^H (f^H).
	\end{align}  On the right, $H$ runs through the isomorphism classes of elliptic endoscopic data of $G$. For each $H$, $ST^H(\cdot )$ is the geometric side of the stable trace formula for $H$, and $f^H$ is a function on $H(\adele)$ determined by the Shimura datum, $f^{p,\infty}$, and $a$. 
	  \end{conj}
	  	
	  In addition, Kottwitz also formulated the following conjecture for the compact support cohomology $ \coh^*_c $ of $\Sh_K\otimes_E \overline E$. 
	  \begin{conj}[Kottwitz, see {\cite[\S 7]{kottwitzannarbor}}] \label{baby conj}
	  	 The action of $\Gal(\overline E/E)$ on $\coh^*_c$ is unramified at $\mathfrak p$, and under simplifying assumptions we have \begin{align}\label{baby id}
	  	\sum_k (-1)^k \Tr(f^{p,\infty} \times \Phi^a \mid  \coh^k_c ) = \sum_H \iota(G,H) ST^H_e (f^H).
	  	\end{align} Here $H$ and $f^H$ are the same as in Conjecture \ref{intro conj}, while $ST^H_e (\cdot )$ is the elliptic part of the geometric side of the stable trace formula for $H$. 
	  \end{conj}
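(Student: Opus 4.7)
The plan is to confirm \eqref{baby id} for $(G,X) = (\SO(V),X)$ by the Langlands--Kottwitz method in two stages: first, derive a Kottwitz-style point-counting formula for the left hand side; second, stabilize it to match the right hand side.

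For the first stage, the orthogonal Shimura datum is of abelian type via the Kuga--Satake construction, so one passes to the $\Gspin(V)$-cover, where Kisin's work furnishes a smooth integral canonical model at every hyperspecial prime $p$. The description of the mod-$p$ points in the form envisaged by Langlands--Rapoport, established by Kisin and refined by Kisin--Shin--Zhu for the abelian-type descent, combined with the Fujiwara--Varshavsky form of the Lefschetz--Verdier trace formula, will yield
\begin{align*}
\Tr(f^{p,\infty} \times \Phi^a \mid \coh^*_c) = \sum_{(\gamma_0;\gamma,\delta)} c(\gamma_0;\gamma,\delta)\, \orb^{\gamma}(f^{p,\infty})\, \torb^{\delta\sigma}(\phi_p),
\end{align*}
where the sum runs over equivalence classes of Kottwitz triples and $\phi_p$ is the characteristic function of the double coset in $G(\QQ_{p^a})$ determined by $\mu$. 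Care is needed both to transfer from $\Gspin$ back down to $\SO$ and to work with $\coh^*_c$ of the open Shimura variety, rather than $\icoh^*$ on the Baily--Borel compactification which is the target of the companion Conjecture \ref{intro conj}.

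For the second stage, apply Kottwitz's general stabilization procedure. The key inputs will be: (i) the partitioning of Kottwitz triples into stable conjugacy classes together with the identification of the obstruction group $\mathfrak{K}(I_0/\QQ)$ with a character group attached to the elliptic endoscopic data of $G$ via Tate--Nakayama duality; (ii) the Langlands--Shelstad transfer producing $f^{H,p,\infty}$ on each endoscopic $H$, and a twisted transfer at $p$ matching $\phi_p$ to a Kottwitz function on $H$; (iii) the fundamental lemma for special orthogonal groups together with the twisted fundamental lemma of Ng\^o, which assert the existence of these transfers with the predicted identities on stable orbital integrals. Regrouping the sum is then expected to produce $\sum_H \iota(G,H)\, ST^H_e(f^H)$ on the nose.

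The hardest part is anticipated to be orthogonal-specific bookkeeping rather than any single analytic input: because $\SO$ is a quotient of $\Gspin$, the geometric input naturally lives on $\Gspin$ while the endoscopic data are most conveniently parameterized on the $\SO$ side, which forces a careful descent involving the center $Z = \mu_2$ and the pointed set $H^1(\QQ, Z)$. One must also enumerate explicitly the elliptic endoscopic data of $\SO(n,2)$ -- products of smaller quasi-split special orthogonal groups -- and verify that the normalized Langlands--Shelstad transfer factors combine with the Kottwitz sign conventions to produce exactly the constants $\iota(G,H)$ in the final formula. This cohomological and combinatorial reconciliation, more than any fundamental lemma input (which are now theorems), is where the bulk of the technical effort is expected to lie.
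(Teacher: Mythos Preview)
The paper does not prove this statement. Conjecture~\ref{baby conj} is stated in the introduction as Kottwitz's conjecture and is explicitly taken as a black box: the paper says that for orthogonal Shimura varieties this conjecture is established in \cite{KSZ}, and Theorem~\ref{from KSZ} simply quotes that result as input. The entire thrust of the present paper is the complementary direction --- deducing Conjecture~\ref{intro conj} from Conjecture~\ref{baby conj} by stabilizing the boundary terms in Morel's formula (Theorem~\ref{reduced thm}).

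Your outline is a reasonable sketch of what the proof in \cite{KSZ} should look like, and the two stages you describe (point counting via Kisin's Langlands--Rapoport-type description, followed by Kottwitz's stabilization) are indeed the architecture of that work. But you should be aware that you are not reconstructing anything in this paper; you are sketching the content of a separate reference that this paper imports wholesale. If the assignment was to supply the paper's own argument for the labeled statement, the correct answer is that there is none here --- the paper treats it as an external theorem.

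One minor correction to your sketch: you write that the descent from $\Gspin$ to $\SO$ and the reconciliation of sign conventions is ``where the bulk of the technical effort is expected to lie.'' In practice the serious work in \cite{KSZ} is upstream of that, in extending the Langlands--Rapoport description and the stabilization machinery to abelian-type Shimura varieties in the first place; the orthogonal-specific bookkeeping, while nontrivial, is not the crux.
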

  	
  \section*{The main result}
  Let $(V,q)$ be a quadratic space over $\QQ$ of signature $(n,2)$, where $n \geq 3$. We assume that $V$ has a $2$-dimensional totally isotropic subspace, which is automatic if $n \geq 5$. Let $G= \SO(V,q)$. We have a natural Shimura datum $(G,\X)$, where $\X$ can be identified with the set of oriented negative definite planes in $V_{\RR}$. This Shimura datum is of abelian type (but not of Hodge type). The associated Shimura varieties are called \emph{orthogonal Shimura varieties}. They are $n$-dimensional varieties over the reflex field $\QQ$.
  \begin{thm0}[Corollary \ref{Main Main result}]\label{intro thm} Conjecture \ref{intro conj} is true for the orthogonal Shimura varieties associate to $(V,q)$, for almost all primes $p$ and for all sufficiently large $a$. 
  \end{thm0}

We refer the reader to the statements of Theorem \ref{geometric assertion} and Corollary \ref{Main Main result} for the precise meaning of ``almost all primes $p$''. Here we just mention that the set of primes to be excluded should depend on a fixed element $f^{\infty}$ of the ``full'' Hecke algebra $\mathcal H(G(\adele_f)\sslash K )_{\overline \QQ_{\ell}}$, whereas $f^{p,\infty}$ in (\ref{intro id}) should be the component of $f^{\infty}$ away from $p$, after $p$ has been chosen. 
	  \section*{Some remarks}
	  
	  From a group-theoretic point of view, both sides of (\ref{baby id}) are less complicated
compared to (\ref{intro id}). In fact, the RHS of (\ref{baby id}) has an elementary definition in terms of stable orbital integrals. For the LHS of (\ref{baby id}), Kottwitz computed it for PEL Shimura varieties of type $\mathsf A$ or $\mathsf C$ in \cite{kottwitz1992points}  by counting (virtual) abelian varieties with additional structures over finite fields and using the Grothendieck--Lefschetz--Verdier trace formula. He obtained:
	  \begin{align}\label{gp thry expression}
	  	\sum_k (-1)^k \Tr(f^{p,\infty} \times \Phi^a \mid  \coh^k_c ) = 
	  \sum_{(\gamma_0 , \gamma,\delta)} c(\gamma_0, \gamma,\delta)  O_{ \gamma} (f^{p,\infty}) TO_{\delta} (\phi)\Tr (\gamma_0\mid  \mathbb V).
	  \end{align} We do not  explain the terms on the RHS in detail here, but only mention that they are group-theoretic in nature and include orbital integrals $O_{\gamma}(\cdot)$ and twisted orbital integrals $TO_{\delta}(\cdot)$. In \cite{kottwitzannarbor}, Kottwitz conjectured that (\ref{gp thry expression}) should hold for general Shimura varieties (at least under some simplifying assumptions of a group-theoretic nature). In the same paper Kottwitz \emph{stabilized} the RHS of (\ref{gp thry expression}), namely he found\footnote{The construction of $f^{H}$ relies on the Langlands--Shelstad Transfer Conjecture and the Fundamental Lemma,  which were unproven at the time of  \cite{kottwitzannarbor}. They are now theorems thanks to the work of numerous mathematicians, most notably Ng\^o and Waldspurger.} the functions $f^H$ such that the RHS of (\ref{gp thry expression}) is equal to the RHS of (\ref{baby id}). In \cite{KSZ}, both the formula (\ref{gp thry expression}) and the stabilization step are generalized to arbitrary Shimura varieties of abelian type, and Conjecture \ref{baby conj} is proved for these varieties. 
  
	  One should view Conjecture \ref{intro conj} as one step forward from Conjecture \ref{baby conj}. From a spectral perspective, it is $ST^H$ rather than $ST^H_{e}$ that sees the ``whole picture''. More specifically, $ST^H$ has a spectral expansion, from which one can eventually make a link to automorphic representations. By contrast it is unclear how $ST^H_e$ can be directly related to spectral information in general.
	  
	  We also mention that the expectation that the intersection cohomology is the correct cohomology to insert in  (\ref{intro id}) is motivated by Zucker's conjecture and Arthur's work on $L^2$-cohomology, among other things.\ignore{Recall that Zucker's conjecture (proved by Looijenga and Saper-Stern) states that $\icoh^*$ is isomorphic to the $L^2$-cohomology of $\Sh_K(\CC)$ as Hecke modules. Arthur \cite{arthurlef} showed that the trace of a Hecke operator on the $L^2$-cohomology of an arbitrary locally symmetric space can be expressed in terms of his \emph{invariant trace formula}, and in particular an analogue of Conjecture \ref{intro conj} without the Frobenius follows from Arthur's general stabilization of the invariant trace formula \cite{arthursta1} \cite{arthursta2} \cite{arthursta3}. We mention that Goresky--Kottwitz--MacPherson \cite{GKM} proved the same formula as \cite{arthurlef} for the trace of a Hecke operator on the intersection cohomology, which is the same as the $L^2$-cohomology by Zucker's Conjecture. Thus Conjecture \ref{intro conj} can be regarded as a refinement of the work of Arthur and Goresky--Kottwitz--MacPherson mentioned above, with extra information about the Galois action. For more motivations see \cite{morel2010icm}.
	  }
	  We refer the reader to  \cite{morel2010icm} for a more detailed discussion on these motivations.

	  \ignore{
	\begin{rem0}
In stating Conjectures \ref{intro conj} and \ref{baby conj} in the forms above, we need to assume the possibility of embedding each $\lang H$ into $\lang G$, and assume that the maximal $\QQ$-split torus in $Z_G$ is $\RR$-split. While the conjectures can be stated in total generality by considering z-extensions and central data (see \cite{KSZ}), the orthogonal Shimura datum $(G,\X)$ does satisfy these assumptions. However, the assumption that the derived group of $G$ is simply connected, which often appears in the context of Shimura varieties, is neither satisfied by $G$ nor necessary for stating the conjectures for $(G,\X)$. 
	\end{rem0}
	
}

\section*{Application: the Hasse--Weil zeta functions}\label{subsec:intro app}
In \cite{kottwitzannarbor},  Kottwitz showed that one can combine Conjecture \ref{intro conj} with the conjectural framework of Arthur parameters and Arthur's multiplicity conjectures to infer a description of the Galois--Hecke module $\icoh^*$, and in particular a formula for the Hasse--Weil zeta function associated to $\icoh^*$. 

Currently some of these premises related to Arthur's conjectures have been established  in special cases. Most notably, Arthur \cite{arthurbook} has established the multiplicity conjectures for quasi-split classical groups.\footnote{The results in \cite{arthurbook} are contingent on the release of several upcoming papers, including the reference [A25], which have not appeared as of the time of writing. \label{foot:Arthur}} 
In fact, our interest in delving into special orthogonal groups within this paper is driven by a desire to connect with Arthur's work. This intentional decision distinguishes our focus from similar groups such as $\Gspin$, whose Shimura varieties display a relative simplicity in various aspects, for instance, being of Hodge type.

Unfortunately, when the rank is large the special orthogonal groups that have Shimura varieties cannot be quasi-split even over $\RR$, because of the signature $(n,2)$ condition. Arthur's work has been generalized to limited cases of inner forms by Ta\"ibi \cite{taibi} (building on earlier work of Kaletha \cite{kalethaglobal, kalrigid} and Arancibia--Moeglin--Renard \cite{AMR}, among others). 
We combine Theorem \ref{intro thm} with Arthur's and Ta\"ibi's work to obtain the following theorem. Here we state it only for odd $n$ for simplicity. 

\begin{thm0}[Theorem \ref{thm:whole zeta}, Remark \ref{rem:more general K}]\label{intro app} Assume that $n $ is odd, and that $G = \SO(V,q)$ is quasi-split at all finite places. 
	For any finite set $S$ of prime numbers, let $\zeta^S( \icoh^* , s )$ be the $S$-partial Hasse--Weil zeta function associated to $\icoh^*$. When $S$ is sufficiently large, we have \begin{multline*}
 \log \zeta ^S ( \icoh^* ,s)  \\ = \sum _{\psi} \sum_{\pi^\infty} \sum _ {\nu} \dim (\pi^\infty) ^K  m (\pi^{\infty}, \psi,  \nu) (-1) ^{n} \nu (s_{\psi}) \log L^S (\mathcal M (\psi ,\nu) , s).
	\end{multline*}
 Here $\psi$ runs through a certain set of Arthur's substitutes of global Arthur parameters, $\pi^\infty$ runs through the away-from-$\infty$ global packet of $\psi$, and $\nu $ runs through characters of the centralizer group of $\psi$ (which is finite abelian). The three-fold summation is over a finite range. The numbers 
	$m(\pi^\infty ,\psi ,\nu) \in \set{0, 1}$ and $\nu (s_{\psi}) \in \set{\pm 1}$ are defined in terms of constructions in \cite{arthurbook} and \cite{taibi}. The term $L^S (\mathcal M(\psi, \nu) , s)$ is a finite product of $S$-partial standard automorphic $L$-functions for general linear groups  (with some shifting in the variable $s$), and hence has  meromorphic continuation to $\CC$. In particular, the above formula implies that $\zeta^S(\icoh^*, s)$ has meromorphic continuation to $\CC$. 
\end{thm0}
   In the proof of Theorem \ref{intro app}, one crucial ingredient is a relatively simple  formula for $ST^H (f^H)$ when the test function $f^H$ is \emph{stable cuspidal} at the real place; see Hypothesis \ref{hypo}.    This formula follows from Kottwitz's stabilization of the $L^2$ Lefschetz number formula in his unpublished notes, and is also used in Morel's work \cite{morel2010book,morel2011suite}. A self-contained proof of this formula for $ST^H(f^H)$, from a different point of view, is given in a recent paper by Z.~Peng \cite{peng}.

We also prove a refinement of Theorem \ref{intro app} concerning the decomposition of $\icoh^*$ in the Grothendieck group of Galois--Hecke modules, under the same assumption on $G$. When $n$ is odd (as well as in some cases when $n$ is even), we express $\icoh^*$ in terms of the known Galois representations associated to regular algebraic cuspidal  automorphic representations of general linear groups, with multiplicities given in a similar way as the multiplicities in Theorem \ref{intro app}.  See Theorem \ref{thm:decomp c}, Corollary \ref{cor:single degree}, and Corollary \ref{cor:known Galois}. When $n$ is even, both the computation of the partial Hasse--Weil zeta function and the decomposition of $\icoh^*$ proved in this paper are weaker than the conjectures in \cite{kottwitzannarbor}, in that a certain ambiguity up to outer automorphism is constantly present. This is due to the extra ambiguity in the endoscopic classification of representations for even special orthogonal groups in \cite{arthurbook} and \cite{taibi}, which seems intrinsic to the methods therein.

As a byproduct of our refinement of Theorem \ref{intro app}, we prove that if an Arthur parameter $\psi$ contributes to $\icoh^*$, then the cuspidal automorphic representations of general linear groups that constitute $\psi$ all satisfy the Ramanujan--Petersson conjecture at almost all primes. These representations need not be regular algebraic, in which case the conjecture was previously known. See Theorem \ref{thm:decomp c} (3) and Remark \ref{rem:Rama}.

\section*{Reduction to the stabilization of the boundary terms}
  We now discuss the structure of the proof of Theorem \ref{intro thm}. For some period of time, the study of the LHS of (\ref{intro id}) had been restricted to sporadic low dimensional cases; see for instance \cite{montreal}. The essential tools for treating arbitrary dimensions were developed by Morel  \cite{morelthese,morel2008complexes} (cf.~\cite{morel2010icm}), who went on to prove Conjecture \ref{intro conj} for some unitary similitude Shimura varieties and the Siegel modular varieties of arbitrary dimensions in \cite{morel2010book} and \cite{morel2011suite} respectively. We use Morel's work to obtain the following result for the orthogonal Shimura varieties associated to $(V,q)$. We fix a minimal parabolic subgroup of $G = \SO(V,q)$ and fix a Levi component of it. Thus we get a notion of standard parabolic subgroups and standard Levi subgroups of $G$. 
\begin{thm0}[Theorem \ref{geometric assertion}]\label{intro Morel thm}
 For almost all primes $p$, we have	\begin{align}\label{intro Morel}
\sum_k (-1)^k \Tr (f^{p,\infty}\times \Phi^j \mid  \icoh^k) = \sum_{M} \Tr_M,
	\end{align}  where $M$ runs through the standard Levi subgroups of $G$.
\end{thm0} Let us roughly describe the terms $\Tr_M$. For $M =G$, we have  $$\Tr_G = \sum_k (-1)^k \Tr(f^{p,\infty} \times \Phi^j \mid  \coh^k_{c}),$$ where $\coh^k_c$ is the compact support cohomology of $\Sh_{K , \overline \QQ}$. For a proper $M$, the term $\Tr_{M}$ is a more complicated mixture of the following ingredients. 
\begin{itemize}
	\item The analogue of $\sum_k (-1)^k\Tr(f^{p,\infty} \times \Phi^j \mid  \coh^k_{c})$ for a boundary stratum in $\overline{\Sh_K}$. In another words, an enumeration of points on the stratum fixed under certain Frobenius--Hecke operators. 
	\item The topological fixed point formula of Goresky--Kottwitz--MacPherson as in \cite{GKM}, for the trace of a Hecke operator on the compact support cohomology of a certain locally symmetric space.
	\item \emph{Kostant--Weyl terms}. By this we mean characters for certain algebraic sub-representations of $M_P$ inside $$\coh^*(\Lie N_P, \mathbb V) ,$$ where $P$ is a standard parabolic subgroup of $G$ containing $M$, and $P = M_P N_P$ is the standard Levi decomposition. These sub-representations are defined by certain truncations of weights, and can be understood in terms Kostant's theorem \cite{kostant} describing $\coh^*(\Lie N_P, \mathbb V)$. 
\end{itemize}

As we have already mentioned, in \cite{KSZ} the term $\Tr _G$ is computed and stabilized for all Shimura varieties of abelian type. Thus 
$\Tr _G$ is known to be equal to the RHS of (\ref{baby id}). In view of this, Theorem \ref{intro thm} follows from Theorem \ref{intro Morel thm} and the following result, which may be viewed as the ``stabilization of the boundary terms''.
\begin{thm0}[Theorem {\ref{main identity}}]\label{reduced thm} We have 
	\begin{align}\label{intro main id}
\sum_{ M\subsetneqq G} \Tr_M = \sum_H \iota(G,H) [ ST^H (f^H) - ST^H_e(f^H)].
	\end{align}
\end{thm0}

\section*{Stabilization of the boundary terms}
The method for proving Theorem \ref{reduced thm} is by calculating the two sides of (\ref{intro main id}) and matching the explicit expressions. To calculate the RHS, we use Kottwitz's formula in his unpublished notes, as mentioned below Theorem \ref{intro app}. According to this formula (to be recalled in \S \ref{simplified geometric side}), we have an expansion of the form $$ST^H (f^H) - ST^H_e (f^H) = \sum_{ M ' \neq H} ST^H_{M'} (f^H),$$ where $M'$ runs through standard proper Levi subgroups of $H$, and each term $ST^H_{M'} (\cdot)$ has a relatively simple expression.  
	
Roughly speaking, we label the pairs $(H,M')$ appearing in the above summation by either a standard proper Levi subgroup $M$ of $G$ or the symbol $\emptyset$. We write $(H,M') \sim M$, or $(H,M') \sim \emptyset$. In order to prove Theorem \ref{reduced thm}, we need to show 
\begin{align}\label{intro pos}
\Tr_M = \sum_{ (H, M') \sim M } ST^H_{M'} (f^H), 
\end{align} where $M$ is either a standard proper Levi subgroup of $G$ or the symbol $\emptyset$, and we define $\Tr_{\emptyset}$ to be $0$. 
The proof of (\ref{intro pos}) involves the following ingredients. 

\subsection*{(i) Fixed point formula for a boundary stratum}
  We need a formula that enumerates points on a boundary stratum fixed under a Frobenius--Hecke operator, of a form similar to (\ref{gp thry expression}). The boundary stratum in question is (a finite quotient of) either a modular curve or a zero-dimensional Shimura variety, so such a formula is essentially a classical result. However, the zero-dimensional case causes some extra complication. We will come back to this technical point later in the introduction.

\subsection*{(ii) Archimedean comparison}
We need a series of identities between the archimedean contributions to the two sides of (\ref{intro pos}). These are identities between terms of two different natures, namely discrete series character values (which appear on the RHS of (\ref{intro pos})) and Kostant--Weyl terms (which appear on the LHS of (\ref{intro pos}); see the discussion below Theorem \ref{intro Morel thm}). We establish such identities by explicit computation. On the discrete series side, we use formulas due to Harish-Chandra \cite{HCdiscreteseriesI} and Herb \cite{herb}. On the Kostant--Weyl side, we use Kostant's theorem \cite{kostant} and the Weyl character formula. 

We point out that \textit{a priori} it is not clear which identities between the archimedean contributions would eventually lead to the proof of (\ref{intro pos}). Finding the correct forms of the archimedean identities seems to be a harder task than proving them. It would be desirable to have a more conceptual understanding of how the archimedean comparison should be woven into the proof of (\ref{intro pos}) in general.

	\subsection*{(iii) Computation at $p$}
	
	 We need to compute the $p$-adic contributions to the two sides of (\ref{intro pos}) explicitly. \textit{A priori} there are more $p$-adic terms on the RHS than the LHS. We will need to prove, among other things, that the extra terms eventually cancel each other. 

	This finishes our discussion on the structure of the proof of Theorem \ref{intro thm}. Next we highlight three new features in the proof which did not show up in Morel's work \cite{morel2011suite,morel2010book} for symplectic similitude and unitary similitude groups.
	
\section*[Abelian-type Shimura varieties]{Arithmetic feature: Shimura varieties of abelian type}

The orthogonal Shimura varieties are of abelian type and not of PEL type. In this paper we take as a black box the main result of \cite{KSZ} that proves Conjecture \ref{baby conj} for these Shimura varieties. In Morel's work, the Shimura varieties are of PEL type, and for them Conjecture \ref{baby conj} was already proved by Kottwitz. 

The reason that Theorem \ref{intro thm} is proved only for primes outside an unspecified finite set is also due to a certain lack of understanding of Shimura varieties of abelian type. Ideally one would like to prove the theorem for all hyperspecial primes $p$, but a prerequisite for that would be a robust theory of integral models of the Baily--Borel and toroidal compactifications. Such a theory has been established by Madapusi Pera \cite{peratoroidal} in the case of Hodge type. For the Baily--Borel compactifications alone, a ``crude'' construction of the integral models in the case of abelian type has been given by Lan--Stroh \cite{lanstrohII}. However, for the above-mentioned purpose the integral models of toroidal compactifications are equally important, and this is currently unavailable beyond the case of Hodge type. 

All the difficulty about integral models of compactifications can be circumvented at the cost of excluding an unspecified finite set of primes, and this is the point of view taken in this paper. We refer the reader to \S \ref{subsec:background} for a more detailed discussion. 
	
	\section*[Zero-dimensional boundary strata as quotients]{Geometric feature: zero-dimensional boundary strata as quotients of Shimura varieties}
	
	In general, the boundary strata of the Baily--Borel compactification are naturally isomorphic to finite quotients of Shimura varieties at certain natural levels. Often these quotients are isomorphic to genuine Shimura varieties. However this is not true for the zero-dimensional boundary strata in the present case. From a group-theoretic point of view, this issue corresponds to the fact that the orthogonal Shimura datum does not satisfy Morel's axioms in \cite[Chap.~1]{morel2010book}. As a result, in the proof of Theorem \ref{intro Morel thm} we need to modify the axiomatic approach in  \textit{loc.~cit.}, and the terms $\Tr_M$ in (\ref{intro Morel}) are also given by formulas that are slightly different from those in \cite{morel2010book,morel2011suite}.
	
	\section*[Normalizing transfer factors]{Endoscopic-theoretic feature: normalizing transfer factors}	
	
	In the proof of (\ref{intro pos}), signs are utterly important. One source of signs is the difference between the normalizations of transfer factors at the real place. The necessity of computing these signs was not emphasized in \cite{morel2010book,morel2011suite}. For the orthogonal Shimura varieties, these signs form a delicate pattern. 
	
	To understand these signs we need to compare the normalization $\Delta_{j,B}$ introduced in \cite[\S 7]{kottwitzannarbor}, and the Whittaker normalization. Here we explicitly fix $G_{\RR}$ as a pure inner form of its quasi-split inner form $G^*_{\RR}$ and fix a Whittaker datum for $G^*_{\RR}$, so the Whittaker normalizations for the transfer factors between $G_{\RR}$ and its endoscopic groups can be defined. The normalization $\Delta_{j,B}$ naturally shows up in the description of the archimedean component of $f^H$. To compare these two normalizations, we compare the corresponding spectral transfer factors that appear in the endoscopic character relations and compute the sign between them. 
	
Extra complication arises when $G^*_{\RR}$ has more than one equivalence class of Whittaker data. This happens if and only if $\dim V$ is divisible by $4$, when there are precisely two equivalence classes. In this case, we need to study how the two (different) Whittaker normalizations relate to the explicit formulas of Waldspurger \cite{walds10}, the latter having the merit of being easier to keep track of when passing to Levi subgroups. In this direction we prove Theorem \ref{thm:comparing Waldspurger}, which may be of independent interest in representation theory.

\addtocontents{toc}{\protect\setcounter{tocdepth}{\arabic{tocdepth}}}

\chapter*{Acknowledgments} This paper  constitutes a revised and expanded version of my PhD thesis. I extend my utmost gratitude to my PhD advisor, Mark Kisin, for introducing me to the realm of Shimura varieties and proposing this specific project---an endeavor that unfolded as a thrilling and beautiful adventure. During my graduate studies, I benefited significantly from his constant guidance, encouragement, support, as well as the pressure he would give me when progress was slow. Since then, I have continued not only to glean insights into mathematical thinking from him but also to relish his kindness and friendship.

Special appreciation goes to Tasho Kaletha for investing a tremendous amount of time in explaining the intricacies of endoscopy and addressing my questions during a crucial stage of this project. I also extend my thanks to Keerthi Madapusi for steering me through the literature on non-compact Shimura varieties.

Sophie Morel's groundbreaking work is the foundation of this project. In addition to learning from her papers, my visit to her in September 2016 proved valuable. I am genuinely grateful for her giving me the opportunity to present this project at the Princeton/IAS number theory seminar later that fall, where I spoke in front of some of the field's key contributors, including Prof.~Langlands. Additionally, I appreciate her feedback on an earlier draft.

 My gratitude extends to Robert Kottwitz for generously sharing his unpublished notes on the stabilization of the $L^2$ Lefschetz number formula, which serves as a paradigm for the stabilization of the trace formula for the Shimura variety.

 The results presented in the final chapter were not part of the original thesis. I first had the opportunity to share them during my visit to the Morningside Center of Mathematics in January 2018. I express my gratitude to Ye Tian for the invitation, hospitality, and the chance to present these results.

 During the revision stage, Sug Woo Shin provided enormous help, for which I am truly thankful. I also appreciate the invitations from Sug Woo Shin and Sungmun Cho to the Korea Institute for Advanced Study (KIAS) in the summer and winter of 2019, providing an ideal environment for revising this paper.

I extend my thanks to George Boxer, Kai-Wen Lan, Chao Li, Michael Rapoport, Koji Shimizu, Olivier Ta\"ibi, David Vogan, Xinyi Yuan, Zhiwei Yun, Wei Zhang, and Rong Zhou for informative conversations and for answering my questions at various stages of this work. A special acknowledgment to Michael Harris for his sustained interest in the project.

Last but not least, I  express my deep gratitude to the anonymous referee(s) for their meticulous examination of the computations, bringing to light numerous subtleties and corrections, providing various suggestions for improvement, and offering new insights regarding \S \ref{subsec:refined decomp}. 

	\chapter*{Leitfaden}

In \S \ref{Section global}, we recall the setting of orthogonal Shimura varieties and state Morel's formula in Theorem \ref{geometric assertion}. The terms in this formula are defined in \S \ref{Section Geometirc}, and the proof is given in \S \ref{sec:Proof of Morel's formula}. For a more detailed introduction to the structure of the proof see \S \ref{subsec:background}.

In \S \ref{Section infty}, we carry out the archimedean comparison between the Kostant--Weyl terms and the stable discrete series characters. 
The results proved in this chapter to be used later are Propositions \ref{Arch comp M_1}, \ref{Arch comp M2}, \ref{Arch comp odd ++}, \ref{effect of switching to Phi_endos}, and \ref{Arch comp M12 even}.

In \S \ref{sec:endoscopic}, we review the endoscopic data for special orthogonal groups and give explicit presentations which are important for the later computations.

In \S \ref{sec:transfer factors}, we compare different normalizations of archimedean transfer factors for special orthogonal groups. The goal is to explicitly determine certain signs. 

In \S \ref{Section p}, we calculate some Satake transforms at $p$ that will be needed later in the stabilization.

In \S \ref{Section Stabilization}, we prove the stabilization of the boundary terms by assembling the preceding ingredients and explicit manipulation. We deduce the main result Theorem \ref{intro thm} of this paper in Corollary \ref{Main Main result}. 

In \S \ref{sec:app}, we apply our main result to the actual computation of Hasse--Weil zeta functions in some special cases, after reviewing results of Arthur and Ta\"ibi on the endoscopic classification of automorphic representations. The main results in this chapter are Theorems \ref{thm:final}, \ref{thm:whole zeta}, and \ref{thm:decomp c}.

\chapter*{Conventions and notations}
\begin{itemize}
	\item 
	For $x\in \RR$, we denote by $\floor{x}$ the largest integer $\leq x$ and denote by $\ceil{x}$ the smallest integer $\geq x$. If $x \geq 0$, we denote by $x^{1/2}$ the non-negative square root of $x$. 
	\item We denote $i\in \CC$ alternatively by $\sqrt{-1}$. 
	\item For any $n \in \ZZ_{\geq 1}$, we denote by $[n]$\index{$[n]$} the set $\set{1,2,\cdots, n}$. We denote by $\mathfrak S_n$\index{$\mathfrak S_n$} the symmetric group of the set $[n]$. 
	\item Let $A$ be a subset of $\ZZ_{\geq 1}$. For each $i \in \ZZ_{\geq 1}$, we set 
	$\nabla_i (A)=1$ if $i\in A$, and $\nabla_i(A) = -1$ if $i \notin A$. 
	\item When the symbol $\pm$ appears for multiple times in a single expression, it is understood that all possible combinations of the signs are considered. For example, we shall write $\set{\pm x \pm y}$ for the set $\set{x+y, x-y, -x+y, -x-y}$. 
	
	\item A \emph{basis} of a finite-dimensional vector space is always understood as an ordered basis. We often just use the notation for a set such as $\set{e_1,\cdots, e_d}$ to denote a basis, but the ordering is understood.  
	
	\item For $x_1,\cdots, x_n\in \CC^\times$, we write\index{$\mathrm{symdiag}(x_1,\cdots,x_n)$} $\DS(x_1,\cdots,x_n)$ for the $2n \times 2n$ diagonal matrix 
	$\diag(x_1,\cdots, x_n, x_n^{-1}, \cdots, x_1^{-1})$. 
	
	\item For any square matrix $A$, we write $A^{\mathsf T}$ for the transpose. 
	\item If a group $G$ acts on a set $X$, we write $\mathrm{Cent}_G X$ for the action kernel, namely the largest subgroup of $G$ acting trivially on $X$. 
	\item When $x$ is an element of a group, we write $\Int(x)$ for the automorphism $y \mapsto x y x^{-1}$. 
	\item If $\Sigma$ is a finite set of prime numbers, we denote by $\ZZ[1/\Sigma]$\index{$\ZZ[1/\Sigma]$} the ring $\ZZ[ 1/p, p\in \Sigma]$. 
	\item 
	For $a\in \ZZ_{\geq 1}$ and $p$ a prime number, we denote by $\QQ_{p^a}$\index{$\QQ_{p^a}, \ZZ_{p^a}$} the degree $a$ unramified extension of $\QQ_p$, and by $\ZZ_{p^a}$ the valuation ring of $\QQ_{p^a}$. We denote by $\sigma$\index{$\sigma$} the arithmetic $p$-Frobenius acting on $\QQ_{p^a}$. 
	\item If $H$ is either a locally profinite group or a real Lie group, we write $C^{\infty}_c(H)$ for the set of compactly supported smooth $\CC$-valued functions on $H$. 
	\item  
	We use the following abbreviations:\index{$\Gamma_v$}
	\begin{align*}
 \Gamma_{\QQ} & = \Gal (\overline \QQ/ \QQ),&  \Gamma_p & = \Gamma_{\QQ_p} = \Gal(\overline \QQ_p/\QQ_p),& \Gamma_{\infty} &= \Gamma_{\RR} = \Gal(\CC/\RR).&
	\end{align*}
More generally, if $F$ is a field, we write $\Gamma_F$\index{$\Gamma_F$} for the absolute Galois group of $F$.
\item We say that a profinite Galois \'etale covering $Y \to X$ of schemes is a \emph{$G$-torsor}, where $G$ is a profinite group, if $G$ is the limit $\varprojlim_{i\in I} G_i$ of finite groups $G_i$, and $Y\to X$ is the limit of finite Galois \'etale coverings $Y_i \to X$ that is a $G_i$-torsor.  
	\item By a \emph{reductive group}, we always mean a connected reductive group. 
	\item For a reductive group $G$ over $\RR$ and a maximal torus $T$ in $G$ defined over $\RR$, we write $\Omega_{\CC}(G,T)$\index{$\Omega_{\CC}(G,T)$} for the complex Weyl group $\Nor_{G(\CC)}(T)/ T(\CC)$, and write $\Omega_{\RR}(G,T)$\index{$\Omega_{\RR}(G,T)$} for the real Weyl group $\Nor_{G(\RR)}(T)/ T(\RR)$. 
	
	\item For a reductive group $G$ over $\RR$, we denote by $G(\RR)^0$ the identity connected component of the real Lie group $G(\RR)$. 
	
	\item In the structural theory of reductive groups, the words ``pinning'',  ``splitting'', and ``\'epinglage'' are synonyms. We use the word ``splitting''. 
		
	\item If $P$ is a parabolic subgroup of a reductive group over a field, we write $N_P$\index{$N_P$} for the unipotent radical of $P$. We reserve the notation $U_P$ for a different purpose. We write $M_P$\index{$M_P$} for $P/N_P$. When it is clear from the context, $M_P$ also denotes a fixed Levi component of $P$. 	
		
	\item 
	We freely use the language of abelianized Galois cohomology as developed in  \cite{borovoi} and  \cite{labesse1999}. For an overview, cf.~\cite[\S 1]{KSZ}. We also use Kottwitz's more classical formulation \cite{kottwitzelliptic} in terms of centers of Langlands dual groups. 
	\item Let $G$ be a reductive group over $\QQ$. We denote by $\ker^1 (\QQ, G)$\index{$\ker^1 (\QQ, G)$} the kernel set 
	$$ \ker ( \coh^1(\QQ, G) \to \coh^1(\adele, G) ). $$
	It is well known that $\ker^1(\QQ, G)$ has the canonical structure of an abelian group; see for instance \cite{borovoi}. 
	
	\item When normalizing transfer factors, we use the classical normalization of local class field theory as opposed to Deligne's normalization, cf.~\cite[\S \S  4.1--4.2]{KSconvention}.   
	\item Names of Dynkin types are denoted by sans serif letters, e.g., $\mathsf A_n , \mathsf B_n$, etc. 
	
	\item We sometimes use the abbreviations ``LHS'' and ``RHS'' for ``left hand side'' and ``right hand side''. \index[n]{LHS} \index[n]{RHS} 
\end{itemize}

\ignore{

\chapter*{Epigraph}
\renewcommand{\epigraphflush}{center}
\renewcommand{\epigraphsize}{\LARGE}
\setlength{\epigraphwidth}{0.6\textwidth}

\epigraph{ \begin{CJK*}{UTF8}{bsmi}
		其始也，皆收視反聽，耽思傍訊，精騖八極，心遊萬仞。其致也，情曈曨而彌鮮，物昭晰而互進。
\end{CJK*}}{ \begin{CJK*}{UTF8}{bsmi} 陸機《文賦》
	\end{CJK*} \footnotemark}

\footnotetext{``In the beginning, / All external vision and sound are suspended, / Perpetual thought itself gropes in time and space; / Then, the spirit at full gallop reaches the eight limits of the cosmos, / And the mind, self-buoyant, will ever soar to new insurmountable heights. / When the search succeeds, / Feeling, at first but a glimmer, will gradually gather into full luminosity, / Whence all objects thus lit up glow as if each the other's light reflects.''\\	
	\textemdash Excerpt from \textit{Essay on Literature} by Lu Ji (\begin{CJK*}{UTF8}{bsmi}陸機\end{CJK*}) (261-303 AD). Translation by Chen Shixiang.}

}

\numberwithin{equation}{subsection}

\chapter{The orthogonal Shimura varieties}\label{Section global}

\section[General definitions]{General definitions concerning reductive groups}\label{subsec:general defns}
We collect some definitions that will appear repeatedly in the paper. 

\begin{defn}\label{defn:n^G_M}
	Let $G$ be a reductive group over a field $F$. Let $P$ be a parabolic subgroup of $G$, with unipotent radical $N_P$. Let $M$ be a Levi component of $P$.
	\begin{enumerate}
		\item We denote by $A_M$\index{$A_M$} the \emph{split component}\index[n]{split component} of $M$, namely the maximal $F$-split torus in the center of $M$. 
		\item Let $ \Nor_G(M)$ be the normalizer of $M$ in $G$. We denote by $\cW^G_M$\index{$\cW^G_M$} the quotient group
		$ \Nor_G(M) (F) / M (F),$ and denote by $n^G_M$\index{$n^G_M$} the cardinality of $\cW^G_M$.
		\item For any $\gamma \in M(F)$, we define \index{$D^G_M(\cdot)$}
		\begin{align*}
		D^G_M (\gamma) : = \det \big (1-\mathrm{Ad}(\gamma)\mid  \Lie G/\Lie M \big ) \in F.
		\end{align*}
		\item Assume that $F =\QQ_v$ for a place $v$ of $\QQ$. For any $\gamma \in P(\QQ_v)$, we define \index{$\delta_{P(\QQ_v)}(\cdot)$}
		\begin{align*}\delta_{P(\QQ_v)}(\gamma) : = \abs{\det\big(\mathrm{Ad}( \gamma)  \mid  \Lie N_P \otimes \QQ_v \big)}_v \in \RR_{> 0},
		\end{align*} where $\abs{\cdot}_v$ denotes the usual absolute value on $\QQ_v$.
	\end{enumerate} 
\end{defn}

\begin{rem}\label{rem:Weyl group of Levi} 
	In Definition \ref{defn:n^G_M} (2), we in fact have $\Nor_{G} (M)(F) = \Nor_{G}(A_M)(F)$, and $M(F) = \Cent_{G}(A_M)(F)$. Hence $\cW^G_M$ is isomorphic to the image of $\Nor_{G}(A_M) (F)$ in $\Aut (A_M)$. 
\end{rem}

\begin{defn}\label{defn:Weyl denominator}
	Let $G$ be a quasi-split reductive group over a field $F$. By a \emph{Borel pair}\index[n]{Borel pair} in $G$, we mean a pair $(T,B)$ consisting of a maximal torus $T$ in $G$ and a Borel subgroup $B$ of $G$ containing $T$. Given a Borel pair $(T,B)$, we denote the sets of roots, coroots, positive roots, positive coroots by $\Phi(G,T)$, $\Phi(G,T)^{\vee}$, $\Phi(G,T)^+$, $\Phi(G,T)^{\vee, +}$ respectively.\index{$\Phi(G,T)$}\index{$\Phi(G,T)^{\vee}$}\index{$\Phi(G,T)^+$}\index{$\Phi(G,T)^{\vee,+}$} We write $\mathrm{BRD}(T,B)$\index{$\mathrm{BRD}(T,B)$} for the based root datum $(X^*(T), \Phi(G,T), \Phi(G,T)^+, X_*(T), \Phi(G,T)^{\vee}, \Phi(G,T)^{\vee, +})$.  We define the \emph{Weyl denominator}\index[n]{Weyl denominator}\index{$\Delta_B(\cdot)$}
	\begin{align*}
	\Delta_B (\gamma) &: = \prod _{\alpha \in \Phi(G,T)^+} (1- \alpha^{-1} (\gamma)) \in \overline F,& \forall \gamma \in T(\overline F).&
	\end{align*} 
\end{defn}
\begin{defn}\label{defn:q(G)}Let $G$ be a reductive group over $\RR$. We denote by $X_G$\index{$X_G$} the symmetric space associated to $G$, namely $X_G = G(\RR)/K A_G(\RR)^0$, where $K$ is a maximal compact subgroup of $G(\RR)$. Thus $X_G$ is a smooth manifold. We let $q(G)$\index{$q(G)$} be the half of the dimension of $X_G$. 
\end{defn}

\begin{rem}\label{rem:X is conn}
	In Definition \ref{defn:q(G)}, $K$ meets every connected component of $G(\RR)$ by Matsumoto's theorem (see \cite[14.4]{boreltits}). Hence $X_G$ is connected. 
\end{rem}

\begin{defn}\label{defn:cuspidal}
	We call a reductive group $G$ over $\QQ$ \emph{cuspidal}\index[n]{cuspidal (reductive group)} if $G_\RR$ contains elliptic maximal tori and $Z_G^0$ has equal $\QQ$-split and $\RR$-split rank. Equivalently, $(G/A_{G})_{\RR}$ contains  $\RR$-anisotropic maximal tori, where $A_G$ is the split component of $G$ over $\QQ$. 
\end{defn}
\begin{rem}
	\label{usage of cuspidal} In this paper, every reductive group over $\QQ$ that appears will be a direct product of special orthogonal groups and general linear groups. Thus the only case where the center can have different $\QQ$-split and $\RR$-split ranks is when we have a direct factor $\SO_2$ which is non-split over $\QQ$ but split over $\RR$.
\end{rem}
\begin{defn}\label{defn:R-ell}
	Let $G$ be a reductive group over $\QQ$. We say that an element $\gamma \in G(\QQ)$ is \emph{$\RR$-elliptic},\index[n]{$\mathbb R$-elliptic} if there is an elliptic maximal torus $T$ in $G_{\RR}$ such that $\gamma \in T(\RR)$. 
\end{defn}

\section{Generalities on quadratic spaces}\label{subsec:generalities on quad sp} \subsection{}
Let $F$ be a field of characteristic zero, with a fixed algebraic closure $\overline F$. In this paper, all quadratic spaces over $F$ are assumed to be finite-dimensional and non-degenerate. Let $(V,q)$ be a quadratic space over $F$. We denote by $[\cdot ,\cdot]_q : V \otimes V \to F$ the associated bilinear pairing, defined as $[x,y] _q= q(x,y)  - q(x) -q(y)$. When no confusion can arise we simply write $V$ for $(V,q)$, and write $[\cdot, \cdot]$ for $[\cdot,\cdot]_q$. Recall that the \emph{determinant}\index[n]{determinant of a quadratic space} of $q$, denoted by $\det q$, is the image in $F^{\times} / F^{\times, 2}$ of the determinant of the matrix of $q$ under any basis of $V$. We define the \emph{discriminant}\index[n]{discriminant of a quadratic space} of $(V,q)$ to be 
$$
\delta : = (-1)^{\floor{\dim V /2}} \det q \in F^{\times} / F^{\times,2} .$$ 
For $m \in \ZZ_{\geq 1}$, we write $J_m$\index{$J_m$} for the $m\times m$ matrix $$
J_m = \begin{pmatrix}
&& 1 \\
& \reflectbox{$\ddots$} \\
1 
\end{pmatrix}.$$

\begin{defn}\label{defn:qsplitting}
	Let $(V,q)$ be a quadratic space over $F$ of dimension $d$ and discriminant $\delta$. Let $m = \floor{d/2}$. We define the following notions. 
	\begin{enumerate}
		\item A basis $\set{e_1,\cdots, e_d}$ of $V$ is called \emph{hyperbolic}\index[n]{hyperbolic basis}, if the matrix $([e_i,e_j]_q)$ is of the form 
		$$\begin{pmatrix}
		&& J_m \\
		& x \\
		J_m	\end{pmatrix}$$ for some $x\in F^\times$ when $d$ is odd, and is equal to	$$\begin{pmatrix}
		& J_m \\
		J_m	\end{pmatrix}$$ when $d$ is even. Note that when $d$ is even, a hyperbolic basis exists only when $\delta$ is trivial. 
		\item Assume that $d$ is even, and that $\delta$ is non-trivial. In this case a basis $\set{e_1,\cdots, e_d}$ of $V$ is called \emph{near-hyperbolic} \index[n]{near-hyperbolic basis}, if the matrix $([e_i,e_j]_q)$ is equal to		$$\begin{pmatrix}
		&&&& J_{m-1}\\
		&& 1 \\
		&&& - x
		\\
		J_{m-1}	\end{pmatrix}$$ for some $x\in F^{\times}$. Note that $x$ is a lift of $\delta \in F^{\times}/F^{\times, 2}$. We say that $x$ is the \emph{discriminant}\index[n]{discriminant of a near-hyperbolic basis} of  $\set{e_1,\cdots, e_d}$.  
	\end{enumerate}
\end{defn}
\begin{defn}\label{defn:qsplit quad sp} We call $(V,q)$ \emph{quasi-split},\index[n]{quasi-split quadratic space} if there exists a hyperbolic basis or a near-hyperbolic basis of $V$. If there exists a hyperbolic basis we also say that $V$ is \emph{split};\index[n]{split quadratic space} this is equivalent to requiring that $V$ contains a totally isotropic subspace of dimension $\floor{\dim V/2}$.  
\end{defn}

\begin{eg}\label{eg:real quasi-split}
Let $F= \RR$. Then a quadratic space over $\RR$ of signature $(p,q)$ is quasi-split if and only $p-q \in \set{1,-1, 2}$. For any $p\in \ZZ_{\geq 1}$, the quadratic spaces of signature $(p,p)$ and $(p+1,p-1)$ are both quasi-split, and their discriminants are $1$ and $-1 \in \RR^{\times}/\RR^{\times,2}$ respectively. 
\end{eg}
\subsection{}\label{subsubsec:standard RD}
 Let $m \in \ZZ_{\geq 1}$. We denote by $\mathrm{RD}(\mathsf B_m)$\index{$\mathrm{RD}(\mathsf B_m)$} the \emph{standard type $\mathsf B_m$ root datum}\index[n]{standard type $\mathsf B_m$ root datum}, given by $$ (\ZZ^m = \mathrm{span}_{\ZZ} \set{\epsilon_1,\cdots, \epsilon_m}, R, \ZZ^m = \mathrm{span}_{\ZZ} \set{\epsilon_1^{\vee},\cdots, \epsilon_m^{\vee}} , R^{\vee}) ,$$ where $\lprod{\epsilon_i, \epsilon_j^{\vee}} = \delta_{i,j}$, and \begin{align*}
 	R & = \set{\pm \epsilon_i \mid 1\leq i \leq m} \cup \set {\pm \epsilon_i \pm \epsilon_j  \mid  1 \leq i < j \leq m}, \\
 	R^{\vee} & = \set{\pm 2 \epsilon_i^{\vee} \mid 1\leq i \leq m} \cup \set{\pm \epsilon_i^{\vee} \pm \epsilon_j^{\vee}  \mid  1 \leq i < j \leq m}. 
 \end{align*}
(If $m=1$, then $R = \set{\epsilon_1}, R^{\vee} = \set{2\epsilon_1^{\vee}}$.)
By the \emph{standard simple roots}\index[n]{standard simple roots (of type $\mathsf B_m$)} we mean the following choice of simple roots: 
$$ \epsilon_1 - \epsilon_2,\cdots, \epsilon_{m-1} -\epsilon_m , \epsilon_{m}. $$ We denote by $\mathrm{BRD}(\mathsf B_m)$\index{$\mathrm{BRD}(\mathsf B_m)$} the based root datum corresponding to the above choice of simple roots, called the \emph{standard based root datum}.
The Weyl group of $\mathrm{RD}(\mathsf B_m)$ is naturally identified with $\set{\pm 1}^m \rtimes \mathfrak S_m$. 		

Similarly, for $m \in \ZZ_{\geq 1}$ we denote by $\mathrm{RD}(\mathsf D_m)$\index{$\mathrm{RD}(\mathsf D_m)$} the \emph{standard type $\mathsf D_m$ root datum}\index[n]{standard type $\mathsf D_m$ root datum}, given by
$$ (\ZZ^m , R, \ZZ^m , R^{\vee}), $$ where 
\begin{align*}
R &  = \set{\pm \epsilon_i  \pm \epsilon_j \mid 1 \leq i < j \leq m } , \\ 
R^{\vee} & = \set{\pm \epsilon_i^{\vee} \pm \epsilon_j^{\vee} \mid  1 \leq i < j \leq m  }.
\end{align*} (If $m=1$, then $R = R^{\vee} = \emptyset.$) By the \emph{standard simple roots}\index[n]{standard simple roots (of type $\mathsf D_m$)} we mean the following choice of simple roots: 
$$ \epsilon_1 - \epsilon_2,\cdots, \epsilon_{m-1} -\epsilon_m , \epsilon_{m-1} +\epsilon_m . $$
We denote by $\mathrm{BRD}(\mathsf D_m)$\index{$\mathrm{BRD}(\mathsf D_m)$} the corresponding based root datum. The Weyl group of $\mathrm{RD}(\mathsf D_m)$ is naturally identified with $(\set{\pm 1} ^m)' \rtimes \mathfrak S_m$, where $(\set{\pm 1} ^m)'$\index{$(\set{\pm 1}^m)'$} denotes the kernel of the homomorphism $\set{\pm 1}^m \to \set{\pm 1}$ taking the product of the coordinates. 

\begin{defn}\label{defn:orienting F bar}
	Let $\alpha \in \overline F$ be an element such that $\alpha^2 \in F^{\times}$ and $\alpha \notin F$. Let $\Uni(1)_{\alpha}$\index{$\Uni(1)_{\alpha}$} be the norm-one subtorus of $\Res_{F(\alpha)/ F} \Gm$. We have a canonical isomorphism $\Uni(1)_{\alpha, \overline F} \cong \GG_{m,\overline F}$ corresponding to the inclusion $F(\alpha) \hookrightarrow \overline F$. In particular, we canonically identify $X^*(\Uni(1)_{\alpha})$ and  $X_*(\Uni(1)_{\alpha})$ with $\ZZ$. We also have a canonical injective $F$-homomorphism $\iota_{\alpha}: \Uni(1)_{\alpha} \to \GL_2$,\index{$\iota_{\alpha}$} which represents the multiplication action of $\Uni(1)_{\alpha}$ on $F(\alpha)$ under the $F$-basis $\set{1,\alpha}$ of $F(\alpha)$. If $F = \RR, \overline F = \CC, \alpha = \sqrt{-1}$, we simply write $\Uni(1)$ for $\Uni(1)_{\alpha}$.  
	\end{defn}
\subsection{} \label{subsubsec:Borel pairs assoc to hd and nhd}  Let $V=(V,q)$ be a quadratic space over $F$ of dimension $d$ and discriminant $\delta$. Let $G = \SO(V)$. Then $G$ is a reductive algebraic group over $F$, and semi-simple if $d\neq 2$. The absolute rank of $G$ is $m = \floor{d/2}$. 

Assuming that $(V,q)$ is quasi-split, we shall obtain an explicit description of a Borel pair in $G$ and the associated based root datum as follows. There are two cases to consider.

The first case is when $V$ has a hyperbolic basis $\mathbb B  = \set{e_1,\cdots,e_d}$. We then identify $G$ with a subgroup of $\GL_d$ using the basis $\mathbb B$. When $d$ is odd, we obtain an $F$-embedding \index{$\iota_{\mathbb B}$}
\begin{align*}
\iota_{\mathbb B} &: \Gm^{m}  \To G  , &  
(z_1,\cdots, z_m)  & \longmapsto \mathrm{diag}(z_1, \cdots , z_m, 1 , z_m^{-1} ,\cdots, z_1^{-1}).&
\end{align*} 
When $d$ is even, we obtain an $F$-embedding \begin{align*}
	\iota_{\mathbb B} & : \Gm^{m}  \To G  , & 
	(z_1,\cdots, z_m) &  \longmapsto \mathrm{diag}(z_1, \cdots , z_m,  z_m^{-1} ,\cdots, z_1^{-1}).&
\end{align*}  For both parities of $d$, the image $T$ of $\iota_{\mathbb B}$ is a split maximal torus in $G$. Also, the intersection of $G$ with the upper triangular Borel subgroup of $\GL_d$ is a Borel subgroup $B$ of $G$ containing $T$. Under $\iota_{\mathbb B}$, the based root datum $\mathrm{BRD}(T,B)$ is identified with the standard based root datum $\mathrm{BRD}(\mathsf B_m)$ (resp.~$\mathrm{BRD}(\mathsf D_m)$) when $d$ is odd (resp.~even).

The second case is when $d$ is even, $\delta$ is non-trivial, and $V$ has a near-hyperbolic basis $\mathbb B= \set{e_1,\cdots, e_d}$. Let $x\in F^\times$ be the discriminant of $\mathbb B$ (see Definition \ref{defn:qsplitting}), and fix a square root $\alpha \in \overline F$ of $x$. We identify $G$ with a subgroup of $\GL_d$ using the basis $\mathbb B$, and obtain an $F$-embedding \index{$\iota_{\alpha, \mathbb B}$}
\begin{align*}
\iota_{\alpha, \mathbb B} : \Gm^{m-1}\times \Uni(1)_{\alpha}  & \To G  \\  (z_1,\cdots, z_{m-1}, z_m) & \longmapsto \mathrm{diag}(z_1,\cdots , z_{m-1}, \iota_{\alpha}(z_m) , z_{m-1}^{-1},\cdots, z_1^{-1}).
\end{align*} 
 Here $\Uni(1)_{\alpha}$ and $\iota _{\alpha}: \Uni(1)_{\alpha }\to \GL_2$ are as in Definition \ref{defn:orienting F bar}. The image $T$ of $\iota_{\alpha, \mathbb B}$ is a maximal torus in $G$. Recall from Definition \ref{defn:orienting F bar} that $X^*(\Uni(1)_{\alpha})$ and $X_*(\Uni(1)_{\alpha})$ are canonically identified with $\ZZ$, so $X^*(\GG_m^{m-1} \times \Uni(1)_{\alpha})$ and $X_*(\GG_m^{m-1} \times \Uni(1)_{\alpha})$ are canonically identified with $\ZZ^m$. Under $\iota_{\alpha,\mathbb B}$, the root datum of $(T_{\overline F}, G_{\overline F})$ is identified with $\mathrm{RD}(\mathsf D_m)$. The standard based root datum $\mathrm{BRD}(\mathsf D_m)$ thus gives rise to a Borel subgroup $B_{\overline F}$ of $G_{\overline F}$ containing $T_{\overline F}$. The $\Gamma_F$-action on $X^*(\Gm^{m-1} \times \Uni(1)_{\alpha}) \cong \ZZ^m$ factors through $\Gal(F(\alpha)/F)$, and the non-trivial element of $\Gal(F(\alpha)/F)$ acts by $\ZZ^m \to \ZZ^m, (a_1,\cdots, a_m) \mapsto (a_1,\cdots, a_{m-1}, -a_m)$. Hence the $\Gamma_F$-action preserves the set of standard simple roots. It follows that $B_{\overline F}$ comes from a Borel subgroup $B$ of $G$. Thus $(T,B)$ is a Borel pair in $G$, and $\iota_{\alpha, \mathbb B}$ induces an isomorphism between $\mathrm{BRD}(\mathsf D_m)$ and $\mathrm{BRD}(T,B)$.  
 
\begin{prop}\label{prop: even TFAE} 	Let $(V,q)$ be a quadratic space over $F$ of dimension $d$ and discriminant $\delta$. Let $G= \SO(V)$. Assume that $d \geq 3$. The following statements hold. 
\begin{enumerate}
	\item The quadratic space $V$ is split if and only if $G$ is split.
	\item If $d$ is odd, then $G$ is split if and only if $G$ is quasi-split.
	\item If $d$ is even, then $G$ is split if and only if $G$ is quasi-split and $\delta$ is trivial. 
	\item Assume that $d$ is even, $\delta$ is non-trivial, and $V$ is quasi-split. Then $G$ is quasi-split.
	\item Assume that $d$ is even, $\delta$ is non-trivial, and $G$ is quasi-split over $F$. Then $G$ is split over $F(\alpha)$, for any $\alpha \in \overline F$ whose square is a lift of $\delta$. 
	\item Keep the assumptions in (5), and assume that $F$ is a non-archimedean local field of characteristic zero. Then $G$ is unramified if and only if $F(\alpha)$ is unramified over $F$, if and only if $\delta\in F^{\times}/F^{\times,2}$ has a representative in $\oo_F^{\times}/\oo_F^{\times,2}$. 
	\item Suppose $F=\QQ_p$ for an odd prime $p$. Then $(V,q)$ is quasi-split if and only if the Hasse invariant is $(-1)^{\frac{p-1}{2}v_p(\delta) \floor{\frac{d-1}{2}}}$. Here  $v_p(\delta)$ is well defined in $\ZZ/2\ZZ$. 
	\item Suppose $F=\QQ$. Then $(V,q)\otimes_{\QQ} \QQ_p$ is quasi-split for almost all primes $p$.  
\end{enumerate}	
\end{prop} 
\begin{proof} 
	\textbf{(1)} This is well known; see for instance \cite[Prop.~2.14]{PR}. 
	
	\textbf{(2)} This follows from the fact that the Dynkin diagram of type $\mathsf B_{(d-1)/2}$ does not have non-trivial automorphisms.	
	
	\textbf{(3)} If $G$ is split, then $V$ is split by part (1), and so $\delta$ is trivial. Conversely, assume that $G$ is quasi-split and $\delta$ is trivial. By the abstract classification of quasi-split semi-simple groups of type $\mathsf D_m$ (where $m= \frac{d}{2}\geq 2$), we know that the split rank of $G$ is at least $\frac{d}{2}-1$. This implies that $V$ is an orthogonal direct sum of $\frac{d}{2}-1$ hyperbolic planes and a $2$-dimensional quadratic space $V_0$, by \cite[Prop.~2.14]{PR}. The discriminant of $V_0$ is the same as that of $V$, which is trivial. Therefore there is a basis of $V_0$ under which the matrix of the quadratic form on $V_0$ is $\diag(a, -ab^2)$ for some $a, b \in F^{\times}$. Clearly this implies that $V_0$ is a hyperbolic plane. Hence $V$ is split, and therefore $G$ is split by (1).

	\textbf{(4)} By \S \ref{subsubsec:Borel pairs assoc to hd and nhd}, $G$ admits a Borel subgroup over $F$.

	\textbf{(5)} This follows from (3) by base changing both $V$ and $G$ from $F$ to $F(\alpha)$.  
	
	\textbf{(6)} Since $\delta$ is non-trivial, by (1) we know that $G$ is non-split. By the abstract classification of quasi-split non-split semi-simple groups of type $\mathsf D_m$ (with $m \geq 2$), we know that $G$ splits over a unique quadratic extension $E/F$ inside $\overline F$, and that any splitting field of $G$ inside $\overline F$ must contain $E$. Thus $G$ is unramified if and only if $E/F$ is unramified. By (5), we know that $E =  F(\alpha)$. Thus $G$ is unramified if and only if $F(\alpha)$ is unramified over $F$, which is also equivalent to that $\delta$ has a representative in $\oo_F^{\times}/\oo_F^{\times, 2}$.  
	
	\textbf{(7)} If $(V,q)$ is quasi-split, then it has matrix representation $$\begin{pmatrix}
		I_{\frac{d-1}{2}} \\ & x \\ && -I_{\frac{d-1}{2}}
	\end{pmatrix}$$ when $d$ is odd and $$\begin{pmatrix}
	I_{\frac{d}{2}} \\ & -x \\ && -I_{\frac{d}{2}-1}
\end{pmatrix}$$ when $d$ is even, for some $x\in F^{\times}$ representing $\delta$. Hence the Hasse invariant is  $(x,-1)_p^{\floor{\frac{d-1}{2}}} =(-1)^{\frac{p-1}{2}v_p(x) \floor{\frac{d-1}{2}}}$. This proves the ``only if'' direction. The ``if'' direction follows since two quadratic spaces over $\QQ_p$ with the same dimension, discriminant, and Hasse invariant are isomorphic.   
	
	\textbf{(8)} For almost all $p$, $v_p(\delta) = 0 \in \ZZ/2\ZZ$ and the Hasse invariant of $(V,q)$ at $p$ is trivial. By (7) we know that $(V,q)\otimes_{\QQ} \QQ_p$ is quasi-split for such $p$.  
\end{proof}
\begin{rem}From the assumptions that $d$ is even, $\delta$ is non-trivial, and $G = \SO(V)$ is quasi-split over $F$, it does not follow that $V$ is quasi-split. For example, the quadratic spaces over $\RR$ of signatures $(n+2, n)$ and $(n,n+2)$ define isomorphic special orthogonal groups, but only the former quadratic space is quasi-split; cf.~Example \ref{eg:real quasi-split}. 
\end{rem}

\section[Shimura data and rational boundary components]{Generalities on Shimura data and rational boundary components}\label{subsec:boundary} In this section we collect some general facts concerning the formalism of mixed Shimura data and rational boundary components in \cite{pink1989compactification}.
\subsection{}
According to the definition of Pink \cite[Chap.~2]{pink1989compactification}, a \emph{mixed Shimura datum}\index[n]{mixed Shimura datum} is a tuple $(P, U, \Y,h)$, where $P$ is a connected linear algebraic group over $\QQ$, $U$ is a subgroup of the unipotent radical of $P$ that is normal in $P$, $\Y$ is a left homogeneous space under the subgroup $P(\RR)U(\CC)$ of $P(\CC)$, and $h$ is a $P(\RR)U(\CC)$-equivariant map $\Y \to \Hom (\mathbb S_\CC, P)$, satisfying the axioms in \cite[2.1]{pink1989compactification}. (Here $\mathbb S : = \Res_{\CC/\RR} \GG_m$.\index{$\mathbb S$}) If $h$ is clear from the context, we omit it from the notation. If $U$ is trivial, we also omit it from the notation. The mixed Shimura datum is called \emph{pure}\index[n]{pure Shimura datum} if $P$ is reductive. Note that the notion of a pure Shimura datum according to Pink's definition is less restrictive than Deligne's definition in \cite[2.1]{deligne1979varietes}, in that $h$ is allowed to be non-injective, cf.~\cite[2.2 (d)]{pink1989compactification}. In the sequel all pure Shimura data are understood in the sense of Pink. 

Some comments on the homogeneous space $\Y$ are in order. First, note that $P(\RR)U(\CC)$ is the preimage of $(P/U)(\RR)$ along the map $P(\CC) \to (P/U)(\CC)$, since $\coh^1(\RR, U)$ is trivial. It follows that $P(\RR)U(\CC)$ is a closed Lie subgroup of the real Lie group $P(\CC)$. Recall that for any real Lie group $\mathcal G$, a left homogeneous space under $\mathcal G$ is a set $S$ equipped with a transitive left action of $\mathcal G$ such that the stabilizers are closed Lie subgroups of $\mathcal G$. Then $S$ has the unique structure of a smooth manifold such that the $\mathcal G$-action is smooth. In the definition of a mixed Shimura datum, $\Y$ is required to be a left homogeneous space under the real Lie group $P(\RR)U(\CC)$, and so $\Y$ is canonically a smooth manifold. As explained in \cite[2.2]{pink1989compactification}, $\Y$ has finitely many connected components, and the smooth structure on $\Y$ can be upgraded to a canonical complex structure, which is invariant under $P(\RR)U(\CC)$.

\subsection{}
By definition (\cite[2.3]{pink1989compactification}), a \emph{morphism}\index[n]{morphism between mixed Shimura data} between two mixed Shimura data $(P, U, \Y, h)$ and $(P', U', \Y', h')$ is a pair $(\pi, F)$, where $\pi: P \to P'$ is a homomorphism of $\QQ$-algebraic groups, and $F: \Y \to \Y'$ is a map, required to satisfy the following conditions: 
\begin{itemize}
	\item $\pi$ maps $U$ into $U'$.
	\item $F$ is equivariant with respect to the homomorphism $P(\RR)U(\CC) \to P'(\RR) U'(\CC)$ induced by $\pi$. 
	\item For any $y \in \Y$, the homomorphism $h'(F(y)): \mathbb S_{\CC} \to P'_{\CC}$ is equal to the composite homomorphism 
	$$ \mathbb S_{\CC} \xrightarrow{h(y)} P_{\CC} \xrightarrow{\pi} P'_{\CC}. $$
\end{itemize}
As shown in \cite[2.4]{pink1989compactification}, if $(\pi,F)$ is a morphism as above, then $F$ is automatically holomorphic with respect to the canonical complex structures on $\Y$ and $\Y'$. 
\subsection{}\label{subsubsec:pure quotient}
Let $(P, U, \Y,h)$ be a mixed Shimura datum. 
In \cite[2.9]{pink1989compactification}, Pink constructs the \emph{quotient}\index[n]{quotient of a mixed Shimura datum} of $(P, U, \Y,h)$ by a normal subgroup $P_0$ of $P$. This is a mixed Shimura datum for the group $P/P_0$ equipped with a morphism from $(P, U, \Y,h)$ satisfying a universal property. In the following, we give an alternative construction of the quotient in the special case where $P_0$ is the unipotent radical of $P$. 

Let $W$ be the unipotent radical of $P$. We write $G$ for $P/W$, and write $\pi$ for the projection $P \to G$. Since $\coh^1(\RR, W)$ is trivial, and since $W(\RR)$ and $U(\CC)$ are connected, the natural map $P(\RR)U(\CC) \to G(\RR)$ is surjective and induces an isomorphism $\pi_0(P(\RR) U(\CC)) \isom \pi_0(G(\RR))$. In particular, $\pi_0(G(\RR))$ acts on $\pi_0(\Y)$.

Suppose we have a left $\pi_0(G(\RR))$-set $\Gamma$ and a $\pi_0(G(\RR))$-equivariant map $\lambda: \pi_0(\Y) \to \Gamma$. 
We define the map \index{$\mathbb H_{\lambda}$}
\begin{align*}
	\mathbb H_{\lambda} : \Y & \To \Gamma \times \Hom(\mathbb S_{\CC}, G_{\CC}) .\\
	y & \longmapsto (\lambda([y]), \pi \circ h(y)).
\end{align*}
We have a diagonal $G(\RR)$-action on $\Gamma \times \Hom(\mathbb S_{\CC}, G_{\CC})$, where the action on the second factor is by conjugation. The map $\mathbb H_{\lambda}$ is equivariant with respect to the natural homomorphism $P(\RR)U(\CC) \to G(\RR)$. Let $\X_{\lambda} : = \im (\mathbb H_{\lambda})$. Let $h_{\lambda}: \X \to \Hom (\mathbb S_{\CC}, G_{\CC})$ be the projection map to the second factor. It is easy to check that $(G, \X_{\lambda}, h_{\lambda})$ is a pure Shimura datum, and that the pair $(\pi : P \to G, ~\mathbb H_{\lambda}: \Y \to \X_{\lambda})$ is a morphism $(P,U, \Y, h) \to (G, \X_{\lambda}, h_{\lambda})$ between mixed Shimura data. Since $\mathbb H_{\lambda} : \Y \to \X_{\lambda}$ is surjective by the definition of $\X_{\lambda}$, it induces a surjection $\pi_0 (\mathbb H_{\lambda}) : \pi_0(\Y) \to \pi_0(\X_{\lambda})$. 
\begin{lem}\label{lem:constr of pure quotient}
Let $\Gamma$ and $\lambda$ be as above. The following statements hold. 
\begin{enumerate}
	\item The map $\X_{\lambda} \to \Gamma$ given by the projection to the first factor induces an injection $\pi_0(\X_{\lambda}) \to \Gamma$. 
	\item The surjection $\pi_0 (\mathbb H_{\lambda}) : \pi_0(\Y) \to \pi_0(\X_{\lambda})$ is a bijection if and only if $\lambda$ is injective. 
	\item If $\lambda$ is injective, then the morphism $(\pi, \mathbb H_{\lambda}): (P,U, \Y , h) \to (G,\X_{\lambda}, h_{\lambda})$ identifies $(G,\X_{\lambda}, h_{\lambda})$ with the quotient of $(P,U, \Y , h)$ by $W$. 
\end{enumerate}		
\end{lem}
\begin{proof}
\textbf{(1)} A connected component of $\X_{\lambda}$ is the same thing as a $G(\RR)^0$-orbit in $\X_{\lambda}$, but $G(\RR)^0$ acts trivially on $\Gamma$.

\textbf{(2)} The composition of $\pi_0(\mathbb H_{\lambda})$ followed by the injection $\pi_0(\X_{\lambda}) \to \Gamma$ in part (1) is equal to $\lambda$.

\textbf{(3)} Let $(\pi, F): (P, U, \Y, h) \to (G,\X_{\mathrm{abs}}, h_{\mathrm{abs}})$ be the abstract quotient by $W$, which is characterized by a universal property and constructed in \cite[2.9]{pink1989compactification}. By the universal property, there is a unique $G(\RR)$-equivariant map $j: \X_{\mathrm{abs}}\to \X_{\lambda}$ such that $h_{\lambda} \circ j = h_{\mathrm{abs}}$ and $j \circ F = \mathbb H_{\lambda}$. We only need to show that $j$ is a bijection. Since $\mathbb H_{\lambda} : \Y \to \X_{\lambda}$ is surjective, so is $j$. By part (2), $j$ induces an injection $\pi_0(\X_{\mathrm{abs}}) \to \pi_0(\X_{\lambda})$. It remains to show that the restriction of $j$ to each connected component of $\X_{\mathrm{abs}}$ is injective. For this, it is enough to show that the restriction of $h_{\lambda} \circ j = h_{\mathrm{abs}}$ to each connected component of $\X_{\mathrm{abs}}$ is injective. But this is \cite[2.12]{pink1989compactification}.
\end{proof}
 
\subsection{}\label{subsubsec:formalism} We recall the formalism of rational boundary components developed in \cite[Chap.~4]{pink1989compactification}. 
Let $(G,\X) = (G,\X,h)$ be a pure Shimura datum. For simplicity, we assume that $G^{\ad}$ is $\QQ$-simple, which will suffice for our applications. We denote by $\admpar$\index{$\admpar$} the set of \emph{admissible parabolic subgroups of $G$}\index[n]{admissible parabolic subgroup}, namely $G$ itself and the maximal proper parabolic subgroups of $G$ (defined over $\QQ$). For any $P \in \admpar$, Pink \cite[4.7, 4.8]{pink1989compactification} defines a canonical normal subgroup $P^{\Pink}$\index{$P^{\Pink}$} of $P$, and a canonical normal subgroup $U_P$\index{$U_P$} of $P^{\Pink}$ contained in the unipotent radical of $P^{\Pink}$.\footnote{Our $P$, $P^{\Pink}$, and $ U_P$ are denoted respectively by $Q$, $ P_1$, and $U_1$ in \cite[4.7, 4.8]{pink1989compactification}.} 
As $G$ is reductive, the proof of \cite[4.8]{pink1989compactification} shows that the unipotent radical of $P^{\Pink}$ is equal to the unipotent radical $N_P$ of $P$. In particular, the subgroup $P^{\Pink} \subset G$ uniquely determines $P$. We shall write $M_P$\index{$M_P$} for $P/N_P$ and write $G_P$\index{$G_P$} for $P^{\Pink}/N_P$.  

We define \index{$\mathscr Y_P$} 
$$\mathscr Y_P : = \pi_0 (\X) \times \Hom(\mathbb S_{\CC}, P^{\Pink}_{\CC}),$$ equipped with the diagonal action of $P(\RR)U_P(\CC)$. Here the action on the first factor is via  $\pi_0(P(\RR)U_P(\CC)) \cong \pi_0(P(\RR)) \to \pi_0(G(\RR))$, and on the second factor via conjugation. We write $p_1^P$ and $ p_2^P$ for the projection maps from $\mathscr Y_P$ to the two factors. In \cite[4.11]{pink1989compactification}, Pink defines a canonical $P(\RR)$-equivariant map\index{$\omega_P$}
 $$\omega_P: \X \To \mathscr Y_P$$
 such that $p_1^P \circ \omega_P$ is the natural projection $\X \to \pi_0(\X)$.
 
 By definition (\cite[4.11]{pink1989compactification}),  a \emph{rational boundary component}\index[n]{rational boundary component} of $(G,\X)$ is a pair $(P, \Y)$, where $P \in \admpar$, and $\Y$ is any $P^{\Pink}(\RR)U_P(\CC)$-orbit in $\mathscr Y_P$ such that $\Y \cap \im(\omega_P) \neq \emptyset$. We denote by $\RBC(G,\X)$ or simply $\RBC$ \index{$\RBC$} the set of all rational boundary components. For each $P\in \admpar$, we denote by $\RBC_P(G, \X)$ or simply $\RBC_P$\index{$\RBC_P$} the set of all rational boundary components whose first coordinate is $P$. For $(P, \Y) \in \RBC$, we write $\X^\Y$\index{$\X^\Y$} for the subset $\omega_P^{-1}(\Y)$ of $\X$. We have the following facts (see \cite[Chap.~4]{pink1989compactification}):
 \begin{enumerate}
  	\item[(I)] For $(P,\Y) \in \RBC$, the $P^{\Pink}(\RR)U_P(\CC)$-action on $\Y$ and the map $p_2^P |_{\Y} : \Y \to \Hom(\mathbb S_{\CC}, P^{\Pink}_{\CC})$ make the tuple $(P^{\Pink}, U_P, \Y)$ a mixed Shimura datum. 
  	\item[(II)] For $(P, \Y) \in \RBC$, the set $\X^{\Y}$ is the union of some connected components of $\X$. The map $\omega_P$ maps $\X^{\Y}$ injectively and holomorphically into $\Y$, inducing a bijection 	\index{$\gamma_{\Y}$}
  	\begin{align}\label{eq:gamma_Y}
\gamma_{\Y}:  \pi_0(\X^{\Y}) \isom \pi_0 (\Y). 
  	\end{align} Moreover, the map $\pi_0(\Y) \to \pi_0(\X)$ induced by $p_1^P |_{\Y} : \Y \to \pi_0(\X)$ is the inverse of $\gamma_\Y$. 
  	\item[(III)]  For each fixed $P \in \admpar$, $\X$ is the disjoint union \begin{align}\label{eq:disjoint union}
  	\X  = \coprod_{(P,\Y) \in \RBC_P} \X^{\Y}. 	\end{align}		
 \end{enumerate}

\subsection{}\label{subsubsec:taking pure quotient} We keep the setting of \S \ref{subsubsec:formalism}. Let $P \in \admpar$. 
For each $(P,\Y) \in \RBC_P$, we let $(G_P, \X_\Y)$\index{$\X_\Y$} be the quotient of the mixed Shimura datum $(P^{\Pink}, U_P, \Y)$ by the unipotent radical $N_P$ of $P$ (which is also the unipotent radical of $P^{\Pink}$), and let $F_\Y: \Y \to \X_\Y$\index{$F_{\Y}$} be the canonical $P^{\Pink}(\RR)U_P(\CC)$-equivariant map. By Lemma \ref{lem:constr of pure quotient}, we know that $F_\Y$ induces a bijection between the sets of connected components. 

Let $\pi_{\Y}$\index{$\pi_\Y$} be the composition 
$$\pi_{\Y} :  \X^{\Y} \xrightarrow{\omega_P} \Y \xrightarrow{F_{\Y}} \X_\Y. $$ 
Then $\pi_\Y$ is holomorphic and induces a bijection between the sets of connected components, since both $\omega_P|_{\X^{\Y}}$ and $F_\Y$ have these properties. Moreover, $\pi_\Y$ is equivariant with respect to the surjective Lie group homomorphism $P^{\Pink}(\RR)U_P(\CC) \to G_P(\RR)$. In particular, $\pi_\Y$ is a surjective submersion, since  $\X^\Y$ (resp.~$\X_\Y$) is a left homogeneous space under $P^{\Pink}(\RR) U_P(\CC)$ (resp.~$G_P(\RR)$). 

Let $\X_P$\index{$\X_P$} be the disjoint union
$$\X_P: = \coprod_{(P,\Y) \in \RBC_P} \X_\Y, $$ as a complex manifold with a $G_P(\RR)$-action. In view of (\ref{eq:disjoint union}), we have a map \index{$\pi_P$}
$$ \pi_P : = \coprod_{(P,\Y) \in \RBC_P} \pi_\Y: \X \To \X_P. $$
Then $\pi_P$ is holomorphic, surjective, submersive, equivariant with respect to  $P^{\Pink}(\RR)U_P(\CC) \to G_P(\RR)$, and induces a bijection between the sets of connected components, since each $\pi_\Y$ has these properties. When $P= G$, the map $\pi_G$ is an isomorphism.  

Consider the set-theoretic disjoint union \footnote{While we shall only consider $\X^*$ as a set, there is a natural \emph{Satake topology} on $\X^*$; see \cite[6.2]{pink1989compactification}. Under this topology, $\X^*$ contains $\X$ as a dense open subset. } \index{$\X^*$}
\begin{align}\label{eq:X^*}
\X^* = \coprod_{P \in \admpar} \X_P = \coprod_{(P,\Y) \in \RBC} \X_\Y. 
\end{align}
There is a natural $G(\QQ)$-action on $\X^*$, satisfying the following properties (see \cite[4.16, 6.2]{pink1989compactification}): 
\begin{itemize}
	\item The action respects the stratification of $\X^*$ by the subsets $\X_{\Y}$.  
	\item For $g\in G(\QQ)$ and $P \in \admpar$, we have $g(\X_P) = \X_{gPg^{-1}}$. In particular, $\mathrm{Stab}_{G(\QQ)} \X_P = P(\QQ)$. 
	\item For $P \in \admpar$, the map $\pi_P : \X \to \X_P$ is $P(\QQ)$-equivariant. Here $P(\QQ) $ acts on $X_P$ since $\mathrm{Stab}_{G(\QQ)} \X_P = P(\QQ)$. Moreover, the $P(\QQ)$-action on $\X_P$ factors through the quotient map $P(\QQ) \to M_P(\QQ)$. 
\end{itemize}

 Let  $P \in \admpar$. As discussed above we have an $M_P(\QQ)$-action on $\X_P$. Since $M_P(\QQ)$ is dense in $M_P(\RR)$, there is at most one way to extend this action to a continuous $M_P(\RR)$-action. It is shown in \cite[3.6]{pink1989compactification} that such an extension indeed exists. Since we need to explicitly describe this $M_P(\RR)$-action later for the orthogonal Shimura datum, we give its construction in the following proposition. 

\begin{prop}\label{prop:action}Keep the setting of \S \ref{subsubsec:formalism}, and let $P \in \admpar$. The following statements hold. 
	\begin{enumerate}
		\item There is a unique extension of the $G_P(\RR)$-action on $\X_P$ to an $M_P(\RR)$-action such that the map $\pi_P : \X \to \X_P$ is equivariant with respect to the homomorphism $P(\RR) \to M_P(\RR)$.
		\item  The $M_P(\RR)$-action on $\X_P$ in (1) factors through the natural homomorphism
		\begin{align}\label{eq:natural hom from M_P}
	M_P(\RR) & \To \pi_0(M_P(\RR)) \times \Aut(G_{P,\RR}) \\ \nonumber
	m & \longmapsto ([m], \Int m). 
		\end{align}
		\item The $M_P(\RR)$-action on $\X_P$ in (1) is transitive and continuous. Its restriction to $M_P(\QQ)$ coincides with the $M_P(\QQ)$-action discussed in \S \ref{subsubsec:taking pure quotient}. 
	\end{enumerate}
\end{prop}

 \begin{proof}
 	\textbf{(1)} The uniqueness immediately follows from the surjectivity of $\pi_P$. We prove the existence. Using the canonical isomorphism $\pi_0(P(\RR)U_P(\CC)) \cong \pi_0 (M_P(\RR))$, we view $\pi_0(\X)$ as a $\pi_0(M_P(\RR))$-set. In particular, $\pi_0(\X)$ is a  $\pi_0(G_P(\RR))$-set. To simplify notation, we write $\mathbb H_P$ for the set $\pi_0(\X) \times \Hom(\mathbb S_{\CC}, G_{P,\CC})$, which is equipped with the diagonal $G_P(\RR)$-action as in \S \ref{subsubsec:pure quotient} (where we take $\Gamma$ to be $\pi_0(\X)$). The $G_P(\RR)$-action on $\mathbb H_P$ extends to an $M_P(\RR)$-action in the obvious way (using the normality of $G_P$ in $M_P$). We have a natural map
 	\begin{align*}
\mathscr F_P: \mathscr Y_P  = \pi_0 (\X) \times \Hom(\mathbb S_{\CC}, P^{\Pink}_{\CC}) & \To \mathbb H_P =  \pi_0(\X) \times \Hom(\mathbb S_{\CC}, G_{P,\CC}) \\
([x], l ) & \longmapsto ([x], ( \mathbb S_{\CC} \xrightarrow{l} P^{\Pink}_{\CC} \to G_{P,\CC})),
 	\end{align*}
 	which is equivariant with respect to $P(\RR) \to M_P(\RR)$. 
 	 	
 	 	Let $(P,\Y) \in \RBC_P$. We denote by $\lambda_{\Y}$ the injective map $$ \pi_0(\Y) \xrightarrow{\gamma_{\Y}^{-1}} \pi_0(\X^{\Y}) \hookrightarrow \pi_0(\X),$$ where $\gamma_{\Y}$ is as in (\ref{eq:gamma_Y}). As in \S \ref{subsubsec:pure quotient}, $\lambda_\Y$ induces a map $\mathbb H_{\lambda_\Y} : \Y \to \mathbb H_P$, whose image is denoted by $\X_{\lambda_\Y}$. By Lemma \ref{lem:constr of pure quotient}, we may assume that $\X_\Y$ is equal to $\X_{\lambda_{\Y}}$, and that the map $F_\Y : \Y \to \X_\Y$ is equal to the map $\mathbb H_{\lambda_\Y}$. Then we have a commutative diagram 
 	 $$ \xymatrix{ \Y \ar@{^{(}->}[r] \ar[d]^{F_\Y} & \mathscr Y_P \ar[d]^{\mathscr F_P} \\ \X_{\Y} \ar@{^{(}->}[r]   & \mathbb H_P } $$ 
 	 
 For different elements $(P,\Y) \neq (P,\Y') \in \RBC_P$, the subsets $\X_\Y$ and $\X_{\Y'}$ of $\mathbb H_P$ are disjoint, because their projections in $\pi_0(\X)$ are the disjoint subsets $\pi_0(\X^{\Y})$ and $\pi_0(\X^{\Y'})$. Therefore we may identify $\X_P$ with the union of the $\X_\Y$'s inside $\mathbb H_P$. Under this identification, the map $\pi_P : \X \to \X_P$ is given by the composite map 
 $$ \X \xrightarrow{\omega _P} \mathscr Y_P \xrightarrow{\mathscr F_P} \mathbb H_P. $$ Since $\pi_P: \X \to \X_P$ is surjective, and since $\mathscr F_P \circ \omega_P : \X \to \mathbb H_P$ is equivariant with respect to $P(\RR) \to M_P(\RR)$, we see that $\X_P$ is an $M_P(\RR)$-stable subset of $\mathbb H_P$. We define the desired $M_P(\RR)$-action on $\X_P$ to be the one inherited from the $M_P(\RR)$-action on $\mathbb H_P$. Then $\pi_P$ is indeed equivariant with respect to $P(\RR) \to M_P(\RR)$.   
 
  \textbf{(2)} It suffices to observe that the $M_P(\RR)$-action on $\mathbb H_P$ factors through (\ref{eq:natural hom from M_P}), which is obvious.
  
  \textbf{(3)} Firstly, by \cite[4.7]{pink1989compactification}, the $G(\RR)$-action on $\X$ restricts to a transitive $P(\RR)$-action on $\X$. Since $\pi_P: \X \to \X_P$ is surjective, the $M_P(\RR)$-action on $\X_P$ is transitive. Secondly, the continuity of the $M_P(\RR)$-action on $\X_P$ follows from the continuity of the $P(\RR)$-action on $\X$, and the fact that the maps $\pi_P: \X \to \X_P$ and $P(\RR) \to M_P(\RR)$ are surjective submersions. Finally, the last statement in (3) follows from the surjectivity and $P(\QQ)$-equivariance of $\pi_P: \X \to \X_P$, where $P(\QQ)$ acts on $\X_P$ in the way described in \S \ref{subsubsec:taking pure quotient}.  
\end{proof}
 \begin{rem}\label{rem:pink}
 	In the above exposition, we started with the rational boundary components in the sense of \cite{pink1989compactification}, and used them to construct the $M_P(\RR)$-homogeneous space $\X_P$, the $P(\RR)$-equivariant map $\pi_P: \X \to \X_P$, and the $G(\QQ)$-set $\X^*$. This is the approach taken in \cite{pink1989compactification}. Alternatively, one could apply the classical (i.e.~non-adelic) formalism of rational boundary components in \cite{amrt} to each connected component of the Hermitian symmetric domain $\X$ in order to construct each connected component of $\X_P$ and each connected component of $\X^*$. One could then construct the whole $\X_P$ and $\X^*$ by taking suitable disjoint unions, and reconstruct the subsets $\X_{\Y} \subset \X_P$ as the $G_P(\RR)$-orbits in $\X_P$. This alternative approach is the point of view taken in \cite{pink1992ladic}. These two approaches are logically equivalent. Our usage of the notations $\X^*$ and $\X_P$ agrees with \cite[\S 3.6]{pink1992ladic} and \cite[\S 1.1]{morel2010book}.
 \end{rem}

\section{The group-theoretic setting}\label{global groups} In this section we fix the group-theoretic setting for our discussion of orthogonal Shimura varieties. 

\subsection{}
Let $(V,q)$ be a quadratic space over $\QQ$, of signature $(n,2)$. We always assume that $n \geq 3$. Let $d = \dim V = n+2$, and let $m = \floor{d/2}$. Let $G = \SO (V)$. Throughout the paper, we shall refer to ``\emph{the odd case}'' and ``\emph{the even case}'' according to the parity of $d$. 

Since $n \geq 3$, the maximal totally isotropic subspaces of $V_{\RR}$ are of dimension $2$. Throughout the paper we assume that the maximal totally isotropic subspaces of $V$ are also of dimension $2$. If $n \geq 5$, this assumption is automatic by Meyer's theorem (see \cite[\S IV.3.2 Cor.~2]{serrecourse}). We fix a flag \begin{align}
\label{flag} 0 \subset V_1 \subset V_2  \subset V_2^{\perp } \subset V_1^{\perp} \subset V,
\end{align}where $V_i$ is an $i$-dimensional totally isotropic $\QQ$-subspace of $V$.\index{$V_i,~i =1,2$} We set\index{$W_i,~i =1,2$} 
\begin{align*}
W_i   : = V_i^{\perp}/ V_i,
\end{align*}
for $i\in \set{1,2}$. Define \index{$P_1, P_2, P_{12}$}
\begin{align*} 
P_1 & : = \mathrm{Stab}_G (V_2) \subset G ,\\ P_2 & : = \mathrm{Stab}_G (V_1) \subset G, \\ P_{12} & : = P_1 \cap P_2 \subset G.
\end{align*} Then $P_{12}$ is a minimal parabolic subgroup of $G$, and $P_1$ and $P_2$ are the only proper parabolic subgroups of $G$ strictly containing $P_{12}$. If $S$ is a non-empty subset of $\set{1,2}$, we write $P_S$\index{$P_S$} for the one of $P_1$, $P_2$, and $P_{12}$ corresponding to $S$. 

\subsection{}
We fix once and for all a splitting of the flag (\ref{flag}). Then we obtain a Levi component $M_S$\index{$M_S$} of $P_S$ for each non-empty $S\subset \set{1,2}$. We have \index{$M_1, M_2, M_{12}$}
\begin{align} \nonumber
M_1 & \cong \GL(V_2) \times \SO ( W_2 )  , \\ \nonumber   M_2 & \cong \GL(V_1) \times \SO(W_1), \\ \label{eq:decomp M_{12}}  M_{12}  & \cong \GL(V_1) \times \GL(V_2 / V_1) \times \SO(W_2). 
\end{align}

In the sequel we call parabolic subgroups of $G$ containing $P_{12}$ \emph{standard}.\index[n]{standard parabolic subgroups of $G$} For each standard parabolic subgroup $P$, we denote by $M_P$ the Levi component of $P$ containing $M_{12}$, also called \emph{standard},\index[n]{standard Levi subgroups of $G$} and denote by $N_P$ the unipotent radical of $P$. Thus the standard proper parabolic subgroups are $P_1, P_2, P_{12}$, and for $P= P_S$ we have $M_P = M_S$. We also write $N_S$ for $N_{P_S}$. \index{$N_S$} 
\subsection{}\label{para:e_1}
We fix a basis $\set{e_1}$ of $V_1$ and a basis $\set{e_2}$ of $V_2/V_1$. By the fixed splitting of the flag (\ref{flag}), we can view $e_2$ as a vector in $V_2 \subset V$. Let $e_1' \in V / V_1^{\perp}  $ and $e_2' \in V_1^{\perp} / V_2^{\perp}$ be determined by the conditions $[e_i , e_i']  = 1, i =1,2. $\index{$e_1, e_2, e_1', e_2'$} We view $e_1',e_2'$ as vectors in $V$ as well. Under these choices we have identifications 
\begin{align*}
&\GL(V_i) \cong \GL_{i}, ~ i\in \set{1,2},& &\text{and} &  \GL(V_2/V_1) \cong \GL_1,&\end{align*} which we shall use freely in the sequel. In particular, the decomposition (\ref{eq:decomp M_{12}}) becomes $$ M_{12} \cong \Gm \times \Gm \times \SO(W_2).$$
We shall refer to the factor corresponding to $\GL(V_1) $ as \emph{the first $\Gm$}\index[n]{the first and the second $\GG_m$}, and refer to the factor corresponding to $\GL(V_2/V_1)$ as \emph{the second $\GG_m$}. 

\subsection{} Let $M$ be a standard proper Levi subgroup of $G$. We set \index{$M^{\GL}$} \index{$M^{\SO}$} \index{$M_h$} \index{$M_l$}
\begin{flalign*}
M^{\GL}  &  := \begin{cases}
\GL(V_2) ,   \\
\GL(V_1),  \\ 
\GL(V_1) \times \GL(V_2/V_1), 
\end{cases}  &  M^{\SO} &  := \begin{cases}
\SO(W_2), \\
\SO(W_1), \\
\SO(W_2),
\end{cases} \\ 
 M_h  & := \begin{cases}
\GL(V_2) ,  \\
\GL(V_1),   \\ 
\GL(V_1),  
\end{cases}  & 
M_l  &  := \begin{cases}
\SO(W_2),  \\
\SO(W_1), \\
\GL(V_2/V_1)\times \SO(W_2),
\end{cases} 
\end{flalign*}
where the three cases are when $M = M_1,M_2$, and $M_{12}$ respectively. (Here $h$ stands for ``hermitian'' and $l$ stands for ``linear''.) We have $$M = M^{\GL} \times M^{\SO} = M_h \times M_l. $$

\section{The orthogonal Shimura datum} \label{subsec:datum} 

\subsection{}\label{para:Hodge cochar}
Let $(V,q)$ and $G= \SO(V)$ be as in \S \ref{global groups}. In this paper we are concerned with the \emph{orthogonal Shimura datum} on $G$. In the following we recall its definition and some basic facts. More details can be found in \cite{MPspin}.

Consider the set $\X$ of oriented, negative definite, two-dimensional subspaces of $V_{\RR}$. Then $\X$ is a left homogeneous space under the natural action of $G(\RR)$. Moreover, $\X$ has two connected components, and the action of $\pi_0 (G(\RR)) = \ZZ/2\ZZ$ on $\pi_0(\X)$ is the  non-trivial one. 

Let $x \in \X$. For any $r e^{i\theta} \in \CC^{\times}$ (with $r \in \RR_{> 0}, \theta \in \RR$), we let $$\underline h(x) (re^{i\theta}) \in G(\RR)$$ be the element which acts on $V_{\RR} =x \oplus x^{\perp}$ as the rotation on $x$ by angle $-2\theta$ (according to the given orientation on $x$) and as the identity on $x^{\perp}$. The map $\underline h (x): \CC^\times \to G(\RR)$ comes from an $\RR$-algebraic group homomorphism $$h(x) : \mathbb S \To G_{\RR} .$$ Moreover, the association $x\mapsto h(x)$ is $G(\RR)$-equivariant and identifies $\X$ with a $G(\RR)$-conjugacy class of homomorphisms $\mathbb S \to G_{\RR}$. The tuple $(G, \X,h)$ is a pure Shimura datum, called the \emph{orthogonal Shimura datum}\index[n]{the orthogonal Shimura datum}. From now on we also denote this Shimura datum by $\mathbf O(V)$\index{$\mathbf O(V)$}. 
It is known that $\mathbf O(V)$ is of abelian type.\index[n]{abelian type} In fact, the pair $(\Gspin(V), \X)$ can be upgraded to  a Shimura datum of Hodge type\index[n]{$\mathrm{GSpin}$ Shimura datum}\index[n]{Hodge type}, and $\mathbf O(V)$ is the quotient of that by the central $\GG_m$ in $\Gspin(V)$. 

The Hodge cocharacter\index[n]{Hodge cocharacter} $\mu: \GG_m \to G$  of $\mathbf O(V)$ (well-defined up to $G(\overline \QQ)$-conjugacy) is given as follows. Choose an arbitrary hyperbolic basis $\mathbb B$ of $V_{\overline \QQ}$, and let $\iota_{\mathbb B}: \GG_m^m \hookrightarrow G_{\overline \QQ}$ be the embedding constructed in \S \ref{subsubsec:Borel pairs assoc to hd and nhd}. Let $\set{\epsilon_1^{\vee} , \cdots, \epsilon_m^{\vee}}$ be the standard basis of $X_*(\GG_m^m)$. Then $\mu$ is conjugate to $\iota_{\mathbb B} \circ \epsilon_1^{\vee}$. Moreover, it is possible to find a representative $\mu: \GG_m\to G$ defined over $\QQ$. In fact, we may assume that the first and the last vectors in $\mathbb B$ are respectively $e_1$ and $e_1'$. Then $\iota_{\mathbb B} \circ \epsilon_1^{\vee}$ is defined over $\QQ$. Consequently, the reflex field of $\mathbf O(V)$ is $\QQ$. 

Next we determine some explicit information about the rational boundary components of $\mathbf O(V)$. We follow the notation in \S \ref{subsec:boundary}. In the present case the set $\admpar$ consists of $G$ and the $G(\QQ)$-conjugates of $P_1$ and $P_2$.  
\begin{prop}\label{prop:rational bdry comp} The following statements hold. 
	\begin{enumerate}
		\item For each $P \in \admpar$, the set $\RBC_P(\mathbf O(V))$ is a singleton. 
		\item For $i =1,2$, we have $P_{i}^{\Pink} = M_{i,h} N_i$. In particular, $G_{P_i} = P_i^{\Pink}/N_i$ is naturally identified with $M_{i,h}$. 
		\item For $i =1,2$, under the identification $M_{i,h} \cong \GL_{3-i}$, the Shimura datum $(M_{i,h}, \X_{P_i})$ is identified with the Siegel Shimura datum\index[n]{Siegel Shimura datum} $(\GL_{3-i}, \mathcal H_{2(2-i)})$\index{$\mathcal H_{2g}$} (see \cite[2.7, 2.8]{pink1989compactification}).
		\item The action of the subgroup $M_{1,l}(\RR) \subset M_1(\RR)$ on $\X_{P_1}$ is trivial.
		\item The groups  $\pi_0(M_{2,h}(\RR))$, $\pi_0(M_{2,l}(\RR))$, and $\pi_0(G(\RR))$ are all isomorphic to $\ZZ/2\ZZ$. The map $$\pi_0(M_2(\RR)) \cong \pi_0(M_{2,h}(\RR)) \times \pi_0(M_{2,l}(\RR)) \To \pi_0(G(\RR))$$ induced by the inclusion $M_2(\RR) \hookrightarrow G(\RR)$ is given by 
		\begin{align*}
\ZZ/2\ZZ \times \ZZ/2\ZZ & \To \ZZ/2\ZZ  \\
(a,b) & \longmapsto a+b.
		\end{align*}
		The action of $M_2(\RR)$  on $\X_{P_2}$ as in Proposition \ref{prop:action} is given by the composite map $M_2(\RR) \to \pi_0(M_2(\RR)) \to \pi_0(G(\RR))$ followed by the unique non-trivial action of $\pi_0(G(\RR)) \cong \ZZ/2\ZZ$ on the two-element set $\X_{P_2} = \mathcal H_0$. 
	\end{enumerate}
\end{prop}
\begin{proof} Statements (1) (2) (3) follow from  \cite[Prop.~2.4.5]{hoermannbook}. To show (4), note that $M_{1,l}(\RR) \cong \SO(n-2,0)(\RR)$ is connected, and that it commutes with $G_{P_1} = M_{1,h}$. The statement then follows from Proposition \ref{prop:action} (2). We now show (5). We have 
	\begin{align*}
M_{2,h}(\RR) & \cong \RR^{\times},&   M_{2,l}(\RR) &\cong \SO(n-1,1)(\RR),&  G(\RR) &\cong \SO(n,2)(\RR). 
	\end{align*} The first two statements in (5) follow from the standard description of the connected components of special orthogonal groups; see for instance \cite[I.17]{knappbeyond}. The third statement follows from the fact that the map $\pi_{P_2}: \X \to \X_{P_2}$ is $P_2(\RR)$-equivariant and induces a bijection $\pi_0(\X) \isom \pi_0(\X_{P_2}) =\X_{P_2}$; see \S \ref{subsubsec:taking pure quotient} and Proposition \ref{prop:action}.\end{proof}

 \section{Shimura varieties}\label{subsec:SV}
From now on until the end of \S \ref{Section global}, we let $\mathbf{O}(V) = (G,\X, h)$ be the orthogonal Shimura datum fixed in \S \ref{subsec:datum}. Let $K$ be a neat\index[n]{neat} compact open subgroup of $G(\adele_f)$. (See \cite[0.6]{pink1989compactification} for the meaning of ``neat''.) As usual we define 
  $$ \Sh_K (\mathbf{O}(V))(\CC) :  = G(\QQ) \backslash \X \times G(\adele_f)/K.$$ This is the set of $\CC$-points of the \emph{canonical model}\index[n]{canonical model} $\Sh_K (\mathbf{O}(V)) $\index{$\Sh_K (\mathbf{O}(V))$}, which is a smooth quasi-projective variety of dimension $n =d-2$ over the reflex field $\QQ$. As $\mathbf O(V)$ is of abelian type, the existence of the canonical model follows from \cite{deligne1979varietes}. We write $\Sh_K$ for $\Sh_K (\mathbf{O}(V))$. 
  
  Let $K_1$ and $K_2$ be neat compact open subgroups of $G(\adele_f)$, and let $g$ be an element of $G(\adele_f)$ such that $K_1 \subset g K_2 g^{-1}$. We have a
  finite \'etale $\QQ$-morphism \index{$[\cdot g]_{K_1, K_2}$} $$[\cdot g]_{K_1, K_2}: \Sh_{K_1} \To \Sh_{K_2} $$ called a \emph{Hecke operator}\index[n]{Hecke operator}. On $\CC$-points, it is induced by \begin{align*}
  	 \X \times G(\adele_f) & \To \X \times G(\adele_f) \\  (x,k) &\longmapsto (x, kg). &
   \end{align*}
   When the context is clear we will simply write $[\cdot g]$ for $[\cdot g]_{K_1, K_2}$.
   
We recall the following facts proved in \cite[12.3]{pink1989compactification}. For any neat compact open subgroup $K \subset G(\adele_f)$, the Shimura variety $\Sh_K$ has the canonical Baily--Borel compactification \index[n]{Baily--Borel compactification} 
 $$ j : \Sh_{K} \To \overline{\Sh_K},$$
 where $\overline{\Sh_K}$\index{$\overline{\Sh_K}$} is a normal projective variety over $\QQ$, and $j$ is a dense open embedding defined over $\QQ$. At the level of $\CC$-points, we have $$\overline{\Sh_K} (\CC) = G(\QQ) \backslash \X^* \times G(\adele_f) /K, $$ where $\X^*$ is the $G(\QQ)$-set defined in (\ref{eq:X^*}), and $j$ is induced by $\omega_G: \X \isom \X_G \hookrightarrow \X^*$. For $K_1, K_2,$ and $g$ as in the last paragraph, the morphism $[\cdot g]: \Sh_{K_1} \to \Sh_{K_2}$ uniquely extends to a finite $\QQ$-morphism $\overline{[\cdot g]} = \overline{[\cdot g]}_{K_1,K_2}: \overline{\Sh_{K_1}} \to \overline {\Sh_{K_2}}$.
 \section{Automorphic \texorpdfstring{$\lambda$}{lambda}-adic sheaves}\label{subsec:automorphic sheaves}
  \subsection{}\label{subsubsec:V and lambda}
   Let $\mathbb V$\index{$\mathbb V$} be a finite-dimensional vector space over a number field $\mathbb E$\index{$\mathbb E$} equipped with a $G$-representation, i.e., an $\mathbb E$-algebraic group homomorphism $G_{\mathbb E} \to \GL (\mathbb V)$. Let $\lambda$ be a finite place of $\mathbb E$.\index{$\lambda$ (a finite place of $\mathbb E$)} Then by a well-known construction, for any neat compact open subgroup $K \subset G(\adele_f)$ there is an $\mathbb E_{\lambda}$-sheaf on $\Sh_{K}$ associated to $\mathbb V$, which we denote by $\mathcal F^{K} \mathbb V$.\index{$\mathcal F^{K} \mathbb V$} Moreover, for each Hecke operator $[\cdot g]: \Sh_{K_1} \to \Sh_{K_2}$ (with $K_1, K_2$ neat), there is a canonical isomorphism \index{$\mathcal F_{[\cdot g]}$}
   \begin{align}\label{eq:canonical coh corr}
\mathcal F_{[\cdot g] }: \mathcal F^{K_1} \mathbb V \isom [\cdot g]^* \mathcal F ^{K_2} \mathbb V.
   \end{align}
We refer the reader to \cite[\S 5.1]{pink1992ladic} and \cite[\S 1.5]{KSZ} for more details. 

Let $\ell$ be the rational prime below $\lambda$, and fix a $\QQ_{\ell}$-algebra embedding $\mathbb E_{\lambda} \hookrightarrow \overline \QQ_{\ell}$. Let $K$ be as above. We view the $\mathbb E_{\lambda}$-sheaf $\mathcal F^K \mathbb V$ as a $\overline \QQ_{\ell}$-sheaf and keep the same notation. We have the intersection complex \index{$\IC^{ K}  \mathbb V$}
  	$$ \IC ^{ K} \mathbb V: =  \bigg( j_{!*} \big((\mathcal F^{ K} \mathbb V) [n] \big ) \bigg)  [-n] \in D^b_c(\overline {\Sh_K}, \overline \QQ_{\ell}). $$ Here $j$ is the open embedding $\Sh _K \hookrightarrow \overline{ \Sh_K}$, and remember that $n = \dim \Sh_K$. 
  	
  \subsection{} We have analogues of the canonical isomorphisms (\ref{eq:canonical coh corr}) for the intersection complexes, which we now explain. Consider a Hecke operator $[\cdot g]: \Sh_{K_1} \to \Sh_{K_2}$ and its  extension\index{$\overline{[\cdot g]} $} $\overline{[\cdot g]} : \overline{\Sh_{K_1}} \to \overline{\Sh _{K_2}}.$
   To ease notation we write $g$ for $[\cdot g]$ and write $\bar g$ for $\overline{[\cdot g]}$. For $i =1,2$, we write $\mathcal F_i$ and $\IC_i$ for $\mathcal F^{K_i} \mathbb V$ and $\IC^{K_i}\mathbb V$ respectively, and write $j_i$ for the open embedding $\Sh_{K_i} \to \overline{\Sh_{K_i}}.$ 
   
    For any $\mathscr F \in D^b_c({\Sh_{K_2}}, \overline \QQ_{\ell})$, we have the commutative diagram 
  $$ \xymatrix{ \bar g^* Rj_{2,!} \mathscr F  \ar[d]\ar[r] & Rj_{1,!}  g^* \mathscr F \ar[d]  \\ \bar g^* Rj_{2,*} \mathscr F \ar[r] &  Rj_{1,*}  g^*  \mathscr F } $$
  where the horizontal maps are the base change maps, and the vertical maps are induced by the natural maps $Rj_{i, !} (\cdot )\to Rj _{i,*} (\cdot) , ~ i = 1,2$. Since $\bar g$ is finite (see \S \ref{subsec:SV}), $\bar g^*$ is exact with respect to the (middle-perversity) perverse t-structures. Therefore the above commutative diagram induces  
  a natural map
\begin{align}\label{eq:first direction between middle extensions}
\bar g^* j_{2, !*}  \mathscr F \To j_{1,!*} g^* \mathscr F .
\end{align}
Taking $\mathscr F$ to be $\mathcal F_2[n]$, we obtain a map 
  $$ \bar g^* j_{2, !*}  (\mathcal F_2[n]) \To j_{1,!*} g^*(\mathcal F_2[n]). $$ The composition of the above map followed by $j_{1, !*} (\mathcal F_{[\cdot g]}^{-1})$ gives a map
$$ \bar g^* j_{2, !*}  (\mathcal F_2[n]) \To  j_{1, !*} (\mathcal F_1 [n]) .$$ Shifting by $[-n]$ we obtain a map 
\begin{align}\label{map1}
\bar g ^* \IC_2 \To  \IC _1.
\end{align}

Similarly, using the base co-change maps (see \cite[XVIII]{SGA4-t3}) 
\begin{align*}
Rj _{1,!} g^! \mathscr F & \To\bar g^! Rj _{2,!}  \mathscr F, \\  Rj_{1,*} g^! \mathscr F ,& \To \bar g^! Rj_{2, *} \mathscr F ,
\end{align*}
we obtain a map
\begin{align}\label{eq:second direction of middle extension}
j_{1,!*} g^! \mathscr F  \To  \bar g^! j_{2, !*} \mathscr F 
\end{align} as a 
counterpart of (\ref{eq:first direction between middle extensions}). Note that because $g$ is finite \'etale (see \S \ref{subsec:SV}), we have $ g^! = g^*$. Again, taking $\mathscr F$ to be $ \mathcal F_2 [n]$ in (\ref{eq:second direction of middle extension}), pre-composing with $j_{1, !*} (\mathcal F_{[\cdot g]}) $, and shifting by $[-n] $, we obtain a map 
\begin{align}\label{map2}
\IC_1 \To \bar g^!  \IC _2. 
\end{align}
  
 Now for Hecke operators $ [\cdot g_1] _{K' , K_1}$ and $ [\cdot g_2] _{K', K_2}$, we obtain a canonical cohomological correspondence \index{$\mathscr H_{g_1, g_2 , K_1, K_2, K'}$}
  \begin{align}\label{cohomological Hecke correspondence}
\mathscr H_{g_1, g_2, K_1, K_2, K'} : ~ \bar g_1 ^* \IC^{K_1} \mathbb V \To  \bar g_2 ^! \IC^{K_2} \mathbb V
  \end{align} by composing (\ref{map1}) for $g= g_1$ with (\ref{map2}) for $ g= g_2$. 
\section{Intersection cohomology and Morel's formula}\label{subsec:intersection coh}
\subsection{} Keep the setting of \S \ref{subsubsec:V and lambda}. Let $K$ be a neat compact open subgroup of $G(\adele_f)$. Define  \index{$\mathbf{IH}^*(\overline{\Sh_K},\mathbb V)$}\index{$\mathbf{H}^*_c(\Sh_K ,\mathbb V)$}
 \begin{align*}
\icoh^* (\overline{\Sh_K} ,\mathbb V) : = \coh ^*_{\et} (\overline{\Sh_K} \otimes_{\QQ} \overline \QQ, ~ \IC^K  \mathbb V),\\
\coh^*_c (\Sh_K ,\mathbb V) : = \coh ^*_{\et, c} ({\Sh_K} \otimes_{\QQ} \overline \QQ, ~ \mathcal F^K  \mathbb V),
 \end{align*} which we view as graded $\overline \QQ_{\ell}$-vector spaces. 
  We denote by $\mathcal H (G(\adele_f)\sslash K)_{\QQ}$\index{$\mathcal H(G(\adele_f)\sslash K)_{\QQ}$} the Hecke algebra\index[n]{Hecke algebra (adelic)} of $\QQ$-valued smooth compactly supported $K$-bi-invariant distributions on $G(\adele_f)$. If we choose a Haar measure $dg^{\infty}$ on $G(\adele_f)$ that gives rational volumes to compact open subgroups, then each element of $\mathcal H (G(\adele_f)\sslash K)_{\QQ}$ can be uniquely written as $f^{\infty} dg^{\infty}$, where $f^{\infty}$ is a smooth compactly supported $K$-bi-invariant function $G(\adele_f) \to \QQ$. We have commuting actions of $\Gal(\overline \QQ/ \QQ)$ and $\mathcal H(G(\adele_f) \sslash K)_{\QQ}$ on $\icoh^* ( \overline{\Sh_K } , \mathbb V)$ and $\coh^*_c(\Sh_K, \mathbb V)$. Here the $\mathcal H(G(\adele_f) \sslash K)_{\QQ}$-action on $\icoh^*(\overline{\Sh_K}, \mathbb V)$ is characterized as follows. For any $g\in G(\adele_f)$, the element $$1_{KgK}\cdot \vol_{dg^{\infty}} (K)^{-1} dg^{\infty} \in \mathcal H(G(\adele_f) \sslash K)_{\QQ} $$  depends only on $g$ and not on the choice of $dg^{\infty}$. We require that this element acts on $\icoh^* (\overline{\Sh_K},\mathbb V)$ via the endomorphism induced by the cohomological correspondence 
  	$$\mathscr H_{g, 1, K, K , gKg^{-1} \cap K} : ~ \bar g ^* \IC^{K} \mathbb V \To \bar 1 ^{!} \IC^{K} \mathbb V, $$ where the notation is as in (\ref{cohomological Hecke correspondence}). By linearity, this determines the $\mathcal H(G(\adele_f) \sslash K)_{\QQ}$-action on $\icoh^*(\overline{\Sh_K}, \mathbb V)$. The $\mathcal H(G(\adele_f) \sslash K)_{\QQ}$-action on $\coh^*_c(\Sh_K, \mathbb V)$ is characterized similarly.
  	
  	If $p$ is a prime and $K^p$ is a compact open subgroup of $G(\adele_f^p)$, we denote by $\mathcal H (G(\adele_f^p)\sslash K^p)_{\QQ}$\index{$\mathcal H(G(\adele_f^p)\sslash K^p)_{\QQ}$} the Hecke algebra of $\QQ$-valued smooth compactly supported $K^p$-bi-invariant distributions on $G(\adele_f^p)$. Similarly as before, its elements can be written as $f^{p,\infty} dg^{p,\infty}$, where $f^{p,\infty}$ is a function $G(\adele_f^p) \to \QQ$ and $dg^{p,\infty}$ is a Haar measure on $G(\adele_f^p)$ giving rational volumes to compact open subgroups.  
 \begin{defn}\label{defn:hyperspecial pr}
 	Let $K$ be a compact open subgroup of $G(\adele_f)$ and let $p$ be a prime number. 
 	\begin{enumerate}
 		\item We say that $p$ is a \emph{hyperspecial prime}\index[n]{hyperspecial prime} for $K$, if we have $K= K^pK_p$, with $K_p$ a hyperspecial subgroup of $G(\QQ_p)$, and $K^p $ a compact open subgroup of $ G(\adele_f^p)$. 
 		\item Let $f^{\infty}dg^\infty \in \mathcal H(G(\adele_f)\sslash K)_{\QQ}$. We say that $p$ is an \emph{unramified prime} \index[n]{unramified prime (for an element of the Hecke algebra)} for $f^{\infty}dg^{\infty}$, if $p$ is hyperspecial for $K$, and we have $f^{\infty} dg^{\infty}= f^{p,\infty} dg^{p,\infty} 1_{K_p} dg_p$, where $f^{p,\infty} dg ^{p,\infty}$ is an element of $\mathcal H (G(\adele_f ^p) \sslash K^p)_{\QQ}$, $1_{K_p}: G(\QQ_p) \to \QQ$ is the characteristic function of $K_p$, and $dg_p$ is a Haar measure on $G(\QQ_p)$ giving rational volumes to compact open subgroups. 
 	\end{enumerate}
 \end{defn}

\subsection{}\label{subsubsec:setting for Morel's formula}
 Fix a neat compact open subgroup $K$ of $G(\adele_f)$, and fix $f^{\infty} dg^{\infty} \in \mathcal H(G(\adele_f)\sslash K)_{\QQ}$. Let $\Sigma_0$\index{$\Sigma_0$} be the finite set consisting of the prime $\ell$, the primes not hyperspecial for $K$, and the primes not unramified for $f^{\infty}dg^{\infty}$. For each prime $p\notin \Sigma_0$, we write $K = K^pK_p$ and $f^{\infty} dg^{\infty} = f^{p,\infty}dg^{p,\infty}1_{K_p} dg_p$ as in Definition \ref{defn:hyperspecial pr}. Without loss of generality, we may and shall assume that $\vol_{dg_p } (K_p) = 1$ by rescaling $f^{p,\infty}dg^{p,\infty}$. 
 
 Recall from \S \ref{subsubsec:V and lambda} that we have fixed an embedding $\mathbb E_{\lambda} \hookrightarrow \overline \QQ_{\ell}$. We now also fix a field embedding $\mathbb E \hookrightarrow \CC$. For any endomorphism $u$ of the graded $\overline \QQ_{\ell}$-vector space $\icoh^*(\overline{\Sh_K}, \mathbb V)$, we write $\Tr(u \mid  \icoh^*(\overline{\Sh_K}, \mathbb V) )$ for the alternating sum $$\sum_k (-1)^k  \Tr(u \mid  \icoh^k(\overline{\Sh_K}, \mathbb V) ) \in \overline \QQ_{\ell}. $$ (The sum is finite, since the terms are zero unless $0 \leq k\leq 2\dim \Sh_K$.) The same convention is applied to $\coh^*_c({\Sh_K}, \mathbb V)$.
 \begin{thm}[Morel's formula]\label{geometric assertion}
 In the setting of \S \ref{subsubsec:setting for Morel's formula}, there exists a finite set of prime numbers $\Sigma = \Sigma(\mathbf{O} (V), \mathbb V, \lambda, K,f^{\infty})$\index{$\Sigma(\mathbf{O} (V), \mathbb V, \lambda, K,f^{\infty})$} containing $\Sigma_0$ such that  the following statements hold for all primes $p \notin \Sigma$.
 	  \begin{enumerate}
 	  	\item The actions of $\Gal(\overline \QQ/ \QQ) $ on $\icoh^*(\overline{\Sh_K}, \mathbb V)$ and on $\coh^*_c (\Sh_K,\mathbb V)$ are both unramified at $p$.
 	  	\item Let $\Frob_p \in \Gal(\overline \QQ/ \QQ)$ be a geometric Frobenius at $p$.\index{$\Frob_p$} There exists a positive integer $a_0 = a_0 (\mathbf{O} (V), \mathbb V, \lambda, K, f^{\infty},p)$ such that for all integers $a \geq a_0$ we have
 	    \begin{multline}\label{eq:in geometric assertion}
\Tr(\Frob_p^a \times f^{\infty}dg^{\infty} \mid  \icoh^* (\overline{ \Sh _K}, \mathbb V))  \\ = \Tr(\Frob_p^a \times f^{\infty}  dg ^{\infty} \mid   \coh_c^* ( \Sh _K,  \mathbb V))  +  \sum_M \Tr _{M} (f^ {p, \infty} dg^{p,\infty} , K, a ).
 	  \end{multline} Here in the summation $M$ runs through the standard proper Levi subgroups of $G$, and $\Tr _{M} (f^ {p, \infty} dg^{p,\infty} , K, a )$\index{$\Tr_M(f^{p,\infty} dg^{p,\infty}, K, a)$} will be given in Definition \ref{Defn Tr12} below (which depends on the embedding $\mathbb E \hookrightarrow \CC$). The two sides of (\ref{eq:in geometric assertion}) are \emph{a priori} numbers in $\overline \QQ_{\ell}$ and $\CC$ respectively, but they actually both lie in $\mathbb E$.  
 	  \end{enumerate}
 \end{thm} 
 
 The proof of Theorem \ref{geometric assertion} will be given in \S \ref{sec:Proof of Morel's formula}.
 
 \begin{rem} We expect that Theorem \ref{geometric assertion} should in fact hold for $\Sigma=\Sigma_0$. The proof of this would require a robust theory of integral models of the Baily--Borel compactification and the toroidal compactifications of $\Sh_K$ at all hyperspecial primes, which is currently unavailable. See \S \ref{subsec:background} below for a more detailed discussion.
 \end{rem}

\chapter{Definition of the terms in Morel's formula}
\label{Section Geometirc}
In this chapter we define the terms $\Tr _{M} (f^ {p, \infty} dg^{p,\infty} , K, a )$ in Theorem \ref{geometric assertion}. We keep the setting in \S \ref{global groups}--\S \ref{subsec:intersection coh}. In particular, we fix $\mathbb E \hookrightarrow \CC$ as in \S \ref{subsubsec:setting for Morel's formula}.

\section{Truncated Lie algebra cohomology}
\begin{defn}\label{defn:varpi_i}
	For $i\in \set{1,2}$, let $\varpi_i: \GG_m \to M_{i,h}$\index{$\varpi_i$} be the weight cocharacter of the Shimura datum $(M_{i,h} , \X_{P_i})$, and let $t_i = \dim \X_{P_i} - \dim \X$.\index{$t_i$} (Here $\dim$ means the complex dimension.)
\end{defn}
\begin{lem}\label{lem:determining varpi and t}
The following statements hold.\begin{enumerate}
	\item The cocharacter $\varpi_1$ of $M_{1,h} = \GL(V_2) \cong \GL_2$ is given by $z \mapsto \diag(z,z)$.
	\item The cocharacter $\varpi_2$ of $M_{2,h} = \GL(V_1) \cong \GG_m$ is given by $z \mapsto z^2$.
	\item We have $t_1 = 3-d$, and $t_2 = 2-d$. 
\end{enumerate}
\end{lem}
\begin{proof}
	By Proposition \ref{prop:rational bdry comp}, we have $(M_{i,h} , \X_{P_i}) \cong (\GL_{2-i}, \mathcal H_{2(2-i)})$. The statements about $\varpi_1$ and $\varpi_2$ are clear. To determine $t_1$ and $t_2$, we use that $\dim \X = n = d-2$, $\dim \X_{P_1}=1$, and $\dim \X_{P_2}= 0$.
\end{proof}
\subsection{}
Let $S$ be a non-empty subset of $\set{1,2}$.  By Kostant's theorem\index[n]{Kostant's theorem} \cite{kostant} (cf.~ \cite[\S 11]{GHM} or \S \ref{subsec:Kostant's thm} below), the Lie algebra cohomology \index[n]{Lie algebra cohomology}
$$ \coh^k (\Lie (N_S)_{\CC} , \mathbb V \otimes_{\mathbb E} \CC)$$ is a finite-dimensional algebraic representation of $M_S(\CC)$, and is non-zero only for finitely many non-negative integers $k$. For $i \in S$, since we have $M_i= M_{i,h} \times M_{i,l}$ and since $\varpi_i$ is a central cocharacter of $M_{i,h}$ defined over $\QQ$, we know that $\varpi_i$ is a cocharacter of the split component $A_{M_i}$ of $M_i$, and \textit{a fortiori} a cocharacter of the split component $A_{M_S}$ of $M_S$.
\begin{defn}\label{defn:RGamma}
Let $S$ be a non-empty subset of $\set{1,2}$. We write\index{$\mathbf{H}^k (\Lie (N_S)_{\CC} , \mathbb V \otimes_{\mathbb E} \CC)_{>t_S}$} $$ \coh^k (\Lie (N_S)_{\CC} , \mathbb V \otimes_{\mathbb E} \CC)_{>t_S}$$ for the maximal $M_S(\CC)$-sub-representation of $ \coh^k (\Lie (N_S)_{\CC} , \mathbb V \otimes_{\mathbb E} \CC)$ on which $\varpi_i$ has weights strictly greater than $t_i$ for each $i \in S$. (Here we say that a $\GG_m$-representation has weights greater than a number $t$ if all the  appearing characters $z \mapsto z^k$ satisfy $k>t$.) We define the virtual $M_{S}(\CC)$-representation: \index{$R\Gamma (\Lie N_S, \mathbb V) _{>t_S}$}
$$ R\Gamma (\Lie N_S , \mathbb V) _{>t_S} : = \sum_{k\geq 0 } (-1)^k \coh^k (\Lie (N_S)_{\CC}, \mathbb V\otimes_{\mathbb E} \CC)_{>t_S}.$$ When $P = P_S$ is fixed in the context, we also replace the symbol ``$>t_S$'' by ``$>t_P$''.
\end{defn}

\section{The Kostant--Weyl term \texorpdfstring{$L_{M}$}{LM}}
\label{subsec:KW 12}
In this section, let $M$ be a standard proper Levi subgroup of $G$, i.e., $M \in \set{M_1, M_2, M_{12}}$. 
\begin{defn}\label{defn: P(M)}  Let $\mathcal P(M)$\index{$\mathcal P(M)$} be the set of pairs $(P,g)$, where $P$ is a standard proper parabolic subgroup of $G$, and $g$ is an element of $G(\QQ)$, satisfying the following conditions. 
	\begin{enumerate} 
		\item We have $M_h = M_{P,h}$, and $M_l$ is a Levi subgroup of $M_{P,l}$. In particular, $M \subset M_P$. 
		\item The element $g$ centralizes $M_h \subset G$, and normalizes $M_l \subset G$. In particular, $g$ normalizes $M \subset G$.   
	\end{enumerate}
Let $\sim$ be the equivalence relation on $\mathcal P(M)$\index{$\sim$ on $\mathcal P(M)$} such that $(P,g)\sim (P',g') $ if and only if $P= P'$ and $g\in M_{P}(\QQ)g'M(\QQ)$. (Here $M_P$ is the standard Levi component of $P$, which may not be the same as $M$.) For any standard proper parabolic subgroup $Q$ of $G$, let\index{$\mathcal P (M, Q)$} $$ \mathcal P (M, Q): = \set{(P,g) \in \mathcal P(M) \mid  P = Q } \subset \mathcal P(M).$$ 
\end{defn}
\begin{defn}\label{defn:refined L_M}
Set  \index{$\mathtt m_M$} $\mathtt m_M$ to be $1$ if $M=M_1$, and $2$ if $M \in \set{M_2, M_{12}}$. For $\gamma \in M(\RR)$ and $(P,g) \in \mathcal P(M)$, define the complex number \index{$L_{M,P,g} (\gamma)$} 	\begin{multline*}
L_{M,P,g} (\gamma):= \mathtt m_M   (-1)^{\dim A_{M}/ A_{M_P}}  (n^{M_P} _{M})^{-1}  \abs{D^{M_P} _{M} (g \gamma g^{-1})}^{1/2}_{\RR}   \\    \cdot  \delta_{P(\RR)} (g \gamma g^{-1})^{1/2}  \Tr(g\gamma g^{-1} \mid  R\Gamma (\Lie N_P , \mathbb V) _{>t_P} ).
\end{multline*} 
Here the terms $n^{M_P} _{M}$, $D^{M_P} _{M}(\cdot)$, $\delta_{P(\RR)}(\cdot)$ are all defined in \S \ref{subsec:general defns}, and $R\Gamma(\Lie N_P, \mathbb V)_{>t_P}$ is as in Definition \ref{defn:RGamma}. 
\end{defn}

It is easy to see that $L_{M,P,g}(\gamma)$ depends on $(P,g)$ only via the $\sim$-equivalence class of $(P,g)$. We use this fact in the next definition. 

	\begin{defn}\label{Defn L_M 12 odd}  For $\gamma \in M(\RR)$, define the \emph{Kostant--Weyl term}\index{$L_M(\gamma)$}\index[n]{Kostant--Weyl term}
	\begin{align}\label{eq:KW}
	L_{M} (\gamma):= \sum_{ (P,g) \in \mathcal P (M) /{\sim}} \abs{\mathcal P(M,P)/{\sim}}^{-1} L_{M,P,g}(\gamma) \in \CC. 
	\end{align} 
\end{defn}

\begin{prop}\label{prop:defn of L_M for M_1 M_2}
	Let $i =1$ or $2$. Then every element of $\mathcal P(M_i)$ is $\sim$-equivalent to $(P_i,1)$. In particular, for $\gamma \in M_i(\RR)$ we have 
	$$L_{M_i}(\gamma) = \mathtt m_{M_i} \delta_{P_i(\RR)} (\gamma)^{1/2} \Tr(\gamma \mid R\Gamma(\Lie N_i, \mathbb V)_{>t_i}).$$
 \end{prop}
\begin{proof} It is clear that $(P_i,1) \in \mathcal P(M_i)$. Let $(P,g) \in \mathcal P(M_i)$. By condition (1) in Definition \ref{defn: P(M)}, we have $P = P_i$. Since $M_{i,h}$ contains $A_{M_i}$, the centralizer of $M_{i,h}$ in $G$ is contained in $M_i$. Hence by condition (2) in Definition \ref{defn: P(M)}, we have $g \in M_i(\QQ)$. It then follows that $(P,g) \sim (P_i, 1)$. 
\end{proof}

\subsection{}

Next we give an explicit description of the set $\mathcal P(M_{12})/{\sim}$. Recall from \S \ref{global groups} that we have identified $V$ with the orthogonal direct sum of $\mathrm{span}_{\QQ}\set{e_1, e_1'}$ and $W_1$, and identified $W_1$ with the orthogonal direct sum of $\mathrm{span}_{\QQ} \set{e_2, e_2'}$ and $W_2$. Also recall that $M_{2,l} = \SO(W_1) \subset M_2 = \GL(V_1) \times \SO(W_1)$. 
 \begin{defn}\label{defn:n12}
	Let $M_{2,l}(\QQ)^{\sharp}$\index{$M_{2,l}(\QQ)^{\sharp}$} be the set consisting of $g \in M_{2,l}(\QQ)$ satisfying the following conditions:
	\begin{enumerate}
		\item $g(e_2) = e_2',~ g(e_2') = e_2$. 
		\item $g$ stabilizes $W_2$, and $g|_{W_2}$ is an element of $\mathrm{O}(W_2)(\QQ)$ with determinant $-1$. 
	\end{enumerate}
\end{defn}
\begin{rem}
	Since $\dim W_2 = n-2 \geq 1$, the group $\mathrm{O}(W_2)(\QQ)$ indeed contains elements with determinant $-1$. It is then  clear that $M_{2,l}(\QQ)^{\sharp} \neq \emptyset$.  
\end{rem}
\begin{prop}\label{prop:to simplify LM12} The set $\mathcal P(M_{12}, P_1)$ is empty. Every element of  $\mathcal P(M_{12}, P_2)$ is $\sim$-equivalent to $(P_{2}, 1)$. The set $\mathcal P(M_{12},P_{12})$ is the union of exactly two $\sim$-equivalence classes, and they are represented by $(P_{12}, 1)$ and $(P_{12}, g_0)$, where $g_0$ is any element of $M_{2,l}(\QQ)^{\sharp}$. 
\end{prop}	
\begin{proof}
	Since $M_{12,l}$ is not contained in $M_{1, l}$, we have $\mathcal P(M_{12},P_1) = \emptyset$. 
Since $M_{12,h} = \GL(V_1) = A_{M_2}$, by condition (2) in Definition \ref{defn: P(M)} we know that any $(P,g) \in \mathcal P(M_{12})$ must satisfy $g \in \Nor_{M_2} (M_{12,l})(\QQ)$. Conversely, for any $g \in \Nor_{M_2} (M_{12,l})(\QQ)$, we have $(P_2,g),(P_{12},g) \in \mathcal P(M_{12})$. The statement about $\mathcal P(M_{12},P_2) $ immediately follows. 

To show the last statement about $\mathcal P(M_{12},P_{12})$, we know from the above discussion that we have a surjection 
\begin{align*}
\Nor_{M_2} (M_{12,l})(\QQ) & \To \mathcal P(M_{12}, P_{12})/{\sim }\\ 
g & \longmapsto (P_{12}, g). 
\end{align*} This surjection restricts to a surjection 
\begin{align*}
\Nor_{M_{2,l}} (M_{12,l})(\QQ) \To \mathcal P(M_{12}, P_{12})/{\sim} ,
\end{align*}
which induces a bijection (see Definition \ref{defn:n^G_M} for the notation)
  $$ \cW^{M_{2,l}}_{M_{12,l}} \isom \mathcal P(M_{12},P_{12}) / {\sim}. $$
  Now note that $\GL(V_2/V_1) \cong \GG_m$ is the split component of $M_{12,l}$. As in Remark \ref{rem:Weyl group of Levi}, we have an injective homomorphism 
  \begin{align*}
 \cW^{M_{2,l}}_{M_{12,l}} & \To \Aut(\GL(V_2/V_1)) \cong \ZZ/2\ZZ \\
 g & \longmapsto \Int(g)|_{\GL(V_2/V_1)}.
  \end{align*}The desired statement follows from the fact that for all $g_0 \in M_{2,l}(\QQ)^{\sharp}$, we have $g_0 \in \Nor_{M_{2,l}}(M_{12,l})(\QQ)$, and $\Int(g_0)|_{\GL(V_2/V_1)}$ is non-trivial. 
\end{proof}

\section{Definitions related to Kottwitz's fixed point formula}\label{subsec: defn in Kott}
\subsection{}
Let $M_h$ be the reductive group $\GL_i$ over $\QQ$, where $i=1$ or $2$. We equip $M_h$ with the Siegel Shimura datum $\mathcal H_{2(2-i)}$ (see \cite[2.7, 2.8]{pink1989compactification}). We define some group-theoretic terms that appear in Kottwitz's fixed point formula for the Shimura varieties associated to $(M_h, \mathcal H_{2(2-i)})$. The main reference is \cite[Part I]{kottwitzannarbor}; see also \cite[\S 1.6]{morel2010book}. We fix a prime $p$, and an integer $a \geq 1$.

	Define a cocharacter $\mu$ of $M_h$ as follows. When $M_h = \GG_m$, let $\mu$ be the identity cocharacter. When $M_h = \GL_2$, let  $\mu$ be
	$z \mapsto \diag(z,1)$. Thus $\mu$ is a Hodge cocharacter\index[n]{Hodge cocharacter} for the Shimura datum $(M_h, \mathcal H_{2(2-i)})$.

The following definition is equivalent to the standard definition as in \cite[\S 19]{kottwitz1992points} or \cite[\S 1.6]{morel2010book}; it appears simpler since in the group $M_h$ stable conjugacy is the same as conjugacy.  
\begin{defn}\label{defn:Kott triple}
	A \emph{Kottwitz triple}\index[n]{Kottwitz triple} in $M_h$ (of level $p^a$, for the Shimura datum $(M_h,\mathcal H_{2(2-i)})$) is a triple $(\gamma_0,\gamma,\delta)$, with $\gamma_0 \in M_h(\QQ)$, $\gamma \in M_h(\adele_f^p)$, $\delta \in M_h(\QQ_{p^a})$, satisfying the following conditions:
	\begin{enumerate}
\item The element $\gamma_0$ is semi-simple and $\RR$-elliptic (see Definition \ref{defn:R-ell}). 
\item The element $\gamma$ is conjugate to $\gamma_0$ in $M_h(\adele_f^p).$
\item The element $\N\delta: = \delta \sigma(\delta) \cdots \sigma^{a-1} (\delta)  \in M_h (\QQ_{p^a})$ is conjugate to $\gamma_0$ in $M_h(\QQ_{p^a})$. 
\item If $M_h = \Gm$, then the $p$-adic valuation of $\delta \in \QQ_{p^a}^{\times}$ is $-1$. If $M_h = \GL_2$, then the $p$-adic valuation of the determinant of $\delta \in \GL_2(\QQ_{p^a})$ is $-1$. 
	\end{enumerate}
	Two Kottwitz triples $(\gamma_0,\gamma,\delta)$ and $(\gamma_0' ,\gamma',\delta')$ are said to be \emph{equivalent}, if $\gamma_0$ is conjugate to $\gamma_0'$ in $M_h(\QQ)$, and $\delta$ is $\sigma$-conjugate to $\delta'$ inside $M_h(\QQ_{p^a})$. In the sequel, it is understood that whenever Kottwitz triples appear in a summation, they are taken up to equivalence.
\end{defn}
\begin{rem}
	Abstractly, condition (4) in Definition \ref{defn:Kott triple} says that the image of $\delta$ in $\pi_1(M_h) _{\Gamma_p}$ under the Kottwitz map\index[n]{Kottwitz map} is equal to that of $-\mu$.
\end{rem}
\subsection{}
Let $(\gamma_0,\gamma,\delta)$ be a Kottwitz triple. Let $I_0 = M _{h,\gamma_0}$ be the centralizer (which is connected) of $\gamma_0$ in $M_h$. Define $\mathfrak K (I_0 /\QQ)$\index{$\mathfrak K (I_0 /\QQ)$} to be the finite abelian group consisting of those elements of $\pi_0 ([Z(\widehat{ I_0}) / Z(\widehat {M_h})]^{\Gamma_{\QQ}})$ whose images in $\coh^1(\Gamma_{\QQ}, Z(\widehat{M_h}))$ are locally trivial; see \cite[\S 4.6]{kottwitzelliptic}. In \cite[\S 2]{kottwitzannarbor} Kottwitz defines an invariant \index{$\alpha (\gamma_0,\gamma,\delta)$} $$\alpha (\gamma_0,\gamma,\delta) \in \mathfrak K(I_0/\QQ) ^D $$ of the triple $(\gamma_0,\gamma,\delta)$. Here $\mathfrak K(I_0/\QQ) ^D$ is the Pontryagin dual of $\mathfrak K(I_0/\QQ)$. 

\begin{lem}\label{simplification 1 M1}
For $M_h = \GL_1$ or $\GL_2$, we always have $\mathfrak K(I_0/\QQ)=0$.
\end{lem}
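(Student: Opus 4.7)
The plan is to handle the two cases $M_h=\GL_1$ and $M_h=\GL_2$ separately by explicit computation, using the fact that any $\RR$-elliptic semisimple element in $\GL_2(\QQ)$ is either central or has its centralizer a non-split imaginary quadratic torus.

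When $M_h=\GL_1=\GG_m$, every $\gamma_0\in \GG_m(\QQ)$ has centralizer $I_0=\GG_m=M_h$, so $Z(\widehat{I_0})/Z(\widehat{M_h})$ is trivial and $\mathfrak K(I_0/\QQ)=1$ tautologically. When $M_h=\GL_2$ and $\gamma_0$ is central, again $I_0=\GL_2=M_h$ and the same argument applies. So the only case requiring real work is $M_h=\GL_2$ with $\gamma_0$ non-central. In that case $\gamma_0$ has two distinct eigenvalues in $\overline{\QQ}$; the $\RR$-ellipticity hypothesis forces these eigenvalues to be a pair of complex-conjugate numbers, hence to generate an imaginary quadratic field $F\subset\overline{\QQ}$, and $I_0=\Res_{F/\QQ}\GG_m$.

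For this last case I would compute $\mathfrak K(I_0/\QQ)$ directly. The dual torus is $\widehat{I_0}=\CC^\times\times\CC^\times$ with $\Gamma$ acting through $\Gal(F/\QQ)$ by swapping the two factors, while $Z(\widehat{M_h})=\CC^\times$ embeds diagonally. Thus $Z(\widehat{I_0})/Z(\widehat{M_h})\cong \CC^\times$ via $(a,b)\mapsto a/b$, and the Galois action on the quotient is inversion. Its fixed points are $\set{\pm 1}$, so
$$\pi_0\bigl([Z(\widehat{I_0})/Z(\widehat{M_h})]^{\Gamma}\bigr)=\ZZ/2\ZZ.$$
The non-trivial class is represented by the image of $(1,-1)\in Z(\widehat{I_0})$; its associated connecting cocycle in $\coh^1(\Gamma, Z(\widehat{M_h}))$ sends the non-trivial element of $\Gal(F/\QQ)$ to $-1\in\CC^\times$, i.e.\ it is the quadratic character $\chi_{F/\QQ}$.

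To conclude, I would invoke Chebotarev: since $F\neq\QQ$, the character $\chi_{F/\QQ}$ is non-trivial on the Frobenius class of any prime inert in $F$, of which there are infinitely many, so $\chi_{F/\QQ}$ is not locally trivial. Hence the non-trivial element of $\pi_0([Z(\widehat{I_0})/Z(\widehat{M_h})]^{\Gamma})$ does not lie in $\mathfrak K(I_0/\QQ)$, and $\mathfrak K(I_0/\QQ)=1$ in all cases. The main (and only minor) obstacle is correctly pinning down the Galois action on $\widehat{I_0}$ and the resulting connecting map; once those are in place the computation is essentially one line.
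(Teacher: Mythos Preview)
Your proof is correct and takes essentially the same approach as the paper: both reduce to the non-central $\GL_2$ case, identify $\widehat{I_0}\cong\CC^\times\times\CC^\times$ with Galois swapping the factors, and use Chebotarev on $\coh^1(\Gamma,Z(\widehat{\GL_2}))$. The only cosmetic difference is that the paper shows the kernel of the connecting map is trivial (by observing $\widehat T^{\Gamma}\subset Z(\widehat{\GL_2})$ is connected, using just $\Gamma_\infty$), whereas you show directly that the non-trivial class maps to the non-locally-trivial character $\chi_{F/\QQ}$; these are two readings of the same exact sequence.
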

\begin{proof} If $I_0 = M_h$ then obviously $\mathfrak K(I_0/\QQ)=0$. Thus we may assume that $M_h = \GL_2$ and that $\gamma_0$ in non-central. 
	Then $I_0$ is a maximal torus $T$ in $\GL_2$ defined over $\QQ$. In this case $Z(\widehat{I_0} )  = \widehat{I_0} = \widehat T$. Since the Galois action on $Z(\widehat {\GL_2})$ is trivial, by Chebotarev's density theorem the only locally trivial element of $\coh^1(\Gamma_{\QQ}, Z(\widehat {\GL_2}))$ is the trivial element. In view of the exact sequence 
	$$ \pi_0 (Z(\widehat {\GL_2}) ^{\Gamma_{\QQ}}) \to \pi _0 ( \widehat T^{\Gamma_{\QQ}}) \to \pi_0 ([\widehat T / Z(\widehat {\GL_2} )]^{\Gamma_{\QQ}}) \to \coh^1(\Gamma_{\QQ}, Z(\widehat {\GL_2})),$$ it suffices to show that $$\widehat T^{\Gamma_{\QQ}} \subset Z(\widehat {\GL_2}).$$ Since $\gamma_0$ is $\RR$-elliptic, $T_{\RR}$ is an elliptic maximal torus in $\GL_{2,\RR}$. Hence there exists an identification $\widehat T \cong \CC^{\times } \times \CC^{\times}$ such that the non-trivial element of $\Gamma_{\infty}$ acts on $\widehat T$ by switching the two coordinates. It follows that $\widehat T ^{\Gamma _{\infty}} \subset Z(\widehat {\GL_2})$, and \textit{a fortiori} $\widehat T^{\Gamma_{\QQ}} \subset Z(\widehat {\GL_2}).$  
\end{proof}

\subsection{}\label{subsubsec:Haar}
Let $(\gamma_0,\gamma,\delta)$ be a Kottwitz triple. By Lemma \ref{simplification 1 M1}, the invariant $\alpha(\gamma_0,\gamma,\delta)$ automatically vanishes. Hence as in \cite[\S 3]{kottwitzannarbor}, there is an inner form $I$
 of $I_0$ over $\QQ$ satisfying the following conditions. \begin{itemize}
 	\item The group $I_{\RR}$ is anisotropic modulo center.
 	\item For any finite place $v$ of $\QQ$ not equal to $p$, $I_{\QQ_v}$ is the trivial inner form of $I_{0,\QQ_v}$. 
 	\item The inner form $I_{\QQ_p}$ of $I_{0,\QQ_p}$ is isomorphic (as an inner from) to the $\sigma$-centralizer $(M_h)_{\delta\sigma}$\index{$(M_h)_{\delta\sigma}$} of $\delta$ in $M_h$ (which is denoted by $I(p)$ in \textit{loc.~cit.}).  
 \end{itemize}
We refer the reader to \textit{loc.~cit.~}for more details.
 
 Fix Haar measures on $I(\QQ_p)$, $I(\adele_f^p)$, and $I(\RR)$ such that the product Haar measure on $I(\adele)$ is the Tamagawa measure. Fix a Haar measure on $M_h(\QQ_{p^a})$ such that $M_h(\ZZ_{p^a})$ has volume $1$. Fix Haar measures on $ M_h(\RR)$ and $M_h(\adele_f^p)$  arbitrarily. 
 
 \begin{defn}\label{defn:c of a Kott trip} In the setting of \S \ref{subsubsec:Haar}, we define \index{$c(\gamma_0 , \gamma,\delta)$} \index{$c_1(\gamma_0,\gamma,\delta)$} \index{$c_2(\gamma_0, \gamma,\delta)$}
$$c (\gamma_0 , \gamma,\delta) : = c_1(\gamma_0,\gamma,\delta)c_2(\gamma_0, \gamma,\delta), $$ where  
 \begin{align*}
c_1 (\gamma_0,\gamma,\delta) & = \vol (I(\QQ) \backslash I(\adele_f)) =\tau(I) \vol(A_{M_h} (\RR) ^0 \backslash I(\RR) )^{-1} , \\ c_2 (\gamma_0,\gamma,\delta) &= \abs{\ker (\ker^1(\QQ, I_0) \to \ker^1(\QQ, M_h))}.
 \end{align*} Here $\tau(I)$ is the Tamagawa number of $I$.
 \end{defn}
  \begin{defn}\label{defn:O and TO} In the setting of \S \ref{subsubsec:Haar}, we define the \emph{orbital integral along $\gamma$}\index[n]{orbital integral} to be the functional \index{$O_{\gamma}$} 
 	\begin{align*}
 	O_{\gamma} : C^{\infty}_c(M_h(\adele_f^p)) & \To \CC \\
 	f & \longmapsto O_{\gamma}(f) = \int_{M_{h,\gamma}(\adele_f^p)\backslash M_h(\adele_f^p)} f(g^{-1} \gamma g), 
 	\end{align*}
 	with respect to the fixed Haar measure on $M_h(\adele_f^p)$ and the Haar measure on $M_{h,\gamma} (\adele_f^p)$ transferred from $I(\adele_f^p)$. We define the \emph{twisted orbital integral along $\delta$}\index[n]{twisted orbital integral} to be the functional \index{$TO_{\delta}$}
 		\begin{align*}
 	TO_{\delta} : C^{\infty}_c(M_h(\QQ_{p^a})) & \To \CC \\
 	f & \longmapsto TO_{\delta}(f) = \int_{(M_h) _{\delta\sigma} (\QQ_p) \backslash M_h(\QQ_{p^a})} f(g^{-1} \delta \sigma(g) ) , 
 	\end{align*} 
with respect to the fixed Haar measure on $M_h(\QQ_{p^a})$ and the Haar measure on $(M_h) _{\delta\sigma} (\QQ_p)$ transferred from $I(\QQ_p)$. For more details see \cite[\S 3]{kottwitzannarbor}.
 \end{defn}
 
 \begin{defn}\label{defn:phi_a} Let $\phi_{a}^{M_h} : M_h(\QQ_{p^a})  \to \QQ $ be the characteristic function of $ M_h (\ZZ_{p^a}) \mu(p)^{-1}  M_h (\ZZ_{p^a}).$ \index{$\phi_{a}^{M_h}$}
 \end{defn}
\section{Definition of \texorpdfstring{$\Tr_{M}$}{TrM}} 
In this section, let $P$ be a standard parabolic subgroup of $G$, and let $M= M_P$ be the standard Levi component of $P$. We define the term $ \Tr_M(f^{p,\infty} d g^{p,\infty}, K, a)$ in (\ref{eq:in geometric assertion}). 
\begin{defn}\label{defn:simR}
	For $\gamma_0 \in M_h(\RR)$ and $\gamma_L \in M_l(\RR)$, we write $\gamma_0 \sim_{\RR} \gamma_L$\index{$\sim_{\RR}$}, if one of the following conditions holds.
	\begin{enumerate}
		\item We have $M_h \cong \GL_2$. 
		\item We have $M_h \cong \GG_m$, $\gamma_0 \in M_h(\RR)^0$, and $\gamma_L \in M_l(\RR)^0$.
		\item  We have $M_h \cong \GG_m$, $\gamma_0 \notin M_h(\RR)^0$, and $\gamma_L \notin M_l(\RR)^0$.	\end{enumerate}  
\end{defn}
\begin{rem}\label{rem:simR}
	When $M= M_1$, we have $M_h = \GL_2$, and so the condition $\gamma_0 \sim_{\RR} \gamma_L$ is by definition automatic. When $M= M_{12} $ or $M_2$, we have $\pi_0 (M_h(\RR) )  \cong \pi_0 (M_l (\RR) ) \cong \ZZ/2\ZZ$. Thus the condition $\gamma_0 \sim_{\RR} \gamma_L$ depends only on the $M_h(\RR)$-conjugacy class of $\gamma_0$ and the $M_l(\RR)$-conjugacy class of $\gamma_L$.
\end{rem}
\begin{defn}\label{Defn Tr12} Let $K$ be a compact open subgroup of $G(\adele_f)$. Let $p$ be a hyperspecial prime for $K$, and let $K_p, K^p$ be as in Definition \ref{defn:hyperspecial pr}. Let $f^{p,\infty}  dg ^{p,\infty}\in  \mathcal H (G(\adele_f ^p) \sslash K^p)_{\QQ}$, and let $a\in \ZZ_{\geq 1}$. We define the complex number \index{$\Tr_M(f^{p,\infty} dg^{p,\infty}, K, a)$}
	\begin{multline}\label{eq:Tr_M}
 \Tr_M(f^{p,\infty} d g^{p,\infty}, K, a) : = \sum_{\gamma_L} \iota^{M_l}  (\gamma_L) ^{-1} \chi ((M_{l,\gamma_L }) ^0  )  \sum_{(\gamma_0 , \gamma,\delta)} c(\gamma_0, \gamma,\delta) \\   \cdot \delta_{P(\QQ_p)}  (\gamma_0) ^{1/2}    O_{\gamma_L \gamma} (f^{p,\infty} _M) O_{\gamma_L} (1_{M_l (\ZZ_p)}) TO_{\delta} (\phi_a^{M_h})L_M(\gamma_L \gamma_0), 
	\end{multline}
	where $\gamma_L$ runs through the semi-simple conjugacy classes in $M_l(\QQ)$ that are $\RR$-elliptic (see Definition \ref{defn:R-ell}; if no such $\gamma_L$ exists, then the sum is empty), and $(\gamma_0, \gamma,\delta )$ runs through the equivalence classes of Kottwitz triples in $M_h$ of level $p^a$ (see Definition \ref{defn:Kott triple}) such that $\gamma_0 \sim_{\RR} \gamma_L$ (see Definition \ref{defn:simR} and Remark \ref{rem:simR}). The other terms are defined as follows: \begin{enumerate}
\item We write $\iota^{M_l} (\gamma_L)$ for $\abs{M_{l, \gamma_L} (\QQ ) / (M_{l,\gamma_L}) ^0 (\QQ)}$.\index{$\iota^{M_l} (\cdot)$}

\item We write $\chi((M_{l,\gamma_L }) ^0 )$ for the \emph{Euler characteristic}\index[n]{Euler characteristic (of a reductive group)} of the reductive group $(M_{l,\gamma_L }) ^0 $ over $\QQ$, as defined in \cite[\S 7.10]{GKM}.\index{$\chi (\cdot)$}
\item The term $c (\gamma_0 , \gamma,\delta)$ is as in Definition \ref{defn:c of a Kott trip}.
\item We let $f^{p,\infty}_M \in C^{\infty}_c(M(\adele_f^p))$\index{$f^{p,\infty}_M$} be the \emph{constant term}\index[n]{constant term} of $f^{p,\infty}$ as defined in \cite[\S 7.13]{GKM}. This function depends on auxiliary choices, but its orbital integrals are well defined once all the relevant Haar measures are fixed. 
\item We have a canonical identification $$C^{\infty}_c(M(\adele_f^p)) \cong  C^{\infty}_c(M_h(\adele_f^p)) \otimes_{\CC} C^{\infty}_c(M_l(\adele_f^p)). $$ In view of this, we define the functional $O_{\gamma_L \gamma} : C^{\infty}_c(M(\adele_f^p)) \to \CC$\index{$O_{\gamma_L \gamma}$} to be the tensor product of the functional $O_{\gamma} : C^{\infty}_c(M_h(\adele_f^p))  \to \CC$ in Definition \ref{defn:O and TO} and the functional \index{$O_{\gamma_L}$ (away from $p$)} 
\begin{align} \label{eq:O gamma L}
O_{\gamma_L} :  C^{\infty}_c(M_l(\adele_f^p)) &  \To \CC \\  \nonumber
f & \longmapsto O_{\gamma_L}(f) = \int_{M_{l,\gamma_L}(\adele_f^p) \backslash M_l(\adele_f^p)} f(g^{-1} \gamma_L g) dg, 
\end{align}
where the relevant Haar measures are to be specified in Remark \ref{rem:Haar measures} below. 
\item We let $M_l(\ZZ_p)$\index{$M_l(\ZZ_p)$} be the hyperspecial subgroup of $M_l(\QQ_p)$ given by 
\begin{align}\label{eq:M_l(Z_p)}
M_l(\ZZ_p) : = 
[K_p \cap (M_l(\QQ_p) N_P(\QQ_p) ) ] / (K_p \cap N_P (\QQ_p)) .
\end{align} See Remark \ref{rem:why hyp} below for more explanations. 
\item We define\index{$O_{\gamma_L} (1_{M_l(\ZZ_p)})$} \begin{align}\label{eq:O gamma L at p}
O_{\gamma_L} (1_{M_l(\ZZ_p)}) : =  \int_{M_{l,\gamma_L}(\QQ_p) \backslash M_l(\QQ_p)} 1_{M_l(\ZZ_p)} (g^{-1} \gamma_L g) dg,
\end{align} 
where the relevant Haar measures are to be specified in Remark \ref{rem:Haar measures} below. 
\item The term $TO_{\delta}(\phi_a^{M_h})$ is as in Definitions \ref{defn:O and TO} and \ref{defn:phi_a}.
\item The term $L_M(\cdot)$ is as in Definition \ref{Defn L_M 12 odd}.
	\end{enumerate}  \end{defn}
\begin{rem}\label{rem:Haar measures} We make precise the choices of various Haar measures in Definition \ref{Defn Tr12}. We choose an arbitrary Haar measure on $M_l (\adele_f^p)$, and choose arbitrary Haar measures on $M_{l,\gamma_L} (\adele_f^p)$ and $M_{l,\gamma_L}(\QQ_p)$ for each $\gamma_L$. We then define the Haar measure on $M (\adele_f^p) = M_h (\adele_f^p) \times M_l (\adele_f^p)$ to be the product of the Haar measure on $M_l(\adele_f^p)$ chosen above and the Haar measure on $M_h(\adele_f^p)$ chosen in \S \ref{subsubsec:Haar}. We then specify various normalizations: 
	\begin{enumerate}
		\item Use the Haar measure on $M(\adele_f^p)$ as above and the Haar measure $d g^{p,\infty}$ on $G(\adele_f^p)$ to define the constant term $f^{p,\infty} _M$. 
		\item Use the Haar measures on $M_l(\adele_f^p)$ and $M_{l,\gamma_L} (\adele_f^p)$ chosen above to define (\ref{eq:O gamma L}).
		\item Use the Haar measure on $M_l(\QQ_p)$ giving volume $1$ to $M_l(\ZZ_p)$, and the Haar measure on $M_{l,\gamma_L} (\QQ_p)$ chosen above, to define (\ref{eq:O gamma L at p}).
		\item Use the Haar measures on $M _{l,\gamma_L} (\adele_f^p)$ and $M _{l,\gamma_L} (\QQ_p)$ chosen above to define the product measure on $(M_{l,\gamma_L} )^0 (\adele_f)$, and use the latter to define $\chi ((M_{l,\gamma_L} )^0)$ as in \cite[\S 7.10]{GKM}.
	\end{enumerate}
\end{rem}
\begin{rem}\label{rem:why hyp}
	We explain why $M_l(\ZZ_p)$ defined by (\ref{eq:M_l(Z_p)}) is a hyperspecial subgroup of $M_l(\QQ_p)$ by collecting standard facts about reductive group schemes from \cite[XXVI]{SGA3III}. Since $K_p$ is a hyperspecial subgroup of $G(\QQ_p)$, there is a reductive group scheme $\mathcal G$ over $\ZZ_p$ with generic fiber $G_{\QQ_p}$ such that $K_p = \mathcal G(\ZZ_p) \subset G(\QQ_p)$. By \cite[XXVI, Cor.~3.5]{SGA3III}, the parabolic subgroup $P_{\QQ_p}$ of $G_{\QQ_p}$ extends to a unique parabolic subgroup $\mathcal P$ of $\mathcal G$. Since parabolic subgroups are closed (see \cite[XXVI, Prop.~1.2]{SGA3III}), we have $\mathcal P(\ZZ_p)=P(\QQ_p)\cap K_p$. Now the reductive quotient $\mathcal M$ of $\mathcal P$ (see \cite[XXVI, Cor.~1.5, Prop.~1.6]{SGA3III}) is a reductive group scheme over $\ZZ_p$ whose generic fiber is $M$. Since $\spec \ZZ_p$ is affine, by \cite[XXVI, Cor.~2.3]{SGA3III} we know that $\mathcal P$ admits a Levi component. It follows that the natural map $\mathcal P(\ZZ_p) \to \mathcal M(\ZZ_p)$ is surjective. Therefore, the subgroup $\mathcal M(\ZZ_p)$ of $M(\QQ_p)$ is equal to the image of $P(\QQ_p) \cap K_p$ under $P(\QQ_p) \to M(\QQ_p)$. Now since $M = M_h \times M_l$, any hyperspecial subgroup of $M(\QQ_p)$ (such as $\mathcal M(\ZZ_p)$) must be the direct product of a hyperspecial subgroup of $M_h(\QQ_p)$ and a hyperspecial subgroup of $M_l(\QQ_p)$. Hence the kernel of $\mathcal M(\ZZ_p) \hookrightarrow M(\QQ_p) \to M_h(\QQ_p)$, which is $M_l(\ZZ_p)$, must be a hyperspecial subgroup of $M_l(\QQ_p)$.
\end{rem}
\begin{rem}\label{rem:Tr_M = 0 for even M_2}
When $M = M_1$ or $M_{12}$, every element of $M_l (\QQ)$ is semi-simple $\RR$-elliptic, because $M_{l,\RR}$ is isomorphic to either $\SO(n-2, 0)$ or $\GG_{m,\RR} \times \SO(n-2, 0)$, and $\SO(n-2, 0) (\RR)$ is compact. When $d$ is even and $M= M_2$, we know that $M_{l,\RR}\cong \SO(n-1, 1)$ does not have elliptic maximal tori (as $n$ is even and at least $4$), so there are no $\RR$-elliptic elements of $M_l(\QQ)$ in the sense of Definition \ref{defn:R-ell}. In this case it is understood that $\Tr_M(f^{p,\infty} dg^{p,\infty} , K, a) =  0$.
\end{rem}

\ignore{
\begin{rem}
	The factor $\iota^{M_l} (\gamma_L)$ is omitted in \cite{morel2011suite} but recorded in \cite{morel2010book}. It arises when one deduces \cite[Thm.~1.6.6]{morel2010book} from \cite{GKM}. 
\end{rem}
}
\section{An equivalent form of Morel's formula} 

At this point we have defined the terms in (\ref{eq:in geometric assertion}). In this section we give an equivalent form of (\ref{eq:in geometric assertion}). It is this equivalent form that we shall prove in \S \ref{sec:Proof of Morel's formula}. In the following, we fix $K$, $p$, $f^{p,\infty}dg^{p,\infty}$, and $a$ as in Definition \ref{Defn Tr12}, and we shall omit them from the notation when convenient. For instance, we shall write $\Tr_M$ for $\Tr_M(f^{p,\infty} d g^{p,\infty} , K, a)$. 

\subsection{}
Let $M\in \set{M_1, M_2, M_{12}}$. We set \index{$\stdlev(M_l)$}
$$\stdlev(M_l) : = \begin{cases}
\set{M_{1,l}}, & M= M_1, \\
\set{M_{2,l}, M_{12,l}} , & M= M_2 ,\\
\set{M_{12,l}}, & M = M_{12}. 
\end{cases}$$
	Thus in each case $\stdlev(M_l)$ is a set of representatives of the $M_l(\QQ)$-conjugacy classes of Levi subgroups of $M_l$. 

\begin{defn}\label{defn:refined Tr_M}
	Let $M \in \set{M_1, M_2, M_{12}}$ and let $(Q,g) \in \mathcal P(M)$ (see Definition \ref{defn: P(M)}). We define $\Tr_{M, Q , g}$\index{$\Tr_{M,Q,g}$} by the same formula (\ref{eq:Tr_M}) used to define $\Tr_M$, but with $L_M(\cdot)$ replaced by $L_{M,Q,g}(\cdot)$ (see Definition \ref{defn:refined L_M}). Thus 
		\begin{multline}\label{eq:Tr_MQg}
		\Tr_{M,Q,g} : = \sum_{\gamma_L} \iota^{M_l}  (\gamma_L) ^{-1} \chi ((M_{l,\gamma_L }) ^0  )  \sum_{(\gamma_0 , \gamma,\delta)} c(\gamma_0, \gamma,\delta) \\   \cdot \delta_{P(\QQ_p)}  (\gamma_0) ^{1/2}    O_{\gamma_L \gamma} (f^{p,\infty} _M) O_{\gamma_L} (1_{M_l (\ZZ_p)}) TO_{\delta} (\phi_a^{M_h})L_{M,Q,g}(\gamma_L \gamma_0), 
	\end{multline}
\end{defn}

\begin{defn}\label{defn:T_Q flat}
	For $Q \in \set{P_1, P_2, P_{12}}$, we define \index{$\mathrm T_Q'$}
	$$\mathrm T_Q' : = \sum_{M} \Tr_{M , Q , 1},$$
	 where the sum is over $M \in \set{M_1, M_2, M_{12}}$ such that $M_l \in \stdlev(M_{Q,l})$. Indeed, for each such $M$, we have $(Q,1) \in \mathcal P(M)$, and so $\Tr_{M , Q , 1}$ is defined as in Definition \ref{defn:refined Tr_M}. 
\end{defn}
\begin{lem}	\label{lem:elementary} 
	We have \begin{align*}
	\Tr_{M_1} + \Tr_{M_2} + \Tr_{M_{12}} = \mathrm T_{P_1}' + \mathrm T_{P_2}' + \mathrm T_{P_{12}}'. 
	\end{align*}
\end{lem}
\begin{proof}
	By (\ref{eq:KW}), for each $M\in \set{M_1, M_2, M_{12}}$ we have 
	$$ \Tr_M = \sum_{(Q,g) \in \mathcal P(M)/{\sim}} \abs{\mathcal P(M,Q)/{\sim}}^{-1} \Tr_{M,Q,g}. $$ By Propositions \ref{prop:defn of L_M for M_1 M_2} and \ref{prop:to simplify LM12}, if $(Q,g) \in \mathcal P(M)$, then $(Q,1) \in \mathcal P(M)$. We claim that in this case $\Tr_{M,Q,g}= \Tr_{M,Q,1}$.  Indeed, by definition $L_{M,Q,g}(\gamma_L \gamma_0) = L_{M,Q,1}(g\gamma_Lg^{-1} \gamma_0)$, so it suffices to show that the expression $$\iota^{M_l} (\gamma_L) ^{-1} \chi ((M_{l,\gamma_L }) ^0  )     O_{\gamma_L \gamma} (f^{p,\infty} _M) O_{\gamma_L} (1_{M_l (\ZZ_p)})$$ on the RHS of (\ref{eq:Tr_MQg}) is invariant under the replacement $\gamma_L\mapsto g \gamma_L g^{-1}$. The invariance of  $\iota^{M_l} (\gamma_L)$ and  $ \chi ((M_{l,\gamma_L }) ^0  ) $ follows from the fact that $g$ normalizes $M_l$. To show the invariance of $O_{\gamma_L \gamma} (f^{p,\infty} _M)$, it suffices to show that $f^{p,\infty}_M$ and its composition with the automorphism $\Int(g)$ of $M(\adele_f^p)$ have equal orbital integrals at all elements. By Kazhdan's density result \cite{KazhdanCusp} it suffices to check this only at regular semi-simple elements. Since orbital integrals are locally constant on the regular semi-simple locus, we further reduce to $(G,M)$-regular semi-simple elements. That is, we only need to show the invariance of $O_{\gamma_L \gamma}(f^{p,\infty}_M)$ under $\gamma_L \mapsto g \gamma_L g^{-1}$ under the assumption that $\gamma_L \gamma$ is $(G,M)$-regular. This follows from the descent formula (see \cite[Lem.~6.1]{shintemplier}  or \cite{vandijk}) 
$$ O_{\gamma_L \gamma}	(f^{p,\infty} _M)  = \abs{D^{G} _{M} (\gamma_L \gamma)}_{\adele_f^p}^{1/2}  O_{\gamma_L \gamma} (f^{p,\infty})
	$$ and the fact that $g$ normalizes $M$.\footnote{The above argument of reducing to the $(G,M)$-regular case and then applying the descent formula is quite standard. In fact, one uses a similar argument to show, in the first place, that the choices made in the definition of the constant term do not affect its orbital integrals; cf.~\cite[\S 6.1]{shintemplier}.} Finally, to show the invariance of $O_{\gamma_L} (1_{M_l (\ZZ_p)})$, it suffices to show that $g M_l(\ZZ_p) g^{-1}$ is conjugate to $M_ l (\ZZ_p)$ in $M_l(\QQ_p)$. By the descriptions in Propositions \ref{prop:defn of L_M for M_1 M_2} and \ref{prop:to simplify LM12}, we are left to show that for any hyperspecial subgroup $U \subset \SO(W_2)(\QQ_p)$ and any $x \in \mathrm{O}(W_2)(\QQ_p) - \SO(W_2)(\QQ_p)$, we have $x U x^{-1}$ is conjugate to $U$ in $\SO(W_2)(\QQ_p)$. For this it suffices to exhibit one element of $\mathrm{O}(W_2)(\QQ_p) - \SO(W_2)(\QQ_p)$ normalizing $U$. But $U$ is the stabilizer of a $\ZZ_p$-lattice $\Lambda$ in $W_2$ (cf.~\cite[\S 2]{LustStevens}). Since $p>2$, if we take $v\in \Lambda$ such that $v_p(\lprod{v,v})$ is minimal, then the projection $w \mapsto w - \frac{\lprod{v,w}}{\lprod{v,v} } v$ preserves $\Lambda$, and hence $\Lambda$ is the orthogonal direct sum of $\ZZ_p v$ and its orthogonal complement in $\Lambda$. We can therefore take the desired element of $\mathrm{O}(W_2)(\QQ_p) - \SO(W_2)(\QQ_p)$ to be the reflection along $v$, which stabilizes $\Lambda$. This finishes the proof of the claim.
	
	By the claim we have 
	$$\Tr_M = \sum_{Q} \Tr_{M , Q , 1},$$ where the sum is over $Q \in \set{P_1, P_2, P_{12}}$ such that $\mathcal P(M,Q) \neq \emptyset$. We finish the proof by noting that for $M \in \set{M_1, M_2, M_{12}}$ and $Q \in \set{P_1, P_2, P_{12}}$, we have $ \mathcal P(M,Q) \neq \emptyset $ if and only if $M_l \in \stdlev(M_{Q,l}).$
\end{proof}

\begin{defn}\label{defn:T_Q}
	For $Q \in \set{P_1, P_2, P_{12}}$, we define \index{$\mathrm T_Q$}
	\begin{multline}\label{eq:T_Q}
	\mathrm T_Q = \mathtt m_{M_Q} \sum_{\fL \in \stdlev(M_{Q,l})}   (-1)^{\dim A_{\fL}/ A_{M_{Q,l}}}  (n^{M_{Q,l}} _{\fL})^{-1}  \\ \cdot \sum_{\gamma_L} \iota^{\fL}  (\gamma_L) ^{-1}  \chi (\fL_{\gamma_L} ^0)  \abs{D^{M_{Q,l}} _{\fL} (\gamma_L)}^{1/2}_{\RR}  \\  \cdot  \sum_{(\gamma_0 , \gamma,\delta)} c(\gamma_0, \gamma,\delta) \delta_{Q(\QQ_p)}  (\gamma_0) ^{1/2} O_{\gamma_L \gamma} (f^{p,\infty}_{M_\fL}) O_{\gamma_L} (1_{\fL (\ZZ_p)}) TO_{\delta} (\phi_a^{M_{Q,h}})     \\  \cdot    \delta_{Q(\RR)} (\gamma_L\gamma_0)^{1/2}  \Tr(\gamma_L\gamma_0 \mid  R\Gamma (\Lie N_Q , \mathbb V) _{>t_Q} ). \end{multline}
	Here, for each $\fL \in \stdlev(M_{Q,l})$, we let
	$M_\fL$ be the unique element of $\set{M_1, M_2, M_{12}}$ such that $M_{\fL, l} = \fL$. In other words, $M_{\fL} = M_{Q, h} \times \fL$. The second sum is over all semi-simple conjugacy classes $\gamma_L$ in $\fL(\QQ)$ which are $\RR$-elliptic in the sense of Definition \ref{defn:R-ell}. (If no such element exists, then the summand labeled by $\fL$ is zero.) The third sum is over equivalence classes of Kottwitz triples $(\gamma_0,\gamma, \delta)$ in $M_{Q,h}$ with $\gamma_0 \sim_{\RR} \gamma_L$. The definition of $\fL(\ZZ_p)$ is given by (\ref{eq:M_l(Z_p)}) applied to $M : = M_{\fL}$. All the other terms are defined in the same way as in Definition \ref{Defn Tr12}. 
\end{defn}
\begin{lem}\label{lem:no flat}	For $Q \in \set{P_1, P_2, P_{12}}$, we have $\mathrm T_Q'= \mathrm T_Q$. 
\end{lem}
\begin{proof}For each $\fL \in \stdlev (M_{Q,l})$, let $P_\fL$ be the unique element of $\set{P_1, P_2, P_{12}}$ such that $M_{P_{\fL}} = M_{\fL}$. 
Combining Definitions \ref{defn:refined L_M}, \ref{defn:refined Tr_M}, \ref{defn:T_Q flat}, and using the fact that for $\fL \in \stdlev (M_{Q,l})$ we have $M_{\fL, h} = M_{Q,h}$, we obtain
	\begin{multline*}
\mathrm T_Q' = \mathtt m_{M_Q} \sum_{\fL \in \stdlev(M_{Q,l})}   (-1)^{\dim A_{M_\fL}/ A_{M_Q}}  (n^{M_Q} _{M_\fL})^{-1} \\\cdot  \sum_{\gamma_L} \iota^{\fL}  (\gamma_L) ^{-1}  \chi (\fL_{\gamma_L} ^0)  \abs{D^{M_{Q}} _{M_\fL} (\gamma_L\gamma_0)}^{1/2}_{\RR}  \\   \cdot 
\sum_{(\gamma_0 , \gamma,\delta)} c(\gamma_0, \gamma,\delta) \delta_{P_\fL(\QQ_p)}  (\gamma_0) ^{1/2} O_{\gamma_L \gamma} (f^{p,\infty}_{M_\fL}) O_{\gamma_L} (1_{\fL (\ZZ_p)}) TO_{\delta} (\phi_a^{M_{Q,h}})   \\  \cdot    \delta_{Q(\RR)} (\gamma_L\gamma_0)^{1/2}  \Tr(\gamma_L\gamma_0 \mid  R\Gamma (\Lie N_Q , \mathbb V) _{>t_Q} ). \end{multline*}
Here the three summations are the same as on the RHS of (\ref{eq:T_Q}). To finish the proof, we only need to check the following four identities for each $\fL \in \stdlev(M_{Q,l})$: 
\begin{enumerate}
	\item $\dim A_{M_\fL}/A_{M_Q}  = \dim A_\fL / A_{M_{Q,l}}.$ 
	\item $	n^{M_Q} _{M_\fL}  = n^{M_{Q,l}}_{\fL}$.
	\item $D^{M_Q} _{M_\fL} (\cdot)  = D^{M_{Q,l}}_{\fL} (\cdot).$
	\item $\delta_{P_{\fL}(\QQ_p)} (\gamma_0)  = \delta_{Q(\QQ_p)} (\gamma_0)  . $
\end{enumerate}
The first three identities follow from the fact that $M_\fL = M_{Q,h} \times \fL$ and $M_Q = M_{Q,h} \times M_{Q,l}$. For the fourth identity, we have $P_{\fL} \subset Q$, and the subgroup $N_{P_{\fL}} / N_Q$ of $Q/N_Q = M_Q$ is contained inside $M_{Q,l} \subset M_Q$. Hence $\gamma_0\in M_{Q,h}(\QQ)$ acts trivially on $\Lie N_{P_{\fL}}/ \Lie N_Q$, and the desired identity follows. 
\end{proof}

\begin{prop}\label{prop:equivalent form} The formula (\ref{eq:in geometric assertion}) in Theorem \ref{geometric assertion} is equivalent to the following formula.  
	\begin{multline}\label{eq:equivalent form of Morel's formula}
	\Tr(\Frob_p^a \times f^{\infty} dg^{\infty} \mid  \icoh^* (\overline{ \Sh _K}, \mathbb V)) - \Tr(\Frob_p^a \times f^{\infty}  dg^{\infty} \mid   \coh_c^* ( \Sh _K,  \mathbb V))    \\ =    \mathrm T_{P_1} + \mathrm T_{P_2} + \mathrm T_{P_{12}} .
	\end{multline}\end{prop}
\begin{proof}
	This follows from Lemmas \ref{lem:elementary} and \ref{lem:no flat}. 
\end{proof}

\chapter{Proof of Morel's formula}\label{sec:Proof of Morel's formula}
In this chapter we prove Theorem \ref{geometric assertion}. 
\section{Introduction to the proof}\label{subsec:background}  
	
\subsection{}
Our goal is to prove the formula (\ref{eq:in geometric assertion}). In Proposition \ref{prop:equivalent form}, we have shown that (\ref{eq:in geometric assertion}) is equivalent to (\ref{eq:equivalent form of Morel's formula}). This last formula is a variant of  \cite[Thm.~1.7.1]{morel2010book}, and our proof will be a modification of the proof in \textit{loc.~cit.}.

First we review some key ingredients in \cite[Thm.~1.7.1]{morel2010book}. The proof is axiomatic in nature, building on the earlier work of Morel \cite{morelthese,morel2008complexes}, and the work of Pink \cite{pink1992ladic}. Other ingredients needed in this axiomatic approach include:
\begin{enumerate}
	\item Deligne's conjecture on local terms in the  Grothendieck--Lefschetz--Verdier trace formula, which was proved in special cases that are already enough for Shimura varieties by Pink \cite{pinkcalc}, and in general by Fujiwara \cite{fujiwara} and Varshavsky \cite{varshavsky}. 
	\item The fixed point formula of Goresky--Kottwitz--MacPherson \cite{GKM}.
	\item The fixed point formula of Kottwitz \cite{kottwitz1992points}.
\end{enumerate} 
The ingredient (1) is of course still valid in our case. As regards (2), we will need the original formula as well as a variant of it (see Proposition \ref{prop:generalization ofGKM} below). As regards (3), we will apply this formula to the boundary pure Shimura data $(\GG_m, \mathcal H_0)$ and $(\GL_2, \mathcal H_2)$. The Shimura datum $(\GL_2, \mathcal H_2)$ gives rise to the usual modular curves, and Kottwitz's formula is valid. For $(\GG_m, \mathcal H_0)$, we need a version of Kottwitz's fixed point formula for certain variants of the usual zero-dimensional Shimura varieties associated to the datum  (see Proposition \ref{prop:generalization of0-dim Shim} below). Finally, note that in Theorem \ref{geometric assertion} we have not provided a formula for the term $\Tr(\cdot \mid   \coh_c^* ( \Sh _K,  \mathbb V))$. Such a formula is eventually needed in order to fully understand the LHS of (\ref{eq:in geometric assertion}). This ingredient is provided in \cite{KSZ} (for all Shimura varieties of abelian type), and is treated as a black box in the present paper when we prove Corollary \ref{Main Main result} below. 
\subsection{}\label{discussion:2}
Let $P$ be a standard proper parabolic subgroup of $G$. There are
the following differences between our $\mathrm T_P$
in Definition \ref{defn:T_Q} and Morel's definition \cite[p.~23]{morel2010book}. We do not explicitly assume that the Kottwitz triples should have trivial Kottwitz invariant, but this is automatic by Lemma \ref{simplification 1 M1}. Also, in the first summation in (\ref{eq:T_Q}) we do not explicitly assume that $\fL$ is cuspidal (see Definition \ref{defn:cuspidal}), but in our case if $\fL$ is non-cuspidal then the sum over $\gamma_L$ is empty. (Indeed, the possible choices of $\fL$ are $M_{1,l}, M_{2,l}, M_{12,l}$. In the odd case all of them are cuspidal. In the even case, $M_{1,l}$ and $M_{12,l}$ are cuspidal, whereas $(M_{2,l})_{\RR}$ does not contain elliptic maximal tori, as noted in Remark \ref{rem:Tr_M = 0 for even M_2}.) The sole essential difference is that we impose the condition $\gamma_0 \sim_{\RR} \gamma_L$, which is not imposed by Morel, and this is due to the fact that our orthogonal Shimura datum $\mathbf O(V)$ does not satisfy the axioms in \cite[\S 1.1]{morel2010book}. 

Recall that Morel's axioms require that for each $P \in \admpar$, the Levi quotient $M_P$ of $P$ should admit a decomposition $M_P = G_P \times L_P$ such that $G_P(\RR)$ acts transitively on $\X_P$ and $L_P(\RR)$ acts trivially on $\X_P$, among other things.
In our case, by Proposition \ref{prop:rational bdry comp} (5), such a decomposition is clearly impossible for $P = P_2$. This is in fact related to the following geometric phenomenon. In general, each boundary stratum of the Baily--Borel compactification of a Shimura variety can be identified with the quotient of a smaller Shimura variety by the action of a finite group. If Morel's axioms are satisfied, then this finite quotient can be ``absorbed'' by a change of level. By contrast, in our case, the zero-dimensional boundary strata corresponding to $P_2$ cannot be identified as Shimura varieties without taking quotients.   
	
To resolve this problem, we need to systematically modify the arguments in \cite[Chap.~1]{morel2010book} whenever they concern zero-dimensional boundary strata. Roughly speaking, Morel's formula for $\mathrm T_P$ is a mixture of two formulas: the fixed point formula of Kottwitz for a Shimura variety associated to $G_P$, and the fixed point formula of Goresky--Kottwitz--MacPherson for a locally symmetric space associated to $L_P$. In our case, we need to replace the ``Shimura variety associated to $ G_P$'' by a finite quotient of it, and meanwhile replace the ``locally symmetric space associated to $L_P$'' by a finite covering of it. Fortunately, we only need these generalizations in very simple situations, and the extra complication is mainly of a combinatorial nature.
	
	\subsection{}\label{discussion:3}
We now discuss another ingredient in Morel's proof of \cite[Thm.~1.7.1]{morel2010book}, namely the construction of suitable integral models. In \cite[\S 1.3]{morel2010book} Morel provides two approaches to the construction of the integral model of the Baily--Borel compactification, for which Pink's formula (see \cite[Thm.~1.2.3]{morel2010book} and \cite[p.8 item (6)]{morel2010book}) holds, among other things. The first approach, \cite[Prop.~1.3.1]{morel2010book}, applies Lan's work \cite{lanbook} to construct the integral model away from a controlled finite set of bad primes. This approach is valid in the PEL-type case. The second approach, \cite[Prop.~1.3.4]{morel2010book}, is applicable in much more general situations, but it only constructs the integral model away from an \emph{uncontrolled} finite set of primes. Although Lan's work has been generalized by Madapusi Pera \cite{peratoroidal} to the case of Hodge type, our Shimura datum $\mathbf O(V)$ of abelian type is still beyond the applicability.\footnote{In \cite{lanstrohII}, Lan--Stroh have given a ``crude'' construction of the integral models of the Baily--Borel compactifications in the case of abelian type. However, since good integral models of \emph{toroidal compactifications }are also implicitly needed in order to verify Pink's formula, their construction does not seem sufficient for our purpose.} 
Hence we have to follow Morel's second approach, losing control of the set of bad primes. 
This explains why in Theorem \ref{geometric assertion} the set $\Sigma$ is not made specific and may also depend on $\lambda$ and $f^{\infty}$. 

Nevertheless, we shall show (see Lemma \ref{lem:enlarge} below) that the localizations to almost all primes of the abstract integral models constructed by Morel's second approach can be compared with other known integral models of Shimura varieties in the expected way. In particular, for sufficiently large primes we are in a position to apply the result of Lan--Stroh \cite[Thm.~4.19]{lanstrohII}, which relates the intersection cohomology and compact support cohomology of the special fiber of the integral model to those of the generic fiber respectively. 
\subsection*{Outline of the proof}
In \S \ref{subsec:gen2}, we prove an analogue of the fixed point formula of Goresky--Kottwitz--MacPherson for certain double coverings of locally symmetric spaces. The main result is Proposition \ref{prop:generalization ofGKM}. In \S \ref{subsec:gen1}, we study certain finite quotients of zero-dimensional Shimura varieties that will appear on the boundary of $\overline {\Sh_K}$. We develop the analogues of various constructions in \cite[Chap.~1]{morel2010book} for these quotients. The main results are Propositions \ref{prop:generalization of0-dim Shim} and  \ref{prop:variant of coh corr}. In \S \ref{subsec:axioms}, we explain how Morel's axioms in \cite[\S 1.1]{morel2010book} should be modified to suit our situation. In \S \ref{subsec:int mod}, we construct the integral models away from an uncontrolled set of bad primes, and compare the localizations of these models at almost all primes with other known integral models. In \S \ref{subsec:finish of proof}, we assemble all the ingredients and explain how to modify the proof of \cite[Thm.~1.7.1]{morel2010book} to prove our Theorem \ref{geometric assertion}. 

\section[A fixed point formula for double coverings]{A fixed point formula for some double coverings of locally symmetric spaces}\label{subsec:gen2}
\subsection{}\label{subsubsec:group setting for GKM variant}
Let $L$ be a reductive group over $\QQ$. We assume that $\pi_0(L(\RR)) \cong \ZZ/2\ZZ$. By the real approximation theorem, $L(\QQ) ^+ : = L(\QQ) \cap L(\RR) ^0$ is of index $2$ in $L(\QQ)$. We also assume that a minimal Levi subgroup $L_0$ of $L_{\RR}$ satisfies $\pi_0(L_0(\RR))\cong \ZZ/2\ZZ$. Then by Matsumoto's theorem (see \cite[14.4]{boreltits}), for any Levi subgroup $L'$ of $L_{\RR}$, the inclusion $L'(\RR) \hookrightarrow L(\RR)$ induces an isomorphism $\pi_0(L'(\RR)) \isom \pi_0(L(\RR))$. Now for each Levi subgroup $L'$ of $L$ defined over $\QQ$, we set \index{$L'(\QQ)^+$}
$$L'(\QQ)^+ : = L'(\QQ)\cap L'(\RR)^+,$$ which is of index $2$ in $L'(\QQ)$. 

\subsection{}\label{subsubsec:setting for GKM variant}
Let $L$ be as in \S \ref{subsubsec:group setting for GKM variant}. Let $K$ be a neat compact open subgroup of $L(\adele_f)$. Let $X_L$ be the symmetric space associated to $L_{\RR}$ as in Definition \ref{defn:q(G)}. We have the usual \emph{locally symmetric space} \index[n]{locally symmetric space} \index{$\M^K$}
$$\M^K : = L(\QQ) \backslash X_L \times L(\adele_f)/K, $$
as considered in \cite[\S 7]{GKM} and \cite[Chap.~1]{morel2010book}.
We shall consider the following variant of $\M^K$: \index{$\M^K_{\shall}$}
\begin{align*}
\M^K_{\shall}  : = L(\QQ) ^+ \backslash X_L \times L(\adele_f) / K.
\end{align*} We call $\M^K_{\shall}$ a \emph{shallower locally symmetric space}\index[n]{shallower locally symmetric space}. Both $\M^K$ and $\M^K_{\shall}$ are smooth real manifolds, and the natural map $\M^K_{\shall} \to \M^K$ is easily seen to be a double covering. 

Let $\mathbb W$ be an algebraic representation of $L_{\CC}$. Denote by $\mathcal F^K \mathbb W$\index{$\mathcal F^K \mathbb W$} the sheaf on $\M^K$ of local sections of the map
$$ L(\QQ) \backslash (\mathbb W \times X_L \times L(\adele_f)/ K) \To \M^K. $$ 
Denote by $R\Gamma_c (K, \mathbb W)$ \index{$R\Gamma_c (K, \mathbb W)$} the virtual alternating sum of the compact support cohomology $\coh^*_c (\M^K, \mathcal F^K \mathbb W)$. Similarly, we let $\mathcal F^K_{\shall} \mathbb W$\index{$\mathcal F^K_{\shall} \mathbb W$} be the sheaf on $\M^K_{\shall}$ of local sections of the map
$$ L^+(\QQ) \backslash (\mathbb W \times X_L \times L(\adele_f) / K) \To \M^K_{\shall}, $$ and denote by $R\Gamma_{c,\shall} (K, \mathbb W)$\index{$R\Gamma_c^{\shall} (K, \mathbb W)$} the virtual alternating sum of the compact support cohomology $\coh^*_c (\M^K_{\shall}, \mathcal F^K_{\shall} \mathbb W)$, cf.~\cite[\S 1.2]{morel2010book}.

Fix $g\in L(\adele_f)$, and let $K'\subset L(\adele_f)$ be another compact open subgroup such that $K' \subset K\cap gKg^{-1}$. Analogous to \cite[p.~22]{morel2010book}, we have finite \'etale Hecke operators 
$$T_1, T_g: \M^{K'}_{\shall} \To  \M^K_{\shall}.$$  As in \cite[Thm.~1.6.6]{morel2010book}, the natural cohomological correspondence $$T_g^* \mathcal F^K_{\shall} \mathbb W \To T_1^! \mathcal F^K_{\shall} \mathbb W$$ gives rise to an endomorphism $u_g$ of $R\Gamma_{c,\shall} (K, \mathbb W)$. \footnote{Note that Morel \cite[Thm.~1.6.6]{morel2010book} and Goresky--Kottwitz--MacPherson \cite{GKM} follow different conventions concerning the definition of $u_g$; see \cite[Rmk.~1.6.7]{morel2010book}. We follow Morel's convention here.}

Let $l_0$ denote the non-trivial element of $L(\QQ)/L(\QQ)^+$. We have a natural action of $L(\QQ)/L(\QQ)^+$ on $\M^K_{\shall}$, induced by the diagonal left action of $L(\QQ)$ on $X_L\times L(\adele_f)$. Under this action the covering  $\M^K_{\shall} \to \M^K $ is a $L(\QQ)/L(\QQ)^+$-torsor. The sheaf $\mathcal F^K_{\shall} \mathbb W$ has a natural $L(\QQ)/L(\QQ)^+$-equivariant structure, and so $l_0$ induces an endomorphism, still denoted by $l_0$, of $R\Gamma_{c,\shall} (K,\mathbb W)$. This endomorphism commutes with $u_g$. The following result is a variant of \cite[Thm.~1.6.6]{morel2010book}, the latter being a special case of \cite[Thm.~7.14 B]{GKM}.
\begin{prop}\label{prop:generalization ofGKM}
	In the setting of \S \ref{subsubsec:setting for GKM variant}, we have\begin{multline}\label{eq: gen GKM 1}
	\Tr(u_g \mid  R\Gamma_{c,\shall} (K, \mathbb W))  = 2\sum_{L' }  (-1)^{\dim (A_{L'}/A_L)}  (n^L_{L'}) ^{-1}  \sum_{\gamma} \iota^{L'}(\gamma) ^{-1} \chi ((L'_{\gamma} )^0)   \\ \cdot O_{\gamma} (f_{L'}^{\infty}) \abs{D ^L_{L'} (\gamma)} ^{1/2} \Tr(\gamma\mid  \mathbb W) , \end{multline} and 
\begin{multline}
	 \label{eq: gen GKM 2} 
	\Tr(u_g l_0\mid  R\Gamma_{c,\shall} (K, \mathbb W))  = 2\sum_{L' } (-1)^{\dim (A_{L'}/A_L)}  (n^L_{L'}) ^{-1} \sum_{\gamma} \iota^{L'}(\gamma) ^{-1} \chi ((L'_{\gamma} )^0)  \\  \cdot O_{\gamma} (f_{L'}^{\infty}) \abs{D ^L_{L'} (\gamma)} ^{1/2} \Tr(\gamma\mid  \mathbb W).
\end{multline}
	Here:
	\begin{itemize}
		\item  In both (\ref{eq: gen GKM 1}) and (\ref{eq: gen GKM 2}), the first sum is over $L(\QQ)$-conjugacy classes of Levi subgroups $L'$ of $L$.
		\item In (\ref{eq: gen GKM 1}) (resp.~(\ref{eq: gen GKM 2})), the second sum is over $L'(\QQ)$-conjugacy classes $\gamma$ in $L'(\QQ)^+$ (resp.~$L'(\QQ) - L'(\QQ) ^+$) that are $\RR$-elliptic in $L'(\RR)$ in the sense of Definition \ref{defn:R-ell}.  	
		\item We denote by $f^{\infty}$ the function $1_{gK}/\vol(K') \in C^{\infty}_c(L(\adele_f))$, and let $f^{\infty}_{L'}$ be the constant term of $f^{\infty}$ along $L'$, cf.~Definition \ref{Defn Tr12}. 	
		\item All the other terms on the right hand sides of (\ref{eq: gen GKM 1}) and (\ref{eq: gen GKM 2}) are defined in the same way as in \cite[Thm.~1.6.6]{morel2010book}, cf.~Definition \ref{Defn Tr12}. 
	\end{itemize} 
\end{prop}
\begin{proof}
	The formula	(\ref{eq: gen GKM 1}) follows from similar arguments as in \cite[\S 7]{GKM}. The key point is that the main tools used in \textit{loc.~cit.}, namely the reductive Borel--Serre compactification\index[n]{reductive Borel--Serre compactification} and the weighted complexes on it, are still available in the current setting. In fact, these objects were studied in \cite{GHM} in the non-adelic setting, where one is allowed to replace any given arithmetic subgroup by an arbitrary finite-index subgroup. Hence by the standard translation between the adelic and the non-adelic languages, we can consider the reductive Borel--Serre compactification of $\M^K_{\shall}$, as well as weighted complexes on it. The arguments in \cite[\S 7]{GKM} can be easily transported to this new setting. 
	
	We explain some more details. Fix a minimal parabolic subgroup $P_0$ of $L$, and fix a Levi component $L_0$ of $P_0$. For any standard parabolic subgroup $P$ of $L$ (i.e.~one that contains $P_0$), we denote by $L_P$ the Levi component of $P$ containing $L_0$, and denote by $N_P$ the unipotent radical of $P$. As in \cite[\S 7]{GKM}, the reductive Borel--Serre compactification of the usual locally symmetric space $\M^K$ has a stratification indexed by the standard parabolic subgroups $P$ of $L$. The stratum indexed by $P$ is of the form \begin{align}\label{form of stratum}
	L_P(\QQ)\backslash [ (N_P(\adele_f) \backslash L(\adele_f) /K) \times X_{L_P} ].
	\end{align} In \cite[\S 7]{GKM}, one considers the spaces $\mathrm{Fix} (P, x_0,\gamma)$, where $P$ runs through the standard parabolic subgroups of $L$, $x_0$ runs through representatives of the double cosets in $P(\adele_f) \backslash L(\adele_f)/ K'$, and $\gamma$ runs through conjugacy classes in $L_P(\QQ)$. Each space $\mathrm{Fix} (P,x_0, \gamma)$ is of the form
	$$ \mathrm{Fix} (P, x_0, \gamma) =  L_{P,\gamma} (\QQ) \backslash (Y^{\infty} \times Y_{\infty}).$$
	We refer the reader to \cite[p.~523]{GKM} for the definition of $Y^{\infty} $ and $Y_{\infty}$.
	
	For us, the reductive Borel--Serre compactification of $\M^K_{\shall}$ still has a stratification indexed by the standard parabolic subgroups $P$ of $L$, and the stratum indexed by $P$ is of the form 
	\begin{align}\label{form of stratum'}
	L_P(\QQ)^+ \backslash [ (N_P(\adele_f) \backslash L(\adele_f) /K) \times X_{L_P} ].
	\end{align} Comparing (\ref{form of stratum}) and (\ref{form of stratum'}), it is clear that if one is to count the fixed points of the cohomological correspondence in the same way as in \cite[\S 7]{GKM}, one should consider 
	\begin{align}\label{eq:to consider instead of GKM}
	\coprod_{P, x_0, \gamma} \mathrm{Fix}' (P,x_0,\gamma),
	\end{align}
	where $P$ runs through the standard parabolic subgroups of $L$, $x_0$ runs through representatives of the double cosets in $P(\adele_f) \backslash L(\adele_f)/ K'$, $\gamma$ runs through conjugacy classes in $L_P(\QQ)^+$, and 
	$$\mathrm{Fix}' (P,x_0,\gamma): = L_P(\QQ)^+_{\gamma} \backslash (Y^{\infty} \times Y_{\infty}). $$
	Here $L_P(\QQ)^+_{\gamma}$ denotes the centralizer of $\gamma$ in $L_P(\QQ)^+$
	
	Let $P$ be a standard parabolic subgroup of $L$. For $\gamma\in L_P(\QQ)^+$, we say that $\gamma$ is \emph{of first kind} if $L_{P,\gamma}(\QQ) \subset L_P(\QQ)^+$, and \emph{of second kind} if otherwise. When $\gamma$ is of first kind, the $L_P(\QQ)$-conjugacy class of $\gamma$ is the disjoint union of two $L_P(\QQ)^+$-conjugacy classes, and we have $\mathrm{Fix} ' (P, x_0, \gamma) = \mathrm{Fix} (P,x_0, \gamma)$. When $\gamma$ is of second kind, the $L_P(\QQ)$-conjugacy class of $\gamma$ is the same as the $L_P(\QQ)^+$-conjugacy class of $\gamma$, and $\mathrm {Fix}' (P,x_0 ,\gamma)$ is a double covering of $\mathrm{Fix} (P,x_0,\gamma)$. 
	From this discussion, we see that the space (\ref{eq:to consider instead of GKM}) is the same as \begin{align}\label{eq:to consider instead of GKM'}
	\coprod_{P, x_0, \gamma} \mathrm{Fix}'' (P,x_0,\gamma),
	\end{align}
	where $P$ and $x_0$ run through the same indexing sets as before, $\gamma$ runs through $L_P(\QQ)$-conjugacy classes in $L_P(\QQ)^+$, and 
	$\mathrm{Fix''} (P,x_0, \gamma) $ is the disjoint union of two copies of  $\mathrm{Fix} (P, x_0 ,\gamma)$ if $\gamma$ is of first kind, and is equal to $	\mathrm{Fix}' (P, x_0 ,\gamma) $ if $\gamma$ is of second kind.
	
	From the above discussion, the compact support Euler characteristic (see \cite[\S 7.10, 
\S 7.11]{GKM}) of $\mathrm{Fix''} (P,x_0, \gamma)$ is equal to twice that of $\mathrm{Fix} (P,x_0,\gamma)$. 
	
	In the qualitative discussion in \cite[\S 7.12]{GKM}, the contribution from $\mathrm{Fix} (P, x_0,\gamma)$ to the Lefschetz formula is a product of three factors (1), (2), and (3), where factor (1) is the compact support Euler characteristic of $\mathrm  {Fix} (P, x_0 ,\gamma)$. For us the contribution from $\mathrm{Fix} ''(P, x_0,\gamma)$ is also a product of three analogous factors, where our factors (2) and (3) are identical to those in \textit{loc.~cit.}, and our factor (1) is two times the factor (1) in \textit{loc.~cit.}~as we have already seen. Therefore, analogous to \cite[(7.12.1)]{GKM}, we have the following expression for the Lefschetz formula:
	\begin{align}\label{expression}
	\sum_{ P} \sum_{\gamma} 2 (-1) ^{\dim A_I /\dim A_{L_P}} \abs{L_{P,\gamma} (\QQ) / L_{\gamma}^0 (\QQ)} ^{-1} \chi (L_{\gamma}^0) \mathrm L_P^{\mathrm{GKM}} (\gamma) O_{\gamma} (f_P) ,
	\end{align}
	where $\gamma$ runs through the $L_P(\QQ)$-conjugacy classes in $L_P(\QQ)^+$ (instead of $L_P(\QQ)$), and the other notations are the same as in \textit{loc.~cit.~}except that we write $\mathrm L_P^{\mathrm{GKM}}(\cdot)$ for the function denoted by $L_P(\cdot)$ in \textit{loc.~cit.}. 
	
	Now the rest of the arguments in \cite[\S 7]{GKM} that deduce \cite[Thm.~7.14 B]{GKM} from \cite[(7.12.1)]{GKM} can be applied to (\ref{expression}). Also the elementary translation from \cite[Thm.~7.14 B, \S 7.17]{GKM} to the formula of \cite[Thm.~1.6.6]{morel2010book} carry over to imply (\ref{eq: gen GKM 1}). 
	
	We have proved (\ref{eq: gen GKM 1}). We now prove (\ref{eq: gen GKM 2}). We claim that \begin{align}
	\label{eq: sum of Lefschets}
	\Tr(u_g \mid  R\Gamma_{c,\shall} (K, \mathbb W))+ \Tr(u_gl_0 \mid  R\Gamma_{c,\shall} (K, \mathbb W))= 2\Tr (u_g \mid  R\Gamma _c (K,\mathbb W)).
	\end{align}
	Here we  abuse  notation and write $u_g$ also for the endomorphism of $R\Gamma_c (K,\mathbb W)$ induced by $g$. 
	Once (\ref{eq: sum of Lefschets}) is proved, the desired identity (\ref{eq: gen GKM 2}) follows from (\ref{eq: gen GKM 1}), (\ref{eq: sum of Lefschets}), and the formula for $\Tr (u_g \mid  R\Gamma _c (K,\mathbb W))$ given in \cite[Thm.~1.6.6]{morel2010book}. 
	
	We now prove (\ref{eq: sum of Lefschets}). Let $\pi$ denote the double covering map $\M^K_{\shall } \to \M^K$. We write $\mathscr F_{\shall}$ (resp.~$\mathscr F$) for the sheaf $\mathcal F^K_{\shall} \mathbb W$ (resp.~$\mathcal F^K\mathbb W$) on $\M^K_{\shall}$ (resp.~$\M^K$). Since $\mathscr F_{\shall} = \pi^* \mathscr F$, and since $\pi$ is a finite covering, we have
	\begin{align}\label{eq:ln}\coh^*_c(\M^K_{\shall}, \mathscr F_{\shall}) = \coh^*_c(\M^K_{\shall}, \pi^*\mathscr F)  = \coh^*_c (\M^K, \pi_* \pi^* \mathscr F).
	\end{align} For each character $\chi$ of the deck group $\Delta = \ZZ/2\ZZ$ of $\pi$, we let $\mathscr G_{\chi}$ be the local system on $\M^K$ given by the covering $\pi$ and the character $\chi$. 
	Combining (\ref{eq:ln}) and the projection formula $$\pi_* \pi ^* \mathscr F \cong \mathscr F  \otimes \bigoplus_{\chi: \Delta\to\CC^{\times}} \mathscr G_{\chi} ,$$ we obtain a decomposition 
	$$ \coh^*_c (\M^K_{\shall} ,\mathscr F_{\shall}) \cong \bigoplus_{\chi: \Delta \to \CC^{\times}}  \coh^*_c (\M^K, \mathscr F \otimes \mathscr G_{\chi}).$$ 
	This decomposition is equivariant with respect to $u_g$, and the direct summand $\coh^*_c (\M^K, \mathscr F \otimes \mathscr G_{\chi})$ corresponds to the $\chi$-eigenspace for the $\Delta$-action on the left hand side. The desired (\ref{eq: sum of Lefschets}) follows. 
\end{proof}

\section[Correspondences on  zero-dimensional Shimura varieties]{Cohomological correspondences on some zero-dimensional Shimura varieties}\label{subsec:gen1}

\subsection{} \label{subsubsec:zero dim SV}
Let $(\Gm, \HZ)$ be the zero-dimensional Siegel Shimura datum as in  \cite[2.8]{pink1989compactification}. Recall that $\HZ$ consists of two elements, and $\Gm(\RR)$ acts on $\HZ$ via the unique non-trivial action of $\pi_0(\Gm(\RR)) \cong \ZZ/2\ZZ$.
We now recall the construction of the associated zero-dimensional Shimura varieties, following \cite[11.3, 11.4]{pink1989compactification} and \cite[\S 5.5]{pink1992ladic}. 

As usual, we fix a neat compact open subgroup $K$ of $\Gm(\adele_f)$, and define the set of $\CC$-points of the Shimura variety as
$$\Sh_K(\CC) = \Sh_K(\Gm ,\HZ) (\CC): = \Gm(\QQ) \backslash \HZ \times \Gm(\adele_f)/ K. $$ 
There is a natural action of $\pi_0 (\Gm(\adele)/\Gm(\QQ))$ on the finite set $\Sh_K(\CC)$, from which we obtain an action of $\Gal(\overline \QQ / \QQ)$ on $\Sh_K(\CC)$ via the isomorphism 
\begin{align}\label{eq:CFT}
\Gal(\QQ ^{\mathrm{ab}} / \QQ) \isom \pi_0 (\Gm(\adele)/\Gm(\QQ)) 
\end{align} 
from class field theory (normalized such that geometric Frobenius elements correspond to uniformizers). The \emph{canonical model} $$\Sh_K=  \Sh_K(\Gm,\HZ)$$ is by definition the finite \'etale $\QQ$-scheme corresponding to the $\Gal(\overline \QQ/\QQ)$-set $\Sh_K(\CC)$. 

In fact, using the transitivity of the $\pi_0 (\Gm(\adele)/\Gm(\QQ))$-action on $\Sh_K(\CC)$, we can describe $\Sh_K$ more explicitly as follows. The inclusion $\hat \ZZ^\times \subset \Gm(\adele_f)$ induces an isomorphism $\hat \ZZ ^{\times} \isom \pi_0(\Gm(\adele)/\Gm(\QQ))$. We thus identify $\hat \ZZ^\times$ with $\Gal(\QQ^{\ab}/\QQ)$ via (\ref{eq:CFT}). (According to our normalization, this identification is induced by the Gauss isomorphisms $(\ZZ/m\ZZ )^\times \isom \Gal(\QQ(\zeta_m)/\QQ), k+ m\ZZ \mapsto (\zeta_m \mapsto \zeta_m^k)$.) Let $F_K/\QQ$\index{$F_K$} be the finite abelian extension corresponding to the open subgroup $K \subset \hat \ZZ^{\times} \cong \Gal(\QQ^{\ab}/\QQ)$. Then we have a canonical identification $$\Sh_K\cong  \spec F_K. $$ 
From this description, it is clear that $\varprojlim_{K} \Sh_K \cong \spec \QQ^{\ab}$.

Observe that the non-identity bijection $\HZ \to \HZ$ induces a bijection $\Sh_K(\CC) \to \Sh_K(\CC)$ which is $\pi_0(\Gm(\adele)/\Gm(\QQ))$-equivariant. From this we obtain an automorphism of the $\QQ$-scheme $\Sh_K$, denoted by $\sigma_{\infty}$\index{$\sigma_{\infty}$}. If we identify $\Sh_K $ with $\spec F_K$ as above, then $\sigma_{\infty}$ is given by the complex conjugation acting on $F_K$. Moreover, since $K$ is neat, we have $\QQ^{\times} \cap K = \set{1}$, and it follows that $\sigma_{\infty}$ is always a non-trivial automorphism of $\Sh_K$ (or equivalently, $F_K$ is always totally complex).

We denote by $\Sh_K^{\flat}$\index{$\Sh_K^{\flat}$} the quotient of $\Sh_K$ by $\sigma_{\infty}$. Thus $\Sh_K^{\flat} \cong \spec {F_K^{\flat}}$, where $F_K^{\flat}$\index{$F_K^{\flat}$} is the maximal totally real subfield of $F_K$. Alternatively, $\Sh_K^{\flat}$ is the Shimura variety at level $K$ associated to the Shimura datum $(\Gm , \set{\N_{\CC/\RR}: \mathbb S \to \GG_{m,\RR}})$. 

 We shall need a common generalization of the $\QQ$-schemes $\Sh_K$ and $\Sh_K^\flat$. First we define the generalization of a level subgroup.  
\ignore{\begin{rem}
	For any neat compact open subgroup $K$ of $\Gm(\adele_f)$, the $\QQ$-scheme $\Sh_K^{\flat}$ could be identified with the canonical model of the Shimura variety at level $K$ associated to the Shimura datum $(\Gm, \HZ')$, where $\HZ'$ is the image of $\HZ$ in $\Hom(\mathbb S, \GmR)$. For example, if $K$ is the principal congruence subgroup $\set{g \in \hat \ZZ^\times| g \equiv 1 \mod m}$ with $m\in \ZZ_{\geq 3}$, then $K$ is neat, and we have $\Sh_K = \spec \QQ(\zeta_m)$, $\Sh_K^{\flat} = \spec \QQ(\zeta_m + \zeta_m^{-1})$. \end{rem}  }
 
  \begin{defn}\label{defn:neat in the broad sense} We say that a subgroup $U$ of $\Gm(\adele_f)\times \ZZ/2\ZZ$ is an \emph{admissible level}\index[n]{admissible level}, if there are neat compact open subgroups $K_1$ and $K_2$ of $\Gm(\adele_f)$ such that $$K_1 \times \set{0} \subset U \subset K_2 \times \ZZ/2\ZZ. $$ 
 \end{defn}
 \subsection{}\label{subsubsec:generalized SV} Note that for any neat compact open subgroup $K \subset \Gm(\adele_f)$, we have $K\subset \hat \ZZ^{\times}$, and the element $-1 \in \hat \ZZ^{\times}$ is not in $K$. Thus $K \times \ZZ/2\ZZ$ can be identified with a subgroup of $\hat \ZZ^{\times}$, where the non-trivial element of $\ZZ/2\ZZ$ corresponds to $-1 \in \hat \ZZ^{\times}$. It follows that every admissible level $U$ as in Definition \ref{defn:neat in the broad sense} can be canonically identified with an open subgroup of $\hat \ZZ^{\times} \cong \Gal(\QQ^{\ab}/\QQ)$, and thus determines a finite abelian extension $F_U/\QQ$. We define
 \begin{align*}
\Sh_U = \Sh_U(\Gm,\HZ)&: = \spec F_U.
 \end{align*}
When $U\subset \Gm(\adele_f)$, the current definition of $\Sh_U$ agrees with the one in \S \ref{subsubsec:zero dim SV}. Also, if $K$ is a neat compact open subgroup of $\Gm(\adele_f)$, then $K\times \ZZ/2\ZZ$ is an admissible level and we have $\Sh_{K\times \ZZ/2\ZZ} = \Sh_K^{\flat}$. 
 
 The usual Hecke operators can be generalized to this new setting as follows. Let $U$ be an admissible level, and let $g \in \Gm(\adele_f)\times \ZZ/2\ZZ$. We shall define an automorphism 
 $$[\cdot g]_U : \Sh_U \To \Sh_U. $$
For this we identify $g$ with an element of $\Gm(\adele)$ by identifying $\ZZ/2\ZZ$ with $\set{\pm 1} \subset \RR^{\times}$. Then $g$ determines an element $\rho(g)\in \Gal(\QQ^{\ab}/\QQ)$ via the inverse of (\ref{eq:CFT}). We define $[\cdot g]_U$ to be the automorphism of $\Sh_U = \spec F_U$ corresponding to the restriction of $\rho(g)$ to $F_U$.  

If $U'$ is another admissible level contained in $U$, then we have a natural map $\Sh_{U'} \to \Sh_U$, and the two compositions 
$$
\xymatrix{
\Sh_{U'}   \ar[r]  & \Sh_U  \ar[r]^{[\cdot g]_U} & \Sh_ U , \\
\Sh_{U'} \ar[r]^{[\cdot g]_{U'}}&  \Sh_ {U'} \ar[r]&  \Sh_U } 
$$ are equal. We denote them by $[\cdot g]_{U', U}$. 

If $K$ is a neat compact open subgroup of $\Gm(\adele_f)$ and $g\in \Gm(\adele_f)$, then the above definition of $[\cdot g]_K$ recovers the usual Hecke operator on $\Sh_K$. If $\epsilon$ denotes the non-trivial element of $\ZZ/2\ZZ$, then $[\cdot \epsilon]_K$ is the automorphism $\sigma_{\infty}$ of $\Sh_K$ as in \S \ref{subsubsec:zero dim SV}. 
     
For an admissible level $U$, we define \index{$\Shh_U(\Gm,\HZ)$}
$$\Shh_U = \Shh_U(\Gm,\HZ) : = \spec \oo_{F_U},$$ and call it the \emph{canonical integral model}\index[n]{canonical integral model} of $\Sh_U$. The Hecke operators $[\cdot g]_U$ and $[\cdot g]_{U',U}$ as above uniquely extend to the canonical integral models. 
\begin{lem}\label{lem:Galois groups}
	Let $U_1$ and $U_2$ be two admissible levels with $U_1 \subset U_2$. Then the following statements hold.
	\begin{enumerate}
		\item The natural map $\Sh_{U_1} \to \Sh_{U_2}$ is a Galois covering and a $U_2/U_1$-torsor.
		\item Let $p$ be a prime number such that $\ZZ_p^{\times} \subset U_1$. (Here $\ZZ_p^\times$ is viewed as a subgroup of $\Gm(\adele_f) \subset \Gm(\adele_f)\times \ZZ/2\ZZ$.) Then $\Shh_{U_i}\otimes_{\ZZ} \ZZ_{(p)}$ are finite \'etale over $\ZZ_{(p)}$ for $i = 1,2$. Moreover, the natural map $\Shh_{U_1}\otimes_{\ZZ} \ZZ_{(p)} \to\Shh_{U_2}\otimes_{\ZZ} \ZZ_{(p)}$ is a Galois covering and a $U_2/U_1$-torsor. 
	\end{enumerate}
\end{lem} 
\begin{proof} Statement (1) is just Galois theory. To show (2), we observe that $p$ is unramified in $F_{U_1}$ and $F_{U_2}$ by class field theory. 
\end{proof}

\subsection{}\label{subsubsec:setting for zero dim SV}
Let $L$ be a reductive group over $\QQ$, and fix a continuous action of $L(\RR)$ on the set $\HZ$. We write $L(\QQ)^\natural$\index{$L(\QQ)^\natural$} for $\mathrm{Cent}_{L(\QQ)} \HZ$. Thus $L(\QQ)^{\natural}$ is a normal subgroup of $L(\QQ)$ of index at most $2$. We have a canonical injection 
\begin{align}\label{eq:can inj}
L(\QQ) /L(\QQ)^{\natural} \hookrightarrow \Aut(\HZ) = \ZZ/2\ZZ.
\end{align}

Let $M = \Gm \times L.$ Thus the group $M(\RR)$ acts on $\HZ$, where we let $\Gm(\RR)$ act as in \S \ref{subsubsec:zero dim SV}. Let $K_M$ be a neat compact open subgroup of $M(\adele_f)$. Define \index{$K_{\da}$}
\begin{align}\nonumber
K_{\da} : = K_M/(K_M\cap L(\adele_f)).
\end{align} We identify $K_{\da}$ with the image of $K_M$ under the projection $M(\adele_f) \to \Gm(\adele_f)$. Since $K_M$ is a neat compact open subgroup of $M(\adele_f)$, we know that $K_{\da}$ is a neat compact open subgroup of $\Gm(\adele_f)$. Define the following subgroups\footnote{In the application, typically $M$ will be the Levi quotient of a parabolic subgroup $P$ of a reductive group $G$, and we reserve the notations $H, H_L^{\natural}$ for certain subgroups of $P(\adele_f)$ whose images in $M(\adele_f)$ are the subgroups $\bar H, \bar H_L^{\natural}$ defined here, cf.~\S \ref{subsubsec:relation}.} of $M(\adele_f)$:\index{$\bar H$} \index{$\bar H_L^{\natural}$} 
\begin{align} \label{eq:bar H}
\bar H  & : = K_M \cap( \Gm(\adele_f) L(\QQ)),\\ \label{eq:bar H_L^natural}
\bar H_L^{\natural} & : = K_M \cap L(\QQ)^{\natural}.
\end{align}
Note that $\bar H_L^{\natural}$ is a normal subgroup of $\bar H$. We define \index{$\check H$} $$\chH : = \bar H/\bar H_L^{\natural}.$$ We have a natural homomorphism $\chH \to \Gm(\adele_f)$ induced by the projection map $K_M \to \Gm(\adele_f)$, and a natural homomorphism $\chH \to \ZZ/2\ZZ$ induced by the composition 
$$ \Gm(\adele_f) L(\QQ) \to L(\QQ) \to L(\QQ)/ L(\QQ)^{\natural} \xrightarrow{(\ref{eq:can inj})} \ZZ/2\ZZ,$$ where the first map is the projection to the second factor. Taking the product, we obtain a homomorphism $\chH \to \Gm(\adele_f) \times \ZZ/2\ZZ $ which is injective. We use it to view $\chH$ as a subgroup of $\Gm(\adele_f) \times \ZZ/2\ZZ$.

\begin{lem}\label{lem:explaining the H groups}
In the setting of \S \ref{subsubsec:setting for zero dim SV}, the following statements hold.\begin{enumerate}
	\item We have $\bar H_L^{\natural} = K_M \cap (\mathrm{Cent}_{M(\QQ)} \HZ)$. 
	\item The subgroup $\chH$ of $\Gm(\adele_f)\times \ZZ/2\ZZ$ is an admissible level. 
	\end{enumerate}
\end{lem}
\begin{proof}
	For (1), the containment $\bar H_L^{\natural} \subset  K_M \cap (\mathrm{Cent}_{M(\QQ)} \HZ)$ is clear. For the reverse containment, let $g\in \Gm(\QQ)$ and $l\in L(\QQ)$ be such that $gl \in K_M \cap (\mathrm{Cent}_{M(\QQ)} \HZ)$. Then $g \in K_{\da} \cap \Gm(\QQ)$, which is the trivial group by the neatness of $K_{\da}$. Hence $gl = l$, and $l \in L(\QQ)^{\natural}$. This shows (1). For (2), we let $K_1=K_M\cap \Gm(\adele_f)$ and $K_2= K_{\da}$. Then $K_1$ and $K_2$ are neat compact open subgroups of $\Gm(\adele_f)$, and we have 
	$K_1 \times \set{0} \subset \chH \subset K_2 \times \ZZ/2\ZZ$.  
\end{proof}
\subsection{}\label{subsubsec:defn of Sh_H}
We keep the setting of \S \ref{subsubsec:setting for zero dim SV}. By Lemma \ref{lem:explaining the H groups} (2), $\chH$ is an admissible level. Applying the construction in \S \ref{subsubsec:generalized SV}, we obtain a $\QQ$-scheme $\Sh_{\chH}$ and a $\ZZ$-scheme $\Shh_{\chH}$. 

By definition, the profinite Galois covering $\spec \QQ^{\ab} \to \Sh_{\chH}$ is a $\chH$-torsor. We may thus construct \'etale sheaves on $\Sh_{\chH}$ associated to suitable $\chH$-modules. More precisely, let $\mathrm{Rep}_{M}$\index{$\mathrm{Rep}_{M}$} be the category of finite-dimensional algebraic representations of $M$ on $\mathbb E_{\lambda}$-vector spaces (where $\mathbb E$ and $\lambda$ are as in \S \ref{subsubsec:V and lambda}). Let $D^b(\mathrm{Rep}_{M})$\index{$D^b(\mathrm{Rep}_{M})$} be the bounded derived category of $\mathrm{Rep}_{M}$ (i.e., the category of graded objects of $\mathrm{Rep}_{M}$ of finite length, as $\mathrm{Rep}_{M}$ is semi-simple). As explained in \cite[\S 1.2]{morel2010book} and \cite[\S 2.1.4]{morelthese}, we have an additive triangulated functor \index{$\mathcal F^{\chH} R\Gamma(\bar H_L^{\natural}, - )$}
\begin{align}\label{eq:complicated functor}
\mathcal F^{\chH} R\Gamma(\bar H_L^{\natural}, - ) : D^b(\mathrm{Rep}_{M}) \To  D^b_c(\Sh_{\chH}, \mathbb E_{\lambda}).
\end{align} Roughly speaking, to compute this functor at $\mathbb W \in D^b(\mathrm{Rep_{M}})$, one first applies the right derived functor of $\coh^0(\bar H_L^{\natural},-)$ to $\mathbb W$ to get a complex of $\chH$-modules, and then uses this complex and the $\chH$-tower $\spec \QQ^{\ab} \to \Sh_{\chH}$ to construct a complex of $\mathbb E_{\lambda}$-sheaves on $\Sh_{\chH}$. We refer the reader to \cite[\S 2.1.4, G\'en\'eralisation]{morelthese} for the precise construction.\footnote{It is assumed in \textit{loc.~cit.~}that $L(\QQ) = L(\QQ)^{\natural}$, but this assumption can be removed without affecting any of the arguments.}

Using (\ref{eq:complicated functor}), we define the following functor, which can be viewed as a compact support analogue: \index{$\mathcal F^{\chH} R\Gamma_c(\bar H_L^{\natural},-) $}
\begin{align}\label{eq:compact supp 1}
\mathcal F^{\chH} R\Gamma_c(\bar H_L^{\natural},- ) : D^b(\mathrm{Rep}_{M}) & \To D^b_c(\Sh_{\chH'} ,\mathbb  E_{\lambda}),  \\ \nonumber
\mathbb W & \longmapsto D\bigg( \mathcal F^{\chH} R\Gamma(\bar H_L^{\natural}, \mathbb W^*)[2q(L_{\RR})] \bigg),
\end{align}
where $D(\cdot)$ denotes the Verdier dual, $\mathbb W^*$ denotes the contragredient of $\mathbb W$, and $q(L_{\RR})$ is as in Definition \ref{defn:q(G)}. 

Similarly, let $\mathrm{Rep}_{\Gm\times \ZZ/2\ZZ}$\index{$\mathrm{Rep}_{\Gm\times \ZZ/2\ZZ}$} be the category of finite-dimensional algebraic representations of $\Gm\times \ZZ/2\ZZ$ on $\mathbb E_{\lambda}$-vector spaces, and let $D^b(\mathrm{Rep}_{\Gm\times \ZZ/2\ZZ})$\index{$D^b(\mathrm{Rep}_{\Gm\times \ZZ/2\ZZ})$} be the bounded derived category. (Here we view $\ZZ/2\ZZ$ as a constant group scheme.) We have an additive triangulated functor \index{$\mathcal F^{K_{\da}\times \ZZ/2\ZZ}(-)$}
\begin{align}\label{eq:functor 1 for K flat}
\mathcal F^{K_{\da}\times \ZZ/2\ZZ}(-) : D^b(\mathrm{Rep}_{\Gm\times \ZZ/2\ZZ}) \To D^b_c(\Sh_{K_{\da}\times \ZZ/2\ZZ}, \mathbb E_{\lambda}) =D^b_c(\Sh_{K_{\da}}^{\flat}, \mathbb E_{\lambda}) 
\end{align} given as follows. Let $\mathbb W \in \mathrm{Rep}_{\Gm\times \ZZ/2\ZZ} $. First viewing $\mathbb W$ as an algebraic representation of $\Gm$, we obtain the associated automorphic $\mathbb E_{\lambda}$-sheaf on $\Sh_{K_{\da}}$ as usual (see \S \ref{subsubsec:V and lambda}). We then use the $\ZZ/2\ZZ$-action on $\mathbb W$ to define the descent datum with respect to the double covering $\Sh_{K_{\da}} \to \Sh_{K_{\da}} ^{\flat}$, and obtain an $\mathbb E_{\lambda}$-sheaf on $\Sh_{K_{\da}} ^{\flat}$. Equivalently, we let the Galois group $\Gamma $ of $\spec \QQ^{\ab} \to \Sh_{K_{\da}}^{\flat}$, namely $\Gamma = K_{\da} \times \ZZ/2\ZZ \subset \hat \ZZ^{\times}$, act on $\mathbb W$ via the projection $\Gamma \to \Gm(\QQ_{\ell}) \times \ZZ/2\ZZ$ followed by the canonical action of $\Gm(\QQ_{\ell}) \times \ZZ/2\ZZ$ on $\mathbb W$. We then obtain an $\mathbb E_{\lambda}$-sheaf on $\Sh_{K_{\da}}^{\flat}$ via the $\Gamma$-torsor  $\spec \QQ^{\ab} \to \Sh_{K_{\da}}^{\flat}$ and the $\Gamma$-representation $\mathbb W$.
\subsection{} \label{subsubsec:RGammaU} We keep the setting of \S \ref{subsubsec:setting for zero dim SV}. Let $D^b(\mathrm{Rep}_{M})$ and $D^b(\mathrm{Rep}_{\Gm\times \ZZ/2\ZZ})$ be as in \S \ref{subsubsec:defn of Sh_H}. For any neat compact open subgroup $U \subset L(\adele_f)$, we shall construct a functor \index{$R\Gamma_{\natural}(U, -)$}
\begin{align}\label{eq:functor 2 for K flat}
R\Gamma_{\natural}(U,-) : D^b(\mathrm{Rep}_{M}) \To D^b(\mathrm{Rep}_{\Gm\times \ZZ/2\ZZ}).
\end{align}
The construction is similar to the one described in \cite[Rmk.~1.5.2 (1)]{morel2010book}. Consider the space \index{$\M^{U}_{\natural}$}
$$\M^{U}_{\natural} : = L(\QQ)^{\natural}\backslash X_L \times L(\adele_f)/U, $$ where $X_L$ is as in Definition \ref{defn:q(G)}. Thus $\M^{U}_{\natural}$ is a variant of the usual locally symmetric space 
$\M^U$, cf.~\S \ref{subsubsec:setting for GKM variant}. We know that $\M^U_{\natural}$ is a smooth manifold, and the natural map $\M^U_{\natural} \to \M^U$ is a covering map of degree $[L(\QQ): L(\QQ)^{\natural}]$. (In our later application, $L$ will satisfy the assumptions in \S \ref{subsubsec:group setting for GKM variant} and we will have $L(\QQ)^{\natural} = L(\QQ)^+$, so $\M^U_{\natural}$ is the same as $\M^U_{\shall}$ discussed in \S \ref{subsubsec:setting for GKM variant}.)

Fix a system of representatives $(l_i)_{i\in I}$ of the double cosets in $L(\QQ)^{\natural}\backslash L(\adele_f)/U$. Here the indexing set $I$ is finite, since the set $L(\QQ)\backslash L(\adele_f)/U$ is finite and $[L(\QQ): L(\QQ)^{\natural}] \leq 2$. Then we have 
$$\M^U_{\natural} \cong \coprod_{i\in I} \Gamma_i \backslash X_L, $$ where each $\Gamma_i : = l_i U l_i ^{-1} \cap L(\QQ)^{\natural}$ is a neat arithmetic subgroup of $L(\QQ)$. For $\mathbb W \in D^b(\mathrm{Rep}_{M})$, we define 
\begin{align}\label{eq:RGamma U}
R\Gamma_{\natural}(U, \mathbb W) : = \bigoplus_{ i \in I} R \Gamma (\Gamma_i, \mathbb W),
\end{align}
where each $R \Gamma (\Gamma_i, -)$ is the functor $D^b(\mathrm{Rep}_{M}) \to D^b(\mathrm{Rep}_{\Gm})$ as in \cite[Rmk.~1.2.2]{morel2010book} such that the cohomology of $R \Gamma (\Gamma_i, \mathbb W)$ computes the group cohomology $\coh^*(\Gamma_i, \mathbb W)$.

 We further equip $R\Gamma_{\natural} (U,\mathbb W)$ with a $\ZZ/2\ZZ$-action as follows: If $L(\QQ)/L(\QQ)^{\natural}$ is trivial, we define this action to be trivial. Assume that $L(\QQ)/L(\QQ)^{\natural}\cong \ZZ/2\ZZ$. Then left-multiplication by the non-trivial element of $L(\QQ)/L(\QQ)^{\natural}$ induces an involution on the set $L(\QQ)^{\natural} \backslash L(\adele_f)/U$, and hence an involution on $I$. If $\set{i,j}$ is a size-two orbit in $I$ under the involution, then there is a canonical coset in $\Gamma_j \backslash L(\QQ)$ consisting of $l\in L(\QQ)$ satisfying $ll_i \in l_j U$. For any such $l$, the isomorphism $\mathbb W \to \mathbb W$ given by the action of $l$ intertwines with the isomorphism $\Gamma_i \isom \Gamma_j$ given by $\Int(l)$, and we obtain an isomorphism $\tau_{i,j}:R\Gamma(\Gamma_i , \mathbb W) \isom R\Gamma(\Gamma_j , \mathbb W)$, which is independent of the choice of $l$. Moreover, the isomorphism $\tau_{j,i}: R\Gamma(\Gamma_j , \mathbb W)\isom R\Gamma(\Gamma_i , \mathbb W)$ obtained in the similar way is inverse to $\tau_{i,j}$. Now consider a size-one orbit $\set{i}$ in $I$ under the involution. Then $\Gamma_i$ is a subgroup of $l_i U l_i^{-1}\cap L(\QQ)$ of index $2$. For any $l \in (l_i U l_i^{-1}\cap L(\QQ) ) - \Gamma_i$, the isomorphism $\mathbb W \to \mathbb W$ given by the action of $l$ intertwines with the isomorphism $\Gamma_i \isom \Gamma_i$ given by $\Int(l)$, and we obtain an automorphism $\tau_{i}$ of $R\Gamma(\Gamma_i , \mathbb W)$, which is independent of the choice of $l$ and has order at most $2$. The collection of $\tau_{i,j}$ and $\tau_{i}$ as above thus gives a canonical $\ZZ/2\ZZ$-action on $R\Gamma_{\natural}(U,\mathbb W)$, and we thereby view $R\Gamma_{\natural}(U,\mathbb W)$ as an object in $D^b(\mathrm{Rep}_{\Gm\times \ZZ/2\ZZ})$. 
 
 At this point, we have constructed the desired functor (\ref{eq:functor 2 for K flat}), after fixing the choice of a system of representatives $(l_i)_{i\in I}$. It can be checked that changing the system of representatives does not change the functor up to natural isomorphism. 
 
 Using (\ref{eq:functor 2 for K flat}), we define the following functor as a compact support analogue:  \index{$R\Gamma_{c,\natural}(U, -)$}
 \begin{align}\label{eq:compact supp 2}
 R\Gamma_{c,\natural}(U, -) : D^b(\mathrm{Rep}_{M}) & \To D^b(\mathrm{Rep}_{\Gm\times \ZZ/2\ZZ}) \\
 \nonumber \mathbb W &\longmapsto  \bigg(R\Gamma_{\natural}(U, \mathbb W^*)[2q(L_{\RR})] \bigg)^*,
 \end{align}
 where $*$ denotes taking contragredient, and $q(L_{\RR})$ is as in Definition \ref{defn:q(G)}. 
  
\begin{rem}\label{rem:incarnation}For $\mathbb W \in \mathrm{Rep}_M$, the object $R\Gamma_{\natural}(U,\mathbb W)$ (resp.~$R\Gamma_{c,\natural}(U,\mathbb W)$) is an incarnation of the cohomology (resp.~cohomology with compact support) of the space $\M^U_{\natural}$ ``with coefficients in $\mathbb W$''. To explain this, fix a field embedding $\mathbb E_{\lambda} \hookrightarrow \CC$. Then $\mathbb W$ determines an algebraic representation $\mathbb W_{\CC}$ of $L_{\CC}$ over $\CC$. Consider the sheaf $\mathcal F^U_{\natural} (\mathbb W_{\CC})$ of local sections of $$ L(\QQ)^{\natural}\backslash \mathbb W_{\CC} \times X_L \times L(\adele_f)/U \To \M^U_{\natural},$$ cf.~\cite[\S 1.2]{morel2010book} and \S \ref{subsubsec:setting for GKM variant}. Then for each $k\in \ZZ$, the base change to $\CC$ of the $k$-th cohomology of $R\Gamma_{\natural}(U,\mathbb W)$ (resp.~$R\Gamma_{c,\natural}(U,\mathbb W)$) is isomorphic to $\coh^k(\M^U_{\natural}, \mathcal F^U_{\natural} (\mathbb W_{\CC}))$ (resp.~$\coh^k_c(\M^U_{\natural}, \mathcal F^U_{\natural} (\mathbb W_{\CC}))$). 
\end{rem}
\subsection{}\label{subsubsec:setting for coh corr} We keep the setting of \S \ref{subsubsec:setting for zero dim SV}. Consider the following situation, which is a special case of the situation described below \cite[Notation 1.5.1]{morel2010book}. Fix $m\in \Gm(\adele_f)L(\QQ) \subset M(\adele_f)$. Let $K_M'$ be a compact open subgroup of $M(\adele_f)$ such that $$K_M' \subset K_M \cap mK_Mm^{-1}. $$ Let $\bar H'$ and $(\bar H_{L}^{\natural})'$ be defined by the formulas  (\ref{eq:bar H}) and (\ref{eq:bar H_L^natural}, but with $K_M$ replaced by $K_M'$. Note that we have $\bar H' \subset \bar H \cap m\bar H m^{-1}$. 

Let $\chH' : = \bar H'/(\bar H_L^{\natural})'$. Let $\theta_1: \chH' \to \chH$ be the homomorphism induced by $\Int(m^{-1}): \bar H' \to \bar H$, and let $\theta_2: \chH' \to \chH$ be the homomorphism induced by the inclusion $\bar H' \subset \bar H$. As a generalization of the functor (\ref{eq:complicated functor}), for $i\in \set{1,2}$ we have a functor\index{$\mathcal F^{\chH'} \theta_i^* R\Gamma(\bar H_L^{\natural},- )$} 
\begin{align}\label{eq:more complicated functor}
\mathcal F^{\chH'} \theta_i^* R\Gamma(\bar H_L^{\natural},- ) : D^b(\mathrm{Rep}_{M}) \To D^b_c(\Sh_{\chH'} ,\mathbb  E_{\lambda}). \end{align}
 To compute this functor at $\mathbb W \in D^b(\mathrm{Rep}_M)$, roughly speaking one first applies the right derived functor of $\coh^0(\bar H_L^{\natural},-)$ to $\mathbb W$ to get a complex of $\chH$-modules, then pulls this complex back via $\theta_i^*$ to obtain a complex of $\chH'$-modules, and then uses the last complex and the $\chH'$-tower $\spec \QQ^{\ab} \to \Sh_{\chH'}$ to construct a complex of $\mathbb E_{\lambda}$-sheaves on $\Sh_{\chH'}$. The precise construction of (\ref{eq:more complicated functor}) is along similar lines as the construction of (\ref{eq:complicated functor}), for which we refer to \cite[\S 1.5]{morel2010book}. 

Let $\bar m$ be the image of $m$ in $\Gm(\adele_f) \times (L(\QQ)/L(\QQ)^{\natural}) \subset \Gm(\adele_f) \times \ZZ/2\ZZ$. As in \S \ref{subsubsec:generalized SV}, we have Hecke operators 
\begin{align*}
[\cdot \bar m]_{\chH', \chH} : \Sh_{\chH'} & \To \Sh_{\chH} ,\\ 
 [\cdot 1]_{\chH', \chH} : \Sh_{\chH'} & \To \Sh_{\chH} .
\end{align*}
In the sequel we denote them simply by $[\cdot m]$ and $[\cdot 1]$. 

Let $\mathbb W \in D^b(\mathrm{Rep}_{M})$. Applying the functor (\ref{eq:complicated functor}) to $W$, we obtain 
$$\mathscr L : = \mathcal F^{\chH} R\Gamma(\bar H_L^{\natural}, \mathbb W) \in D^b_c(\Sh_{\chH}, \mathbb E_\lambda).$$  As explained in \cite[\S 1.5]{morel2010book}, it follows from \cite[Prop.~1.11.5]{pink1992ladic} that there are canonical isomorphisms 
\begin{align*}
\mathcal F^{\chH'} \theta_1^* R\Gamma(\bar H_L^{\natural}, \mathbb W) & \cong [\cdot m]^* \mathscr L, \\
 \mathcal F^{\chH'} \theta_2^* R\Gamma(\bar H_L^{\natural}, \mathbb W) & \cong [\cdot 1]^* \mathscr L .
\end{align*}
Using these isomorphisms, as in \cite[\S 1.5]{morel2010book} one constructs a canonical cohomological correspondence \index{$c_{m,1}$}
\begin{align}\label{eq:coh corr 1}
c_{m,1} : [\cdot m]^* \mathscr L \To [\cdot 1]^! \mathscr L  =  [\cdot 1]^* \mathscr L .
\end{align}
(Both sides are complexes of sheaves on $\Sh_{\chH'}$.) Similarly, applying the functor (\ref{eq:compact supp 1}) we obtain 
$$\mathscr L_c: = \mathcal F^{\chH} R\Gamma_c(\bar H_L^{\natural}, \mathbb W) \in D^b_c(\Sh_{\chH}, \mathbb E_\lambda),$$ and there is a canonical cohomological correspondence 
\begin{align}\label{eq:coh corr 1 c}
c_{m,1} : [\cdot m]^* \mathscr L_c \To [\cdot 1]^! \mathscr L_c  =  [\cdot 1]^* \mathscr L_c.
\end{align}

Now let $p$ be a prime number which is coprime to $\lambda$ and hyperspecial for $K_M$ (see Definition \ref{defn:hyperspecial pr}). Assume in addition that $m\in \Gm(\adele_f^p)L(\QQ)$. Then there exists $K_M'$ as in the above discussion such that $p$ is also hyperspecial for $K_M'$. For such $K_{M}'$, it is clear from Lemma \ref{lem:Galois groups} (2) that the Hecke operators $[\cdot m]: \Sh_{\chH'} \to \Sh_{\chH}$ and  $[\cdot 1]: \Sh_{\chH'} \to \Sh_{\chH}$ extend to finite \'etale morphisms $\Shh_{\chH'} \otimes_{\ZZ} \ZZ_{(p)} \to \Shh_{\chH} \otimes_{\ZZ} \ZZ_{(p)}$ (still denoted by $[\cdot m]$ and $[\cdot 1]$), that $\mathscr L$ and $\mathscr L_c$ extend to complexes of lisse $\mathbb E_{\lambda}$-sheaves on $\Shh_{\chH} \otimes_{\ZZ} \ZZ_{(p)}$, and that the cohomological correspondences (\ref{eq:coh corr 1}) and (\ref{eq:coh corr 1 c}) also extend. We denote by $\overline {\mathscr L}$ (resp.~$\overline {\mathscr L_c}$) the pull-back to $\Shh_{\chH}\otimes_{\ZZ} \FF_p$ of the extension of $\mathscr L$ (resp.~$\mathscr L_c$) over  $\Shh_{\chH} \otimes_{\ZZ} \ZZ_{(p)}$. As in \cite[Notation 1.5.1]{morel2010book}, for any $a\in \ZZ_{\geq 1}$ we can twist the reductions of (\ref{eq:coh corr 1}) and (\ref{eq:coh corr 1 c}) over $\FF_p$ by the $a$-th power of the absolute Frobenius, and obtain cohomological correspondences \index{$\Phi^a c_{m,1}$} 
\begin{align*}
\Phi^a c_{m,1} : [\cdot m]^* \overline {\mathscr L } &  \To [\cdot 1]^! \overline {\mathscr L} , \\
\Phi^a c_{m,1} : [\cdot m]^* \overline {\mathscr L_c} &  \To [\cdot 1]^! \overline {\mathscr L_c}. 
\end{align*} 
\begin{defn}\label{defn:Tr_mathcal H} In the setting of \S \ref{subsubsec:setting for coh corr}, we define 
\index{$\Tr_{\mathcal H} (a, m, K_M, K_M' ,\mathbb W)$} \index{$\Tr_{\mathcal H, c} (a, m, K_M, K_M' ,\mathbb W)$}
\begin{align*}
\Tr_{\mathcal H} (a, m, K_M, K_M' ,\mathbb W) & := \sum_k (-1)^k\Tr(\Phi^ac_{m,1} \mid \coh^k(\Shh_{\chH}\otimes_{\ZZ} \overline \FF_p, \overline {\mathscr L})), \\
\Tr_{\mathcal H, c} (a, m, K_M, K_M' ,\mathbb W) & := \sum_k (-1)^k \Tr(\Phi^ac_{m,1} \mid \coh^k(\Shh_{\chH}\otimes_{\ZZ} \overline \FF_p, \overline {\mathscr L_c})).
\end{align*}
\end{defn}
\subsection{}\label{subsubsec:setting for coh corr for K flat} We keep the setting of \S \ref{subsubsec:setting for zero dim SV}. Let $\mathbb U \in D^b(\mathrm{Rep}_{\Gm\times \ZZ/2\ZZ})$. Applying the functor (\ref{eq:functor 1 for K flat}) to $\mathbb U$ we obtain
$$\mathscr M : = \mathcal F^{K_{\da}\times \ZZ/2\ZZ} (\mathbb U) \in D^b_c(\Sh_{K_{\da}\times \ZZ/2\ZZ}, \mathbb E_\lambda).$$  Let $p$ be a prime number coprime to $\lambda$ such that $\ZZ_p^{\times} \subset K_{\da}$. (For instance, if $p$ is hyperspecial for $K_M$, then $\ZZ_p^{\times} \subset K_{\da}$.) Let $g \in \Gm(\adele_f^p)$.  As in \S \ref{subsubsec:generalized SV}, we have the Hecke operator
\begin{align*}[\cdot g]_{K_{\da}\times \ZZ/2\ZZ} : \Sh_{K_{\da} \times \ZZ/2\ZZ} \To  \Sh_{K_{\da} \times \ZZ/2\ZZ}, 
\end{align*} which we denote simply by $[\cdot g]$. Similarly as in \S \ref{subsubsec:setting for coh corr}, we have a canonical cohomological correspondence $$ u(0,g): [\cdot g]^* \mathscr M \To \mathscr M, $$
and we can pass to the special fiber of the canonical integral model mod $p$, twist by the $a$-th power of the absolute Frobenius (where $a \in \ZZ_{\geq 1}$), and obtain a cohomological correspondence 
\index{$u(a,g) $}
\begin{align}\label{eq:u(a,g)}
u(a,g) = \Phi^a u(0,g)  : [\cdot g]^* \overline{\mathscr M} \To \overline {\mathscr M}. 
\end{align} 
\begin{defn}\label{defn:Tr on flat} In the setting of \S \ref{subsubsec:setting for coh corr for K flat}, we define \index{$\Tr(a,g, K_{\da}\times \ZZ/2\ZZ, \mathbb U)$}
\begin{align*}
\Tr(a,g, K_{\da}\times \ZZ/2\ZZ, \mathbb U) : =  \sum _k (-1)^k \Tr(u(a,g) \mid \coh^k(\Shh_{K_{\da} \times \ZZ/2\ZZ} \otimes_{\ZZ} \overline \FF_p, \overline {\mathscr M})). 
\end{align*}
\end{defn}

The following result is a variant of \cite[Rmk.~1.6.5]{morel2010book}.
\begin{prop}\label{prop:generalization of0-dim Shim} Keep the setting of \S \ref{subsubsec:setting for coh corr for K flat}. We have 
	\begin{align}\label{eq: gen 0-dim}
\Tr(a,g, K_{\da}\times \ZZ/2\ZZ, \mathbb U) = \sum _{(\gamma_0,\gamma,\delta)}   c(\gamma_0,\gamma,\delta) O_{\gamma} (f^p) TO_{\delta} (\phi_a^{\GG_m}) \widetilde \Tr(\gamma_0 \mid \mathbb U).
	\end{align}
	Here the terms on the right are as follows.  
	\begin{enumerate}
		\item The summation is over Kottwitz triples $(\gamma_0,\gamma,\delta)$ in $\Gm$ of level $p^a$, as in \S \ref{subsec: defn in Kott}. 
		\item The terms $c(\gamma_0,\gamma,\delta)$, $ O_{\gamma} (\cdot)$, and  $TO_{\delta} (\cdot) $ are defined as in \S \ref{subsec: defn in Kott}. 
		\item We define $f^p := 1_{gK_{\da}^p} / \vol(K_{\da}^p) \in C^{\infty}_c(\Gm(\adele_f^p))$, where $K_{\da}^p$ is the subgroup of $\Gm(\adele_f^p)$ such that $K_{\da} = \ZZ_p^{\times}K_{\da}^p$. The function $\phi_a^{\Gm}$ is as in Definition \ref{defn:phi_a}. 
		\item For any $\gamma_0 \in \Gm(\QQ) = \QQ^\times$, we set \index{$\widetilde \Tr$}
		$$ \widetilde \Tr(\gamma_0 \mid \mathbb U) : =  \begin{cases}
		\Tr(\gamma_0 \mid  \mathbb U) , &  \text{if } \gamma_0>0,\\ 
		\Tr (\gamma_0 \times \epsilon \mid \mathbb U), & \text{if }\gamma_0<0,
		\end{cases} $$  
		where $\epsilon$ denotes the non-trivial element of $\ZZ/2\ZZ$.
	\end{enumerate}
\end{prop}
\begin{proof} We write $K$ for $K_{\da}$, and write $S$ for the set $\Shh_{K_{\da}\times \ZZ/2\ZZ}(\overline \FF_p)$. We identify the three sets $S$, $\Sh_{K}^{\flat} (\CC)$, and $\Gm(\QQ)\backslash \Gm(\adele_f)/K = \QQ^{\times} \backslash \adele_f^{\times}/K$. Let $\Phi$ be the endomorphism of $S$ induced by the absolute Frobenius on the $\FF_p$-scheme $\Shh_{K_{\da}\times \ZZ/2\ZZ}$.  We denote by $p_p$ the image of $p$ under the embedding $\QQ_p^{\times}\hookrightarrow \adele_f^\times$. Then the endomorphism $\Phi^a \circ [\cdot g]$ of $S$ is given by the multiplication by $p_p^a g$ on $\QQ^{\times} \backslash \adele_f^{\times}/K$. Similarly, we write $\tilde S$ for $\Shh_K(\overline \FF_p)$, and identify it with $\Sh_K(\CC) = \QQ^{\times} \backslash \HZ \times \adele_f^{\times}/K \cong \QQ_{>0} \backslash \adele_f^{\times} / K$. 

Since we are in the zero-dimensional case, we can compute $\Tr(a,g, K_{\da}\times \ZZ/2\ZZ, \mathbb U) $ by summing the naive local terms over the fixed points of $S$ under $\Phi^a \circ [\cdot g]$. 

	Let $x\in S$ be a fixed point under $\Phi^a\circ [\cdot g]$. Then $x$ has a representative $\tilde x\in\adele_f^{\times}$ for which there exists $f_0 \in \QQ^{\times}$ satisfying $f_0\tilde x \in p_p^ag \tilde x K$, or equivalently $f_0\in p_p^a g K$. Hence the set of fixed points is non-empty if and only if $\QQ^{\times} \cap p_p^a g K \neq \emptyset$, and when it is non-empty it is equal to $S$.

	If $\QQ^{\times} \cap p_p^a g K  = \emptyset$, then $\Tr(a,g, K_{\da}\times \ZZ/2\ZZ, \mathbb U) = 0 $ since there are no fixed points. In this case it is straightforward to check that the RHS of (\ref{eq: gen 0-dim}) is also zero.
	
	Assume that $\QQ^{\times} \cap p_p^a g K \neq \emptyset$. In this case, this set has a unique element $f_0$, since we have $\QQ^{\times} \cap K = \set{1}$ by the neatness of $K$. We have seen that in this case every point in $S$ is a fixed point. There are two cases to consider.
	
	First suppose that $f_0>0$. Then every point in $\tilde S$ is fixed by $\Phi^a \circ [\cdot g]$. Write $g_{\ell}\in \Gm(\QQ_{\ell})$ for the $\ell$-adic component of $g$. In this case, the naive local term at each point in $S$ is equal to the naive local term at any one of the two lifts of that point in $\tilde S$, and the latter is equal to the trace on the algebraic $\Gm(\QQ_{\ell})$-representation $\mathbb U$ of the product of $g_{\ell}^{-1}$ and the $\ell$-adic component of $f_0^{-1}p_p^a g \in K$ (cf.~the argument on \cite[p.~433]{kottwitz1992points}). Hence the native local term is equal to $\Tr(f_0^{-1} \mid \mathbb U)$, which is equal to $\widetilde \Tr(f_0^{-1} \mid \mathbb U)$ since $f_0 > 0$. 
	
	Now suppose that $f_0<0$. Then for every point in $S$, the two lifts of it in $\tilde S$ are permuted non-trivially by $\Phi^a \circ [\cdot g]$. In this case, the naive local term at each point in $S$ is equal to the trace on the algebraic $\Gm(\QQ_{\ell}) \times \ZZ/2\ZZ$-representation $\mathbb U$ of the product of $g_{\ell}^{-1}$ and the projection to $\QQ_{\ell}^\times \times \ZZ/2\ZZ$ of $f_0^{-1}p_p^a g  \times \epsilon \in K \times \ZZ/2\ZZ$, which is $\Tr(f_0^{-1} \times \epsilon \mid \mathbb U) = \widetilde{\Tr}(f_0^{-1} \mid \mathbb U)$.
	
	We conclude that in both cases the naive local term at each point in $S$ is equal to $\widetilde{\Tr}(f_0^{-1} \mid \mathbb U)$. Hence 
	$$ \Tr(a,g, K_{\da}\times \ZZ/2\ZZ, \mathbb U) = \widetilde \Tr(f_0^{-1} \mid \mathbb U) \abs{S}. $$ To compute the RHS of (\ref{eq: gen 0-dim}), we note that every Kottwitz triple $(\gamma_0,\gamma,\delta)$ that makes a non-zero contribution must satisfy $\gamma_0 = f_0^{-1}$. (In fact all Kottwitz triples satisfying this condition are in one equivalence class.) On the other hand, by \cite[Rmk.~1.6.5]{morel2010book} we know that 
	$$ \sum _{(\gamma_0,\gamma,\delta)}   c(\gamma_0,\gamma,\delta) O_{\gamma} (f^p) TO_{\delta} (\phi_a^{\GG_m}) = \frac{1}{2} \abs{\tilde S},$$ which is nothing but $\abs{S}$. Hence the RHS of (\ref{eq: gen 0-dim}) is equal to $\widetilde \Tr(f_0^{-1} \mid \mathbb U) \abs{S}$ as well. The proof is complete. 
\end{proof} 

\subsection{} \label{subsubsec:setting for variant of coh corr} 
We now state a variant of \cite[Prop.~1.7.2]{morel2010book}.
We keep the setting of \S \ref{subsubsec:setting for zero dim SV}. Let $p$ be a prime number which is coprime to $\lambda$ and hyperspecial for $K_M$. Fix $m \in M(\adele_f^p)$ (not necessarily in $\Gm(\adele_f^p)L(\QQ)$). Let $K_M'$ be a compact open subgroup of $M(\adele_f)$ such that $p$ is hyperspecial for $K_M'$ and such that $$K_M'\subset K_M \cap m K_M m^{-1}. $$ Fix a system of representatives $(m_i)_{i\in I}$ of those double cosets $e$ in $$\Gm(\adele_f)L(\QQ) \backslash M(\adele_f)/K_M'
$$
satisfying $$em K_M = e K_M. $$ For every $i\in I$, let $g_i\in \Gm(\adele_f)$ and $l_i\in L(\QQ)$ be such that $$g_i l_i m_i \in m_i m K_M. $$ We may and shall assume that $m_i \in M(\adele_f^p)$ and $g_i\in \Gm(\adele_f^p)$ for each $i\in I$. 

Let $\mathbb W \in D^b(\mathrm{Rep}_{M})$, and let 
\begin{align*}
	\mathbb U  & := R\Gamma_{\natural}(K_M\cap L(\adele_f), \mathbb W) \in D^b(\mathrm{Rep}_{\Gm\times \ZZ/2\ZZ}),\\   
	\mathbb U_c  &:= R\Gamma_{c,\natural}(K_M\cap L(\adele_f), \mathbb W) \in D^b(\mathrm{Rep}_{\Gm\times \ZZ/2\ZZ}),
\end{align*} where the notations are as in (\ref{eq:functor 2 for K flat}) and (\ref{eq:compact supp 2}). The following result is a variant of \cite[Prop.~1.7.2]{morel2010book}.
\begin{prop}\label{prop:variant of coh corr} Keep the setting and notation of \S \ref{subsubsec:setting for variant of coh corr}.   
	Write $g$ for the projection of $m$ in $\Gm(\adele_f^p)$, and write $K_{\da}'$ for $ K_M'/(K_M'\cap L(\adele_f))$.
Assume that $[L(\QQ) : L(\QQ)^{\natural}] = 2$. Then for each $a\in \ZZ_{\geq 1}$ we have 
	\begin{multline}\label{eq:I to 1}
	 \sum_{i\in I} \Tr_{\mathcal H}(a, g_il_i, m_iK_Mm_i^{-1}, m_i K_M' m_i^{-1}, \mathbb W) \\ = \Tr(a, g,  K_{\da} \times \ZZ/2\ZZ, \mathbb U) \cdot [K_{\da}: K_{\da}'], \end{multline} and \begin{multline}
	  \label{eq:I to 1'}
	  \sum_{i\in I} \Tr_{\mathcal H,c}(a, g_il_i, m_iK_Mm_i^{-1}, m_i K_M' m_i^{-1}, \mathbb W) \\  = \Tr(a, g,  K_{\da} \times \ZZ/2\ZZ, \mathbb U_c) \cdot [K_{\da}: K_{\da}'].
	\end{multline}
Here the terms $\Tr_{\mathcal H}(\cdots)$, $\Tr_{\mathcal H,c}(\cdots)$,  and $\Tr(\cdots)$ are as in Definitions \ref{defn:Tr_mathcal H} and \ref{defn:Tr on flat}.  
 \end{prop} 
\begin{rem}The RHS of (\ref{eq:I to 1}) is indeed the analogue of the RHS of \cite[Prop.~1.7.2 (1)]{morel2010book}. We have the seemingly extra factor $[K_{\da}: K_{\da}']$, but this is due to the fact that in our definition of the cohomological correspondence (\ref{eq:u(a,g)}) we used the Hecke operator $[\cdot g]$ as an endomorphism of $\Sh_{K_{\da} \times \ZZ/2\ZZ}$, as opposed to using the correspondence $\Sh_{K_{\da} \times \ZZ/2\ZZ} \xleftarrow{[\cdot g] } \Sh_{K_{\da}' \times \ZZ/2\ZZ} \xrightarrow{[\cdot 1]} \Sh_{K_{\da} \times \ZZ/2\ZZ}$. 
	
	Similarly, the RHS of (\ref{eq:I to 1'}) is the analogue of the RHS of \cite[Prop.~1.7.2 (2)]{morel2010book}.
\end{rem}
\begin{proof} By duality, (\ref{eq:I to 1}) implies (\ref{eq:I to 1'}). The proof of (\ref{eq:I to 1}) is essentially the same as that of \cite[Prop.~1.7.2(1)]{morel2010book}, the only difference being that we need to modify Morel's functor $R\Gamma(K_M , -)$ (and its analogue for $K_M'$). Below we explain this modification.

	Consider the space \index{$\M^{K_M}_{\natural}$}
$$\M^{K_M}_{\natural} : =  M(\QQ)\backslash (\HZ \times X_L \times M(\adele_f))/K_M \cong (\mathrm{Cent}_{M(\QQ)}\HZ) \backslash X_L\times M(\adele_f)/K_M, $$ where $M(\QQ)$ acts on $\HZ \times X_L \times M(\adele_f)$ diagonally, and for the action of $M(\QQ)$ on $\HZ$ both the factors $\Gm(\QQ)$ and $L(\QQ)$ act non-trivially. (The action of $L(\QQ)$ on $\HZ$ is via the unique non-trivial action of $L(\QQ)/L(\QQ)^{\natural}\cong \ZZ/2\ZZ$.) Thus $\M^{K_M}_{\natural}$ is a  double covering  of the usual locally symmetric space $$\M^{K_M} = M(\QQ) \backslash X_M \times M(\adele_f)/K_M, $$ where $X_M = X_L$ since $X_{\Gm}$ is a point. Let $R\Gamma_{\natural} (K_M, \mathbb W)$\index{$R\Gamma_{\natural} (K_M, \mathbb W)$} be the ``cohomology of $\M^{K_M}_{\natural}$ with coefficients in $\mathbb W$'' (cf.~Remark \ref{rem:incarnation}). Namely, we write $\M_{\natural}^{K_M}$ as 
$$ \coprod_{j \in J} (n_j K_M n_j^{-1} \cap \mathrm{Cent}_{M(\QQ)} \HZ) \backslash X_L,  $$
where $(n_j)_{j\in J}$ is a system of representatives of the double cosets in $$(\mathrm{Cent}_{M(\QQ)} \HZ) \backslash M(\adele_f)/ K_M,$$ and define  
$$R\Gamma_{\natural} (K_M, \mathbb W): = \bigoplus_{j\in J} R\Gamma(n_j K_M n_j^{-1} \cap \mathrm{Cent}_{M(\QQ)} \HZ , \mathbb W) $$ inside the derived category of finite-dimensional $\mathbb E_{\lambda}$-vector spaces. 

Observe that we have a fibration 
\begin{align}\label{eq:fibration 1}
M^{K_M}_{\natural} \To \Gm(\QQ) \backslash \Gm(\adele_f) / K_{\da}
\end{align}
 induced by the projection $\HZ \times X_L \times M(\adele_f) \to \Gm(\adele_f)$. The fibers of (\ref{eq:fibration 1}) are naturally identified with $$ L(\QQ) \backslash \HZ \times X_L \times L(\adele_f)/ (K_M \cap L(\adele_f)),$$ which we observe is the same as $\M^{K_M \cap L(\adele_f)}_{\natural}$ defined in \S \ref{subsubsec:RGammaU}, since $[L(\QQ) : L(\QQ)^{\natural}] =2$. The base of the fibration (\ref{eq:fibration 1}) is identified with $\Sh_{K_{\da} \times \ZZ/2\ZZ} (\CC)$. Hence we have identifications 
 \begin{align}\label{eq:idn 1}
R\Gamma_{\natural}(K_M, \mathbb W)  \cong R\Gamma(\Sh_{K_{\da} \times \ZZ/2\ZZ} \otimes_{\QQ} \CC ,  {\mathscr M}) \cong  R\Gamma(\Shh_{K_{\da} \times \ZZ/2\ZZ} \otimes_{\ZZ} \overline \FF_p, \overline {\mathscr M}),
 \end{align}
 where $\mathscr M = \mathcal F^{K_{\da} \times \ZZ/2\ZZ} (\mathbb U)$ and $\overline{\mathscr M}$ is the reduction of $\mathscr M$ (cf.~\S \ref{subsubsec:setting for coh corr for K flat}). 
 
 On the other hand, we have a fibration 
 \begin{align}\label{eq:fibration 2}
 	M^{K_M}_{\natural} \To \Gm(\adele_f) L(\QQ) \backslash M(\adele_f)/K_M
 \end{align} induced by the projection $\HZ \times X_L \times M(\adele_f) \to M(\adele_f)$. For each $e \in M(\adele_f)$, we denote by $\M^{K_M}_{\natural} (e)$ the fiber of (\ref{eq:fibration 2}) over the double coset of $e$. Then $\M^{K_M}_{\natural} (e)$ is identified with $$ \Gm(\QQ) \backslash \HZ \times  \Gm(\adele_f) \times X_L  / \bar H_e,$$ where $\bar H_e : = eK_M e^{-1} \cap (\Gm(\adele_f) L(\QQ))$ is the analogue of $\bar H$ in \S \ref{subsubsec:setting for zero dim SV} with $e K_M e^{-1}$ replacing the role of $K_M$, and the right action of $\bar H_e$ on $\HZ \times \Gm(\adele_f) \times X_L  $ is given as follows. The action of $\bar H_e$ on $\HZ \times \Gm(\adele_f)$ is the restriction of the $\Gm(\adele_f) L(\QQ)$-action, where $\Gm(\adele_f)$ acts on $\Gm(\adele_f)$ by multiplication and $L(\QQ)$ acts on $\HZ$ via the non-trivial action of $L(\QQ)/L(\QQ)^{\natural}$. The action of $\bar H_e$ on $X_L$ is given by the restriction of the projection map $\Gm(\adele_f)L(\QQ) \to L(\QQ)$ followed by the inversion on $L(\QQ)$ and followed by the natural left $L(\QQ)$-action on $X_L$. 

Let $\bar H_{L,e}^{\natural}: = eK_M e^{-1} \cap L(\QQ)^{\natural}$ and $\chH_e : = \bar H_e / \bar H_{L,e}^{\natural}$, which are the analogues of $\bar H_L^{\natural}$ and $\chH$ in \S \ref{subsubsec:setting for zero dim SV} with $eK_Me^{-1}$ replacing the role of $K_M$. Then we have a fibration 
\begin{align}\label{eq:fibration 3}
 \M^{K_M}_{\natural} (e) \To \Gm(\QQ) \backslash \HZ \times \Gm(\adele_f) / \chH_e,
\end{align}
 where the $\chH_e$-action on $\HZ \times \Gm(\adele_f)$ is induced by the $\bar H_e$-action on $\HZ \times \Gm(\adele_f) \times X_L$ in the above. The fibers of (\ref{eq:fibration 3}) are identified with $X_L/ \bar H_{L,e}^{\natural}$, while the base is identified with $\Sh_{\chH_e}(\CC)$. Hence we have an identification 
 \begin{align}\label{eq:idn 2}
R\Gamma_{\natural}(K_M, \mathbb W)  \cong \bigoplus_e R\Gamma(\Sh_{\chH_e} \otimes_{\QQ} \CC , \mathscr L(e) ) \cong  \bigoplus_e  R\Gamma(\Shh_{\chH_e} \otimes_{\ZZ} \overline \FF_p, \overline {\mathscr L(e)}), 
 \end{align}
 where $e$ runs through a system of representatives of the double cosets in $$\Gm(\adele_f) L(\QQ) \backslash M(\adele_f)/K_M, $$ and for each $e$ we define $\mathscr L(e) :  =  \mathcal F^{\chH_e} R\Gamma(\bar H_{L,e}^{\natural}, \mathbb W)$ and define $\overline {\mathscr L(e)}$ to be its reduction (cf.~\S \ref{subsubsec:setting for coh corr}). 

In view of (\ref{eq:idn 1}) and (\ref{eq:idn 2}), we can replace Morel's functor $R\Gamma(K_M, -)$ by $R\Gamma_{\natural}(K_M, - )$ (and also for $K_M'$) and proceed in exactly the same way as in \cite[Prop.~1.7.2]{morel2010book} to conclude the proof. 
\end{proof}

\section{Modifying Morel's axioms}\label{subsec:axioms}
\subsection{}\label{subsubsec:axioms}
Let $(G,\X)$ be a pure Shimura datum. We keep the notation in \S \ref{subsec:boundary}. We replace the axioms on p.~2 of \cite{morel2010book} by the following axioms: 
\subsubsection*{\textbf{A0}}For each $P \in \admpar$, the Levi quotient $M_P$ of $P$ admits a decomposition $M_P = G_P \times L_P$, where $G_P$ is the image of $P^{\Pink} \subset P$ as in \S \ref{subsec:boundary}. 
\subsubsection*{\textbf{A1}} For each $P \in \admpar$, the set $\RBC_P(G,\X)$ is a singleton. In particular, $\X_P $ is equal to $\X_\Y$ for the unique element $(P,\Y) \in \RBC_P (G,\X)$, and we have a Shimura datum $(G_P, \X_P)$; cf.~\S \ref{subsubsec:taking pure quotient}.  
\subsubsection*{\textbf{A2}}  For each $P\in \admpar$, the action of $L_P(\RR)$ on $\X_P$ (see Proposition \ref{prop:action}) is trivial unless $\X_P$ is zero-dimensional. 
\subsubsection*{\textbf{A3}} For each $P\in \admpar$, let $L_P(\QQ)^{\natural}: = \mathrm{Cent}_{L_P(\QQ)} \X_P$. For each neat compact open subgroup $K_M$ of $M_P(\adele_f)$, we have $K_M \cap \mathrm{Cent}_{M_P(\QQ)} \X_P = K_M \cap L_P(\QQ)^{\natural}$. 

\begin{rem} Our axiom \textbf{A0} is slightly more restrictive than the first two conditions on p.~2 of \cite{morel2010book}, where $G_P$ is allowed to be different from the image of $P^{\Pink}$. Assuming \textbf{A0}, our axiom \textbf{A1} is equivalent to the first part of the fourth condition in \textit{loc.~cit.}, and our axiom \textbf{A2} is weaker than the second part of that condition. Our axiom \textbf{A3} is identical to the fifth condition in  \textit{loc.~cit.}. We have deleted the third condition in \textit{loc.~cit.~}from the axioms as a general correction. Indeed, this condition is neither used in \cite{morel2010book} nor satisfied by any of the Shimura data considered in \cite{morel2010book}, \cite{morel2011suite}, or the present paper. 
\end{rem}

\subsection{} From now on we assume the axioms in \S \ref{subsubsec:axioms}. Let $P\in \admpar$ and $g\in G(\adele_f)$. On p.~2 of \cite{morel2010book}, Morel defines the groups $H_P, H_L, K_Q, K_N$ associated to the pair $(P,g)$. We define: \index{$H_P$} \index{$H_L^{\natural}$} \index{$K_Q$} \index{$K_N$}
\begin{align*}
H_P &: = gKg^{-1} \cap P(\QQ) P^{\Pink}(\adele_f), \\
H_L^\natural & : = gKg^{-1} \cap L_P(\QQ)^\natural N_P(\adele_f),  \\ 
K_Q &: =  gKg^{-1} \cap P^{\Pink} (\adele_f), \\
K_N &: = gKg^{-1} \cap N_P(\adele_f). 
\end{align*}
Our $H_P, K_Q, K_N$ are the same as Morel's definitions, and our $H_L^{\natural}$ is equal to Morel's $H_L$\index{$H_L$} (defined to be $gKg^{-1} \cap L_P(\QQ) N_P(\adele_f)$) when $L_P(\QQ) = L_P(\QQ)^{\natural}$ (which is always true under Morel's axioms). In general, $H_L^{\natural}$ may be different from $H_L$, and $H_L^{\natural}$ is the correct replacement of $H_L$ in the discussion on the structure of the boundary strata in \cite[Chap.~1]{morel2010book}. The point is that under the axioms in \S \ref{subsubsec:axioms}, the group $H_L^\natural$ is always equal to Pink's group $H_C$ in \cite[\S 3.7]{pink1992ladic}, which has a canonical definition. More precisely, as on p.~2 of \cite{morel2010book}, the boundary stratum in the Baily--Borel compactification corresponding to $(P,g)$ is of the form\footnote{We systematically replace Morel's notation $M^{K_Q/K_N} (G_P, \X_P)$ for the Shimura variety by the notation $\Sh_{K_Q/K_N} (G_P, \X_P)$.}
\begin{align}\label{eq:form of bdry strata}
\Sh_{K_Q/K_N} (G_P,\X_P) / H_P,
\end{align}
and the action of $H_P$ factors through the finite quotient group $H_P / H_L^\natural K_{Q}$ (instead of $H_P/H_L K_Q$).

In Table \ref{table:1} below we compare Pink's notation in \cite[\S 3.7]{pink1992ladic}, Morel's notation in \cite[p.~2]{morel2010book}, and our notation. The symbols in the first column all have canonical definitions, independent of the axioms in \cite{morel2010book} or \S \ref{subsubsec:axioms}. Under the axioms in \S \ref{subsubsec:axioms}, the three symbols in every row denote the same object, with the only exception that Morel's $H_L$ is not equal to Pink's $H_C$ in general.
\begin{table}[H]   \renewcommand\thetable{1}
	\centering\caption{Comparison of notations}\label{table:1}
	\begin{tabular}{|c|c|c|}
		\hline
		\text{Pink's notation} & \text{Morel's notation}  & \text{Our notation}
				\\
				\hline
				$Q$ & $P$ & $P$ \\
				\hline 
				$P_1$ & $Q_P$ & $P^{\Pink}$ \\
				\hline
				$W_1$ & $N_P$ & $N_P$ \\
				\hline
				$G_1$ & $G_P$ & $G_P$ \\
				\hline 
				$\X_1$ & $\X_P$ & $\X_P$ \\ 
				\hline 
				$\X_Q$ & $\X_P$ & $\X_P$ \\
				\hline 
				$\mathrm{Stab}_{Q(\QQ)}\X_1$ & $P(\QQ)$ & $P(\QQ)$ \\
				\hline
				$H_Q$ & $H_P$ & $H_P$ \\
				\hline 
				$H_C$ & $H_L \quad \bendsymbol$    & $H_L^{\natural}$ \\
				\hline 
				$K_W$ & $K_N$ & $K_N$ \\
				\hline 
				$K_P$ & $K_Q$ & $K_Q$ \\
				\hline
				$\pi_1(K_P) = K_P/K_W$ & $K_Q/K_N$ & $K_Q/K_N$ \\
				\hline
			\end{tabular}
	
	\end{table}
{ $\bendsymbol:  \quad H_L \neq H_L^{\natural}$ unless $L_P(\QQ) = L_P(\QQ)^{\natural}$.}
\FloatBarrier
\subsection{} \label{subsubsec:simplifying assumptions}
We make the following crucial assumption \textbf{CA}, in addition to the axioms in \S \ref{subsubsec:axioms}. 
\subsubsection*{\textbf{CA}} If $P \in \admpar$ is such that $\X_P$ is zero-dimensional, then the Shimura datum $(G_P,\X_P)$ is the Siegel Shimura datum $(\Gm, \HZ)$. For such $P$ we also assume that $L_P$ satisfies the assumptions in \S \ref{subsubsec:group setting for GKM variant}. Namely, we assume that $\pi_0(L_P(\RR)) \cong \pi_0(L_0(\RR)) \cong \ZZ/2\ZZ$, where $L_0$ is any minimal Levi subgroup of $L_{P,\RR}$. Moreover, for such $P$ we assume that $\pi_0(L_P(\RR))$ acts non-trivially on $\HZ$. In particular, we have $L_P(\QQ)^{\natural} = L_P(\QQ)^+$. 
\subsection{}\label{subsubsec:relation} Under \textbf{CA}, we know that for any $P\in\admpar$ such that $\X_P$ is zero-dimensional, and for any $g\in G(\adele_f)$, the boundary stratum (\ref{eq:form of bdry strata}) corresponding to $(P,g)$ is related to the generalized Shimura varieties in \S \ref{subsubsec:generalized SV} in the following way. In \S \ref{subsubsec:setting for zero dim SV}, we identify $\Gm$ with $G_P$, and take $L = L_P$, $M = M_P$. Let $K_M$ be the image of $gKg^{-1}\cap P(\adele_f)$ under the projection $P(\adele_f)\to M(\adele_f)$, and define $\bar H, \bar H_L^{\natural}, \chH$ as in \S \ref{subsubsec:setting for zero dim SV}. Then $\bar H$ (resp.~$\bar H_L^\natural$) is the image of $H_P$ (resp.~$H_L^{\natural}$) under $P(\adele_f)\to M(\adele_f)$, and (\ref{eq:form of bdry strata}) is the same as $\Sh_{\chH}$ defined in \S \ref{subsubsec:defn of Sh_H}.
	 
	 \subsection{}\label{subsubsec:verify axioms}Our orthogonal Shimura datum 	 $\mathbf O(V)$ satisfies \textbf{A0}--\textbf{A3} in \S \ref{subsubsec:axioms}, and \textbf{CA} in \S \ref{subsubsec:simplifying assumptions}. Indeed, it suffices to verify these conditions for the standard maximal proper parabolic subgroups $P_i, ~ i =1,2$. We take $L_{P_i}$ to be $M_{i,l}$. Then the desired conditions follow from Proposition \ref{prop:rational bdry comp} and Lemma \ref{lem:explaining the H groups} (1).
	 
\section{Integral models}\label{subsec:int mod}
\subsection{}\label{subsubsec:int mod 1}
	We now turn to construct the integral models of the Baily--Borel compactification $\overline{\Sh_K}$ and its strata. For this let us specialize to the orthogonal Shimura datum $(G,\X) = \mathbf O(V)$. Recall that the standard maximal proper parabolic subgroups of $G$ are $P_1$ and $P_2$. We write $(G_i,\X_i)$ for the Shimura datum $(G_{P_i} = M_{i,h}, \X_{P_i})$ for $i\in \set{1,2}$, and write $(G_0,\X_0)$ for $(G,\X)$. (Our numbering of the $P_i$ and $G_i$ is the same as the abstract numbering in \cite[\S 1.1]{morel2010book}.) For $i\in \set{1,2}$, we set $L_{P_i}$ to be $M_{i,l}$. In accordance with \textit{loc.~cit.}, we define $L_{P_{12}}$ to be $ M_{12,l}$, so that $M_{12}$ is the direct product of $G_2$ and $L_{P_{12}}$. 
	
	Without loss of generality, we assume that the function $f^{\infty}$ in Theorem \ref{geometric assertion} is of the form $1_{KgK}/\vol(K\cap gKg^{-1})$ for some fixed $g\in G(\adele_f)$. Since $\mathbf O(V)$ is of abelian type, we  can apply \cite[Prop.~1.3.4]{morel2010book} to construct the following objects:
	\begin{itemize}
		\item a finite set $\Sigma$ of prime numbers  containing $\Sigma_0$ (where $\Sigma_0$ is as in \S \ref{subsubsec:setting for Morel's formula}).
		\item a set $\mathcal K_i$\index{$\mathcal K_i$ for $i\in \set{0,1,2}$} of neat compact open subgroups of $G_i(\adele_f)$ for $i\in \set{0,1}$ such that $K$ and $ K\cap gKg^{-1}$ are elements of $\mathcal K_0$.
		\item a set $\mathcal K_2$ of admissible levels, in the sense of Definition \ref{defn:neat in the broad sense}.
		\item a subset $A_i$ of $G_i(\adele_f)$ for $i\in \set{0,1,2}$ such that $1$ and $g$ are elements of $A_0$. 
		\item a smooth quasi-projective scheme $\Shh_{U}(G_i, \X_i)$ over $\ZZ[1/\Sigma]$ with generic fiber $\Sh_U(G_i,\X_i)$, for each $i\in \set{0,1,2}$ and each $U\in \mathcal K_i$. Here when $i=2$, the Shimura variety $\Sh_U(G_2,\X_2)$ at the admissible level $U$ is understood as in \S \ref{subsubsec:generalized SV}. 
		\item a normal projective scheme $\overline {\Shh_{U}}(G_i, \X_i)$ over $\ZZ[1/\Sigma]$ containing $\Shh_{U}(G_i, \X_i)$ as a dense open subscheme, whose generic fiber is the Baily--Borel compactification $\overline{\Sh_U}(G_i,\X_i)$ of $\Sh_U(G_i,\X_i)$, for each $i\in \set{0,1}$ and each $U\in \mathcal K_i$.
	\end{itemize}
These objects should satisfy all the requirements in \cite[Prop.~1.3.4]{morel2010book} and the paragraph following it. To be more precise, the formulations of these requirements need to be suitably modified when they concern zero-dimensional boundary strata. In the above, we have already modified the formulation of \cite[Prop.~1.3.4]{morel2010book} when it concerns $\mathcal K_2$, i.e., our $\mathcal K_2$ is a set of admissible levels, which are more general than neat compact open subgroups of $G_2(\adele_f) = \Gm(\adele_f)$. The conditions (a), (b), and (1)--(7) in \cite[\S 1.3]{morel2010book} also need to be modified as follows. 
\begin{itemize}
	\item  In condition (a), if $j=2$, we need to replace $L_{P'}(\QQ)$ with $L_{P'}(\QQ)^{+}$. (Here $P'$ is either $P_2$ or $P_{12}$, and $L_{P'}(\QQ)^+$ is the same as $L_{P'}(\QQ) \cap L_{P_2}(\QQ)^{\natural}$.) After this replacement, the quotient group in question is naturally a subgroup of $G_2(\adele_f)\times \ZZ/2\ZZ = \Gm(\adele_f)\times \ZZ/2\ZZ$, and the requirement is that this subgroup should be a member of $\mathcal K_2$.
	\item As in the paragraph following \cite[Prop.~1.3.4]{morel2010book}, we may and shall assume that the $\mathcal K_i$ are minimal in the following sense. We assume that $\mathcal K_0$ is the union of the $G(\adele_f)$-conjugacy class of $K$ and that of $K \cap gK g^{-1}$. Then we determine $\mathcal K_1$ as the minimal set that is stable under $G_1(\adele_f)$-conjugacy and such that condition (a) is satisfied for $(i,j) = (0,1)$. Having determined $\mathcal K_0$ and $\mathcal K_1$, we determine $\mathcal K_2$ as the minimal set such that the modified version of condition (a) as above is satisfied for $(i,j) \in \set{(0,2), (1,2)}$. In particular, $\mathcal K_1$ is finite modulo $G_1(\adele_f)$-conjugacy, and $\mathcal K_2$ is finite.
	\item In condition (b), if $j=2$, we still keep $L_P(\QQ)$, and do \emph{not} replace it with $L_P(\QQ)^{\natural}$.
	\item In conditions (3) and (4), if $i<2$, then the relevant requirements about zero-dimensional boundary strata should be reinterpreted in the obvious way, taking into account that in the generic fiber these strata are given by the generalized Shimura varieties $\Sh_U(\Gm, \HZ)$ at admissible levels $U$; cf.~\S \ref{subsubsec:relation}.   
	\item In conditions (5)--(7), for $i=2$ and $U\in \mathcal K_2$, the sheaves on the integral model $\Shh_U(G_2,\X_2)$ in question should be extensions of those sheaves on the generic fiber $\Sh_U(G_2,\X_2)$ that are constructed by the functors (\ref{eq:complicated functor}), (\ref{eq:compact supp 1}), and (\ref{eq:functor 1 for K flat}). (Indeed, by the minimality of $\mathcal K_2$ assumed above, each  $U\in \mathcal K_2$ is of the form either $\bar H/\bar H_L^{\natural}$ or $K_{\da}$, for a suitable choice of $L$ and $K_M$ as in \S\ref{subsubsec:setting for zero dim SV}; cf.~\S \ref{subsubsec:relation}.)
\end{itemize}
With the above modifications, the same proof of \cite[Prop.~1.3.4]{morel2010book} still goes through.

\begin{rem}
	The construction in \S \ref{subsubsec:int mod 1} can be easily generalized to an arbitrary abelian-type Shimura datum satisfying \textbf{A0}--\textbf{A3} in \S \ref{subsubsec:axioms} and \textbf{CA} in \S \ref{subsubsec:simplifying assumptions}. 
\end{rem}

Next we would like to to compare the localizations of the integral models constructed in \S \ref{subsubsec:int mod 1} with other known integral models, at least at almost all primes. We need some preparations. 
\begin{defn}\label{defn:language of integral models}
	Let $S$ be a scheme of finite type over $\QQ$. \begin{enumerate}
			\item By a \emph{family of local integral models}\index[n]{family of local integral models} of $S$, we mean the choice of an integral model $\mathcal S_p$ of $S$ over $\ZZ_p$ (i.e.~a $\ZZ_p$-scheme with generic fiber $S\otimes_{\QQ} \QQ_p$) for almost all primes $p$. Two such families $(\mathcal S_p)_{p\gg 0}$ and $(\mathcal S_p')_{p\gg 0}$ are called \emph{equivalent}, if for almost all $p$ there exists a $\ZZ_p$-isomorphism $\mathcal S_p \isom \mathcal S_p'$ extending the identity on the generic fiber. 
		\item Given a finite-type $\ZZ$-scheme $\mathcal S$ with generic fiber $S$, we obtain a family of local integral models $(\mathcal S \otimes_{\ZZ} \ZZ_p)_{p \gg 0}$ of $S$. Any family of local integral models equivalent to such a family is called \emph{eventually globalizable}.\index[n]{eventually globalizable}
	\end{enumerate}
\end{defn}
\begin{rem}\label{rem:spread out}
	By the ``spreading out'' property of isomorphisms (see \cite[Thm.~(8.10.5) (i)]{EGAIV3} or \cite[Thm.~3.2.1]{poonen}), the eventually globalizable condition characterizes the family of local integral models up to equivalence. 
\end{rem} 
\begin{lem}\label{lem:descent} Let $R$ be an integral domain, with fraction field $F$. Let $\Y$ be a scheme flat and locally of finite presentation over $R$. Let $X$ be a scheme over $F$, and let $\pi: \Y \otimes_R F \to X$ be an $F$-morphism. Then there exists at most one separated $R$-scheme $\X$ with generic fiber $X$ such that $\pi$ extends to an fppf $R$-morphism $\pi_0: \Y \to \X$. 
\end{lem}
\begin{proof}
	Let $\X$ and $\X'$ be two separated $R$-schemes with generic fiber $X$, together with fppf $R$-morphisms $\pi_0 : \Y \to \X$ and $\pi_0': \Y \to \X'$ extending $\pi$. 
	We claim that $\pi_0'$ factors uniquely through $\pi_0$. The lemma follows from the claim by symmetry.
	
	To prove the claim, form the fiber product $\Y\times_{\X} \Y$ with respect to $\pi_0: \Y \to \X$. Since $\pi_0$ is an fpqc covering and therefore a universal effective epimorphism, it suffices to check the equality of the two morphisms $$g_i: \Y\times_{\X} \Y \xrightarrow{\mathrm{pr}_i} \Y \xrightarrow{\pi_0'} \X', \qquad i= 1,2 .$$ Since both $\pi_0$ and the structure morphism $\Y\to \spec R$ are flat and locally of finite presentation, the same holds for the structure morphism $\Y\times_{\X} \Y \to \spec R$, which implies that it is open. Hence the generic fiber of $\Y \times_{\X} \Y$ is dense in $\Y \times_{\X} \Y$. Since the $R$-morphisms $g_1$ and $g_2$ agree on the dense generic fiber, and since the target $\X'$ is separated over $R$ (which implies that the locus where $g_1 = g_2$ is closed), we conclude that $g_1 = g_2$ on a closed subscheme of $\mathcal Y \times _{\mathcal X} \mathcal Y$ whose underlying topological space is that of $\mathcal Y \times _{\mathcal X} \mathcal Y$. In particular $g_1$ and $g_2$ induce the same map at the level of topolgical spaces. To finish the proof, we can reduce to the affine case, namely we can replace $\mathcal X'$ by an affine $R$-scheme $\spec A$, and replace $\mathcal Y \times _{\mathcal X} \mathcal Y$ by an affine $R$-scheme $\spec B$ flat over $R$. We know that $g_1, g_2: A \to B$ induce the same map $A \otimes_R F \to B \otimes _R F$. Hence we can conclude that $g_1 =g_2$ since $B \to B\otimes_R F$ is injective. 
\end{proof}
\subsection{}\label{subsubsec:preparation for enlarging Sigma}  We keep the notation in \S \ref{subsubsec:int mod 1}. In the following, by ``enlarging $\Sigma$'' we always mean replacing $\Sigma$ by a finite set of primes containing $\Sigma$. Also, when we write $p\notin \Sigma$ it is understood that $p$ is a prime. 

Let $(\Gspin(V),\X')$ be the GSpin Shimura datum\index[n]{GSpin Shimura datum} associated to the quadratic space $V$, which is of Hodge type and has reflex field $\QQ$. The natural homomorphism $\Gspin(V) \to G$ extends to a morphism $(\Gspin(V) , \X') \to (G,\X)$ of Shimura data, inducing an isomorphism between the adjoint Shimura data. For more details see \cite[\S 3]{MPspin}.

 We fix a neat compact open subgroup $\tilde K \subset \Gspin(V)(\adele_f)$\index{$\tilde K$} such that its image in $G(\adele_f)$ is contained in $K$. We denote by $\Sh_{\tilde K}$\index{$\Sh_{\tilde K}$} the canonical model over $\QQ$ of the Shimura variety associated to $(\Gspin(V),\X')$ at level $\tilde K$, and denote by $\overline {\Sh_{\tilde K}}$\index{$\overline {\Sh_{\tilde K}}$} the
 Baily--Borel compactification 
  over $\QQ$. Thus $\Sh_{\tilde K}$ is smooth quasi-projective over $\QQ$, and $\overline{\Sh_{\tilde K}}$ is normal projective over $\QQ$. There are natural $\QQ$-morphisms $\pi: \Sh_{\tilde K} \to \Sh_K$ and $\bar \pi: \overline {\Sh_{\tilde K}} \to \overline {\Sh_K}$. 

 Note that $\pi$ is finite \'etale surjective. Indeed, by fpqc descent, it suffices to check these properties for the base change of $\pi$ to $\CC$, which is clear from the adelic description of the Shimura varieties over $\CC$ and Hilbert 90 applied to $\ker(\Gspin(V) \to G) = \Gm$; cf.~\cite[\S 3.2]{MPspin}.
 
 Recall that $ K \in  \mathcal K_0$. We let $\Shh_K = \Shh_K(G, \X)$ be the smooth quasi-projective scheme over $\ZZ[1/\Sigma]$ with generic fiber $\Sh_K$ as given in \S \ref{subsubsec:int mod 1}. 
By standard ``spreading out'' (see \cite[Thm.~3.2.1]{poonen}), we may and shall assume that the following objects exist after enlarging $\Sigma$: 
\begin{itemize}
	\item a smooth quasi-projective scheme $\Shh_{\tilde K}$ over $\ZZ[1/\Sigma]$ with generic fiber $\Sh_{\tilde K}$. 
	\item a normal projective scheme $\overline{\Shh_{\tilde K}}$ over $\ZZ[1/\Sigma]$ with generic fiber $\overline {\Sh_{\tilde K}}$. 
	\item a dense open embedding $\Shh_{\tilde K} \hookrightarrow \overline{\Shh_{\tilde K}}$ extending the embedding $\Sh_{\tilde K} \hookrightarrow \overline{\Sh_{\tilde K}}$. 
	\item a finite \'etale surjective morphism $\pi_0 : \Shh_{\tilde K} \to \Shh_K$ extending $\pi$.
\end{itemize}
We also enlarge $\Sigma$ so that the following condition holds: 
\begin{itemize}
	\item For each $p\notin \Sigma$, there are reductive group schemes $\tilde {\mathcal {G}}_p$ and $\mathcal G_p$ over $\ZZ_p$ with generic fibers $\Gspin(V)_{\QQ_p}$ and $G_{\QQ_p}$ respectively such that the homomorphism $\Gspin(V)_{\QQ_p} \to G_{\QQ_p}$ extends to a homomorphism $\tilde {\mathcal {G}}_p \to \mathcal G_p$. Moreover, we have $\tilde K = \tilde {\mathcal {G}}_p(\ZZ_p) \tilde K^p$ and $K = \mathcal G_p (\ZZ_p) K^p$ for some compact open subgroups $\tilde K^p \subset \Gspin(V)(\adele_f^p)$ and $K^p \subset G(\adele_f^p)$.
\end{itemize}
\begin{lem}\label{lem:enlarge}
	In the setting of \S \ref{subsubsec:preparation for enlarging Sigma}, it is possible to further enlarge $\Sigma$ and find a number field $F$ unramified outside $\Sigma$ such that the following conditions hold for all $p \notin \Sigma$. Here all isomorphisms between integral models are required to extend the identity on the generic fiber. 
	\begin{enumerate}
		\item For each $U \in \mathcal K_2$, $\Shh_U(G_2,\X_2)\otimes{\ZZ_p}$ is isomorphic to the base change to $\ZZ_p$ of the canonical integral model of $\Sh_U(G_2,\X_2)$ in \S \ref{subsubsec:defn of Sh_H}.
		\item For each  $U \in \mathcal K_1$,  $\Shh_U(G_1, \X_1)\otimes {\ZZ_p}$ is isomorphic to the canonical hyperspecial integral model over $\ZZ_p$ of the modular curve $\Sh_U(G_1, \X_1)$. 
		
		\item The integral model $\Shh_{\tilde K} \otimes \ZZ_p$ (resp.~$\overline {\Shh_{\tilde K}} \otimes \ZZ_p$) is isomorphic to the canonical hyperspecial integral model $\KS{\tilde K}$ (resp.~$\MP$) over $\ZZ_p$ constructed in \cite{kisin2010integral} (resp.~in \cite{peratoroidal}).
		\item The integral model $\Shh_{K}\otimes \ZZ_p$ is isomorphic to the canonical hyperspecial integral model $\KS{K}$ over $\ZZ_p$ constructed in \cite{kisin2010integral}.
		\item For each place $v$ of $F$ above $p$,  $\overline{\Shh_K} \otimes _{\ZZ} \oo_{F,v}$ is isomorphic to the base change to $\oo_{F,v}$ of the integral model over $\ZZ_p$ of $\overline{\Sh_K}$ constructed in \cite[Prop.~2.4]{lanstrohII}.
		\end{enumerate}
\end{lem}
\begin{proof}
	First note that for (1) and (2) it suffices to show that we can enlarge $\Sigma$ for each $U$ separately, since $\mathcal K_1$ is finite modulo $G_1(\adele_f)$-conjugacy and $\mathcal K_2$ is finite. (Indeed, as is implicit in the proof of \cite[Prop.~1.3.4]{morel2010book}, the integral models at conjugate levels are by construction isomorphic to each other.)
	
	For (1)--(3), we know that the canonical integral models in each case form an eventually globalizable family of local integral models (Definition \ref{defn:language of integral models}) as $p$ varies. We are done by Remark \ref{rem:spread out}.   
	\ignore{The case with $\Sh_{\tilde K}$ is obvious from the construction, and we explain the case with $\overline{\Sh_{\tilde K}}$.
		In \cite{peratoroidal}, integral models over $\ZZ_{(p)}$ of toroidal compactifications of $\Sh_{\tilde K}$ are constructed via taking normalizations of the closures of $\Sh_{\tilde K}$ inside the integral models over $\ZZ_{(p)}$ of suitable toroidal compactifications of Siegel moduli spaces. Since the last integral models form a globalizable family when $p$ varies (see \cite{faltingschai} or \cite{lanbook}), we know that the toroidal compactifications over $\ZZ_{(p)}$ in \cite{peratoroidal} form a globalizable family when $p$ varies. Now hyperspecial canonical integral models over $\ZZ_{(p)}$ of $\overline{\Sh_{\tilde K}}$ are obtained in \cite{peratoroidal} via a Proj construction applied to the Hodge bundle on the above-mentioned integral model of a toroidal compactification. For different $p$, the Hodge bundles are integral models of the same vector bundle on the generic fiber of the toroidal compactification. Hence the hyperspecial canonical integral models of $\overline{\Sh_{\tilde K}}$ indeed form a globalizable co-finite set of models.} 
	
	For (4), we would like to apply Lemma \ref{lem:descent} to characterize $\KS{K}$ in terms of $\KS{\tilde K}$. Let $p\notin \Sigma$. By the construction in \cite{kisin2010integral} (cf.~\cite[Prop.~2.4, Remark 2.6]{lanstrohII}) and by the surjectivity of $\pi$,  the morphism $\pi_{\QQ_p}: \Sh_{\tilde K}\otimes_{\QQ} \QQ_p \to \Sh_K \otimes_{\QQ} \QQ_p$ extends to a finite \'etale surjective (hence fppf) morphism $\KS{\tilde K} \to \KS{K}$. We also know that $\KS{\tilde K}$ is flat of finite presentation over $\ZZ_p$. By \cite[Prop.~2.4]{lanstrohII}, $\KS{K}$ is quasi-projective and hence separated over $\ZZ_p$. By part (3), we may assume that $\KS{\tilde K} = \Shh_{\tilde K} \otimes_{\ZZ} \ZZ_p$. Since $\Shh_{\tilde K} \otimes_{\ZZ} \ZZ_p \to \Shh_{K}\otimes_{\ZZ} \ZZ_p$ is also finite \'etale surjective and since $\Shh_K\otimes_{\ZZ} \ZZ_p$ is also separated over $\ZZ_p$ (as it is quasi-projective), we know from Lemma \ref{lem:descent} that $\Shh_K \otimes_{\ZZ} \ZZ_p$ is isomorphic to $\KS{K}$ as integral models of $\Sh_K$. 
		
For (5), we let $(C_{i,\mathrm{geom}})_{i\in I}$ be the connected components of $\overline {\Sh_{\tilde K}} \otimes_{\QQ} \overline \QQ$, and let $(D_{j,\mathrm{geom}})_{j\in J}$ be the connected components of $\overline {\Sh_{K}} \otimes_{\QQ} \overline \QQ$. For each $i\in I$, let $C^0_{i,\mathrm{geom}}$ be the intersection of $C_{i,\mathrm{geom}}$ with $\Sh_{\tilde K} \otimes_{\QQ} \overline \QQ$, and similarly define $D^0_{j,\mathrm{geom}}$. The morphism $\bar \pi: \overline {\Sh_{\tilde K}} \to \overline {\Sh_K}$ induces a surjection $I \to J$, which we still denote by $\bar \pi$. As in the proof of \cite[Prop.~2.4]{lanstrohII}, we know that for each $i\in I$, the morphism $C_{i,\mathrm{geom}} \to D_{\bar \pi(i), \mathrm{geom}}$ induced by $\pi$ is the quotient by a finite group $\Delta_i$ acting on $C_{i,\mathrm{geom}}$, in the sense of \cite[Rmk.~2.6]{lanstrohII}. Moreover, $\Delta_i$ acts freely on $C^0_{i,\mathrm{geom}}$ and the Galois \'etale cover $C^0_{i,\mathrm{geom}} \to D^0_{\bar \pi(i), \mathrm{geom}}$ is a $\Delta_i$-torsor. We pick a number field $F$ such that each $C_{i,\mathrm{geom}}$ is the base change of a connected component $C_i$ of $\overline{\Sh_{\tilde K}}\otimes_{\QQ} F$, and such that the action of $\Delta_i$ on $C_{i,\mathrm{geom}}$ descends to $C_i$. For each $i\in I$, define $D_i$ to be the quotient $C_i/\Delta_i$, in the sense of \cite[Rmk.~2.6]{lanstrohII}. We fix a section $\iota:J \to I$ of the surjection $ I \to J$. Then it is clear that $\overline{\Sh_K}\otimes_{\QQ} F$ can be identified with $\coprod_{j\in J}  D_{\iota(j)}$.  

Since our choice of $F$ is independent of $\Sigma$, we can enlarge $\Sigma$ such that $F$ is unramified outside $\Sigma$. After further enlarging $\Sigma$, we may and shall assume that each $C_i$ is contained in a unique connected component $\mathscr C_i$ of $\overline{\Shh_{\tilde K}}\otimes_{\ZZ} \oo_F$, and that the action of $\Delta_i$ on $C_i$ extends to $\mathscr C_i$. Since the formation of the quotient of a quasi-projective scheme by the action of a finite group commutes with flat base change, we know that the generic fiber of $\coprod_{j \in J} \mathscr D_{\iota(j)}$ is the same as $\coprod_{j\in J} D_{\iota(j)}$, which we have already identified with $\overline{\Sh_K}\otimes_{\QQ} F$. Thus $\coprod_{j \in J} \mathscr D_{\iota(j)}$ and $\overline {\Shh_K}\otimes_{\ZZ} \oo_F$ are two finite-type $\oo_F\otimes_{\ZZ} \ZZ[1/\Sigma]$-schemes with the common generic fiber, and we can hence enlarge $\Sigma$ to assume that they are $\oo_F$-isomorphic. It is then clear from parts (3) and (4) above, and the construction in the proof of \cite[Prop.~2.4]{lanstrohII}, that the condition in (5) holds for all $p\notin \Sigma$ and all places $v$ of $F$ above $p$. \end{proof}

 \section{Finish of the proof}\label{subsec:finish of proof}
 Essentially all arguments in \cite[Chap.~1]{morel2010book} can be easily modified to suit our new axiomatic setting (i.e.~\textbf{A0}--\textbf{A3} in \S \ref{subsubsec:axioms} plus \textbf{CA} in \S \ref{subsubsec:simplifying assumptions}). With each appearance of $H_L$ replaced by $H_L^{\natural}$, the results of \cite[\S 1.4, \S 1.5]{morel2010book} all carry over. More precisely, in \cite[Prop.~1.4.5]{morel2010book}, if the index $n_r$ corresponds to zero-dimensional boundary data, then we replace the functor $\mathcal F^{H/H_L} R\Gamma(H_L/K_N, - )$ with the functor (\ref{eq:complicated functor}) (applied to $M = M_P$, $\bar H=$ the image of $H$ under $P(\adele_f) \to M_P(\adele_f)$, and $\bar H_L^{\natural}=$ the intersection of $\bar H$ with $L_{n_r}(\QQ)^{+}$). We then modify \cite[Cor.~1.4.6]{morel2010book} correspondingly (by replacing the functor $\mathcal F^{H/H_L} R\Gamma_c(H_L/K_N,- )$ with the functor (\ref{eq:compact supp 1})), and modify the definitions of $L_{C_1}$ and $L_{C_2}$ on pp.~17--18 of \cite[\S 1.5]{morel2010book} correspondingly.

Let $\overline{\Shh_K}$ be the integral model constructed in \S \ref{subsubsec:int mod 1}. We now explain the modification of the proof of \cite[Thm.~1.7.1]{morel2010book}, applied to the special fiber of $\overline{\Shh_K}$ modulo a prime $p\notin \Sigma$, where $\Sigma$ is as in Lemma \ref{lem:enlarge}. We follow the notation in \textit{loc.~cit.}. Modifications are only needed when $n_r$ corresponds to zero-dimensional boundary data. In this case, the definitions of $v_h$ and $u_{h'}$ need to be modified in accordance with the modifications in \cite[Cor.~1.4.6, \S 1.5]{morel2010book} mentioned above. To get the relation between $\Tr(v_h)$ and $\Tr(u_{h'})$, we need to apply Proposition \ref{prop:variant of coh corr} in place of \cite[Prop.~1.7.2]{morel2010book}. Finally, in the calculation of $\Tr(v_h)$ on the bottom of \cite[p.~25]{morel2010book}, we apply Proposition \ref{prop:generalization of0-dim Shim} in place of \cite[Rmk.~1.6.4, Rmk.~1.6.5]{morel2010book}, and apply Proposition \ref{prop:generalization ofGKM} in place of \cite[Thm.~1.6.6]{morel2010book}. Note that Proposition \ref{prop:generalization of0-dim Shim} is applicable thanks to condition (1) in Lemma \ref{lem:enlarge}. Also, the fixed point formula of Kottwitz for the one-dimensional boundary strata is applicable thanks to condition (2) in Lemma \ref{lem:enlarge}.

	After calculating $\Tr(v_h)$, the same arguments as those on pp.~26--27 of \cite{morel2010book} lead to a modified version of \cite[Thm.~1.7.1]{morel2010book}, where the right hand side of the equality in that theorem is replaced by 
	\begin{align*}
		\Tr(\Frob_p^a \times f^{\infty}  dg^\infty \mid   \coh_c^* (\Shh_K\otimes_\ZZ \overline \FF_p,  \mathcal F^K \mathbb V)) +  \mathrm T_{P_1} + \mathrm T_{P_2} + \mathrm T_{P_{12}},
	\end{align*}
with the  terms $\mathrm T_{P_1}$, $\mathrm T_{P_2}$, and $\mathrm T_{P_{12}}$  as in Definition \ref{defn:T_Q}.\footnote{Note that the factor $\mathtt m_M$ in Definition \ref{defn:T_Q} comes from the factor $2$ in Proposition \ref{prop:generalization ofGKM}, which is an analogue of \cite[Thm.~1.6.6]{morel2010book}. By contrast, in Morel's case the extra factor $2$ comes from \cite[Rmk.~1.6.5]{morel2010book}. } 
	From this, we deduce the analogue of the identity (\ref{eq:equivalent form of Morel's formula}) for the special fiber. Namely we have proved (\ref{eq:equivalent form of Morel's formula}) for $a$ sufficiently large, but with $\overline{\Sh_K}$ and $\Sh_K$ replaced by the mod $p$ reductions of the integral models. 
	
	To prove (\ref{eq:equivalent form of Morel's formula}) itself, we apply \cite[Thm.~4.19]{lanstrohII}. This result confirms Theorem \ref{geometric assertion} (1) and asserts that the terms 	$\Tr(\cdots\mid  \icoh^* (\overline{ \Sh _K}, \mathbb V)) $ and $\Tr(\cdots \mid   \coh_c^* ( \Sh _K,  \mathbb V)) $ in (\ref{eq:equivalent form of Morel's formula}) are unchanged if we replace $\overline{\Sh_K}$ and $\Sh_K$ by the mod $p$ reductions of the integral models.\footnote{In \cite[\S 3]{lanstrohII}, an extra assumption is made on the relation between the level $K$ and the prime $\ell$. This assumption can be easily removed if we consider the system of levels in \cite[\S 4.9]{pink1992ladic} instead of the system $\mathcal H(\ell^r), r>0$ in the notation of \cite[\S 3]{lanstrohII}.} Indeed, by \cite[Thm.~4.19]{lanstrohII} and conditions (4), (5) in Lemma \ref{lem:enlarge}, we know that the compact support cohomology and the intersection cohomology (with coefficients in $\mathbb V$) of $\Sh_{K,\overline \QQ_p}$ are respectively isomorphic to those of $\Shh_{K,\overline \FF_p}$ under the canonical adjunction morphisms (which are Hecke-equivariant and $\Gal(\overline \QQ_p/\QQ_p)$-equivariant). Note that in Lemma \ref{lem:enlarge} (5) we only compare the integral models over an extension of $\ZZ_p$, but this already suffices for the current purpose since whether the canonical adjunction morphisms are isomorphisms is insensitive to finite base change. This finishes the proof of Theorem \ref{geometric assertion} (1) and (\ref{eq:equivalent form of Morel's formula}). In Proposition \ref{prop:equivalent form}, we have already proved that  (\ref{eq:equivalent form of Morel's formula}) is equivalent to the identity (\ref{eq:in geometric assertion}) in Theorem \ref{geometric assertion} (2). 
	
	Finally, we explain why the two sides of (\ref{eq:in geometric assertion}) lie in $\mathbb E$ for all sufficiently large $a$. In the above proof of (\ref{eq:equivalent form of Morel's formula}), it is already implicit that the LHS of (\ref{eq:equivalent form of Morel's formula}) lies in the algebraic closure $\overline \QQ$ of $\QQ$ inside $\overline \QQ_{\ell}$, and that the equality holds when we view the LHS as a number in $\CC$ by choosing an arbitrary $\mathbb E$-algebra embedding $\overline \QQ \hookrightarrow \CC$. (Remember that at the outset we fixed field embeddings $\mathbb E_{\lambda}\hookrightarrow \overline \QQ_{\ell}$ and $\mathbb E \hookrightarrow \CC$, and that the RHS of (\ref{eq:equivalent form of Morel's formula}) is a number in $\CC$.) Since the definition of the RHS of (\ref{eq:equivalent form of Morel's formula}) depends only on the embedding $\mathbb E \hookrightarrow \CC$ but not on the choice of $\overline \QQ \hookrightarrow \CC$, we see that both sides of (\ref{eq:equivalent form of Morel's formula}) are in $\mathbb E$ since they must be fixed by $\Gal(\overline \QQ/\mathbb E)$. 
Thus it remains to check 	$$\Tr(\Frob_p^a \times f^{\infty}  dg^{\infty} \mid   \coh_c^* ( \Sh _K,  \mathbb V))  \in \mathbb E. $$ But this follows from the point counting formula in \cite{KSZ}.

 The proof of Theorem \ref{geometric assertion} is complete.

\chapter{Comparison with discrete series characters}\label{Section infty}
\section{Elliptic maximal tori in Levi subgroups}\label{R groups}
\subsection{} \label{para:R groups}
We now pass to a local setting over $\RR$. The symbols $V$, $V_i$, $W_i$, $G$, $P_S$, and $M_S$ will now denote the base change to $\RR$ of the corresponding objects in \S \ref{global groups}. We note that over $\RR$, $P_{12}$ is still a minimal parabolic subgroup of $G$, and $P_{12}, P_1, P_2$ are still the only proper parabolic subgroups of $G$ containing $P_{12}$. Also note that the split component of $M_S$ over $\RR$ is just the base change to $\RR$ of the split component over $\QQ$. For this reason we still use the notation $A_{M_S}$ for the split component over $\RR$. 

Note that $W_2$ and $W_1$ are quadratic spaces of signatures $(n-2,0)$ and $(n-1,1)$ respectively. We have \begin{align*}
M_1 & \cong \GL_2 \times \SO(W_2),& M_2 & \cong \GL_1 \times \SO(W_1),& M_{12} & \cong \Gm^2 \times \SO(W_2).&
\end{align*} Hence $M_{1}$ and $M_{12}$ always contain elliptic maximal tori (over $\RR$), whereas $M_2$ contains elliptic maximal tori if and only if $d$ is odd (recall that when $d$ is even we assume that $n = d-2 \geq 4$). We fix an elliptic maximal torus $T_{W_2}$\index{$T_{W_2}$} in $ \SO(W_2)$. We then obtain elliptic maximal tori:\index{$T_1$}\index{$T_{12}$}
\begin{align*}
T_1  & : = T_{\GL_2}^{\std} \times T_{W_2} \subset M_1 = \GL_2 \times \SO(W_2), \\ T_{12} & := \GG_m^2 \times T_{W_2} \subset M_{12} = \GG_m^2 \times \SO(W_2),
\end{align*}where \index{$T_{\GL_2}^{\std}$} 
$$T_{\GL_2}^{\std} = \set{\begin{pmatrix}
	a & b\\ c & d
	\end{pmatrix} \in \GL_2 \mid  a= d, b = -c}.$$ 

When $d$ is odd, we also fix an elliptic maximal torus $T_{W_1}$\index{$T_{W_1}$} in $\SO(W_1)$, and obtain an elliptic maximal torus $T_2 = \GG_m \times T_{W_1}$ in $M_2 = \GG_m \times \SO(W_1)$.\index{$T_2$}

 \subsection{}
We define a maximal torus $T'$\index{$T'$} in $G_{\CC}$ as follows. Remember that $V$ is the orthogonal direct sum of $\mathrm{span} \set{e_1, e_1'}$, $\mathrm{span} \set{e_2, e_2'}$, and $W_2$. We choose a hyperbolic basis  (see Definition \ref{defn:qsplitting})  $\mathbb B = \set{f_1,\cdots, f_d}$ of the quadratic space $V_{\CC}$ over $\CC$ such that \begin{align*} 
f_1 &= e_1,& f_2 & = e_2, & f_d & = e_1' , & f_{d-1} & = e_2'. &  
\end{align*} As in \S \ref{subsubsec:Borel pairs assoc to hd and nhd}, from $\mathbb B$ we obtain an embedding $$\iota_{\mathbb B} : \GmC^m \isom  T' \subset G_\CC, $$ and a Borel subgroup $B$ of $G_{\CC}$ containing $T'$. By construction, $T'$ is contained in $M_{\CC}$ for each $M \in \set{M_1, M_2,M_{12}}$, and $B$ is contained in $P_{\CC}$ for each $P \in \set{P_1,P_2,P_{12}}$. Moreover, $\iota_{\mathbb B}$ identifies the first two copies of $\GG_m$ with the split component $\Gm^2 = \GL(V_1) \times \GL(V_2/V_1)$ of $M_{12}$. 

Let $S$ be a non-empty subset of $\set{1,2}$ and assume that $S \neq \set{2}$ if $d$ is even. We fix an element $g_S \in M_S(\CC)$ such that $\Int(g_S)(T_{S,\CC})= T'$. Denote the standard characters of $\GG_{m,\CC}^m \cong T'$ by $\epsilon_1, \cdots, \epsilon_m$, and the standard cocharacters by $\epsilon_1^{\vee}, \cdots, \epsilon_m^{\vee}.$ We transport them to $T_{S,\CC}$ using $\Int(g_S)$, and retain the same notation. 

For $S$ as above, we let $R_{S}$ be the subset of $\Phi(G_{\CC},T_{S,\CC})$ consisting of real elements, and similarly we define $R_S^{\vee} \subset \Phi(G_{\CC}, T_{S,\CC}) ^{\vee}$.\index{$R_S, R_S^{\vee}$} We view $R_{S}$ and $R_S^{\vee}$ as subsets of $X^*(A_{M_S})$ and $X_*(A_{M_S})$ respectively. In Tables \ref{table:odd real root syst} and  \ref{table:even real root syst} below, we determine $R_S$ and $R_S^{\vee}$ explicitly in the odd and even cases respectively. In the last rows of the two tables, we record the type of the root datum $ (X^*(A_{M_S}) , R_S, X_*(A_{M_{S}} ), R_S^{\vee})$. 
\begin{table}[H]
	  \renewcommand\thetable{2}
	\centering\caption{Real root systems in the odd case}\label{table:odd real root syst}
	\begin{tabular}{|c|c|c|c|}
		\hline $S$ & $\set{1}$ & $\set{2}$ & $\set{1,2}$ \\ \hline
		$R_S$ &  $\set{\pm (\epsilon_1 +\epsilon_2)}$ & $\set{\epsilon_1}$ & $\set{\pm \epsilon_1, \pm \epsilon_2, \pm \epsilon_1 \pm \epsilon _2}$ \\
		\hline 
		$R_S^{\vee}$ & $\set{\pm (\epsilon_1^{\vee} + \epsilon_2 ^{\vee})}$ &  $\set{2\epsilon_1^{\vee}}$  &   $\set{\pm 2\epsilon_1^{\vee}, \pm 2\epsilon_2^{\vee}, \pm \epsilon_1 ^{\vee}\pm \epsilon_2^{\vee}}$ \\ \hline
		$X^*(A_{M_S})$ & $\frac{1}{2} \ZZ (\epsilon_1 +\epsilon_2)$ & $\ZZ \epsilon_1$ & $\ZZ \epsilon_1 \oplus \ZZ \epsilon_2$ \\ 
		\hline 
			$X_*(A_{M_S})$ & $\ZZ (\epsilon_1^{\vee} +\epsilon_2^{\vee})$ & $\ZZ \epsilon_1^{\vee}$ & $\ZZ \epsilon_1^{\vee} \oplus \ZZ \epsilon_2^{\vee}$ \\ \hline 
			type &  $\mathsf{A}_1$ & $\mathsf A_1$ & $\mathsf B_2$ \\ \hline
		\end{tabular}\end{table} 
	\begin{table}[H]   \renewcommand\thetable{3}
		\centering\caption{Real root systems in the even case}\label{table:even real root syst}
		\begin{tabular}{|c|c|c|}
			\hline $S$ & $\set{1}$ & $\set{1,2}$ \\ \hline
			$R_S$ &  $\set{\pm (\epsilon_1 +\epsilon_2)}$  & $\set{\pm \epsilon_1 \pm \epsilon _2}$ \\
			\hline 
			$R_S^{\vee}$ & $\set{\pm (\epsilon_1^{\vee} + \epsilon_2 ^{\vee})}$ &  $\set{\pm \epsilon_1 ^{\vee}\pm \epsilon_2^{\vee}}$ \\ \hline
			$X^*(A_{M_S})$ & $\frac{1}{2} \ZZ (\epsilon_1 +\epsilon_2)$ & $\ZZ \epsilon_1 \oplus \ZZ \epsilon_2$ \\ 
			\hline 
			$X_*(A_{M_S})$ & $\ZZ (\epsilon_1^{\vee} +\epsilon_2^{\vee})$ & $\ZZ \epsilon_1^{\vee} \oplus \ZZ \epsilon_2^{\vee}$ \\ \hline 
			type &  $\mathsf{A}_1$ & $\mathsf A_1 \times \mathsf A_1$ \\ \hline
	\end{tabular}\end{table}

\section{Stable discrete series characters}\label{subsec:stable discrete series chars}
\subsection{}\label{subsubsec:mathbb V}
We keep the setting of \S \ref{R groups}. Fix an irreducible algebraic representation $\mathbb V$\index{$\mathbb V$} of $G_{\CC}$. This gives rise to an L-packet $\Pi(\mathbb V)$\index{$\Pi(\mathbb V)$} of discrete series representations of $G(\RR)$. Let $\Theta = \Theta_{\mathbb V}$\index{$\Theta$}\index{$\Theta_{\mathbb V}$}
 be the stable character\index[n]{stable character} associated to $\Pi(\mathbb V)$, i.e., the sum\footnote{Our definition of the stable character is the same as \cite{morel2011suite}, whereas in \cite{GKM} a sign $(-1)^{q(G)}$ is included.} of the characters of the members of $\Pi(\mathbb V)$.

Let $S$ be a non-empty subset of $\set{1,2}$, and assume that $S \neq \set{2}$ if $d$ is even. Let $M : = M_S$. In \S \ref{R groups} we fixed an elliptic maximal torus $T_S$ in $M$. In the sequel, unless otherwise stated, we call an element $\gamma \in T_S(\RR)$ \emph{regular} if it is regular in $G$, i.e., if $\alpha (\gamma) \neq 1$ for all $\alpha \in \Phi(G_\CC, T_{S,\CC})$. 

The \emph{normalized stable discrete series character}\index[n]{normalized stable discrete series character} $\Phi^G_M (\cdot, \Theta)$\index{$\Phi^G_M(\cdot,\Theta)$} is defined and studied in \cite{arthurlef} and \cite{GKM}; see also \cite[\S 3.2]{morel2010book}. It is a continuous function $T_S(\RR) \to \CC$ such that $$\Phi^G_M(\gamma, \Theta) = \abs{D^G_M(\gamma)}^{1/2}_{\RR} \Theta(\gamma) $$ for all regular $\gamma\in T_S(\RR)$. In the following we recall a formula for $\Phi^G_M(\gamma, \Theta)$, for regular $\gamma \in T_S(\RR)$. 

\subsection{}\label{subsubsec:notation for roots and Weyl}
In \S \ref{R groups} we fixed a Borel pair $(T', B')$ in $G_{\CC}$, an elliptic maximal torus $T_S$ in $M$, and an element $g_S \in M(\CC)$ such that $\Int(g_S) (T_{S,\CC})  = T'$. We now denote by $B$ the Borel subgroup $\Int(g_S)^{-1} (B')$ of $G_{\CC}$ containing $T_{S,\CC}$. Remember that $P_{S,\CC} \supset B$. We let $B_M : = M_{\CC} \cap B$, which is a Borel subgroup of $M_{\CC}$. We make the following definitions: \begin{itemize}
	\item Denote by $\Phi^+$\index{$\Phi^+$} the set of $B$-positive roots in $\Phi(G_{\CC}, T_{S,\CC})$. 
	\item Denote by $\Phi_M^+$\index{$\Phi_M^+$ } the set of $B_M$-positive roots in $\Phi(M_{\CC}, T_{S,\CC})$.
	\item Denote by $\rho \in X^* (T_S)\otimes \frac{1}{2}\ZZ$\index{$\rho \in X^* (T_S)$} the half sum of the elements of $\Phi^+$. 
	\item Denote by $\lambda \in X^*(T_S)$\index{$\lambda \in X^*(T_S)$} the highest weight of the $G_{\CC}$-representation $\mathbb V$ with respect to the Borel pair $(T_{S,\CC}, B)$ in $G_{\CC}$.
	\item Denote by $\Omega$ the complex Weyl group $\Omega_{\CC} (G, T_{S})$.\index{$\Omega$}
	\item For $\omega \in \Omega$, denote by $\omega B$\index{$\omega B$} the Borel subgroup $\dot \omega B \dot\omega ^{-1}$ of $G_{\CC}$, where $\dot \omega \in \Nor _{G(\CC)} (T_S)$ is any representative of $\omega$. 
	\item Denote by $\Delta_M$\index{$\Delta_M$} the Weyl denominator of $M_{\CC}$ with respect to the Borel pair $(T_{S,\CC} , B_M)$ in $M_{\CC}$; see Definition \ref{defn:Weyl denominator}. Thus $\Delta_M = \prod_{\alpha \in \Phi_M^+}(1 - \alpha^{-1})$. 
	\item 
	For $\omega \in \Omega$, define\index{$\Phi(\Omega)$}\index{$l(\omega)$}\index{$\epsilon(\omega)$}
	\begin{align*}
	\Phi(\omega) & : =  \Phi^+ \cap ( - \omega \Phi^+ ), \\ l(\omega)  & := \abs{\Phi(\omega)} , \\ \epsilon(\omega) & : = (-1)^{l(\omega)}. 
	\end{align*}
Thus $l(\omega)$ and $\epsilon(\omega)$ are the length and sign of $\omega$ respectively. 
\end{itemize} 

Recall that in \S \ref{R groups} we explicitly identified the set $R_S$ of real roots in $\Phi(G_{\CC}, T_{S,\CC})$. Since $S$ is currently fixed, we simply write $R$ for $R_S$. For $\gamma\in T_S(\RR)$, we define \index{$R_{\gamma}$} \index{$R_{\gamma}^+$} \index{$\epsilon_R$}
\begin{align*}
R_{\gamma} & : = \set{\alpha \in R \mid  \alpha (\gamma) > 0} ,\\
R_{\gamma}^+ & : = \set{\alpha \in R \mid  \alpha(\gamma) > 1}, \\ \epsilon_R(\gamma) & : = (-1)^{\abs{\Phi^+ \cap ( - R_{\gamma}^+ ) }}  = (-1)^{\# \set{\alpha \in \Phi^+ \cap R \mid  0 < \alpha(\gamma)<1 } }.
\end{align*}
Then by the work of Harish-Chandra \cite{HCdiscreteseriesI} and Herb \cite{herb}, we have the following formula for $\Phi^G_M(\gamma, \Theta)$, for regular $\gamma\in T_S(\RR)$:
\begin{multline}\label{eq:to explain notation}
\Phi^G_M(\gamma, \Theta) = (-1)^{q(G)} \epsilon_{R} (\gamma) \delta_{P_S(\RR)} (\gamma)^{1/2}\Delta_M(\gamma)^{-1} \\ \cdot  \sum_{\omega \in \Omega } \epsilon(\omega) n(\gamma, \omega B) (\omega \lambda)(\gamma) \prod _{\alpha \in \Phi(\omega)} \alpha^{-1} (\gamma).  
\end{multline}
See also \cite[\S 4]{GKM} and \cite[Fait 3.1.6]{morel2011suite}. Here $q(G)$ and $\delta_{P_S(\RR)}$ are defined in \S \ref{subsec:general defns}, and $n(\gamma,\omega B)$ are certain integers, whose definition we now explain following \cite[\S 4]{GKM}. \index{$n(\gamma,\omega B)$}

Let $G^{\SC}$\index{$G^{\SC}$} be the simply connected cover of $G$, and write $\im(G^{\SC} (\RR))$ for the image of $G^{\SC}(\RR) \to G(\RR)$.  Firstly, if $\gamma \notin Z_G(\RR) \im(G^{\SC} (\RR))$, then  $n(\gamma,\omega B) = 0$ for all $\omega \in \Omega$.

\begin{rem}\label{rem:G(R)^0}
	In our case $Z_G(\RR) \im(G^{\SC} (\RR)) = G(\RR)^0$. In fact, since $G$ is semi-simple, we have $\im(G^{\SC} (\RR)) = G(\RR)^0$ by the connectedness of $G^{\SC } (\RR)$. Now in the odd case $Z_G$ is trivial, and in the even case $Z_G(\RR) = \set{\pm \id_V}$ is contained in $G(\RR)^0$ (see \cite[I.17]{knappbeyond}). 
\end{rem}
\subsection{}\label{subsubsec:to explain n}
We now assume that $\gamma \in T_S(\RR)$ is regular and lies in $Z_G(\RR) \im (G^{\SC}(\RR))$, and  explain the definition of $n(\gamma,\omega B)$ in this case. First we need some preparations. 

Let $E^*$ be a finite-dimensional $\RR$-vector space, and $U \subset E^*$ a root system. Let $E_*$ denote the dual vector space of $E^*$, and let $U^{\vee} \subset E_*$ be the set of coroots. Assume that $U$ spans $E^*$, and that the Weyl group of $U$ contains $-1 \in \GL(E_*)$. Let $E_{*,\mathrm{reg}} \subset E_*$ and  $E^*_{\mathrm{reg}} \subset E^*$ be the regular loci with respect to $U$ and $U^{\vee}$ respectively. One associates to the datum $(E^*, U)$ a function \index{$\bar c_U$}
\begin{align}\label{eq:bar c}
\bar c_{U} : E_{*,\mathrm{reg}} \times E^*_{\mathrm{reg}}  \To \ZZ. 
\end{align}
This function appeared in the work of Herb \cite{herb}, and can be inductively characterized by the properties (1)--(5) listed in \cite[\S 3]{GKM}. We will give explicit formulas for $\bar c_U$ in some special cases in Lemmas \ref{lem:rk1} and \ref{lem:rk2} below. Later in the paper (\S\S \ref{subsec:odd case vanishing} and \ref{subsec:even case vanishing}), we will recall Herb's close formula for $\bar c_U$ in more complicated situations. 

Now we write $X_*(A_M)_{\RR}$ and $X^*(A_M)_{\RR}$\index{$X_*(A_M)_{\RR}, X^*(A_M)_{\RR}$} for $X_*(A_M) \otimes_{\ZZ} \RR$ and $X^*(A_M) \otimes _{\ZZ} \RR$ respectively, and identify  $X_*(A_M)_{\RR}$ with $\Lie (A_M)$. We view the Weyl group of the root system $R_{\gamma}$ as a subgroup of $\GL(X_*(A_M)_{\RR})$. Let $X_*(A_M) _{\RR, \mathrm{reg}} \subset X_*(A_M)_{\RR}$ and $X^*(A_M) _{\RR, \mathrm{reg}} \subset X^*(A_M)_{\RR}$\index{$X_*(A_M) _{\RR, \mathrm{reg}}$, $X^*(A_M) _{\RR, \mathrm{reg}}$} be the regular loci, with respect to the root systems $R_{\gamma}$ and $R_{\gamma}^{\vee}$, respectively. 

\begin{lem}[{\cite[p.~499]{GKM}}]\label{lem:GKM}For regular $\gamma \in T_S(\RR)$ which lies in  $Z_G(\RR) \im (G^{\SC}(\RR))$, the Weyl group of $R_{\gamma}$ contains $-1 \in \GL(X_*(A_M)_{\RR})$. \qed
\end{lem} 
\subsection{} Keep the setting of \S \ref{subsubsec:to explain n}. In view of Lemma \ref{lem:GKM} and the general construction (\ref{eq:bar c}), we obtain a function\index{$\bar c_{R_\gamma} $}
\begin{align*}
\bar c_{R_\gamma} : X_*(A_M)_{\RR,\mathrm{reg}} \times X^*(A_M)_{\RR,\mathrm{reg}} \To \ZZ. 
\end{align*}
We can now define the integers $n(\gamma, \omega B)$ in terms of the function $\bar c_{R_\gamma}$. Let $T_S(\RR)_1$\index{$T_S(\RR)_1$} be the maximal compact subgroup of $T_S(\RR)$. We have a canonical decomposition $$T_S(\RR) = A_M(\RR) ^0 \times T_S(\RR)_1.$$ We write the projection of $\gamma\in T_S(\RR)$ in $A_M(\RR) ^0$ as $\exp(x_{\gamma})$, with $x_{\gamma}\in \Lie (A_M) = X_*(A_M)_{\RR}$.\index{$x_{\gamma}$} Our assumption that $\gamma$ is regular ensures that $$x_{\gamma} \in X_*(A_M)_{\RR, \mathrm{reg}}. $$ Let $\wp: X^*(T_S)_{\RR}\to X^*(A_M)_{\RR}$\index{$\wp$} be the natural restriction map. Then for any $\omega \in \Omega$ we have $$\wp(\omega \lambda +\omega \rho) \in X^*(A_M)_{\RR,\mathrm{reg}}. $$ Define 
\begin{align}\label{eq:defn of n(gamma B)}
n (\gamma ,\omega B) : = \bar c _{R_\gamma} (x_{\gamma}, \wp(\omega \lambda +\omega \rho)).
\end{align}
This finishes our explanation of (\ref{eq:to explain notation}).
\begin{cor}\label{cor:GKM}
	Let $\gamma \in T_S(\RR)$ be a regular element such that the Weyl group of $R_{\gamma}$ does not contain $-1 \in \GL(X_*(A_M)_{\RR})$. Then $\Phi^G_M(\gamma, \Theta) = 0$. 
\end{cor}
\begin{proof}
	By Lemma \ref{lem:GKM}, we have $\gamma \notin Z_G(\RR) \im(G^{\SC}) (\RR)$. Hence $n(\gamma, \omega B) = 0$ for all $\omega \in \Omega$, and we have $\Phi^G_M(\gamma,\Theta) =0$ by (\ref{eq:to explain notation}).
\end{proof}
In the sequel we will need explicit descriptions of the function $\bar c_{U}$ for certain root systems $U$ in $\RR^1$ and $\RR^2$. For $i\in \set{1,2}$, we use the standard inner product on $\RR^i$ to identify $\RR^i$ with its own dual space. 
\begin{lem}\label{lem:rk1}
	Let $\epsilon$ be the basis vector $1$ of $\RR^1$. The Weyl group of the root system $U = \set{\pm \epsilon}$ contains $-1$. The regular loci in $\RR^1$ with respect to $U$ and with respect to $U^{\vee}$ are both $\RR^1 -\set{0}$. The function $\bar c _U : (\RR^1 - \set{0}) \times( \RR^1 - \set{0}) \to \ZZ$ is given by: 
	$$ \bar c _U (x \epsilon,y \epsilon) = \begin{cases}
	2, & \text{if } xy < 0 , \\
	0, & \text{if } xy > 0.
	\end{cases}$$ 
\end{lem}
\begin{proof}
	This follows from a direct computation based on properties (1)--(5) listed in \cite[\S 3]{GKM}.
\end{proof}
\subsection{}\label{subsubsec:three cases of c}
We now consider certain root systems in $\RR^2$.  
Let $\set{\epsilon_1 = (1,0), \epsilon_2= (0,1)}$ be the natural basis of $\RR^2$, and let $x_1$ and $x_2$ be the two coordinate functions on $\RR^2$. Let\footnote{Here the subscript $\Endos$ stands for ``endoscopic''.} \index{$U_{\odd},U_{\Endos},U_{\even}$}
\begin{align*}
U_{\odd} & : = \set{\pm \epsilon_1 , \pm \epsilon_2, \pm \epsilon_1 \pm \epsilon_2}, \\ 
U_{\Endos} & := \set{\pm \epsilon_1, \pm \epsilon_2}\\
U_{\even} & := \set{\pm \epsilon_1 \pm \epsilon_2}.
\end{align*} For each subscript $? \in \set{\odd, \Endos, \even}$, $U_?$ is a root system in $\RR^2$. The regular locus in $\RR^2$ with respect to $U_?$ is equal to the regular locus with respect to $U_?^{\vee}$. We denote this locus by $\RR^2_?$.\index{$\RR^2_{\odd}, \RR^2_{\Endos}, \RR^2_{\even}$}

Explicitly, $\RR^2_{\odd}$ is the complement of the two coordinate axes and the two diagonal lines. Thus it is the disjoint union of eight open cones. We label the cone $\set{(x_1,x_2) \mid  0< x_2 <x _1}$ by the symbol $\RI$, and label the other cones counterclockwise, by $\RII$, $\RIII$, $\cdots$, $\RVIII$. See Figure \ref{fig1}.\index{$(\mathcal {I}), (\mathcal {II}), \cdots, (\mathcal {VIII})$}

Similarly, $\RR^2_{\Endos}$ is the complement of the two coordinate axes,  and $\RR^2_{\even}$ is the complement of the two diagonal lines $x_1= \pm x_2$. We label the four open cones constituting $\RR^2_{\even}$ counterclockwise, starting with the cone $\set{(x_1,x_2) \mid x_1 > \abs{x_2}}$, by the symbols $\RA, \RB, \RC, \RD$. See Figure \ref{fig2}.\index{$(\mathscr A), (\mathscr B), (\mathscr C), (\mathscr D)$}  

\begin{figure}[H] \centering
	\beginpgfgraphicnamed{eight-cones}
	\includegraphics{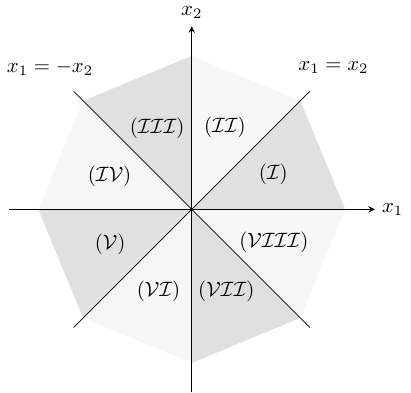}
	
	\caption{Labeling of the eight open cones complement to the two coordinate axes and the two diagonal lines in the $x_1$-$x_2$-plane. The union of the cones is denoted by $\RR^2_{\odd}$.}
	\label{fig1}
\end{figure}

\begin{figure}[H] \centering
	\beginpgfgraphicnamed{four-cones}
	\includegraphics{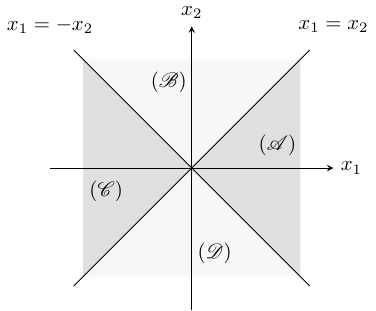}
	\caption{Labeling of the four open cones complement to the two diagonal lines in the $x_1$-$x_2$-plane. The union of the cones is denoted by $\RR^2_{\even}$.}
	\label{fig2}
\end{figure} 

We shall use the same symbols $\RI, \RII, \cdots, \RA, \RB, \cdots$, to denote the characteristic functions of the corresponding open cones. For each subscript $? \in \set{\odd, \Endos, \even}$, the Weyl group of $U_?$ contains $-1\in \GL(\RR^2)$. Hence we have the associated function $$\bar c_{U_?} : \RR^2 _? \times \RR^2 _? \To \ZZ.$$ The following lemma describes this function. For each fixed $x \in \RR^2_?$, we let $\mathbf f_{?,x} : \RR^2_? \to \ZZ$ be the function that sends $x'\in \RR^2_?$ to $\bar c_{U_?} (x,x')$. \index{$\mathbf f_{\odd,x}, \mathbf f_{\Endos,x}, \mathbf f_{\even,x}$}

\FloatBarrier

\begin{lem}\label{lem:rk2}The following statements hold. 
	\begin{enumerate}
		\item If $x\in \RV$, then 
		\begin{align}\label{eq:odd V}
		\frac{1}{4}	\mathbf f_{\odd,x}  = \RII + \RVIII. 
		\end{align}
		If $x\in \RIV$, then 
		\begin{align}\label{eq:odd IV}
		\frac{1}{4}	\mathbf f_{\odd,x} = \RI + \RVII. 
		\end{align}
		\item The function $\bar c_{U_{\Endos}}: \RR^2_{\Endos} \times \RR^2_{\Endos} \to \ZZ$ is given by 
		\begin{align}\label{eq:c_eds}
		\bar c_{U_{\Endos}} (x,x') = \begin{cases}
		4, & \text{if $x$ and $x'$ lie in opposite quadrants}, \\
		0, & \text{otherwise}. 
		\end{cases} \end{align}
		In particular, if $x\in \RV$, then 
		\begin{align}\label{eq:eds V}
		\frac{1}{4}	\mathbf f_{\Endos,x}|_{\RR^2_{\odd}}  = \RI + \RII  .
		\end{align}
		If $x\in \RIV$, then 
		\begin{align}\label{eq:eds IV}
		\frac{1}{4}	\mathbf f_{\Endos,x} |_{\RR^2_{\odd}}  = \RVII + \RVIII .
		\end{align}
		\item If $x \in \RC$, then 
		\begin{align}\label{eq:even C}
		\frac{1}{4} \mathbf f_{\even,x} = \RA. 
		\end{align}
	\end{enumerate} 
\end{lem}
\begin{proof}
	This follows from a direct computation based on properties (1)--(5) listed in \cite[\S 3]{GKM}.
\end{proof}
\begin{rem}
 The complete descriptions of $\bar c_{U_{\odd}}$ and $\bar c_{U_{\even}}$ follow immediately from Lemma \ref{lem:rk2} and the Weyl invariance of these functions (see property (5) in \cite[\S 3]{GKM}). 
\end{rem}
\section{Kostant's theorem}\label{subsec:Kostant's thm}
We apply Kostant's theorem\index[n]{Kostant's theorem} \cite{kostant} to compute the character of the virtual representation in Definition \ref{defn:RGamma}. 

 Let $S$ be a non-empty subset of $\set{1,2}$, and let $M : = M_S$. Assume that $S \neq \set{2}$ in the even case. Let $T_S$ be as in \S \ref{R groups}. We fix $\mathbb V$ as in \S \ref{subsubsec:mathbb V}, and continue to use the notations introduced in \S \ref{subsubsec:notation for roots and Weyl}. Let $R\Gamma (\Lie N_S , \mathbb V) _{> t_S}$ be as in Definition \ref{defn:RGamma}. Let $\varpi_1$ and $\varpi_2$ be as in Definition \ref{defn:varpi_i}.  
\begin{lem}\label{lem:Kostant} For $\gamma \in T_S(\CC)$ regular in $G$ (or more generally, regular in $M$), we have 
	$$	\Tr (\gamma \mid  R\Gamma ( \Lie N_S , \mathbb V) _{> t_S})  =  \Delta_{M}(\gamma)^{-1}  \sum_{\substack{\omega \in \Omega\\ \langle\omega  (\lambda+\rho) , \varpi_i\rangle > 0 ,~\forall i\in S}} \epsilon(\omega)  (\omega\lambda)(\gamma) \prod_{\alpha \in \Phi(\omega)}\alpha^{-1} (\gamma). $$
\end{lem} 
\begin{proof}The proof is the same as a computation  in the proof of \cite[Prop.~3.3.1]{morel2011suite}.	Let $\Omega_S: = \Omega_{\CC}(M, T_S)$, which is naturally a subgroup of $\Omega$. For $\omega_1 \in \Omega_S$ we define $l(\omega_1)$ and $\epsilon(\omega_1) = (-1)^{l(w)}$ by viewing $\omega_1$ as in $\Omega$; as a  standard fact $l(w_1)$ is also the length of $w_1$ in $\Omega_S$ with respect to the simple roots in $\Phi_M^+$.  Consider 
\begin{align*}
\Omega_S' & : = \bigg \{\omega \in \Omega \mid  \Phi(\omega) \subset \set{\text{roots of $T_{S,\CC}$ on $\Lie (N_S)_{\CC}$}} \bigg \} &  \\ & = \set{\omega \in \Omega\mid  \Phi(\omega) \cap \Phi_M^+ = \emptyset }.
\end{align*}
	Then $\Omega_S'$ is the set of minimal length representatives\index[n]{minimal length representatives} of the cosets in $\Omega_S \backslash \Omega$; see \cite[p.~361]{kostant} or \cite[p.~165]{GHM}. In particular, multiplication induces a bijection 
	\begin{align}\label{eq:Kostant bij}
\Omega_S \times \Omega_S' \isom \Omega.
	\end{align}
	
	We have fixed the positive system $\Phi_M^+$ inside $\Phi(M_{\CC}, T_{S,\CC})$. As usual, we say that an element $\lambda ' \in X^*(T_{S})$ is \emph{$M$-dominant}, if the pairing of $\lambda'$ with any positive coroot in $\Phi(M_\CC, T_{S,\CC})^{\vee}$ is non-negative. 
	For such $\lambda'$, we let $V_{M,\lambda'}$ be the irreducible algebraic representation of $M(\CC)$ of highest weight $\lambda'$. 
	
	As recalled on p.~1700 of \cite{morel2011suite}, Kostant's theorem states that as an algebraic representation of $M(\CC)$, we have $$ \coh^k (\Lie (N_S)_{\CC}, \mathbb V)\cong \bigoplus_{\substack{\omega' \in \Omega'_S \\ l(\omega') = k } } V_{M, \omega'(\lambda+\rho) -\rho}. $$ Consequently, $$ \coh^k (\Lie (N_S)_{\CC}, \mathbb V)_{> t_S}\cong \bigoplus_{\substack{\omega'\in \Omega'_S \\ l(\omega') = k \\ \langle \omega'(\lambda+\rho) -\rho , \varpi_i \rangle > t_i ,~\forall i\in S} } V_{M, \omega'(\lambda+\rho) -\rho}. $$ 
	By a simple computation, we have
	$t_i = \langle -\rho, \varpi_i\rangle$ for $i = 1,2$.  Hence we have 
	\begin{align}\label{eq:Kostant}
	\coh^k (\Lie (N_S)_{\CC}, \mathbb V)_{> t_S}\cong \bigoplus_{\substack{\omega'\in \Omega'_S \\ l(\omega') = k \\\langle \omega' (\lambda +\rho ) , \varpi_i \rangle > 0,~\forall i\in S} } V_{M, \omega'(\lambda+\rho) -\rho}. 
	\end{align}
	
	By the Weyl character formula (see for instance \cite[Fait 3.1.6 ]{morel2011suite}), for any $M$-dominant $\lambda' \in X^*(T_S)$ we have
	\begin{align}\label{eq:Weyl}
	\Tr(\gamma \mid  V_{M, \lambda'}) = \Delta_M( \gamma)^{-1} \sum_{\omega_1 \in \Omega_S} \epsilon(\omega_1) (\omega_1 \lambda') (\gamma) \prod_{\alpha \in \Phi(\omega_1)} \alpha^{-1} (\gamma).
	\end{align}
	(Here we have used the fact that for each $\omega_1 \in \Omega_S$, the set $\Phi(\omega_1)=\Phi^+ \cap (-\omega_1\Phi^+)$ is also equal to $\Phi_M^+ \cap (-\omega _1 \Phi_M^+)$.)
	
	Combining (\ref{eq:Kostant}) and (\ref{eq:Weyl}), we obtain \begin{multline*} 
	\Tr (\gamma\mid  R\Gamma ( \Lie (N_S) , \mathbb V) _{> t_S}) =  \sum_{\substack{\omega' \in \Omega_S'\\ \langle\omega ' (\lambda+\rho) , \varpi_i\rangle > 0,~\forall i\in S }} (-1)^{l(\omega')} \Tr(\gamma\mid  V_{M, \omega'(\lambda +\rho) -\rho})  \\  =\sum_{\substack{\omega' \in \Omega_S'\\ \langle\omega ' (\lambda+\rho) , \varpi_i\rangle > 0,~\forall i\in S }}  \epsilon(\omega') \Delta_M( \gamma)^{-1} \\ \cdot \sum_{\omega_1 \in \Omega_S} \epsilon(\omega_1) \bigg(\omega_1 (\omega'(\lambda +\rho) -\rho)\bigg) (\gamma) \prod_{\alpha \in \Phi(\omega_1)} \alpha^{-1} (\gamma).
	\end{multline*}
	Since $\varpi_i$ is invariant under $\Omega_S$ for every $i\in S$, and since we have the bijection (\ref{eq:Kostant bij}), the above is equal to $$ \sum_{\substack{\omega \in \Omega\\ \langle\omega  (\lambda+\rho) , \varpi_i\rangle > 0 , ~\forall i \in S }} \epsilon(\omega) \Delta_M(\gamma) ^{-1}  (\omega\lambda  )(\gamma)\cdot \bigg(\omega \rho - p_1(\omega) \rho \bigg) (\gamma) \prod_{\alpha \in \Phi(p_1(\omega))}\alpha^{-1} (\gamma),$$ where for each $\omega \in \Omega $ we set $p_1(\omega)$ to be the unique element of $\Omega_S$ such that $\omega \in  p_1(\omega  )  \Omega_S'$. To finish the proof, we just need to check that for all $\omega \in \Omega$, we have $$ \omega(\rho)  - p_1(\omega)(\rho) - \sum _{ \alpha \in \Phi(p_1(\omega_1))} \alpha = - \sum_{\alpha \in \Phi(\omega)} \alpha.$$ But this follows from the identity 
	$$ \rho - \theta (\rho) = \sum_{\alpha \in \Phi (\theta)} \alpha $$ which holds for arbitrary $ \theta \in \Omega.$
\end{proof}

\section[Case $M_1$]{Kostant--Weyl terms and discrete series characters, case \texorpdfstring{$M_1$}{M1}}\label{KH M1} 

\subsection{}  We keep the notations in \S \ref{subsec:stable discrete series chars} and \S \ref{subsec:Kostant's thm}. We take $S = \set{1}$ and $M = M_1$. Recall from \S \ref{R groups} that we have fixed an elliptic maximal torus
$T_1 = T_{\GL_2}^{\std} \times T_{W_2}$  in $M$. Consider a regular element $\gamma \in T_1(\RR)$. We write\index{$\gamma_{W_2}$} $$\gamma = (\begin{pmatrix}
a & b \\ -b & a 
\end{pmatrix} , \gamma_{W_2}) \in  T_{\GL_2}^{\std}(\RR) \times T_{W_2} (\RR) ,$$ with $a, b \in \RR$ and $a^2+b^2 \neq 0$. Note that $(\epsilon_1 + \epsilon_2) (\gamma) = a^2+ b^2$. Hence we have 
$$R_{\gamma} = \set{\pm (\epsilon_1 +\epsilon_2)}.$$
Let $L_M(\gamma)$ be as in Definition \ref{Defn L_M 12 odd}. 
\begin{prop}\label{Arch comp M_1} 
	Suppose $a^2 + b ^2 < 1 $. Then we have \begin{align*}
	\Phi^G_M ( \gamma , \Theta) = 2 (-1)^{q(G)+1} L_M(\gamma).
	\end{align*} \end{prop}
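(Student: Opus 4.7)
The plan is to expand both sides explicitly and match them term by term, using Kostant's theorem on the right and Herb's formula for the stable discrete series character on a non-elliptic torus on the left. Since $T_1$ is elliptic in $M$ but only semi-elliptic in $G$, the unique $(G,T_1)$-real root $\pm(\epsilon_1+\epsilon_2)$ controls the discrepancy between the two sides, and the hypothesis $a^2+b^2<1$ says precisely that $(\epsilon_1+\epsilon_2)(\gamma)<1$, which will pin down signs.

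First I would apply Kostant's theorem \cite{kostant} to rewrite
\[
\Tr(\gamma, R\Gamma(\Lie N_1, \mathbb{V})) = \sum_{w \in \Omega^{M}} (-1)^{\ell(w)} \chi^{M}_{w(\lambda+\rho)-\rho}(\gamma),
\]
where $\Omega^M \subset \Omega$ is the set of minimal-length coset representatives for $\Omega_M \backslash \Omega$ and $\chi^M_{\mu}$ denotes the character of the irreducible $M$-representation of highest weight $\mu$ (with respect to the positive system induced from $B$). The truncation $R\Gamma(\Lie N_1,\mathbb{V})_{>t_1}$ selects those $w$ for which $\langle w(\lambda+\rho)-\rho, \varpi_1\rangle > t_1$; I will record this subset as $\Omega^M_{>t_1}$. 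Combined with the Weyl character formula on $T_1$ and the definition of $\delta_{P_1(\RR)}^{1/2}$, this gives an explicit expression
\[
L_M(\gamma) = \delta_{P_1(\RR)}(\gamma)^{1/2} \sum_{w \in \Omega^M_{>t_1}} (-1)^{\ell(w)} \frac{\sum_{\omega \in \Omega_M}(-1)^{\ell(\omega)} (\omega w(\lambda+\rho))(\gamma)}{\prod_{\alpha \in \Phi_M^+}(1-\alpha^{-1})(\gamma)}.
\]

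Next I would compute $\Phi^G_M(\gamma,\Theta)$ using the formula of Herb \cite{herb} (in the form recalled in \cite[Theorem 4.1]{GKM} and used in \cite[\S 3.2]{morel2010book}, \cite[\S 3]{morel2011suite}). Since there is a single pair of real roots $\pm(\epsilon_1+\epsilon_2)$, Herb's formula specializes to a very manageable expression: one chooses a Cayley transform from $T_1$ to an elliptic torus $T_G \subset G$, applies Harish-Chandra's discrete series character formula on $T_G$, and transfers back, picking up a factor depending on the sign of $1-(\epsilon_1+\epsilon_2)(\gamma)$. The hypothesis $a^2+b^2<1$ ensures that this sign factor is the one corresponding to $(\epsilon_1+\epsilon_2)(\gamma)<1$, which identifies a specific geometric series expansion of $\frac{1}{1-(\epsilon_1+\epsilon_2)(\gamma)}$, and rewrites $\Phi^G_M(\gamma,\Theta)$ as a sum over $w \in \Omega$ restricted by the same half-space condition $\langle w(\lambda+\rho),\varpi_1\rangle > \langle \rho,\varpi_1\rangle - \dim\mathcal X$, which after unwinding matches our $\Omega^M_{>t_1}$.

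The third step is the bookkeeping: I would reorganize the Herb sum over $\Omega$ by decomposing $w = \omega w_0$ with $w_0 \in \Omega^M$ and $\omega \in \Omega_M$, absorb the $\Omega_M$-sum into the $M$-Weyl numerator, and observe that the Weyl denominator for $(G,T_1)$ factors as $\delta_{P_1(\RR)}^{1/2}(\gamma)$ times the Weyl denominator for $(M,T_1)$. The factor $2$ in the conclusion arises from the fact that the half-space $\langle \cdot, \varpi_1\rangle > t_1$ and its opposite contribute equally (both half-spaces appear in Herb's symmetric formula), and the sign $(-1)^{q(G)+1}$ comes from comparing the Harish-Chandra sign $(-1)^{q(G)}$ hidden in $\Theta$ (see the normalization conventions in \cite[\S 3]{morel2011suite}) with the sign $(-1)^{\dim A_{T_1}/A_{M}}$ implicit in the Cayley transform.

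The main obstacle will be step three: verifying that the truncation condition defining $R\Gamma(\Lie N_1,\mathbb{V})_{>t_1}$ corresponds exactly, after the Cayley transform, to the half-space selected by Herb's formula under the hypothesis $a^2+b^2<1$, and that the resulting signs combine to $(-1)^{q(G)+1}$. This is a delicate but finite computation, essentially parallel to \cite[Proposition 3.3.1]{morel2010book} and \cite[Proposition 3.2]{morel2011suite}, where the analogous comparisons are carried out for $\mathrm{GU}(p,q)$ and $\Gsp_{2n}$ respectively; our case is simpler because only one pair of real roots appears, so the combinatorics reduces to a single $\SL_2$-type calculation.
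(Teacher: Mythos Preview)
Your overall strategy matches the paper's: expand $L_M(\gamma)$ via Kostant's theorem and the Weyl character formula, expand $\Phi^G_M(\gamma,\Theta)$ via the Goresky--Kottwitz--MacPherson discrete series constants (which package Herb's formula), and match the two sums indexed by $\omega\in\Omega$. The paper carries this out cleanly, reducing everything to the equivalence
\[
\langle \omega(\lambda+\rho),\varpi_1\rangle>0 \iff \wp(\omega(\lambda+\rho))\in\RR_{>0}(\epsilon_1+\epsilon_2),
\]
which is immediate since $\varpi_1=\epsilon_1^\vee+\epsilon_2^\vee$.

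However, two of your explanations of specific constants are off and would mislead you in the bookkeeping. First, the factor $2$ does \emph{not} come from two half-spaces contributing equally; only the half-space $\wp(\omega(\lambda+\rho))\in\RR_{>0}(\epsilon_1+\epsilon_2)$ contributes, and on that half-space the rank-one discrete series constant is $\bar c_{R_\gamma}(x,\cdot)=2$ (the other half-space gives $0$). This is exactly what makes the truncation condition $>t_1$ match: only one side of the wall survives. Second, the extra sign $(-1)$ beyond $(-1)^{q(G)}$ is not a Cayley-transform artifact; it is $\epsilon_R(\gamma)=-1$, which holds precisely because $a^2+b^2<1$ puts the unique positive real root $\epsilon_1+\epsilon_2$ into the region $(\epsilon_1+\epsilon_2)(\gamma)\in\,]0,1[$. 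Your mention of a geometric series expansion of $1/(1-(\epsilon_1+\epsilon_2)(\gamma))$ is not how the argument goes; there is no such expansion, just the direct evaluation $\bar c=2$ or $0$.
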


\begin{proof}
	We first compute $\Phi^G_M(\gamma,\Theta)$ using (\ref{eq:to explain notation}). Clearly $T_1(\RR)$ is connected. Hence $\gamma \in G(\RR) ^0$, and so the integers $n(\gamma,\omega B)$ in (\ref{eq:to explain notation}) are defined by (\ref{eq:defn of n(gamma B)}). 
	
	The subgroup $A_M(\RR)^0 \subset T_1(\RR)$ consists of $\begin{pmatrix}
	z & 0 \\ 0 & z
	\end{pmatrix} \in T_{\GL_2}^{\std} (\RR), z \in \RR_{>0}$. The subgroup $T_1(\RR)_1 \subset T_1(\RR)$ is $\Uni (1 ) (\RR)\times T_{W_2}(\RR)$, where $\Uni(1)(\RR)$ consists of $$\begin{pmatrix}
	z_1 & z_2 \\ -z_2 & z_1
\end{pmatrix} \in T_{\GL_2}^{\std} (\RR), \qquad z_1, z_2 \in \RR, z_1^2 + z_2^2 = 1. $$ Hence the projection of $\gamma$ in $A_M(\RR)^0 = \RR_{> 0}$ is $\sqrt{a^2 +b^2}$, and  	$$x_{\gamma}  = \log \sqrt{a^2 + b^2} \in \RR\cong \Lie (A_M) = X_*(A_M) _{\RR} .$$ 
	Since $a^2 + b^2 <1$, we have $$x_{\gamma} \in \RR_{< 0} \cong \RR_{>0}(-\epsilon_1^{\vee} - \epsilon_2^{\vee}) .$$

	Since $R_{\gamma} = \set{\pm (\epsilon_1 + \epsilon_2)}$, by Lemma \ref{lem:rk1} we have 
	$$\bar c _{R_{\gamma}} (x_{\gamma}, \chi) = \begin{cases}
	2 , & \text{if } \chi \in \RR_{>0} (\epsilon_1 +\epsilon_2), \\ 0,  & \text{if } \chi  \in \RR_{>0} (-\epsilon_1 -\epsilon_2).
	\end{cases}  $$	
	Hence by the definition (\ref{eq:defn of n(gamma B)}), for $\omega \in \Omega$ we have $$ n(\gamma , \omega B) = \begin{cases}
	2 , & \text{if } \wp (\omega(\lambda +\rho )  )\in \RR_{>0} (\epsilon_1 +\epsilon_2), \\ 0,  &\text{if }  \wp (\omega(\lambda +\rho )  ) \in \RR_{>0} (-\epsilon_1 -\epsilon_2) .
	\end{cases} $$ Now the term $\epsilon_R(\gamma)$ in (\ref{eq:to explain notation}) is $-1$. By the above computation and by (\ref{eq:to explain notation}), we obtain \begin{multline}\label{LHS for M_1}
	\Phi_M^G (\gamma,\Theta) = 2 (-1)^{q(G)+1} \delta_{P_1(\RR)}(\gamma)^{1/2} \Delta_M(\gamma)^{-1} \\ \cdot  \sum_{\substack{\omega\in \Omega \\ \wp(\omega(\lambda +\rho   ) )  \in \RR_{>0} (\epsilon_1 +\epsilon_2) }} \epsilon(\omega) (\omega \lambda) (\gamma) \prod_{\alpha \in \Phi(\omega)} \alpha^{-1} (\gamma).
	\end{multline}
	
Next we compute $2(-1)^{q(G)+1} L_M(\gamma)$. By Proposition \ref{prop:defn of L_M for M_1 M_2} and Lemma \ref{lem:Kostant}, we have
	\begin{multline}\label{RHS for M_1}
	2(-1)^{q(G)+1} L_M(\gamma) = 2(-1)^{q(G)+1} \delta_{P_1(\RR)} (\gamma)^{1/2} \Delta_M(\gamma) ^{-1} \\ \cdot \sum_{\substack{\omega \in \Omega\\  \langle\omega  (\lambda+\rho) , \varpi_1\rangle > 0 }} \epsilon(\omega)  (\omega\lambda )(\gamma) \prod_{\alpha \in \Phi(\omega)}\alpha^{-1} (\gamma).
	\end{multline}
	Comparing (\ref{LHS for M_1}) and (\ref{RHS for M_1}), we see that the proof reduces to checking that for all $\omega \in \Omega$, we have $$ \langle\omega  (\lambda+\rho) , \varpi_1\rangle > 0  \Longleftrightarrow \wp(\omega (\lambda +\rho)  ) \in \RR_{>0} (\epsilon_1 + \epsilon_2) .$$ This is obvious.	
\end{proof}

\section[Odd case $M_2$]{Kostant--Weyl terms and discrete series characters, odd case \texorpdfstring{$M_2$}{M2}} \label{KH M2}
\subsection{} We keep the notations in \S \ref{subsec:stable discrete series chars} and \S \ref{subsec:Kostant's thm}. We take $S = \set{2}$ and $M = M_2$. Assume that $d$ is odd. Recall from \S \ref{R groups} that we have fixed an elliptic maximal torus $T_2 = \GG_m \times T_{W_1}$ in $M$. Consider a regular element $\gamma \in T_2(\RR)$. We write \index{$\gamma_{W_1}$}
\begin{align*}
\gamma  = (a, \gamma_{W_1}),
\end{align*} with $a \in \RR^{\times}$. If $a>0$, then $R_{\gamma} = \set{\pm \epsilon_1}$. Otherwise $R_{\gamma} = \emptyset$. Let $L_M(\gamma)$ be as in Definition \ref{Defn L_M 12 odd}. 

\begin{prop}\label{Arch comp M2} When $a<0$, we have $\Phi^G_M(\gamma,\Theta) = 0$. When $0<a< 1 $, we have \begin{align*}
	\Phi^G_M ( \gamma , \Theta) =  (-1)^{q(G)+1} L_M(\gamma).
	\end{align*}
\end{prop}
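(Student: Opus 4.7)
The proof will parallel that of Proposition \ref{Arch comp M_1}, with the decisive new feature being that $T_2(\RR) \cong \RR^{\times} \times T_{W_1}(\RR)$ has two connected components, indexed by the sign of $a$. The plan is as follows.

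First I would invoke the same formula of Herb--Arthur recalled in \cite[Fait 3.1.6]{morel2011suite}:
\begin{align*}
\Phi^G_M(\gamma,\Theta) = (-1)^{q(G)}\epsilon_R(\gamma)\delta_{P_2(\RR)}(\gamma)^{1/2}\Delta_M(\gamma)^{-1}\sum_{\omega\in \Omega} \epsilon(\omega) n(\gamma,\omega B)(\omega\lambda)(\gamma)\prod_{\alpha\in \Phi(\omega)}\alpha^{-1}(\gamma),
\end{align*}
where $\Omega = \Omega_{\CC}(G,T_2)$, and $n(\gamma,\omega B)$ vanishes unless $\gamma \in Z_G(\RR)G(\RR)^0$, in which case it is computed via the discrete series constant $\bar c_{R_\gamma}$ associated to the real root system $R = \{\pm \epsilon_1\}$.

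For the first assertion ($a<0$), the point is that in the odd case $Z_G(\RR) = 1$, so it suffices to show $\gamma \notin G(\RR)^0$. The element $a \in \GL(V_1) = \GG_m$ acts as $\diag(a,a^{-1})$ on the hyperbolic plane $V_1 \oplus V_1^\vee$ of signature $(1,1)$ and fixes its orthogonal complement. Under the inclusion $\SO(1,1) \hookrightarrow \SO(n,2) = G$, which induces an isomorphism on $\pi_0$ (both sides being $\ZZ/2\ZZ$ since $n,2 \geq 1$), the element $\diag(a,a^{-1})$ with $a<0$ lies in the nontrivial component of $\SO(1,1)(\RR)$. Since $\gamma_{W_1} \in T_{W_1}(\RR) \subset \SO(W_1)(\RR)^0 \subset G(\RR)^0$, we conclude $\gamma \notin G(\RR)^0$, so $n(\gamma,\omega B) = 0$ for every $\omega$, proving $\Phi^G_M(\gamma,\Theta) = 0$.

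For the second assertion ($0<a<1$), we now have $\gamma \in G(\RR)^0$, and the analysis proceeds exactly as in the $M_1$ case. The split part $A_2 = \GG_m$ coincides with the first factor of $T_2$, so $x := \log a \in \RR_{<0}$ corresponds to $\log a \cdot (-\epsilon_1^\vee) \in \RR_{>0}(-\epsilon_1^\vee) = \RR_{>0} \cdot (-\tfrac{1}{2}\varpi_2)$. The real root set is $R = R_\gamma = \{\pm \epsilon_1\}$, its Weyl group $\{\pm 1\}$ contains $-1$, and $R_\gamma^+ = \{-\epsilon_1\}$, giving $\epsilon_R(\gamma) = -1$. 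The discrete series constant $\bar c(x, \wp(\omega(\lambda+\rho)))$ equals $2$ when $\wp(\omega(\lambda+\rho)) \in \RR_{>0}\epsilon_1$ and $0$ when $\wp(\omega(\lambda+\rho)) \in \RR_{>0}(-\epsilon_1)$ (these being the only two possibilities). Hence
\begin{align*}
\Phi^G_M(\gamma,\Theta) = 2(-1)^{q(G)+1} \delta_{P_2(\RR)}(\gamma)^{1/2}\Delta_M(\gamma)^{-1}\sum_{\substack{\omega\in\Omega \\ \langle\omega(\lambda+\rho),\varpi_2\rangle>0}} \epsilon(\omega)(\omega\lambda)(\gamma)\prod_{\alpha\in\Phi(\omega)}\alpha^{-1}(\gamma),
\end{align*}
where I used that $\wp(\omega(\lambda+\rho)) \in \RR_{>0}\epsilon_1$ is equivalent to $\langle\omega(\lambda+\rho),\varpi_2\rangle > 0$ (here $\varpi_2 = 2\epsilon_1^\vee$).

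For the RHS, I would apply Kostant's theorem to $R\Gamma(\Lie N_2, \mathbb V)_{>t_2}$ exactly as in the $M_1$ case, using the identity $t_2 = \langle -\rho,\varpi_2\rangle$ (verified directly from $\rho = \tfrac{2m-1}{2}\epsilon_1 + \cdots + \tfrac{1}{2}\epsilon_m$ in type $B_m$, giving $\langle -\rho,\varpi_2\rangle = -(2m-1) = 2-d$) to rewrite the weight condition in Kostant's formula as $\langle\omega(\lambda+\rho),\varpi_2\rangle > 0$. Combined with the Weyl character formula on $M$ and the standard identity $\omega\rho - \omega_1 \rho = \sum_{\alpha\in\Phi_M(\omega_1)}\alpha - \sum_{\alpha\in\Phi(\omega)}\alpha$ (with $\omega = \omega_1 \omega'$, $\omega'$ the minimal-length coset representative), this yields
\begin{align*}
\Tr(\gamma, R\Gamma(\Lie N_2,\mathbb V)_{>t_2}) = \Delta_M(\gamma)^{-1}\sum_{\substack{\omega\in\Omega \\ \langle\omega(\lambda+\rho),\varpi_2\rangle>0}}\epsilon(\omega)(\omega\lambda)(\gamma)\prod_{\alpha\in\Phi(\omega)}\alpha^{-1}(\gamma).
\end{align*}
Multiplying by $2\delta_{P_2(\RR)}(\gamma)^{1/2}$ gives $L_{M_2}(\gamma)$ by Definition \ref{Defn L_M 2}, and a direct comparison yields $\Phi^G_M(\gamma,\Theta) = (-1)^{q(G)+1} L_M(\gamma)$. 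The main technical point is the component-group analysis in the first assertion; the Kostant--Weyl bookkeeping in the second assertion is essentially identical to the $M_1$ case, with the factor of $2$ produced by the discrete series constant matching the factor of $2$ built into Definition \ref{Defn L_M 2}.
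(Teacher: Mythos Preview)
Your proposal is correct and follows essentially the same route as the paper's proof. The one minor difference is in the $a<0$ case: you give the direct component-group argument showing $\gamma \notin G(\RR)^0$ via the embedding $\SO(1,1) \hookrightarrow \SO(n,2)$, whereas the paper invokes the abstract argument from \cite[\S 4]{GKM} (namely $R_\gamma = \emptyset$ has trivial Weyl group in $X_*(A_2)$, which forces $\gamma \notin Z_G(\RR)\im(G^{\SC}(\RR))$), noting only parenthetically that this ``can also be seen directly.'' Both arguments are valid; the GKM-style argument has the advantage of recurring uniformly in the later $M_{12}$ analysis.
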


\begin{proof}
	When $a < 0$, we have $R_{\gamma} =\emptyset$. It follows from Corollary \ref{cor:GKM} that $\Phi^G_M(\gamma,\Theta) = 0$, as desired. 
	
	Now assume that $0 < a < 1$. We first compute $\Phi^G_M(\gamma,\Theta)$ using (\ref{eq:to explain notation}). We have $T_2 \cong \GG_m \times \Uni(1)^{m-1}$, and $A_M \cong \GG_m$, $T_2(\RR)_1 = \set{\pm 1 }\times \Uni(1)(\RR)^{m-1}$. Hence the projection of $\gamma$ in $A_M(\RR)^0 = \RR_{>0}$ is $a$, and  	$$x_{\gamma}  = \log a \in \RR\cong \Lie (A_M) = X_*(A_M) _{\RR} .$$ 
	Since $0 < a <1$, we have $$x_{\gamma} \in \RR_{< 0} \cong \RR_{>0}(-\epsilon_1^{\vee}) .$$

	Since $R_{\gamma} = \set{\pm \epsilon_1}$, by Lemma \ref{lem:rk1} we have 
	$$\bar c _{R_{\gamma}} (x_{\gamma}, \chi) = \begin{cases}
	2 , & \chi \in \RR_{>0} (\epsilon_1), \\ 0,  & \chi  \in \RR_{>0} (-\epsilon_1).
	\end{cases}  $$	
	Hence by the definition (\ref{eq:defn of n(gamma B)}), for $\omega \in \Omega$ we have $$ n(\gamma , \omega B) = \begin{cases}
	2 , &  \text{if } \wp (\omega(\lambda +\rho )  )\in \RR_{>0} (\epsilon_1), \\ 0,  &  \text{if } \wp (\omega(\lambda +\rho )  ) \in \RR_{>0} (-\epsilon_1 ) .
	\end{cases} $$ Now the term $\epsilon_R(\gamma)$ in (\ref{eq:to explain notation}) is $-1$. By the above computation and by (\ref{eq:to explain notation}), we obtain \begin{multline}\label{LHS for M_2}
	\Phi_M^G (\gamma,\Theta) = 2 (-1)^{q(G)+1} \delta_{P_2(\RR)}(\gamma)^{1/2} \Delta_M(\gamma)^{-1} \\ \cdot \sum_{\substack{\omega\in \Omega \\ \wp(\omega(\lambda +\rho   ) )  \in \RR_{>0} (\epsilon_1) }} \epsilon(\omega) (\omega \lambda) (\gamma) \prod_{\alpha \in \Phi(\omega)} \alpha^{-1} (\gamma).
	\end{multline}
	
	Next we compute $(-1)^{q(G)+1} L_M(\gamma)$. By Proposition \ref{prop:defn of L_M for M_1 M_2} and Lemma \ref{lem:Kostant}, we have
	\begin{multline}\label{RHS for M_2}
	(-1)^{q(G)+1} L_M(\gamma) = 2(-1)^{q(G)+1} \delta_{P_2(\RR)} (\gamma)^{1/2} \Delta_M(\gamma) ^{-1} \\ \cdot \sum_{\substack{\omega \in \Omega\\  \langle\omega  (\lambda+\rho) , \varpi_2\rangle > 0 }} \epsilon(\omega)  (\omega\lambda )(\gamma) \prod_{\alpha \in \Phi(\omega)}\alpha^{-1} (\gamma).
	\end{multline}
	Comparing (\ref{LHS for M_2}) and (\ref{RHS for M_2}), we see that the proof reduces to checking that for all $\omega \in \Omega$, we have $$ \langle\omega  (\lambda+\rho) , \varpi_2\rangle > 0  \Longleftrightarrow \wp(\omega (\lambda +\rho)  ) \in \RR_{>0} (\epsilon_1) .$$ This is obvious.
\end{proof}

\section[Case $M_{12}$]{Kostant--Weyl terms and discrete series characters, case \texorpdfstring{$M_{12}$}{M12}} \label{KH M12}
\subsection{}\label{subsubsec:KH M12}
We keep the notations in \S \ref{subsec:stable discrete series chars} and \S \ref{subsec:Kostant's thm}. We take $S = \set{1,2}$ and $M = M_{12}$. (We drop the assumption that $d$ is odd made in \S \ref{KH M2}.) Recall from \S \ref{R groups} that we have fixed an elliptic maximal torus $T_{12}  = \GG_m \times \GG_m \times T_{W_2} $ in $M$. Consider a regular element $\gamma \in T_{12}(\RR)$. We write 
\begin{align*}
\gamma  = (a,b, \gamma_{W_2}), 
\end{align*} with $a,b\in \RR^{\times}$. Let $L_M(\gamma)$ be as in Definition \ref{Defn L_M 12 odd}. We fix an element $g_0 \in M_{2,l}(\QQ)^{\sharp}$, as in Definition \ref{defn:n12}.  
\begin{lem}\label{lem:first comp of LM12} We have \begin{align}\label{first calc for L_M}
	L_{M} (\gamma)=  & ~ \delta_{P_{12}(\RR)}  (\gamma )^{1/2} \Tr(\gamma  \mid  R\Gamma (\Lie N_{12} , \mathbb V) _{>t_{12}} ) \\\nonumber  &  + \delta_{P_{12}(\RR)} (g_0\gamma g_0^{-1} ) ^{1/2}  \Tr(g_0\gamma g_0^{-1}  \mid  R\Gamma (\Lie N_{12} , \mathbb V) _{>t_{12}} )
	\\ \nonumber &  - \abs{D^{M_2} _{ M} ( \gamma )}^{1/2}_{\RR} \delta_{P_2(\RR)} ( \gamma )  ^{1/2} \Tr(\gamma \mid  R\Gamma (\Lie N_{2} , \mathbb V) _{>t_2} ).
	\end{align}
\end{lem}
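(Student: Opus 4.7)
The plan is to compute $L_M(\gamma)$ directly from Definition \ref{Defn L_M 12 odd} by evaluating the three terms of the sum indexed by $\mathcal P(M)/\sim$, which by Lemma \ref{lem:to simplify LM12} is represented by the pairs $(P_{12}, 1)$, $(P_{12}, n_{12})$, and $(P_2, 1)$, with $N(P_{12}) = 2$ and $N(P_2) = 1$. The three contributions will match the three terms on the RHS of \eqref{first calc for L_M} exactly, with the initial factor of $2$ and the values of $N(P)^{-1}$ combining cleanly.

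For the first representative $(P_{12}, 1)$, the conjugation is trivial, so $gMg^{-1} = M = M_{12} = M_P$. This forces $\dim A_{gMg^{-1}}/A_{M_P} = 0$, $n^{M_{12}}_{M_{12}} = 1$, and $D^{M_{12}}_{M_{12}}(\gamma) = 1$. The prefactor collapses to $2 \cdot N(P_{12})^{-1} = 1$, yielding the first term on the RHS. For the second representative $(P_{12}, n_{12})$, I would invoke Lemma \ref{lem:to simplify LM12} to note that $n_{12}$ normalizes $M$, so once again $gMg^{-1} = M_P = M_{12}$ and the same simplifications apply; the only change is that $\gamma$ is replaced by $n_{12}\gamma n_{12}^{-1}$ everywhere, giving the second term.

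The third representative $(P_2, 1)$ is the subtle one. Here $gMg^{-1} = M = M_{12}$ sits as a proper Levi of $M_P = M_2$. Using the decomposition $M_2 = \GL(V_1) \times \SO(W_1)$ and $M_{12} = \GL(V_1) \times \GL(V_2/V_1) \times \SO(W_2)$, I read off that $A_{M_2} \cong \GG_m$ while $A_{M_{12}} \cong \GG_m^2$, so $\dim A_{M_{12}}/A_{M_2} = 1$, contributing a sign $(-1)$. For $n^{M_2}_{M_{12}}$, Remark \ref{rem:Weyl group of Levi} identifies this with the image of $\Nor_{M_2}(M_{12})(\QQ)/M_{12}(\QQ)$ in $\Aut(A_{M_{12}}/A_{M_2})$; the element $n_{12}$ lies in $M_2(\QQ)$ (by Definition \ref{defn:n12}) and inverts the $\GL(V_2/V_1) = \GG_m$ factor while fixing $A_{M_2}$ pointwise, so $n^{M_2}_{M_{12}} = 2$. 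The factor of $2$ in front of the sum, together with $N(P_2)^{-1} = 1$, $(n^{M_2}_{M_{12}})^{-1} = 1/2$, and $(-1)^{\dim A_{M_{12}}/A_{M_2}} = -1$, combine to give an overall $-1$, producing the third term with its $|D^{M_2}_{M_{12}}(\gamma)|^{1/2}$ factor.

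The only thing requiring genuine (though short) verification is the claim that $n^{M_2}_{M_{12}} = 2$; this is really a statement about the relative Weyl group of the split torus $\GL(V_2/V_1) \subset \SO(W_1)$ inside $\SO(W_1)$, and it is immediate from the construction of $n_{12}$ (which is precisely a representative of the non-trivial element). After these bookkeeping computations the three contributions assemble into the three summands on the RHS of \eqref{first calc for L_M}, completing the proof.
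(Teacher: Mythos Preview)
Your proof is correct and follows essentially the same approach as the paper: reduce to the three representatives via Lemma~\ref{lem:to simplify LM12}, then compute the sign $(-1)^{\dim A_{gMg^{-1}}/A_{M_P}}$ and the constant $(n^{M_P}_{gMg^{-1}})^{-1}$ for each, with $n_{12}$ witnessing the nontrivial normalizing element in the $(P_2,1)$ case. One small inaccuracy: Remark~\ref{rem:Weyl group of Levi} identifies $\Nor_{M_2}(M_{12})(\QQ)/M_{12}(\QQ)$ with its image in $\Aut(A_{M_{12}})$, not in $\Aut(A_{M_{12}}/A_{M_2})$, but your conclusion $n^{M_2}_{M_{12}} = 2$ and its justification via $n_{12}$ are correct regardless.
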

\begin{proof}The lemma follows from Proposition \ref{prop:to simplify LM12}, the fact that $\dim A_{M}/ A_{M_2}=1$, and the fact that $n^{M_2}_M =2$. Here $n^{M_2}_M$ is clearly equal to the cardinality of $\cW^{M_{2,l}}_{M_{l}}$, and we already showed in the proof of Proposition \ref{prop:to simplify LM12} that this group is $\ZZ/2\ZZ$.
\end{proof}

\begin{defn}\label{defn:omega_0}When $d$ is odd, let \index{$\omega_0, \omega_1, \omega_2$}
	\begin{align*}
	\omega_0 &:= s_{\epsilon_2} \in \Omega , \\
	\omega_1 & := s_{\epsilon_1-\epsilon_2} \in \Omega ,\\
	\omega_2 & : = s_{\epsilon_1} \in \Omega. 
	\end{align*}When $d$ is even, let $$\omega_0: = s_{\epsilon_2 +\epsilon_3} s_{\epsilon_2 - \epsilon_3} \in \Omega. $$ Here $s_{\alpha}$ denotes the reflection in $\Omega$ corresponding to $\alpha \in \Phi(G_\CC, T_{12,\CC})$. 
\end{defn}
The following lemma is similar to an argument on p.~1702 of \cite{morel2011suite}.
\begin{lem}\label{lem:quot}
	Let $s \in \set{\omega_0, \omega_1, \omega_2}$ if $d$ is odd, and let $s = \omega_0$ if $d$ is even. 
	\begin{enumerate}
		\item The automorphism of $T_{12, \CC}$ induced by $s$ is defined over $\RR$.
		\item Let $\gamma \in T_{12}(\RR)$ be regular, and let $\gamma':= s (\gamma) \in T_{12} (\RR)$. For any $\omega \in \Omega$ we have 
		\begin{align}\label{eq:quot}
		\frac{
			\delta_{P_{12} (\RR)}  (\gamma' ) ^{1/2} \prod_{\alpha \in \Phi(\omega)} \alpha^{-1} (\gamma') }{ \delta_{P_{12} (\RR)}  (\gamma)^{1/2} \prod _{\alpha\in \Phi (s \omega) } \alpha^{-1} (\gamma)} = \prod_{\alpha \in \Phi_M^+} \frac{\abs{\alpha(\gamma')}^{-1/2} } { \abs{\alpha(\gamma)}^{-1/2} }\cdot  \prod_{\alpha \in \Phi(s)}\abs{\alpha(\gamma)}^{-1} \alpha(\gamma) .
		\end{align}
		Here $\abs{\cdot}$ denotes the usual absolute value on $\CC$, and as usual $\Phi(s)$ denotes $\Phi^+ \cap (-s\Phi^+)$. 
	\end{enumerate}

\end{lem}
\begin{proof}
\textbf{(1)} The automorphism in question is given by 
	\begin{align*}
	(x ,y,z ) &\longmapsto (y,x, z), \quad \forall (x,y)\in \GG_m^2,~ z\in T_{W_2, \CC},
	\end{align*} when $s = \omega_1$, and is given by
	\begin{align*}
	(x ,y,z ) &\longmapsto (x^{-1},y, z), \quad \forall (x,y)\in \GG_m^2,~ z\in T_{W_2, \CC},
	\end{align*} when $s = \omega_2$. In these cases the claim is obvious. When $s = \omega_0$, the automorphism in question is of the form
	\begin{align*}
	(x ,y,z ) &\longmapsto (x, y^{-1}, f(z)), \quad (x,y)\in \GG_m^2,~ z\in T_{W_2, \CC},
	\end{align*}
	for some automorphism $f$ of $T_{W_2, \CC}$. Since $T_{W_2}\cong \Uni(1)^{m-2}$, every automorphism of $T_{W_2, \CC}$ is defined over $\RR$. This proves the claim. 
	
	\textbf{(2)} Note that $\Phi^+ $ is the disjoint union of $\Phi_M^+$ and the set of roots of $T_{12,\CC}$ acting on $\Lie (N_{12})_{\CC}$. Hence 
	$$\delta_{P_{12} (\RR)} (\nu) = \prod_{\alpha \in \Phi^+} \abs{\alpha(\nu)} \prod_{\alpha \in \Phi_M^+} \abs{\alpha(\nu)}^{-1}, \quad \forall \nu \in T_{12}(\RR).$$
	For any $\omega \in \Omega$, we have 
	\begin{align*}
	\delta_{P_{12} (\RR)}  (\gamma' )^{1/2}  & \prod_{\alpha \in \Phi(\omega)} \alpha^{-1} (\gamma')  \\ &  = \prod_{\alpha \in \Phi^+} \abs{\alpha(\gamma')}^{1/2} \prod_{\alpha\in \Phi_M^+} \abs{\alpha(\gamma')} ^{-1/2} \prod_{\alpha \in \Phi^+ \cap (-\omega\Phi^+)}\alpha^{-1}(\gamma')
	\\ &  = \prod_{\alpha \in s \Phi^+} \abs{\alpha(\gamma)}^{1/2} \prod_{\alpha\in \Phi_M^+} \abs{\alpha(\gamma')} ^{-1/2} \prod_{\alpha \in s\Phi^+ \cap (-s\omega\Phi^+)}\alpha^{-1}(\gamma).
	\end{align*}
	Also we have 
	$$ \delta_{P_{12} (\RR)}  (\gamma) ^{1/2} \prod _{\alpha\in \Phi (s \omega) } \alpha^{-1} (\gamma) =   \prod_{\alpha \in \Phi^+} \abs{\alpha(\gamma)}^{1/2} \prod_{\alpha\in \Phi_M^+} \abs{\alpha(\gamma)} ^{-1/2} \prod_{\alpha \in \Phi^+ \cap (-s\omega\Phi^+)}\alpha^{-1}(\gamma).$$
	Hence
	\begin{multline*}\frac{
		\delta_{P_{12} (\RR)}  (\gamma' ) ^{1/2}\prod_{\alpha \in \Phi(\omega)} \alpha^{-1} (\gamma') }{ \delta_{P_{12} (\RR)}  (\gamma) ^{1/2} \prod _{\alpha\in \Phi (s \omega) } \alpha^{-1} (\gamma)} \\  = \prod_{\alpha \in \Phi_M^+} \frac{\abs{\alpha(\gamma')}^{-1/2} } { \abs{\alpha(\gamma)}^{-1/2} } \cdot   \frac{\prod_{\alpha \in s \Phi^+} \abs{\alpha(\gamma)}^{1/2} }{\prod_{\alpha \in  \Phi^+} \abs{\alpha(\gamma)}^{1/2} }  
	\cdot \frac{\prod_{\alpha \in s \Phi^+ \cap (-s \omega \Phi^+)} \alpha^{-1}(\gamma) }{\prod_{\alpha \in  \Phi^+ \cap (-s \omega \Phi^+)} \alpha^{-1}(\gamma) }.
	\end{multline*} To finish the proof, we note that 
	$$\frac{\prod_{\alpha \in s \Phi^+} \abs{\alpha(\gamma)}^{1/2} }{\prod_{\alpha \in  \Phi^+} \abs{\alpha(\gamma)}^{1/2} } =  \frac{\prod_{\alpha \in \Phi^+ \cap s \Phi^+} \abs{\alpha(\gamma)}^{1/2} \prod_{\alpha \in - \Phi (s)} \abs{\alpha(\gamma)}^{1/2} }{\prod_{\alpha \in  \Phi^+ \cap s \Phi^+} \abs{\alpha(\gamma)}^{1/2} \prod_{\alpha \in  \Phi(s)} \abs{\alpha(\gamma)}^{1/2} } = \prod_{\alpha \in \Phi(s) } \abs{\alpha(\gamma) }^{-1},$$
	and that \begin{align*}
	\frac{\prod_{\alpha \in s \Phi^+ \cap (-s \omega \Phi^+)} \alpha^{-1}(\gamma) }{\prod_{\alpha \in  \Phi^+ \cap (-s \omega \Phi^+)} \alpha^{-1}(\gamma) } & = \frac{\prod_{\alpha \in (-\Phi^+) \cap s \Phi^+ \cap (-s \omega \Phi^+)} \alpha^{-1}(\gamma) }{\prod_{\alpha \in  \Phi^+ \cap (-s \Phi^+) \cap (-s \omega \Phi^+)} \alpha^{-1}(\gamma) } \\ & = \frac{\prod _{\alpha \in \Phi(s) \cap (s \omega \Phi^+)} \alpha (\gamma)}{ \prod _{\alpha \in \Phi(s) \cap (-s \omega \Phi^+)} \alpha ^{-1} (\gamma) } \\ &
	= \prod _{\alpha \in \Phi(s)} \alpha (\gamma).
	\end{align*}
	The desired (\ref{eq:quot}) follows.
\end{proof}

\begin{lem}\label{lem:g_0 and omega_0}
	For any $g_0 \in M_{2,l}(\QQ)^{\sharp}$ (see Definition \ref{defn:n12}), there exists $g\in \SO(W_2)(\RR) \subset G(\RR)$ such that $gg_0$ normalizes $T_{12}$ and the image of $gg_0$ in $\Omega$ is $\omega_0$ as in Definition \ref{defn:omega_0}. 
\end{lem}
\begin{proof}
Recall that $T_{12} = \GG_m^2 \times T_{W_2}$, where $\GG_m^2 = \GL(V_1) \times \GL(V_2/V_1)$, and $T_{W_2}$ is an elliptic maximal torus in $\SO(W_2)$. From the definition of $M_{2,l}(\QQ)^{\sharp}$, we know that $g_0$ normalizes $\GG_m^2$, stabilizes $W_2 \subset V$, and restricts to an element $g_0 |_{W_2} \in \mathrm{O}(W_2)(\RR) - \SO(W_2)(\RR)$. Since all elliptic maximal tori in $\SO(W_2)$ over $\RR$ are conjugate under $\SO(W_2)(\RR)$, there exists $g\in \SO(W_2)(\RR)$ such that $gg_0$ normalizes $T_{12}$. We let $h$ denote $(gg_0 )|_{W_2}$, which is an element of $\mathrm{O}(W_2)(\RR) - \SO(W_2)(\RR)$ normalizing $T_{W_2}$. 

If $d$ is odd, we can take $g$ to be $-\id_{W_2} \cdot (g_0|_{W_2})^{-1}$. Then $gg_0$ permutes $\set{e_2, e_2'}$ non-trivially, fixes $e_1$ and $e_1'$, and acts as $-\id_{W_2}$ on $W_2$. It follows that the image of $gg_0$ in $\Omega$ is $\omega_0$, as desired.  

Assume that $d$ is even. Then $m =d/2 \geq 3$. By our definition of the $\ZZ$-basis $\set{\epsilon_1,\cdots, \epsilon_m}$ of $X^*(T_{12})$, we know that $\set{\epsilon_3,\cdots, \epsilon_m}$ is a $\ZZ$-basis of $X^*(T_{W_2})$. Moreover, $$\Phi(\SO(W_2)_{\CC}, T_{W_2,\CC}) = \set{\pm \epsilon_i \pm \epsilon_j \mid 3 \leq i< j \leq m}.$$ It is easy to check that there exists an element $h' \in \mathrm{O}(W_2)(\CC) - \SO(W_2)(\CC)$ normalizing $T_{W_2,\CC}$ such that the automorphism $\sigma'$ of $X^*(T_{W_2})$ induced by $h'$ satisfies $\sigma '(\epsilon_3) = -\epsilon_3$ and $\sigma'(\epsilon_i) = \epsilon_i$ for $4\leq i \leq m$. Denote by $\sigma$ the automorphism of $X^*(T_{W_2})$ induced by $h$. It suffices to show that $$\sigma \in \Omega_{\RR}(\SO(W_2), T_{W_2}) \sigma' \subset \Aut(X^*(T_{W_2})). $$  Here $\Omega_{\RR} (\SO(W_2), T_{W_2})$ is the real Weyl group $\Nor_{\SO(W_2)(\RR)} (T_{W_2}) / T_{W_2}(\RR)$, viewed as a subgroup of $\Aut(X^*(T_{W_2}))$. Since $h$ and $h'$ differ by left-multiplication by an element of $\SO(W_2)(\CC)$ normalizing $T_{W_2,\CC}$, we have $\sigma \in \Omega_{\CC}(\SO(W_2), T_{W_2}) \sigma'$. We finish the proof by noting that $\Omega_{\CC}(\SO(W_2), T_{W_2})  = \Omega_{\RR}(\SO(W_2), T_{W_2})$, since $\SO(W_2)$ is anisotropic over $\RR$. 
 \end{proof}
\begin{defn}\label{defn:N_i}
	For $\omega\in \Omega$, define
	\begin{align*}
	N_1(\omega)  & : = \begin{cases}
	1,  & \text{if } \langle\omega  (\lambda+\rho) , \varpi_i\rangle > 0  \text{ for } i =1,2, \\ 0,  & \mbox{otherwise}.
	\end{cases} \\ N_2(\omega) & : = \begin{cases}
	1,    & \text{if } \langle\omega  (\lambda+\rho) , \omega_0 \varpi_i\rangle > 0 \text{ for }  i=1,2, \\ 0, & \mbox{otherwise}.
	\end{cases} \\ N_3(\omega) & : = \begin{cases}
	1,  &  \text{if } \langle\omega  (\lambda+\rho) , \varpi_2\rangle > 0 ,\\ 0, & \mbox{otherwise}.
	\end{cases}
	\end{align*} Here $\omega_0$ is as in Definition \ref{defn:omega_0}. 
\end{defn}

\begin{lem}\label{Computing L_M for M_12} Let $\gamma = (a,b, \gamma_{W_2})$ be a regular element of $T_{12}(\RR)$. The quantity \index{$\tilde L_M(\gamma)$} $\tilde L_M(\gamma) : = L_M(\gamma) \cdot  ( \delta_{P_{12}(\RR)} (\gamma) ^{1/2} \Delta_{M} (\gamma) ^{-1} )^{-1}$  can be computed as follows. \begin{enumerate} 
		\item If $d$ is odd, then 
		$$\tilde L_M(\gamma)  =\sum_{\omega \in \Omega} \bigg[  N_1(\omega)  -  \sign(b ) N_2(\omega )  - \sign (1-b^{-1}) N_3(\omega)\bigg]  \epsilon(\omega) (\omega\lambda )(\gamma) \prod_{\alpha \in \Phi(\omega)}\alpha^{-1} (\gamma).	$$
		\item If $d$ is even, then $$
		\tilde  L_M(\gamma) = \sum_{\omega \in \Omega} \bigg[ N_1(\omega) +N_2(\omega) -  N_3(\omega)\bigg]\epsilon(\omega)  (\omega\lambda) (\gamma) \prod_{\alpha \in \Phi(\omega)} \alpha^{-1} (\gamma) .$$
	\end{enumerate}
	
\end{lem}
\begin{proof}
	Our starting point is (\ref{first calc for L_M}). Let $\gamma'= \omega_0 (\gamma)$. By Lemma \ref{lem:g_0 and omega_0}, we may replace $g_0 \gamma g_0^{-1}$ in the second summand on the RHS of (\ref{first calc for L_M}) by $\gamma'$. Now we would like to rewrite the third summand. Define $$ \eta_2 (\gamma) : = \prod_{\alpha \in \Phi_{M_2} ^+ - \Phi_M^+} \frac{\abs{1- \alpha^{-1} (\gamma)}}{ 1- \alpha^{-1} (\gamma) }.$$ Then arguing as on p.~1701 of \cite{morel2011suite}, we have 
	\begin{align*}
	\abs{D_M^{M_{2}} (\gamma)}^{1/2} \delta_{P_{2} (\RR) } (\gamma) ^{1/2} \Delta_{M_{2}} (\gamma) ^{-1} = \eta_{2} (\gamma) \delta_{P_{12}(\RR)} (\gamma) ^{1/2} \Delta_{M} (\gamma) ^{-1}.
	\end{align*} 
	Hence we can rewrite (\ref{first calc for L_M}) as follows:
	\begin{align}\label{second calc for L_M}
	L_{M} (\gamma)=  & ~ \delta_{P_{12}(\RR)}  (\gamma )^{1/2} \Tr(\gamma  \mid  R\Gamma (\Lie N_{12} , \mathbb V) _{>t_{12}} ) \\\nonumber  &  + \delta_{P_{12}(\RR)} (\gamma' ) ^{1/2}  \Tr(\gamma'\mid  R\Gamma (\Lie N_{12} , \mathbb V) _{>t_{12}} )
	\\ \nonumber &  - \delta_{P_{12}(\RR)} (\gamma) ^{1/2} \Delta_{M} (\gamma) ^{-1} \Delta_{M_2}(\gamma) \eta_2 (\gamma) \Tr(\gamma \mid  R\Gamma (\Lie N_{2} , \mathbb V) _{>t_2} ).
	\end{align}
	Using Lemma \ref{lem:Kostant} to compute the $\Tr(\cdots)$ terms in (\ref{second calc for L_M}), we get 
	\begin{align*}
	L_{M} (\gamma)=  & ~ \delta_{P_{12}(\RR)}  (\gamma )^{1/2} \Delta_{M}(\gamma)^{-1}  \sum_{\substack{\omega \in \Omega\\ \langle\omega  (\lambda+\rho) , \varpi_i\rangle > 0 ,~\forall i\in \set{1,2}}} \epsilon(\omega)  (\omega\lambda)(\gamma) \prod_{\alpha \in \Phi(\omega)}\alpha^{-1} (\gamma) \\\nonumber  &  + \delta_{P_{12}(\RR)} (\gamma' ) ^{1/2}  \Delta_{M}(\gamma')^{-1}  \sum_{\substack{\omega \in \Omega\\ \langle\omega  (\lambda+\rho) , \varpi_i\rangle > 0 ,~\forall i\in \set{1,2}}} \epsilon(\omega)  (\omega\lambda)(\gamma') \prod_{\alpha \in \Phi(\omega)}\alpha^{-1} (\gamma')
	\\ \nonumber &  - \delta_{P_{12}(\RR)} (\gamma) ^{1/2} \Delta_{M} (\gamma) ^{-1}  \eta_2 (\gamma)  \sum_{\substack{\omega \in \Omega\\ \langle\omega  (\lambda+\rho) , \varpi_2\rangle > 0}} \epsilon(\omega)  (\omega\lambda)(\gamma) \prod_{\alpha \in \Phi(\omega)}\alpha^{-1} (\gamma).
	\end{align*}
	By Lemma \ref{lem:quot}, the second summand in the above is equal to 
	\begin{align*} \delta_{P_{12}(\RR)} (\gamma) ^{1/2}  \Delta_{M}(\gamma)^{-1} A(\gamma,\gamma')  \sum_{\substack{\omega \in \Omega\\ \langle\omega  (\lambda+\rho) , \varpi_i\rangle > 0 ,~\forall i\in \set{1,2}}} \epsilon(\omega)  (\omega\lambda)(\gamma')  \prod_{\alpha \in \Phi(\omega_0\omega)}\alpha^{-1} (\gamma) ,
	\end{align*}
	where $$A(\gamma,\gamma'): =   \frac{\Delta_M(\gamma)}{\Delta_M (\gamma')}  \prod_{\alpha \in \Phi_M^+} \frac{\abs{\alpha(\gamma')}^{-1/2} } { \abs{\alpha(\gamma)}^{-1/2} }\prod_{\alpha \in \Phi(\omega_0)}\frac{\alpha (\gamma)} {\abs{\alpha(\gamma)}} . $$
	Therefore we have 
	\begin{align*}
\tilde	L_M(\gamma) =   &   \sum_{\substack{\omega \in \Omega \\ \langle\omega  (\lambda+\rho) , \varpi_i\rangle > 0, ~ \forall  i \in \set{1,2}}}  \epsilon(\omega) (\omega\lambda )(\gamma) \prod_{\alpha \in \Phi(\omega)}\alpha^{-1} (\gamma)  \\ \nonumber &  + A(\gamma,\gamma')   \sum_{\substack{\omega \in \Omega\\ \langle\omega  (\lambda+\rho) , \varpi_i\rangle > 0, \forall i \in \set{1,2} } } \epsilon(\omega) (\omega\lambda )(\gamma') \prod_{\alpha \in \Phi(\omega_0\omega)}\alpha^{-1} (\gamma)  \\ \nonumber &  -\eta_2(\gamma)\sum_{\substack{\omega \in \Omega\\ \langle\omega  (\lambda+\rho) , \varpi_2\rangle > 0 }} \epsilon(\omega) (\omega\lambda )(\gamma) \prod_{\alpha \in \Phi(\omega)}\alpha^{-1} (\gamma) . \end{align*}
	Making the substitution $\omega \mapsto \omega_0 \omega$ in the second summation and using the following obvious relations:
	$$ \omega_0^2  =1 ,$$
	$$ 	(\omega_0 \omega \lambda) (\gamma')   = (\omega \lambda) (\gamma),$$ 
	$$ \epsilon(\omega_0\omega)  = \epsilon(\omega_0) \epsilon(\omega) , $$
	$$ \lprod{\omega_0\omega(\lambda+\rho) , \varpi_i}  = \lprod{\omega(\lambda+\rho) , \omega_0 \varpi_i}, $$
 we obtain 
	\begin{align} \nonumber
	\tilde L_M(\gamma) = &       \sum_{\substack{\omega \in \Omega \\ \langle\omega  (\lambda+\rho) , \varpi_i\rangle > 0, ~ \forall  i \in \set{1,2}}}  \epsilon(\omega) (\omega\lambda )(\gamma) \prod_{\alpha \in \Phi(\omega)}\alpha^{-1} (\gamma)  \\ \nonumber &   + \epsilon(\omega_0)A(\gamma,\gamma')   \sum_{\substack{\omega \in \Omega\\ \langle\omega  (\lambda+\rho) , \omega_0 \varpi_i\rangle > 0, \forall i \in \set{1,2} } } \epsilon(\omega) (\omega\lambda )(\gamma) \prod_{\alpha \in \Phi(\omega)}\alpha^{-1} (\gamma)  \\ \nonumber &  -\eta_2(\gamma)\sum_{\substack{\omega \in \Omega\\ \langle\omega  (\lambda+\rho) , \varpi_2\rangle > 0 }} \epsilon(\omega) (\omega\lambda )(\gamma) \prod_{\alpha \in \Phi(\omega)}\alpha^{-1} (\gamma)\\ \label{third calc} = &    \sum_{\omega \in \Omega}    \bigg[  N_1(\omega)  + \epsilon(\omega_0) A(\gamma,\gamma') N_2(\omega) - \eta_2(\gamma) N_3(\omega)\bigg]  \\ \nonumber & \cdot \bigg[ \epsilon(\omega) (\omega\lambda )(\gamma) \prod_{\alpha \in \Phi(\omega)}\alpha^{-1} (\gamma)\bigg]. \end{align} 
	To finish the proof it remains to compute the quantities $\epsilon(\omega_0)$, $A(\gamma,\gamma')$, and $\eta_2(\gamma)$, which we carry out separately in the odd and even cases. 
	
	First assume that $d$ is odd. Then 
	\begin{align}\label{eq:epsilon odd}
	\epsilon(\omega_0) = -1.
	\end{align}
	To compute $A(\gamma,\gamma')$, first note that $\Delta_M(\gamma)/\Delta_M (\gamma')$ and $$\prod_{\alpha \in \Phi_M^+} \abs{\alpha(\gamma')}^{\frac{-1}{2}}  \abs{\alpha(\gamma)}^{\frac{1}{2}}$$ are both $1$, since $\gamma^{-1} \gamma'$ lies in the center of $M$. To compute $$\prod_{\alpha \in \Phi(\omega_0)} \abs{\alpha (\gamma)} ^{-1} \alpha (\gamma), $$ we have $\Phi(\omega_0) = \set{\epsilon_2} \cup \set{\epsilon_2 \pm \epsilon_j \mid j\geq 3}$, and we know that $\epsilon_2 +\epsilon_j$ is the complex conjugate of $\epsilon_2 - \epsilon_j$ for $j\geq 3$, with respect to the real structure of $T_{12}$. (In fact, the complex conjugation acts on $X^*(T_{W_2}) = \mathrm{span}_{\ZZ} \set{\epsilon_3,\cdots, \epsilon_m}$ as $-1$.) Hence we have 
	\begin{align}\label{eq:part in partial odd} A(\gamma,\gamma') = 
	\prod_{\alpha \in \Phi(\omega_0)} \abs{\alpha (\gamma)} ^{-1} \alpha (\gamma) = \abs{\epsilon_2(\gamma) } ^{-1} \epsilon_2(\gamma) = \sign(b). 
	\end{align}
	
	We are left to compute $\eta_2(\gamma)$. We have $\Phi_{M_2} ^+ - \Phi_M^+ = \set{\epsilon_2} \cup \set{\epsilon_2 \pm \epsilon_j \mid j\geq 3}$. Since $\epsilon_2 + \epsilon_j$ is the complex conjugate of $\epsilon_2 - \epsilon_j$ for $j\geq 3$, we have  
	\begin{align}\label{eq:computing eta odd}
	\eta_2 (\gamma) = \frac{\abs{1- \epsilon_2^{-1} (\gamma)}}{ 1- \epsilon_2^{-1} (\gamma) } = \sign(1-b^{-1}).
	\end{align}
	The proof is finished by combining (\ref{third calc}), (\ref{eq:epsilon odd}), (\ref{eq:part in partial odd}), and (\ref{eq:computing eta odd}).
	
	Now assume that $d$ is even. Then $\epsilon(\omega_0) = 1$. To finish the proof it suffices to check that
	$A(\gamma,\gamma') = \eta_2(\gamma) = 1.$
	
	We compute $A(\gamma,\gamma')$. Let $x_j : = \epsilon_j (\gamma), 1\leq j \leq m$. We have
	\begin{align}\label{eq:A1}
	\frac{\Delta_M(\gamma)}{\Delta_{M} (\gamma')} &  = \prod_{\alpha \in \Phi_M^+} \frac{1-\alpha^{-1} (\gamma)}{1-\alpha^{-1} (\gamma')}  = \prod_{\alpha \in \set{\epsilon_3 \pm \epsilon_j \mid j\geq 4}} \frac{1-\alpha^{-1} (\gamma)}{1-\alpha^{-1} (\gamma')} \\ \nonumber  & =\prod_{j \geq 4} \frac{1- x_3^{-1} x_j^{-1} }{1- x_3 x_j^{-1}} \frac{1- x_3^{-1} x_j}{1- x_3 x_j} = \prod_{j\geq 4} x_3^{-2}. 
	\end{align}
	Also
	\begin{align}\label{eq:A_2}
	\prod_{\alpha\in \Phi_M^+} \frac{\abs{\alpha(\gamma')}^{-1/2 }}{\abs{\alpha(\gamma)}^{-1/2}} = \prod_{\alpha \in \set{\epsilon_3 \pm \epsilon_j\mid j \geq 4}}\frac{\abs{\alpha(\gamma')}^{-1/2 }}{\abs{\alpha(\gamma)}^{-1/2}}  = \prod_{j\geq 4} \abs{\frac{x_3^{-1} x_j}{x_3 x_j} \frac{x_3^{-1} x_j^{-1} }{x_3 x_j^{-1}}} ^{-1/2} = \prod_{j\geq 4} \abs{x_3} ^2. 
	\end{align} 
	To compute $$\prod_{\alpha \in \Phi(\omega_0)}\alpha (\gamma) \abs{\alpha(\gamma)}^{-1}, $$ we have  $$\Phi(\omega_0) = \set{\epsilon_2 \pm \epsilon_j \mid j\geq 3} \cup  \set{\epsilon_3 \pm \epsilon_j \mid j\geq 4}.$$ Note that $\epsilon_2 +\epsilon_j$ is the complex conjugate of $\epsilon_2 - \epsilon_j$, for $j\geq 3$. Hence we have 
	\begin{align}\label{eq:A3}
	\prod_{\alpha \in \Phi(\omega_0)}\frac{\alpha (\gamma)} {\abs{\alpha(\gamma)}}  = \prod_{\alpha \in \set{\epsilon_3 \pm \epsilon_j \mid j \geq 4}}\frac{\alpha (\gamma)} {\abs{\alpha(\gamma)}}  =\prod_{ j \geq 4} \frac{x_3 x_j x_3 x_j^{-1}}{\abs{x_3 x_j x_3 x_j^{-1} }} = \prod_{ j \geq 4} \frac{ x_3^2}{\abs{x_3} ^2}. 
	\end{align}
	Combining (\ref{eq:A1}) (\ref{eq:A_2})  (\ref{eq:A3}), we conclude that $A(\gamma, \gamma') = 1$, as desired. 
	
	We are left to check that $\eta_2(\gamma) = 1$. 
	We have $\Phi_{M_2} ^+ - \Phi_M^+ = \set{\epsilon_2 \pm \epsilon_j\mid j\geq 3}$. As we observed before, $\epsilon_2 + \epsilon_j$ is the complex conjugate of $\epsilon_2 - \epsilon_j$ for all $j\geq 3$. Hence 
	$\eta_2 (\gamma)= 1$ as desired.    
\end{proof}

\subsection{}\label{subsubsec:double bracket} Keep the setting of \S \ref{subsubsec:KH M12}. In the following we compare $L_M(\gamma)$ with $\Phi^G_M(\gamma,\Theta)$. We will also introduce and study a variant of $\Phi^G_M(\gamma,\Theta)$, denoted by $\Phi^G_{M}(\gamma,\Theta)_{\Endos}$.

We have $A_M = M^{\GL} = \GG_m \times \GG_m$, and $T_{12}( \RR)_1 = \set{\pm 1} \times \set{\pm 1} \times T_{W_2} (\RR)$. The projection of $\gamma$ in $A_M (\RR) ^0 \cong \RR_{>0} \times \RR_{>0}$ is $(\abs{a} , \abs{b})$, and $$x_{\gamma} = (\log \abs{a}, \log \abs{b})\in \RR^2  \cong \Lie(A_{M}) = X_*(A_{M})_{\RR}. $$ 

Let $\wp $ be the natural restriction map $ X^*(T_{12})_{\RR}\to X^*(A_M)_{\RR}$. We identify $X^*(A_M)_{\RR}$ with $\RR^2$, and let $\RR^2_{\odd}, 
\RR^2_{\Endos}, \RR^2_{\even}$ be the subsets of $\RR^2$ defined in \S \ref{subsubsec:three cases of c}. Note that when $d$ is odd (resp.~even), we have $\wp ( \omega (\lambda +\rho)) \in \RR^2_{\odd}$ (resp.~$\in \RR^2_{\even}$) for all $\omega \in \Omega$. Suppose $f$ is a function $\RR^2_{\odd} \to \CC$ (resp.~$\RR^2_{\even} \to \CC$) when $d$ is odd (resp.~even). We write $\dbp{f}$ for the function \index{$\dbp{f}$}
\begin{align*}\dbp {f}: 
\Omega & \To \CC \\
\omega & \longmapsto  f (\wp ( \omega (\lambda +\rho))).
\end{align*}

Recall from \S \ref{subsubsec:three cases of c} that $\RI, \RII, \cdots, \RVIII,\RA$ denote the characteristic functions of some open cones in $\RR^2$.  
\begin{lem}\label{lem:N(omega)} When $d$ is odd, we have the following identities between functions on $\Omega$: 
	\begin{align*}
	N_1(\cdot) & = \dbp{\RI + \RII + \RVIII},\\
	N_2(\cdot) & = \dbp{\RI + \RVII + \RVIII}, \\
	N_3(\cdot) & = \dbp{\RI + \RII + \RVII + \RVIII}.
	\end{align*} When $d$ is even, we have the following identities between functions on $\Omega$: \begin{align*}
	N_1(\cdot) & = \dbp{\RA + \RII},\\
	N_2(\cdot) & = \dbp{\RA + \RVII}, \\
	N_3(\cdot) & = \dbp{\RA + \RII + \RVII}. 
	\end{align*}
\end{lem}
\begin{proof}
	This follows immediately from Definition \ref{defn:N_i}.
\end{proof}
\subsection{}
Recall from \S \ref{subsec:stable discrete series chars} that $\Phi^G_M(\gamma,\Theta)$ can be computed by (\ref{eq:to explain notation}). Using the notation $\dbp{f}$ introduced in \S \ref{subsubsec:double bracket}, we recall the definition of $n(\gamma,\omega B)$ appearing in (\ref{eq:to explain notation}) as follows:
$$n(\gamma,\omega B) : = \begin{cases}
\dbp{\bar c_{R_{\gamma}} (x_{\gamma}, \cdot)} (\omega), &  \text{if }
\gamma \in  G(\RR) ^0 ,\\
0, & \text{if } \gamma \notin G(\RR)^0 .
\end{cases} $$ 	

Let $R_{\Endos} := \set{ \pm \epsilon_1, \pm \epsilon_2 } \subset X^*(A_{M}) _{\RR}$. Under the identification $X^*(A_{M}) _{\RR} \cong \RR^2$, the subset $R_{\Endos}$ is identified with the root system $U_{\Endos}$ considered in \S \ref{subsubsec:three cases of c}. In particular, the Weyl group of $R_{\Endos}$ contains $-1$, and the function $\bar c_{R_{\Endos}}$ associated to $R_{\Endos}$ is identified with the function $\bar c_{U_{\Endos}} : \RR^2_{\Endos} \times \RR^2_{\Endos}\to \ZZ$ considered in \S \ref{subsubsec:three cases of c}.

 \emph{When $d$ is odd}, we define 
 \begin{align}\label{eq:n_eds}
n_{\Endos}(\gamma,\omega B) : = \begin{cases} \dbp{\bar c_{R_{\Endos}} (x_{\gamma}, \cdot)} (\omega), & \text{if } a, b >0 , \\
	0, &  \mbox{otherwise},
\end{cases} 
 \end{align}
for $\omega \in \Omega$. Here $\dbp{\bar c_{R_{\Endos}} (x_{\gamma}, \cdot) }$ is well defined because $\RR^2_{\odd} \subset \RR^2_{\Endos}$. 

Analogous to (\ref{eq:to explain notation}), we define, \emph{when $d$ is odd},  \index{$\Phi^G_M(\cdot,\Theta) _{\Endos}$}
\begin{multline}\label{eq:Phi^G_M endos}
\Phi^G_M (\gamma ,\Theta) _{\Endos} : =  (-1)^{q(G)} \epsilon_{R} (\gamma) \delta_{P_{12}(\RR)} (\gamma)^{1/2}\Delta_M(\gamma)^{-1}  \\  \cdot \sum_{\omega \in \Omega } \epsilon(\omega) n_{\Endos}(\gamma, \omega B) (\omega \lambda)(\gamma) \prod _{\alpha \in \Phi(\omega)} \alpha^{-1} (\gamma). \end{multline}

\begin{lem}\label{lem:in conn}
	For both parity of $d$, let $\nu = (s,t,u) \in T_{12}(\RR) = \RR^{\times} \times \RR^{\times} \times T_{W_2}(\RR)$ be an element with $s,t <0$. Then $\nu \in G(\RR)^0$. 
\end{lem}
\begin{proof}
	Since $T_{W_2} (\RR)$ is connected (being a product of copies of $\Uni(1)$), we know that $\nu$ is in the same connected component of $G(\RR)$ as $$\nu_1 : = (-1,  -1 ,  1) \in T_{12}(\RR).$$ It remains to show that $\nu_1 \in G(\RR)^0$. We know that $\nu_1$ acts as $-1$ on $\RR X_1 + \RR X_2$ and on $\RR Y_1 + \RR Y_2$, where $X_i = e_i + e_i'$ and $Y_i = e_i- e_i '$. Now $\RR X_1 + \RR X_2$ is a positive definite plane and $\RR Y_1 + \RR Y_2$ is a negative definite plane, and $\nu_1$ acts on both of them with determinant $1$. Also $\nu_1$ acts as the identity on the orthogonal complement of these two planes. This implies that $\nu_1 \in G(\RR) ^0$, by the standard description of the connected components of indefinite special orthogonal groups (see \cite[I.17]{knappbeyond}).
\end{proof}
\begin{prop}\label{Arch comp odd ++}
	Assume that $d$ is odd. Let $\gamma = (a,b, \gamma_{W_2})\in T_{12}(\RR)$ be a regular element. Let $(x_1,x_2) =  (\log \abs{a}, \log \abs{b}).$ When $ab<0$, we have $$\Phi ^G_M(\gamma, \Theta) = \Phi ^G_M (\gamma, \Theta) _{\Endos} =0.$$ When $ab > 0$, assume that $x_1 < -\abs{x_2}.$ Then we have 
	$$ 4 (-1) ^{q(G)} L_M(\gamma)  = 
	\Phi_M^G (\gamma ,\Theta)  + \Phi_M^G (\gamma ,\Theta)_{\Endos}.$$
\end{prop}
\begin{proof} 
	We first treat the case $ab<0$. Then $\Phi ^G_M(\gamma,\Theta) _{\Endos} = 0 $ since all $n_{\Endos}(\gamma,\omega B)$ vanish by definition. To show $\Phi ^G_M(\gamma ,\Theta) =0$, note that $R_{\gamma} = \set{\pm \epsilon_1}$ or $\set{\pm \epsilon_2}$. Thus the Weyl group of $R_{\gamma}$ (as a root system in  $X^*( A_{M}) _{\RR}= \RR^2$) does not contain $-1$. By Corollary \ref{cor:GKM}, we have $\Phi^G_M(\gamma,\Theta) =0$.
	
	We now treat the case $ab>0$. First assume that $a$ and $b$ are both positive. Under our assumption that $x_1 < -\abs{x_2}$, there are two cases to consider, namely $0< a < b <1$ or $0< ab <1 < b$. (Here $b \neq 1$ since $\gamma$ is regular.) We have 
	$$\epsilon_R(\gamma) = \begin{cases}
	1,  & \text{ if }0<a<b<1 ,\\
	-1, & \text{ if }0 < ab < 1<b.
	\end{cases}$$  
	Comparing Lemma \ref{Computing L_M for M_12} with (\ref{eq:to explain notation} and (\ref{eq:Phi^G_M endos}), we see that the current proposition reduces to the following two statements:  
	\begin{itemize}
		\item 	When $0< a < b < 1,$ we have 
		\begin{align}\label{eq:statement 1}
		\frac{1}{4} (n(\gamma ,\omega B) + n_{\Endos} (\gamma,\omega B) ) = N_1 (\omega) - N_2(\omega) +N_3(\omega), \quad \forall \omega \in \Omega .
		\end{align}
		\item 	When $ 0<ab < 1 <b,$ we have 
		\begin{align}\label{eq:statement 2}
		\frac{1}{4} (n(\gamma ,\omega B) + n_{\Endos} (\gamma,\omega B) ) = -N_1 (\omega) + N_2(\omega) +N_3(\omega),  \quad \forall \omega \in \Omega .
		\end{align}
	\end{itemize}
	
	Since obviously $\gamma \in T_{12} (\RR) ^0 \subset G(\RR) ^0$, we have $$n(\gamma , \omega B) = \dbp{ \bar c_{R_{\gamma}} (x_{\gamma}, \cdot)}(\omega), \quad \forall \omega \in \Omega ,$$ by definition. Since $R_{\gamma} = \set{\pm \epsilon_1, \pm \epsilon_2, \pm \epsilon_1 \pm \epsilon_2} = U_{\odd}$, we have $\bar c_{R_{\gamma}} (x_{\gamma}, \cdot) =\mathbf f_{\odd,x_{\gamma}}$. (See \S \ref{subsubsec:three cases of c} for the notation.) In other words we have 
	\begin{align}\label{eq:n and f, odd}
n(\gamma, \omega B) = \dbp{\mathbf f_{\odd,x_\gamma}}(\omega), \quad \forall \omega \in \Omega.
	\end{align}
Similarly we have 
\begin{align}\label{eq:n and f, eds}
n_{\Endos}(\gamma, \omega B) = \dbp{\mathbf f_{\Endos,x_\gamma}}(\omega), \quad \forall \omega \in \Omega.
\end{align}
	When $0<a<b <1$, we have $x_\gamma \in \RV$. By (\ref{eq:odd V}), (\ref{eq:eds V}), (\ref{eq:n and f, odd}), and (\ref{eq:n and f, eds}), we have \begin{align*}
	\frac{1}{4} n(\gamma,\omega B) &= \dbp{ \RII + \RVIII}(\omega ), \\
	\frac{1}{4} n_{\Endos}(\gamma, \omega B ) &  =\dbp{\RI + \RII}(\omega).
	\end{align*} Thus the LHS of (\ref{eq:statement 1}) is equal to $\dbp{\RI +2\RII +\RVIII}(\omega)$. On the other hand, by Lemma \ref{lem:N(omega)}, the RHS of (\ref{eq:statement 1}) is also equal to $\dbp{\RI +2\RII +\RVIII}(\omega)$. Hence (\ref{eq:statement 1}) holds, as desired.
	
	When $0<ab<1<b$, we have $x_\gamma \in \RIV$. By (\ref{eq:odd IV}) and (\ref{eq:eds IV}), we have \begin{align*}
	\frac{1}{4} n(\gamma, \omega B) &= \dbp{\RI + \RVII}(\omega), \\
	\frac{1}{4} n_{\Endos}(\gamma, \omega B ) &  =\dbp{\RVII + \RVIII}(\omega).
	\end{align*}Thus the LHS of (\ref{eq:statement 2}) is equal to $\dbp{\RI +2\RVII +\RVIII}(\omega)$. By Lemma \ref{lem:N(omega)}, the RHS of (\ref{eq:statement 2}) is also equal to $\dbp{\RI +2\RVII +\RVIII}(\omega)$. Hence (\ref{eq:statement 2}) holds, as desired. 
	
	We now assume that $a$ and $b$ are both negative. In this case $\Phi ^G_M(\gamma,\Theta)_{\Endos} =0$ by definition. We have $\epsilon_R(\gamma) = 1$. Comparing Lemma \ref{Computing L_M for M_12} with (\ref{eq:to explain notation}), we see that the current proposition reduces to the following identity:
	\begin{align}\label{eq:statement 3}
	\frac{1}{4} n(\gamma ,\omega B) = N_1 (\omega) + N_2(\omega) -N_3(\omega) , \quad \forall \omega \in \Omega. 
	\end{align}
	
	By Lemma \ref{lem:in conn}, we have $\gamma \in G(\RR)^0$, and so $$n(\gamma , \omega B) =\dbp{\bar c_{R_{\gamma}} (x_{\gamma}, \cdot)}(\omega), \quad \forall \omega \in \Omega,$$ by definition. Since $R_{\gamma}  = \set{\pm \epsilon_1 \pm\epsilon_2} = U_{\even}$, we have $\bar c_{R_{\gamma}} (x_{\gamma},\cdot) = \mathbf f_{\even,x_{\gamma}}.$ (See \S \ref{subsubsec:three cases of c} for the notation). Thus 
	\begin{align}\label{eq:n and f, quasi-even}
n(\gamma , \omega B) = \dbp{\mathbf f_{\even ,x_{\gamma}}}(\omega), \quad \forall \omega \in \Omega.
	\end{align}
	Since $x_1 < -\abs{x_2}< 0 $, we have $x_ \gamma \in \RIV \cup \RV \subset \RC$. Hence by (\ref{eq:even C}) and (\ref{eq:n and f, quasi-even}), we have $$\frac{1}{4} n(\gamma, \omega B) = \dbp{\RA} (\omega ) = \dbp{\RI + \RVIII}(\omega).$$ By Lemma \ref{lem:N(omega)}, the RHS of (\ref{eq:statement 3}) is also equal to $\dbp{\RI + \RVIII}(\omega)$. Hence (\ref{eq:statement 3}) holds, as desired.\end{proof}

The following proposition will also be needed in \S \ref{pf:break 1} below.
\begin{prop}\label{effect of switching to Phi_endos}
	Assume that $d$ is odd. Let $\gamma = (a,b , \gamma_{W_2}) \in T_{12} (\RR)$ be a regular element, with $ab > 0$. Let $\omega_1, \omega_2$ be as in Definition \ref{defn:omega_0}, and let  \begin{align*}
	\gamma' & : =  \omega_1 (\gamma) = (b,a, \gamma_{W_2}) \in T_{12} (\RR) ,\\
	\gamma'' & : = \omega_2 (\gamma) = (a^{-1},b, \gamma_{W_2}) \in T_{12} (\RR).
	\end{align*} Then we have 
	\begin{align}\label{eq:invariance1}
	\Phi ^G_M ( \gamma ,\Theta) & = \Phi ^G_M (\gamma' ,\Theta)  = \Phi ^G_M (\gamma'' ,\Theta) , \\\label{eq:invariance2}
	\epsilon_R (\gamma) \epsilon_{R_{\Endos}} (\gamma)\Phi^G_M (\gamma, \Theta) _{\Endos} & = -  \epsilon_R (\gamma') \epsilon_{R_{\Endos}} (\gamma')\Phi^G_M (\gamma' , \Theta )_{\Endos}, \\ \label{eq:invariance3}
	\epsilon_R (\gamma) \epsilon_{R_{\Endos}} (\gamma)\Phi^G_M (\gamma, \Theta) _{\Endos} & =   \epsilon_R (\gamma'') \epsilon_{R_{\Endos}} (\gamma'')\Phi^G_M (\gamma'' , \Theta )_{\Endos}. 
	\end{align}
	Here $\epsilon_{R_{\Endos}}(\gamma)$ is defined to be
	$$ (-1)^{\# \set{\alpha \in \Phi^+ \cap R_{\Endos} \mid  0 < \alpha(\gamma)<1 } }, $$
	and similarly for $\epsilon_{R_{\Endos}}(\gamma')$ and $\epsilon_{R_{\Endos}}(\gamma'')$.
\end{prop}
\begin{proof}The equalities in (\ref{eq:invariance1}) hold because $\omega_1$ and $\omega_2$ can be represented by elements of $(\Nor_G M)(\RR)$, and $\Phi^G_M(\cdot, \Theta)$ is invariant under $(\Nor_G M)(\RR)$. 
	
	We now prove (\ref{eq:invariance2}). We have 
	$\Delta_M(\gamma)  = \Delta_M(\gamma') $ because $\gamma^{-1} \gamma'$ lies in the center of $M$. Also
	$
	\epsilon_{R_{\Endos}} (\gamma)  = \epsilon_{R_{\Endos}} (\gamma') .$
	Hence we have reduced the proof to showing that  
	\begin{align}\label{eq:exchange a b}
	\delta_{P_{12} (\RR)} (\gamma) ^{1/2} \sum_{ \omega } &  \epsilon(\omega) n_{\Endos} (\gamma,\omega B) (\omega \lambda) (\gamma) \prod_{ \alpha \in \Phi(\omega)} \alpha^{-1} (\gamma) \\ \nonumber  & = - \delta_{P_{12} (\RR)} (\gamma') ^{1/2} \sum_{ \omega } \epsilon(\omega) n_{\Endos} (\gamma',\omega B) (\omega \lambda) (\gamma') \prod_{ \alpha \in \Phi(\omega)} \alpha^{-1} (\gamma') .\end{align}
	
	We claim that for all $\omega \in \Omega$, we have $n_{\Endos} (\gamma', \omega B) = n_{\Endos} (\gamma, \omega_1\omega B)$. Indeed, if $a$ and $b$ are both negative, then both sides are by definition zero. If $a$ and $b $ are both positive, then our claim follows from the following property: 
	\begin{align*}
	\bar c_{R_{\Endos}} (y,y') = \bar c_{R_{\Endos}}(\omega_1 y, \omega_1 y'), \quad \forall y, y ' \in \RR^2_{\Endos},
	\end{align*} which is a direct consequence of (\ref{eq:c_eds}). 
	
By the claim and Lemma \ref{lem:quot}, the RHS of (\ref{eq:exchange a b}) is equal to
	\begin{multline*}
 - \delta_{P_{12} (\RR)} (\gamma) ^{1/2} \sum_{ \omega } \epsilon(\omega) n_{\Endos} (\gamma,\omega_1 \omega B) (\omega \lambda) (\gamma') \\ \cdot \prod_{ \alpha \in \Phi(\omega_1\omega)} \alpha^{-1} (\gamma)  \prod_{\alpha \in \Phi_M^+} \frac{\abs{\alpha(\gamma')}^{-\frac{1}{2}} } { \abs{\alpha(\gamma)}^{-\frac{1}{2}} }  \prod_{\alpha \in \Phi(\omega_1)} \frac{\alpha(\gamma)} {\abs{\alpha(\gamma)}} .
	\end{multline*} 
Under the substitution $\omega \mapsto \omega_1\omega$ in the summation, the above becomes
\begin{multline*}
	\delta_{P_{12} (\RR)} (\gamma) ^{1/2} \sum_{ \omega } \epsilon(\omega) n_{\Endos} (\gamma,\omega B) (\omega \lambda) (\gamma)\\ \cdot  \prod_{ \alpha \in \Phi(\omega)} \alpha^{-1} (\gamma)  \prod_{\alpha \in \Phi_M^+} \frac{\abs{\alpha(\gamma')}^{-\frac{1}{2}} } { \abs{\alpha(\gamma)}^{-\frac{1}{2}} } \prod_{\alpha \in \Phi(\omega_1)} \frac{\alpha(\gamma)} {\abs{\alpha(\gamma)}}.
\end{multline*}
	To finish the proof of (\ref{eq:exchange a b}) it suffices to check
	$$\prod_{\alpha \in \Phi_M^+} \frac{\abs{\alpha(\gamma')}^{-\frac{1}{2}} } { \abs{\alpha(\gamma)}^{-\frac{1}{2}} }  \prod_{\alpha \in \Phi(\omega_1)} \frac{\alpha(\gamma)} {\abs{\alpha(\gamma)}} = 1.$$ Since $\gamma^{-1} \gamma'$ lies in the center of $M$, the first product in the above is equal to $1$. The second product is also equal to $1$, because $\Phi(\omega_1) =  \set{\epsilon_1 - \epsilon_2}$, and $(\epsilon_1- \epsilon_2) (\gamma) = a/b > 0$. We have thus proved (\ref{eq:exchange a b}). As we have already seen, this implies (\ref{eq:invariance2}).
	
	We now prove (\ref{eq:invariance3}) in a completely analogous way. We have 
	$ \Delta_M(\gamma)  = \Delta_M(\gamma')$, and $
	\epsilon_{R_{\Endos}} (\gamma)  = - \sign(a) \epsilon_{R_{\Endos}} (\gamma')$, so we need to check 
	\begin{multline}\label{eq:exchange a a^-1}
	\delta_{P_{12} (\RR)} (\gamma) ^{1/2} \sum_{ \omega }   \epsilon(\omega) n_{\Endos} (\gamma,\omega B) (\omega \lambda) (\gamma) \prod_{ \alpha \in \Phi(\omega)} \alpha^{-1} (\gamma) \\  = -\sign(a) \delta_{P_{12} (\RR)} (\gamma'') ^{1/2} \sum_{ \omega } \epsilon(\omega) n_{\Endos} (\gamma'',\omega B) (\omega \lambda) (\gamma'') \prod_{ \alpha \in \Phi(\omega)} \alpha^{-1} (\gamma'') .\end{multline}
	Again it easily follows from the definition of $n_{\Endos}$ and (\ref{eq:c_eds}) that $n_{\Endos} (\gamma'', \omega B) = n_{\Endos} (\gamma, \omega_2\omega B)$, for all $\omega \in \Omega$. By this fact and Lemma \ref{lem:quot}, the RHS of (\ref{eq:exchange a a^-1}) is equal to
	\begin{multline*}
- \sign(a) \delta_{P_{12} (\RR)} (\gamma) ^{1/2} \sum_{ \omega } \epsilon(\omega) n_{\Endos} (\gamma,\omega_2 \omega B) (\omega \lambda) (\gamma'') \\ \cdot  \prod_{ \alpha \in \Phi(\omega_2\omega)} \alpha^{-1} (\gamma)  \prod_{\alpha \in \Phi_M^+} \frac{\abs{\alpha(\gamma'')}^{-\frac{1}{2}} } { \abs{\alpha(\gamma)}^{-\frac{1}{2}} }  \prod_{\alpha \in \Phi(\omega_2)} \frac{\alpha(\gamma)} {\abs{\alpha(\gamma)}} .
	\end{multline*}
Under the substitution $\omega \mapsto \omega_2\omega$ in the summation the above becomes 
	\begin{multline*}
\sign(a)\delta_{P_{12} (\RR)} (\gamma) ^{1/2} \sum_{ \omega } \epsilon(\omega) n_{\Endos} (\gamma,\omega B) (\omega \lambda) (\gamma) \\ \cdot  \prod_{ \alpha \in \Phi(\omega)} \alpha^{-1} (\gamma)  \prod_{\alpha \in \Phi_M^+} \frac{\abs{\alpha(\gamma'')}^{-\frac{1}{2}} } { \abs{\alpha(\gamma)}^{-\frac{1}{2}} } \prod_{\alpha \in \Phi(\omega_2)} \frac{\alpha(\gamma)} {\abs{\alpha(\gamma)}}. 
	\end{multline*}
	To finish the proof of (\ref{eq:exchange a a^-1}), it suffices to check 
	$$\prod_{\alpha \in \Phi_M^+} \frac{\abs{\alpha(\gamma'')}^{-\frac{1}{2}} } { \abs{\alpha(\gamma)}^{-\frac{1}{2}} }  \prod_{\alpha \in \Phi(\omega_2)} \frac{\alpha(\gamma)} {\abs{\alpha(\gamma)}} = \sign(a).$$ Again the first product in the above is equal to $1$, so we need to check 
	that the second product is equal to $\sign (a).$ For this, we may replace the product over all $\alpha \in \Phi(\omega_2)$ by the product over those $\alpha \in \Phi(\omega_2)$ that are real. This is because $\Phi(\omega_2)$ is stable under complex conjugation, and we obviously have $$ \frac{\alpha (\gamma)} {\abs{\alpha (\gamma)}} \frac{\bar \alpha (\gamma)} {\abs{\bar \alpha (\gamma)}} =1$$ for any $\alpha, \bar \alpha \in \Phi(\omega_2)$ that are complex conjugate to each other. Now the real roots in $\Phi(\omega_2)$ are $\epsilon_1, \epsilon_1 + \epsilon_2, \epsilon_1 - \epsilon_2$. Hence
	$$	\prod_{\alpha \in \Phi(\omega_2)} \frac{\alpha (\gamma)} {\abs{\alpha (\gamma)}}  = \prod_{\alpha \in \set{ \epsilon_1, \epsilon_1 + \epsilon_2, \epsilon_1 - \epsilon_2}} \frac{\alpha (\gamma)} {\abs{\alpha (\gamma)}} = \frac{a}{\abs{a}} \frac{ab}{\abs{ab}} \frac{ab^{-1}}{\abs{ab^{-1}}} =  \frac{a}{\abs{a}} = \sign(a) ,$$ as desired. We have thus proved (\ref{eq:exchange a a^-1}). As we have already seen, this implies (\ref{eq:invariance3}).
\end{proof}

The following proposition is the counterpart of Proposition \ref{Arch comp odd ++} in the even case.
\begin{prop}\label{Arch comp M12 even}
	Assume that $d$ is even. Let $\gamma = (a,b, \gamma_{W_2})\in T_{12}(\RR)$ be a regular element. Let $(x_1,x_2) =  (\log \abs{a}, \log \abs{b}).$  When $ab < 0$, we have $$ \Phi ^G_M(\gamma,\Theta) = 0.$$
	When $ab>0$, assume that $x_1 < -\abs{x_2}$. Then we have $$ 4 (-1) ^{q(G)} L_M(\gamma) = 
	\Phi_M^G (\gamma ,\Theta) .$$
\end{prop}
\begin{proof} 
	When $ab<0 $, we have $R_{\gamma} = \emptyset$. Thus $\Phi^G_M(\gamma,\Theta) =0$ by Corollary \ref{cor:GKM}. 
	
	Assume that $ab >0$. Under our assumption that $x_1 <  -\abs{x_2}$, we have $\epsilon_R(\gamma) = 1$. 
	In view of Lemma \ref{Computing L_M for M_12}, to prove the current proposition it suffices to prove 
	\begin{align}\label{eq:statement even}
	\frac{1}{4} n(\gamma ,\omega B) = N_1 (\omega) + N_2(\omega) -N_3 (\omega) , \quad \forall \omega \in \Omega.  
	\end{align}
	
	By Lemma \ref{lem:in conn}, we have $\gamma \in G(\RR)^0$, and so $$n(\gamma , \omega B) =\dbp{\bar c_{R_{\gamma}} (x_{\gamma}, \cdot)}(\omega), \quad \forall \omega \in \Omega,$$ by definition. Since $R_{\gamma}  = \set{\pm \epsilon_1 \pm\epsilon_2} = U_{\even}$, we have $\bar c_{R_{\gamma}} (x_{\gamma},\cdot) = \mathbf f_{\even,x_{\gamma}}$. (See \S \ref{subsubsec:three cases of c} for the notation). Thus 
	\begin{align}\label{eq:n and f, even}
n(\gamma , \omega B) =\dbp{\mathbf f_{\even ,x_{\gamma}}}(\omega), \quad \forall \omega \in \Omega.
	\end{align}
	Since $x_1 < -\abs{x_2}$, we have $x_\gamma \in \RC$. By (\ref{eq:even C}) and (\ref{eq:n and f, even}), we have $$\frac{1}{4} n(\gamma, \omega B) =\dbp{\RA}(\omega ).$$ Now by Lemma \ref{lem:N(omega)}, the RHS of (\ref{eq:statement even}) is also equal to $ \dbp{\RA}(\omega )$. Hence (\ref{eq:statement even}) holds, as desired. 	
\end{proof}

\chapter{Endoscopic data for special orthogonal groups}\label{sec:endoscopic}
In this chapter, let $F$ be a local or global field of characteristic zero. Let $V = (V,q)$ be a quadratic space over $F$ of dimension $d$ and discriminant $\delta$ (see \S \ref{subsec:generalities on quad sp}). Let $G = \SO(V)$. Let $m = \floor{d/2}$, which is the absolute rank of $G$. As usual, we refer to ``the odd case'' and ``the even case'' according to the parity of $d$. 
\section{The quasi-split inner form} \label{Fixing inner twist}

We need to explicitly fix an inner twisting between $G$ and a quasi-split inner form. For this, let $\underline V = (\underline V, \underline q)$ be the unique (up to isomorphism) quasi-split quadratic space over $F$ of dimension $d$ and discriminant $\delta$. 

 \begin{defn}\label{defn:fixing isom between quad spaces}
 	We fix an isomorphism of quadratic spaces over $\overline F$:\index{$\phi_V$} $$\phi_V: (V,q)\otimes_F \overline F \isom (\underline{V},\underline{q})\otimes_F \overline F.$$ If $F= \RR$, we may and shall assume that $\phi_V$ satisfies the following condition: Let $(a,b)$ be the signature of $(V,q)$. If $a > b$ (resp.~$a\leq b$), then there exists an orthogonal basis $\set{v_1, \cdots , v_d}$ of $V$, and an orthogonal basis $\set{\underline{v}_1, \cdots, \underline v_d}$ of $\underline{ V}$, such that for each $1\leq j \leq d$, we have $q(v_j), \underline q (\underline{ v}_j) \in \set{\pm 1}$, and $\phi_V(v_j) =  \underline v_j \otimes \lambda_j $ for some $\lambda_j \in \set{1, \sqrt{-1}}$, with $\lambda_j = \sqrt{-1}$ only if $q(v_j) = 1$ (resp.~only if $q(v_j) = -1$).     
 \end{defn}	
 \subsection{}\label{subsubsec:defn of the cocycle u}
Let $G^*:= \SO(\underline{ V} , \underline q)$, which is quasi-split over $F$ by Proposition \ref{prop: even TFAE}. Using $\phi_V$ as in Definition \ref{defn:fixing isom between quad spaces}, we define the isomorphism\index{$\psi_V$} 
\begin{align*}
\psi_V: G_{\overline F} &  \isom G^*_{\overline F} \\
g & \longmapsto \phi_V g \phi_V^{-1}.
\end{align*}
 Define the function \index{$u_V$}
 \begin{align*}
u_V: \Gamma_F & \To \GL(\underline V \otimes_F \overline F) \\ \rho & \longmapsto \leftidx^\rho\phi_V \phi_V^{-1}.
 \end{align*}
 Clearly the image of $u_V$ is contained in $\mathrm O(\underline V)(\overline F)$. If we fix $F$-bases of $V$ and $\underline V$, then since $q$ and $\underline q$ have the same discriminant, the square of the determinant of the matrix of $\phi_V$ lies in $F^{\times,2}$, which implies that the determinant of the matrix of $\phi_V$ lies in $F$. Hence $u_V(\rho)$ has determinant $1$ for each $\rho \in \Gamma_F$. Thus the image of $u_V$ is contained in $G^*(\overline F)$. Note that we have 
 \begin{align}\label{eq:pure inner twist}
\leftidx^{\rho}\psi_V \psi_V^{-1} = \Int( u_V(\rho)) \in \Aut(G^*_{\overline F}), \quad \forall \rho \in \Gamma_F.
 \end{align}
  It follows that $\psi_V$ is an inner twisting. 
 \begin{rem}\label{rem:remember phi_V}
 If we view $\SO(V)$ and $\SO(\underline V)$ as abstract reductive groups over $F$, then in the odd case there is a unique $\SO(\underline V)(\overline F)$-conjugacy class of inner twistings $\SO(V)_{\overline F}\isom\SO(\underline V)_{\overline F}$, whereas in the even case there are two such conjugacy classes, interchanged under the conjugation by any element of $\mathrm{O}(\underline V)(\overline F) - \SO(\underline V)(\overline F)$. If we change the choice of $\phi_V$ to some $\phi_V'$, then $\phi_V' = g \circ \phi_V$ for some $g\in \mathrm{O}(\underline V)(\overline F)$. The inner twisting $\psi_V'$ arising from $\phi_V'$ stays in the same $\SO(\underline V)(\overline F)$-conjugacy class as $\psi_V$ if and only if $g\in \SO(\underline V)(\overline F)$. Thus for the purpose of realizing $G^*=\SO(\underline V)$ as an inner form of $G$, it suffices to remember $\phi_V$ up to replacing it by $g \circ \phi_V$ for $g\in \SO(\underline V)(\overline F)$. 
 \end{rem}
 \begin{rem}\label{rem:pure inner form}
 	The pair $(\psi_V,u_V )$ realizes $G$ as a \emph{pure inner form}\index[n]{pure inner form} of $G^*$ in the sense of Vogan \cite{voganllc}; cf.~the introduction of \cite{kalrigid}. The pair  $(\psi_V,u_V )$ itself is called a \emph{pure inner twist}\index[n]{pure inner twist}; cf.~\cite[\S 2]{kaldepth0}. Fixing such a pure inner twist (or rather its $G^*(\overline F)$-conjugacy class, see below) is more refined than just fixing $G^*$ as an inner form of $G$, and it plays an essential role in normalizing transfer factors when $F$ is a local field. Specifically, suppose $(H,\lang H, s, \eta)$ is an elliptic endoscopic datum for $G$, and suppose we have fixed a normalization of transfer factors between $H$ and $G^*$. Then the datum $(\psi_V, u_V)$ allows one to ``transport'' that normalization to a  normalization of transfer factors between $H$ and $G$, as observed by Kottwitz and explained in \cite[\S 2.2]{kaldepth0}.  For this purpose, it actually suffices to just remember $\phi_V$ up to replacing it by $g\circ \phi_V$ for $g\in G^*(\overline F) = \SO(\underline V)(\overline F)$, which will result in $(\psi_V, u_V)$ being replaced by $ (\Int(g)\circ \psi_V , \rho \mapsto \lix ^{\rho} g u_V(\rho) g^{-1}) $ and will not change the transported normalization between $H$ and $G$. By contrast, if one abstractly modifies $(\psi_V,u_V)$ by keeping $\psi_V$ unchanged and replacing $u_V$ by $ \rho \mapsto \lix ^{\rho} g u_V(\rho) g^{-1}$ for some $g\in G^*(\overline F)$, the resulting normalization of transfer factors between $H$ and $G$ can change, as observed in \cite[\S 1.11 (4)]{walds10}. 
 \end{rem}

\begin{defn}\label{defn:fixing alpha}
	When $d$ is even and $\delta$ is trivial, we fix an $\SO(\underline V)(F)$-orbit of hyperbolic bases (Definition \ref{defn:qsplitting}) of $\underline V$ once and for all, denoted by $[\mathbb B_{\underline V}]$.\index{$[\mathbb B_{\underline V}]$} When $d$ is even and $\delta$ is non-trivial, we fix $\alpha \in  \overline F$ such that $x = \alpha^2 \in F^{\times}$ is a lift of $\delta$, and we fix an $\SO(\underline V)(F)$-orbit $[\mathbb B_{\underline V}]$ of near-hyperbolic bases of $\underline V$ such that all members of this orbit have discriminant $x$ (see Definition \ref{defn:qsplitting}). If $F = \RR$, we identify $\overline F$ with $\CC$ and take $\alpha = \sqrt{-1}$.
\end{defn}
 \section{Some matrix groups over \texorpdfstring{$\CC$}{C}}\label{subsec:matrix groups}
 \subsection{}\label{subsubsec:Join notation}
 We define some algebraic groups over $\CC$, which we also identify with their $\CC$-points. Let $N$ be a positive even integer. Let $\set{\hat e_k \mid 1\leq k \leq N}$\index{$\hat e_k$} be the standard basis of $\CC^N$. Define two $N\times N$ matrices \index{$I_N^-, I_N^+$}
 $$ I_N^-  : =  \begin{pmatrix}
 &&&& - 1 \\
 &&& 1 \\
 && -1 \\
 & \reflectbox{$\ddots$} \\
 1 
 \end{pmatrix},  \qquad I_N^+ : = \begin{pmatrix}
  I_{N/2} & \\ & - I_{N/2}
  \end{pmatrix} I_N^-.$$
Thus $I_N^+$ and $I_N^-$ define a quadratic form and a symplectic form on $\CC^N$ respectively. We use these forms to define the groups $\mathrm{O}_N(\CC)$, $\SO_N(\CC)$, and $\Sp_N(\CC)$, as subgroups of $\GL_N(\CC)$. By convention, $\SO_0 (\CC) = \Sp_0(\CC) = \GL_0 (\CC)  = \set{1}$.
 
 We introduce a short-hand notation in order to conveniently denote certain diagonal matrices. For $x_1,\cdots, x_n \in \CC^{\times}$, we write $\DS(x_1,\cdots, x_n)$ \index{$\mathrm{symdiag}(x_1,\cdots,x_n)$} for the $2n \times 2n$ diagonal matrix $\diag(x_1,\cdots, x_n , x_n^{-1},\cdots, x_1^{-1})$.  
 \begin{defn}\label{defn:fixing borel pair in dual} Let $m =d/2$. In the reductive group $\Sp_N (\CC)$ (resp.~$\SO_N(\CC)$), we fix once and for all a Borel pair $(\mathcal T, \mathcal B)$\index{$(\mathcal T, \mathcal B)$}, together with an isomorphism $(\CC^{\times})^m \isom \mathcal T$, as follows. Let $\mathcal T$ be the intersection of $\Sp_N (\CC)$ (resp.~$\SO_N(\CC)$) with the diagonal torus in $\GL_N(\CC)$, and define the isomorphism $(\CC^{\times})^m  \isom \mathcal T$ by 
 	$$(t_1,\cdots, t_m) \longmapsto \DS(t_1,\cdots, t_m). $$ Using this isomorphism we identify $X^*(\mathcal T)$ and $X_*(\mathcal T)$ with $\ZZ^m$. The root datum of $\Sp_N (\CC)$ (resp.~$\SO_N(\CC)$) on $(X^*(\mathcal T), X_*(\mathcal T))$ is dual to the standard root datum $\mathrm{RD}(\mathsf B_m)$ (resp.~$\mathrm{RD}(\mathsf D_m)$) as in \S \ref{subsubsec:standard RD}. We define $\mathcal B$ by the condition that the based root datum $\mathrm{BRD}(\mathcal T, \mathcal B)$ is dual to the standard based root datum $\mathrm{BRD}(\mathsf B_m)$ (resp.~$\mathrm{BRD}(\mathsf D_m)$) as in \S \ref{subsubsec:standard RD}. We call $(\mathcal T , \mathcal B)$ the \emph{standard Borel pair}\index[n]{standard Borel pair (in the dual group)}.
 \end{defn}
\section{Fixing the \texorpdfstring{$L$}{L}-group}\label{fixing L group}
\subsection{}\label{subsubsec:L-group data}  Let $\mathrm{BRD}(G)$\index{$\mathrm{BRD}(G)$} be the \emph{canonical based root datum}\index[n]{canonical based root datum} of $G_{\overline F}$, namely the projective limit 
$$ \mathrm{BRD}(G)  = \varprojlim_{(T,B)} \mathrm{BRD}(T,B),$$ where $(T,B)$ runs through the Borel pairs in $G_{\overline F}$, and the transition maps are the canonical isomorphisms induced by inner automorphisms of $G_{\overline F}$. Since $G$ is defined over $F$, there is a canonical action of $\Gamma_F$ on $\mathrm{BRD}(G)$; see \cite[\S 1.3]{borelcorvallisarticle}. Recall that the \emph{$L$-group of $G$}\index[n]{$L$-group} consists of the following data (cf.~\cite[\S 2]{borelcorvallisarticle}, \cite[\S 1.2]{KS99}): 
\begin{enumerate}
	\item a reductive group $\widehat G$ over $\CC$.\index{$\widehat G$} 
	\item a Borel pair $(\mathcal T, \mathcal B)$ in $\widehat G$. 
	\item an action of $\Gamma_F$ on $\widehat G$ via algebraic automorphisms such that there exists a $\Gamma_F$-stable  splitting extending $(\mathcal T, \mathcal B)$. In particular, $\Gamma_F$ acts on the based root datum $\mathrm{BRD}(\mathcal T, \mathcal B)$.
	\item a $\Gamma_F$-equivariant isomorphism 
	\begin{align}\label{eq:frak v}
	\mathfrak v: \mathrm{BRD}(G) \isom \mathrm{BRD}(\mathcal T, \mathcal B) ^{\vee},
	\end{align} where $\mathrm{BRD}(\mathcal T, \mathcal B) ^{\vee}$ denotes the dual of $\mathrm{BRD}(\mathcal T, \mathcal B)$. 
\end{enumerate}
Given the above data, one defines\index{$\lang G$} \index{$\Gamma'$} $$\lang G: = \widehat G \rtimes \Gamma', $$  where $\Gamma'$ is taken to be one of the following groups depending on the context: If $F$ is a number field, we typically take $\Gamma'$ to be  $\Gamma_F$ or a sufficiently large finite quotient of it. When $F = \RR$, we typically take $\Gamma'$ to be the Weil group\index[n]{Weil group} $W_{\RR}$, which acts on $\widehat G$ through the map $W_{\RR} \to \Gamma_{\RR}$. When $F=\CC$ we take $\Gamma'$ to be trivial. (This case will never be considered in the paper.) When $F$ is a non-archimedean local field of characteristic zero, we typically take $\Gamma'$ to be the Weil group $W_{F}$\index{$W_{F}$} acting on $\widehat G$ through $W_F \hookrightarrow \Gamma_F$, or a sufficiently large (finite or infinite) quotient of $W_F$. Here ``sufficiently large'' always means that $\Gamma'$ should admit a quotient $\Gal(E/F)$, where $E/F$ is a Galois extension sufficiently large such that the $\Gamma_F$-action on $\widehat G$ in (3) above factors through $\Gal(E/F)$. As a result, $\Gamma'$ acts on $\widehat G$. For our specific $G$, this means that when $d$ is even and $\delta$ is non-trivial, $\Gamma'$ should admit $\Gal(F(\alpha)/F)$ as a natural quotient, where $\alpha$ is as in Definition \ref{defn:fixing alpha}. 

We have a canonical $\Gamma_F$-equivariant isomorphism between $\mathrm{BRD}(G)$ and $\mathrm{BRD}(G^*)$ (coming from the fixed $G^*(\overline F)$-conjugacy class of inner twistings $G_{\overline F} \isom G^*_{\overline F}$ represented by $\psi_V$). Thus if $\widehat G$ and $(\mathcal T, \mathcal B)$ are as in (1), (2), (3) above, then specifying $\mathfrak v$ as in (4) is equivalent to specifying a $\Gamma_F$-equivariant isomorphism 
\begin{align}\label{eq:frak v^*}
\mathfrak v^*: \mathrm{BRD}(G^*) \isom \mathrm{BRD}(\mathcal T, \mathcal B) ^{\vee}.
\end{align}
In other words, fixing an $L$-group of $G$ is equivalent to fixing an $L$-group of $G^*$. 

\subsection{}\label{subsubsec:L group}
We now explicitly present the $L$-group of $G$. 
We take $\widehat G$\index{$\widehat G$} to be $\Sp_{d-1}(\CC)$ (resp.~$\SO_d(\CC)$) as in \S \ref{subsec:matrix groups} if $d$ is odd (resp.~even). Define the action of $\Gamma_F$ on $\widehat G$ as follows. The action is trivial unless $d$ is even and $\delta$ is non-trivial. In the latter case, we define the action to factor through $\Gamma_F \to \Gal(F(\alpha)/F)$ (see Definition \ref{defn:fixing alpha} for $\alpha$), and let the non-trivial element of $\Gal(F(\alpha)/F)$ act on $\widehat G = \SO_d(\CC)$ by conjugation by the permutation matrix on $\CC^d$ that switches $\hat e_{m}$ and $\hat e_{m +1}$ and fixes all the other $\hat e_i$'s. 

 We take $(\mathcal T, \mathcal B)$ to be the standard Borel pair fixed in Definition \ref{defn:fixing borel pair in dual}. Then it is easy to check that the condition in (3) in \S \ref{subsubsec:L-group data} is indeed satisfied. To complete the presentation of the $L$-group, we have yet to specify (\ref{eq:frak v}). As we have already noted, this is equivalent to specifying (\ref{eq:frak v^*}). 

Under the isomorphism $(\CC^{\times})^m \isom \mathcal T$ specified in Definition \ref{defn:fixing borel pair in dual}, the based root datum $\mathrm{BRD}(\mathcal T,\mathcal B)^{\vee}$ is identified with the standard based root datum $\mathrm{BRD}(\mathsf B_m)$ (resp.~$\mathrm{BRD}(\mathsf D_m)$) in the odd (resp.~even) case. Moreover the $\Gamma_F$-action on $\mathrm{BRD}(\mathsf B_m)$ or $\mathrm{BRD}(\mathsf D_m)$ induced by the $\Gamma_F$-action on $\widehat G$ fixed above is the trivial action unless $d$ is even and $\delta$ is non-trivial, in which case it is given by the unique non-trivial action of $\Gal(F(\alpha)/F) = \ZZ/2\ZZ$ on $\mathrm{BRD}(\mathsf D_m)$. Hence to specify (\ref{eq:frak v^*}), it suffices to specify a $\Gamma_F$-equivariant isomorphism
${\mathfrak v^*}' : \mathrm{BRD}(G^*) \isom \mathrm{BRD}(\mathsf B_m)$ or ${\mathfrak v^*}' : \mathrm{BRD}(G^*) \isom \mathrm{BRD}(\mathsf D_m)$, where $\Gamma_F$ acts on the right hand sides in the way just described. 

In the odd case, there is a unique choice of ${\mathfrak v^*}'$. In the even case, remember that when $\delta$ is trivial (resp.~non-trivial), we have fixed $[\mathbb B_{\underline V}]$ (resp.~$\alpha$ and $[\mathbb B_{\underline V}]$) in Definition \ref{defn:fixing alpha}. Any member $\mathbb B_{\underline V}$ of $[\mathbb B_{\underline V}]$ gives rise to a Borel pair $(T,B)$ in $G^*$, and it together with $\alpha$ gives rise to an isomorphism $\mathrm{BRD}(T,B)\isom \mathrm{BRD}(\mathsf D_m)$, as in \S \ref{subsubsec:Borel pairs assoc to hd and nhd}. We thus obtain an isomorphism $\mathrm{BRD}(G^*) \isom \mathrm{BRD}(\mathsf D_m)$, which we easily check is $\Gamma_F$-equivariant, and depends on $\mathbb B_{\underline V}$ only via $[\mathbb B_{\underline V}]$. This specifies ${\mathfrak v^*}'$.

The presentation of the $L$-group of $G$ is complete.

\subsection{}\label{para:local global comp}
	Suppose $F= \QQ$, and let $v$ be a place of $\QQ$. Fix a field embedding $\overline {\QQ} \to \overline {\QQ}_v$. Then our above presentation of the $L$-group of $G$ naturally gives rise to a presentation of the $L$-group of $G_{\QQ_v}$. On the other hand, if $(\underline V, \underline q)$ is the quasi-split quadratic space over $\QQ$ fixed in \S \ref{Fixing inner twist}, then $\underline V_{\QQ_v} = (\underline V, \underline q)\otimes_{\QQ} \QQ_v$ is up to isomorphism the unique quasi-split quadratic space over $\QQ_v$ of dimension $d$ and discriminant $\delta$. Thus one could choose the data as in Definitions  \ref{defn:fixing isom between quad spaces}, and \ref{defn:fixing alpha} with respect to the base field $\QQ_v$ and with $V$ and $\underline V$ replaced by $V_{\QQ_v}$ and $\underline V_{\QQ_v}$, say $\phi_{V_{\QQ_v}}$, $[\mathbb B_{\underline V_{\QQ_v}}]$, and $\alpha_v$, and obtain from these data a presentation of the $L$-group of $G_{\QQ_v} = \SO(V_{\QQ_v})$ by going through the above constructions again. These two presentations of the $L$-group of $G_{\QQ_v}$ are  identical in the odd case, and in the even case they are identical as long as the following conditions are satisfied:
	\begin{enumerate}
		\item We have $\phi_{V_{\QQ_v}} = g_v \circ (\phi_V\otimes_{\overline \QQ} \id_{\overline \QQ_v})$ for some $g_v \in G^*(\overline {\QQ}_v)$. 
		\item The isomorphism $\mathrm{BRD}(G^*) \isom \mathrm{BRD}(\mathsf D_m)$ arising from $[\mathbb B_{\underline V}]$ and $\alpha$ is compatible with the isomorphism $\mathrm{BRD}(G^*_{\QQ_v}) = \mathrm{BRD}(\SO(\underline V_{\QQ_v})) \isom \mathrm{BRD}(\mathsf D_m)$ arising from $[\mathbb B_{\underline V_{\QQ_v}}]$ and $\alpha_v$. 
	\end{enumerate}  In the rest of the paper these compatibility conditions are always implicitly assumed when we simultaneously deal with $\QQ$ and its localizations; note that when $\phi_{V}$ is given, there indeed exists $\phi_{V_\RR}$ satisfying simultaneously (1) in the above and the extra condition in Definition \ref{defn:fixing isom between quad spaces}. By contrast, we will not assume that the local data $\phi_{V_{\QQ_v}}, [\mathbb B_{\underline V_{\QQ_v}}], \alpha_v$ are induced by the global data $\phi_V, [\mathbb B_{\underline V}], \alpha$ on the nose.  Under condition (1) we also know that the inner class of the inner twisting $\psi_V: G_{\overline \QQ} \isom G^*_{\overline \QQ}$ (arising from $\phi_V$) induces the inner class of the inner twisting $\psi_{V_{\QQ_v}} : G_{\overline \QQ_v} \isom G^*_{\overline \QQ_v}$ (arising from $\phi_{V_{\QQ_v}}$) via base change.  
\section{The elliptic endoscopic data}
\label{subsec:elliptic end data} 
\subsection{}\label{subsubsec:extended endo data}
Keep the setting of \S \ref{fixing L group}. Denote by $\mathscr E(G)$\index{$\mathscr E(G)$} the set of isomorphism classes of elliptic endoscopic data\index[n]{elliptic endoscopic data} for $G$, in the sense of \cite[\S 2.1]{KS99}. 
In the following we construct explicit representatives of $\mathscr E(G)$, following \cite{walds10}. Recall from \cite[\S 2.1]{KS99} that in general, the category of elliptic endoscopic data for $G$ is a full subcategory of the category of endoscopic data for $G$, and the latter is a full subcategory of the groupoid category described as follows:
\begin{itemize}
	\item The objects are tuples $(H, \mathcal H, s, \eta)$, where $H$ is a quasi-split reductive group over $F$, $\mathcal H$ is a group containing $\widehat {H}$ as a subgroup, $s$ is an element of $\widehat G$, and $\eta$ is an injective group homomorphism $\mathcal H \to \lang G$.
	\item An isomorphism from $(H,\mathcal H, s,\eta)$ to $(H',\mathcal H' , s' ,\eta')$ is an element $g\in \widehat G$ such that $g \im(\eta) g^{-1} = \im (\eta')$ and   $g s g^{-1} \equiv s' \mod Z(\widehat G)$.
\end{itemize}
We do not recall here the conditions characterizing the subcategories of endoscopic data and elliptic endoscopic data. 

In the following, all our explicit representatives of $\mathscr E(G)$ will be of the form $(H,\lang H, s, \eta)$. Thus in the terminology of \cite{kaldepth0}, we  represent  each isomorphism class of elliptic endoscopic data by an \emph{extended endoscopic triple}\index[n]{extended endoscopic triple}. The advantage of doing so is that we could avoid introducing $z$-extensions, which is in general a necessity for the theory of endoscopy when $G^{\der}$ is not simply connected; cf.~\cite[\S 2.3]{taibi}.

 We first define a set of numerical parameters that will be used. 
\begin{defn}\label{defn:mathscr P_V}
Let $V$ be a quadratic space over $F$ of dimension $d$ and discriminant $\delta$. Define a set $\mathscr P_V$ as follows.\index{$\mathscr P_V$} 
\begin{enumerate}
	\item When $d$ is odd, we let $\mathscr P_V$\index{$\mathscr P_V$} be the set of pairs $(d^+, d^-)$ of positive odd integers such that $d^+ + d^- = d+1$. We define an involution $\swap$\index{$\swap$} on $\mathscr P_V$ (called \emph{swapping}) by sending $(d^+, d^-)$ to $(d^-, d^+)$. 
	\item When $d$ is even, we let $\mathscr P_V$ be the set of quadruples $(d^+, \delta^+, d^- ,\delta^-)$, where: 
	\begin{itemize}
		\item  $d^+$ and $d^-$ are non-negative even integers such that $d^+ + d^- =d$. 
		\item $\delta^+ $ and $ \delta^- $ are elements of $F^\times / F^{\times ,2}$ such that $\delta^+ \delta^- = \delta$. 
		\item Neither of $(d^+, \delta^+)$ and $(d^- ,\delta^-)$ is equal to $(0, x)$ for any non-trivial $x\in F^{\times}/F^{\times,2}$. If $d \geq 4$, then neither of $(d^+, \delta^+)$ and $(d^- ,\delta^-)$  is equal to $(2,1)$. 
	\end{itemize}
 We define an involution $\swap$\index{$\swap$} on $\mathscr P_V$ by sending $(d^+, \delta^+, d^-, \delta^-)$ to $(d^-,\delta^-, d^+, \delta^+)$. 
\end{enumerate} When $d$ is odd,
 we sometimes write elements of $\mathscr P_V$ also as $(d^+,\delta^+,d^-,\delta^-)$, understanding that $\delta^+ = \delta^- =1$. 
\end{defn}

\subsection{} \label{endoscopic data} Fix an element $(d^+,\delta^+, d^-, \delta^-) \in \mathscr P_V$. We shall construct an elliptic endoscopic datum for $G = \SO(V)$ associated to this parameter. The endoscopic datum will be of the form $(H, \mathcal H, s, \eta)$, where
\begin{itemize}
	\item $H$ is given as $H = H^+ \times H^-$, with $H^{\pm} = \SO(V^{\pm})$, where $V^\pm$ are the unique (up to isomorphism) quasi-split quadratic spaces over $F$ of dimension $d^\pm$ and discriminant $\delta^\pm$;\index{$H^+, H^-$} remember that $\delta^{\pm}$ are understood to be trivial in the odd case.
	\item $\mathcal H = \lang H$ is the $L$-group of $H$; cf.~the discussion in \S \ref{subsubsec:extended endo data}.
	\item $s$ is a semi-simple element of $\widehat G$.
	\item $\eta$ is an $L$-embedding $\lang H \to \lang G$.\index{$\eta:\lang H \to \lang G$}
\end{itemize} 

To be more precise, in the even case, we fix similar choices as in Definition \ref{defn:fixing alpha} for $V^{\pm}$, which we shall denote by $\alpha^\pm$ and $[\mathbb B_{V^\pm}]$. (Here $\alpha^+$ is only needed when $\delta^+$ is non-trivial, and similarly for $\alpha^-$.) We then use these choices to specify the analogues of (\ref{eq:frak v^*}) for $H^{\pm}$ in both the odd and even cases, and present the $L$-groups $\lang H^{\pm}$ as $\widehat{H^{\pm}} \rtimes \Gamma'$ as in \S\ref{fixing L group}, where $\widehat{H^{\pm}}$ are the matrix groups $\Sp_{d^{\pm}-1}(\CC)$ (resp.~$\SO_{d^{\pm}}(\CC)$) in the odd (resp.~even) case. 
In the even case, $\Gamma'$ needs to be large enough so as to 
admit $\Gal(F(\alpha^+)/F)$ (resp.~$\Gal(F(\alpha^-)/F)$, resp.~$\Gal(F(\alpha^+, \alpha^-)/F)$) as a quotient when $\delta^+ $ is non-trivial (resp.~$\delta^-$ is nontrivial, resp.~$\delta^+$ and $\delta^-$ are both non-trivial).

We present the $L$-group $\lang H$ of $H$ as the fiber product of $\lang H^+$ and $\lang H^-$ over $\Gamma'$. Thus $\lang H$ is a semi-direct product $$(\widehat { H^+} \times \widehat {H^- } ) \rtimes \Gamma', $$ and $\widehat H = \widehat { H^+} \times \widehat {H^- } $ is equipped with the standard Borel pair \index{$(\mathcal T_{\widehat H},  \mathcal B_{\widehat H} ) = (\mathcal T_{V^+} \times \mathcal T_{V^-} , \mathcal B_{V^+} \times \mathcal B_{V^-})$} $$(\mathcal T_{\widehat H},  \mathcal B_{\widehat H} ) = (\mathcal T_{V^+} \times \mathcal T_{V^-} , \mathcal B_{V^+} \times \mathcal B_{V^-}). $$ Here $(\mathcal T_{V^\pm}, \mathcal B_{V^\pm})$ are as in Definition \ref{defn:fixing borel pair in dual} for the matrix groups $\widehat{H^{\pm}}$.

We now specify the components $s$ and $\eta$. The element $s\in \widehat G$ will always be a diagonal matrix, with $\pm 1$'s on the diagonal. We write $s=\diag (s_1,\cdots , s_{d-1})$ or $\diag(s_1, \cdots, s_{d})$, when $d$ is odd or even respectively.

For $w\in \Gamma'$, we write\footnote{In practice, it can happen that $\lang H$ is presented as $\widehat{H} \rtimes \Gamma'_H$ whereas $\lang G$ is presented as $\widehat{G} \rtimes \Gamma'_G$, for different quotient groups $\Gamma'_H$ and $\Gamma'_G$ of the absolute Galois (or Weil) group of $F$.  For instance, in the even case, when $\delta$ is trivial and $\delta^+$ and $\delta^-$ are both non-trivial, we can take $\Gamma'_H$ to be $\Gal(F(\alpha^+, \alpha^-)/F)$ and take $\Gamma'_G$ to be trivial. In all cases, we may and shall assume that $\Gamma'_G$ is always a quotient of $\Gamma'_H$. Then the formula $\eta(w) = (\rho(w), w)$ is understood as $\eta(w) = (\rho(w), \pi(w))$, where $\pi$ is the quotient map $\Gamma'_H \to \Gamma'_G$. In the text we slightly abuse notation to write $\Gamma'$ for both $\Gamma'_H$ and $\Gamma'_G$.} $$\eta(w) = (\rho(w), w) \in \lang G = \widehat G \rtimes \Gamma'.$$ To specify the map $\eta: \lang H \to \lang G$ it suffices to specify the map $\eta|_{\widehat H}: \widehat H \to \widehat G$ and the map $\rho: \Gamma' \to \widehat G$. 

We now specify the numbers $s_i \in \CC^{\times}$,  and the maps $\eta|_{\widehat H}$ and $\rho$.

\subsubsection{The odd case} 
Write $m^{\pm} : = \floor{d^{\pm}/2}$. Define $$s_k : = \begin{cases}
1,  & \text{if } m^- +1 \leq k \leq d-m^- - 1, \\ -1,  & \text{otherwise}.
\end{cases} $$
 Define the map $$\eta|_{\widehat H} :  \widehat H  = \widehat {H^+} \times \widehat {H^- } = \Sp_{d^+ -1} (\CC) \times \Sp_{d^- -1} (\CC) \To \widehat G = \Sp_{d-1} (\CC)$$ to be the restriction of the map $\GL_{d^+-1} (\CC) \times \GL_{d^- -1} (\CC) \to \GL_{d-1}(\CC) $ given by the identification 
\begin{align*}
\CC^{d^+ -1} \times \CC^{d^- -1}  & \overset {\sim} {\longrightarrow} \CC^{d-1}   \\ (\hat e_k,0)  & \longmapsto \hat e_{k+ m^-} \\ (0, \hat e_k) & \longmapsto \begin{cases}
\hat e_{k},  &\text{if } k\leq m^- ,\\ \hat e_{k+d^+ -1}, &\text{if } m^- +1 \leq k \leq d^- -1
.\end{cases} 
\end{align*}
Finally, define $\rho$ to be trivial.

\subsubsection{The even case}
 Write $m^{\pm} : =  d^{\pm} /2$. Define 
$$s_k : = \begin{cases}
1, &\text{if } m^- +1 \leq k \leq d-m^- ,  \\ -1, &  \text{otherwise}.
\end{cases}$$
Define the map $$\eta|_{\widehat H} : \widehat H  =  \widehat{ H^+} \times \widehat {H^-} = \SO_{d^+ } (\CC) \times \SO_{d^- } (\CC) \To \widehat G = \SO_{d} (\CC)$$  to be the restriction of the map $\GL_{d^+} (\CC) \times \GL_{d^-} (\CC) \to \GL_{d}(\CC)$ given by the identification
\begin{align}\label{eq:eta in even case}
\CC^{d^+ } \times \CC^{d^- }   & \overset{\sim}{\longrightarrow} \CC^{d}   \\ \nonumber (\hat e_k,0) & \longmapsto \hat e_{k+ m^-}, \\ \nonumber (0, \hat e_k) & \longmapsto \begin{cases}
\hat e_{k}, & \text{if } k\leq m^- ,\\  \hat e_{k+d^+}, &\text{if }  m^- +1 \leq k \leq d^- .
\end{cases}
\end{align}

We define $\rho: \Gamma' \to \widehat G$ as follows. First we define a matrix $S \in \GL_d(\CC)$.\index{$S \in \GL_d(\CC)$} If $d^+ \neq 0$, we take $S$ to be the permutation matrix that switches $\hat e_{m^-}$ and $\hat e_{d-m^- +1}$, switches $\hat e_{m}$ and $\hat e_{m+1}$, and leaves all the other $\hat e_i$'s fixed. If $d^+ = 0$, we take $S$ to be $I_d$.
Thus in all cases we have $S \in \widehat G$.
We then let $\rho: \Gamma' \to \widehat G$ be the map 
\begin{align} \label{eq:eta on Gamma'}
w \longmapsto 
\begin{cases}
1, &  \text{if } w |_{F(\alpha^-)} = \id , \\
S, &  \mbox{otherwise}.
\end{cases}
\end{align} 
Here remember that $\alpha^-$ is a fixed square root in $\overline F$ of a fixed lift of $\delta^-$ in $F^\times$ when $\delta^-$ is non-trivial. If $\delta^-$ is trivial, we understand $F(\alpha^-)$ as $F$. The above formula (\ref{eq:eta on Gamma'}) makes sense because when $\delta^-$ is non-trivial we have assumed that $\Gamma'$ admits $\Gal(F(\alpha^-)/F)$ as a quotient.

\subsection{}\label{para:e_p}
In both the odd and even cases, the construction in \S \ref{endoscopic data} associates to each parameter $\mathfrak p \in \mathscr P_V$ an elliptic endoscopic datum \index{$\mathfrak e_{\mathfrak p}$} $$ \mathfrak e_{\mathfrak p} = \mathfrak e_{\mathfrak p} (V) =  (H, \lang H , s,\eta)$$ for $G$. Moreover, the construction $\mathfrak p \mapsto \mathfrak e_{\mathfrak p}$ induces a bijection 
$$ \mathscr P_V /\swap \isom \mathscr E(G). $$
These facts are well known (see \cite[\S 1.8]{walds10} or \cite[\S 2.3]{taibi}) and can be proved similarly as  \cite[Prop.~2.1.1]{morel2011suite}.

\ignore{
\begin{rem}\label{rem:s to s inverse}
	In the literature there are two different conventions for transfer factors (apart from the normalization of local class field theory), which are related by $s \mapsto s^{-1}$. See \cite[p.~184]{kottwitzannarbor} and \cite[\S 5.1]{KSconvention} for more details. In our case, since $s$ always satisfies $s^ 2= 1$, the two conventions are the same. 
\end{rem}}

\subsection{}\label{subsubsec:out of endos}
Let $\mathfrak p \in \mathscr P_V$. The \emph{outer automorphism group}\index[n]{outer automorphism group (of an endoscopic datum)} $\Out (\mathfrak e_{\mathfrak p})$\index{$\Out (\mathfrak e_{\mathfrak p})$} of the endoscopic datum $\mathfrak e_{\mathfrak p} = (H,\lang H, s,\eta)$ is defined in \cite[\S 2.1]{KS99}.
Note that the group $\bar Z$ discussed on p.~19 of \cite{KS99} is trivial, as $Z(\widehat G)$ is contained in $\eta(\widehat H)$. Hence $\Out (\mathfrak e_{\mathfrak p})$ is isomorphic to $\Aut(\mathfrak e_{\mathfrak p})/ \widehat H$ (where $\Aut(\mathfrak e_{\mathfrak p})$ denotes the automorphism group of $\mathfrak e_{\mathfrak p}$ in the category of endoscopic data), and can be naturally viewed as a subgroup of $\Out_F(H) : = \Aut_F(H)/H^{\ad}(F)$; see \textit{loc.~cit.} for details. 

In the odd case, $\Out (\mathfrak e_{\mathfrak p})$ is trivial unless $\mathfrak p = \swap (\mathfrak p)$, in which case we have $\Out (\mathfrak e_{\mathfrak p}) \cong \ZZ/2\ZZ$, with the non-trivial element acting by swapping $H^+$ and $H^-$.

In the even case, write $\mathfrak p = (d^+,\delta^+,d^-,\delta^-)$. Then $\Out (\mathfrak e_{\mathfrak p})$ is trivial when $d^+d^- = 0$. When $d^+ = d^- =d/2$ and $\delta =1$, we have  $\Out (\mathfrak e_{\mathfrak p}) \cong \ZZ/2\ZZ \times \ZZ/2\ZZ$, where the non-trivial element of the first $\ZZ/2\ZZ$ induces simultaneously non-trivial outer automorphisms on $H^{+}$ and $H^-$, and the non-trivial element of the second $\ZZ/2\ZZ$ acts by swapping $H^+$ and $H^-$. In the remaining cases, we have $\Out(\mathfrak e_{\mathfrak p})\cong \ZZ/2\ZZ$, with the non-trivial element acting by the simultaneously non-trivial outer automorphisms on $H^+$ and $H^-$.

\section{The endoscopic \texorpdfstring{$G$}{G}-data for Levi subgroups} \label{subsec:G-endosc arch}
\subsection{}\label{subsubsec:defn of G-endosc datum} Let $M$ be a Levi subgroup of $G$. 
The notion of an \emph{endoscopic $G$-triple for $M$}\index[n]{endoscopic $G$-triple for $M$} is introduced by Kottwitz in his unpublished notes, and recalled in \cite[\S 2.4]{morel2010book}. (For $G=M$, this is the usual notion of an endoscopic triple for $M$, as in \cite[\S 7.4]{kottwitzcuspidal}.) Given an endoscopic datum $(M', \mathcal M', s_M, \eta_M)$ for $M$, we shall say that it is an \emph{endoscopic $G$-datum for $M$},\index[n]{endoscopic $G$-datum for $M$} if $(M', s_M, \eta_{M}|_{\widehat M'})$ is an endoscopic $G$-triple for $M$ in the sense of \cite[Def.~2.4.1]{morel2010book}. By an \emph{isomorphism} between two endoscopic $G$-data $(M'_1, \mathcal M'_1, s_{M,1}, \eta_{M,1})$ and $(M'_2, \mathcal M'_2, s_{M,2}, \eta_{M,2})$ for $M$, we mean an element $g\in \widehat M$ such that $g \im (\eta_{M,1}) g^{-1} = \im (\eta_{M,2})$ and $g s_{M,1} g^{-1} \equiv s_{M,2} \mod Z(\widehat G)$. Here $Z(\widehat G)$ is canonically embedded in $Z(\widehat M)$.

We mention that the category of endoscopic $G$-data for $M$ (where the morphisms are the isomorphisms) is in fact equivalent to the category of \emph{endoscopic $G$-pairs for $M$} in Kottwitz's unpublished notes.

It is easy to see that the association $(M',\mathcal M', s_M, \eta_M) \mapsto (M', s_M, \eta_M|_{\widehat M'})$ defines a bijection 
$$\set{\text{endoscopic $G$-data for $M$}}/\text{isom} \isom \set{\text{endoscopic $G$-triples for $M$}}/\text{isom} .$$
We also have a similar bijection
$$\set{\text{endoscopic data for $G$}}/\text{isom} \isom \set{\text{endoscopic triples for $G$}}/\text{isom}.$$
As recalled in \cite[\S 2.4]{morel2010book}, Kottwitz constructs a map 
$$ \set{\text{endoscopic $G$-triples for $M$}}/\text{isom}  \To \set{\text{endoscopic triples for $G$}}/\text{isom} .$$
We thus obtain a map
$$ \set{\text{endoscopic $G$-data for $M$}}/\text{isom}  \To \set{\text{endoscopic data for $G$}}/\text{isom} . $$
We say that an endoscopic $G$-datum for $M$ is  \emph{bi-elliptic},\index[n]{bi-elliptic} if both the underlying endoscopic datum for $M$ and the associated endoscopic datum for $G$ (well-defined up to isomorphism) are elliptic. We denote by $\mathscr E_G(M)$\index{$\mathscr E_G(M)$} the set of isomorphism classes of   bi-elliptic endoscopic $G$-data for $M$. Thus we have natural maps 
$\mathscr E_G(M) \to \mathscr E(G)$ and $\mathscr E_G(M) \to \mathscr E(M).$ 

In the following we construct explicit representatives of $\mathscr E_G(M)$. For later purposes, it suffices to consider only certain Levi subgroups $M$ specified as follows. 
\subsection{}\label{subsubsec:setting for G-endosc}
Consider a subspace $W$ of $V$ such that the quadratic form on $V$ is non-degenerate on $W$ and such that the orthogonal complement $W^{\perp}$ of $W$ in $V$ is even-dimensional and split as a quadratic space. We write $d_W$ for the dimension of $W$, and let $n = \floor{d_W/2}$. Recall that $V$ has dimension $d$ and discriminant $\delta$, and as always $m$ denotes $\floor{d/2}$. Clearly the discriminant of $W$ equals $\delta$, and $d_W$ has the same parity as $d$. 

Fix $r,t \in \ZZ_{\geq 0}$ such that $ m = n  + r+2t$. Thus $\dim W^{\perp} = 2(r+2t)$. We fix a hyperbolic basis  (Definition \ref{defn:qsplitting}) $$\mathbb B_{W^{\perp}} = \set{f_1,\cdots, f_{2(r+2t)}}$$ of $W^{\perp}$, which exists since $W^{\perp}$ is split. Using this basis, we identify $\SO(W^{\perp})$ as a subgroup of $\GL_{2(r+2t)}$, and define an embedding 
\begin{align}\label{eq:M}
\Gm^r \times \GL_2^t  \To \SO(W^{\perp}) 
\end{align}
by sending $(z_1,\cdots, z_r, w_1, \cdots, w_t)$
to the block diagonal matrix $$\diag(z_1,\cdots, z_r, w_1,\cdots, w_t, J_2(w_t^{\mathsf T})^{-1}J_2,\cdots, J_2(w_1^{\mathsf T})^{-1}J_2, z_r^{-1} ,\cdots, z_1^{-1} ) ,$$ where 
$$J_2 = \begin{pmatrix}
 0 & 1 \\ 1 & 0 
\end{pmatrix} . $$ We denote the image of (\ref{eq:M}) by $M^{\GL}$, and define $M$ to be $M^{\GL} \times \SO(W)$, viewed as a subgroup of $G$. Then $M$ is a Levi subgroup of $G$. We also write $M^{\SO}$ for $\SO(W)$. 

\subsection{}\label{subsubsec:choices for W}
We proceed similarly as before to fix the quasi-split inner form of $\SO(W)$, present the $L$-group of $\SO(W)$, and fix explicit representatives of the isomorphism classes of the elliptic endoscopic data for $\SO(W)$. We need to fix notation and impose some compatibility conditions. Since $d_W$ has the same parity as $d$, in the following we shall still refer to the ``odd case'' and the ``even case'' unambiguously. As in \S \ref{Fixing inner twist}, we fix the unique (up to isomorphism) quasi-split quadratic space $\underline W$ over $F$ of dimension $d_W$ and discriminant $\delta$ (which is the common discriminant of $V$ and $W$) and fix an isomorphism\index{$\phi_W$} $$\phi_W: W \otimes_F \overline F \isom \underline W \otimes_F \overline F$$ of quadratic spaces over $\overline F$, from which we get the inner twisting \index{$\psi_W$}
\begin{align*}
\psi_W: \SO(W)_{\overline F} & \isom \SO(\underline{W})_{\overline F} \\
g & \longmapsto \phi_W g\phi_W^{-1}.
\end{align*}

Note that as quadratic spaces over $F$, $\underline V$ is isomorphic to the orthogonal direct sum of  $W^{\perp}$ and $\underline W$. We fix such an isomorphism\index{$\phi_W^V$}
$$ \phi_W^V : W^{\perp} \oplus \underline W \isom \underline V ,$$ and use it to obtain an embedding 
\begin{align}\label{eq:Levi resulting from phi_W^V}
M^{\GL} \times \SO(\underline W) \hookrightarrow G^*
\end{align}whose image is a Levi subgroup.

We remind the reader that when $F= \RR$ we require both $\phi_V$ and $\phi_W$ to satisfy the extra condition in Definition \ref{defn:fixing isom between quad spaces}. In general, we assume the following compatibility condition, which can obviously 
be arranged by adjusting $\phi_W$.
\begin{enumerate}
	\item The diagram  $$\xymatrixcolsep{5pc}  \xymatrix{ (W^\perp \oplus W) \otimes \overline F \ar@{=}[r]  \ar[d] ^{\id \oplus \phi_W} & V \otimes \overline F  \ar[d]^{\phi_V} \\ (W^{\perp} \oplus \underline W) \otimes \overline F  \ar[r]^-{\phi^V_W} & \underline V \otimes \overline F } $$ commutes up to an element of $G^*(\overline F) = \SO(\underline V)(\overline F)$. 
\end{enumerate}

As a consequence of this condition, we know that the diagram 
		\begin{equation*}\xymatrixcolsep{5pc}
		\xymatrix{ M _{\overline F}= (M^{\GL}\times \SO(W))_{\overline F} \ar[r]^-{\text{inclusion}} \ar[d] ^{(\id, \psi_W)} &  G_{\overline F}  \ar[d]^{\psi_V} \\ (M^{\GL} \times \SO(\underline W) )_{\overline F} \ar[r]^-{(\ref{eq:Levi resulting from phi_W^V})}  & G^*_{\overline F} }
		\end{equation*}  commutes up to an inner automorphism of $G^*_{\overline F}$. 
	\ignore{
		\item The function $u_V: \Gamma_F \to G^*(\overline F)$ is equal to the composition of $u_W: \Gamma_F \to \SO(\underline W)(\overline F)$ followed by (\ref{eq:Levi resulting from phi_W^V}). 
	\end{enumerate}}

In the odd case, we present the $L$-group $\lang \SO(W)$ as in \S \ref{fixing L group}. 
In the even case, we make similar choices as in Definition \ref{defn:fixing alpha}  for $\underline W$, to be denoted by $\alpha_{\underline W}$ (needed only when $\delta$ is non-trivial) and $[\mathbb B_{\underline W}]$\index{$[\mathbb B_{\underline W}]$}, and use them to present the $L$-group $\lang \SO(W)$ as in \S \ref{fixing L group}.
We may and shall assume the following compatibility conditions: 
\begin{enumerate}
	\setcounter{enumi}{1}
	\item There is a member $\mathbb B_{\underline W} \in [ \mathbb B_{\underline W}]$ such that  $\phi^V_W$ sends the ordered basis $(\mathbb B_{W^\perp} , \mathbb B_{\underline W})$ to a member of $[\mathbb B_{\underline V}]$. 
	\item When $\delta$ is non-trivial, the choices $\alpha_{\underline W}$ and $\alpha$ are equal. 
\end{enumerate}Note that 
the above two conditions are consistent: if (2) is already arranged then we have $\alpha^2 = \alpha_{\underline W}^2$ when $\delta$ is non-trivial, and so we can arrange (3). 

In both the odd and even cases, we canonically identify $M^{\GL}$ with $\Gm^r \times \GL_2^t$ via (\ref{eq:M}), and canonically present $\widehat{M^{\GL}}$ as $(\CC^\times) ^r \times \GL_2(\CC) ^{t}$. We now present the $L$-group of $M$ as $$\lang M =\widehat{ M^{\GL}} \times \lang \SO(W).$$ The above compatibility conditions (1)--(3) ensure that the canonical $\widehat G$-conjugacy class of maps  $
		\lang M \to \lang G$ arising from the fact that $M$ is a Levi subgroup of $G$ is represented by the following map: 
\begin{multline}\label{eq:LM to LG}
	\lang M = (\CC^\times) ^r\times \GL_2(\CC)^t \times \widehat{ \SO(W)} \rtimes \Gamma'  \ni (g_1,\cdots, g_r, h_1,\cdots, h_t, k) \rtimes \tau \\ 	\longmapsto 
 \diag(g_1,\cdots, g_r, h_1,\cdots, h_t, k ,h_t^{\dagger},\cdots,h_1^\dagger, g_r^{-1} ,\cdots, g_1^{-1} ) \rtimes \tau \\ \in \lang G = \widehat G \rtimes \Gamma',
\end{multline}
where we define \index{$h^\dagger,~ \forall  h\in \GL_2(\CC)$} 
\begin{align}\label{eq:dagger}
h^\dagger := \begin{pmatrix}
	&- 1 \\  1 & 
\end{pmatrix}(h^{\mathsf T})^{-1} \begin{pmatrix}
	&  1 \\ -1 & 
\end{pmatrix} , \quad \forall h \in \GL_2(\CC),
\end{align}
 (i.e., $h^\dagger$ is the adjoint of $h^{-1}$ with respect to the symplectic form defined by $\begin{pmatrix}
 	& -1 \\ 1 & 
 \end{pmatrix}$).

We now construct explicit representatives of $\mathscr E_G(M)$. 
\begin{defn} \label{defn:P_W}
	Let $\mathscr P_W$ be as in Definition \ref{defn:mathscr P_V} with respect to the quadratic space $W$, and for each positive integer $x$ we write $[x]$ for the set $\set{1,2,\cdots, x}$. Also set $[0] = \emptyset$. We define the following objects.  
	\begin{enumerate}
		\item Let $\mathscr P_{r,t}$\index{$\mathscr P_{r,t}$} be the set of pairs $(A,B)$, where $A$ is a subset of $[r]$ and $B$ is a subset of $[t]$. For $(A,B) \in \mathscr P_{r,t}$, we write $A^c$ for the complement of $A$ in $[r]$ and write $B^c$ for the complement of $B$ in $[t]$.  
		\item Let $\mathscr P_{r,t} \times' \mathscr P_W$\index{$\mathscr P_{r,t} \times' \mathscr P_W$} be the subset of  $\mathscr P_{r,t} \times \mathscr P_W$ consisting of those $$(A, B,  d^+,\delta^+,d^-,\delta^-) \in \mathscr P_{r,t} \times \mathscr P_W$$ such that the quadruple $$(d^+ + 2\abs{A} + 4\abs{B}, \delta^+, d^- + 2 \abs{A^c} + 4\abs{B^c}, \delta^-)$$ belongs to $\mathscr P_V$. (In the odd case, we understand that $\delta^+ = \delta^- =1$, and note that $\mathscr P_{r,t} \times' \mathscr P_W = \mathscr P_{r,t} \times \mathscr P_W$.) 
		\item Note that $(A, B, \mathfrak p ) \mapsto (A^c, B^c, \swap(\mathfrak p))$ is an involution on the set $\mathscr P_{r,t} \times' \mathscr P_W$. We denote this involution still by $\swap$.\index{$\swap$}	
	\end{enumerate}
\end{defn}
\begin{defn}\label{defn:nabla}
	Let $A$ be a subset of $\ZZ_{\geq 1}$. For each $i \in \ZZ_{\geq 1}$, we define \index{$\nabla_i(\cdot)$} 
	$$\nabla_i (A) : = \begin{cases}
		1, & \text{if } i\in A, \\
		-1, & \text{if } i \notin A.
	\end{cases}$$
\end{defn}
\subsection{}\label{para:presentation of endoscopic G-data}
Fix an element $(A, B, \mathfrak p) \in \mathscr P_{r,t} \times' \mathscr P_W$. In the following we construct an endoscopic $G$-datum for $M$ associated to this parameter, denoted by $\mathfrak e_{A,B,\mathfrak p}$\index{$\mathfrak e_{A,B,\mathfrak p}$}.  From $\mathfrak p \in \mathscr P_W$ we obtain the endoscopic datum $\mathfrak e_{\mathfrak p} (W)$ for $\SO(W)$ as in \S \ref{subsec:elliptic end data}, which we write as $$(M^{\prime,\SO}, \lang M^{\prime, \SO}, s^{\SO}, \eta^{\SO}:\lang M^{\prime, \SO} \to \lang \SO(W)).$$ 
We then set $$\mathfrak e_{A,B,\mathfrak p} : = (M', \lang M', s_M, \eta_M: \lang M' \to \lang M) $$ with components given as follows. Let $$M' := M^{\GL} \times M^{\prime, \SO}. $$ Let $s_M$ be the element of $\widehat M = \widehat{M^{\GL}} \times \widehat{ M^{\SO} } $ whose component in $\widehat{ M^{\SO} }$ is $s^{\SO}$ and whose component in $\widehat {M^{\GL}}= (\CC^\times) ^{r} \times \GL_2(\CC) ^{t}$ is 
\begin{align}\label{eq:s^GL}
s^{\GL} = (\nabla_1(A), \cdots, \nabla_r(A), \nabla_1(B) I_2, \cdots, \nabla_t(B) I_2).
\end{align}  
We present the $L$-group $\lang M'$\index{$\lang M'$} of $M'$ as $$\lang M' = \widehat{M^{\GL} } \times \lang M^{\prime, \SO}, $$ and define $\eta_M$ to be the map $$\eta_M = (\id, \eta^{\SO}): \lang M' = \widehat{M^{\GL} } \times \lang M^{\prime, \SO} \to \lang M = \widehat{M^{\GL}} \times \lang M^{\SO}.$$

For each $\mathfrak p \in \mathscr P_W$, we also set  \index{$\mathfrak e_{\mathfrak p}(M)$} 
$$ \mathfrak e_{\mathfrak p}(M) : = (M', \lang M', s_M', \eta_M),$$ where $M', \lang M', \eta_M$ are as above, and $s_M'$ is the element of $\widehat M = \widehat{M^{\GL}} \times \widehat{ M^{\SO} } $ whose component in $\widehat{ M^{\SO} }$ is $s^{\SO}$ and whose component in $\widehat {M^{\GL}}$ is the trivial element. Then $\mathfrak e_{\mathfrak p}(M)$ is an elliptic endoscopic datum for $M$. 
\begin{prop}\label{prop:odd classification of G-endoscopy}
	For each $(A,B, \mathfrak p) \in \mathscr P_{r,t} \times' \mathscr P_W$, the tuple $\mathfrak e_{A,B, \mathfrak p}$ is a bi-elliptic endoscopic $G$-datum for $M$ whose underlying endoscopic datum for $M$ is isomorphic to $\mathfrak e_{\mathfrak p}(M)$. The construction $(A, B, \mathfrak p) \mapsto \mathfrak e_{A,B, \mathfrak p}$ induces a bijection \begin{align*} \bigg (\mathscr P_{r,t}\times' \mathscr P_W  \bigg) / \swap \isom \mathscr E_G(M).
	\end{align*}
 Moreover, for $	(A, B, d^+,\delta^+, d^-,\delta^- )  \in 	\mathscr P_{r,t} \times' \mathscr P_W$, the image of $\mathfrak e_{A,B, d^+, \delta^+, d^- ,\delta^-}$ under the map $\mathscr E_G(M) \to \mathscr E(G)$ is represented by  \begin{align*}
 \mathfrak e_{d^+ + 2 \abs{A} + 4\abs{B},\delta^+, d^-+ 2 \abs{A^c} + 4\abs{B^c},\delta^-}.
\end{align*}
(Remember that if the common parity of $d_W$ and $d$ is odd, then we keep the convention that $\delta^{\pm} = 1$ as in Definition \ref{defn:mathscr P_V}.)
\end{prop}
\begin{proof}
	This can be checked in a similar way as the proof of \cite[Lem.~2.3.3]{morel2011suite}. The key point is that $M^{\GL}$ is a product of copies of $\GG_m$ and $\GL_2$, and these groups do not have any non-trivial elliptic endoscopic data.
\end{proof} 
\subsection{}\label{para:Out_G}
Let $(A,B,\mathfrak p) \in \mathscr P_{r,t} \times' \mathscr P_W$. Write $\mathfrak e_{A,B,\mathfrak p}$ as $(M', \lang M', s_M, \eta_M)$. We define the \emph{outer $G$-automorphism group}\index[n]{outer $G$-automorphism group (of an endoscopic $G$-datum)} of $\mathfrak e_{A,B,\mathfrak p}$ to be\index{$\Out_G(\mathfrak e_{A,B,\mathfrak p})$} $$\Out_G(\mathfrak e_{A,B,\mathfrak p}) : = \Aut_G(\mathfrak e_{A,B,\mathfrak p}) / \widehat M', $$ where $\Aut_G (\mathfrak e_{A,B,\mathfrak p})$\index{$\Aut_G (\mathfrak e_{A,B,\mathfrak p})$} denotes the automorphism group of $\mathfrak e_{A,B,\mathfrak p}$ in the category of endoscopic $G$-data for $M$ (see \S \ref{subsubsec:defn of G-endosc datum}). We make two remarks on this definition. Firstly, $\Out_G(\mathfrak e_{A,B,\mathfrak p})$ is naturally isomorphic to the outer automorphism group of the endoscopic $G$-triple $(M', s_M, \eta_M |_{\widehat M'})$ defined in \cite[\S 2.4]{morel2010book}. (This is explained in Kottwitz's unpublished notes.) Secondly, $\Out_G(\mathfrak e_{A,B,\mathfrak p})$ is naturally isomorphic to a subgroup of the outer automorphism group $\Out(\mathfrak e_{A,B,\mathfrak p})$ of the underlying endoscopic datum for $M$. (See \S \ref{subsubsec:out of endos} for the latter.)

We now explicitly determine $\Out_G(\mathfrak e_{A,B,\mathfrak p})$. In the odd case, we always have $\Out_G(\mathfrak e_{A,B,\mathfrak p}) = \set{1}$. In the even case, write $\mathfrak p = (d^+,\delta^+,d^-,\delta^-)$. Then $\Out_G(\mathfrak e_{A,B,\mathfrak p})$ is trivial if $d^+d^- = 0$. In the remaining cases, we have $\Out_G(\mathfrak e_{A,B,\mathfrak p})\cong \ZZ/2\ZZ$, where the non-trivial element acts via the non-trivial outer automorphism on $M^{\GL}$, and via the simultaneously non-trivial outer automorphisms on the two special orthogonal groups constituting $M^{\prime,\SO}$. 

\subsection{}\label{para:two maps from endoscopic G data}Let $(A, B, d^+,\delta^+, d^-,\delta^- ) \in 	\mathscr P_{r,t} \times' \mathscr P_W$. We have the endoscopic $G$-datum $$\mathfrak e_{A,B, d^+,\delta^+, d^-, \delta^-} = (M', \lang M', s_M, \eta_M : \lang M' \to \lang M)$$ for $M$, and the endoscopic datum $$ \mathfrak e_{d^+ + 2\abs{A} +4 \abs{B} , \delta^+,  d^- + 2\abs{A^c} + 4 \abs{B^c}, \delta^-} = (H, \lang H, s, \eta: \lang H\to \lang G)$$ for $G$. Thus the isomorphism class of $(H, \lang H, s, \eta)$ in $\mathscr E(G)$ is equal to the image of the isomorphism class of $(M', \lang M', s_M, \eta_M)$ in $\mathscr E_G(M)$, by Proposition \ref{prop:odd classification of G-endoscopy}. As explained on p.~43 of \cite{morel2010book}, the endoscopic $G$-datum $(M', \lang M', s_M, \eta_M)$ for $M$ determines an  $H(F)$-conjugacy class of Levi subgroups of $H$, all of which are isomorphic to $M'$. In the following we upgrade this construction to an $H(F)$-conjugacy class of $F$-embeddings $M' \hookrightarrow H$ with images Levi subgroups. (This depends on our explicit presentation of the groups.) As a result we obtain a $\widehat H$-conjugacy class of embeddings $\lang M' \to \lang H$. Our construction will be such that the following diagram commutes up to $\widehat G$-conjugation:
\begin{align}\label{eq:diag with M'}
	\xymatrix{ \lang H \ar[r]^{\eta} & \lang G \\
		\lang M' \ar[u] \ar[r]^{\eta_M}  &  \lang M \ar[u]}
\end{align}
Here the vertical arrow on the right is canonical up to $\widehat G$-conjugation, arising from the fact that $M$ is a Levi subgroup of $G$ (cf. (\ref{eq:LM to LG})).

 Recall from \S \ref{subsubsec:setting for G-endosc} that $M^{\GL}$ is a subgroup of $\SO(W^{\perp})$, and that $W^{\perp}$ is equipped with a hyperbolic basis $\set{f_1,\cdots, f_{2(r+2t)}}.$ Let \index{$(W^{\perp})_{A,B}, (W^{\perp})_{A^c,B^c} $}
\begin{align*}
(W^{\perp})_{A,B} & : = \mathrm{span}  \{f_i , f_{2(r+2t)+1-i} \mid i \in A \text{ or } \ceil{\frac{i-r}{2}} \in B  \}   ,\\ (W^{\perp})_{A^c,B^c} & 
: = \mathrm{span}  \{f_i , f_{2(r+2t)+1-i} \mid i \in A^c \text{ or } \ceil{\frac{i-r}{2}} \in B^c  \}  .
\end{align*}The natural action of $M^{\GL}$ on $W^{\perp}$ stabilizes $(W^{\perp})_{A,B}$ and $(W^{\perp})_{A^c, B^c}$. Let $M^{\GL}_{A,B}$ (resp.~$M^{\GL}_{A^c,B^c}$)\index{$M^{\GL}_{A,B}$, $M^{\GL}_{A^c,B^c}$} be the maximal quotient of $M^{\GL}$ acting faithfully on $(W^{\perp})_{A,B}$ (resp.~$(W^{\perp})_{A^c, B^c}$). Concretely, if we write $A = \set{i_1,\cdots, i_u}$ and $B = \set{j_1,\cdots, j_v}$ where $i_1< i_2 < \cdots< i_u$ and $j_1<j_2<\cdots< j_v$, then $M^{\GL}_{A,B}$ is identified with $\GG_m^{u} \times \GL_2^{v}$, and the quotient map $$M^{\GL} \cong \GG_m^r \times \GL_2^t \To M^{\GL}_{A,B} \cong \GG_m^{u} \times \GL_2^{v}$$ is given by 
$$ (z_1,\cdots, z_r, w_1,\cdots, w_t) \longmapsto (z_{i_1},\cdots, z_{i_u}, w_{j_1},\cdots, w_{j_v}).$$
Similarly we have a concrete description of the quotient map $M^{\GL} \to M^{\GL}_{A^c,B^c}$. 

We now specify the $H(F)$-conjugacy class of embeddings $M' \hookrightarrow H$. First consider the odd case. Choose an isometric isomorphism $\mathbf  f^+$ from the orthogonal direct sum of $(W^{\perp})_{A,B}$ and $W^+$ to $V^+$. (Such $\mathbf  f^+$ indeed exists since both quadratic spaces are split and have dimension $d^+ + 2\abs{A}+4 \abs{B}$.) This choice, together with the natural action of $M^{\GL}_{A,B}$ on $(W^\perp)_{A,B}$ and the natural action of $\SO(W^+)$ on $W^+$, determines an embedding $$\mathbf  f^+_*: M^{\GL}_{A,B} \times \SO(W^+) \To \SO(V^+). $$ We claim that the $\SO(V^+)(F)$-conjugacy class of $\mathbf  f^+_*$ is independent of the choice of $\mathbf  f^+$. Indeed, the $\mathrm{O}(V^+)(F)$-conjugacy class of $\mathbf  f^+_*$ is clearly independent of the choice of $\mathbf  f^+$. The element of $\mathrm{O}(V^+)(F)$ acting as $1$ on $\mathbf  f^+((W^\perp)_{A,B})$ and as $-1$ on $\mathbf  f^+(W^+)$ has determinant $-1$ and centralizes $\mathbf  f^+_*$. Hence the $\mathrm{O}(V^+)(F)$-conjugacy class of $\mathbf  f^+_*$ is in fact equal to the $\SO(V^+)(F)$-conjugacy class of $\mathbf  f^+_*$. Our claim follows.

Similarly, we choose an isometric isomorphism $\mathbf  f^-$ from the orthogonal direct sum of $(W^{\perp})_{A^c, B^c}$ and $W^-$ to $V^-$. We then obtain an embedding $$\mathbf  f^-_*: M^{\GL}_{A^c,B^c} \times \SO(W^-) \To \SO(V^-),$$ whose $\SO(V^-)(F)$-conjugacy class is independent of the choice of $\mathbf  f^-$. Taking the direct product of $\mathbf  f^+_*$ and $\mathbf  f^-_*$, we obtain the desired embedding $M' \to H$ which is canonical up to $H(F)$-conjugacy.

We now consider the even case. Since the orthogonal direct sum of $(W^\perp)_{A,B}$ and $W^+$ is a quasi-split quadratic space of the same dimension and discriminant as $V^+$, we can choose an isometric isomorphism $\mathbf f^+$ between them just as in the odd case. We then obtain the embedding $$\mathbf f^+_*: M^{\GL}_{A,B} \times \SO(W^+) \To \SO(V^+). $$ At this point, only the $\mathrm{O}(V^+)(F)$-conjugacy of $\mathbf f^+_*$ is well defined. We explain how to narrow this down to an $\SO(V^+)(F)$-conjugacy class. As before we canonically identify $M^{\GL}_{A,B}$ with $\GG_m^u \times \GL_2^v$ (where $u =\abs{A}$ and $v = \abs{B}$). Consider the canonical embedding $\iota^{\GL}_{A,B} : \GG_m^{u+2v} \to M^{\GL}_{A,B}$ given by 
$$ (z_1,\cdots, z_{u+2v}) \longmapsto  (z_1, \cdots, z_u, \begin{pmatrix}
		z_{u+1} \\ &  z_{u+2}
	\end{pmatrix}, \cdots, \begin{pmatrix}
		z_{u+2v-1} \\ &  z_{u+2v}
	\end{pmatrix} ) . 
$$  We divide our discussion into the cases where $\delta^+$ is trivial and non-trivial. 

Suppose $\delta^+$ is trivial. As in \S \ref{endoscopic data}, $W^+$ is equipped with an $\SO(W^+)(F)$-orbit $[\mathbb B_{W^+}]$ of hyperbolic bases, and $V^+$ is equipped with an $\SO(V^+)(F)$-orbit $[\mathbb B_{V^+}]$ of hyperbolic bases.
They determine an $\SO(W^+)(F)$-conjugacy class of embeddings $$\iota_{W^+}= \iota_{\mathbb B_{W^+}} : \GG_m^{d^+/2} \To \SO(W^+)$$ and an $\SO(V^+)(F)$-conjugacy class of embeddings $$\iota_{V^+} = \iota_{\mathbb B_{V^+}}: \GG_m^{d^+/2 + u+2v}  \To \SO(V^+)$$ (cf.~\S\ref{subsubsec:Borel pairs assoc to hd and nhd}). We impose the condition that the embedding $$\mathbf f^+_* \circ (\iota^{\GL}_{A,B} \times \iota_{W^+}) : \GG_m^{u+2v} \times \GG_m^{d^+/2} \To \SO(V^+) $$ should be $\SO(V^+)(F)$-conjugate to $\iota_{V^+}$ under the obvious identification 
\begin{align*}
\GG_m^{u+2v} \times \GG_m^{d^+/2} & \isom \GG_m^{d^+/2 +u+2v} \\  ((z_1,z_2, \cdots), (w_1,w_2,\cdots)) & \longmapsto (z_1, z_2, \cdots, w_1, w_2, \cdots).
\end{align*}
This extra condition narrows the $\mathrm{O}(V^+)(F)$-conjugacy class of $\mathbf f^+_*$ to an $\SO(V^+)(F)$-conjugacy class. 

Now suppose $\delta^+$ is non-trivial. In this case, $W^+$ is equipped with an $\SO(W^+)(F)$-orbit $[\mathbb B_{W^+}]$ of near-hyperbolic bases, and we have fixed a square root $\alpha^{+,\prime}\in \overline F$ of the common discriminant of members of $[\mathbb B_{W^+}]$. Similarly, $V^+$ is equipped with an $\SO(V^+)(F)$-orbit $[\mathbb B_{V^+}]$ of near-hyperbolic bases, and we have fixed a square root $\alpha^{+}\in \overline F$ of the common discriminant of members of $[\mathbb B_{V^+}]$. (Here $\alpha^+$ may not be equal to $\alpha^{+,\prime }$.) These extra data determine an $\SO(W^+)(F)$-conjugacy class of embeddings $$\iota_{W^+} = \iota_{\alpha^{+,\prime}, \mathbb B_{W^+}}: \GG_m^{d^+/2 -1} \times \Uni(1)_{\alpha^{+, \prime}} \To \SO(W^+), $$ and an $\SO(V^+)(F)$-conjugacy class of embeddings $$\iota_{V^+} = \iota_{ \alpha^+, \mathbb B_{V^+}}: \GG_m^{d^+/2  + u+2v -1} \times \Uni(1)_{\alpha^{+}} \To \SO(V^+)$$ (cf.~\S\ref{subsubsec:Borel pairs assoc to hd and nhd}). Note that $\Uni(1)_{\alpha^{+,\prime}}$ is canonically identified with $\Uni(1)_{\alpha^+}$, since the fields $F(\alpha^{+,\prime})$ and $F(\alpha^+)$ are the same. We impose the condition that $$\mathbf f^+_* \circ (\iota^{\GL}_{A,B} \times \iota_{W^+}) : \GG_m^{u+2v} \times \GG_m^{d^+/2-1} \times \Uni(1)_{\alpha^{+,\prime}} \To \SO(V^+)$$ should be $\SO(V^+)(F)$-conjugate to $\iota_{V^+}$ under the obvious identification 
\begin{align*}
\GG_m^{u+2v} \times \GG_m^{d^+/2-1} \times \Uni(1)_{\alpha^{+,\prime}} & \isom \GG_m^{d^+/2 +u+2v-1} \times \Uni(1)_{\alpha^+} \\ ((z_1,z_2,\cdots), (w_1,w_2,\cdots), y) & \longmapsto (z_1,z_2,\cdots, w_1, w_2, \cdots, y).
\end{align*} This extra condition narrows the $\mathrm{O}(V^+)(F)$-conjugacy class of $\mathbf f^+_*$ to an $\SO(V^+)(F)$-conjugacy class.

We have specified an $\SO(V^+)(F)$-conjugacy class of embeddings $M^{\GL}_{A,B} \times \SO(W^+) \to \SO(V^+)$. Similarly, we specify an  $\SO(V^-)(F)$-conjugacy class of embeddings $M^{\GL}_{A^c, B^c} \times \SO(W^-) \to \SO(V^-)$. Taking the direct product we obtain the desired embedding $M' \to H$ which is canonical up to $H(F)$-conjugacy.

Write $A = \set{i_1,\cdots, i_u}$, $B = \set{j_1,\cdots, j_v}$, $A^c=\set{p_1,\cdots, p_{r-u}}, $ and $B^c= \set{q_1,\cdots , q_{t-v}}$ with increasing ordering (i.e., $i_1<\cdots<i_u$ etc.). 
In both the odd and even cases, the $\widehat H$-conjugacy class of embeddings $\lang M' \to \lang H$ arising from our construction is represented by the map
\begin{multline}\label{eq:LM' to LH}
	\lang M' = (\CC^\times) ^r\times \GL_2(\CC)^t \times \widehat{ \SO(W^+)} \times \widehat{\SO(W^-)} \rtimes \Gamma' \\ \ni (g_1,\cdots, g_r, h_1,\cdots, h_t, k^+, k^-) \rtimes \tau 	\longmapsto  \\ 	\diag(g_{i_1},\cdots, g_{i_u}, h_{j_1},\cdots, h_{j_v}, k^+ ,h_{j_v}^{\dagger}, \cdots, h_{j_1}^{\dagger}, g_{i_u}^{-1} ,\cdots, g_{i_1}^{-1} )    \\ \times 
	\diag(g_{p_1},\cdots, g_{p_{r-u}}, h_{q_1},\cdots, h_{q_{t-v}}, k^- ,h_{q_{t-v}}^{\dagger}, \cdots, h_{q_1}^{\dagger}, g_{p_{r-u}}^{-1} ,\cdots, g_{p_1}^{-1} )  \\\rtimes \tau \\ \in \lang H = \widehat {H^+}\times \widehat {H^-}  \rtimes \Gamma',
\end{multline}
where the notation $\dagger$ is as in (\ref{eq:dagger}). Using the formulas (\ref{eq:LM to LG}) and (\ref{eq:LM' to LH}), one sees that the diagram (\ref{eq:diag with M'}) indeed commutes up to $\widehat G $-conjugation.
\ignore{
\begin{rem}
	In applications, only the formulas (\ref{eq:LM to LG}), (\ref{eq:LM' to LH}), and the commutativity of (\ref{eq:diag with M'}) will matter to us, while the concrete description of the $H(F)$-conjugacy class of embeddings $M' \hookrightarrow H$ will not matter. 
\end{rem}
}
\section{Admissible isomorphisms and embeddings}\label{subsec:admissible isom} 
\subsection{}
Keep the setting of \S \ref{subsec:elliptic end data}. 
For any torus $T$ over $F$, we denote by $\widehat T$\index{$\widehat T$} the dual torus\index[n]{dual torus} over $\CC$, whose group of characters is canonically identified with $X_*(T)$. If $f: T_1 \to T_2$ is a homomorphism of tori over $F$, we denote by $\hat f$\index{$\hat f$} the dual homomorphism $\widehat {T_2} \to \widehat {T_1}$. 

For any Borel pair $(T,B)$ in $G_{\overline F}$ and any Borel pair $(\derT, \derB)$ in $G^*_{\overline F}$, the fixed isomorphisms (\ref{eq:frak v}) and (\ref{eq:frak v^*}) give rise to isomorphisms $\widehat{T} \isom \mathcal T $ and $ \widehat{\derT} \isom \mathcal T$ of tori over $\CC$. We denote these isomorphisms by $\mathfrak d_{B, \mathcal B}$ and $\mathfrak d_{\derB, \mathcal B}$ respectively.\index{$\mathfrak d_{B, \mathcal B}$}\index{$\mathfrak d_{\derB, \mathcal B}$}

Now consider an elliptic endoscopic datum $(H, \lang H, s, \eta)$ for $G$ as in \S \ref{endoscopic data}. Given a Borel pair $(T_H, B_H)$ in $H_{\overline F}$, there is a similar isomorphism \index{$\mathfrak d_{B_H, \mathcal B_{\widehat H}}$} 
$$
\mathfrak d_{B_H, \mathcal B_{\widehat H}}: \widehat {T_H}   \isom \mathcal T_{\widehat H}. $$ Here $(\mathcal T_{\widehat H} , \mathcal B_{\widehat H})$ is the standard Borel pair in $\widehat H$ as in \S \ref{endoscopic data}. Note that $\eta : \lang H \to \lang G$ maps $\mathcal T_{\widehat H}$ isomorphically onto $\mathcal T$. Hence we obtain isomorphisms 
\begin{align*}
\mathfrak d_{B, \mathcal B} ^{-1} \circ \eta \circ \mathfrak d_{B_H, \mathcal B_{\widehat H}}  & : \widehat {T_H}  \isom \widehat {T},  \\
\mathfrak d_{\derB, \mathcal B} ^{-1} \circ \eta \circ \mathfrak d_{B_H, \mathcal B_{\widehat H}} & : \widehat {T_H}  \isom \widehat {\derT}, 
\end{align*}
or equivalently, isomorphisms 
\begin{align*}
j & : T_H \isom T \subset G_{\overline F} , \\
\derj & : T_H \isom \derT \subset G^*_{\overline F}. 
\end{align*}
We call $j$ an \emph{admissible isomorphism}\index[n]{admissible isomorphism} between $T_H$ and $T$, and an \emph{admissible embedding}\index[n]{admissible embedding} of $T_H$ into $G_{\overline F}$; cf.~\cite[\S 1.3]{LS87}. Similar terminology applies to $\derj$. We shall also say that $j$ is associated to the Borel pairs $(T_H,B_H)$ and $(T, B)$, and say that $\derj$ is associated to the Borel pairs $(T_H,B_H)$ and $(\derT, \derB)$. 

The following facts are well known and straightforward to verify. 

\begin{lem}\label{lem:facts on adm iso}
	Fix maximal tori $T_H \subset H_{\overline F}$, $T \subset G_{\overline F}$,  and $\derT \subset G^*_{\overline F}$.\begin{enumerate}
		\item The set of admissible isomorphisms between $T_H$ and $T$ (resp.~between $T_H$ and $\derT$) is a torsor under the Weyl group of $(G_{\overline F}, T)$ (resp.~the Weyl group of $(G^*_{\overline F}, \derT)$). 
		\item The set of admissible embeddings of $T_H$ into $G_{\overline F}$ (resp.~into $G^*_{\overline F}$) is a single orbit under $G(\overline F)$-conjugation (resp.~$G^*(\overline F)$-conjugation). 
		\item Let $j:T_H \to G_{\overline F}$ and $\derj: T_H \to G^*_{\overline F}$ be arbitrary embeddings such that $$j = \Int (g) \circ \psi_V^{-1} \circ \derj$$ for some $g \in G(\overline F)$. (Here $\psi_V$ is the fixed inner twisting between $G$ and $G^*$; see \S \ref{Fixing inner twist}.) Then $\derj$ is admissible if and only if $j$ is admissible.  
	\end{enumerate} \qed
\end{lem}

\chapter[Transfer factors for real $\SO$ groups]{Transfer factors for real special orthogonal groups} \label{sec:transfer factors}
\section{Cuspidality and transfer of elliptic tori}\label{subsec:cuspidality and transfer of tori}
\subsection{} \label{para:cusp crit}
We keep the notation in \S \ref{sec:endoscopic}, specialized to $F = \RR$. Thus $(V,q)$ is a quadratic space over $\RR$ of dimension $d$ and discriminant $\delta$, and $G = \SO(V,q)$ is a reductive group over $\RR$. We are interested in the case where $G$ contains anisotropic maximal tori. When $d$ is odd, this is always the case. When $d$ is even, this is the case if and only if $\delta = (-1) ^{d/2} \in \RR^{\times}/\RR^{\times,2}$. (Note that if $(d,\delta) = (2,1)$, then $G \cong \GG_m$ contains elliptic maximal tori but not anisotropic maximal tori.) In the following we assume that $G$ contains anisotropic maximal tori. We discuss a systematic way of parameterizing anisotropic maximal tori in $G$. As usual, we let $m : = \floor{d/2}$. Our assumption clearly implies that $V$ admits an \emph{elliptic decomposition} defined as follows. 

\begin{defn}\label{defn:ED}
	By an \emph{elliptic decomposition}\index[n]{elliptic decomposition} of $V$, we mean an ordered tuple $(V_j, o_j)_{1\leq j \leq m}$, where $V_1,\cdots, V_m$ are mutually orthogonal definite planes in $V$, and $o_j$ is an orientation on $V_j$. Thus the orthogonal direct sum of $V_1,\cdots, V_m$ is a hyperplane in $V$ (resp.~equal to $V$) when $d$ is odd (resp.~even). We denote by $\ED(V)$\index{$\ED(V)$} the set of all elliptic decompositions of $V$. By abuse of notation, we often write $(V_j)_j$ for an element of $\ED(V)$, understanding that each $V_j$ is equipped with an orientation.
\end{defn} 
\begin{defn}\label{defn:PEMT}
	By a \emph{parameterized anisotropic maximal torus}\index[n]{parameterized anisotropic maximal torus} in $G$, we mean an anisotropic maximal torus $T_G$ in $G$ together with an isomorphism $\Uni(1)^m \isom T_G$. 
\end{defn}
\begin{defn}\label{defn:fund pair}
By a \emph{fundamental pair}\index[n]{fundamental pair} in $G$, we mean a pair $(T_{G}, B)$, where $T_G$ is an anisotropic maximal torus in $G$, and $B$ is a Borel subgroup of $G_{\CC}$ containing $T_{G,\CC}$. 
	\end{defn}
\begin{rem}\label{rem:card of fund pairs}
	Since any two anisotropic maximal tori in $G$ are conjugate under $G(\RR)$, the number of $G(\RR)$-orbits of fundamental pairs in $G$ is equal to the cardinality of $\Omega_{\CC} (G, T_G) / \Omega_{\RR}(G, T_G)$, where $T_G$ is an arbitrary anisotropic maximal torus in $G$. \end{rem}	

\subsection{}\label{subsubsec:constrn with ED}
Given $\cD = (V_j)_j \in \ED(V)$, we obtain a parameterized anisotropic maximal torus from the embedding \index{$f_{\cD}$} \index{$T_{\cD}$} $$f_{\cD}: \Uni(1)^{m} \isom T_{\cD} \subset  G, $$ where the $j$-th copy of $\Uni(1)$ acts by rotation on the oriented definite plane $V_j$. The (absolute) root datum of $G$ on $$(X^*(T_{\cD}), X_*(T_{\cD})) \underset{f_{\cD}}{\isom} (X^*(\Uni(1) ^m) , X_*(\Uni(1) ^m)) = (\ZZ^m, \ZZ^m)$$ is the standard root datum $\mathrm{RD}(\mathsf B_m)$ or $\mathrm{RD}(\mathsf D_m)$ when $d$ is odd or even. Hence the standard based root datum $\mathrm{BRD}(\mathsf B_m)$ or $\mathrm{BRD}(\mathsf D_m)$ gives rise to a Borel subgroup $B_{\cD}$ of $G_{\CC}$ containing $T_{\cD,\CC}$. Thus we obtain a fundamental pair $(T_{\cD}, B_{\cD})$ from $\cD \in \ED(V)$.  
\ignore{
In summary, we have a canonical map 
\begin{align*} \ED(V) &\To \set{\text{parameterized elliptic maximal tori in $G$}}\times \set{\text{fundamental pairs in } G} \\
\cD & \longmapsto (f_{\cD}: \Uni(1)^m \isom T_{\cD} \subset G, (T_{\cD},B_{\cD})).  
\end{align*} Clearly this map is equivariant with respect to the natural $G(\RR)$-actions on both sides.}
\ignore{ In the odd case, two elliptic decompositions $$(V_1,\cdots, V_m ,l), \quad (V_1', \cdots, V_m', l')$$ are in the same $G(\RR)$-orbit if and only if $V_i$ has the same definiteness as $V_i'$ for each $i$. It follows that the number of $G(\RR)$-orbits on the LHS of (\ref{e.d. to pair}) is ${d}\choose p$. 
}

\subsection{}\label{subsubsec:defn of EDo} Recall from \S \ref{Fixing inner twist} that we have fixed a quasi-split quadratic space $(\underline V , \underline q)$ and fixed an isomorphism $\phi_V: V\otimes_{\RR} \CC \isom \underline V \otimes_{\RR} \CC$ of quadratic spaces over $\CC$. By definition we have $G^* = \SO(\underline V)$. We have the obvious analogues of Definitions \ref{defn:ED}, \ref{defn:PEMT}, \ref{defn:fund pair}, and the constructions in \S \ref{subsubsec:constrn with ED}, with $V$ and $G$ replaced by $\underline V$ and $G^*$. Note that our assumption that $G$ contains anisotropic maximal tori implies that $G^*$ also contains anisotropic maximal tori, since these conditions both boil down to the numerical condition that either $d$ is odd or $d$ is even and $\delta = (-1)^{d/2}$. In particular  $\ED(\underline V) \neq \emptyset$. 

Recall from Definition \ref{defn:fixing alpha} that when $d$ is even and when $\delta$ is trivial (resp.~non-trivial), we have fixed a $G^*(\RR)$-orbit $[\mathbb B_{\underline V}]$ of hyperbolic bases (resp.~near-hyperbolic bases) of $\underline V$. Note that all members of $[\mathbb B_{\underline V}]$ induce the same orientation on $\underline V$. We denote this orientation by $o_{\underline V}$.\index{$o_{\underline V}$} Still under the assumption that $d$ is even, we define an orientation $o_V$\index{$o_V$} on $V$ as follows. Let $(a,b)$ be the signature of $V$. Since $\delta = (-1)^{d/2}$, both $a$ and $b$ are even. Also $\underline V$ has signature $$(a^*,b^*) = (2 \ceil{d/4}, 2 \floor{d/4}). $$ We define $o_V$ to be $(-1)^{(b-b^*)/2}$ times the pull-back of $o_{\underline V}$ along the $\RR$-linear isomorphism 
$$ \wedge^d \phi_V : \wedge^d V \isom \wedge^d \underline V. $$
Here $\wedge^d \phi_V$ is indeed defined over $\RR$ because $V$ and $\underline V$ have the same discriminant.

 When $d$ is even, every elliptic decomposition of $\underline V$ (resp.~$V$) gives rise to an orientation on $\underline V$ (resp.~$V$). We define $\ED(\underline V, o_{\underline V})$ to be the set of elliptic decompositions of $\underline V$ that induce the orientation $o_{\underline V}$, and similarly define $\ED(V,o_V)$.\index{$\ED(\underline V, o_{\underline V})$} \index{$\ED(V,o_V)$} In order to have uniform notation in the odd and even cases, we set \index{$\ED(\underline V)^o$} \index{$\ED(V)^o$}
 \begin{align*}
\ED(\underline V)^o & : = \begin{cases}
\ED(\underline V), & \text{if $d$ is odd}, \\
\ED(\underline V,o_{\underline V}), & \text{if $d$ is even}, \end{cases} \end{align*} and \begin{align*}
  \ED(V)^o & : = \begin{cases}
\ED(V), & \text{if $d$ is odd}, \\
\ED(V,o_V), & \text{if $d$ is even}.  
\end{cases}
 \end{align*} 
\begin{lem}\label{lem:motivation for o_V}
	 Assume that $d$ is even.
	Let $\set{v_1,\cdots, v_d}$ be an orthogonal basis of $V$ and $\set{\underline v_1,\cdots, \underline v_d}$ an orthogonal basis of $\underline V$ satisfying the condition in Definition \ref{defn:fixing isom between quad spaces}. Then $\set{v_1,\cdots, v_d}$ induces the orientation $o_V$ if and only if $\set{\underline v_1,\cdots, \underline v_d}$ induces the orientation $o_{\underline V}$. 
\end{lem}
\begin{proof}
	Comparing the signatures we see that the cardinality of the set $$\set{j\mid 1\leq j \leq d, \phi(v_j) = \underline v_j \otimes \sqrt{-1}}$$ is congruent to $b-b^* \pmod 2$. Hence the determinant of the matrix of $\phi_V$ under the given bases is equal to $ (-1)^{(b-b^*)/2}$. The lemma follows. 
\end{proof}
\subsection{}\label{subsubsec:setting for para tori}
Consider an elliptic endoscopic datum $(H,\lang H, s,\eta)$ for $G$. We assume that it is one of the explicit representatives constructed in \S \ref{subsec:elliptic end data}. Recall that $H = \SO(V^+) \times \SO(V^-)$, where $V^\pm$ are quasi-split quadratic spaces over $\RR$. In the even case, we denote by $o_{V^\pm}$\index{$o_{V^\pm}$} the orientation on $V^\pm$ determined by $[\mathbb B_{V^\pm}]$. (See \S \ref{endoscopic data} for $[\mathbb B_{V^\pm}]$.) We assume that $H$ contains anisotropic maximal tori, or equivalently, that both $\SO(V^+)$ and $\SO(V^-)$ contain anisotropic maximal tori. In particular, $\ED(V^\pm)$ are non-empty. Similarly as in \S \ref{subsubsec:defn of EDo}, we set\index{$\ED(V^\pm)^o $}
$$\ED(V^\pm)^o  : = \begin{cases}
\ED(V^\pm), & \text{if $d^\pm$ is odd}, \\
\ED(V^\pm,o_{V^\pm}), & \text{if $d^\pm$ is even}. \end{cases} $$

Let $m^{\pm} : = \floor{ d^{\pm} /2}$. We fix an element $$\mathcal D_H = (\mathcal D_{H}^+, \mathcal D_H^-) \in \ED(V^+)^o \times \ED(V^-)^o. $$ Then we get a parameterized anisotropic maximal torus
\begin{align*}
f_{\mathcal D_H} : \Uni(1)^{m^+ } \times \Uni(1) ^{m^-} \isom T_{\cD_H^+}\times T_{\cD_H^-} = T_{\cD_H} \subset \SO(V^+)\times \SO(V^-) = H, 
\end{align*}
and a fundamental pair $(T_{\mathcal D_H}, B_{\mathcal D_H})$ in $H$, by the obvious generalization of Definitions \ref{defn:PEMT} and \ref{defn:fund pair}. We also fix $\derD \in \ED(\underline V)^o$ and $\mathcal D \in \ED(V)^o$. Let 
\begin{align*}
f_{\derD}  & : \Uni(1)^m \isom T_{\derD} \subset G^*, \\
 f_{\mathcal D}  & : \Uni(1)^m \isom T_{\cD} \subset G 
\end{align*}
be the associated parameterized anisotropic maximal tori, and let $(T_{\derD}, B_{\derD}), (T_{\mathcal D}, B_{\mathcal D})$ be the associated fundamental pairs in $G^*$ and in $G$.  We define the following composite maps with Convention \ref{convention:identifying U(1)} below in force:
\begin{align*}
	j_{\mathcal D_H, \mathcal D} & : T_{\cD_H} \xrightarrow{f_{\mathcal D_H}^{-1}} \Uni(1) ^{m^+} \times \Uni(1) ^{m^-} \cong \Uni(1) ^m \xrightarrow{f_{\mathcal D}} T_{\cD} , \\
	j_{\mathcal D_H, \derD} & : T_{\cD_H} \xrightarrow{f_{\mathcal D_H}^{-1}} \Uni(1) ^{m^+} \times \Uni(1) ^{m^-} \cong \Uni(1) ^m \xrightarrow{f_{\derD}} T_{\derD}.
\end{align*}

\begin{convention}\label{convention:identifying U(1)}
	We identify $\Uni(1) ^{m^+} \times \Uni(1) ^{m^-}$ with $\Uni(1) ^{m}$ by the isomorphism
	$$ ((g_1, \cdots, g_{m^+}) ,(h_1, \cdots , h_{m^-})) \longmapsto (h_1, \cdots , h_{m^-}, g_1, \cdots, g_{m^+}).$$
\end{convention}

Our next goal is to show that $j_{\mathcal D_H, \mathcal D}$ and $j_{\mathcal D_H, \derD}$ are admissible isomorphisms, in the sense of \S \ref{subsec:admissible isom}. 
\begin{lem}\label{e.d. dual} In the setting of \S \ref{subsubsec:setting for para tori}, the following diagram commutes:
	$$\xymatrix{\widehat{\Uni(1)} ^m \ar@{=}[d] &   \widehat {T_{\derD}} \ar[l]_-{\widehat {f_{\derD}}}  \ar[d] ^{\mathfrak d_{B_{\derD} ,\mathcal B}}\\ (\CC^{\times})^m \ar[r] & \mathcal T   }$$ Here the bottom horizontal map is the isomorphism fixed in Definition \ref{defn:fixing borel pair in dual}, and  $\mathfrak d_{B_{\derD} ,\mathcal B}$ is as in \S \ref{subsec:admissible isom}. 
\end{lem} 
\begin{proof}In the odd case, $\underline V$ is split, so we can fix a hyperbolic basis $\mathbb B$ of $\underline V$. In the even case, we fix a member $\mathbb B$ of the $G^*(\RR)$-orbit $[\mathbb B_{\underline V}]$ of bases of $\underline V$ in Definition \ref{defn:fixing alpha}. When $\underline V$ is split (i.e., when either $d$ is odd or $d$ is even and $\delta$ is trivial), $\mathbb B$ is a hyperbolic basis, and we let $\iota_{\mathbb B}: \GG_m ^m \hookrightarrow G^*$ be the associated embedding as in \S \ref{subsubsec:Borel pairs assoc to hd and nhd}. When $\underline V$ is not split (i.e., when $d$ is even and $\delta$ is non-trivial), $\mathbb B$ is a near-hyperbolic basis, and we let $\iota_{\mathbb B}: \GG_m^{m-1} \times \Uni(1) \hookrightarrow G^*$ be the associated embedding as in \S \ref{subsubsec:Borel pairs assoc to hd and nhd}. In all cases we write $T'_{\mathbb B}$ for the image of $\iota_{\mathbb B}$. We view the base change of $\iota_{\mathbb B}$ to $\CC$ as an isomorphism $\iota_{\mathbb B, \CC}:  \GG_{m, \CC} ^m \isom T'_{\mathbb B, \CC}$ (as we canonically identify $\Uni(1)_{\CC}$ with $\GG_{m,\CC}$). 
	Now we claim that there exists $g\in G^*(\CC)$ such that the diagram
	\begin{align}\label{two para tori}
	\xymatrix{ \Uni(1) ^m_{\CC} \ar[r]^-{f_{\derD, \CC}}  \ar[d] & T_{\derD, \CC }  \\ 
		\GG_{m,\CC}^m \ar[r]^-{\iota_{\mathbb B, \CC}} & T'_{\mathbb B,\CC}  \ar[u]_-{\Int (g)} } 
	\end{align} commutes. Here the left vertical arrow is the canonical isomorphism.
	
To prove the claim, first we observe that the truth of the claim does not depend on the choices of $\mathbb B$ and $\derD$ (as long as they both induce the correct orientation $o_{\underline V}$ in the even case). Using this observation, we easily reduce the claim for both the odd and even cases to the even case where $\underline V$ has signature $(2,2)$. In this case, take a basis $\set{u_1, u_2, u_3, u_4}$ of $\underline V$ under which the quadratic form has matrix $\diag (1,1,-1,-1)$. Without loss of generality we assume that this basis induces the orientation $o_{\underline V}$. Let $V_1$ be the oriented plane spanned by $\{u_1, u_2\}$, and let $V_2$ be the oriented plane spanned by $\{u_3, u_4\}$. Then $(V_1, V_2) \in \ED(\underline V, o_{\underline V})$. Define 
	\begin{align*}
	x_1 & = \frac{1}{2}(u_1 +u_3) , & y_1 & = u_1 - u_3 , & x_2  & = \frac{1}{2}(u_2 -u_4), & 	y_2 & = u_2 + u_4.
	\end{align*}
	Then $ \set{x_1,x_2,y_2,y_1}$ is a hyperbolic basis of $\underline V$, and it induces the orientation $o_{\underline V}$. We may and shall assume that $\derD = (V_1,V_2)$ and $\mathbb B = \set{x_1,x_2,y_2,y_1}$. Let $g \in \End(\underline V \otimes \CC)$ be given by 
	\begin{align*}
x_1 & \longmapsto \frac{1}{2} (u_1 - i u_2), &
y_1 & \longmapsto u_1 + i u_2 , &
x_2 & \longmapsto - \frac{1}{2} (u_3 - i u_4) , &
y_2 & \longmapsto u_3 + i u_4 .&
	\end{align*}
Then $g\in \mathrm{O}(\underline V)(\CC)$, and the diagram (\ref{two para tori}) commutes. We have $\det g = 1$ by direct computation, which proves the claim.

Now by the definition of $B_{\derD}$, we know that $f_{\derD}$ pulls back the based root datum $\mathrm{BRD}(T_{\derD,\CC}, B_{\derD})$ on $(X^*(T_{\derD}), X_*(T_{\derD}))$ to the standard based root datum $\mathrm{BRD}(\mathsf B_m)$ or $\mathrm{BRD}(\mathsf D_m)$ on $(\ZZ^m, \ZZ^m)$. By the commutative diagram (\ref{two para tori}), we know that the isomorphism $\mathrm{BRD}(T_{\derD,\CC}, B_{\derD}) \isom \mathrm{BRD}(\mathsf B_m \text{ or }\mathsf D_m)$ induced by $f_{\derD}$ is equal to the isomorphism ${\mathfrak v^*}'$ fixed in \S \ref{fixing L group}. The lemma then follows from the definition of $\mathfrak d_{B_{\derD}, \mathcal B}$. 
\end{proof}

\begin{lem}\label{e.d. and j}In the setting of \S\ref{subsubsec:setting for para tori}, $j_{\cD_H, \derD}$ is admissible, and it is associated to the Borel pairs $(T_{\cD_H,\CC}, B_{\cD_H})$ and $(T_{\derD, \CC}, B_{\derD})$.
\end{lem}
\begin{proof}The map $\eta: \lang H \to \lang G$ restricts to an isomorphism $\mathcal T_{\widehat H} = \mathcal T_{V^+} \times \mathcal T_{V^-} \isom \mathcal T$. This isomorphism is given by 
 \begin{align*}
(\CC^{\times})^{m^+} \times (\CC^{\times})^{m^-} & \To (\CC^{\times})^{m} \\  ((g_1, \cdots, g_{m^+}) ,(h_1, \cdots , h_{m^-})) & \longmapsto (h_1, \cdots , h_{m^-}, g_1, \cdots, g_{m^+}),
 \end{align*} under the identifications $\mathcal T_{V^+} \cong (\CC^\times)^{m^+}, \mathcal T_{V^-} \cong (\CC^\times)^{m^-}, \mathcal T \cong (\CC^\times)^m$  as in Definition \ref{defn:fixing borel pair in dual}. This fact, together with Lemma \ref{e.d. dual} (applied to $\underline{ V}$ and $V^{\pm}$), implies the current lemma. 
\end{proof}
\begin{lem}\label{lem:j is adm}
	In the setting of \S\ref{subsubsec:setting for para tori}, $j_{\cD_H, \cD}$ is admissible.
\end{lem}
\begin{proof}Since $\ED(V)^o$ is a single $G(\RR)$-orbit, the truth of the lemma does not depend on the choice of $\mathcal D\in \ED(V)^o$. Let $\set{v_1,\cdots, v_d}$ be a basis of $V$ and $\set{\underline v_1,\cdots, \underline v_d}$ a basis of $\underline V$, satisfying the condition in Definition \ref{defn:fixing isom between quad spaces}. Let $m = \floor{d/2}$. Up to reordering, we may assume that $q(v_j) = q(v_{j+1})$ for all $j\in \set{1,3,\cdots, 2m-1}$. When $d$ is even, we may further assume that $\set{v_1,\cdots, v_d}$ induces the orientation $o_V$ (because we may switch the order of $v_1$ and $v_2$ without changing the other conditions). For each $1\leq j \leq m$, let $V_j$ be the oriented plane spanned by $\{v_{2j-1}, v_{2j}\}$, and let $\underline V_j$ be the oriented plane spanned by $\{\underline v_{2j-1}, \underline v_{2j}\}$. Then $(V_j)_j \in \ED(V)^o$ and $(\underline V_j)_j \in \ED(\underline V)$. By Lemma \ref{lem:motivation for o_V}, we have $(\underline V_j)_j \in \ED(\underline V)^o$. In \S \ref{subsubsec:setting for para tori}, we can take $\mathcal D$ to be $(V_j)_j$, and take $\derD$ to be $(\underline V_j)_j$. By Lemma \ref{e.d. and j}, we know that $j_{\cD_H, \derD}$ is admissible. In view of Lemma \ref{lem:facts on adm iso} (3), we complete the proof by noting that $j_{\cD_H, \cD} = \psi_V^{-1} \circ j_{\cD_H,\derD}$. 
\end{proof}

\section{Transfer factors, when \texorpdfstring{$d$}{d} is not divisible by \texorpdfstring{$4$}{4}}\label{transf factors odd}
\subsection{}\label{subsubsec:setting for transf factor}
We keep the setting of \S \ref{subsec:cuspidality and transfer of tori}, and in particular keep the assumption that $G$ and $G^*$ contain anisotropic maximal tori. By an \emph{equivalence class of Whittaker data for $G^*$},\index[n]{equivalence class of Whittaker data} we mean a $G^*(\RR)$-conjugacy class of pairs $(B,\lambda)$ consisting of a Borel subgroup $B$ of $G^*$ defined over $\RR$ and a generic character $\lambda: N_B(\RR) \to \CC^\times$, where $N_B$ denotes the unipotent radical of $B$. See \cite[\S 5.3]{KS99} for more details. It is a standard result that the set of equivalence classes of Whittaker data for $G^*$ is a torsor under the finite abelian group $G^{*, \ad}(\RR)/G^*(\RR)$. 

Assume that $d$ is not divisible by $4$. Then the map $G^*(\RR) \to G ^{*,\ad} (\RR)$ is surjective, which can be seen by noting that $\ker (\coh^1 (\RR, Z_{G^*}) \to \coh^1(\RR, G^*))$ is trivial. Hence $G^*$ has a unique equivalence class of Whittaker data. As in \S \ref{subsubsec:setting for para tori}, we fix an elliptic endoscopic datum $(H,\lang H, s,\eta)$ for $G$, assumed to be one of the explicit representatives constructed in \S \ref{subsec:elliptic end data}. Thus we have $H = \SO(V^+) \times \SO(V^-)$, where $V^\pm$ is quasi-split and has dimension $d^\pm$ and discriminant $\delta^\pm$. As usual, both $V^\pm$ are split in the odd case. We write $m = \floor{d/2}, m^\pm = \floor{d^\pm/2}$. We assume that $H$ contains anisotropic tori.

In this paper, unless otherwise stated, ``transfer factor'' always means ``absolute geometric  transfer factor''.
 
The \emph{Whittaker normalization} of the transfer factors between $H$ and $G^*$ was defined by Kottwitz--Shelstad in \cite[\S 5.3]{KS99} (in the general setting of twisted endoscopy), and a correction was later made in \cite{KSconvention}. In this paper, we always use the classical normalization of local class field theory as opposed to Deligne's normalization; see \cite[\S\S 4.1, 4.2]{KSconvention}. Thus among the four $\Delta, \Delta', \Delta _D, \Delta_D'$ discussed at the end of \cite[\S 5.1]{KSconvention}, we only consider $\Delta$ and $\Delta'$. Moreover, since we always have $s^2=1$, we have $\Delta = \Delta'$. We shall call the transfer factors $\Delta_{\lambda}'(\cdot, \cdot)$ given in \cite[(5.5.2)]{KSconvention} the \emph{Whittaker-normalized   transfer factors}\index[n]{Whittaker normalization of the   transfer factors (between $H$ and $G^*$)}. By the discussion at the end of \cite[\S 5.5]{KSconvention} and by $s^2=1$, we have $$\Delta_{\lambda}' = \epsilon_L (V,\psi)\Delta_0' =  \epsilon_L (V,\psi)\Delta_0,$$ where $\Delta_0$ is the Langlands--Shelstad normalization defined on p.~248 of \cite{LS87}. 

In the following we denote the Whittaker-normalized   transfer factors between $H$ and $G^*$ by $\underline \Delta_{\Wh}(\cdot, \cdot )$.\index{$\underline \Delta_{\Wh}$} Also, having fixed $\psi_V: G \to G^*$ and $u_V: \Gamma_\infty \to G^*(\CC)$ as in \S \ref{Fixing inner twist}, we can derive from $\underline \Delta_{\Wh} (\cdot, \cdot)$ a normalization of the   transfer factors between $H$ and $G$ as in Remark \ref{rem:pure inner form}, to be denoted by $\Delta_{\Wh}(\cdot, \cdot)$. (See \S \ref{subsubsec:Kaletha's transf factor} below for more details.)

In \cite[\S 7]{kottwitzannarbor}, another normalization $\Delta_{j, B}(\cdot, \cdot)$\index{$\Delta_{j,B}$} of the   transfer factors between $H$ and $G$ is considered, which is associated to a certain datum $(j,B)$. The goal of this section is to compare the two normalizations $\Delta_{\Wh}$ and $\Delta_{j,B}$. 

\emph{In the following we assume that $V$ is of signature $(p,q)$ with $p>q$, and that $d=p+q$ is not divisible by $4$.}

\section*{Transfer factors between $H$ and $G^*$}
  
\begin{defn}\label{defn:Whitt ED} We define a subset $\ED(\derV)^o_{\Wh}$\index{$\ED(\derV)^o_{\Wh}$} of $\ED(\derV)^o$ (see \S \ref{subsubsec:defn of EDo}) as follows. When $d$ is odd, let $\ED(\derV)^o_{\Wh}$ consist of those $(\derV_j)_{1\leq j \leq m}\in \ED(\derV)^o= \ED(\derV)$ such that $\derV_j$ is $(-1) ^{j+1} \sign(\delta)$-definite for each $1\leq j\leq m$. When $d$ is even (but not divisible by $4$), let $\ED(\derV)^o_{\Wh}$ consist of those $(\derV_j)_{1\leq j \leq m}\in \ED(\derV)^o$ such that $\derV_j$ is $(-1) ^{j+1}$-definite for each $1\leq j\leq m$.
\end{defn}
\begin{rem}\label{rem:existence of Whitt ED}
	Let $(\derV_j)_j $ be an arbitrary element of $ \ED(\derV)^o$. Recall that $\derV$ has discriminant $\delta$ and determinant $(-1)^m\delta$. In the odd case, $\derV$ has signature $(m+1,m)$ when $\delta>0$, and signature $(m,m+1)$ when $\delta<0$. Therefore there are precisely $\ceil{m/2}$ (resp.~$\floor{m/2}$) positive definite planes among the $\derV_j$'s when $\delta >0$ (resp.~when $\delta<0$). It follows that there exists $\sigma \in \mathfrak S_m$ such that $(\derV_{\sigma(j)})_j \in \ED(\derV)^o_{\Wh}$. In the even case, there are precisely $\ceil{m/2}$ positive definite planes among the $\derV_j$'s no matter what $\delta$ is, so again there exists $\sigma \in \mathfrak S_m$ such that $(\derV_{\sigma(j)})_j \in \ED(\derV)^o_{\Wh}$. (Here $(\derV_{\sigma(j)})_j$ automatically induces the same orientation on $\derV$ as $(\derV_j)_j$ does.) Moreover, in both cases $\ED(\derV)^o_{\Wh}$ is a single $G^*(\RR)$-orbit with respect to the natural $G^*(\RR)$-action on $\ED(\derV)^o$. 
\end{rem}
\begin{lem}\label{lem:meaning of Whitt}
Let $\derD \in \ED(\derV)^o_{\Wh}$. Let $(T_{\derD}, B_{\derD})$ be the associated fundamental pair in $G^*$, as in \S \ref{subsubsec:constrn with ED}. Then every $B_{\derD}$-simple root in $X^*(T_{\derD})$ is (imaginary) non-compact.\index[n]{non-compact root} In other words, $(T_{\derD}, B_{\derD})$ is a \emph{fundamental pair of Whittaker type} in the terminology of \cite{sheellipticone}. 
\end{lem}
\begin{proof}	Let $\set{\epsilon_1^\vee,\cdots ,\epsilon_m^\vee}$ be the standard basis of $X_*(\Uni(1)^m)$, and let $\set{\epsilon_1,\cdots, \epsilon_m}$ be the standard basis of $X^*(\Uni(1)^m)$. Let $f_{\derD}: \Uni(1)^m \isom T_{\derD}$ be the parameterized anisotropic maximal torus associated to $\derD$, as in \S \ref{subsubsec:constrn with ED}. We identify $X^*(T_{\derD})$ with $X^*(\Uni(1)^m)$ via $f_{\derD}$. Then the $B_{\derD}$-simple roots are $\alpha_j = \epsilon_j - \epsilon_{j+1}, 1\leq j \leq m-1 $, and $\alpha_m =  \epsilon_m$ (resp.~$\alpha_m = \epsilon_{m-1}+\epsilon_m$) in the odd (resp.~even) case. Denote the complex conjugation by $\tau$. It suffices to check that for each $1\leq j \leq m $ and for one (and hence any) root vector $E_{j}$ of $\alpha_j$, we have 
	$$ [E_{j}, \tau E_{j}] = C(E_j) H_{j} \in \Lie G^*$$ for some $C(E_j) \in \RR_{> 0}$. Here $H_{j}$ is the coroot $\alpha_j^{\vee}$ viewed as an element of $\Lie G^*$.
	
	Write $\derD = (\derV_j)_j$. Since $\derD \in \ED(\derV)^o_{\Wh}$, there exists an integer $r$ such that $\derV_j$ is $(-1)^{r+j}$-definite for each $1\leq j \leq m$. Moreover, we have $(-1)^r= - \sign(\delta)$ when $d$ is odd, and $(-1)^r = -1$ when $d$ is even. For each $1\leq j \leq m$, let $\{f_j, f_j'\}$ be an orthogonal basis of $\derV_j$ inducing the given orientation on $\derV_j$ such that $$\underline{q} (f_j) = \underline{q} (f_j') = (-1) ^{r+j}. $$ Let 
	\begin{align*}
 e_j & := f_j \otimes 1  - f_j' \otimes i \in \underline{V} \otimes \CC,  \\  e_j' & : = (-1)^{r+j}\frac{1}{2}\tau ( e_j) \in \underline{V} \otimes \CC.
	\end{align*} 
	 In the odd case we also fix a non-zero vector $l\in \derV$ which is orthogonal to each $\derV_j$, and satisfies $\underline q(l) \in \set{\pm 1}.$ Thus $\underline q(l)$ is the sign of the determinant of the quadratic space $\derV$, which is $(-1)^{m}\sign(\delta)= (-1)^{r+m+1}$. 
	 
	 Now $\set{e_1,\cdots, e_m , e_1' ,\cdots, e_m', l}$ (resp.~$\set{e_1,\cdots, e_m , e_1',\cdots, e_m'}$) is a $ \CC$-basis of $\underline{V} \otimes \CC$ in the odd (resp.~even) case, and we have  
	 \begin{align*}
[e_j , e_k]&  = [e_j', e_k'] = [e_j,l] = [e_j', l] = 0,  &  [ e_j , e_k'] & = \delta_{j,k}. &
	 \end{align*}
Note that for each $1\leq j \leq m$, the cocharacter $f_{\derD} \circ \epsilon_j^{\vee}$ of $G^*$ acts on $\derV$ with weight $1$ on $e_j$, weight $-1$ on $e_j'$, and weight $0$ on $e_k, e_k'$ for all $k\neq j$. In the odd case, it also acts with weight $0$ on $l$. 
	
	For $1\leq j \leq m-1$, we define $E_j\in \End(\derV\otimes \CC)$ by 
	\begin{align*} e_j & \longmapsto 0, &  e_{j+1} & \longmapsto e_j, & e_j' & \longmapsto -e_{j+1}', & \\ e_{j+1}'& \longmapsto 0 ,&  e_k , e_{k}' & \longmapsto 0  ~ \text{for }k \notin \set{j, j+1} , \\ l & \longmapsto 0  ~\text{(if $d$ is odd)} . 
	\end{align*} It is easy to see that $E_j\in \Lie G^*_{\CC}$ and that it is indeed a root vector of $\alpha_j$. We compute that $\tau E_j$ is given by 
		\begin{align*} e_j & \longmapsto e_{j+1} ,&  e_{j+1} & \longmapsto 0, &  
		 e_j' & \longmapsto 0, \\ 
		 e_{j+1}'& \longmapsto -e_j',  & 
		 e_k , e_{k}' & \longmapsto 0  ~ \text{for }k \notin \set{j, j+1}, &  \\  l & \longmapsto 0 ~ \text{(if $d$ is odd)}  . 
	\end{align*}
Then $[E_{j}, \tau E_{j}]$ is given by 
	\begin{align*} e_j & \longmapsto e_j, &  e_{j+1} & \longmapsto -e_{j+1} , &  e_j' & \longmapsto -e_{j}', &  \\ e_{j+1}'& \longmapsto e_{j+1}' & e_k , e_{k}', & \longmapsto 0  ~ \text{for }k \notin \set{j, j+1}, &  \\ l & \longmapsto 0 ~\text{(if $d$ is odd)} . 
\end{align*}  Thus $[E_j,\tau E_j] = H_j$, as desired.

In the odd case, we define $E_m \in \End(\derV\otimes \CC)$ by 
\begin{align*}
l & \longmapsto e_m, &  e_m' & \longmapsto  -\underline q(l)^{-1} l  = (-1)^{r+m} l , \\  e_k & \longmapsto 0 ~ \text{for }1\leq k \leq m , &  e_k ' & \longmapsto 0 ~ \text{for }1\leq k \leq m-1.
	\end{align*} Then $E_m\in \Lie G^*_{\CC}$ and it is a root vector of $\alpha_m$. We compute that $\tau E_m$ is given by 
	\begin{align*}
	l & \longmapsto (-1)^{m+r} 2 e_m', &  e_m & \longmapsto   2l , \\  e_k & \longmapsto 0 ~ \text{for }1\leq k \leq m-1, & e_k ' & \longmapsto 0 ~ \text{for }1\leq k \leq m.
	\end{align*} 
Then $ [E_{m}, \tau E_{m}] $ is given by
		\begin{align*}
	l & \longmapsto 0 , & e_m & \longmapsto   2 e_m,  \\ e_m' &\longmapsto -2e_m' ,&  e_k, e_k' & \longmapsto 0 ~ \text{for }1\leq k \leq m-1.
	\end{align*} Thus $[E_m,\tau E_m] = H_m$, as desired. 
	
	In the even case, we define $E_m \in \End (\derV\otimes \CC)$ by 
	\begin{align*}
	e_m' & \longmapsto e_{m-1} , &  e_{m-1}' & \longmapsto  -e_m , \\  e_k & \longmapsto 0 ~ \text{for }1\leq k \leq m , & e_k ' & \longmapsto 0 ~ \text{for }1\leq k \leq m-2.
	\end{align*} Then $E_m\in \Lie G^*_{\CC}$ and it is a root vector of $\alpha_m$. We compute that $\tau E_m$ is given by 
	\begin{align*}
e_m & \longmapsto -e_{m-1}' , &  e_{m-1} & \longmapsto   e_m', \\   e_k & \longmapsto 0 ~ \text{for }1\leq k \leq m-2 , &  e_k' & \longmapsto 0 ~ \text{for }1\leq k \leq m.
	\end{align*} 
Then $ [E_{m}, \tau E_{m}] $ is given by
	\begin{align*}
	e_m & \longmapsto e_m , & e_{m-1} & \longmapsto  e_{m-1} , \\ e_m' &\longmapsto - e_m', &  e_{m-1}' & \longmapsto  -e_{m-1}',  \\  e_k, e_k' & \longmapsto 0 ~ \text{for }1\leq k \leq m-2.
	\end{align*} Thus $[E_m,\tau E_m] = H_m$, as desired. 
\end{proof}
\subsection{}\label{subsubsec:derj and derB}
As before, we fix the standard Borel pair $(\mathcal T, \mathcal B)$ in $\widehat G$, and the standard Borel pair $(\mathcal T_{V^+} \times \mathcal T_{V^-}, \mathcal B_{V^+} \times \mathcal B_{V^-}) = (\mathcal T_{\widehat H}, \mathcal B_{\widehat H})$ in $\widehat H = \widehat{\SO(V^+) } \times \widehat{\SO(V^-)}$; see Definition \ref{defn:fixing borel pair in dual} and \S \ref{endoscopic data}. We extend $(\mathcal T,\mathcal B)$ to a $\Gamma_{\infty}$-stable  splitting $\spl_{\widehat G}$, and extend $(\mathcal T_{\widehat H}, \mathcal B_{\widehat{H}})$ to a $\Gamma_{\infty}$-stable  splitting $\spl_{\widehat H}$.

Note that $\eta: \lang H \to \lang G$ maps $(\mathcal T_{\widehat H},\mathcal B_{\widehat H})$ into $(\mathcal T, \mathcal B)$. Given this property, and given the choices $\spl_{\widehat G}$ and $\spl_{\widehat H}$, we have the following constructions (see \cite[\S\S 7.3, 8.1]{shestructure}, \cite[\S 7]{she2}, \cite[\S 3]{shenote}):
\begin{itemize}
	\item Inside each equivalence class $\boldsymbol{\varphi}$ of discrete Langlands parameters for $G^*$, there is a canonical $\mathcal T$-conjugacy class of parameters, whose elements we shall call \emph{almost canonical representatives}.\index[n]{almost canonical representatives} Similarly, inside each equivalence class of discrete Langlands parameters for $H$, there are almost canonical representatives. 
	\item Let $\boldsymbol{\varphi}$ be as above. Consider the set 
	of equivalence classes of discrete Langlands parameters for $H$ that induce $\boldsymbol{\varphi}$ via $\eta: \lang H \to \lang G$. Then this set is non-empty (because $H$ contains anisotropic maximal tori), and it contains a canonical element $\boldsymbol{\varphi_H}$, called \emph{well-positioned}\index[n]{well-positioned (discrete Langlands parameter)}, which is uniquely characterized by the following property: For one (and hence any) almost canonical representative $\varphi_H$ of $\boldsymbol{\varphi_H}$, the composition $\eta \circ \varphi_H$ is an almost canonical representative of $\boldsymbol{\varphi}$.
\end{itemize} 

We now choose an arbitrary equivalence class $\boldsymbol{\varphi}$ of discrete Langlands parameters for $G^*$, and obtain  $\boldsymbol{\varphi_H} $ from $\boldsymbol{\varphi}$ as above. Choose an almost canonical representative $\varphi_H$ of $\boldsymbol{\varphi_H}$, and let $\varphi:= \eta\circ \varphi_H$. Thus $\varphi$ is an almost canonical representative of $\boldsymbol{\varphi}$. By construction, the Borel pair in $\widehat G$ (resp.~$\widehat H$) determined by $\varphi$ (resp.~$\varphi_H$) as on p.~182 of \cite{kottwitzannarbor} is $(\mathcal T, \mathcal B)$ (resp.~$(\mathcal T_{\widehat H}, \mathcal B_{\widehat H})$).

Let $\pi_{0}$ be the unique generic member (with respect to the unique equivalence class of Whittaker data) of the L-packet $\Pi_{\boldsymbol{\varphi }}$; see \cite{kostantwhittaker} and \cite{vogankirillov}. As proved by Shelstad in \cite{she3} (see \cite[Thm.~3.6]{shenote}), we have \begin{align}
\label{Wh=1}
\underline \Delta_{\Wh}^{\sspec} (\boldsymbol{\varphi_H}, \pi_0) =1.
\end{align}
Here $\underline \Delta_{\Wh}^{\sspec}(\cdot, \cdot)$\index{$\underline \Delta_{\Wh}^{\sspec}$} are the (absolute) spectral transfer factors between $H$ and $G^*$, under the Whittaker normalization. (In fact, (\ref{Wh=1}) holds for all discrete $\boldsymbol{\varphi_H}$ inducing $\boldsymbol{\varphi}$, not just the well-positioned one; cf.~\cite[\S 5.6]{kalrigid}. We will not need this.) By \cite[Lem.~12.3]{she2}, the transfer factors $\underline \Delta_{\Wh}^{\sspec}(\cdot, \cdot)$ are \emph{compatible} with the Whittaker-normalized   transfer factors $\underline \Delta_{\Wh}(\cdot,\cdot)$, in the sense that the endoscopic character relations defined by the former are satisfied when the test functions satisfy orbital integral relations with respect to the latter.

We now fix $\derD$ and $\cD_H$ as in \S \ref{subsubsec:setting for para tori}, and assume that $\derD \in \ED(\derV)^o_{\Wh}$. We keep the notation in \S \ref{subsubsec:setting for para tori}. By Lemma {\ref{e.d. and j}}, the map $j_{\cD_H,\derD}$ constructed in \S \ref{subsubsec:setting for para tori} is an admissible isomorphism. We note that $(j_{\cD_H,\derD} , B_{\derD} ,  B_{\cD_H})$ is \emph{aligned}\index[n]{aligned} with $\boldsymbol{\varphi_H}$ in the sense of \cite[p.~184]{kottwitzannarbor}, which follows from our assumption that $\boldsymbol{\varphi_H} $ is well-positioned. In the following, we abbreviate $j_{\cD_H,\derD}$ as $\underline j$, and abbreviate $B_{\derD}$ as $\underline B$.

In \cite[\S 7]{kottwitzannarbor}, a normalization \index{$\Delta_{\underline j, \underline B}$}
$$\Delta_{\underline j, \underline B}(\cdot, \cdot)$$ of the   transfer factors between $H$ and $G^*$ is defined. Write $\Delta_{\underline j, \underline B} ^{\sspec}(\cdot,\cdot)$\index{$\Delta_{\underline j, \underline B} ^{\sspec}$} for the spectral transfer factors normalized compatibly with $\Delta_{\underline j, \underline B}(\cdot,\cdot)$. Then since $(\underline j, \underline B)$ is aligned with $\boldsymbol{\varphi_H}$, we have (see \cite[p.~185]{kottwitzannarbor})
\begin{align}\label{eq:spec id in Ann Arbor}
 \Delta_{\underline j, \underline B}^{\sspec} ( \boldsymbol{\varphi_H} ,  \pi ( \varphi, \omega^{-1} \underline B)) = \lprod{a_{\omega}, s},
\end{align}
for all $\omega \in \Omega_{\CC} (G^*, T_{\derD})$. Here $a_{\omega}$\index{$a_{\omega}$} is defined in \cite[\S 5]{kottwitzannarbor}, and we shall not need the definition of $\lprod{a_{\omega},s}$ except the fact that $$\lprod{a_1, s} = 1. $$
 Now by Vogan's classification theorem for generic representations \cite[Thm.~6.2]{vogankirillov} and by Lemma \ref{lem:meaning of Whitt}, we know that $\pi_0 = \pi (\varphi, \underline B)$. Hence by setting $\omega =1$ in (\ref{eq:spec id in Ann Arbor}) we obtain 
\begin{align}
\label{Kott=1}
\Delta_{\underline j, \underline B}^{\sspec} ( \boldsymbol{\varphi_H} ,  \pi_0 ) = 1.
\end{align}
Comparing (\ref{Wh=1}) and (\ref{Kott=1}), we see that 
\begin{align}\label{eq:comp spec trans}
\underline \Delta_{\Wh}^{\sspec} = \Delta_{\underline j, \underline B} ^{\sspec}.
\end{align}
Now as we recalled above,  $\underline \Delta^{\sspec}_{\Wh}$ is compatible with $\underline \Delta_{\Wh}$. Hence it follows from (\ref{eq:comp spec trans}) that 
$$\Delta_{\underline j, \underline B} = \underline \Delta_{\Wh}.$$ We record this in the following lemma. 
\begin{lem}\label{answer for H and G^*}
	Let $\derD \in \ED(\derV)^o_{\Wh}$ and  $\cD_H\in \ED(V^+)^o \times \ED(V^-)^o$. Let $\underline j = j_{\cD_H ,\derD}$ and let $\underline B = B_{\derD}$. Then $\underline \Delta_{\Wh} = \Delta_{\underline j , \underline B}.$ \qed 
\end{lem}

\section*{Transfer factors between $H$ and $G$}

\subsection{} \label{subsubsec:Kaletha's transf factor}
Recall from \S \ref{Fixing inner twist} that we have fixed an isomorphism $\phi_V: V\otimes \CC \isom \derV \otimes \CC$ between quadratic spaces over $\CC$, and used $\phi_V$ to define the inner twisting $\psi_V: G_{\CC} \isom G^*_{\CC}$ and the cocycle $u_V: \Gamma_{\infty} \to G^*(\CC)$, satisfying (\ref{eq:pure inner twist}). As we have explained in Remark \ref{rem:pure inner form}, these extra data allow us to derive from $\underline \Delta_{\Wh}(\cdot,\cdot)$ a normalization of the  transfer factors between $H$ and $G$, which we denote by $\Delta_{\Wh}(\cdot,\cdot)$.\index{$\Delta_{\Wh}$} 

We now recall the characterization of $\Delta_{\Wh}$ in terms of $\underline \Delta_{\Wh}$ following \cite[\S 2.2]{kaldepth0}. Let $T_H$, $T$, and $\derT$ be anisotropic maximal tori in $H$, $G$, and $G^*$, respectively. (Recall that $H$, $G$, and $G^*$ all contain anisotropic maximal tori.) Assume that $u_V$ takes values in $\derT(\CC)$. (We shall see in \S \ref{subsubsec:describe u} below that this can indeed be arranged.) Let $j: T_{H,\CC} \to T_{\CC}$ and $\derj: T_{H,\CC} \to \derT_{\CC}$ be arbitrary admissible isomorphisms; see \S \ref{subsec:admissible isom}. Note that $T_{H}$, $T$, and $\derT$ are all isomorphic to $\Uni(1)^m$, and so $j$ and $\derj$ are necessarily defined over $\RR$. Let $\gamma^H \in T_H(\RR)$, and let $$\gamma : = j(\gamma^H), \quad \underline \gamma : = \derj(\gamma^H). $$ Assume that $\gamma$ and $\underline \gamma$ are strongly regular. Then $\Delta_{\Wh}$ is characterized by the following formula:   
\begin{align}\label{characterization of Wh for G}
\Delta_{\Wh} (\gamma^H,\gamma) = \underline \Delta_{\Wh} (\gamma^H,\underline \gamma) \lprod{\inv(\gamma, \underline \gamma), s_{\gamma^H,\underline \gamma}} ^{-1},
\end{align}
 where $\inv(\gamma, \underline \gamma)$ and $s_{\gamma^H,\underline \gamma}$ are defined as follows. 
\begin{itemize} 
\item Define  $\inv(\gamma, \underline \gamma)$\index{$\inv(\gamma, \underline \gamma)$} to be the image of the cocycle $(\rho \mapsto u_V(\rho))$ under the Tate--Nakayama isomorphism $ \coh ^1(\RR, \derT) \isom \tatecoh ^{-1} (\Gamma_{\infty}, X_*(\derT))$. In our case, since $\derT \cong \Uni(1)^m$, the norm map on $X_*(\derT)$ is zero, and so $\tatecoh ^{-1} (\Gamma_{\infty}, X_*(\derT))$ is simply $X_*(\derT) _{\Gamma_{\infty}}$. 
\item Define $s_{\gamma^H,\underline \gamma}$ to be  the image of $s\in Z(\widehat H)$ (which is part of the endoscopic datum) under the composite map
\begin{align*}
	\xymatrix{
Z(\widehat H) \ar@{^{(}->}[r] &  \widehat {T_H} \ar[r]^{\widehat {\derj}} & \widehat{\derT}.}
\end{align*}
Here the first map is the common restriction to $Z(\widehat H)$ of any isomorphism $\mathcal T_{\widehat H} \isom \widehat{T_H}$ of the form $\mathfrak d_{B_H, \mathcal B_{\widehat H}}^{-1}$ for any Borel subgroup $B_H$ of $H_{\CC}$ containing $T_{H, \CC}$; see \S \ref{subsec:admissible isom}. We know that $s_{\gamma^H, \underline \gamma}$ is invariant under $\Gamma_{\infty}$ since, in our case, it is of order at most $2$ and the non-trivial element of $\Gamma_{\infty}$ acts on $ \widehat{\derT}$ by inversion. Thus $s_{\gamma^H, \underline \gamma}$ can be paired with $\inv(\gamma,\underline \gamma)$. 
\end{itemize}
\begin{defn}\label{defn:Whitt normalization for G}
	We call $\Delta_{\Wh}(\cdot,\cdot)$ as in (\ref{characterization of Wh for G}) the \emph{Whittaker-normalized   transfer factors between $H$ and $G$}.\index[n]{Whittaker normalization of the  transfer factors (between $H$ and $G$)}
\end{defn}

\begin{defn} \label{defn of suitable} 
	Let $\ED(\derV)^o_{\Wh, \phi_V}$\index{$\ED(\derV)^o_{\Wh, \phi_V}$} be the set of tuples $(\underline V_j, \lambda_j)_{1\leq j \leq m}$, where $(\underline V_j)_j \in \ED(\derV)^o_{\Wh}$ (see Definition \ref{defn:Whitt ED}), and $\lambda_1,\cdots, \lambda_m \in \set{1,\sqrt{-1}}$, satisfying the following conditions.  
	\begin{enumerate}
		\item For each $1\leq j \leq m$, we have $\phi_V^{-1 } (\derV_j) \subset V \otimes \lambda_j^{-1}$.
		\item There exists $j_0\in \ZZ$ such that for each $1\leq j\leq m$, we have $\lambda_j = \sqrt{-1}$ if and only if $\derV_j$ is negative definite and $j\leq j_0$.
		\item If $d$ is odd, then the restriction of $\phi_V^{-1} : \derV\otimes \CC \to V\otimes \CC$ to the orthogonal complement of $\bigoplus_{j=1}^m \derV_j$ in $\derV$ is defined over $\RR$.
	\end{enumerate}
\end{defn}
\begin{rem} The set $\ED(\derV)^o_{\Wh, \phi_V}$ is non-empty. This follows from the condition in Definition \ref{defn:fixing isom between quad spaces}, the fact that $V$ and $\derV$ have the same discriminant, and Remark \ref{rem:existence of Whitt ED}.  
\end{rem}
\subsection{}
Let $(\derV_j , \lambda_j)_j \in \ED(\derV)^o_{\Wh,\phi_V}$ as in Definition \ref{defn of suitable}. We construct an element $(V_j)_j \in \ED(V)^o$ as follows.  
For each $j$, let $\{f_j, f_j'\}$ be a basis of $\derV_j$ inducing the given orientation on $\derV_j$. Then the vectors $\lambda_j \phi^{-1}(f_j), \lambda_j\phi^{-1}(f_j') \in V\otimes \CC$ lie in $V\otimes 1$. We identify $V\otimes 1$ with $V$, and let $V_j$ be the oriented plane spanned by $\{\lambda_j \phi^{-1}(f_j),  \lambda_j\phi^{-1}(f_j') \}$. Then $(V_j)_j$ is an element of $\ED(V)$. By Lemma \ref{lem:motivation for o_V}, we have $(V_j)_j \in \ED(V)^o$. The construction $(\derV_j, \lambda_j)_j \mapsto (V_j)_j$ gives a map 
\begin{align}\label{eq:transfer of ED} \ED(\derV)^o_{\Wh,\phi_V} \To \ED(V)^o.
\end{align}

\begin{defn}\label{defn:EDNice} We define a subset $\EDNice$\index{$\EDNice$} of $\ED(V)^o$ as follows. When $d$ is odd, we let $\EDNice$ consist of those $(V_j)_j\in \ED(V)^o = \ED(V)$ for which there exists $j_0\in \ZZ$ such that 
	\begin{multline*}
\set{j \mid 1\leq j \leq m, V_j \text{ is negative definite}}  \\ = \set{j \mid 1\leq j \leq m,  j>j_0, \text{ and }(-1)^j = \sign(\delta)} . 
	\end{multline*}
When $d$ is even (but not divisible by $4$), we let $\EDNice$ consist of those $(V_j)_j\in \ED(V)^o$ for which there exists $j_0\in \ZZ$ such that 
\begin{multline*}
\set{j \mid 1\leq j \leq m, V_j \text{ is negative definite}} \\ = \set{j \mid 1\leq j \leq m, j>j_0, \text{ and }(-1)^j = 1} .
\end{multline*}
\end{defn}
\begin{eg} \label{eg:nice ED}
	Let $\cD = (V_j)_j$ be an arbitrary element of $ \ED(V)^o$. Recall that $V$ has signature $(p,q)$. If $d$ is odd and $q=2$, then $\cD$ is in $\EDNice$ if and only if $V_m$ is negative definite. If $d$ is odd and $q \leq 1$, then $\cD$ is automatically in $\EDNice$. If $d$ is even (but not divisible by $4$) and $q=2$, then $\cD$ is in $\EDNice$ if and only if $V_{m-1}$ is negative definite. If $d$ is even (but not divisible by $4$) and $q=0$, then $\cD$ is automatically in $\EDNice$. 
\end{eg}
\begin{lem}
	The image of the map (\ref{eq:transfer of ED}) is contained in $\EDNice$. 
\end{lem}
\begin{proof}This is clear from the definitions. 
\end{proof}
\subsection{}\label{subsubsec:describe u}
Now let $(\derV_j, \lambda_j)_j $ be an element of $\ED(\derV)^o_{\Wh,\phi_V}$, with image $(V_j)_j \in \EDNice$ under the map (\ref{eq:transfer of ED}). Write $\derD$ for the element $(\derV_j)_j\in\ED(\derV)^o_{\Wh}$, and write $\cD$ for the element $(V_j)_j \in \EDNice$. Write $\vec{\lambda}$ for the tuple $(\lambda_j)_j$. Let $$f_{\derD}: \Uni(1)^m \To T_{\derD}$$ be the parameterized anisotropic maximal torus in $G^*$ associated to $\derD$, and let $$f_{\cD}: \Uni(1)^m \To T_{\cD}$$ be the parameterized anisotropic maximal torus in $G$ associated to $\cD$. Also, let $(T_{\derD}, B_{\derD})$ be the fundamental pair in $G^*$ associated to $\derD$, and let $(T_{\cD}, B_{\cD})$ be the fundamental pair in $G$ associated to $\cD$.
We abbreviate $(T_{\derD}, B_{\derD})$ as $(\derT, \derB)$, and abbreviate $(T_{\cD}, B_{\cD})$ as $(T,B)$. 

 Note that we have 
 \begin{align}\label{eq:psi and f}
f_{\derD} = \psi_V \circ f_{\cD} ,
 \end{align}
 which is clear from the definition of $\psi_V$ in \S \ref{subsubsec:defn of the cocycle u}. In particular, the cocycle $u_V$ takes values in $\derT(\CC)$. More precisely, for $\rho=\tau$ the complex conjugation, $u_V(\tau)$ acts as $-1$ on $\derV_j$ for those $j$ such that  $\lambda_j = \sqrt{-1}$, and acts as the identity on the orthogonal complement of these $\derV_j$'s. It follows that $u_V(\tau) \in \derT(\RR)$. Another consequence of the relation (\ref{eq:psi and f}) is that $\psi_V$ sends the Borel pair $(T_{\CC}, B)$ in $G_{\CC}$ to the Borel pair $(\derT_{\CC},\derB)$ in $G^*_{\CC}$. 

Take any $\cD_H\in \ED(V^+)^o \times \ED(V^-)^o$, and define 
\begin{align*}
j_{\cD_H, \derD}: T_{\cD_H} &\isom T_{\derD} \\ j_{\cD_H, \cD}: T_{\cD_H} & \isom T_{\cD}
\end{align*}
as in \S \ref{subsubsec:setting for para tori} (where $\derD$ and $\cD$ are fixed in the last paragraph.) We abbreviate $j_{\cD_H, \derD}$ as $\underline j$, and abbreviate $j_{\cD_H,\cD}$ as $j$. Let $(T_{\cD_H}, B_{\cD_H})$ be the fundamental pair in $H$ associated to $\cD_H$. We abbreviate $(T_{\cD_H},B_{\cD_H})$ as $(T_H,B_H)$. Take a test element $\gamma^H \in T_{\cD_H}(\RR)$, and let 
\begin{align*}
\gamma &: = j(\gamma^H),  & \underline \gamma & : = \derj(\gamma^H).&
\end{align*}
 Assume that $\gamma$ and $\underline \gamma$ are strongly regular.

\begin{lem} \label{inv and s} Keep the setting of \S \ref{subsubsec:describe u}. Let $\lprod{\inv(\gamma, \underline \gamma), s_{\gamma^H,\underline \gamma}}$ be the pairing defined in \S \ref{subsubsec:Kaletha's transf factor}. Then we have 
$$ \lprod{\inv(\gamma, \underline \gamma), s_{\gamma^H,\underline \gamma}} = (-1) ^{k(m^-, \vec{\lambda})}, $$ where\index{$k(m^-, \vec{\lambda})$} $$ k(m^-, \vec{\lambda}) := \#\set{j \mid 1\leq j \leq m^- , \lambda_j = \sqrt{-1}}.$$
\end{lem}
\begin{proof}
	By \cite[Lem.~2.3.3]{kaldepth0}, the element $\inv(\gamma, \underline \gamma) \in X_*(\derT) _{\Gamma_{\infty}}$ is equal to the image of any element $\mu \in X_*(\derT)$ such that $\mu(-1) = u_V(\tau)$, where $\tau$ is the complex conjugation. We identify $X_*(\derT)$ with $\ZZ^m$ via $f_{\derD}: \Uni(1)^m \isom \derT$, and let $\set{\epsilon_1^\vee, \cdots, \epsilon_m^{\vee}}$ be the natural basis. By the description of $u_V(\tau)$ in \S \ref{subsubsec:describe u}, we can take $\mu$ to be 
	$$\mu = \sum_{1\leq j \leq m, \lambda_j = \sqrt{-1}} \epsilon_j^{\vee}.$$
 	On the other hand, if we identify $\widehat{ \derT} $ with $(\CC^{\times})^m$ under $\widehat {f_{\derD}}$, then the element $s_{\gamma^H,\underline\gamma} \in \widehat{ \derT}$ is given by 
 	$$ (\underbrace{-1,\cdots, -1}_{m^-}, \underbrace{1,\cdots, 1}_{m^+}) \in (\CC^\times)^m. $$
 	(Remember that Convention \ref{convention:identifying U(1)} is in force in the definition of $j_{\cD_H, \derD}$ in \S \ref{subsubsec:setting for para tori}.) The lemma follows by evaluating $\mu$ at the above element.
\end{proof}

 \begin{lem} \label{easy lemma}Keep the setting of \S \ref{subsubsec:describe u}. We have 
$$\Delta _{\underline j, \underline B} (\gamma^H, \underline \gamma) = (-1)^{q(G)+q(G^*)} \Delta_{j ,B} (\gamma^H, \gamma). $$ Here $\Delta_{\underline j, \underline B}$ (resp.~$\Delta_{j,B}$) is the normalization of the   transfer factors between $H$ and $G^*$ (between $H$ and $G$), associated to $(\underline j, \underline B)$ (resp.~$(j,B)$), as defined in \cite[\S 7]{kottwitzannarbor}. The numbers $q(G)$ and $q(G^*)$ are as in Definition \ref{defn:q(G)}.
 \end{lem}
 \begin{proof}
 	By the formula for $\Delta_{j,B}$ on p.~184 of \cite{kottwitzannarbor}, we have 
 	\begin{align*}
\Delta _{\underline j, \underline B} (\gamma^H, \underline \gamma) &= (-1) ^{q(G^*) + q(H)} \chi_{G^*, H} (\underline \gamma) \Delta_{\underline B} (\underline \gamma^{-1}) \Delta_{B_H} ((\gamma^H)^{-1}) ^{-1}  \\ \Delta _{j, B} (\gamma^H,\gamma) &= (-1) ^{q(G) + q(H)} \chi_{G, H} (\gamma) \Delta_{B} (\gamma^{-1}) \Delta_{B_H} ((\gamma^H)^{-1}) ^{-1} .
 	\end{align*} Here $\Delta_{\underline B}$, $ \Delta_{B}$, and $\Delta _{B_H}$ are as in Definition \ref{defn:Weyl denominator}, and we do not explain the definitions of $\chi_{G^*, H}$ and $\chi_{G,H}$. Since $\psi$ sends the Borel pair $(T_{\CC}, B)$ to $(\derT_{\CC}, \derB)$, we know that 
 	$$\Delta_{\underline B} (\underline \gamma^{-1})  = \Delta_{B} (\gamma^{-1}).$$ It remains to show that $$\chi_{G^*, H} (\underline \gamma) = \chi_{G,H} (\gamma).$$ Unraveling the definitions of these terms on p.~184 of \cite{kottwitzannarbor}, we are reduced to checking that the following diagram commutes up to $\widehat G$-conjugation: 
 	$$ \xymatrix{ \lang T \ar[d] \ar[r]^{\eta_{B}} & \lang G \ar@{=}[d] \\ \lang {\derT} \ar[r]^{\eta_{\underline B } } & \lang G   }$$ where the left vertical arrow is induced by $\psi_V|_{T}: T \isom \derT$ (defined over $\RR$). This is true by the characterizations (a) (b) on p.~183 of \cite{kottwitzannarbor}, in view of the fact that $\psi_V(B) = \derB$.   
 \end{proof}
\begin{cor}\label{cor:summary}
Keep the setting of \S \ref{subsubsec:describe u}, and keep the notation in Lemmas \ref{inv and s} and \ref{easy lemma}. We have $$ \Delta_{j,B} = (-1)^{q(G)+q(G^*) + k(m^-, \vec{\lambda})} \Delta_{\Wh}.$$
\end{cor}
\begin{proof} Comparing (\ref{characterization of Wh for G}) with Lemmas \ref{inv and s} and  \ref{easy lemma}, we have 
	$$\frac{\underline \Delta_{\Wh}}{\Delta_{\derj, \derB}} = (-1)^{q(G) + q(G^*) + k(m^-, \vec{\lambda})} \cdot  \frac{\Delta_{\Wh}}{\Delta_{j,B}}.$$ By Lemma \ref{answer for H and G^*} we have $\underline \Delta_{\Wh} = \Delta_{\derj, \derB}$. The corollary follows. 
\end{proof}
Recall that $V$ has signature $(p,q)$, with $d=p+q$ not divisible by $4$. 
\begin{lem}\label{lem:computing q(G)+q(G^*)}  
We have 
$$ (-1)^{q(G)+q(G^*)} = \begin{cases}
(-1)^{\ceil{\frac{m-p}{2}}} , & \text{if $d$ is odd}, \\
1 , & \text{if $d$ is even}.
\end{cases}$$
\end{lem}\begin{proof}
For any signature $(a,b)$, we have $q(\SO(a,b)) = ab/2.$ In the odd case, $V$ has signature $(p,q) = (p, 2m+1-p)$, and $\underline V$ has signature $(m+1,m)$ or $(m,m+1)$. Hence 
\begin{multline*}
	q(G^*) - q(G) \equiv \frac{(m+1) m}{2} - \frac{p(2m+1-p)}{2} \\ = \frac{(m-p) (m+1-p)}{2} \equiv \ceil{\frac{m-p}{2}} \mod 2.
\end{multline*}
In the even case, our assumption that $G$ and $G^*$ contain anisotropic maximal tori implies that the signatures of $V$ and $\underline V$ are pairs of even numbers. Hence $q(G)$ and $q(G^*)$ are both even. 
\end{proof}

 \begin{prop} \label{answer for H and G} 
 Keep the running assumption that $V$ has signature $(p,q)$, with $p>q$ and $d= p+q$ not divisible by $4$. Let $\cD$ be an arbitrary element of $\EDNice$ (see Definition \ref{defn:EDNice}), and let $\cD_H \in \ED(V^+)^o \times \ED(V^-)^o$. Define $j_{\cD_H, \cD}$ and $(T_{\cD_H}, B_{\cD_H})$ as in \S \ref{subsubsec:setting for para tori}. We abbreviate $(j_{\cD_H, \cD}, B_{\cD_H})$ as $(j,B)$. Let $\Delta_{j,B}$ be the normalization of the   transfer factors between $H$ and $G$ associated to $(j,B)$, as defined in \cite[\S 7]{kottwitzannarbor}. 
\begin{enumerate}
	\item Assume that $d$ is odd. In this case, either assume that $q$ is even and $q/2 \leq \ceil{m^+/2}$, or assume that $q$ is odd and $(q-1)/2 \leq \floor {m^+/2}$. Then 
	$$\Delta_{j,B}  = \begin{cases}
	(-1)^{\ceil{\frac{m}{2}} + \ceil{\frac{m^+}{2}}  + \ceil{\frac{m-p}{2}}} \Delta _{\Wh} ,  & \text{if $q$ is even}, 
	\\ (-1)^{\floor{\frac{m}{2}} + \floor{\frac{m^+}{2}}  + \ceil{\frac{m-p}{2}}} \Delta _{\Wh}, & \text{if $q$ is odd}. 
	\end{cases}$$ In particular, we have 
	$$\Delta_{j,B}  = \begin{cases}
	(-1)^{ \ceil{\frac{m^+}{2}}} \Delta _{\Wh},  & \text{when $q=0$ and $m^+$ is arbitrary}, \\
	(-1)^{ \floor{\frac{m^+}{2}}} \Delta _{\Wh}, &  \text{when $q=1$ and $m^+$ is arbitrary}, \\ 
		(-1)^{1 + \ceil{\frac{m^+}{2}}} \Delta _{\Wh},   &  \text{when $q=2$ and $m^+>0$}.
	\end{cases}$$
\item Assume that $d$ is even. Thus $q$ is even since $G$ contains anisotropic maximal tori. We have $$ \Delta_{j, B} = \begin{cases}
(-1) ^{\floor {\frac{m^-}{2}}} \Delta_{\Wh}, & \text{if }q/2 \leq \floor {m^+ /2}, \\
(-1) ^{\frac{m+1}{2}} \Delta_{\Wh}, & \text{if } m^+ =1 \text{ and } q=2. 
\end{cases}$$  In particular, we have 	$$\Delta_{j,B}  = \begin{cases}
(-1)^{ \floor{\frac{m^-}{2}}} \Delta _{\Wh},  & \text{when $q=0$ and $m^+$ is arbitrary}, \\
(-1)^{ \floor{\frac{m^-}{2}}} \Delta _{\Wh}, &  \text{when $q=2$ and $m^+ \geq 2$}, \\ 
(-1)^{1 + \floor{\frac{m^-}{2}} } \Delta _{\Wh},   &  \text{when $q=2$ and $m^+= 1$}.
\end{cases}$$
\end{enumerate} 	
 
 \end{prop}
    \begin{proof} First note that under the natural action of $G(\RR)$ on $\ED(V)^o$, the subset $\EDNice$ of $\ED(V)^o$ is a single orbit. Thus $\Delta_{j,B}$ is in fact independent of the choice of $\cD \in \EDNice$. Hence we may assume that $\cD$ is the same as the element introduced in \S \ref{subsubsec:describe u}. In view of Corollary \ref{cor:summary} and Lemma \ref{lem:computing q(G)+q(G^*)}, to prove the proposition it suffices to compute the sign $(-1)^{k (m^-,\vec{\lambda})}$ in each case. We recall that 
    	$$ k (m^-, \vec{\lambda}) : =  \#\set{j \mid 1\leq j \leq m^-, \lambda_j = \sqrt{-1}}. $$
    	
    \textbf{(1)} Let $N$ be the number of negative definite planes among the $m^+$ planes $$\derV_{m^- +1}, \derV_{m^- +2}, \cdots , \derV_m. $$ By Definition \ref{defn:Whitt ED}, $\derV_m$ is negative definite if and only if $V$ has positive determinant, which happens if and only if $q$ is even. Hence we have $N = \ceil{m^+/2}$ when $q$ is even, and $N = \floor{m^+ /2}$ when $q$ is odd. Thus our assumption on $q$ can be rewritten as $\floor{q/2} \leq N$. 
    
    If there exists $1\leq j_1 \leq m^-$ such that $\derV_{j_1}$ is negative definite and $\lambda_{j_1} = 1$, then the integer $j_0$ in condition (2) in Definition \ref{defn of suitable} would be strictly less than $j_1$, from which it easily follows that the number of negative definite planes among $V_1,\cdots, V_m$ is at least $N+1$. Thus $q \geq 2 (N+1)$, a contradiction. Hence such $j_1$ does not exist. Then by condition (2) in Definition \ref{defn of suitable}, we have 
    \begin{align*}
k (m^-, \vec{\lambda})= \# \set{j \mid 1\leq j \leq m^-, V_j \mbox{ is negative definite}}.
    \end{align*}
 When $q$ is even, we have  $$ k(m^-, \vec{\lambda}) = \begin{cases}
    	 \ceil{m^-/2}, & \text{if $m$ is odd} \\
    	 \floor{m^-/2}, &\text{if $m$ is even} 	 \end{cases} \equiv \ceil{m/2} + \ceil { m^+/2} \mod 2.$$
    	 When $q$ is odd, we have $$ k(m^-,\vec{\lambda}) = \begin{cases}
    	 \floor{m^-/2}, & \text{if $m$ is odd} \\
    	 \ceil{m^-/2}, &\text{if $m$ is even} 	 \end{cases}  \equiv \floor {m/2} + \floor { m^+/2} \mod 2.$$
 We conclude the proof by combining the above computation of $k(m^-, \vec{\lambda})$ with Corollary \ref{cor:summary} and Lemma \ref{lem:computing q(G)+q(G^*)}. 
   
   \textbf{(2)} Since $d=2m$ is not divisible by $4$, we know that $m$ is odd, and by Definition \ref{defn:Whitt ED} we know that $\derV_m$ is positive definite. Hence among the $m^+$ planes $$\derV_{m^- +1}, \derV_{m^- +2}, \cdots , \derV_m, $$ the number of negative definite planes is $\floor{m^+/2}$. When $q/2 \leq \floor{m^+/2}$, by the same argument as in part (1) we have 
   $$k(m^-, \vec{\lambda}) = \# \set{j \mid 1\leq j \leq m^-, V_j \mbox{ is negative definite}},$$ and this is equal to $\floor{m^-/2}$. When $m^+=1$ and $q=2$, we easily see that $$k(m^-, \vec{\lambda}) = \frac{m-1}{2} -1. $$ In both cases we conclude the proof by combining the computation of $k(m^-,\vec{\lambda})$ with Corollary \ref{cor:summary} and Lemma \ref{lem:computing q(G)+q(G^*)}. 
     \end{proof}

\section{Transfer factors, when \texorpdfstring{$d$}{d} is divisible by \texorpdfstring{$4$}{4}}\label{transf factors even div by 4}
\subsection{}
     We keep the same setting as in \S \ref{subsubsec:setting for transf factor}, except that now we assume that $d$ is divisible by $4$. We keep the assumption that $G$ and $G^*$ contain anisotropic maximal tori, which forces the signature of $V$ to be a pair of even numbers. In particular, $\delta$ is trivial, and so $\derV$ and $G^*$ are split. We would like to establish analogues of the results in \S \ref{transf factors odd} in the current case. The new feature is that there are now two different equivalence classes of Whittaker data for $G^*$. As in \S \ref{subsubsec:setting for transf factor}, we fix $(H,\lang H, s, \eta)$, with $H$ containing anisotropic maximal tori. 
     
     \emph{In the following we assume that $V$ is of signature $(p,q)$ with $p>q$, and that $d= p+q$ is divisible by $4$.}
     
  \section*{Transfer factors between $H$ and $G^*$}    
  \begin{defn}\label{defn:EDI,EDII}
  	We define two subsets $\EDI$ and $\EDII$\index{$\EDI, \EDII$} of $\ED(\derV)^o$ (see \S \ref{subsubsec:defn of EDo}) as follows. Let $\EDI$ consist of those $(\derV_j)_j \in \ED(\derV)^o$ such that $\derV_j$ is $(-1) ^{j+1}$-definite for each $j$. Let $\EDII$ consist of those $(\derV_j)_j \in \ED(\derV)^o$ such that $\derV_j$ is $(-1) ^j$-definite for each $j$. 
  \end{defn}
  \subsection{}\label{subsubsec:type I Whitt data}
  Let $(T_1, B_1)$ (resp.~$(T_{2}, B_{2})$) be the fundamental pair associated to an element of $\EDI$ (resp.~an element of $\EDII$). Then $(T_1,B_2)$ and $(T_2,B_2)$ both satisfy the condition that every simple root is non-compact, which can be proved in the same way as Lemma \ref{lem:meaning of Whitt}. As in \cite[\S 4.2.1]{taibidim}, the two pairs $(T_1, B_1)$ and $(T_2, B_2)$ correspond to two different equivalence classes of Whittaker data $\wI$ and $\wII$\index{$\wI, \wII$} of $G^*$ respectively, characterized by the condition that in any L-packet of discrete series representations of $G^*(\RR)$, the element corresponding to $(T_1, B_2)$ (resp.~$(T_2, B_2) $) is generic with respect to $\wI$ (resp.~$\wII$). Then $\wI$ and $\wII$ exhaust the equivalence classes of Whittaker data. We call $\wI$ the equivalence class of \emph{type-I Whittaker data}, and call $\wII$ the equivalence class of \emph{type-II Whittaker data}.\index[n]{type-I and type-II Whittaker data} See \textit{loc.~cit.~}for more details. 
  
 \begin{defn}\label{defn:two Whitt norm}
We denote by $\underline \Delta_{\Wh}(\cdot, \cdot)$\index{$\underline \Delta_{\Wh}$} the Whittaker-normalized   transfer factors between $H$ and $G^*$ with respect to $\wI$, called the \emph{type-I Whittaker normalization}.\index[n]{type-I Whittaker normalization of the transfer factors (between $H$ and $G^*$)} Denote by $\tilde {\underline\Delta}_{\Wh}(\cdot,\cdot)$\index{$\tilde {\underline\Delta}_{\Wh}$} the analogous objects with respect to $\wII$.  
  \end{defn}
  
  \begin{lem}\label{answer for H and G^* even div by 4}
  	Let $\derD \in \EDI$, and let $\cD_H \in \ED(V^+)^o \times \ED(V^-)^o$. Let $\derj$, $(T_H,B_H)$, and $(\derT, \derB)$ be the objects associated to $\derD$ and $\cD_H$ as in \S\ref{subsubsec:derj and derB}. We have
  	\begin{align}\label{eq:WhI}
\underline \Delta_{\Wh} & = \Delta_{\derj, \derB} , \\ \label{eq:WhII} \tilde {\underline\Delta}_{\Wh} & =  (-1) ^{m^-} \Delta_{\derj, \derB}.
  	\end{align} In particular, $$\tilde {\underline {\Delta}}_{\Wh}  = (-1)^{m^-} \underline \Delta_{\Wh}. $$ 
  \end{lem}
  \begin{proof}
  	The proof of (\ref{eq:WhI}) is the same as the argument in \S \ref{subsubsec:derj and derB} leading to Lemma \ref{answer for H and G^*}. For (\ref{eq:WhII}), by the same argument we are reduced to checking that
  	\begin{align}\label{eq:a_omega and s}
\lprod{a_{\omega}, s} = (-1) ^{m^-} ,
  	\end{align} where $\omega\in \Omega_{\CC} (G^*, \derT)$ is an element such that $(\derT, \omega \derB)$ is the fundamental pair associated to an element of $\EDII$. (Such $\omega$ is unique up to right multiplication by $\Omega_{\RR} (G^*,\derT)$.) We can take $$\omega  = (12) (34) \cdots (m-1, m) \in \mathfrak S_m \subset \Omega_{\CC} (G^*, \derT),$$ and then the class $a_{\omega} \in \coh ^1(\RR, \derT)$ (defined in \cite[\S 5]{kottwitzannarbor}) is represented by the cocycle sending the complex conjugation to $-1 \in \derT(\RR)$. This implies (\ref{eq:a_omega and s}).\footnotee{  	
  	Alternatively, (\ref{eq:WhII}) also follows from (\ref{eq:WhI}) and the observation that if one changes the sign of $\eta \in \RR^{\times}/ \RR^{\times, 2}$ in Waldspurger's explicit formula in \cite[\S 1.10]{walds10} specialized to $G^*$ and $H$, the resulting sign change is $(-1) ^{m^- }$, cf.~(\ref{eq:Walds 4}) below. This indeed implies our statement, because Waldspurger's formula equals the Langlands--Shelstad normalization $\Delta_0$ (defined for a quasi-split group with a fixed Galois-stable  splitting), and changing the  splitting of $G^*$ has the same effect on $\Delta_0$ as changing the Whittaker datum has on the Whittaker normalization.  Waldspurger excludes the factor $\Delta_{IV}$, which is a positive real number and is hence harmless for this discussion.}
  \end{proof}
  
   \section*{Transfer factors between $H$ and $G$.}   
   \begin{defn}\label{defn:Whitt norm for G div by 4}
   	As in \S \ref{subsubsec:Kaletha's transf factor}, having fixed $\psi_V$ and $u_V$, and having fixed the Whittaker datum $\wI$, we obtain a normalization of the   transfer factors between $H$ and $G$, called the \emph{type-I Whittaker normalization}.\index[n]{type-I Whittaker normalization of the   transfer factors (between $H$ and $G$)} We denote this normalization by $\Delta_{\Wh}$.\index{$\Delta_{\Wh}$}  
   \end{defn}
   \begin{rem}
   	Analogously we also have the type-II Whittaker normalization between $H$ and $G$. By (\ref{eq:WhII}), it is equal to $(-1) ^{m^-} \Delta _{\Wh}$. 
   \end{rem}

\begin{defn}\label{defn:EDNice, div by 4} We let $\EDNice$\index{$\EDNice$} be the subset of $\ED(V)^o$ consisting of those $(V_j)_j$ for which there exists $j_0\in \ZZ$ such that 
	$$\set{j \mid 1\leq j \leq m, V_j \text{ is negative definite}} = \set{j \mid 1\leq j \leq m,  j>j_0, \text{ and }(-1)^j = 1} .$$
\end{defn}
Recall our running assumption that $V$ has signature $(p,q)$, with $p>q$ and $d= p+q$ divisible by $4$. Recall that $p$ and $q$ are even since $G$ contains anisotropic maximal tori.
\begin{prop}\label{answer div by 4}
	  Let $\cD\in\EDNice$ and $\cD_H \in \ED(V^+)^o \times \ED(V^-)^o$. Define $j_{\cD_H, \cD}$ and $(T_{\cD_H}, B_{\cD_H})$ as in \S \ref{subsubsec:setting for para tori}. We abbreviate $(j_{\cD_H, \cD}, B_{\cD_H})$ as $(j,B)$. Let $\Delta_{j,B}$ be the normalization of the   transfer factors between $H$ and $G$ associated to $(j,B)$, as defined in \cite[\S 7]{kottwitzannarbor}. When $q/2 \leq \ceil {m^+ /2}$, we have $$\Delta_{j, B} = (-1) ^{\floor {\frac{m^-}{2}}} \Delta_{\Wh}. $$
	 In particular, we have 	$$\Delta_{j,B}  = \begin{cases}
		(-1)^{ \floor{\frac{m^-}{2}}} \Delta _{\Wh},  & \text{when $q=0$ and $m^+$ is arbitrary}, \\
		(-1)^{ \floor{\frac{m^-}{2}}} \Delta _{\Wh}, &  \text{when $q=2$ and $m^+ \geq 1$}.
		\end{cases}$$
\end{prop}
  \begin{proof}
  	The proof is the same as Proposition \ref{answer for H and G}. Note that the bound $q/2 \leq \floor{m^+/2}$ in Proposition \ref{answer for H and G} (2) is replaced by $q/2 \leq \ceil {m^+ /2}$ here. This is because in the current case, for any $(\derV_j)_j \in \EDI$, $\derV_m$ is always negative definite.
  \end{proof}

  \section*{Comparison with Waldspurger's explicit formula}
\subsection{}   \label{para:preparation for Walds}
 We fix the additive character $\psi: \RR \to \CC^{\times}, x\mapsto e^{2\pi i x}$\index{$\psi_{\mathrm{Leb}}$} in all the discussion below. Given any Borel subgroup $B_0$ of $G^*$ defined over $\RR$, by the general construction in \cite[\S 5.3]{KS99} we have a canonical map (depending only on $\psi$)
 \begin{align}\label{eq:KS construction}
\set{\RR \text{- splittings of $G^*$ relative to $B_0$}} \To \set{\text{generic characters } N_{B_0}(\RR) \to \CC^\times},
 \end{align}
where the left hand side is the set of $\RR$-splittings of $G^*$ of the form $(T_0, B_0, \set{X_{\alpha}})$. In our particular situation, since $G^*$ is split, $\RR$-splittings of $G^*$ are the same as  splittings. 

We denote by $\Pinn(G^*)$\index{$\mathcal{S}plit(G^*)$} the set of $G^*(\RR)$-conjugacy classes of ($\RR$-) splittings of $G^*$, and denote by $\Whitt(G^*)$\index{$\Whitt(G^*)$} the set of equivalence classes (i.e.~$G^*(\RR)$-conjugacy classes) of Whittaker data for $G^*$. The map (\ref{eq:KS construction}) induces a canonical bijection (depending only on $\psi$):\index{$\pintowh^{G^*}$}
$$\pintowh^{G^*} :  \Pinn(G^*) \isom \Whitt(G^*).$$
Here both sides are torsors under the  abelian group $G^{*,\ad}(\RR)/G^*(\RR) \cong \ZZ/2\ZZ$.

The two elements of $\Whitt(G^*)$ are of course $\wI$ and $\wII$; see \S \ref{subsubsec:type I Whitt data}. On the other hand, there is an independent way to label the two elements of $\Pinn(G^*)$. Recall that in \cite[\S 1.6]{walds10}, Waldspurger associates an element $\eta\in \RR^{\times} / \RR^{\times ,2} \cong \set{\pm 1}$ to the quintuple $(G^*, \spl, \underline V, \underline q,  \rho_{\std})$, where $\spl$ is an arbitrary element of $ \Pinn(G^*)$ and $\rho_{\std}$ is the standard representation $G^* \to \GL(\underline V)$. This gives rise to a map\index{$\eta_{\derV}$}
 \begin{align}\label{eq:Walds eta}
	\eta_{\derV}: \Pinn(G^*) & \To \set{\pm 1} \\  \nonumber \spl & \longmapsto \eta(G^*,\spl, \underline V, \underline q, \rho_{\std}).
\end{align}
This map is easily seen to be surjective, and hence bijective. Thus we can use it to label the two elements of $\Pinn(G^*)$. 

  The following result will be used in the proof of Proposition \ref{prop:computing tasho} below, and it may be of independent interest in representation theory. 
 \begin{thm}\label{thm:comparing Waldspurger}
 Let $\splI  = \eta_{\derV} ^{-1}(-1)\in \Pinn(G^*)$. Then $\pintowh^{G^*}(\splI) = \wI$. 
 \end{thm} 
 
\begin{proof}Write $\mathfrak w'$ for $ \pintowh^{G^*}(\splI)$. Consider an elliptic endoscopic datum $$\mathfrak e_{d^+ , \delta^+, d^-, \delta^-} = (H , \lang H, s ,\eta)$$ such that $H$ contains anisotropic maximal tori. As in \S \ref{para:cusp crit} we have  $\delta^{\pm}  = (-1) ^{d^{\pm} /2}$. Let $m^{\pm} :  = d^{\pm} /2$. Let $\underline \Delta_{\Wh}$ and $\tilde {\underline \Delta}_{\Wh}$ be the transfer factors between $H$ and $G^*$ as in Definition \ref{defn:two Whitt norm}. By  Lemma \ref{answer for H and G^* even div by 4} we have 
	$$ \underline \Delta_{\Wh} =  (-1)^{m^-}\tilde {\underline \Delta}_{\Wh}. $$ Hence it suffices to show that $\underline \Delta_{\Wh} $ is equal to the Whittaker normalization $\underline \Delta_{\mathfrak w'}$ defined by the Whittaker datum $\mathfrak w'$, for one single choice of $(d^+, d^-)$ with $m^-$ odd. In the following we show that 
	\begin{align}\label{eq:to show in Thm Walds}\underline \Delta _{\Wh}  = \underline \Delta_{\mathfrak w'}
	\end{align}without assuming that $m^-$ is odd. 
	 
Let $\derD= (\derV_j)_j$ and $\cD_H$ be as in Lemma \ref{answer for H and G^* even div by 4}, and keep the other notations in that lemma. As usual, we use the isomorphism $f_{\derD}: \Uni(1)^m\isom \derT$ associated to $\derD$ to identify $X^*(\derT)$ with $\ZZ^m$. By Lemma \ref{answer for H and G^* even div by 4}, we have 
\begin{align}\label{eq:not div eq 1}
 \underline \Delta_{\Wh} = \Delta _{\derj, \derB}. 
\end{align}

  We now recall the explicit formula for $\Delta_{\derj, \derB}$ given in \cite[\S 7]{kottwitzannarbor}, cf.~also \cite[\S 3.2]{morel2011suite}.\footnote{Note the following typo in \cite[\S 3.2]{morel2011suite}: The term $(1-\alpha (\gamma^{-1}))$ there should be $(1-\alpha(\gamma))$.} Let $\Lambda$ be the set of $\derB$-positive roots for $(G^*_{\CC}, \derT_{\CC})$ which do not come from $H$ via $\derj$. Namely, 
$$\Lambda = \set{ \epsilon_i + \epsilon_k, \epsilon_i - \epsilon_k \mid  1\leq i \leq m^-, ~  m^- +1 \leq k \leq m }. $$
Fix a strongly regular element $\underline \gamma \in \derT(\RR)$, and let $\gamma^H : = \derj^{-1} (\underline \gamma) \in T_H(\RR)$.  
Then  
\begin{align}\label{eq:not div eq 2}
\Delta_{\derj,\derB} (\gamma^H, \underline \gamma ) = (-1) ^{q(G^*) + q(H)} \chi_{\derB}(\underline \gamma) \prod_{\alpha \in \Lambda} (1 - \alpha (\underline\gamma)) =  \chi_{\derB}(\underline \gamma) \prod_{\alpha \in \Lambda} (1 - \alpha (\underline \gamma)),
\end{align}
	where $\chi _{\derB} $ is a quasi-character on $\derT(\RR)$ whose definition is recalled in \cite[Def.~3.2.1]{morel2011suite}. In \cite[Ex.~3.2.4]{morel2011suite} Morel proves, in a special case, the following formula:
	\begin{align}\label{eq:formula for chi_B}
	\chi_{\derB} = (\rho_{B_H} \circ \derj^{-1}) \rho_{\derB}^{-1},
	\end{align}
	where $\rho_{\derB}$ and $\rho_{B_H}$ are defined to be the half sums of the $\derB$-positive roots and the $B_H$-positive roots respectively, and they are actual (as opposed to square roots of) quasi-characters in the special case considered in \textit{loc.~cit}. In our case, $\rho_{\derB}$ and $\rho_{B_H}$ are again actual quasi-characters. We explain why (\ref{eq:formula for chi_B}) still holds in our case. In fact, in the proof of (\ref{eq:formula for chi_B}) in \textit{loc.~cit.}, the only special property being used is that the cocycle $a\in Z^1 (W_{\RR}  , \widehat {\derT})$ used to define $\chi_{\derB}$ could be arranged so that it sends the element $\tau \in W_{\RR}$ (see the beginning of \cite[\S 3.1]{morel2011suite}) to $1 \in \widehat T$. In our case, this condition is not even needed. This is because $\derT\cong\Uni(1)^m$, and so the image of $a$ in $\coh^1(W_{\RR} , \widehat {\derT})$, which determines $\chi_{\derT}$ via the local Langlands correspondence for $\derT$, only depends on $a|_{W_{\CC}} : W_{\CC} \to \widehat {\derT}$. Hence Morel's proof of (\ref{eq:formula for chi_B}) remains valid in our case.
	
	By (\ref{eq:formula for chi_B}), we have 
	\begin{align}\label{eq:not div eq 3}
	\chi_{\derB} = - m^+ \epsilon_1 - m^+ \epsilon_2 - \cdots - m^+ \epsilon_{m^-}.
	\end{align}
	Having identified both $\derT$ and $T_H$ with $\Uni(1)^m$ (via $f_{\derD}$ and $f_{\cD_H}$ respectively), we write $$\gamma^H = \underline \gamma = (y_1,y_2,\cdots, y_m)$$ with each $y_i \in \Uni(1) (\RR) \subset \CC^{\times}$. 
In conclusion, by (\ref{eq:not div eq 1}), (\ref{eq:not div eq 2}), and (\ref{eq:not div eq 3}), we have 
\begin{align}\label{eq:not div eq 4}\underline 
\Delta_{\Wh} (\gamma^H, \underline \gamma)= \prod_{\substack{1\leq i \leq m^- \\ m^-+ 1\leq k \leq m}} y_i^{-1} (1- y_i y_k^{-1}) (1- y_i y_k) =  \prod_{\substack{1\leq i \leq m^- \\ m^-+ 1\leq k \leq m}} 2 (\Re y_i - \Re y_k).
\end{align}

We now compute $\Delta_{\mathfrak w'}$. Let $\Delta_0$\index{$\Delta_0$} be the Langlands--Shelstad normalization associated to the  splitting $\splI$. In \cite{walds10} Waldspurger gives an explicit formula for $\Delta_0$ excluding the factor $\Delta_{IV}$.\index{$\Delta_{IV}$} Let us denote the value of Waldspurger's formula by $\Delta_{\mathrm{Wal}}$\index{$\Delta_{\mathrm{Wal}}$}, so that $\Delta_0 =  \Delta_{\mathrm{Wal}} \Delta_{IV}$. Thus we have (see \cite[\S 5.3]{KS99}, \cite[\S 5.5]{KSconvention})\begin{align}\label{eq:Walds 1}
\Delta_{ \mathfrak w'} = \epsilon_L (U,\psi)  \Delta_0 = \epsilon_L (U, \psi) \Delta_{\mathrm{Wal}} \Delta_{IV},
\end{align}
where $U$ is the virtual $\Gamma_{\infty}$-representation $X^* (T_0) \otimes \CC  - X^*(T_{H,0}) \otimes \CC$, with $T_0$ a maximal split torus in $G^*$ and $T_{H,0}$ a maximal split torus in $H$, and $\epsilon_L(\cdot, \psi)$\index{$\epsilon_L(\cdot, \psi)$} is the local epsilon factor\index[n]{local epsilon factor} (according to the ``Langlands normalization''; see \cite[\S 5.3]{KS99}) defined using the additive character $\psi: \RR \to \CC^{\times}, x \mapsto e^{2\pi i x}$ and the usual Lebesgue measure on $\RR$ (which is self-dual with respect to $\psi$). Since $G^*$ is split, $T_0$ is necessarily split, so $X^*(T_0)$ is a direct sum of trivial representations of $\Gamma_{\infty}$. As for $X^*(T_{H,0})$, it is a direct sum of trivial representations when $ m^-$ is even, and a direct sum of trivial representations and two copies of $X^*(\Uni (1)) $ when $m^-$ is odd. Therefore, by \cite[(3.2.4), (3.4.1)]{tatebackground} we have 
\begin{align}\label{eq:Walds 2}
\epsilon_L(U, \psi) = (-1) ^{m^-}.
\end{align}  
By definition we have
\begin{align}\label{eq:Walds 3}
\Delta_{IV} (\gamma^H, \underline \gamma) = \prod_{
\alpha \in \Lambda} \abs{  \alpha (\underline \gamma)} ^{-1/2} \abs{1-\alpha (\underline \gamma)}  = \prod_{\substack{1\leq i \leq m^- \\ m^-+ 1\leq k \leq m}} 2 \abs{\Re y_i - \Re y_k } .  
\end{align}
Waldspurger's explicit formula reads (see \cite[\S 1.10]{walds10})
\begin{align}\label{eq:Walds 4}
\Delta_{\mathrm{Wal}} (\gamma^H , \underline \gamma) = \prod_{ i =1}^{m^-} \sign \bigg(\eta_{\derV}(\splI) c_i (1+ \Re y_i) \prod_{\substack{1\leq k \leq m  \\ k\neq i}} (\Re y _i - \Re y_k) \bigg ),
\end{align}
where $c_i \in \set{\pm 1}$ is such that $\underline V_i$ is $c_i$-definite. Recall that $(\derV_i)_i \in \EDI$, which implies $c_i = (-1) ^{i+1}$. Note that $1+ \Re y_i > 0$, and we have 
\begin{align*}
\prod _{ \substack{1\leq i,k \leq m^-\\ i \neq k  } }  \sign (\Re y_i - \Re y_k) & = (-1) ^{m^- (m^- - 1)/2} = (-1) ^{\floor{m^- /2}},\\ \prod_{i=1} ^{m^-} \sign c_i  & = \prod _{i=1} ^{m^-} (-1) ^{i+1} = (-1) ^{\floor{m^- /2}}.
\end{align*}
 Therefore (\ref{eq:Walds 4}) can be rewritten as follows (remember that $\eta_{\derV}(\splI) = -1$)
\begin{align}\label{eq:Wal answer}
\Delta_{\mathrm{Wal}} (\gamma^H, \gamma) =  (-1)^{m^-}\prod_{\substack{1\leq i \leq m^- \\ m^-+ 1\leq k \leq m}} \sign (\Re y_i - \Re y_k).
\end{align} 
Combining (\ref{eq:not div eq 4}) (\ref{eq:Walds 1}), (\ref{eq:Walds 2}), (\ref{eq:Walds 3}), and (\ref{eq:Wal answer}), we obtain (\ref{eq:to show in Thm Walds}), as desired. 
\end{proof}

\chapter{Transfer maps defined by the Satake isomorphism}\label{Section p}
 
 In this chapter, we fix an odd prime $p$.

\section{Recall of the Satake isomorphism}\label{subsec:Satake}

We recall the Satake isomorphism, following \cite{cartiersurvey,borelcorvallisarticle, haines-rostami,shintemplier}. Let $F$ be a finite extension of $\QQ_p$. Let $q$ be the residue cardinality of $F$ and let $\varpi_F$ be a uniformizer of $F$. In this section we let $G$ be an arbitrary unramified reductive group over $F$.
\subsection{}\label{subsubsec:canonical Satake} Let $K$ be the hyperspecial subgroup of $G(F)$ determined by a hyperspecial point $v_0$ in the building of $G$. Let $S$ be a maximal split torus in $G$ whose apartment contains $v_0$, and let $T$ be the centralizer of $S$ in $G$. Let $\Omega$ (resp.~$\Omega(F)$) be the absolute (resp.~relative) Weyl group\index[n]{relative Weyl group} of $G$ defined using $T$ (resp.~$S$). In other words, \index{$\Omega(F)$}
\begin{align*}
\Omega & := \Nor_G(T) /T ,\\
\Omega(F) & : = \Nor _G(S)/T.  
\end{align*}
There is a natural $\Gamma_F$-action on $\Omega$, and $\Omega^{\Gamma_F} = \Omega(F)$. See \cite[\S 6.1]{borelcorvallisarticle} for more details. 

We equip $G(F)$ with the Haar measure giving volume $1$ to $K$. Let $\mathcal H (G(F)\sslash K)$\index{$\mathcal H (G(F)\sslash K)$} be the Hecke algebra\index[n]{Hecke algebra (local, $K$-bi-invariant)} of $\CC$-valued compactly supported locally constant $K$-bi-invariant distributions on $G(F)$. Using the fixed Haar measure, we identify $\mathcal H (G(F)\sslash K)$ with the set of $\CC$-valued compactly supported locally constant $K$-bi-invariant functions on $G(F)$. In the same way we define $\mathcal H(T(F) \sslash T(F) \cap K)$, and we simply write it as  $\mathcal H(T(F) /T(F) \cap K)$\index{$\mathcal H(T(F)/ T(F) \cap K)$} since $T(F)$ is abelian. For any choice of a Borel subgroup $B$ of $G$ containing $T$, the \emph{Satake isomorphism}\index[n]{Satake isomorphism} is the following $\CC$-algebra isomorphism:\index{$\sat^{G}_{K, S}$}
\begin{align}
\label{eq: Satake iso} \sat^{G}_{K, S} : 
\mathcal H(G(F)\sslash K) &   \isom \mathcal H(T(F)/ T(F)\cap K)^{\Omega(F)} \\ \nonumber  f& \longmapsto f_T, ~ f_T(t) = \delta_{B(F)} (t ) ^{-1/2} \int _{N_B(F)} f(nt) dn ,~ \forall t\in T(F),
\end{align}
where $N_B$ is the unipotent radical of $B$, and we normalize the Haar measure $dn$ on $N_B(F)$ such that $N_B(F) \cap K$ has volume $1$. It is known that $\mathcal S^G_{K,S}$ depends only on $K$ and $S$, not on $B$ (see for instance \cite[\S 6.1]{shintemplier}).

\subsection{}\label{para:transition maps for Hecke} We explain how to make both sides of the Satake isomorphism  more canonical, that is, independent of the choices of $K$ and $S$. First note that we have canonical isomorphisms 
$$ \mathcal H(T(F) /  T(F) \cap K) \cong \mathcal H(S(F)/S(F)\cap K) \cong \CC [ X_*(S)] ;$$ see \cite[\S 9.5]{borelcorvallisarticle} and  cf.~\cite[\S 7.2]{cartiersurvey}. 
Moreover, if $S'$ is another maximal split torus in $G$, then there is a canonical isomorphism  $$\CC[X_*(S)]^{\Omega(F)} \isom \CC[ X_*(S')] ^{\Omega'(F)}$$ induced by   conjugation by any $g\in G(F)$ such that $gSg^{-1} = S'$. (Here $\Omega'(F)$ denotes the analogue of $\Omega(F)$ with $S $ replaced by $S'$.) Let \index{$\mathscr A_G$} $$\mathscr A_{G} : = \varprojlim_S \CC[X_*(S)]^{\Omega(F)}, $$ where the projective limit is over all maximal split tori $S$ in $G$, and the transition maps are the above-mentioned canonical isomorphisms. 
For our fixed $v_0$ and $K$, the Satake isomorphisms (\ref{eq: Satake iso}) for various choices of $S$ whose apartments contain $v_0$ induce the same isomorphism\index{$\sat^G_K$}
\begin{align}\label{eq: satake canonical}
\sat^{G}_{K}: \mathcal H(G(F)\sslash K) \isom \mathscr A_{G}.
\end{align}
This is because any such $S$ extends to a maximal split torus in the reductive model of $G$ over $\oo_F$ corresponding to $v_0$, and hence any two such choices of $S$ must be conjugate by an element of $K$; cf.~\cite[XXVI, Prop.~6.16]{SGA3III}.

If $K$ and $K_1$ are two different hyperspecial subgroups of $G(F)$, we have a canonical isomorphism $$(\mathcal S^G_{K_1})^{-1} \circ \mathcal S^G_{K}: \mathcal{H} (G(F)\sslash K)\isom \mathcal H (G(F) \sslash K_1),$$ where $\sat^G_K$ and $\sat^G_{K_1}$ are as in (\ref{eq: satake canonical}). In fact, this isomorphism can be described more concretely as follows. Recall that all hyperspecial subgroups of $G(F)$ are conjugate under $G^{\ad}(F)$. For any $g\in G^{\ad}(F)$ such that $\Int(g)(K_1) =K$, we have an isomorphism $\mathcal H(G(F)\sslash K) \isom \mathcal H(G(F)\sslash K_1)$ sending each $f$ to $f \circ \Int(g)$. We claim that this isomorphism is equal to $(\mathcal S^G_{K_1})^{-1} \circ \mathcal S^G_{K}$, and is in particular independent of the choice of $g$. To verify this, choose $S$ with respect to $K$ as in \S \ref{subsubsec:canonical Satake}, and let $S_1 : = \Int(g^{-1})(S)$. Then $S_1$ is a maximal split torus in $G$ whose apartment contains a hyperspecial point defining $K_1$. Let $T$ (resp.~$T_1$) be the centralizer of $S$ (resp.~$S_1$). By the functoriality of the definition (\ref{eq: Satake iso}), we only need to check that the map 
\begin{align*}
\mathcal H(T(F)/T(F)\cap K)^{\Omega(F)} & \To \mathcal H(T_1 (F)/ T_1(F) \cap K_1)^{\Omega(F)} \\  f & \longmapsto f \circ \Int(g)
\end{align*} is compatible with the canonical isomorphisms $$\mathcal H(T(F)/T(F)\cap K)^{\Omega(F)} \cong \mathscr A_G \cong  \mathcal H(T_1(F)/T_1(F)\cap K_1)^{\Omega(F)}. $$ For this, it suffices to check that the isomorphism $\CC[X_*(S)]^{\Omega(F)} \isom \CC[X_*(S_1)]^{\Omega(F)} $ induced by $\Int(g) : S \isom S_1$ is the same as that induced by $\Int(g_0): S \isom S_1$ for any $g_0 \in G(F)$ with $\Int(g_0)(S) = S_1$. We can further reduce to the case where $S = S_1$, and then it suffices to check that $\gamma = \Int(g) |_S \in \Aut(S)$ comes from $\Omega(F)$. This is true because $\gamma$ lies in $\Omega$ and it stabilizes $S$. The claim is proved. We let \index{$\mathcal H^{\ur} (G)$} $$ \mathcal H^{\ur} (G) : = \varprojlim_K \mathcal H(G(F)\sslash K) ,$$ where the projective limit is over all hyperspecial subgroups $K$ and the transition maps are the canonical isomorphisms. 

In conclusion, the Satake isomorphism can be viewed as a canonical $\CC$-algebra isomorphism\index{$\sat^G$} 
\begin{align}\label{canonical Satake}
\sat^{G} : \mathcal H^{\ur} (G) \isom \mathscr A_{G},
\end{align}where both sides are canonically associated to $G$, not depending on any extra choices. 

\subsection{}\label{subsubsec:Borel isom}
As in \cite[\S 6]{borelcorvallisarticle}, the $\CC$-algebra $\mathscr A_{G}$ has an alternative interpretation in terms of the $L$-group of $G$. To explain this, fix a finite unramified extension $F'/F$ splitting $G$, and let $\sigma_F$\index{$\sigma_F$} be the arithmetic Frobenius generator of $\Gal(F'/F)$. Since $F'$ splits $G$, we may form the $L$-group of $G$ using $\Gal(F'/F)$. We use the symbol $\lang G^{\ur}$\index{$\lang G^{\ur}$} to denote this version of the $L$-group, i.e.,  
$$\lang G ^{\ur}: = \widehat G  \rtimes \Gal(F'/F) = \widehat G  \rtimes \langle \sigma _F \rangle .$$ Inside the $\CC$-algebra of $\CC$-valued functions on the set of semi-simple $\widehat G$-conjugacy classes in $\widehat G \rtimes \sigma_F$, we let\index{$\CC[\ch(\lang G^{\ur})]$} $$\CC[\ch(\lang G^{\ur})]$$be the sub-algebra generated by the restrictions of characters of finite-dimensional representations of $\lang G^{\ur}$. Then there is a canonical isomorphism 
\begin{align}\label{eq:Borel isom}
\mathscr A_{G} \cong \CC[\ch(\lang G^{\ur})]
\end{align} characterized as follows. Let $f\in \mathscr A_G$. Fix a maximal split torus $S$ in $G$, and let $T$ be the centralizer of $S$. Then $\mathscr A_G \cong \CC[X_*(S)]^{\Omega(F)} \subset \CC[X_*(T)]$, so we can view $f$ as a function on the $\CC$-torus $\widehat T$. As usual (cf.~\S \ref{subsubsec:L-group data}), $\widehat G$ is equipped with a Borel pair $(\mathcal T, \mathcal B)$ and an isomorphism $\mathrm
{BRD}(G) \isom \mathrm{BRD}(\mathcal T,\mathcal B)^{\vee}$. In particular, if we choose a Borel subgroup $B$ of $G $ containing $T$, then we get an isomorphism of $\CC$-tori $\widehat T \isom \mathcal T$. In this way we obtain from $f$ a function $f_{\mathcal T} : \mathcal T \to \CC$. The construction $f\mapsto f_{\mathcal T}$ is independent of the choices of $S$ and $B$. The image of $f$ under (\ref{eq:Borel isom}) is characterized by the condition that its value at the $\widehat G$-conjugacy class of $t \rtimes \sigma_F$ is equal to $f_{\mathcal T} (t)$, for all $t \in \mathcal T$.

In the sequel, we shall often make the identification (\ref{eq:Borel isom}) without explicitly mentioning it. Thus we can evaluate an element of $\mathscr A_G$ at a semi-simple $\widehat G$-conjugacy class in $\widehat G \rtimes \sigma_F$ to get a complex number. 

In view of (\ref{eq:Borel isom}),  we can also view the Satake isomorphism as a canonical isomorphism 
\begin{align}
\label{canonical Satake with L group}
\mathcal S^{G}: \mathcal H^{\ur} (G) \isom \CC[\ch(\lang G^{\ur})].
\end{align}
 \subsection{}
 Next we recall a result of Kottwitz. Let $\lambda$ be a cocharacter of $G$ defined over $F$. Assume that $\lambda$ is \emph{minuscule}\index[n]{minuscule}, in the sense that the representation $\mathrm{Ad}\circ \lambda$ of $\GG_m$ on $\Lie G_{\overline F}$ has no weights other than $\set{-1,0,1}$. Let $K$ and $S$ be as in \S \ref{subsubsec:canonical Satake}, and assume that $\lambda$ factors through $S$. Denote by $\Omega(F)\cdot \lambda$ the $\Omega(F)$-orbit of $\lambda$ in $X_*(S)$. Let $f_{K,\lambda} \in \mathcal H(G(F)\sslash K)$\index{$f_{K,\lambda}$} be the characteristic function of $K \lambda(\varpi_F) K$ inside $G(F)$. By the Cartan decomposition,  the dependence of $f_{K,\lambda}$ on $\lambda$ is only through the set $\Omega(F)\cdot \lambda$.   
\begin{thm}[{\cite[Lem.~1.1.3, \S 2]{kottwitztwisted}}]\label{thm:Kottwitz on Satake} 
	We have $$\sat^{G} _{K,S} (f_{K, \lambda}) = q^{\lprod{\rho, \lambda_{\mathrm{dom}}}}\sum_{\lambda'\in \Omega(F) \cdot \lambda} [ \lambda'] ~ \in \CC[X_*(S)]^{\Omega(F)}, $$ 
		where $\rho$ is the half sum of a fixed set of positive (absolute) roots in $X^*(Z_G(S))$, and $\lambda_{\mathrm{dom}}$ is any element of $\Omega(F) \cdot \lambda$ which is dominant with respect to the same choice of positive roots. Moreover, the element of $\mathscr A_G$ corresponding to $\sat^{G} _{K,S} (f_{K, \lambda})\in \CC[X_*(S)]^{\Omega(F)}$ depends only on the $G(F)$-conjugacy class of $\lambda$, not on $K$ or $S$. \qed
\end{thm}

\begin{defn}\label{defn:canonical fn a la Kottwitz}
	Let $\lambda $ be a minuscule cocharacter of $G$ defined over $F$. We write \index{$f_{\lambda}$}
	$$f_{\lambda} \in \mathcal H^{\ur} (G)$$ for the element corresponding to $f_{K,\lambda} \in \mathcal H(G(F)\sslash K)$, for some choice of $K$ and $S$  as in \S \ref{subsubsec:canonical Satake} such that $\lambda$ factors through $S$. By Theorem \ref{thm:Kottwitz on Satake}, $f_{\lambda}$ depends only on the $G(F)$-orbit of $\lambda$, not on any extra choices. 
\end{defn}

\subsection{} \label{subsubsec:setting for const term}
We now discuss the compatibility between the Satake isomorphisms and the constant term maps. Let $K$, $S$, and $T$ be as in \S \ref{subsubsec:canonical Satake}. Let $M$ be a Levi component of a parabolic subgroup $P$ of $G$. Assume that $M \supset T$. Let $N_P$ be the unipotent radical of $P$. Then $M(F) \cap K$ is a hyperspecial subgroup of $M(F)$. We define the \emph{constant term map}\index[n]{constant term map} \index{$(\cdot)_{M}$}
 \begin{align}\label{defn: partial satake}
(\cdot)_{M}: \mathcal H (G(F)\sslash K)  & \To \mathcal H(M(F)\sslash M(F)\cap K) \\ \nonumber
f & \longmapsto f_M, ~  f_M(m) = \delta _{P(F)} (m)^{-1/2} \int_{ N_P(F)} f(nm) dn, ~ m\in M(F),
 \end{align}
 where the Haar measure $dn$ on $N_P(F)$ is normalized by the condition that $N_P(F)\cap K$ has volume $1$. 
 
 \begin{rem}
 	The constant term map can be defined more generally for $C^{\infty}_c$ functions; see for instance \cite[\S 7.13]{GKM} or \cite[\S 6.1]{shintemplier}. In \cite{shintemplier} the map (\ref{defn: partial satake}) is called the \emph{partial Satake transform}. When $M=T$, the map (\ref{defn: partial satake}) is the same as $\mathcal S^G_{K,S}$ in (\ref{eq: Satake iso}). 
 \end{rem} 

\begin{lem}\label{lem:partial Satake} In the setting of \S \ref{subsubsec:setting for const term}, let $\Omega_M(F)$ be the relative Weyl group of $M$ defined using $S$. Then $\Omega_M(F)$ is a subgroup of $\Omega(F)$ when both groups are viewed as subgroups of $\GL(X_*(S))$. Moreover, we have a commutative diagram:
$$ \xymatrixcolsep{5pc} \xymatrix{  \mathcal H (G(F) \sslash K) \ar[d]^{(\cdot)_M} \ar[r]^{\mathcal S ^{G} _{K,S}} & \CC[X_*(S)] ^{\Omega(F)} \ar[d] \\ \mathcal H (M(F)\sslash M(F)\cap K)  \ar[r]^----{\mathcal S^{M}_{M(F)\cap K, S}} & \CC[X_*(S)] ^{\Omega_M(F)} }$$ where the right vertical arrow is the inclusion. 
\end{lem} 
 \begin{proof}
 	This is well known. See for instance \cite[\S 12.3]{haines-rostami} or \cite[\S 2, \S 6]{shintemplier}.
 \end{proof}
\begin{prop}\label{canonical constant term} In the setting of \S \ref{subsubsec:setting for const term}, the constant term map (\ref{defn: partial satake}) induces a canonical map 
	\begin{align}\label{eq:canonical const term}
	 (\cdot )_M : \mathcal{H}^{\ur}  (G) \To \mathcal H^{\ur} (M)\end{align}
	 which depends only on $M$, not on  $K,S,P$. 
\end{prop} 
\begin{proof}
	This follows from Lemma \ref{lem:partial Satake}, and the fact that for all maximal split tori $S$ in $M$, the inclusion maps $\CC[X_*(S)]^{\Omega(F)}\to \CC[X_*(S)]^{\Omega_M(F)} $ induce the same map $\mathscr A_G \to \mathscr A_M$. 
\end{proof}
\begin{rem} There is a canonical $\widehat G$-conjugacy class of embeddings $\lang M ^{\ur}\hookrightarrow \lang G ^{\ur}$, and these embeddings induce via pull-back a common canonical map 
	\begin{align}\label{eq:canonical map between ch}
	\CC[\ch(\lang G^{\ur})] \To \CC[\ch(\lang M^{\ur})]. 
	\end{align}
	Under the canonical Satake isomorphism (\ref{canonical Satake with L group}) and its analogue for $M$, the canonical constant term map (\ref{eq:canonical const term}) corresponds to (\ref{eq:canonical map between ch}); cf.~\cite[Rmk.~2.8]{shintemplier}. From this description, one sees that (\ref{eq:canonical const term}) depends on the embedding $M \hookrightarrow G$ only up to $G(F)$-conjugacy.  
\end{rem}

\section{The twisted transfer map}\label{subsec:twisted transfer}

We recall the formalism of the twisted transfer map. We keep the notation and setting of \S \ref{subsec:Satake}. We still let $G$ be an arbitrary unramified reductive group over $F$. Fix a positive integer $a$ and let $F_a$ be the degree $a$ unramified extension of $F$.

\subsection{}\label{subsubsec:Weil restriction} We first recall some facts concerning Weil restriction of scalars.

Let $R : = \Res_{F_a/F} G$. Then $\widehat R $ together with the $\Gal(F^{\ur}/F)$-action on it can be identified with $\prod _{i=1}^a \widehat G$, on which the arithmetic Frobenius generator $\sigma_F$ of  $\Gal(F^{\ur}/F)$ acts by  
$$ \sigma _F(x_1,\cdots, x_a) =  (\sigma_F (x_2), \cdots, \sigma_F (x_{a-1}), \sigma_F (x_1) ). $$

We have a canonical isomorphism $\mathscr A_R \cong \mathscr A_{G_{F_a}}$, where $\mathscr A_{G_{F_a}}$ is formed with respect to $G_{F_a}$ over the base field $F_a$ instead of $F$. This isomorphism is  characterized as follows. Let $S'$ be a maximal $F_a$-split torus in $G_{F_a}$. Then $\Res_{F_a/F} S'$ is an $F$-rational torus in $R$, and its maximal $F$-split subtorus $U$ is a maximal $F$-split torus in $R$.   We have $$(\Res_{F_a/F} S')\otimes_F F_a \cong \prod_{\iota\in \Gal(F_a/F)} S'. $$ Let $\pi : (\Res_{F_a/F} S')\otimes_F F_a  \to S'$ be the projection to the factor corresponding to $\id \in \Gal(F_a/F)$. Composing the inclusion map $U \hookrightarrow \Res_{F_a/F} S'$   (or more precisely, its base change to $F_a$) with $\pi$, we obtain a map $U_{F_a} \to S'$, which is in fact an $F_a$-isomorphism. The resulting isomorphism $X_*(U) \isom X_*(S')$ then induces the canonical isomorphism $\mathscr A_R \cong \mathscr A_{G_{F_a}}$.

Under the isomorphism  $\mathscr A_R \cong \mathscr A_{G_{F_a}}$, suppose an element $f' \in \mathscr A_R $ corresponds to $f \in \mathscr A_{G_{F_a}}$. We would like to have a formula, in terms of $f$, for the evaluation of $f'$ at an element $$ (g_1, \cdots, g_a) \rtimes \sigma _F \in \lang R ^{\ur} = ( \prod_{i=1}^a \widehat G) \rtimes \langle \sigma_F \rangle, $$
where $g_1,\cdots, g_a$ are arbitrary semi-simple elements of $\widehat{G}$. (Here $\langle \sigma_F \rangle$ is understood as either the unramified Weil group $W_F^{\ur}$ or a sufficiently large finite quotient of it. In all cases $\sigma_F$ is a generator.) Working through the definitions, we obtain the desired formula as follows:
\begin{align}\label{eq:f and f'}
	f' ((g_1,\cdots, g_a) \rtimes \sigma_F ) = f ( g_1 \sigma(g_2) \cdots \sigma^{a-1} (g_a) \rtimes \sigma_F^a). 
\end{align}
Here, $g_1 \sigma(g_2) \cdots \sigma^{a-1} (g_a) \rtimes \sigma_F^a$ is an element of $\lang (G_{F_a})^{\ur} = \widehat{G} \rtimes \langle\sigma_F^a \rangle$, the unramified Langlands dual group of $G_{F_a}$ formed with respect to the base field $F_a$ (so the Galois part is generated by $\sigma_F^a$), and hence we can evaluate $f$ at this element.

\subsection{}\label{subsubsec:twisted transfer} Consider an endoscopic datum $(H,\mathcal H, s,\eta)$ for $G$. 
For simplicity, assume that $\mathcal H  = \lang H$ and  $s \in \eta(Z(\widehat H)^{\Gamma_F})$; these assumptions will be met in our applications. We assume that $(H, \lang H, s,\eta)$ is \emph{unramified}\index[n]{unramified (endoscopic datum)}, meaning that the following two conditions are satisfied: 
\begin{enumerate}
	\item The group $H$ is unramified over $F$. In particular, the action of $\Gamma_F$ on $\widehat H$ factors through $\Gal(F^{\ur}/F)$. 
	\item The map $\eta: \lang H \to \lang G$ is induced by an $L$-embedding $\lang H^{\ur} \to \lang G^{\ur}$. Here $\lang H^{\ur}$ and $\lang G^{\ur}$ denote the $L$-groups formed with $\Gamma'$, where $\Gamma'$ is either the unramified Weil group $W_F^{\ur}$ or a sufficiently large finite quotient of it. In all cases we denote by $\sigma_F$ the arithmetic Frobenius generator of $\Gamma'$. 
\end{enumerate}

Let $R = \Res_{F_a/F}G$. Define a homomorphism \index{$\tilde \eta$}
$$  \tilde \eta: \lang H^{\ur} = \widehat{ H} \rtimes \langle \sigma_F \rangle \To \lang R^{\ur}  = \left( \prod_{i=1}^a \widehat G \right) \rtimes \langle \sigma_F \rangle,  $$ by 
$$ \widehat H \ni x \longmapsto (\eta (x), \cdots, \eta(x) ) \rtimes 1, $$ and 
\begin{align}\label{convenient choice of t}
1\rtimes \sigma_F \longmapsto (s^{-1} \eta(\sigma_F) \sigma_F^{-1},\eta(\sigma_F) \sigma_F^{-1},\cdots, \eta(\sigma_F) \sigma_F^{-1} ) \rtimes \sigma_F.
\end{align}
Let \index{$\tilde \eta^*$} $$
\tilde \eta^*: \CC[\ch(\lang R^{\ur})] \To \CC[\ch(\lang H ^{\ur})] $$ be the map induced by the pull-back along $\tilde \eta$. 
As we have explained in \S \ref{subsubsec:Borel isom}, the source and target of $\tilde \eta^*$ are canonically identified with $\mathscr A_R$ and $\mathscr A_H$ respectively. Also, as in \S \ref{subsubsec:Weil restriction} we have $\mathscr A_R \cong \mathscr A_{G_{F_a}}$. We can thus view $\tilde \eta^*$ as a map 
$$ \tilde \eta^*: \mathscr A_{G_{F_a}} \To \mathscr A_H. $$ 
We call this map the \emph{twisted transfer map}. \index[n]{twisted transfer map} If we identify the two sides with $\mathcal H^{\ur}(G_{F_a})$ and $\mathcal H^{\ur}(H)$ respectively using the canonical Satake isomorphisms, we obtain a map $\mathcal H^{\ur}(G_{F_a}) \to   \mathcal H^{\ur}(H)$ which is also called the twisted transfer map. 
\begin{lem}\label{lem:essential tb}
	Let $f \in \mathscr A_{G_{F_a}}$, and let $x$ be a semi-simple element of $\widehat H$. Write $\eta(x\rtimes \sigma_F)^a = z \rtimes \sigma_F^a$, with $z \in \widehat G$. Then 
the evaluation of $\tilde \eta^*(f) \in \mathscr A_H$ at $x\rtimes \sigma_F \in \lang H^{\ur}$ is equal to 
$$ f(s^{-1} z  \rtimes \sigma_F^a ).
 $$ Here we have $s^{-1}z \in \widehat G$, and $s^{-1} z  \rtimes \sigma_F^a $ is an element of $\lang (G_{F_a})^{\ur} = \widehat{G} \rtimes \langle \sigma_F^a\rangle$, so we can evaluate $f$ at $s^{-1} z  \rtimes \sigma_F^a $. 
\end{lem}
\begin{proof}
Write $y$ for $\eta(x\rtimes \sigma_F) \sigma_F^{-1} \in \widehat G$. Let $f' \in \mathscr A_R$ be the element corresponding to $f$ under $\mathscr A_R \cong \mathscr A_{G_{F_a}}$. We compute 
\begin{align*}
 \tilde \eta^*(f) (x\rtimes \sigma_F)&  =  f' (\tilde \eta(x \rtimes \sigma_F)) = f' (( s^{-1}y , y ,\cdots, y ) \rtimes \sigma_F ) \\ & = f(s^{-1}y \sigma(y) \cdots \sigma^{a-1}(y)  \rtimes \sigma_F^a ) \\ & = f(s^{-1} z  \rtimes \sigma_F^a ). 
\end{align*}
Here the third equality follows from (\ref{eq:f and f'}). 
\end{proof}

\begin{rem} In the above definition of $\tilde \eta$ we have taken advantage of the simplifying assumptions $\mathcal H  = \lang H$ and $s \in \eta(Z(\widehat H)^{\Gamma_F})$. For the definition in more general situations, see \cite[\S 7]{kottwitzannarbor} or \cite[\S 7.4]{KSZ}. Under our simplifying assumptions, the formula (\ref{convenient choice of t}) can also be replaced by $$1\rtimes \sigma_F \longmapsto (t_1 \eta(\sigma_F) \sigma_F^{-1} , t_2 \eta(\sigma_F) \sigma_F^{-1}, \cdots, t_a \eta(\sigma_F) \sigma_F^{-1} ) \rtimes \sigma_F$$ for any choices of $t_1,\cdots, t_a \in \eta(Z(\widehat H)^{\Gamma_F})$ such that $t_1 t_2\cdots t_a= s^{-1}$. In fact, such a replacement does not change the conclusion of Lemma \ref{lem:essential tb}. We have chosen $t_1= s^{-1}$ and $t_2= \cdots = t_a =1$ for definiteness. 
\end{rem}
\subsection{} \label{para:BC}
As a special case of the twisted transfer map, consider the trivial endoscopic datum $(G, \lang G, 1, \id)$ for $G$, which makes sense since $G$ is quasi-split. Then we obtain the so-called \emph{base change map} 
$$ \mathscr A_{G_{F_a}} \To \mathscr A_G,$$ also viewed as a map $$\mathcal H^{\ur}(G_{F_a}) \to   \mathcal H^{\ur}(G).$$

\ignore{
\subsection{}\label{subsubsec:explicate}
 Next we would like to explicate the map $\tilde \eta^* : \CC[\ch(\lang R^{\ur})] \to \CC[\ch(\lang H ^{\ur})]  $ under the canonical isomorphisms $$\CC[\ch(\lang R^{\ur})] \cong \mathscr A_{R} \cong \mathscr A_{G_{F_a}} $$ and $$ \CC[\ch(\lang H^{\ur}) ] \cong \mathscr A_{H}. $$ 

  Fix a maximal $F$-split torus $S_H$ in $H$ and let $T_H$ be its centralizer in $H$. Since $G$ is quasi-split, we know that $T_H$ \emph{transfers} to a maximal torus $T$ in $G$. The precise meaning of the last statement is the following. Firstly, there is a canonical $G(\overline F)$-conjugacy class of embeddings $T_{H,\overline F} \to G_{\overline F}$. These are the so-called \emph{admissible embeddings} (over $\overline F$), defined in the same way as in the archimedean case (see the discussion before Lemma \ref{e.d. and j}). Then it follows from the quasi-splitness of $G$ that there exist admissible embeddings $(T_H)_{\overline F} \to G_{\overline F}$ which are defined over $F$; see \cite[\S 1.3]{LS87}. If $\iota: T_H \to G$ is such an admissible embedding defined over $F$, the image $\iota(T_H)$ is a maximal torus in $G$, and we say that $T_H$ \emph{transfers} to $\iota(T_H)$. We call $\iota$ an \emph{admissible isomorphism over $F$} between $T_H$ and $\iota(T_H)$. 
 
 Suppose $T_H$ transfers to $T$ as above. Fix an admissible isomorphism $\iota : T_H \isom T$ over $F$. 
Since $T_H$ is a minimal Levi subgroup of $H$ over $F$ and since $H$ is split over $F_a$, we know that $T_H$ and $T$ are split over $F_a$, and in particular $G$ is split over $F_a$.\footnote{We caution the reader that the $F$-split rank of $T$, which equals the $F$-split rank of $H$, may be strictly smaller than that of $G$.} The isomorphism $\iota$ induces a Galois-equivariant isomorphism  \begin{align}\label{Galois equiv iso} \iota_* : 
X_*(T_H) \isom X_*(T).
\end{align}
 Moreover, via $\iota$ we may identify $\Omega_H$ (the absolute Weyl group of $H$ defined using $T_H$) with a subgroup of $\Omega$ (the absolute Weyl group of $G$ defined using $T$). With respect to the last identification, the map (\ref{Galois equiv iso}) is $\Omega_H$-equivariant.
 
 The element $s\in Z(\widehat H)^{\Gal(\overline F/F)}$ defines a canonical homomorphism $\lprod{\cdot, s} : X_*(T_H) \to \CC^{\times}$ that is $\Omega_H$-invariant and $\Gal(\overline F/F)$-invariant. For each $\xi \in X_*(T_H)$, we define $$\N \xi : = \xi + \sigma(\xi) + \cdots + \sigma^{a-1} (\xi). $$ Since $\sigma^a$ acts as the identity on $X_*(T_H)$, we have $\N \xi \in X_*(T_H)^{\sigma} = X_*(S_H)$. Consider the map
 \begin{align*}
F_{\iota}: \CC[ X_*(T)] ^{\Omega} & \To \CC [X_*(S_H)] \\  \sum_{\chi \in X_*(T)}c_{\chi}[\chi] & \longmapsto \sum _{\chi \in X_*(T)} c_{\chi } \lprod{\iota_*^{-1}(\chi), s} [\N ( \iota_*^{-1}(\chi) ) ] , ~ c_{\chi} \in \CC. 
 \end{align*} 
 \begin{lem} The map $F_{\iota}$ is independent of the choice of $\iota$ among admissible isomorphisms over $F$ between $T_H$ and $T$. Moreover, the image of $F_{\iota}$ is contained in $\CC[X_*(S_H)] ^{\Omega_H (F)}$.  
 \end{lem}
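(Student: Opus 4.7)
The plan is to prove both claims by carrying out the computations directly on $\Omega(F_a)$-orbit sums, exploiting the interplay between the embedding $\Omega(F) \subset \Omega(F_a)$ and the Weyl-invariance of $s$.

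First, I would verify that for each $\chi \in X_*(S')$, the element $N\chi := \chi + \sigma\chi + \cdots + \sigma^{a-1}\chi$ lies in $X_*(S) \subset X_*(T)$. Indeed, $\sigma^a\chi = \chi$ because $\chi$ is $F_a$-rational, so $\sigma(N\chi) = N\chi$. Since $\iota$ is Galois-equivariant, $\iota^{-1}(N\chi) \in X_*(T_H)^{\sigma} = X_*(S_H)$. This shows that the image of $f_{\iota}$ lies in $\CC[X_*(S_H)]$ (before we worry about invariants).

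Next, for the $\Omega_H(F)$-invariance of the image, I would fix an $\Omega(F_a)$-orbit $O$ in $X_*(S')$ and write the corresponding invariant $\eta_O := \sum_{\chi \in O}[\chi]$. Given $\omega_H \in \Omega_H(F)$, letting $\omega := \iota_*\omega_H \in \Omega(F) \subset \Omega(F_a)$, one computes
\[
\omega_H \cdot f_\iota(\eta_O) = \sum_{\chi \in O} \langle \iota^{-1}\chi, s\rangle \bigl[\iota^{-1}\omega\bigl(\chi + \sigma\chi + \cdots\bigr)\bigr].
\]
Since $\omega \in \Omega(F)$ commutes with $\sigma$, applying $\omega$ inside the bracket permutes the terms of $N\chi$, so $\omega(N\chi) = N(\omega\chi)$. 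Re-indexing by $\chi \mapsto \omega\chi$ (a bijection of $O$ to itself, since $\omega \in \Omega(F_a)$) and using $\Omega_H$-invariance of $s \mapsto \langle{-},s\rangle$ (coming from $s \in Z(\widehat H)^{\Gal(\bar F/F)}$), we recover $f_\iota(\eta_O)$. Hence the image is $\Omega_H(F)$-fixed.

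Finally, for independence of $\iota$, I would recall that any two admissible isomorphisms $\iota_1, \iota_2: T_H \isom T$ differ by the action of $\Omega(F)$ on the target (equivalently, by the image of $\Omega_H(F)$ under $\iota_{1,*}$ on the source). Writing $\iota_2 = \omega \iota_1$ with $\omega \in \Omega(F)$ and performing the substitution $\chi'' = \omega^{-1}\chi$ in the orbit sum, the same two ingredients — $\omega \in \Omega(F_a)$ preserves each $\Omega(F_a)$-orbit $O$, and $\omega$ commutes with $\sigma$ so that $\omega^{-1}(N\chi) = N(\omega^{-1}\chi)$, together with $\Omega_H$-invariance of the pairing against $s$ — show that $f_{\iota_2}(\eta_O) = f_{\iota_1}(\eta_O)$. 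The main obstacle is essentially bookkeeping: ensuring that the three Weyl groups $\Omega_H(F)$, $\Omega(F)$, $\Omega(F_a)$ are consistently identified via $\iota_*$ and the inclusion $\Omega(F) \hookrightarrow \Omega(F_a)$, and that the $\sigma$-action and the Weyl-action commute in the relevant way, but no deeper input is needed beyond the Galois equivariance of $\iota$ and the centrality of $s$.
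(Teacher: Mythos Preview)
Your proposal is correct and follows essentially the same approach as the paper: both arguments rest on the three ingredients you identify — that $N\chi$ is $\sigma$-fixed, that elements of $\Omega(F)$ commute with $\sigma$ and lie in $\Omega(F_a)$ (hence permute each $\Omega(F_a)$-orbit), and that the pairing $\langle\cdot,s\rangle$ is $\Omega_H(F)$-invariant. The only cosmetic differences are the order of presentation and that the paper works with a general element $\sum_\chi c_\chi[\chi]$ satisfying $c_{\omega\chi}=c_\chi$ rather than with orbit sums $\eta_O$; your parenthetical ``equivalently'' in the description of admissible isomorphisms is a bit terse (in general two admissible isomorphisms differ by \emph{both} an $\Omega_H(F)$-twist on the source and an $\Omega(F)$-twist on the target, and one then absorbs the former into the latter via $\iota_*$), but the reduction you perform is exactly the one in the paper.
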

 \begin{proof}
 Let $\iota'$ be another admissible isomorphism over $F$ between $T_H$ and $T$. Then there exists $\omega_0 \in \Omega = \Nor _G( T)/ T$ such that $\iota' = \omega_0 \circ \iota$.  	Take an arbitrary element 
 \begin{align}\label{eq:arbitrary x}
x= \sum_{\chi\in X_*(T)}c_{\chi} [\chi] \in \CC[X_*(T)] ^{\Omega}.
 \end{align}
 Then we have $c_{\omega \chi} = c_{\chi}$ for all $\chi \in X_*(T),  \omega \in \Omega$. We compute 
 	\begin{align*}
F_{\iota'}(x) & = \sum_{\chi \in X^*(T)}c_{\chi } \lprod{\iota_*^{-1} \omega_0^{-1}\chi ,s} [\N (\iota_*^{-1} \omega_0^{-1}\chi)]  = \sum_{\chi \in X^*(T)} c_{\omega_0 \chi } \lprod{\iota_*^{-1}\chi ,s} [\N (\iota_*^{-1} \chi)] = F_{\iota}(x),
 	\end{align*}
 	where the last equality is because $c_{\omega_0 \chi} = c_{\chi}$. It follows that $F_{\iota'} = F_{\iota}$, as desired. 
 	
 	 Next we show that for an arbitrary $x$ as in (\ref{eq:arbitrary x}), the element $F_{\iota}(x) \in \CC [ X_*(S_H)]$ is fixed by $\Omega_H(F)$. Let $\omega_1\in \Omega_H(F)$ be arbitrary. As before, we identify $\Omega_H$ with a subgroup of $\Omega$. We know that the map $\iota_*$ is $\Omega_H$-equivariant. We compute 
 	\begin{align*}
\omega_1 (F_{\iota}(x))& =  \sum_{\chi } c_{\chi} \lprod{\iota_*^{-1} \chi ,s} [\omega_1  \N (\iota_*^{-1} \chi)] \\ & =   \sum_{\chi } c_{\chi} \lprod{\iota_*^{-1} \chi ,s} [ \N  \omega_1  (\iota_*^{-1} \chi)]  & \text{because }\omega_1 \in \Omega_H(F) \\ &=  \sum_{\chi } c_{\chi} \lprod{\iota_*^{-1} \chi ,s} [\N  (\iota_*^{-1}  \omega_1 \chi)]  & \text{because $\iota_*$ is $\Omega_H$-equivariant} \\ & = \sum_{\chi } c_{\omega_1^{-1}\chi} \lprod{\iota_*^{-1} \omega_1^{-1} \chi ,s} [\N ( \iota_*^{-1}\chi)] \\
& = \sum_{\chi } c_{\chi} \lprod{\iota_*^{-1} \omega_1^{-1} \chi ,s} [\N ( \iota_*^{-1}\chi)] & \text{because }c_{\omega_1^{-1} \chi} = c_\chi   \\
& = \sum_{\chi } c_{\chi} \lprod{ \omega_1^{-1} \iota_*^{-1} \chi ,s} [\N ( \iota_*^{-1}\chi)] & \text{because $\iota_*$ is $\Omega_H$-equivariant}   \\ 
  &= \sum_{\chi } c_{\chi} \lprod{\iota_*^{-1} \chi ,s} [\N ( \iota_*^{-1}\chi)] & \text{because $\lprod{\cdot, s}$ is $\Omega_H$-invariant} \\
   & = F_{\iota}(x).
 	\end{align*} 
 	Hence the image of $F_{\iota}$ is contained in $\CC[X_*(S_H)] ^{\Omega_H(F)}$, as desired.  
 \end{proof}
 
We keep the assumption that $H$ is split over $F_a$. We have seen that this implies that $G$ is split over $F_a$. We have also seen that $T_{F_a}$ is a split maximal torus of $G_{F_a}$. 
 \begin{prop}\label{prop:concrete twisted transfer}
 	Under the canonical isomorphisms $$\CC[\ch(\lang R^{\ur})] \cong \CC[\ch(\lang (G_{F_a})^{\ur})] \cong \CC[X_*(T)]^{\Omega} $$ and $$\CC[\ch(\lang H^{\ur})] \cong \CC[X_*(S_H)] ^{\Omega_H(F)},$$ the map $$(\tilde \eta)^*: \CC[\ch(\lang R^{\ur}) ] \to \CC [\ch(\lang H ^{\ur})]$$ corresponds to the map $$F_{\iota}: \CC[ X_*(T)]^{\Omega} \to \CC[X_*(S_H)] ^{\Omega_H (F)}.$$
 \end{prop}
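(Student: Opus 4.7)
Plan: The proposition is a direct computation. Since both $\tilde\eta^{*}$ and $f_{\iota}$ are $\CC$-algebra homomorphisms and the canonical Satake isomorphisms realize the four algebras as subalgebras of group algebras of cocharacter lattices of dual tori, it suffices to verify the formula on monomial basis elements $[\mu]$, $\mu\in X_{*}(S')$. Equivalently, one computes the pullback $\tilde\eta^{*}[\mu]$ as a $\CC$-valued function on semi-simple torus elements $t_{H}\rtimes\sigma\in\mathcal T_{H}\rtimes\sigma$, using the explicit formula \eqref{convenient choice of t} for $\tilde\eta$.

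Recall first that the canonical isomorphism $\ch(\lang R^{\ur})\cong\ch(\lang G_{F_{a}}^{\ur})$ is induced by the norm map on conjugacy classes
\[
\mathrm{Nm}:\widehat R\rtimes\sigma\to \widehat G\rtimes\sigma^{a},\qquad (g_{1},\dots,g_{a})\rtimes\sigma\ \longmapsto\ g_{1}\,\sigma(g_{2})\cdots\sigma^{a-1}(g_{a})\rtimes\sigma^{a},
\]
so that $[\mu]\in\CC[X_{*}(S')]^{\Omega(F_{a})}$ corresponds, on the torus, to $\mu\circ\mathrm{Nm}$. Writing $u_{\sigma}\in\widehat G$ for the $\widehat G$-component of $\eta(1\rtimes\sigma)$, formula \eqref{convenient choice of t} and multiplication in $\widehat R\rtimes W_{F}$ give
\[
\tilde\eta(t_{H}\rtimes\sigma)=\bigl(\eta(t_{H})\,s^{-1}u_{\sigma},\ \eta(t_{H})u_{\sigma},\ \ldots,\ \eta(t_{H})u_{\sigma}\bigr)\rtimes\sigma.
\]
Applying $\mathrm{Nm}$ and using that $s\in Z(\widehat H)^{\Gal(\bar F/F)}$ commutes with $\eta(\widehat H)$ and is fixed by $\sigma$, the norm collapses (up to $\widehat G$-conjugation, which is invisible to $\Omega(F_{a})$-invariant characters) to
\[
s^{-1}\cdot\eta\bigl(t_{H}\sigma(t_{H})\cdots\sigma^{a-1}(t_{H})\bigr)\;=\;s^{-1}\cdot\eta(\N t_{H}).
\]
The leftover product $u_{\sigma}\sigma(u_{\sigma})\cdots\sigma^{a-1}(u_{\sigma})$ equals the $\widehat G$-component of $\eta(1\rtimes\sigma^{a})$ by the cocycle relation for the L-homomorphism $\eta$, and makes no contribution to the semi-simple conjugacy class of the outcome once we project onto the torus.

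Finally, for $\mu\in X_{*}(S')$ one evaluates
\[
\tilde\eta^{*}[\mu](t_{H}\rtimes\sigma)\;=\;\mu\bigl(s^{-1}\eta(\N t_{H})\bigr)\;=\;\langle \iota^{-1}\mu,\,s\rangle\cdot\bigl(\iota^{-1}(\N\mu)\bigr)(t_{H}),
\]
where the first equality uses that $\eta|_{\mathcal T_{H}}:\mathcal T_{H}\to\mathcal T$ is dual to $\iota^{-1}$ on character lattices (admissibility of $\iota$) and the second uses the $\Gal(\bar F/F)$-equivariance of $\iota$. Summed with coefficients $c_{\mu}$, this is precisely $f_{\iota}(\sum c_{\mu}[\mu])$ evaluated at $t_{H}$, which completes the verification. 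The main obstacle in writing out a full proof is careful bookkeeping of conventions: the direction of $\iota$ versus $\iota^{-1}$ on the dual tori, the placement of $s$ versus $s^{-1}$ in the pairing $\langle\cdot,s\rangle$, and checking that the specific cocycle representative in \eqref{convenient choice of t} does not affect the final map (which is built in via the preceding lemma). Once these conventions are pinned down, the argument above is a direct calculation.
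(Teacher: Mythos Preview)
Your approach is the same as the paper's---both amount to unwinding the definition of $\tilde\eta$ together with the Satake/Borel dictionary---and your outline is essentially correct. The paper's proof is a single line deferring to \cite[\S 6]{borelcorvallisarticle}, so your more explicit version is useful. Two points deserve tightening.

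First, the step where you drop the ``leftover product'' $u_a := u_\sigma\,\sigma(u_\sigma)\cdots\sigma^{a-1}(u_\sigma)$ is not justified by the sentence you wrote. In general $u_a$ need not lie in $\mathcal T$ (think of the even orthogonal case where $\eta(\sigma)$ involves a permutation matrix), so one cannot simply ``project onto the torus'' and ignore it. The clean fix is to observe that your norm computation in fact yields
\[
s^{-1}\,\eta(\N t_H)\,u_a \rtimes \sigma^a \;=\; \eta\bigl(s^{-1}\N t_H \rtimes \sigma^a\bigr),
\]
since $u_a\rtimes\sigma^a=\eta(1\rtimes\sigma^a)$ and $s\in Z(\widehat H)$. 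Now invoke the \emph{untwisted} endoscopic compatibility of the Satake isomorphism over $F_a$: evaluation of $[\mu]$ at $\eta(x)$ equals evaluation of $[\iota^{-1}\mu]$ at $x$, for $x\in\mathcal T_H\rtimes\sigma^a$. This absorbs $u_a$ without any ad hoc projection, and gives $(\iota^{-1}\mu)(s^{-1}\N t_H) = \langle\iota^{-1}\mu,s^{-1}\rangle\cdot(\iota^{-1}\N\mu)(t_H)$.

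Second, note the outcome has $s^{-1}$ rather than $s$ in the pairing. As you already flag, this is a convention issue (direction of the norm map, or the sign in the twisting cocycle), and in any case is invisible in the paper's applications since $s^2=1$ throughout.
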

 \begin{proof}
 	This follows from the definition of $\tilde \eta$ and the various definitions in \cite[\S 6]{borelcorvallisarticle}.
 \end{proof}
}

\ignore{
\section[Ordinary transfer and base change]{The ordinary transfer map and the base change map}
\subsection{}
Keep the setting and notation of \S \ref{subsec:twisted transfer}. The pull-back along $\eta: \lang H^{\ur} \to \lang G^{\ur}$ defines a map \index{$\eta^*$} $$ \eta^* : \CC[\ch(\lang G^{\ur})] \To \CC[\ch(\lang H ^{\ur})],$$ called 
the \emph{ordinary transfer map}.\index[n]{ordinary transfer map} We also view it as a map 
$ \mathscr A_G \to \mathscr A_H$
or $ \mathcal H^{\ur}(G)\to \mathcal H^{\ur}(H).$

Let $a $ be a positive integer, and let $R = \Res_{F_a/F} G.$ We have the ``diagonal'' homomorphism 
\begin{align*}
\Delta: \lang G^{\ur} = \widehat{G}\rtimes \Gamma' & \To \lang R^{\ur} = \prod_{i=1}^a \widehat{G} \rtimes \Gamma' \\ 
x \rtimes \tau & \longmapsto (x,\cdots, x) \rtimes \tau. 
\end{align*}

Consider the map   \index{$\Delta^*$} $$
\Delta^*: \CC[\ch(\lang R^{\ur})] \To \CC[\ch(\lang G ^{\ur})] $$   induced by the pull-back along $\Delta$. We identify the two sides with $\mathscr A_R$ and $\mathscr A_G$ respectively, and we further identify $\mathscr A_R$ with $\mathscr A_{G_{F_a}}$. We can thus view $\Delta^*$ as a map 
$$ \Delta^*: \mathscr A_{G_{F_a}} \To \mathscr A_G. $$ 
We call this map the \emph{base change map}\index[n]{base change map}, and also view it as a map $\mathcal H^{\ur}(G_{F_a}) \to   \mathcal H^{\ur}(G).$
\begin{rem}\label{rem:BC is special case}
	The base change map is in fact a special case of the twisted transfer map in \S \ref{subsubsec:twisted transfer}, by taking the endoscopic datum $(H, \mathcal H, s, \eta)$ to be the trivial datum $(G, \lang G, 1, \id)$. 
\end{rem}
\ignore{
\begin{lem}\label{lem:essential BC}
	Let $f \in \mathscr A_{G_{F_a}}$, and let $x$ be a semi-simple element of $\widehat G$. Write $(x\rtimes \sigma_F)^a = z \rtimes \sigma_F^a$, with $z \in \widehat G$. Then 
	the evaluation of $\Delta^*(f) \in \mathscr A_G$ at $x\rtimes \sigma_F \in \lang G^{\ur}$ is equal to 
	$ f(z  \rtimes \sigma_F^a )$, where $z  \rtimes \sigma_F^a $ is viewed as an element of  $\lang (G_{F_a})^{\ur} = \widehat{G} \rtimes \langle \sigma_F^a\rangle$. 
\end{lem}
\begin{proof}
	This is a special case of Lemma \ref{lem:essential tb}. 
\end{proof}}

}

\section{Explicit description of the twisted transfer map}
 We now make the construction in \S \ref{subsec:twisted transfer} explicit for unramified special orthogonal groups. 
 \subsection{}\label{subsubsec:A_G explicit} We first make explicit the group $\mathscr A_G$ and the evaluation of its elements at semi-simple $\widehat G$-conjugacy classes in $\widehat G \rtimes \sigma_F$.

 We now keep the setting and notation of \S \ref{sec:endoscopic}, specialized to the case where $F$ is a finite extension of $\QQ_p$. In particular, $G$ denotes $\SO(V)$ where $V$ is a quadratic space over $F$ of dimension $d$ and discriminant $\delta$. As always we write $m$ for $\floor{d/2}$. Assume that $G$ is unramified over $F$. By Proposition \ref{prop: even TFAE}, if $d$ is odd, or if $d$ is even and $\delta$ is trivial, our assumption implies that $G$ is split. If $d$ is even and $\delta$ is non-trivial, our assumption implies that $\delta$ has a representative in $\oo_F^{\times}/\oo_F^{\times, 2}$, and that $G$ is split over $F(\alpha)$; here recall that $\alpha \in \overline F$ is a fixed square root of a fixed lift of $\delta$ in $F^\times$.  
 
 To simplify notation, for each positive integer $n$ we define \index{$\AB[X_1,\cdots, X_n]$} \index{$\AD[X_1,\cdots, X_n]$}
 \begin{align*}
 	\AB[X_1,\cdots, X_n] & : = \CC[X_1^{\pm 1}, \cdots, X_n^{\pm 1}]^{\set{\pm 1} ^n \rtimes \mathfrak S_n}, \\ 
 	\AD [X_1,\cdots, X_n] & : = \CC[X_1^{\pm 1}, \cdots, X_n^{\pm 1}]^{(\set{\pm 1} ^n)' \rtimes \mathfrak S_n}. 
 \end{align*}
 Here the group $\set{\pm 1} ^n \rtimes \mathfrak S_n$ acts on $\CC[X_1^{\pm 1}, \cdots , X_n^{\pm 1}]$ as follows. The non-trivial element of the $i$-th copy of $\set{\pm 1}$ acts by swapping $X_i$ and $X_i^{-1}$, and $\mathfrak S_n$ acts by permuting the $n$ variables $X_1,\cdots, X_n$ (and simultaneously permuting $X_1^{-1},\cdots, X_n^{-1}$). As usual,  $(\set{\pm 1}^n)'$ is the kernel of the multiplication map $\set{\pm 1}^n \to \set{\pm 1}$. When $n=1$, by definition we have 
 $\mathscr A_{\mathsf D}[X_1] = \CC[X_1^{\pm 1}]$.

First assume that $d$ is odd. Then $G$ is split. Fix a Borel pair $(T,B)$ in $G$. We then get an isomorphism $\mathrm{BRD}(T,B) \isom \mathrm{BRD}(\mathcal T,\mathcal B)^{\vee}$ from the $L$-group datum fixed in \S \ref{fixing L group}. The right hand side is canonically identified with $\mathrm{BRD}(\mathsf{B}_m)$. Thus we get an isomorphism $X_*(T) \isom \ZZ^m$, and an isomorphism $$\mathscr A_G \cong \CC[X_*(T)]^{\Omega} \isom \CC[\ZZ^m]^{\set{\pm 1} ^m \rtimes \mathfrak S_m} \cong  \AB[X_1,\cdots, X_m] ,$$  which is independent of the choice of $(T,B)$. If an element of $\mathscr A_G$ corresponds to 
 $F(X_1,\cdots, X_m)  \in \AB[X_1,\cdots, X_m],$ then the evaluation of this element at $\DS(t_1,\cdots, t_m) \rtimes \sigma_F  \in \mathcal T \rtimes \sigma_F$ (see \S \ref{subsubsec:Join notation} for the notation) is given by 
 $ F(t_1,\cdots, t_m) \in \CC$.
 
 If $d$ is even and $\delta$ is trivial, then $G$ is still split, and similarly as in the odd case we have a canonical identification 
 $$ \mathscr A_G \cong \AD[X_1,\cdots, X_m].$$ (This is true for $m=1$ as well.) As in the odd case, the evaluation of an element of $\mathscr A_G$ corresponding to $F(X_1,\cdots, X_m) \in \AD[X_1,\cdots, X_m]$ at $\DS(t_1,\cdots, t_m) \rtimes \sigma_F \in \mathcal T \rtimes 
 \sigma_F$ is given by $F(t_1,\cdots, t_m)$. 
 
 Now consider the case where $d$ is even and $\delta$ is non-trivial. Let $S$ be a maximal split torus in $G$, let $T$ be the centralizer of $S$, and let $B$ be a Borel subgroup of $G$ containing $T$. We then get an isomorphism $\mathrm{BRD}(T,B) \isom \mathrm{BRD}(\mathcal T, \mathcal B)^{\vee}$ from the $L$-group datum fixed in \S \ref{fixing L group}. The right hand side is canonically identified with $\mathrm{BRD}(\mathsf{D}_m)$. We thus get an isomorphism $X_*(T) \isom \ZZ^m$. Under this isomorphism, $X_*(S) = X_*(T)^{\Gamma_F}$ corresponds to the subgroup $\ZZ^{m-1} \times \set{0} = \set{(x_1,\cdots,x_{m-1},0)\mid x_i \in \ZZ}$ of $\ZZ^m$, and the $\Omega(F)$-action on $X_*(T)$ corresponds to the natural action of $(\set{\pm 1}^{m})' \rtimes \mathfrak S_{m-1}$ on $\ZZ^m$, that is, the non-trivial element of the $i$-th copy of $\set{\pm 1}$ acts by multiplication by $-1$ on the $i$-th coordinate, and $\mathfrak S_{m-1}$ acts by permuting the first $m-1$ coordinates. We have natural identifications $$\CC[ \ZZ^{m-1} \times \set{0}]^{(\set{\pm 1}^{m})' \rtimes \mathfrak S_{m-1}} \cong \CC[\ZZ^{m-1}]^{\set{\pm 1}^{m-1} \rtimes \mathfrak S_{m-1}}  \cong  \AB[X_1,\cdots, X_{m-1}]. $$ Hence we obtain an identification 
 $$ \mathscr A_G \cong  \AB[X_1,\cdots, X_{m-1}]. $$ As in the previous cases, this identification is independent of the choices of $S$ and $B$. If an element of $\mathscr A_G$ corresponds to 
 $ F(X_1,\cdots, X_{m-1})  \in \AB[X_1,\cdots, X_{m-1}],$ then the evaluation of this element at $\DS(t_1,\cdots, t_m) \rtimes \sigma_F \in \mathcal T \rtimes \sigma_F $ is given by 
 $F(t_1,\cdots, t_{m-1})$.

  \subsection{}\label{subsubsec:explicit tb}  Let $G$ be as in \S\ref{subsubsec:A_G explicit}. In \S \ref{subsec:elliptic end data}, we constructed representatives $\fke_{d^+,\delta^+,d^-,\delta^-}$ of the isomorphism classes of elliptic endoscopic data for $G$, where $(d^+,\delta^+,d^-,\delta^-)$ belongs to a set $\mathscr P_V$ as in Definition \ref{defn:mathscr P_V}. In order to ensure ellipticity, in the definition of $\mathscr P_V$ we have the condition that if $d$ is even and at least $4$ then neither of $(d^+,\delta^+)$ and $(d^-,\delta^-)$ is equal to $(2,1)$. We now take a quadruple $(d^+,\delta^+,d^-,\delta^-)$ satisfying all the conditions in the definition of $\mathscr P_V$ except the condition just mentioned. The construction in \S \ref{subsec:elliptic end data} still applies to $(d^+,\delta^+,d^-,\delta^-)$ and yields an endoscopic datum $$\mathfrak e_{d^+ , \delta^+, d^-, \delta^-} = (H, \lang H, s, \eta)$$ for $G$, which may no longer be elliptic. In fact, the non-elliptic endoscopic data for $G$ arising in this way account for all the non-elliptic endoscopic data (up to isomorphism) that can possibly appear as the localization of  global elliptic endoscopic data, in the case where $G$ is the localization of a  special orthogonal group over a number field.  
  
   Throughout we assume that 
  $ d^+ \neq 0.$ We now assume that $F = \QQ_p$, and write $\sigma$ for $\sigma_F$. We keep assuming that $G$ is unramified. As in \S \ref{subsubsec:twisted transfer}, we assume that the endoscopic datum $\mathfrak e_{d^+ , \delta^+, d^-, \delta^-}$ is unramified. In the odd case the last assumption is automatic, and in the even case it implies that $\delta^+$ and $\delta^-$ both have (unique) representatives in $ \ZZ_p^{\times}/\ZZ_p^{\times,2},$ in view of Proposition \ref{prop: even TFAE}. It is easy to check that the converse is also true. Note that $\ZZ_p^{\times}/\ZZ_p^{\times,2} \cong \ZZ/2\ZZ$ as $p$ is odd. Hence each of   $\delta, \delta^+, \delta^-$ can take only two values: the trivial or the non-trivial element of $\ZZ_p^{\times}/\ZZ_p^{\times,2}$. 
  
 Fix a positive integer $a$. We still write $F_a$ for the degree $a$ unramified extension of $F= \QQ_p$. We now make explicit the twisted transfer map $\tilde \eta^* : \mathscr A_{G_{F_a}} \to \mathscr A_H$ defined in \S \ref{subsubsec:twisted transfer}. As always we write $m$ for $\floor{d/2}$, and write $m^{\pm}$ for $\floor{d^\pm/2}$. 
 \subsubsection{The odd case}\label{para:odd tb} In this case, $\mathscr A_{G_{F_a}}$ is identified with $\AB[X_1,\cdots, X_m]$, and $\mathscr A_{H} = \mathscr A_{H^+} \otimes_{\CC} \mathscr A_{H^-}$ is identified with $$ \AB[Z_1,\cdots, Z_{m^+}] \otimes_{\CC} \AB[Y_1,\cdots, Y_{m^-}],  $$ which we identify with a $\CC$-subalgebra of $\CC[Z_1^{\pm 1},\cdots, Z_{m^+} ^{\pm 1}, Y_1^{\pm 1},\cdots, Y_{m^-}^{\pm 1}].$ Consider an element $$t _{\widehat H} = (\DS (t_1,\cdots, t_{m^+}) , \DS(u_1,\cdots, u_{m^-}) ) $$ of the maximal torus $\mathcal T_{\widehat H} = \mathcal T_{V^+} \times \mathcal T_{V^-}$ in $\widehat H$. We have $$\eta(t_{\widehat H} \rtimes \sigma) = \DS (u_1,\cdots, u_{m^-}, t_1,\cdots, t_{m^+}) \rtimes \sigma \in \mathcal T \rtimes \sigma. $$ Since $\sigma$ acts trivially on $\mathcal T$, we have
 \begin{align*}\eta(t_{\widehat H} \rtimes \sigma)^a & = \DS (u_1^a,\cdots, u_{m^-}^a, t_1^a,\cdots, t_{m^+}^a ) \rtimes \sigma^a, \\
s^{-1} 	 \eta(t_{\widehat H} \rtimes \sigma)^a & = \DS (-u_1^a,\cdots, -u_{m^-}^a, t_1^a,\cdots, t_{m^+}^a ) \rtimes \sigma^a. 
 \end{align*} 
 Suppose $f \in \mathscr A_{G_{F_a}}$ corresponds to 
 $F(X_1,\cdots, X_m) \in \AB[X_1,\cdots, X_m]. $
 By Lemma \ref{lem:essential tb}, the evaluation of $\tilde \eta^*(f)$ at $t_{\widehat{H}} \rtimes \sigma$ is equal to
 $$ F(-u_1^a, \cdots, -u_{m^-}^a, t_1^a, \cdots, t_{m^+} ^ a).  $$ Thus the map $\tilde \eta^* $ is explicitly given by 
 \begin{align*}
 	\AB[X_1,\cdots, X_m] & \To \AB[Z_1,\cdots, Z_{m^+}] \otimes_{\CC} \AB[Y_1,\cdots, Y_{m^-}] \\ 
 F(X_1,\cdots, X_m) & \longmapsto F(-Y_1^a,\cdots, -Y_{m^-}^a, Z_1^a,\cdots, Z_{m^+}^a).
 \end{align*} 
  
  \subsubsection{The even case, with trivial $\delta^+$ and trivial $\delta^-$} 
  In this case, $\delta$ is also trivial since $\delta = \delta^+ \delta^-$. We have $$\mathscr A_{G_{F_a}} \cong \AD[X_1,\cdots, X_m], $$ and $$\mathscr A_{H} = \mathscr A_{H^+} \otimes_{\CC} \mathscr A_{H^-} \cong  \AD[Z_1,\cdots, Z_{m^+}] \otimes_{\CC} \AD[Y_1,\cdots, Y_{m^-}].  $$ By similar computation as in \S \ref{para:odd tb}, we find that $\tilde \eta^*$ is explicitly given by 
  \begin{align*}
  	\AD[X_1,\cdots, X_m] & \To \AD[Z_1,\cdots, Z_{m^+}] \otimes_{\CC} \AD[Y_1,\cdots, Y_{m^-}] \\ 
  	F(X_1,\cdots, X_m) & \longmapsto F(-Y_1^a,\cdots, -Y_{m^-}^a, Z_1^a,\cdots, Z_{m^+}^a).
  \end{align*}

\subsubsection{The even case, with non-trivial $\delta^+$ and trivial $\delta^-$}
In this case, $\delta$ is non-trivial in $\ZZ_p^{\times}/\ZZ_p^{\times,2}$. It is a square in $F_a^\times$ if and only if $a$ is even. Thus we have
$$\mathscr A_{G_{F_a}} \cong
\begin{cases} \AD[X_1,\cdots, X_m], & \text{if }a \text{ is even},\\ 
	\AB[X_1,\cdots, X_{m-1}], & \text{if } a \text{ is odd},
	\end{cases} 
 $$ and $$\mathscr A_{H} = \mathscr A_{H^+} \otimes_{\CC} \mathscr A_{H^-} \cong  \AB[Z_1,\cdots, Z_{m^+-1}] \otimes_{\CC} \AD[Y_1,\cdots, Y_{m^-}].  $$  
 Consider an element $$t _{\widehat H} = (\DS (t_1,\cdots, t_{m^+}) , \DS(u_1,\cdots, u_{m^-}) ) \in \mathcal T_{\widehat H} = \mathcal T_{V^+} \times \mathcal T_{V^-}.  $$ Since $\delta^-$ is trivial, $\sigma$ belongs to the first case in (\ref{eq:eta on Gamma'}). Hence $$ \eta(t_{\widehat{H}} \rtimes \sigma ) =   \DS (u_1,\cdots, u_{m^-}, t_1,\cdots, t_{m^+}) \rtimes \sigma \in \mathcal T \rtimes \sigma. $$ Now the action of $\sigma$ on $\mathcal T$ sends 
 $\DS (x_1,\cdots, x_m)$ to $\DS (x_1,\cdots, x_{m-1}, x_m^{-1})$; cf.~\S \ref{subsubsec:L group}. We introduce the notation \index{$\nu_a$}
 \begin{align}\label{eq:nu_a}
 	\nu_a : = \frac{(-1)^{a+1} + 1}{2}.
 \end{align} Hence 
 \begin{align*}
  \eta(t_{\widehat{H}} \rtimes \sigma )^a & =  
  \DS (u_1^a,\cdots, u_{m^-}^a, t_1^a,\cdots, t_{m^+-1}^a, t_{m^+}^{\nu_a} ) \rtimes \sigma^a  ,   \\
 s^{-1}  \eta(t_{\widehat{H}} \rtimes \sigma )^a & =  
 \DS (-u_1^a,\cdots, -u_{m^-}^a, t_1^a,\cdots, t_{m^+-1}^a, t_{m^+}^{\nu_a} ) \rtimes \sigma^a  ,
\end{align*}
 \paragraph{Suppose $a$ is even} 
  Suppose $f \in \mathscr A_{G_{F_a}}$ corresponds to 
 $ F( X_1, \cdots, X_m)  \in \AD[X_1,\cdots, X_m]. $
 By Lemma \ref{lem:essential tb}, the evaluation of $\tilde \eta^*(f)$ at $t_{\widehat{H}} \rtimes \sigma$ is equal to
 $$F(-u_1^a,\cdots, -u_{m^-}^a, t_1^a, \cdots, t_{m^+ -1}^a, 1).$$ 
 Thus the map $\tilde \eta^* $ is explicitly given by 
 \begin{align*}
 	\AD[X_1,\cdots, X_m] & \To \AB[Z_1,\cdots, Z_{m^+-1}] \otimes_{\CC} \AD[Y_1,\cdots, Y_{m^-}] \\ 
 F(X_1,\cdots, X_m) & \longmapsto F (-Y_1^a,  \cdots ,-Y_{m^-}^a, Z_1^a,  \cdots , Z_{m^+-1} ^ a, 1). 
 \end{align*}
 \paragraph{Suppose $a$ is odd} 
 Suppose $f \in \mathscr A_{G_{F_a}}$ corresponds to 
 $ F(X_1,\cdots, X_{m-1})\in \AB[X_1,\cdots, X_{m-1}]. $
 By Lemma \ref{lem:essential tb}, the evaluation of $\tilde \eta^*(f)$ at $t_{\widehat{H}} \rtimes \sigma$ is equal to
 $$ F(-u_1^a,\cdots, -u_{m^-} ^a, t_1^a,\cdots, t_{m^+ -1}^a).  $$ Thus the map $\tilde \eta^* $ is explicitly given by 
 \begin{align*}
 	\AB[X_1,\cdots, X_{m-1}] & \To \AB[Z_1,\cdots, Z_{m^+-1}] \otimes_{\CC} \AD[Y_1,\cdots, Y_{m^-}] \\ 
 	F(X_1,\cdots, X_{m-1})& \longmapsto F(-Y_1^a,\cdots, -Y_{m^-}^a, Z_1^a,\cdots, Z_{m^+-1}^a).
 \end{align*}

 \subsubsection{The even case, with trivial $\delta^+$ and non-trivial $\delta^-$}\label{para:tb complicated 1}
 In this case, $\delta$ is non-trivial. We have
 $$\mathscr A_{G_{F_a}} \cong
 \begin{cases} \AD[X_1,\cdots, X_m], & \text{if }a \text{ is even},\\ 
 	\AB[X_1,\cdots, X_{m-1}], & \text{if } a \text{ is odd},
 \end{cases} 
 $$ and $$\mathscr A_{H} = \mathscr A_{H^+} \otimes_{\CC} \mathscr A_{H^-} \cong  \AD[Z_1,\cdots, Z_{m^+}] \otimes_{\CC} \AB[Y_1,\cdots, Y_{m^- -1}].  $$  
 Consider an element $$t _{\widehat H} = (\DS (t_1,\cdots, t_{m^+}) , \DS(u_1,\cdots, u_{m^-}) ) \in \mathcal T_{\widehat H} = \mathcal T_{V^+} \times \mathcal T_{V^-}.  $$ Since $\delta^-$ is non-trivial, we are in the second case in (\ref{eq:eta on Gamma'}). Hence $$ \eta(t_{\widehat{H}} \rtimes \sigma ) = \DS(u_1,\cdots, u_{m^-}, t_1,\cdots, t_{m^+})\cdot S  \rtimes \sigma \in \lang G^{\ur},  $$ where $S$ is the permutation matrix switching $\hat e_{m^-}$ and $\hat e_{d-m^- +1}$, and switching $\hat e_{m}$ and $\hat e_{m+1}$.  The conjugation action of $S\rtimes \sigma$ on $\mathcal T$ is given by  $$\DS(x_1,\cdots, x_m)\longmapsto\DS(x_1,\cdots, x_{m^--1}, x_{m^-}^{-1}, x_{m^-+1}, \cdots, x_m). $$ Moreover, $(S \rtimes \sigma)^a = S^a \rtimes \sigma^a$, and $S$ is of order $2$. Therefore, with the notation (\ref{eq:nu_a}), we have 
 \begin{align*}
s^{-1} \eta(t_{\widehat H} \rtimes \sigma)^a  = \DS( -u_1^a,\cdots, -u_{m^- -1} ^a, -u_{m^-}^{\nu_a}, t_1^a,\cdots, t_{m^+}^a) \cdot S^{\nu_a} \rtimes \sigma^a . 
 \end{align*} If $a$ is even, the above element lies in $\mathcal T \rtimes \sigma^a$. If $a$ is odd, the above element is conjugate by some  $g\in \widehat{G}$ to the element 
 \begin{align*}
 	\DS(-u_1^a, \cdots, -u_{m^- -1}^a, t_{m^+}^a, t_1^a,\cdots, t_{m^+ -1}^a, -u_{m^-} ) \rtimes \sigma^a \in  \mathcal T \rtimes \sigma^a. 
 \end{align*}
 For instance, one can take $g$ to be the permutation matrix in $\widehat{G}$ switching $\hat e_{m^-}$ and $\hat e_{m}$ and switching $\hat e_{d-m^- +1}$ and $(-1) ^{m^- + m} \hat e_{m+1}$. Indeed, we have $g^{-1} = g$, $(S \rtimes \sigma ^a) g = g \rtimes \sigma^a$, and \begin{multline*}
 g \cdot \DS( -u_1^a,\cdots, -u_{m^- -1} ^a, -u_{m^-}, t_1^a,\cdots, t_{m^+}^a) \cdot g \\ =  \DS(-u_1^a, \cdots, -u_{m^- -1}^a, t_{m^+}^a, t_1^a,\cdots, t_{m^+ -1}^a, -u_{m^-} ) .
 \end{multline*} 
 \paragraph{Suppose $a$ is even} 
 Suppose $f \in \mathscr A_{G_{F_a}}$ corresponds to 
 $ F(X_1,\cdots, X_m) \in \AD[X_1,\cdots, X_m]. $  By Lemma \ref{lem:essential tb}, the evaluation of $\tilde \eta^*(f)$ at $t_{\widehat{H}} \rtimes \sigma$ is equal to
 $$F(-u_1^a,\cdots, -u_{m^- -1}^a,-1,   t_1^a,\cdots, t_{m^+}^a) . 
 $$
 Thus the map $\tilde \eta^* $ is explicitly given by 
 \begin{align*}
 	\AD[X_1,\cdots, X_m] & \To \AD[Z_1,\cdots, Z_{m^+}] \otimes_{\CC} \AB[Y_1,\cdots, Y_{m^--1}] \\ 
 F(X_1,\cdots, X_m) & \longmapsto   F(-Y_1^a,\cdots, -Y_{m^- -1}^a, -1, Z_1^a,\cdots  , Z_{m^+}^a) .
 \end{align*}
 \paragraph{Suppose $a$ is odd} 
 Suppose $f \in \mathscr A_{G_{F_a}}$ corresponds to 
 $ F(X_1,\cdots, X_{m-1})\in \AB[X_1,\cdots, X_{m-1}]. $
 By Lemma \ref{lem:essential tb}, the evaluation of $\tilde \eta^*(f)$ at $t_{\widehat{H}} \rtimes \sigma$ is equal to
 $$ F (  -u_1^a, \cdots -u_{m^- -1}^a, t_{m^+}^a, t_1^a, \cdots t_{m^+ - 1} ^ a).  $$ Thus the map $\tilde \eta^* $ is explicitly given by 
 \begin{align*}
 	\AB[X_1,\cdots, X_{m-1}] & \To \AD[Z_1,\cdots, Z_{m^+}] \otimes_{\CC} \AB[Y_1,\cdots, Y_{m^--1}] \\  F(X_1,\cdots, X_{m-1}) & \longmapsto   F(-Y_1^a,\cdots, -Y_{m^- -1}^a, Z_{m^+}^a, Z_1^a,\cdots, Z_{m^+ -1} ^a ) .
 \end{align*}

\subsubsection{The even case, with non-trivial $\delta^+$ and non-trivial $\delta^-$}\label{para:two non-trivial}
In this case, $\delta$ is trivial. We have
$$\mathscr A_{G_{F_a}} \cong \AD[X_1,\cdots, X_m]
$$ and $$\mathscr A_{H} = \mathscr A_{H^+} \otimes_{\CC} \mathscr A_{H^-} \cong  \AB[Z_1,\cdots, Z_{m^+-1}] \otimes_{\CC} \AB[Y_1,\cdots, Y_{m^- -1}].  $$  
Consider an element $$t _{\widehat H} = (\DS(t_1,\cdots, t_{m^+}) , \DS(u_1,\cdots, u_{m^-}) ) \in \mathcal T_{\widehat H} = \mathcal T_{V^+} \times \mathcal T_{V^-}.  $$ Since $\delta^-$ is non-trivial, we are in the second case in (\ref{eq:eta on Gamma'}).  Hence $$ \eta(t_{\widehat{H}} \rtimes \sigma ) = \DS(u_1,\cdots, u_{m^-},   t_1,\cdots, t_{m^+} )\cdot S  \rtimes \sigma \in \lang G^{\ur},  $$ where $S$ is the permutation matrix switching $\hat e_{m^-}$ and $\hat e_{d-m^- +1}$, and switching $\hat e_{m}$ and $\hat e_{m+1}$. Since $\delta$ is trivial, the action of $\sigma$ on $\widehat G$ is trivial. We know that $S^2 =1$, and the conjugation action of $S$ on $\mathcal T$ is given by  $$\DS(x_1,\cdots, x_m)\longmapsto\DS(x_1,\cdots, x_{m^--1}, x_{m^-}^{-1}, x_{m^-+1}, \cdots, x_{m-1}, x_m^{-1}). $$ 
Hence  with the notation (\ref{eq:nu_a}) we have  \begin{align*}
	s^{-1} \eta(t_{\widehat H} \rtimes \sigma)^a  = \DS( -u_1^a,\cdots, -u_{m^- -1} ^a, -u_{m^-}^{\nu_a}, t_1^a,\cdots, t_{m^+-1}^a, t_{m^+}^{\nu_a}) \cdot S^{\nu_a} \rtimes \sigma^a . 
\end{align*} If $a$ is even, the above element lies in $\mathcal T \rtimes \sigma^a$.
If $a$ is odd, we claim that the above element is $\widehat G$-conjugate to 
$$ \DS (-u_1^a,\cdots, -u_{m^--1}^a, -1,  t_1^a,\cdots, t_{m^+-1}^a, 1) \rtimes \sigma^a \in \mathcal T \rtimes \sigma^a. $$ 

To show the claim, it suffices to show that $\DS(x_1,\cdots, x_{m^-}, y_1,\cdots, y_{m^+}) \cdot S$ is $\widehat{G}$-conjugate to $\DS(x_1,\cdots, x_{m^- -1}, -1, y_1,\cdots, y_{m^+-1}, 1) $ for arbitrary $x_i,y_i \in \CC^{\times}$. Let $\mathcal J$ be the special orthogonal group of the $4$-dimensional quadratic space $\mathrm{span}\set{\hat e_{m^-}, \hat e_m,    \hat e_{m+1} , \hat e_{d-m^-+1} }$ over $\CC$. We write elements of $\mathcal J$ as $4\times 4$ matrices using the given basis. We identify $\mathcal J$
 as a subgroup of $\widehat G$, by letting elements of $\mathcal J$ act trivially on $\hat e_i$ for all $i \notin \set{m^-, m, m+1, d-m^-+1}$. Then $S \in \mathcal J$, and the $4\times 4$ matrix of $S$ is $$ \begin{pmatrix}
 	&&& 1 \\ && 1 \\ & 1 \\ 1
 \end{pmatrix}.$$ Let $U$ and $K$ be elements of $\mathcal J$ whose $4\times 4$ matrices are $\DS(x_{m^-}, y_{m^+})$ and $\DS(-1,1)$  respectively. Now since $US$ is semi-simple (as can be easily seen in $\GL_4$), it must be conjugate in $\mathcal J$ to some element of the diagonal maximal torus $\set{\DS(a,b)\mid a,b \in \CC^\times}$ in $\mathcal J$, which must be either $K$ or $-K$ by considering the characteristic polynomial. But $K$ and $-K$ are actually conjugate in $\mathcal J$. Hence $US$ is conjugate to $K$ in $\mathcal J$. Now inside $\widehat{G}$ we have 
\begin{multline*}
	\DS (x_1,\cdots, x_{m^-},   y_1,\cdots, y_{m^+})  S  \\ = \DS(x_1,\cdots, x_{m^- -1} , 1 , y_1,\cdots, y_{m^+ -1}, 1) US,
\end{multline*} and $\DS(x_1,\cdots, x_{m^- -1} , 1 , y_1,\cdots, y_{m^+ -1}, 1) $ commutes with $\mathcal J$. Hence the above element is  $\widehat{G}$-conjugate to \begin{multline*}
 \DS(x_1,\cdots, x_{m^- -1} , 1 , y_1,\cdots, y_{m^+ -1}, 1) K  \\ = \DS(x_1,\cdots, x_{m^- -1} , - 1 , y_1,\cdots, y_{m^+ -1}, 1) ,
\end{multline*} as desired. Our claim follows. 

Now suppose $f \in \mathscr A_{G_{F_a}}$ corresponds to 
$F(X_1,\cdots, X_m) \in \AD[X_1,\cdots, X_m]. $ By Lemma \ref{lem:essential tb} and the above claim, the evaluation of $\tilde \eta^*(f)$ at $t_{\widehat{H}} \rtimes \sigma$ is equal to
$$ F(-u_1^a,\cdots, -u_{m^- -1}^a, -1, t_1^a,\cdots, t_{m^+ -1} ^a, 1 ) $$ for both parities of $a$. Thus the map $\tilde \eta^* $ is explicitly given by 
\begin{align*}
	\AD[X_1,\cdots, X_{m}] & \To \AB[Z_1,\cdots, Z_{m^+-1}] \otimes_{\CC} \AB[Y_1,\cdots, Y_{m^--1}] \\ 
F(X_1,\cdots, X_m)  & \longmapsto  F(-Y_1^a,\cdots, -Y_{m^- -1}^a, -1, Z_1^a,\cdots, Z_{m^+ -1} ^a, 1 ).
\end{align*}
\subsection{}\label{para:twisted transfer summary}In the following, we collect the explicit description of $\tilde \eta^*$ in all the cases obtained in \S \ref{subsubsec:explicit tb}. 

\subsubsection{The odd case}

\begin{align*} \AB[X_1,\cdots, X_m] & \To \AB[Z_1,\cdots, Z_{m^+}] \otimes_{\CC} \AB[Y_1,\cdots, Y_{m^-}] \\ F(X_1,\cdots, X_m) & \longmapsto F(-Y_1^a,\cdots, -Y_{m^-}^a, Z_1^a,\cdots, Z_{m^+}^a). 
\end{align*}

\subsubsection{The even case, with trivial $\delta^+$ and trivial $\delta^-$}

\begin{align*}	\AD[X_1,\cdots, X_m] & \To \AD[Z_1,\cdots, Z_{m^+}] \otimes_{\CC} \AD[Y_1,\cdots, Y_{m^-}] \\  F(X_1,\cdots, X_m) & \longmapsto F(-Y_1^a,\cdots, -Y_{m^-}^a, Z_1^a,\cdots, Z_{m^+}^a). 
\end{align*}

\subsubsection{The even case, with non-trivial $\delta^+$ and trivial $\delta^-$}
\paragraph{Suppose $a$ is even}
\begin{align*}
	\AD[X_1,\cdots, X_m] & \To \AB[Z_1,\cdots, Z_{m^+-1}] \otimes_{\CC} \AD[Y_1,\cdots, Y_{m^-}] \\   F(X_1,\cdots, X_m) & \longmapsto F(-Y_1^a,\cdots, -Y_{m^-}^a, Z_1^a,\cdots, Z_{m^+ -1 }^a, 1). 
\end{align*}
\paragraph{Suppose $a$ is odd}  
\begin{align*}
	\AB[X_1,\cdots, X_{m-1}] & \To \AB[Z_1,\cdots, Z_{m^+-1}] \otimes_{\CC} \AD[Y_1,\cdots, Y_{m^-}] \\ 
 F(X_1,\cdots, X_{m-1}) & \longmapsto F(-Y_1^a,\cdots, -Y_{m^-}^a, Z_1^a,\cdots, Z_{m^+ -1 }^a).
\end{align*}

\subsubsection{The even case, with trivial $\delta^+$ and non-trivial $\delta^-$}

\paragraph{Suppose $a$ is even}

\begin{align*}
	\AD[X_1,\cdots, X_m] & \To \AD[Z_1,\cdots, Z_{m^+}] \otimes_{\CC} \AB[Y_1,\cdots, Y_{m^--1}] \\ F(X_1,\cdots, X_m) & \longmapsto   F(-Y_1^a,\cdots, -Y_{m^- -1}^a, -1, Z_1^a,\cdots, Z_{m^+ } ^a ) . 
\end{align*}
\paragraph{Suppose $a$ is odd}

\begin{align*}
	\AB[X_1,\cdots, X_{m-1}] & \To \AD[Z_1,\cdots, Z_{m^+}] \otimes_{\CC} \AB[Y_1,\cdots, Y_{m^--1}] \\ F(X_1,\cdots, X_{m-1}) & \longmapsto   F(-Y_1^a,\cdots, -Y_{m^- -1}^a, Z_{m^+}^a, Z_1^a,\cdots, Z_{m^+ -1} ^a ) . 
\end{align*}

\subsubsection{The even case, with non-trivial $\delta^+$ and non-trivial $\delta^-$}

\begin{align*}
	\AD[X_1,\cdots, X_{m}] & \To \AB[Z_1,\cdots, Z_{m^+-1}] \otimes_{\CC} \AB[Y_1,\cdots, Y_{m^--1}] \\ F(X_1,\cdots, X_m)  & \longmapsto  F(-Y_1^a,\cdots, -Y_{m^- -1}^a, -1, Z_1^a,\cdots, Z_{m^+ -1} ^a, 1 ). 
\end{align*}

\ignore{ \section[Explicit ordinary transfer and base change]{Explicit description of the ordinary transfer map and the base change map}
 We keep the setting of \S\ref{subsubsec:explicit tb}. 
 \subsection{} By similar computations as in  \S\ref{subsubsec:explicit tb}, we obtain an explicit description of the ordinary transfer map $\eta^*: \mathscr A_G \to \mathscr A_H$ as follows. 
 
 \subsubsection{The odd case} 
 \begin{align*}
 \eta^*:	\AB[X_1,\cdots, X_m] & \To \AB[Z_1,\cdots, Z_{m^+}] \otimes_{\CC} \AB[Y_1,\cdots, Y_{m^-}] \\ 
 	F(X_1,\cdots , X_m)& \longmapsto F(Y_1,\cdots, Y_{m^-}, Z_1,\cdots, Z_{m^+}).
 \end{align*}

\subsubsection{The even case, with trivial $\delta^+$ and trivial $\delta^-$} 
\begin{align*}
\eta^*:	\AD[X_1,\cdots, X_m] & \To \AD[Z_1,\cdots, Z_{m^+}] \otimes_{\CC} \AD[Y_1,\cdots, Y_{m^-}] \\ 	F(X_1,\cdots , X_m)& \longmapsto F(Y_1,\cdots, Y_{m^-}, Z_1,\cdots, Z_{m^+}).
\end{align*}

\subsubsection{The even case, with non-trivial $\delta^+$ and trivial $\delta^-$} 
\begin{align*}
\eta^*:	\AB[X_1,\cdots, X_{m-1}] & \To \AB[Z_1,\cdots, Z_{m^+-1}] \otimes_{\CC} \AD[Y_1,\cdots, Y_{m^-}] \\ 	F(X_1,\cdots , X_{m-1})& \longmapsto F(Y_1,\cdots, Y_{m^-}, Z_1,\cdots, Z_{m^+ -1}).
\end{align*}

  \subsubsection{The even case, with trivial $\delta^+$ and non-trivial $\delta^-$}

 \begin{align*}
 \eta^*:	\AB[X_1,\cdots, X_{m-1}] & \To \AD[Z_1,\cdots, Z_{m^+}] \otimes_{\CC} \AB[Y_1,\cdots, Y_{m^--1}] \\ 
 F(X_1,\cdots, X_{m-1})& \longmapsto   F(Y_1,\cdots, Y_{m^- - 1}, Z_{m^+}, Z_1,\cdots, Z_{m^+ - 1}). 
 \end{align*}

 \subsubsection{The even case, with non-trivial $\delta^+$ and non-trivial $\delta^-$}
  
 \begin{align*}
 \eta^*:	\AD[X_1,\cdots, X_{m}] & \To \AB[Z_1,\cdots, Z_{m^+-1}] \otimes_{\CC} \AB[Y_1,\cdots, Y_{m^--1}] \\   F(X_1,\cdots, X_m)  & \longmapsto  F(Y_1,\cdots, Y_{m^- -1}, -1, Z_1,\cdots, Z_{m^+ -1}, 1). 
 \end{align*}
 
 \subsection{} Fix a positive integer $a$. As we observed in Remark \ref{rem:BC is special case}, the base change map is a special case of the twisted transfer map. By specializing the computations in  \S\ref{subsubsec:explicit tb} to the case where $d^-=0$ and $\delta^-$ is trivial, we obtain an explicit description of the base change map $\Delta^*: \mathscr A_{G_{F_a}}\to \mathscr A_G$ as follows. 
 
 \subsubsection{The odd case} 
 \begin{align*}
 	\Delta^*:	\AB[X_1,\cdots, X_m] & \To \AB[Z_1,\cdots, Z_{m}] \\ 
 F(X_1,\cdots, X_m) & \longmapsto F(Z_1^a,\cdots, Z_m^a).
 \end{align*}
 
 \subsubsection{The even case, with trivial $\delta$} 
 \begin{align*}
 	\Delta^*:	\AD[X_1,\cdots, X_m] & \To \AD[Z_1,\cdots, Z_{m}]  \\ 
  F(X_1,\cdots, X_m) & \longmapsto F(Z_1^a,\cdots, Z_m^a).
 \end{align*}

 \subsubsection{The even case, with non-trivial $\delta$} 
 \paragraph{Suppose $a$ is even}
  \begin{align*}
 	\Delta^*: \AD[X_1,\cdots, X_m] & \To \AB[Z_1,\cdots, Z_{m-1}]\\ 
 F(X_1,\cdots, X_m) & \longmapsto F(Z_1^a,\cdots, Z_{m-1}^a, 1).
 \end{align*}
 \paragraph{Suppose $a$ is odd}
 \begin{align*}
 	\Delta^*: \AB[X_1,\cdots, X_{m-1}] & \To \AB[Z_1,\cdots, Z_{m-1}]\\ F(X_1,\cdots, X_{m-1}) & \longmapsto F(Z_1^a,\cdots, Z_{m-1}^a).
 \end{align*}
}
 
 \section{Computation of twisted transfers}\label{subsec:p}
 \subsection{} We keep the setting of \S \ref{subsubsec:A_G explicit}, assume that $F = \QQ_p$, and import the constructions and notations in \S\S \ref{subsubsec:setting for G-endosc}--\ref{subsubsec:choices for W}. In particular, we fix $W,r,t$, and a hyperbolic basis $\mathbb B_{W^{\perp}}$ of $W^{\perp}$, and from these data we obtain a Levi subgroup $M \subset G$ (defined over $\QQ_p$). Since $G$ is by assumption unramified over $\QQ_p$, so is $M$.
   
 Let $\fkp= (d^+,\delta^+,d^-,\delta^-)$ be a quadruple satisfying all the conditions in the definition of the set $\mathscr P_W$, except that even when $\dim W$ is even and at least $4$ we still allow $(d^+, \delta^+) = (2,1)$ or $(d^-, \delta^-) = (2,1)$ (or both); cf.~the discussion at the beginning of \S \ref{subsubsec:explicit tb}. Let $A$ be a subset of $[r]$ and $B$ be a subset of $[t]$. Although $(A,B, d^+,\delta^+,d^-,\delta^-)$ is more general than an element of $\mathscr P_{r,t} \times' \mathscr P_W$ as in Definition \ref{defn:P_W}, the construction in \S \ref{para:presentation of endoscopic G-data} still applies to it and yields an endoscopic $G$-datum for $M$ : $$\fke_{A,B, \fkp}  = (M' , \lang M', s_M, \eta_M),$$ which may no longer be bi-elliptic. Also, we obtain an endoscopic datum for $M$ :
   $$\fke_{\fkp}(M) =  \mathfrak e_{d^+,\delta^+, d^-, \delta^-} (M) = (M', \lang M', s_M', \eta_M)$$ and an  endoscopic datum for $G$: $$\mathfrak e_{d^+ + 2\abs{A} + 4 \abs{B}, \delta^+, d^- + 2 \abs{A^c} + 4 \abs{B^c}, \delta^-} = (H, \lang H,  s, \eta),$$ both of which are possibly non-elliptic (due to the possible appearance of $(2,1)$ in the subscripts). Note that the last two endoscopic data are unramified if and only if both $\delta^+$ and $\delta^-$ have even $p$-adic valuations. (In the odd case this is automatic.) In the following we assume that this is the case.
 
  Fix a positive integer $a$. As in \S \ref{subsec:twisted transfer}, we have the twisted transfer map induced by the unramified endoscopic datum $(H,\lang H, \eta,s)$ for $G$:
 $$ b : \mathcal H^{\ur} (G_{\QQ_{p^a}}) \To  \mathcal H^{\ur} (H). $$
 
 Let $\mu$ be the cocharacter of $G$ such that the $\GG_m$-action on $V$ via $\mu$ has weight $1$ on $f_1 \in \mathbb B_{W^{\perp}}$, weight $-1$ on $f_{2(r+2t)} \in \mathbb B_{W^{\perp}}$, and weight zero on the orthogonal complement of these two vectors. Thus $\mu$ is given by 
 \begin{align*}
 	\GG_m & \To \GG_m^{r} \times \GL_2^t \xrightarrow{(\ref{eq:M})} \SO(W^\perp) \To G \\
 	z & \longmapsto (z,1,\cdots,1, I_2,\cdots, I_2)
  \end{align*}
 if $r>0$, and is given by  
 \begin{align*}
 	\GG_m & \To \GL_2^t \xrightarrow{(\ref{eq:M})} \SO(W^\perp) \To G \\
 	z & \longmapsto (\diag(z,1), I_2, \cdots,I_2)
 \end{align*}
if $r=0$. Let $$f_{- \mu} \in \mathcal H^{\ur} (G_{\QQ_{p^a}})$$ be as in Definition \ref{defn:canonical fn a la Kottwitz}, with $F= \QQ_{p^a}$ and $\lambda = -\mu$. Define $$f^H: = b(f_{-\mu}) \in \mathcal H^{\ur} (H). $$ 

The construction in 
\S \ref{para:two maps from endoscopic G data} still applies to the current slightly more general situation (with the possibly non-elliptic data). Hence $M'$ is identified with a Levi subgroup of $H$ (up to $H(F)$-conjugation). We have the canonical constant term map (see Proposition \ref{canonical constant term}): 
$$(\cdot)_{M'} : \mathcal H^{\ur} (H) \To \mathcal H^{\ur} (M').$$ 
In the following we describe $(f^H)_{M'}$. 

Recall from \S \ref{subsubsec:setting for G-endosc} that $M = M^{\GL} \times M^{\SO}$, where $M^{\GL}$ is identified with $\GG_m^r \times \GL_2^t$ via (\ref{eq:M}), and $M^{\SO} = \SO(W)$. The maximal split torus in $M^{\GL}$ given by the product of $\GG_m^r$ with the diagonal tori in the copies of $\GL_2$ is naturally identified with $\GG_m^{r+2t}$. Correspondingly, the algebra $\mathscr A_{M^{\GL}}$ is naturally identified with \index{$\xi_i$} \index{$\zeta_i$} $$\CC[\xi_1^{\pm 1}] \otimes_{\CC} \cdots \otimes_{\CC} \CC[\xi_r^{\pm 1}] \otimes_{\CC} \CC[\zeta_1^{\pm 1}, \zeta_2^{\pm 1}]^{\mathfrak S_2} \otimes_{\CC} \cdots \otimes_{\CC}  \CC[\zeta_{2t-1}^{\pm 1}, \zeta_{2t}^{\pm 1}]^{\mathfrak S_2} .$$ (Here $\mathfrak S_2$ acts on each $\CC[\zeta_j^{\pm}, \zeta_{j+1}^{\pm}]$ by swapping $\zeta_j$ and $\zeta_{j+1}$.) In the sequel we shall view elements of the above algebra, such as $\zeta_1 + \zeta_2$, as an element of $\mathscr A_{M^{\GL}}$ or $\mathcal H^{\ur} (M^{\GL})$. We have $M' = M^{\GL} \times M^{\prime, \SO}$ (see \S \ref{para:presentation of endoscopic G-data}), and correspondingly we have 
$$\mathcal H^{\ur} (M') = \mathcal H^{\ur} (M^{\GL} ) \otimes_{\CC} \mathcal H^{\ur} ( M^{\prime, \SO} ). $$  We retain the notation $\nabla_i(\cdot)$ as in Definition \ref{defn:nabla}. 
\begin{prop}\label{prop:main computation at p}
	The element $p^{a (2-d)/2}(f^H)_{M'} \in \mathcal H^{\ur} (M')$ is of the form $$k(A,B)\otimes 1 + 1\otimes h , $$ with $k(A,B) \in \mathcal H^{\ur} (M^{\GL })$ and $h \in \mathcal H^{\ur} (M^{\prime,\SO})$. The element $h$ depends only on the parameter  $\fkp = (d^+,\delta^+,d^-,\delta^-)$, not on $(A,B)$. The element $k(A,B)$ is given by $$k(A,B) = \sum_{i = 1}^r \nabla_i (A) (\xi_i^a + \xi_i^{-a}) + \sum_{j=1}^t \nabla_j (B) (\zeta_{2j-1}^a + \zeta_{2j-1}^{-a} + \zeta_{2j}^a + \zeta_{2j}^{-a}) . $$ 
\end{prop} 	
\begin{proof} Write $F_a$ for $\QQ_{p^a}$.
	Fix a maximal $F_a$-split torus $S$ in $G_{F_a}$. In this proof we omit notations for the Satake isomorphisms. We use Theorem \ref{thm:Kottwitz on Satake} to compute (the Satake transform of) $f_{-\mu}$. We have $\lprod{\rho, (-\mu)_{\mathrm{dom}}} = (d-2)/2$, and so 
	\begin{align*} 
		p^{a(2-d)/2} f_{-\mu} =  \sum_{\lambda \in \Omega(F)\cdot (-\mu)} [\lambda] \in  \CC[X_*(S)]^{\Omega(F)} \cong \mathscr A_{G_{F_a}} 
	\end{align*}
	by that theorem. Let $m = \floor{d/2}$ be the absolute rank of $G$. As in \S \ref{subsubsec:A_G explicit}, $\mathscr A_{G_{F_a}}$ is identified with one of the three algebras $$\mathscr A_{\mathsf B}[X_1,\cdots, X_m], \quad   \mathscr A_{\mathsf D}[X_1,\cdots, X_m], \quad \mathscr A_{\mathsf B}[X_1,\cdots, X_{m-1}]. $$  
	Correspondingly, we have 
	$$ 	p^{a(2-d)/2} f_{-\mu} = \begin{cases}
		 X_1 + X_1^{-1} + \cdots + X_m + X_m^{-1}  \in \mathscr A_{\mathsf B}[X_1,\cdots, X_m] , \\
		 X_1 + X_1^{-1} + \cdots + X_m + X_m^{-1}  \in \mathscr A_{\mathsf D}[X_1,\cdots, X_m], \\
		 X_1 + X_1^{-1} + \cdots + X_{m-1} + X_{m-1}^{-1}  \in \mathscr A_{\mathsf B}[X_1,\cdots, X_{m-1}].
	\end{cases} $$ For any positive integer $l$, we introduce short-hand notations
\begin{align*}
	\mathscr A_{\mathsf B}[\mathcal Y_l]  & : = 	\mathscr A_{\mathsf B}[Y_1,\cdots, Y_l], &
	 \mathscr A_{\mathsf D}[\mathcal Y_l] & : = 	\mathscr A_{\mathsf D}[Y_1,\cdots, Y_l], \\
	 \mathscr A_{\mathsf B}[\mathcal Z_l] & : = 	\mathscr A_{\mathsf B}[Z_1,\cdots, Z_l], & 
	 \mathscr A_{\mathsf D}[\mathcal Z_l] & : = 	\mathscr A_{\mathsf D}[Z_1,\cdots, Z_l], \end{align*} and \begin{align*}
\mathcal Y_l^a  & : = \sum _{i =1}^l Y_i^a + Y_i^{-a} , & 
\mathcal Z_l^a  & : = \sum _{i =1}^l Z_i^a + Z_i^{-a} .
\end{align*}   
	We then compute, according to  \S \ref{para:twisted transfer summary}, that (the Satake transform of) $	p^{a(2-d)/2} f^H$ in $\mathscr A_H$ is given by:  
	
	\begin{align}\label{eq:five cases}
	 \begin{cases} -
		\mathcal Y_{m^-}^a + \mathcal Z_{m^+}^a  \in \mathscr A_{\mathsf B}[\mathcal Z_{m^+}] \otimes \mathscr  A_{\mathsf B}[\mathcal Y_{m^-}] , & d  \text{ odd}\\
	-\mathcal Y_{m^-}^a + \mathcal Z_{m^+}^a  \in \mathscr A_{\mathsf D}[\mathcal Z_{m^+}] \otimes \mathscr  A_{\mathsf D}[\mathcal Y_{m^-}] , & d \text{ even}, \delta^+ = \delta^- =1,\\
	-\mathcal Y_{m^-}^a + \mathcal Z_{m^+-1}^a   + 1 + (-1)^a  \in \mathscr A_{\mathsf B}[\mathcal Z_{m^+-1}] \otimes \mathscr  A_{\mathsf D}[\mathcal Y_{m^-}] , & d \text{ even}, \delta^+ \neq 1, \delta^- =1,   \\ 
	-\mathcal Y_{m^--1}^a + \mathcal Z_{m^+}^a  - 1 - (-1)^a  \in  \mathscr A_{\mathsf D}[\mathcal Z_{m^+}] \otimes \mathscr  A_{\mathsf B}[\mathcal Y_{m^--1}], & d \text{ even}, \delta^+ = 1, \delta^- \neq 1 , \\
-	\mathcal Y_{m^--1}^a + \mathcal Z_{m^+-1}^a  \in \mathscr A_{\mathsf B}[\mathcal Z_{m^+-1}] \otimes \mathscr  A_{\mathsf B}[\mathcal Y_{m^--1}] , & d \text{ even}, \delta^+ \neq 1, \delta^- \neq 1 .  
	\end{cases}	
\end{align}

	Recall that $M^{\prime, \SO} = \SO(W^+) \times \SO(W^-)$. Write $n^{\pm}$ for the absolute rank of $\SO(W^{\pm})$. Similar to $\mathscr A_{H^+}$, we identify $\mathscr A _{\SO(W^+)}$ with one of  
	$$\mathscr A_{\mathsf B}[\mathcal Z_{n^+}], \quad   \mathscr A_{\mathsf D}[\mathcal Z_{n^+}], \quad \mathscr A_{\mathsf B}[\mathcal Z_{n^+ -1 }], $$ in the odd case, in the even case with $\delta^+ =1$, and in the even case with $\delta^+ \neq 1$ respectively. Similarly, we identify   $\mathscr A _{\SO(W^-)}$ with one of  
	$$\mathscr A_{\mathsf B}[\mathcal Y_{n^-}], \quad   \mathscr A_{\mathsf D}[\mathcal Y_{n^-}], \quad \mathscr A_{\mathsf B}[\mathcal Y_{n^- -1 }], $$ in the odd case, in the even case with $\delta^- =1$, and in the even case with $\delta^- \neq 1$ respectively. The constant term map $\mathscr A_H \to \mathscr A_{M'}$ is of the form
		$$ \begin{cases} 
	  \mathscr A_{\mathsf B}[\mathcal Z_{m^+}] \otimes \mathscr  A_{\mathsf B}[\mathcal Y_{m^-}] \to    \mathscr A_{M^{\GL}}\otimes \mathscr A_{\mathsf B}[\mathcal Z_{n^+}] \otimes \mathscr A_{\mathsf B}[\mathcal Y_{n^-}] ,  \\
		  \mathscr A_{\mathsf D}[\mathcal Z_{m^+}] \otimes \mathscr  A_{\mathsf D}[\mathcal Y_{m^-}]  \to    \mathscr A_{M^{\GL}}\otimes \mathscr A_{\mathsf D}[\mathcal Z_{n^+}] \otimes \mathscr A_{\mathsf D}[\mathcal Y_{n^-}] , \\
	 \mathscr A_{\mathsf B}[\mathcal Z_{m^+-1}] \otimes \mathscr  A_{\mathsf D}[\mathcal Y_{m^-}]  \to    \mathscr A_{M^{\GL}}\otimes \mathscr A_{\mathsf B}[\mathcal Z_{n^+-1}] \otimes \mathscr A_{\mathsf D}[\mathcal Y_{n^-}] ,  \\ 
	  \mathscr A_{\mathsf D}[\mathcal Z_{m^+}] \otimes \mathscr  A_{\mathsf B}[\mathcal Y_{m^--1}] \to    \mathscr A_{M^{\GL}}\otimes \mathscr A_{\mathsf D}[\mathcal Z_{n^+}] \otimes \mathscr A_{\mathsf B}[\mathcal Y_{n^--1}]  ,   \\
	 \mathscr A_{\mathsf B}[\mathcal Z_{m^+-1}] \otimes \mathscr  A_{\mathsf B}[\mathcal Y_{m^--1}] \to    \mathscr A_{M^{\GL}}\otimes \mathscr A_{\mathsf B}[\mathcal Z_{n^+-1}] \otimes \mathscr A_{\mathsf B}[\mathcal Y_{n^--1}]   ,
	\end{cases}
	$$
	where the division into the five cases is the same as in (\ref{eq:five cases}). In each case, using Lemma \ref{lem:partial Satake}, we see that the map is determined by the following rule: Write \begin{align*}
	A & = \set{i_1,\cdots, i_u}, & A^c & = \set{\tilde i_1,\cdots, \tilde i_{r-u}}, \\ B & = \set{j_1,\cdots, j_v}, &  B^c & = \set{\tilde j_1,\cdots, \tilde j_{t-v}}.
	\end{align*}  We send $Z_1,\cdots, Z_{u}$ to $\xi_{i_1},\cdots, \xi_{i_u}$, send $Y_1,\cdots, Y_{r-u}$ to $\xi_{\tilde i_1}, \cdots, \xi_{\tilde i_{r-u}}$, send $Z_{u+1},\cdots, Z_{u+2v}$ to $$ \zeta_{2j_1 -1}, \zeta_{2j_1}, \zeta_{2j_2 -1}, \zeta_{2j_2},\cdots, \zeta_{2j_v -1}, \zeta_{2j_v}, $$ send $Y_{r-u+1}, \cdots, Y_{r-u + 2t -2v}$ to $$\zeta_{2 \tilde j_1 -1}, \zeta_{2 \tilde j_1}, \zeta_{2 \tilde j_2 -1}, \zeta_{2 \tilde j_2}, \cdots, \zeta_{2 \tilde j_{t-v} - 1 }, \zeta_{2 \tilde j_{t-v}}, $$ send the remaining $Z_i$'s to $Z_1,Z_2,\cdots$, and send the remaining $Y_i$'s to $Y_1, Y_2,\cdots$. From this description of the constant term map and the previous computation (\ref{eq:five cases}) of $p^{a (2-d)/2}f^H$, we see that 
	$p^{a (2-d)/2}(f^H)_{M'} \in \mathcal H^{\ur} (M')$ is of the form $$k(A,B)\otimes 1 + 1\otimes h , $$ where $k(A,B)$ is given as in the statement of the proposition, and $$h \in \mathscr A_{\SO(W^+)} \otimes \mathscr A_{\SO(W^-)}$$ is given by  
	$$ \begin{cases} 
		 -\mathcal Y_{n^-}^a + \mathcal Z_{n^+}^a \in  \mathscr A_{\mathsf B}[\mathcal Z_{n^+}] \otimes \mathscr A_{\mathsf B}[\mathcal Y_{n^-}] ,  \\  -\mathcal Y_{n^-}^a + \mathcal Z_{n^+}^a \in 
  \mathscr A_{\mathsf D}[\mathcal Z_{n^+}] \otimes \mathscr A_{\mathsf D}[\mathcal Y_{n^-}] , \\
	 -\mathcal Y_{n^-}^a + \mathcal Z_{n^+-1}^a + 1 + (-1)^a \in   \mathscr A_{\mathsf B}[\mathcal Z_{n^+-1}] \otimes \mathscr A_{\mathsf D}[\mathcal Y_{n^-}] ,  \\ 
	   -\mathcal Y_{n^--1}^a + \mathcal Z_{n^+}^a - 1 - (-1)^a \in \mathscr A_{\mathsf D}[\mathcal Z_{n^+}] \otimes \mathscr A_{\mathsf B}[\mathcal Y_{n^--1}]  ,   \\
	  -\mathcal Y_{n^--1}^a + \mathcal Z_{n^+-1}^a \in  \mathscr A_{\mathsf B}[\mathcal Z_{n^+-1}] \otimes \mathscr A_{\mathsf B}[\mathcal Y_{n^--1}]   ,
	\end{cases}
	$$
	in the five cases as before. Clearly $h$ depends only on $\fkp$, not on $(A,B)$. 
\end{proof}

\chapter{Stabilization}
\label{Section Stabilization}

\section[Definitions and facts on Langlands--Shelstad transfer]{Standard definitions and facts on Langlands--Shelstad transfer}\label{subsec:review of LS}
\subsection{} For any field $F$ of characteristic zero and homomorphism $I \to J$ of algebraic groups over $F$, we write \index{$\D(I,J;F)$}
$$\D (I, J; F) : =  \ker (\coh ^1(F,I) \to \coh^1(F,J)). $$

Now let $F$ be a non-archimedean local field of characteristic zero, and $G$ a reductive group over $F$. We recall the definition of $\kappa$-orbital integrals in the fashion of \cite[\S 2.7]{labesse1999}. Let $\gamma \in G(F)$ be a  semi-simple element, and write $I_{\gamma}$ for $(G_{\gamma})^0$. Recall from \cite[\S 2.3]{labesse1999} that there is a natural surjection from $\D(I_{\gamma}, G ; F)$ to the set of conjugacy classes in the stable conjugacy class of $\gamma$, which is a bijection if $I_{\gamma} = G _{\gamma}$. We have a short exact sequence of pointed sets
\begin{align}\label{seq for Haar}
1 \To I_{\gamma} (F) \backslash G(F) \To \coh^0 (F, I_{\gamma} \backslash G ) \To \D(I_{\gamma}, G ;F) \To 1,
\end{align} and a natural map (see \cite[\S 1.8]{labesse1999})
$$\coh^0(F, I_{\gamma} \backslash G) \To \coh^0_{\ab} (F, I_{\gamma} \backslash G),$$
where $\coh^0 _{\ab} (F, I_{\gamma } \backslash G
 )$ is a locally compact topological abelian group. Denote by $\mathfrak K (I_{\gamma}, G ; F)$ the Pontryagin dual group\index[n]{Pontryagin dual group} of $\coh^ 0_{\ab} (F, I_{\gamma} \backslash G)$. \footnote{Under the assumption that $F$ is non-archimedean, $\mathfrak K (I_{\gamma}, G ; F)$ is isomorphic to the group $\mathfrak K(I_{\gamma}/F)$ defined   in \cite[\S 4.6]{kottwitzelliptic}.}

Choose Haar measures on $I_{\gamma} (F)$ and on $G(F)$, and equip $\D (I_{\gamma},  G ; F)$ with the counting measure. Then the short exact sequence (\ref{seq for Haar}) defines a measure $dx$ on $\coh^0(F, I_{\gamma } \backslash G)$; see \cite[\S 2.7]{labesse1999}. For $f\in C^{\infty} _c (G(F))$ and $\kappa \in \mathfrak K(I_{\gamma}, G  ; F)$, define the \emph{$\kappa$-orbital integral}\index[n]{$\kappa$-orbital integral} \index{$O_{\gamma}^{\kappa}$}
$$ O_{\gamma} ^{\kappa} (f) :  = \int_{x\in \coh^0(F, I_{\gamma} \backslash G)}  e(I_{x^{-1}\gamma x})  \kappa (x) f (x^{-1} \gamma x) dx , $$ where $e(I_{x^{-1} \gamma x})$ is the Kottwitz sign\index[n]{Kottwitz sign} of $I_{x^{-1} \gamma x}$ (see \cite[Def.~1.7.1]{labesse1999}). Also define the \emph{stable orbital integral}\index[n]{stable orbital integral}\index{$SO_{\gamma}$} $$SO_{\gamma} (f) : = O^1_{\gamma} (f). $$ 

\begin{rem}
We give a more concrete description of $O^{\kappa}_{\gamma}(f)$. For each $[x] \in \D(I_{\gamma}, G ; F)$, fix an element $x \in G(\overline F)$ mapping to $[x]$ under the composite map
	$$ G(\overline F) \To \coh^0 (F , I_{\gamma }\backslash G) \To \D(I_{\gamma} , G  ; F). $$
	Then $\gamma_x : =  x^{-1} \gamma x $ is in $ G(F)$ and $\Int (x)$ induces an inner twisting $I_{\gamma} \to I_{\gamma_x}$. In particular, the Haar measure on $I_{\gamma}(F)$ transfers to a Haar measure on $I_{\gamma_x} (F)$. Using this and the fixed Haar measure on $G(F)$, we define the orbital integral $$ O_{\gamma_x} (f) : = \int_{x \in I_{\gamma_x}(F) \backslash G(F)} f(x^{-1} \gamma x).  $$  Then we have 
	$$ O_{\gamma}^{\kappa} (f)  = \sum_{ [x] \in \D(I_{\gamma}, G ;F)} e(I_{\gamma_x}) \kappa (x)  O_{\gamma_x} (f) .  $$
\end{rem}
\subsection{}\label{para:expl of LS}
 Fix an inner twisting $\psi: G \to G^*$ with $G^*$ quasi-split, and fix an $L$-group datum for $G$, as in \cite{LS87}. Let $(H, \mathcal H, s , \eta)$ be an endoscopic datum for $G$. For simplicity, we assume that $\mathcal H = \lang H$ (cf.~the discussion in \S \ref{subsubsec:extended endo data}). 
The notion of when a semi-simple element $\gamma_H\in H(F)$ (not necessarily $G$-regular) is an \emph{image}\index[n]{image (in the context of endoscopy)} of a semi-simple element $\gamma\in G(F)$ is defined in \cite[\S 1.2]{LS90}. 

Under the additional assumption that $G^{\der}$ is simply connected, Langlands--Shelstad \cite[\S 2.4]{LS90} have defined transfer factors for $(G,H)$-regular elements. Thus after fixing a normalization we have a number
$$\Delta(\gamma_H, \gamma) \in \CC,$$
 for each semi-simple $(G,H)$-regular $\gamma_H\in H(F)$ and each semi-simple $\gamma\in G(F)$. Moreover, $\Delta(\gamma_H,\gamma)$ depends on $\gamma_H$ (resp.~$\gamma$) only via its stable conjugacy class (resp.~conjugacy class) over $F$, and we have $\Delta (\gamma_H, \gamma) =0$ unless $\gamma_H$ is an image of $\gamma$.
 
 Since we have assumed that $\mathcal H = \lang H$, we can in fact define $\Delta(\gamma_H, \gamma)$ for $(G,H)$-regular $\gamma_H$ without assuming that $G^{\der}$ is simply connected. In the more restrictive $G$-regular case, this is done in \cite{LS87}; below we explain the $(G,H)$-regular case. For this, consider a $z$-extension $1\to Z \to G_1 \to G \to 1$. This determines a central extension $1 \to Z \to H_1 \to H \to 1$ as in \cite[\S 4.4]{LS87}. As explained in \textit{loc.~cit.}, we have a homomorphism $\eta_1: \lang H_1 \to \lang G_1$ such that $(H_1, \lang H_1, s, \eta_1)$ is an endoscopic datum for $G_1$. The restriction of $\eta_1$ to $\widehat{H_1}$ is canonical, but $\eta_1$ itself is canonical only up to twisting by a cocycle in the center of $\widehat{H_1}$. In our current situation (with $\mathcal H = \lang H$), we can take $\eta_1$ such that the diagram 
 \begin{align} \label{diag:z-ext}
\xymatrix{\lang H \ar[d] \ar[r]^{\eta} & \lang G \ar[d] \\ \lang H_1 \ar[r]^{\eta_1} & \lang G_1  }
 \end{align}
 commutes, where the vertical arrows are the natural ones associated to $H_1 \to H$ and $G_1 \to G$. This pins down $\eta_1$ canonically. We then define $\Delta(\gamma_H, \gamma)$ to be zero if $\gamma_H$ is not an image of $\gamma$, and otherwise to be $\Delta(\gamma_{H_1}, \gamma_1)$ where $\gamma_{H_1} \in H_1(F)$ (resp.~$ \gamma _1\in G_1(F)$) is a lift of $\gamma_H$ (resp.~$\gamma$) such that $\gamma_{H_1}$ is an image of $\gamma_1$, and $\Delta(\gamma_{H_1}, \gamma_1)$ is defined with respect to the endoscopic datum $(H_1, \lang H_1, s, \eta_1)$ for $G_1$ as in \cite[\S 2.4]{LS90}. In the latter case, the pair $(\gamma_{H_1}, \gamma_1)$ always exists, and is unique up to simultaneous translation by $Z(F)$. To show that this definition of $\Delta(\gamma_H,\gamma)$ is independent of the lifts, it suffices to check that  $\Delta( z \gamma_{H_1}, z \gamma_1) = \Delta( \gamma_{H_1},  \gamma_1)$ for all $z\in Z(F)$. For this it suffices to treat the case where $\gamma_{H_1}$ is strongly $ G_1$-regular. Then the desired statement is proved on p.~254 of \cite{LS87} (with $\lambda =1$). One can also check that the above definition is independent of the choice of the $z$-extension $G_1$. For this, using the standard fact (see \cite[Lem.~1.1]{kottwitzrational}) that any two $z$-extensions of $G$ can be dominated by a third $z$-extension, one is reduced to checking that when $G^{\der}$ is simply connected, for strongly $G$-regular $\gamma_H \in H(F)$, the definition of $\Delta(\gamma_H, \gamma)$ as above (i.e., $\Delta(\gamma_H, \gamma) : = \Delta(\gamma_{H_1}, \gamma_1)$ with a given $z$-extension $G_1$ and with $\eta_1$ pinned down as above) agrees with the original definition of $\Delta(\gamma_H, \gamma)$ in \cite{LS87}. This is a routine exercise which involves checking suitable functorial properties of all the terms $\Delta_I ,\cdots, \Delta_{IV}$ in \textit{loc.~cit.}. 
 
The Langlands--Shelstad Transfer Conjecture and the Fundamental Lemma are now unconditional theorems thanks to the work of Ng\^o \cite{ngo10}, Waldspurger \cite{waldsimplique, waldschangement}, Cluckers--Loeser \cite{cluckersLoeser}, and Hales \cite{halesFL}.  We recall these statements in the following theorem\footnote{We state only the Fundamental Lemma for the unit element of the unramified Hecke algebra. The references \cite{ngo10}, \cite{waldschangement}, and \cite{cluckersLoeser} give this result for characteristic zero local fields with sufficiently large residue characteristic. In \cite{halesFL} it is shown that the Fundamental Lemma for the unit for all sufficiently large residue characteristic is enough to imply the Fundamental Lemma (for the full unramified Hecke algebra) for characteristic zero local fields with arbitrary residue characteristic. See also \cite{LMW}.}, taking into account the extension to $(G,H)$-regular elements in \cite[\S 2.4]{LS90}.
 
\begin{thm}\label{thm:LS} Let $G$ be a reductive group over a non-archimedean local field $F$ of characteristic zero. Let $(H, \lang H, s ,\eta)$ be an endoscopic datum for $G$. 
	\begin{enumerate}
		\item (Langlands--Shelstad Transfer.)\index[n]{Langlands--Shelstad Transfer} Fix a normalization of the transfer factors, and fix Haar measures on $G(F)$ and $H(F)$. For any $f\in C_c^{\infty} (G(F))$, there exists $f ^H\in C_c^{\infty} (H(F)) $, called the \emph{Langlands--Shelstad transfer}\index[n]{Langlands--Shelstad transfer} of $f$, with the following properties: For any semi-simple $(G, H)$-regular  $\gamma_H \in H(F)$, we have  
		\begin{equation}\label{transfer id}
		SO_{\gamma_H}(f^H)	 =\begin{cases}
		 0, & ~ \gamma_H \text{ is not an image from } G ,\\
		  \Delta(\gamma_H, \gamma) O_{\gamma}^{s} (f) ,&  ~ \gamma_H \text{ is an image of } \gamma\in G(F)_{\mathrm{ss}} .
		\end{cases} 
		\end{equation}In the second situation of (\ref{transfer id}) we have the following explanations.   
	\begin{itemize}
\item The component $s$ in $(H, \lang H, s, \eta)$ defines an element of $\mathfrak K(I_{\gamma} , G ; F)$ still denoted by $s$, and we use that to define $O_{\gamma}^s $.
\item We define $SO_{\gamma_H} (f^H)$ and $O^s_{\gamma} (f)$  using the fixed Haar measures on $G(F) $ and $H(F)$ and compatible Haar measures on $G_{\gamma} ^0 (F)$ and $H_{\gamma_H} ^0 (F)$.  
	\end{itemize}  
	
\item (Fundamental Lemma.)\index[n]{Fundamental Lemma} Suppose $G$ and $(H,\lang H, s,\eta)$ are unramified (see \S \ref{subsubsec:twisted transfer}). Normalize the Haar measures on $G(F)$ and $H(F)$ such that all hyperspecial subgroups have volume $1$. Let $K$ (resp.~$K_H$) be an arbitrary hyperspecial subgroup of  $G(F)$ (resp.~$H(F)$). Then $1_{K_H}$ is a Langlands--Shelstad transfer of $1_{K}$ as in part (1), for the unramified normalization canonically associated to $K$ of transfer factors defined in \cite{halesunram}. 

\item (Adelic Transfer.) Let $G_0$ be a reductive group over a number field $F_0$ and let $(H_0, \lang H_0 , s_0, \eta_0)$ be an endoscopic datum for $G_0$ over $F_0$. Suppose there is a finite set $\Sigma$ of finite places of $F_0$ and a reductive model $\mathcal G$ of $G_0$ over $\mathcal O_{F_0}[1/\Sigma]$ such that for all finite places $v$ of $F_0$ outside $\Sigma$ the endoscopic datum $(H_0, \lang H_0, s_0,\eta_0)$ localizes to an unramified endoscopic datum over $F_{0,v}$, and the transfer factors between $H_{F_{0,v}}$ and $G_{F_{0,v}}$ are normalized under the canonical unramified normalization associated to $\mathcal G(\mathcal O_{F_{0,v}})$. Let $S$ be the union of $\Sigma$ and the set of all archimedean places of $F_0$, and let $\adele^S_{F_0}$ denote the adeles over $F_0$ away from $S$. For any $f \in C^{\infty} _c (G_0 (\adele ^S_{F_0}))$, there exists $f^H \in C^{\infty} _c (H_0 (\adele^S_{F_0})) $ such that the $\adele^S_{F_0}$-analogue of (\ref{transfer id}) holds. Here the notion of an adelic $(G_0,H_0)$-regular element is defined in \cite[\S 7, pp.~178--179]{kottwitzannarbor}, and all the orbital integrals are defined with respect to adelic Haar measures. 
	\end{enumerate}
\end{thm}

\begin{rem}\label{rem:on using LS}
Part (1) of Theorem \ref{thm:LS} appears to be stronger than the original form of the Langlands--Shelstad Conjecture in two ways. Firstly, the original conjecture is about transferring functions on $G$ to functions on a central extension $H_1$ of $H$. More precisely, fix a $z$-extension $1 \to Z \to G_1 \to G \to 1$ and obtain $H_1$ as in \S \ref{para:expl of LS}. For a choice of $\eta_1: \lang H_1 \to \lang G_1$ (recall that $\eta_1 |_{\widehat {H_1}}$ is canonical), the conjecture concerns transferring functions in $C^{\infty}_c(G(F))$ to functions in $C^{\infty} _c (H_1(F), \lambda)$. Here  $\lambda$ is a character on $Z(F)$ determined by $\eta_1$, and $C^{\infty} _c (H_1(F), \lambda)$ denotes the set of functions in $C^{\infty} (H_1(F))$ that transform under $Z(F)$ by $\lambda$ and whose supports are compact modulo $Z(F)$. Now under our assumption that $\mathcal H = \lang H$, we may and shall pin down $\eta_1$ as in \S \ref{para:expl of LS}, and then $\lambda = 1$. In view of the definition of the transfer factors discussed in \S \ref{para:expl of LS}, we know that under the natural bijection $ C^{\infty} _c (H_1(F) , 1)\isom C^{\infty} _c (H(F))$, a Langlands--Shelstad transfer of $f \in C^{\infty}_c(G(F))$ to $C^{\infty}_c (H_1(F), 1)$ in the original sense corresponds to a Langlands--Shelstad transfer of $f$ to $C^{\infty}_c (H(F))$ in the sense of Theorem \ref{thm:LS}. 

Secondly, in the original conjecture the identity (\ref{transfer id}) is only required to hold for all $G$-regular $\gamma_H$. In \cite[\S 2.4]{LS90}, Langlands--Shelstad prove that this indeed implies (\ref{transfer id}) for all $(G, H)$-regular $\gamma_H$, under the assumption that $G^{\der}$ is simply connected. In view of the last paragraph, we know that this implication is still valid without assuming that $G^{\der}$ is simply connected (but always under the assumption that $\mathcal H = \lang H$).

Similar remarks also apply to part (2) of Theorem \ref{thm:LS}. 
\end{rem}

\section{Calculation of some invariants}\label{subsec:calc of tau and k}
In this section let $G$ be the special orthogonal group of an arbitrary quadratic space of dimension $d \geq 2$ over $\QQ$. Let $m: = \floor{d/2}$.

\begin{prop}\label{prop:Tamagawa} Assume that $G$ is not the split $\SO_2$. Then the Tamagawa number $\tau (G) =2$. 
\end{prop}
\begin{proof} By \cite[(5.1.1)]{kottwitzcuspidal} and Weil's conjecture on Tamagawa numbers proved in \cite{kottTama}, we have
	\begin{align}\label{eq:formula for Tama}
		\tau(G) = \abs{\pi_0 (Z(\widehat {G}) ^{\Gamma_{\QQ}})} / \abs {\ker^1 (\QQ, Z(\widehat {G}))}.
	\end{align}  
First assume that $d \geq 3$. Then $\widehat {G}$ is a symplectic group of rank at least $1$ or an even orthogonal group of rank at least $2$, so $Z(\widehat {G} ) \cong \mu_2$. In particular, $\ker^1(\QQ, Z(\widehat G)) =0$ by Chebotarev's density theorem. On the other hand $\pi_0 (Z(\widehat G) ^{\Gamma_{\QQ}}) = Z(\widehat G) $ has cardinality $2$. Hence $\tau(G) =2$.
	
	 Now assume that $d=2$. Since $G$ is not split, it is isomorphic to the norm-$1$ subtorus of $\Res_{K/\QQ} \GG_m$ for some quadratic extension $K/\QQ$. We have $Z(\widehat G) = \widehat G = \CC^{\times}$. The action of $\Gamma_{\QQ}$ on $Z(\widehat G)$ factors through $\Gal(K/\QQ)$, and the non-trivial element of $\Gal(K/\QQ)$ acts by $z \mapsto z^{-1}$. Hence $Z(\widehat G) ^{\Gamma_{\QQ}} = \set{\pm 1}$. On the other hand, $\ker^1(\QQ, Z(\widehat G))$ is the dual group of the finite abelian group $\ker^1(\QQ, T)$ by \cite[(3.4.5.1)]{kottwitzcuspidal}, and the latter is trivial by the Hasse norm theorem (cf.~\cite[pp.~307--308]{PR}). Hence $\tau(G) = 2$.   
\end{proof}

\begin{defn} Let $H$ be   reductive group over $\RR$ assumed to contain elliptic maximal tori.  Define \index{$k(\cdot)$} $$ k(H) : = \abs{\im ( \coh^1(\RR, T_e^{\SC})   \to \coh^1 (\RR, T_e) )} , 
	$$
where $T_e$ denotes an elliptic maximal torus in $H$ and $T_e^{\SC}$ denotes the inverse image of $T_e$ in $H^{\SC}$.
Since all elliptic maximal tori in $H$ are conjugate under $H(\RR)$, $k(H)$ is well defined. 
\end{defn}

\begin{prop}\label{prop:comp k}Assume that $G_{\RR}$ contains elliptic maximal tori. Then $k(G) = 2^{m -1}$. 
\end{prop}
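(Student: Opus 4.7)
My plan is to compute both $k'(G)$ and $k(G)$ by applying the Tate--Nakayama isomorphism $\coh^1(\RR, T) \cong \widehat{\coh}^{-1}(\Gamma_\infty, X_*(T))$ to the two tori $T_e$ and $T_e^{\SC}$, where by $T_e^{\SC}$ I mean the preimage of $T_e$ in $G^{\SC} = \Spin(V)$ (so that $X_*(T_e^{\SC})$ is identified with the coroot lattice $Q^\vee$ of $G$ sitting inside $X_*(T_e)$).

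The computation of $k'(G)$ is essentially formal. Since $T_e$ is elliptic, fixing an elliptic decomposition of $V_\RR$ as in Definition \ref{defn of e.d.} identifies $T_e \cong \Uni(1)^m$, so that $X_*(T_e) \cong \ZZ^m$ with complex conjugation $\tau$ acting as $-1$. The norm map $1+\tau$ is zero and $1 - \tau$ is multiplication by $2$, hence $\widehat{\coh}^{-1}(\Gamma_\infty, X_*(T_e)) = X_*(T_e)/2X_*(T_e) \cong (\ZZ/2)^m$, giving $k'(G) = 2^m$.

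For $k(G)$ the same Tate--Nakayama calculation applies to $T_e^{\SC}$: since $\tau$ still acts as $-1$ on $X_*(T_e^{\SC}) = Q^\vee$, we get $\coh^1(\RR, T_e^{\SC}) \cong Q^\vee/2Q^\vee$, and the map to $\coh^1(\RR, T_e)$ is induced by the inclusion $Q^\vee \hookrightarrow X_*(T_e) = \ZZ^m$ reduced mod $2$. The key step is thus to identify $Q^\vee$ explicitly inside $\ZZ^m$. Using the list of coroots in \S \ref{R groups}: in the odd case (type $\mathrm B_m$) the coroots are $\pm\epsilon_i^\vee \pm \epsilon_j^\vee$ and $\pm 2\epsilon_i^\vee$, while in the even case (type $\mathrm D_m$) they are $\pm\epsilon_i^\vee \pm \epsilon_j^\vee$. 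A direct check (any generator has even coordinate sum, and conversely $2\epsilon_i^\vee$ and $\epsilon_i^\vee - \epsilon_j^\vee$ generate the even-sum sublattice) shows that in both cases
\[
Q^\vee = \{(a_1,\ldots,a_m) \in \ZZ^m : a_1 + \cdots + a_m \equiv 0 \pmod 2\},
\]
which has index $2$ in $X_*(T_e)$.

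To finish, I would trace through the map $Q^\vee/2Q^\vee \to \ZZ^m/2\ZZ^m$. Since $2\ZZ^m \subset Q^\vee$, the image is simply the reduction of $Q^\vee$ mod $2$, which is the hyperplane $\{(\bar a_i) \in (\ZZ/2)^m : \sum \bar a_i = 0\}$, an index-$2$ subgroup of $(\ZZ/2)^m$. Thus $k(G) = 2^{m-1}$. The only subtlety I expect is bookkeeping with the convention on $T_e \cap G^{\SC}$ and making sure that the sublattice $X_*(T_e^{\SC}) \subset X_*(T_e)$ really is the coroot lattice; but this follows from the fact that $G^{\SC}$ is the simply connected cover in the type $\mathrm B_m$/$\mathrm D_m$ root datum, so the cocharacter lattice of $T_e^{\SC}$ is $Q^\vee$ by general theory.
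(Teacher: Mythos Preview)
Your argument is correct. The computation of $k'(G)$ is identical to the paper's. For $k(G)$, however, you take a genuinely different route.

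The paper does not compute the image of $\coh^1(\RR, T_e^{\SC}) \to \coh^1(\RR, T_e)$ directly. Instead it invokes (with a reference to Morel's work \cite{morel2010book}, \cite{morel2011suite}) the general identity
\[
k(G) = \frac{\abs{\pi_0(\widehat{T_e}^{\Gamma_\infty})}}{\abs{\pi_0(Z(\widehat{G})^{\Gamma_\infty})}},
\]
and then observes that $Z(\widehat{G}) \cong \mu_2$ with trivial Galois action, so the denominator is $2$; this immediately gives $k(G) = 2^m/2 = 2^{m-1}$. Your approach instead identifies $X_*(T_e^{\SC})$ with the coroot lattice $Q^\vee$, checks that in both types $\mathrm B_m$ and $\mathrm D_m$ this is the even-coordinate-sum sublattice of $\ZZ^m$, and then reads off the image in $(\ZZ/2)^m$ as the hyperplane $\sum \bar a_i = 0$. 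Your route is more hands-on and self-contained (no appeal to the dual-group formula), while the paper's route is shorter once that formula is granted and makes the factor of $2$ visibly come from $Z(\widehat{G})$, which is conceptually pleasant and connects to the Tamagawa-number computation just above. Both are perfectly valid.
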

\begin{proof} If $d =2$, then $G_{\RR}$ is a torus, so obviously $k(G) =1$. In this case $m=1$, so the proposition is true. Now assume that $d\geq 3$. Let $T_e$ be an elliptic maximal torus in $G_{\RR}$, which is in fact anisotropic. As argued in the proofs of \cite[Lem.~5.4.2]{morel2010book} and \cite[Lem.~5.2.2]{morel2011suite}, we have\footnote{In \textit{loc.~cit.~}it is stated that $\abs{\im ( \coh^1(\RR, T_e \cap G^{\der})   \to \coh^1 (\RR, T_e) )} = \abs{\pi_0 (\widehat {T_e}^{\Gamma_{\infty}})} / \abs{\pi_0 (Z(\widehat{ G}) ^{\Gamma_{\infty}})}$, and in that context $G^{\der}$ is simply connected. For the correct generalization, one replaces the left hand side by $k(G)$.} $$k(G) = \abs{\pi_0 (\widehat {T_e}^{\Gamma_{\infty}})} / \abs{\pi_0 (Z(\widehat{ G}) ^{\Gamma_{\infty}})}.$$
We have  $ \pi_0 (Z(\widehat{ G}) ^{\Gamma_{\infty}}) \cong  Z(\widehat{ G} )\cong \ZZ/2\ZZ$, and since $T_e \cong \Uni(1) ^{m}$ we have $\pi_0 (\widehat{T_e} ^{\Gamma_{\infty}}) \cong (\ZZ/2\ZZ)^m$. Hence $k(G) = 2^{m-1}. $ 
\end{proof}
Recall that $\GL_{j,\RR}$ contains elliptic maximal tori precisely when $j = 1,2$.
\begin{prop}\label{prop:Tamagawa GL}
	For any $j\geq1$, $\tau(\GL_j) = 1$. For $j =1,2$, $  k (\GL_{j,\RR}) = 1.$ 
\end{prop}
\begin{proof}
	For $j\geq 1$, $Z(\widehat \GL_j) = \CC^{\times}$, on which $\Gamma_{\QQ}$ acts trivially. Hence $$ \pi_0 ( Z(\widehat \GL_j) ^{\Gamma_{\QQ}}) =\pi_0(\CC^{\times} ) = 1,$$ and $$  \ker^1(\QQ , Z(\widehat \GL_j)) =1$$ by Chebotarev's density theorem. Thus $\tau(\GL_j) =1$ by (\ref{eq:formula for Tama}). Since $\GL_{1,\RR}$ is a torus, we have $k(\GL_{1,\RR}) =1$. Any elliptic maximal torus $T_e$ in $\GL_{2,\RR}$ is isomorphic to $ \Res_{\CC/\RR} \GG_m$, and $\coh^1(\RR, T_e)$ is trivial by Shapiro's lemma. Hence $k(\GL_{2,\RR}) =1$.  
\end{proof}

\begin{cor}\label{tau k}
	Let $M$ be a Levi subgroup of $G$ defined over $\QQ$. Let $M'$ be the group in a bi-elliptic endoscopic $G$-datum for $M$. Let $H'$ be the induced endoscopic group for $G$. Assume that $M$ is not a direct product of copies of $\GL_1$ and $\GL_2$ over $\QQ$, and assume that all four $\RR$-groups $G_{\RR}, M_{\RR}, M'_{\RR}, H_{\RR}$ contain elliptic maximal tori. Then we have $$ \frac{\tau(G)}{\tau(H)} \frac{\tau (M')}{\tau (M)} = \frac{k(H)}{k(G)} \frac{ k(M)} { k(M')}. $$
\end{cor}
\begin{proof} We have $M \cong M^{\GL } \times M^{\SO}$, where $M^{\GL}$ is a product of copies of $\GL_1$ and $\GL_2$, and $M^{\SO}$ is a special orthogonal group which is not the split $\SO_2$ over $\QQ$. Then $M'$ is either a direct product of $M^{\GL}$ with one special orthogonal group $S_0$ of the same parity and absolute rank as $M^{\SO}$, or a direct product of $M^{\GL}$ with two   special orthogonal groups $S_1, S_2$ of the same parity as $M^{\SO}$ whose absolute ranks add up to that of $M^{\SO}$. In both cases, none of $S_i$ is the split $\SO_2$ over $\QQ$ since $M'$ is an elliptic endoscopic group for $M$. 
	
	In the first case, $H$ is a special orthogonal group of the same parity and absolute rank as $G$. By Proposition \ref{prop:Tamagawa} we have $\tau(G) = \tau(H)$ and $\tau(M) = \tau(M')$. By Proposition \ref{prop:comp k} we have $k(G) = k(H)$ and $k(M) =k(M')$. The desired identity holds.
	
	In the second case, $H$ is a direct product of two  special orthogonal groups $H_1, H_2$ whose absolute ranks $m_1, m_2$ add up to that of $G$, and neither of the two is the split $\SO_2$ over $\QQ$ since $H$ is an elliptic endoscopic group for $G$. 
	 By Proposition \ref{prop:Tamagawa} and the multiplicativity of $\tau(\cdot)$ with respect to direct products, we have $\tau(G) = 2$, $\tau(H) = \tau(H_1) \tau(H_2) = 4$, and $$\tau(M') = \tau(S_1) \tau(S_2) \tau(M^{\GL}) = 4 \tau(M^{\GL}) = 2 \tau(M^{\SO}) \tau(M^{\GL}) = 2 \tau(M). $$ Hence the LHS of the desired identity is $1$. By Proposition \ref{prop:comp k} and the multiplicativity of $k(\cdot)$ with respect to direct products, we have $$k(G) = 2^{m-1} = 2 \cdot 2^{m_1 -1} 2^{m_2-1} = 2 k(H_1) k(H_2) = 2k(H),$$  and similarly $$k(M) = k(M^{\GL}) k(M^{\SO}) = 2k(M^{\GL}) k(S_1) k(S_2)  =2k(M'). $$ Hence the RHS of the desired identity is also $1$. 
	\end{proof}

\section{The simplified geometric side of the stable trace formula}\label{simplified geometric side} We recall the definition of the simplified geometric side of the stable trace formula, applicable to test functions which are stable cuspidal at infinity. This stems from Kottwitz's work in his unpublished notes. Our exposition follows \cite[\S 5.4]{morel2010book}.
More discussion on the relationship between the simplified geometric side given here and the ``usual'' stable trace formula appearing in Arthur's work is given in \S \ref{subsec:intro} below.
\begin{defn}
Let $M$ be a reductive group over $\RR$ containing elliptic maximal tori. Fix a Haar measure on $M(\RR)$. Let $\bar M$ be the inner form of $M$ over $\RR$ that is anisotropic modulo center (which exists by our assumption on $M$).  Define \index{$\bar v(\cdot)$}
$$ \bar v (M) : = e(\bar M) \vol (\bar M (\RR) / A_{M} (\RR) ^0 ) ,$$ where $e(\bar M)$ is the Kottwitz sign of $\bar M$, $\bar M(\RR)$ is equipped with the Haar measure transferred from that on $M(\RR)$, and $A_M(\RR)^0$ is equipped with the canonical Haar measure obtained by choosing an $\RR$-algebraic group isomorphism $\phi: A_M \isom \GG_m^n$ and pulling back the Lebesgue measure along the composite isomorphism $$\log \phi: A_M(\RR)^0 \xrightarrow{\phi} (\RR_{>0})^n \xrightarrow{(x_i)_i \mapsto (\log x_i)_i} \RR^n .$$ (This measure on $A_M(\RR)^0$ is indeed canonical since a different choice of $\phi$ would replace $\log \phi$ by $g\circ \log \phi$ for some $g\in \GL_n(\ZZ)$.) 
\end{defn} 

\begin{defn}\label{defn:SPhi}
Let $G$ be a reductive group over $\RR$. Fix a quasi-character $\nu : A_{G} (\RR) ^0 \to \CC^{\times }$. Let $M$ be a Levi subgroup of $G$ such that $M$ contains elliptic maximal tori (of $M$), and let $f\in C^{\infty} _c (G(\RR) , \nu^{-1})$ be a stable cuspidal function\index[n]{stable cuspidal function} (see \cite[\S 4]{arthurlef}, \cite[\S 5.4]{morel2010book}). For $\gamma \in M(\RR)$ semi-simple elliptic, we define \index{$S\Phi _M ^{G}$}
 $$ S\Phi _M ^{G}(\gamma, f) : = (-1) ^{\dim A_M} k(M ) k (G) ^{-1} \bar v (M_{\gamma} ^0) ^{-1} \sum _{\Pi} \Phi_M (\gamma^{-1} ,\Theta_{\Pi}) \Tr (f\mid  \Pi),$$ where $\Pi$ runs through the discrete series L-packets belonging to $\nu$, $\Theta_{\Pi}$\index{$\Theta_{\Pi}$} denotes the stable character associated to $\Pi$, and $\Phi_M(\cdot, \Theta_{\Pi})$ is the normalized stable discrete series character as in \S \ref{subsubsec:mathbb V}. This definition depends on the choices of a Haar measure on $M_{\gamma} ^0 (\RR) $ (used to define $\bar v(M_{\gamma} ^0)$) and a Haar measure on $G(\RR)$ (used to define $\Tr(f \mid  \Pi)$). 
\end{defn}

\begin{defn}\label{defn:pre geometric side}
Let $G$ be a reductive group over $\QQ$. Assume that $G$ is cuspidal  in the sense of Definition \ref{defn:cuspidal}. For $f = f^{\infty} f_{\infty} \in C^{\infty} _c (G(\adele)) $ with $f_{\infty} \in C^{\infty} _c (G(\RR) , \nu^{-1})$ stable cuspidal (where $\nu$ is a fixed quasi-character $ A_{G} (\RR) ^0 \to \CC^{\times }$), and for $M\subset G$ a Levi subgroup that is cuspidal,  define \index{$ST_M^{G}$}
$$ ST_M^{G}(f) : = 
\tau(M) \sum_{\gamma} \bar \iota ^M (\gamma) ^{-1} SO_{\gamma} (f_M^{\infty}) S\Phi ^{G}_M (\gamma, f_{\infty}) ,$$
where $\gamma$ runs through a set of representatives of the stable conjugacy classes of the $\RR$-elliptic semi-simple elements of $M(\QQ)$, and \index{$\bar \iota$}
$$\bar \iota ^M ({\gamma} ): = \abs{(M_{\gamma} / M_{\gamma}^0) (\QQ) }  .$$ For $M\subset G$ a Levi subgroup that is not cuspidal,  define $$ ST_M^{G}(f) : = 0.$$ We define  \index{$ST^{G}$}
$$ST^{G} (f) : = \sum_M( n_M^{G})^{-1} ST_M^{G} (f), $$ where $M$ runs through the Levi subgroups of $G$  up to $G(\QQ)$-conjugacy, and $n_M^{G}$ is as in Definition \ref{defn:n^G_M}.
\end{defn}
\begin{rem}
	We explain how the Haar measures are normalized in the definitions of $ST^G_M(f)$ and $ST^G(f)$  so that the results are independent of the Haar measures. For each $SO_{\gamma} (f_M^{\infty})$, we need  Haar measures on $M_{\gamma}^0(\adele_f)$ and $M(\adele_f)$ to define the stable orbital integral $SO_{\gamma}(\cdot)$, and need Haar measures on $M(\adele_f)$ and $G(\adele_f)$
	to define the constant term $f^{\infty}_M$. We assume that the two measures on $M(\adele_f)$ are the same. Then $SO_{\gamma} (f_M^{\infty})$ depends only on the Haar measures on $M_{\gamma}^0(\adele_f)$ and $G(\adele_f)$. Now in the definition of $S\Phi ^{G} _M (\gamma, f_{\infty})$, we need Haar measures on $M_{\gamma}^0(\RR)$ and $G(\RR)$ (cf.~Definition \ref{defn:SPhi}). We assume that the measures on $M_{\gamma}^0(\adele_f)$ and $M_{\gamma}^0(\RR)$ multiply to the Tamagawa measure on $M_{\gamma}^0 (\adele)$, and assume that the measures on $G(\adele_f)$ and $G(\RR)$ multiply to  the Tamagawa measure on $G(\adele)$.  Then  $ST^G_M(f)$ and $ST^G(f)$ are independent of the choices of Haar measures.
\end{rem}
\section{Test functions on endoscopic groups}\label{subsec:test functions}
\subsection{}\label{para:test function intro}
We now keep the notation and setting in \S \ref{subsubsec:setting for Morel's formula} and Theorem \ref{geometric assertion}. In particular, $G = \SO(V,q)$, where $(V,q)$ is a quadratic space over $\QQ$ of dimension $d \geq 5$, signature $(d-2,2)$, and discriminant $\delta \in \QQ^{\times}/\QQ^{\times,2}$.  Assume that the $G$-representation  $\mathbb V$ fixed in \S \ref{subsubsec:V and lambda} is absolutely irreducible. Fix a prime $p\notin \Sigma(\mathbf{O} (V), \mathbb V, \lambda, K,f^{\infty})$, and fix an integer $a \geq a_0(\mathbf O(V),\mathbb V, \lambda, K, f^{\infty}, p)$. 
Let $f^{p,\infty} $ be as in \S \ref{subsubsec:setting for Morel's formula}.

Let $\mathfrak e_{d^+, \delta^+, d^- , \delta^-} = (H,\lang H , s , \eta)$ be an elliptic endoscopic datum for $G= \SO(V)$, presented in the explicit form as in \S \ref{subsec:elliptic end data}. In the following we will always assume that $d^+ \geq  2$, or equivalently, that in the decomposition $H = H^+ \times H^- =  \SO(V^+) \times \SO(V^-)$ the factor $H^+$ is non-trivial. By \S \ref{para:e_p}, every isomorphism class in $\mathscr E(G)$ can be represented by such a datum. 

We follow \cite[\S 7]{kottwitzannarbor} to define a test function $f^H \in C^{\infty} _c (H(\adele))$. By definition, $f^H=0$ unless the following condition is satisfied:
\begin{enumerate}
	\item [($\dagger$)]   The $\RR$-group $H_{\RR}$ contains anisotropic maximal tori\footnote{In \cite[\S 7]{kottwitzannarbor}, the more general condition at the archimedean place is that elliptic maximal tori in $G_{\RR}$ should ``come from'' $H_{\RR}$. In our situation, since $G_{\RR}$ contains anisotropic maximal tori, the condition simplifies to the one in the text.}, and the $\QQ_p$-group $H_{\QQ_p}$ is unramified.
\end{enumerate}
Note that for our explicit representative $(H, \lang H, s,\eta)$, the  group $H_{\QQ_p}$ is unramified if and only if the localization of the endoscopic datum $(H,\lang H, s, \eta)$ over $\QQ_p$ is unramified. Also,  if $H_{\RR}$ contains  anisotropic maximal tori, then $H$ is cuspidal as a $\QQ$-group, and neither of $H^{\pm}_{\RR}$ is isomorphic to the split $\SO_2$ over $\RR$. It easily follows from the last condition that the localization of the (globally elliptic) endoscopic datum $(H,\lang H, s, \eta)$ over $\RR$ remains elliptic, as an endoscopic datum over $\RR$. Conversely, if $H$ is cuspidal, then since $A_H$ is trivial by the (global) ellipticity of $(H,\lang H, s, \eta)$, we know that $H_{\RR}$ contains anisotropic maximal tori. In conclusion, ($\dagger$) is equivalent to the following condition:
\begin{enumerate}
	\item [($\ddagger$)] The $\QQ$-group $H$ is cuspidal, and the $\QQ_p$-group $H_{\QQ_p}$ is unramified. 
\end{enumerate} Moreover, as we have seen, these conditions imply that the endoscopic datum $(H,\lang H, s,\eta)$ is elliptic over $\RR$ and unramified over $\QQ_p$.
In the following we assume that ($\dagger$) and ($\ddagger$) hold.

 By definition $f^H$ is of the form\index{$f^H$} $$f^H = f^H_{\infty} f^H _p f^{H, p,\infty} $$ with $f^H_{\infty} \in C^{\infty} _c (H(\RR))$ stable cuspidal, and $f^H_p \in C^{\infty}_c(H(\QQ_p)), f^{H,p,\infty} \in C^{\infty}_c(H(\adele_f^p))$. (As $Z_H^0$ is anisotropic over $\RR$ we do not need to specify central characters for the notion of stable cuspidal functions.)  

We fix a Haar measure on $H(\adele_f^p)$ arbitrarily, and fix the Haar measure on $H(\QQ_p)$ such that hyperspecial subgroups have volume $1$. Then there is a unique Haar measure on $H(\RR)$ such that the product measure on $H(\adele)$ is the Tamagawa measure. We fix this measure on $H(\RR)$ as well.

\subsection{} \label{once and for all}
 The definition of $f^H_{\infty}$ will depend on the choice of an auxiliary datum $(j, B_{G,H})$\index{$B_{G,H}$}, which we now specify. Here $j : T_H \isom T_G$ is an admissible isomorphism between anisotropic maximal tori $T_H \subset H_{\RR}$ and $T_G \subset G_{\RR}$; see \S \ref{subsec:admissible isom} for the notion of admissible isomorphisms over $\CC$, and note that any $\CC$-isomorphism $T_{H,\CC} \isom T_{G,\CC}$ is automatically defined over $\RR$ since both $T_H$ and $T_G$ are anisotropic over $\RR$.  The other part $B_{G,H}$ is a Borel subgroup of $G_{\CC}$ containing $T_{G,\CC}$; in other words, $(T_G, B_{G,H})$ is a fundamental pair in $G_{\RR}$.
 Later we shall also use the choice of $(j,B_{G,H})$ to normalize the archimedean transfer factors between $H$ and $G$. The dependence of $f^H_{\infty}$ on $(j, B_{G,H})$ is analogous to the dependence of a transfer of a function from $G$ to $H$ on the normalization of transfer factors. However this is only an analogy, as $f^H_{\infty}$ is not defined to be the transfer of a function on $G$.

 We now fix $(j, B_{G,H})$ once and for all in the following way. We let the fundamental pair $(T_G, B_{G,H})$ arise, in the way described in \S \ref{subsubsec:constrn with ED}, from an elliptic decomposition (Definition \ref{defn:ED}) $\mathcal D^{H} \in \ED(V_{\RR})$. Moreover, in the even case we assume that $\mathcal D^H$ gives rise to the orientation $o_V$ on $V_{\RR}$ fixed in \S \ref{subsubsec:defn of EDo}. In other words, $\mathcal D^H \in \ED(V_{\RR})^o$ in the notation of \S \ref{subsubsec:defn of EDo}.  As the notation suggests, \emph{we shall make possibly different choices of $\mathcal D^H$ for different $(H,\lang H, s, \eta)$}; a uniform choice is sometimes not possible because of some further conditions to be imposed in the following paragraph. Once $\mathcal D^H$ has been chosen, we choose $j$ as follows. Recall that $H$ is of the form $H= H^+ \times H^- = \SO(V^+) \times \SO(V^-)$. To define $j: T_H \isom T_G$, we choose an elliptic decomposition $\mathcal D_H = (\mathcal D_{H^+}, \mathcal D_{H^-})$ of $(V^+_{\RR}, V^-_{\RR})$ which should induce the fixed orientations on $V^{\pm}$ in the even case; in other words $\mathcal D_H \in \ED(V^+_{\RR})^o \times \ED(V^-_{\RR})^o$ in the notation of \S \ref{subsubsec:setting for para tori}. Then we define $j$ to be $j_{\mathcal D_H, \mathcal D^H}$ in the notation of \S \ref{subsubsec:setting for para tori}. By Lemma \ref{lem:j is adm}, this  $j$ is indeed an admissible isomorphism. 
  
  Now let us specify further conditions on $\mathcal D^H$. Since the signature of $V_{\RR}$ is $(d-2,2)$, we know that $\mathcal D^H$ involves exactly one negative definite plane as its member. In the odd case, we assume that $\mathcal D^H$ lies in $\ED(V_{\RR})^o_{\mathrm{nice}}$ as in Definition \ref{defn:EDNice}. This means that the unique negative definite member of $\mathcal D^H$ is the last member; cf.~Example \ref{eg:nice ED}. In the even case, unless $m= d/2$ is odd and $ d^+ =2$, we assume that $\mathcal D^H$ lies in $\ED(V_{\RR})^o_{\mathrm{nice}}$ as in Definitions \ref{defn:EDNice} and \ref{defn:EDNice, div by 4}, meaning that the unique negative definite member is the last (resp.~second last) member if $m$ is even (resp.~odd). If in the even case $m$ is odd and $d^+ = 2$, we assume that the unique negative definite member of $\mathcal D^H$ is the last member. In this case, $\mathcal D^H$ is not in $\ED(V_{\RR})^o_{\mathrm{nice}}$, but it  differs from an element thereof  by the transposition $(m-1, m) \in \mathfrak S_m$. 
 
 As long as $d$ is not $\equiv 2 \mod 4$, we can clearly choose $\mathcal D^H$ satisfying all the above conditions independently of $(H,\lang H, s, \eta)$. When $d \equiv 2 \mod 4$, we need to adjust the choice of $\mathcal D^H$ according to whether $d^+ =2$ or not. For instance, for all $(H, \lang H,s,\eta)$ with $d^+ \neq 2$ we may choose $\mathcal D^H$ to be some common $\mathcal D$, and then we may choose $\mathcal D^H$ for $d^+ = 2$ to be $(m-1, m) \cdot \mathcal D$, i.e., $\mathcal D$ with the last two members swapped. In particular, we see that in all cases, we may and shall arrange that $T_G$ is independent of $(H, \lang H, s, \eta)$, which justifies our notation.

 Since $\SO(V^+)$ is non-trivial, 
 our assumptions on $\mathcal D^H$ imply that the factor $\Uni(1)$ of $T_G$ corresponding to the unique negative definite member of $\mathcal D^H$ is sent under $j^{-1}$ into $\SO(V^+) \subset H$.   
    
 \subsection{} \label{subsubsec:defn of f_infty}
 The fixed choice of $(j,B_{G,H})$ determines a Borel subgroup $B_H$ of $H_{\CC}$ containing $T_{H,\CC}$, a subset $\Omega_*$\index{$\Omega_*$} of $\Omega = \Omega _{\CC}(G, T_{G})$, and a bijection induced by multiplication $$ \Omega_H \times \Omega_* \To \Omega$$ as follows. Here $\Omega_H : = \Omega_{\CC}(H, T_H)$ is viewed as a subgroup of $\Omega$ via $$\Omega_H \hookrightarrow \Aut (T_{H,\CC}) \underset{j}{\isom} \Aut (T_{G,\CC})  \supset\Omega.$$ 
 The Borel subgroup $B_H$ is characterized by the condition that the $B_H$-positive roots on $T_{H,\CC}$ are transported via $j$ to $B_{G,H}$-positive roots on $T_{G,\CC}$. (Note that $(T_H, B_H)$ is nothing but the fundamental pair in $H_\RR$ determined by $\mathcal D^H$ as in \S \ref{subsubsec:constrn with ED}, where $\mathcal D^H$ is as in \S \ref{once and for all}.)  The subset $\Omega_* \subset \Omega$ consists of those $\omega \in \Omega$ such that the $B_H$-positive roots on $T_{H,\CC}$ are transported via $j$ to $\omega B_{G,H}$-positive roots on $T_{G,\CC}$.

 Let $\mathbb V^*$\index{$\mathbb V^*$} be the contragredient representation of $\mathbb V$. Let $\varphi_{\mathbb V^*}$\index{$\varphi_{\mathbb V^*}$} be the discrete Langlands parameter of $G_{\RR}$ corresponding to $\mathbb V^*$, i.e., the L-packet of $\varphi_{\mathbb V^*}$ consists of discrete series representations of $G(\RR)$ having the same infinitesimal character as the $G(\CC)$-representation $\mathbb V^* \otimes_{\mathbb E} \CC$ (which is irreducible). Let $\Phi_H(\varphi_{\mathbb V^*})$\index{$\Phi_H(\varphi_{\mathbb V^*})$} be the set of equivalence classes of discrete Langlands parameters of $H_{\RR}$ that induce the equivalence class of $\varphi_{\mathbb V^*}$  via $\eta: \lang H \to \lang G$. As on \cite[p.~185]{kottwitzannarbor}, we have a bijection \index{$\omega_*(\cdot)$} $$ \omega_*(\cdot): \Phi_H(\varphi_{\mathbb V^*}) \isom \Omega_* , \quad   \varphi_H \longmapsto \omega_*(\varphi_H),$$ characterized by the condition that $\varphi_H$ is aligned\index[n]{aligned} with $(\omega_*(\varphi_H) ^{-1} \circ j, B_{G, H}, B_H)$ in the sense of \cite[p.~184]{kottwitzannarbor}. 
 
 For any $\varphi_H \in \Phi _H(\varphi_{\mathbb V^*})$, define
 \begin{align}\label{eq:defn of stable pseudo coeff}
f_{\varphi _H} :  =  d(H)^{-1} \sum_{\pi \in \Pi (\varphi_H) } f_{\pi} 
\in C^{\infty}_c(H(\RR)),  
 \end{align}
 where the terms are explained in the following. 
 \begin{itemize}
 	\item  The summation is over the discrete series representations $\pi$ of $H (\RR)$ inside the L-packet $\Pi(\varphi_H)$ of $\varphi_H$.
 	\item For each $\pi$, the function $f_{\pi}\in C^{\infty} _c (H(\RR))$\index{$f_{\pi}$} is a pseudo-coefficient\index[n]{pseudo-coefficient} for $\pi$; see \cite{clozeldelorme}. Note that this notion depends on the choice of a Haar measure on $H(\RR)$. We use the one fixed in \S \ref{para:test function intro}. 
 	\item We define $d(H)$\index{$d(H)$} to be the cardinality of $\Pi (\varphi_H)$. Note that this number is an invariant of $H_{\RR}$, equal to the cardinality of the complex Weyl group divided by the cardinality of the real Weyl group of an elliptic (i.e., anisotropic) maximal torus.
 \end{itemize}
The function $f_{\varphi_H}$ is stable cuspidal. Using this, we build the function $f^H_{\infty}$ in the following definition; cf.~\cite[p.~186]{kottwitzannarbor}, \cite[\S 6.2]{morel2010book}. 

 \begin{defn} \label{defn:test function at infinity}We define \index{$f^H_{\infty}$}
$$f^H_{\infty} : = (-1)^{q(G_{\RR})}\lprod{\mu_{T_G}, s}_j \sum_{ \varphi_H \in \Phi_H(\varphi_{\mathbb V^*})} \det(\omega_*(\varphi_H)) f_{\varphi_H} \in C^{\infty}_c(H(\RR)).$$ Here $\mu_{T_G} \in X_*(T_G)$ is the Hodge cocharacter of any $h$ in the Shimura datum $\X$ that \emph{factors through $T_G$}. The number $\lprod{\mu_{T_G}, s}_j$ is defined to be the image of $(j^{-1} \circ \mu_{T_G} , s)$ under the canonical pairing $$X_*(T_H) \times Z(\widehat H) \to \pi_1(H) \times Z(\widehat H) = X^{*} (Z(\widehat H)) \times Z(\widehat H) \to \CC^{\times}. $$ For each $\omega \in \Omega$, we write $\det(\omega)$\index{$\det(\omega)$} for the sign of $\omega$.\footnote{This is indeed equal to the determinant of $\omega$ acting on the finite free $\ZZ$-module $X_*(T_G)$, which explains the notation. In \S \ref{subsubsec:notation for roots and Weyl} the sign function is denoted by $\epsilon(\cdot)$, but in the current chapter we prefer the notation $\det(\cdot)$.} 
 \end{defn}
\begin{rem}
	By construction $f^H_{\infty}$ is stable cuspidal. 
\end{rem}
  \begin{lem}\label{lem:paring=1}
 We have 
 $\lprod{\mu_{T_G} ,s}_j = 1 $. 
\end{lem}
\begin{proof}
Using the observation made at the end of \S \ref{once and for all}, we compute that the image of $j^{-1} \circ \mu_{T_G} \in X_*(T_H)$ in $\pi_1(H) \cong \pi_1(H^+) \times \pi_1(H^-)$ has non-trivial projection in  $\pi_1(H^+) \cong \ZZ/2\ZZ$ and trivial projection in $\pi_1(H^-)$. We conclude the proof by recalling that $s$ has trivial component in $Z(\widehat{H^+})$. 
\end{proof}
 \subsection{}\label{normalizing the transf factors}
 
 We normalize the transfer factors between $(H, \lang H, s, \eta)$ and $G$ at various places as follows.
 
 We use the canonical unramified normalization associated to $K_p$ of the transfer factors at $p$  (see \cite{halesunram}), denoted by $(\Delta^G_H)_p$. Associated to the datum $(j, B_{G,H})$ fixed in \S \ref{once and for all}, we have Kottwitz's normalization \cite[\S 7]{kottwitzannarbor} for the transfer factors at $\infty$, which we denote by $\Delta_{j, B_{G,H}}$ (cf.~\S\S \ref{transf factors odd}--\ref{transf factors even div by 4}) and also by $(\Delta^G_H)_{\infty}$. We normalize the transfer factors away from $p$ and $\infty$ such that at almost all unramified places we have the canonical unramified normalization (associated to the hyperspecial subgroup determined by some reductive model of $G$ over $\ZZ[1/\Sigma]$ for some finite set $\Sigma$ of primes) and such that the global product formula with $(\Delta^G_H)_p$ and $(\Delta^G_H)_{\infty}$ is satisfied (see \cite[\S 6]{LS87}). For each place $v \notin \set{p,\infty}$, we denote our normalization by $(\Delta^G_H)_v$.\index{$(\Delta^G_H)_v$}
 
 We are now ready to give the definitions of the other two parts $f^{H,p,\infty}$ and $f^H_p$ in $f^H$. 
 
 \begin{defn}
 	Define $f^{H, p,\infty} \in C^{\infty}_c(H(\adele_f^p))$\index{$f^{H, p,\infty}$} to be a Langlands--Shelstad transfer of $f^{p,\infty}$ as in Theorem \ref{thm:LS} with respect to the Haar measure $dg^{p,\infty}$ on $G(\adele_f^p)$ fixed in \S \ref{subsubsec:setting for Morel's formula} and the Haar measure on $H(\adele_f^p)$ fixed in \S \ref{para:test function intro}. Here the transfer factors are normalized as in \S \ref{normalizing the transf factors}. 
 \end{defn}
\begin{defn}\label{defn:f^H_p}
	Let $\mu : \GG_m \to G_{\QQ_p}$ be a Hodge cocharacter of the Shimura datum $\mathbf O(V)$ defined over $\QQ_p$ (see \S \ref{para:Hodge cochar}). Let $f_{-\mu}$ be the element of $\mathcal H^{\ur}(G_{\QQ_{p^a}})$ associated to $- \mu$ as in Definition \ref{defn:canonical fn a la Kottwitz}. Let $f^H_p = b (f_{-\mu})$ be the image of $f_{-\mu}$ under the twisted transfer map $b: \mathcal H^{\ur} (G_{\QQ_{p^a}}) \to \mathcal H^{\ur} (H_{\QQ_p})$ as in \S \ref{subsubsec:twisted transfer}. We identify $f^H_p$ with a realization of it in $C^{\infty}_c(H(\QQ_p))$; see Remark \ref{rem:realization of Hecke} below. 
 \end{defn}

 \begin{rem}\label{rem:realization of Hecke}
 	Once an element $f^H_p \in \mathcal H^{\ur} (H)$ is specified, it still corresponds ambiguously to different functions on $H(\QQ_p)$. Namely, for each choice of a hyperspecial subgroup $K_{H,p}$ of $H(\QQ_p)$ there is a corresponding $K_{H,p}$-bi-invariant function in $\mathcal H(H(\QQ_p)\sslash K_{H,p})$. These functions have the same stable orbital integrals, as noted in \cite[\S 7]{kottwitzannarbor}. Indeed, as we saw in \S \ref{para:transition maps for Hecke}, these functions are related to each other under pull-back by inner automorphisms of $H_{\QQ_p}$, and these automorphisms do not permute the stable conjugacy classes. The same remark applies to the various canonical constant terms (see Proposition \ref{canonical constant term}) $(f^H_p)_{M'} \in \mathcal H^{\ur} (M')$ for Levi subgroups $M'$ of $H$ defined over $\QQ_p$. It follows that the evaluation of $ST^{H}$ (Definition \ref{defn:pre geometric side}) at the test function $f^H = f^H_{\infty} f^H_p f^{H,p,\infty}$ is unaffected by the ambiguity in $f^H_p$. 	
 \end{rem}
 \begin{rem}
 	The function $f^H$ depends on $a$ via the component $f^H_p$. 
 \end{rem}
 
 \subsection{}\label{para:normalizing tf for M}
 Now suppose $M$ is a standard proper Levi subgroup of $G$ (i.e., one of $M_1, M_2, M_{12}$ as in \S \ref{global groups}) and consider a bi-elliptic endoscopic $G$-datum for $M$ 
 $$\mathfrak e_{A,B, \mathfrak p} = \mathfrak e_{A,B, d^+,\delta^+, d^-, \delta^-} = (M', \lang M', s_M, \eta_M)$$ presented in the explicit form as in \S \ref{para:presentation of endoscopic G-data}. More precisely, the construction in \S \ref{para:presentation of endoscopic G-data} depends on the choice of a hyperbolic basis as in \S \ref{subsubsec:setting for G-endosc}. Thus we need to fix a hyperbolic basis of $W_1^{\perp} = V_1 \oplus V /V_1^{\perp}$ (resp.~$W_2^{\perp} = V_2 \oplus  V/V_2^{\perp}$) when $M \in \set{M_2, M_{12} }$ (resp.~$M = M_{1}$). We always take the hyperbolic basis $\set{e_1, e_1'}$ of $W_1^{\perp}$ and the hyperbolic basis $\set{e_1, e_2, e_2', e_1'}$ of $W_2^{\perp}$, where $e_i, e_i'$ are as in \S \ref{para:e_1}.
 
 As in \S \ref{para:presentation of endoscopic G-data} and Proposition \ref{prop:odd classification of G-endoscopy},  $\mathfrak e_{A,B, \mathfrak p}$ induces the endoscopic datum $$e_{\mathfrak p}(M) = (M', \lang M', s_M', \eta_M)$$ for $M$, and the endoscopic datum  $$ \mathfrak e_{d^+ + 2\abs{A} +4 \abs{B} , \delta^+,  d^- + 2\abs{A^c} + 4 \abs{B^c}, \delta^-} = (H, \lang H, s, \eta)$$ for $G$. Moreover, recall that we have fixed in \S \ref{para:two maps from endoscopic G data} an $H(\QQ)$-conjugacy class of embeddings $M' \hookrightarrow H$ with images Levi subgroups, and in particular we have the diagram (\ref{eq:diag with M'}) commuting up to $\widehat{G}$-conjugation. We now fix such an embedding $M' \hookrightarrow H$ on the nose. 
 
 We assume that $H$ satisfies condition ($\dagger$) in \S  \ref{para:test function intro}. It follows that $M'$ is unramified at $p$, and the endoscopic datum $(M', \lang M', s_M', \eta_M)$ for $M$ is unramified at $p$. Also we assume that the parameter $\mathfrak p$ is such that the component of $s_M$ in $\widehat{M^{\SO}}$ is not $-1$, from which it follows that $H^+$ is non-trivial. Thus the preceding discussion in this section can be applied to $(H,\lang H, s, \eta)$. We normalize the transfer factors between $(M', \lang M', s_M', \eta_M)$ and $M$ as follows.
 
  Away from $p$ and $\infty$, we normalize the transfer factors by inheriting the normalization between $(H, \lang H, s, \eta)$ and $G$ fixed in \S \ref{normalizing the transf factors}, with respect to our fixed  embedding  $M' \hookrightarrow H$; see Remark \ref{rem:Levi inherit} below. At $p$, we use the canonical unramified normalization associated to the hyperspecial subgroup  of $M(\QQ_p)$ determined by $K_p$ (i.e., the image of $P(\QQ_p) \cap K_p$ under $P \to M$, where $P\subset G$ is the standard parabolic subgroup such that $M = M_P$; cf.~Remark \ref{rem:why hyp}), which is also the same as the normalization inherited from the canonical unramified normalization between $(H, \lang H, s, \eta)$ and $G$ associated to $K_p$. For later reference, for each finite place $v$, we denote the above-mentioned normalization by $(\Delta^M_{M'})^{A,B}_v$ or simply $(\Delta^M_{M'})^{A,B}$.\index{$(\Delta^M_{M'})^{A,B}_v$} We denote the above-mentioned hyperspecial subgroup of $M(\QQ_p)$ by $\mathcal M(\ZZ_p)$.\index{$\mathcal M(\ZZ_p)$} At $\infty$, we do not yet fix a normalization. In fact, precise knowledge about signs between different normalizations in this case is key to our later computation; this will be investigated in \S \ref{pf:4} below.  
 \begin{rem}\label{rem:Levi inherit}
 	At each place $v$ of $\QQ$, there is a notion of the normalization of the transfer factors between $(M', \lang M', s_M',\eta_M)$ and $M$ \emph{inherited}\index[n]{inherited normalization of transfer factors} from the normalization of the transfer factors between $(H, \lang H, s, \eta)$ and $G$ with respect to our fixed $M' \hookrightarrow H$. It is described via a simple formula as in \cite[\S 5.2]{morel2010book} or \cite[\S 5.1]{morel2011suite}. Roughly speaking, this means that apart from the difference in $\Delta_{IV}$, the transfer factor between $M'$ and $M$ is equal to the transfer factor between $H$ and $G$ for any $G$-regular element of $M'(\QQ_v) \subset H(\QQ_v)$ and any preimage of it in $M(\QQ_v) \subset G(\QQ_v)$. Here it is crucial that the diagram (\ref{eq:diag with M'}) commutes up to $\widehat{G}$-conjugacy.  
 	
 	An important property is that if the normalizations between $H$ and $G$ at all places satisfy the global product formula, then so do the inherited normalizations between $M'$ and $M$ at all places; this is due to the fact that our choice of $M' \hookrightarrow H$ is global. To see this, one simply notes that the term $\Delta_{IV}$ can be ignored from the definition of transfer factor when deciding whether local normalizations satisfy the global product formula.

 	We now say a few words on the proof of the existence of the inherited normalization. The original source is Kottwitz's unpublished notes, where this result is marked as an easy consequence of the definition of transfer factors in \cite{LS87}. Indeed it can be proved similarly as \cite[Lem.~9.2]{halesunram}. Alternatively, in our particular situation, one can prove this without much difficulty using the explicit formulas for the transfer factors in \cite{walds10}. 
  \end{rem}

\begin{prop}\label{prop:LS for Levi}
Keep the setting of \S \ref{para:normalizing tf for M}. The function $(f^{H,p,\infty})_{M'} \in C^{\infty} _c (M'(\adele_f^p)) $ is a Langlands--Shelstad transfer of $ (f^{p,\infty} ) _{M}\in C^{\infty} _c (M(\adele_f^p))$ in the sense of Theorem \ref{thm:LS}, with respect to the normalization of  transfer factors $(\Delta^M_{M'})_{A,B}$ as in \S \ref{para:normalizing tf for M}.
\end{prop}
\begin{proof} In view of the Fundamental Lemma we can pass to a local setting over some $\QQ_v$ (with $v \neq p,\infty$) instead of the adelic setting. 
	The statement can then be proved similarly as \cite[Lem.~6.3.4]{morel2010book}, with the following two modifications.
	
	Firstly, we replace $G_{\gamma}$ and $M_{\gamma}$ by $G_{\gamma}^0$ and $M_{\gamma}^0$ in the proof of part (i) of \textit{loc.~cit.}.

Secondly, in the proof of part (ii) of \textit{loc.~cit.}, Morel cites \cite[Lem.~2.4.A]{LS90} in order to reduce the proof to checking the matching of orbital integrals for those $\gamma' \in M'(\QQ_v)_{\mathrm{ss}}$ that are $M$-regular, or even $G$-regular (meaning that all matching elements of $M(\overline \QQ_v)_{\mathrm{ss}}$ are $G$-regular).\footnote{Note the following typo: In the second line of the second paragraph of the proof of \cite[Lem.~6.3.4]{morel2010book}, ``regular in $\mathbf H$'' should be ``regular in $\mathbf  M$''.} Since $M_{\der}$ is not simply connected in our case, we cannot directly apply \cite[Lem.~2.4.A]{LS90}, but this can be circumvented by the following argument. To simplify notation, we understand that all reductive groups and endoscopic data are over $\QQ_v$. Suppose we have already established that $\phi \in C^{\infty}_c(M(\QQ_v))$ and $\phi' \in C^{\infty}_c(M'(\QQ_v))$ have matching orbital integrals for all $G$-regular $\gamma' \in M'(\QQ_v)_{\mathrm{ss}}$, and want to deduce the same for all $(M, M')$-regular  $\gamma' \in M'(\QQ_v)_{\mathrm{ss}}$. As in \S \ref{para:expl of LS}, we pick a $z$-extension $1 \to Z \to M_1 \to M \to 1$, and obtain from it a central extension $1 \to Z \to M'_1 \to M' \to 1$ as well as an endoscopic datum $(M'_1, \lang M'_1, s'_{M_1}, \eta_{M_1})$ for $M_1$  such that the diagram analogous to (\ref{diag:z-ext})  commutes. As in Remark \ref{rem:on using LS}, we identify $\phi$ with a function $\phi_1 \in C^{\infty}_c(M_1(\QQ_v), 1_Z)$, and identify $\phi'$ with a function $\phi'_1 \in C^{\infty}_c(M'_1(\QQ_v), 1_Z)$, where in both cases $1_Z$ denotes the trivial character on $Z$. We say that an element of $M'_1(\QQ_v)_{\mathrm{ss}}$ is $G$-regular if all the matching elements of $M_1(\overline \QQ_v)_{\mathrm{ss}}$ are preimages of $G$-regular elements of $M(\overline \QQ_v)_{\mathrm{ss}}$. Then $\phi_1$ and $\phi_1'$ have matching orbital integrals for all $G$-regular elements of $M_1'(\QQ_v)_{\mathrm{ss}}$. Now note that for any maximal torus $T \subset M_1'$, there is a dense subset of $T(\QQ_v)$ consisting of $G$-regular elements. By this and the proof of  \cite[Lem.~2.4.A]{LS90}, $\phi_1$ and $\phi_1'$ have matching orbital integrals for all $(M_1, M_1')$-regular elements of $M_1'(\QQ_v)_{\mathrm{ss}}$. It follows that $\phi$ and $\phi'$ have matching orbital integrals for all $(M, M')$-regular elements of $M'(\QQ_v)_{\mathrm{ss}}$, as desired.
\end{proof}

\section{Statement of the main computation}\label{subsec:statement of main}
\subsection{}\label{para:prepare for main}
 	Let $M$ be a standard proper Levi subgroup of $G$. Define \index{$\Tr'_M$}
 	\begin{align}\label{defn:Tr'}
\Tr_M' = (n^G_M) ^{-1} \sum_{\substack{\mathfrak e_{A,B, \mathfrak p} = (M', \lang M' , s_M, \eta_M) \\ \in \dot {\mathscr E}_G(M)} } \abs{\Out _G ( \mathfrak e_{A,B, \mathfrak p} )} ^{-1} \tau(G) \tau(H)^{-1} ST ^H_{M'}  (f^H).
 	\end{align}
 Here the summation is over a subset $\dot {\mathscr E}_G(M)$\index{$\dot {\mathscr E}_G(M)$} of the set of explicitly presented bi-elliptic endoscopic $G$-data for $M$ as in \S \ref{para:presentation of endoscopic G-data} (in other words, $\dot {\mathscr E}_G(M)$ is a subset of the parameter set $\mathscr P_{r,t} \times' \mathscr P_W = \set{(A,B, \mathfrak p)}$ in the notation of \S \ref{para:presentation of endoscopic G-data}) such that  the component of $s_M$ in $\widehat{M^{\SO}}$ is not $-1$  and such that each isomorphism class in $\mathscr E_G(M)$ is represented exactly once. (Clearly the two conditions can be simultaneously met.)  For each $(M', \lang M', s_M, \eta_M) \in \dot {\mathscr E}_G(M)$, we let $(H,\lang H, s, \eta)$ be the induced endoscopic datum for $G$. More precisely, for $(M', \lang M', s_M, \eta_M) = \mathfrak e _{A, B, d^+,\delta^+, d^-,\delta^-}$, we let $$ (H, \lang H, s, \eta) : = \mathfrak e_{d^+ + 2\abs{A} +4 \abs{B} , \delta^+,  d^- + 2\abs{A^c} + 4 \abs{B^c}, \delta^-}$$ as in Proposition \ref{prop:odd classification of G-endoscopy}. Note that $H^+$ is non-trivial by our assumption on $s_M$. The function $f^H$ is defined in \S \ref{subsec:test functions}. We fix $M' \hookrightarrow H$ as in \S \ref{para:normalizing tf for M} so as to view $M'$ as a Levi subgroup of $H$, and define $ST^H_{M'} (f^H)$ as in Definition \ref{defn:pre geometric side}. 
 
Note that our definition of $\Tr_M'$ is independent of the choice of $\dot {\mathscr E}_G(M)$. Indeed, one directly checks that the summand associated to $(A, B ,\mathfrak p)$ is equal to that associated to $(A^c, B^c, \swap(\mathfrak p))$ (in the case where both parameters satisfy the condition on $s_M$ imposed before). Hence such a summand depends only on the isomorphism class of $\mathfrak e_{A,B,\mathfrak p}$ in $\mathscr E_G(M)$.

Recall that the definition of $f^H$ depends on the fixed integer $$a \geq a_0(\mathbf O(V),\mathbb V, \lambda, K, f^{\infty}, p). $$ Clearly the definitions of both $f^H$ and $\Tr'_M$ make sense for all integers $a \geq 1$. We shall henceforth view $\Tr'_M$ as a function in $a\in \ZZ_{\geq 1}$. 
On the other hand, we have $\Tr_M (f^{p,\infty} dg^{p,\infty}, K,  a )$ as in Definition \ref{Defn Tr12}. We abbreviate it as $\Tr_M$\index{$\Tr_M$}, and also view  it as a function in $a\in \ZZ_{\geq 1}$.

\begin{thm}\label{thm:maincomp}
For all large enough $a$ we have $\Tr_M= \Tr_{M}' $. 
\end{thm} 
\subsection{}\label{para:proof in trivial case}
Note that in the even case and for $M= M_2$, we have $\Tr_M =0$ since $(M_l)_{\RR}$ does not contain elliptic maximal tori (see Remark \ref{rem:Tr_M = 0 for even M_2}). In this case, we also know that each $M'$ appearing in (\ref{defn:Tr'}) is non-cuspidal, and hence $ST^H_{M'} \equiv 0$. Indeed, recall that $M' = M^{\GL} \times M^{\prime ,\SO}$, where $M^{\prime , \SO}$ is the group in the elliptic endoscopic datum $\mathfrak e_{d^+, \delta^+, d^-, \delta^-} (W_1)$ for $M_2^{\SO} = \SO(W_1)$. This ellipticity, together with the fact that $M_2^{\SO}$ is not the split $\SO_2$ over $\QQ$, implies that neither of $(d^{\pm},\delta^{\pm})$ is $(2,1)$ in $\ZZ_{\geq 0} \times (\QQ^{\times}/\QQ^{\times,2})$. Hence if $M'$ is cuspidal, then $(M^{\prime , \SO})_{\RR}$ must contain anisotropic maximal tori, and so as in \S \ref{para:cusp crit} we have $\delta^{\pm} = (-1)^{d^\pm/2}$ in $\RR^{\times}/\RR^{\times,2}$, from which $\delta = (-1)^{(d^+ + d^-)/2} = (-1)^{d/2 -1}$ in $\RR^{\times}/\RR^{\times,2}$, contradicting with the fact that $\delta = (-1)^{d/2}$ in $\RR^{\times}/\RR^{\times,2}$. Thus in the even case with $M = M_2$ we have already proved the theorem.
The proof of the theorem in the remaining cases occupies \S\S \ref{pf:1}--\ref{pf:main}.

\section{First simplifications}
\label{pf:1}
\subsection{}\label{para:dot E for M} We keep the setting of \S \ref{para:prepare for main}, and assume that we are not in the even case with $M = M_2$, since in that case Theorem \ref{thm:maincomp}  is already proved. As in \S \ref{para:e_1}, we have $M = \GG_m ^{r} \times \GL_2^{t} \times \SO(W)$ for some $r \in \set{0,1,2}, t \in \set{0,1}, W \in \set{W_1, W_2}$.
Denote by $\mathscr E(M)^{c, \ur}$\index{$\mathscr E(M)^{c,\ur}$} the subset of $\mathscr E(M)$ consisting of isomorphism classes of endoscopic data whose groups $M'$ are cuspidal and unramified over $\QQ_p$. For each isomorphism class in $\mathscr E(M)^{c, \ur}$, we fix a representative of the form $\mathfrak e_{\mathfrak p} (M)$ for some $\mathfrak p \in \mathscr P_W$, where the notation is as in Definitions \ref{defn:mathscr P_V}, \ref{defn:P_W}, and \S \ref{para:presentation of endoscopic G-data}. Thus there are \textit{a priori} up to two choices of $\mathfrak p$ for each isomorphism class, and we fix one choice. We may and shall also assume that each choice $\mathfrak p = (d^+,\delta^+, d^-,\delta^-)$ satisfies  $d^+ \geq 2$. 
In the following, we denote this set of representatives by $\dot {\mathscr E}(M)^{c,\ur}$.\index{$\dot {\mathscr E}(M)^{c,\ur}$}

Note that $M^{\SO}$ is never isomorphic to the split $\SO_2$ over $\QQ$. Hence the same argument as in \S \ref{para:test function intro} shows that every element $\fke_{\fkp}(M)$ of $\dot {\mathscr E}(M)^{c,\ur}$ satisfies the following conditions:
\begin{enumerate}
	\item As in \S \ref{para:presentation of endoscopic G-data}, write the group in $\fke_{\fkp}(M)$ as $M' = M^{\GL} \times M^{\prime,\SO}$. Then the $\RR$-group $(M^{\prime,\SO})_{\RR}$ contains anisotropic maximal tori.
	\item The localization of $\fke _{\fkp}(M)$ over $\RR$ is still elliptic as an endoscopic datum over $\RR$. 
\end{enumerate}

\begin{lem}\label{lem:first simplification} We have
	$$n^G_M \Tr_M' = \sum_{ \mathfrak e_{\mathfrak p}(M) \in \dot {\mathscr E}(M)^{c,\ur}} \abs{\Out_M (\mathfrak e_{\mathfrak p}(M))} ^{-1} \sum_{A, B} \tau(G)\tau(H) ^{-1} ST^H_{M'} (f^H) .  $$
	Here the second summation is over the following ranges:
	\begin{itemize}
		\item	In the odd case for $M = M_{12}$, we have $A \in \set{\emptyset, \set{1}, \set{2}, \set{1,2}}, B \in \set{\emptyset}$.
		\item In the even case for $M=M_{12}$, we have $A \in \set{\emptyset,\set{1,2}}, B \in \set{\emptyset}$. 
		\item For $M = M_1$, we have $A \in \set{\emptyset}, B \in  \set{\emptyset,\set{1}}$.
		\item In the odd case for $M = M_2$, we have $A\in \set{\emptyset, \set{1}}, B \in \set{\emptyset}$.  
	\end{itemize} For each triple $(e_{\mathfrak p}(M) = \mathfrak e_{d^+,\delta^+, d^-,\delta^-} (M), A, B)$ appearing in the summation, we set 
$$   (H, \lang H, s, \eta) : = \mathfrak e_{d^+ + 2\abs{A} +4 \abs{B} , \delta^+,  d^- + 2\abs{A^c} + 4 \abs{B^c}, \delta^-},  $$  write $M'$ for the group in $\mathfrak e_{\mathfrak p}(M)$, and as in \S \ref{para:prepare for main} identify $M'$ with a Levi subgroup of $H$ so as to define $ST^H_{M'}(f^H)$ . 
\end{lem}
\begin{proof} 
	We first note that the formula $\mathfrak e_{d^+ + 2\abs{A} +4 \abs{B} , \delta^+,  d^- + 2\abs{A^c} + 4 \abs{B^c}, \delta^-}$ indeed gives an elliptic endoscopic datum for $G$, i.e., neither of $(d^+ + 2\abs{A} +4 \abs{B} , \delta^+) $ and $ (d^- + 2\abs{A^c} + 4 \abs{B^c}, \delta^-)$ is equal to $(2,1) \in \ZZ_{\geq 0} \times (\QQ^{\times}/\QQ^{\times,2})$. Indeed, since $M^{\SO}$ is not the split $\SO_2$ over $\QQ$, we know that neither of $(d^\pm, \delta^{\pm})$ is equal to $(2,1)$, which immediately implies our assertion. Also, we have $d^+ + 2\abs{A} +4 \abs{B} \geq 2$ since we have already assumed that $d^+ \geq 2$ in \S \ref{para:dot E for M}. Thus $ST^H_{M'}(f^H)$ in the lemma is indeed defined.

	It is clear from the definitions that if a term $ST^H_{M'}(f^H)$ on the RHS of (\ref{defn:Tr'}) is non-zero, then $H$ is cuspidal and unramified over $\QQ_p$ (since otherwise $f^H =0$), and $M'$ is cuspidal (since otherwise $ST^H_{M'} \equiv 0$). 
Clearly the condition that $H$ is unramified over $\QQ_p$ is equivalent to the condition that $M'$ is unramified over $\QQ_p$. In the odd case, the cuspidality conditions are automatic. In the even case, suppose we have $(M', \lang M', s_M, \eta_M) = \mathfrak e _{A, B, d^+,\delta^+, d^-,\delta^-} \in \dot {\mathscr E_G}(M)$ such that $M'$ is cuspidal. Then as we have mentioned in \S \ref{para:dot E for M}, $(M^{\prime ,\SO})_{\RR}$ contains anisotropic maximal tori, so by the same argument as in \S \ref{para:proof in trivial case} we have $\delta^{\pm} = (-1)^{d^{\pm}/2}$ in $\RR^{\times}/\RR^{\times,2}$.  On the other hand the condition that $H$ is cuspidal is equivalent to $H_{\RR}$ having anisotropic maximal tori by the discussion in \S \ref{para:test function intro}, and hence is equivalent to the conditions that $\delta^{+} = (-1)^{d^+/2 + \abs{A} + 2 \abs{B}} $
and that $\delta^- = (-1)^{d^-/2 + \abs{A^c} + 2 \abs{B^c}}$ in $\RR^{\times}/\RR^{\times,2}$. Thus given that $M'$ is cuspidal and given that $d$ is even, $H$ is cuspidal if and only if $\abs{A}$ and $\abs{A^c}$ are both even. 	
	
The above discussion shows that in (\ref{defn:Tr'}),  	we can replace the summation set $\dot {\mathscr E}_G(M)$ by the subset $\dot {\mathscr E}_G(M)^{c,\ur}$ consisting of elements $\fke_{A,B,\fkp} = (M', \lang M', s_M,\eta_M)$ such that $M'$ is cuspidal and unramified over $\QQ_p$, and such that $\abs{A}$ and $\abs{A^c}$ are even in case $d$ is even. Thus up to re-choosing  $\dot {\mathscr E_G}(M)$ (which does not affect the definition of $\Tr'_M$), we may assume that  whenever $\mathfrak e_{A,B,\mathfrak p} \in \dot {\mathscr E_G}(M)^{c,\ur}$ we have $\mathfrak e_{\mathfrak p}(M ) \in \dot {\mathscr E}(M)^{c,\ur}$. We thus have a well-defined map $F: \dot {\mathscr E_G}(M)^{c,\ur} \to \dot {\mathscr E}(M)^{c,\ur}$ sending each $\mathfrak e_{A,B,\mathfrak p}$ to $\mathfrak e_{\mathfrak p}(M)$. For each $\mathfrak e_{\mathfrak p}(M) \in \dot {\mathscr E}(M)^{c,\ur}$, we let $\Gamma(\mathfrak p)$ denote the set of $(A,B)$ as in the summation range in the current lemma. We divide our analysis into two different cases.

	\textbf{Case 1.} Suppose $\mathfrak p  = (d^+,\delta^+,d^-,\delta^-)$ with $(d^+,\delta^+) \neq (d^-,\delta^-)$. Then one checks that $F^{-1} (\mathfrak e_{\mathfrak p}(M)) = \set{\mathfrak e_{A,B,\mathfrak p}\mid (A,B) \in \Gamma(\mathfrak p)}$. Moreover, for each $\mathfrak e_{A,B,\mathfrak p} \in F^{-1} (\mathfrak e_{\mathfrak p}(M))$, we have $\abs{\Out_G(\mathfrak e_{A,B,\mathfrak p})} = \abs{\Out_M(\mathfrak e_{\mathfrak p}(M))}$. (See \S\S \ref{subsubsec:out of endos} and \ref{para:Out_G} for the computation of these two groups.) Thus the summand indexed by $\mathfrak e_{\mathfrak p}(M)$ in the current lemma is equal to the sum over all $\mathfrak e_{A,B,\mathfrak p} \in F^{-1} (\fke_{\fkp}(M) )$ in (\ref{defn:Tr'}). 
	
	\textbf{Case 2.} Suppose $\mathfrak p  = (d^+,\delta^+,d^-,\delta^-)$ with $(d^+,\delta^+)  = (d^-,\delta^-)$. Then 
	\begin{align*}
\set{\fke_{A,B,\fkp} \mid (A,B) \in \Gamma(\fkp)}   =  F^{-1} (\fke_{\fkp}(M)) \sqcup   \set{  \fke_{A^c,B^c,\fkp}   \mid  \fke_{A,B,\fkp} \in F^{-1} (\fke_{\fkp}(M))} .
	\end{align*} (The union is disjoint.)
Moreover, for each $(A,B) \in \Gamma(\fkp)$, we have $\abs{\Out_M(\fke_{\fkp}(M) )} = 2 \abs{\Out_G(\fke_{A,B,\fkp})}$, and we know that the summand $\tau(G) \tau(H)^{-1} ST^H_{M'} (f^H)$  indexed by $(A,B)$ in the current lemma is equal to the term $\tau(G) \tau(H)^{-1} ST^H_{M'} (f^H)$ in (\ref{defn:Tr'}) arising from either $\fke _{A,B,\fkp}$ or $\fke_{A^c,B^c, \fkp}$, whichever lies in $\dot{\mathscr E}_G(M)$. Thus we again see that the summand indexed by $\mathfrak e_{\mathfrak p}(M)$ in the current lemma is equal to the sum over all $\mathfrak e_{A,B,\mathfrak p} \in F^{-1} (\fke_{\fkp}(M) )$ in (\ref{defn:Tr'}). The proof of the lemma is complete.
\end{proof}
 \ignore{  
\begin{defn}
Denote by $\mathscr E(M)^c$\index{$\mathscr E(M)^c$} (resp.~$\mathscr E(G)^c$\index{$\mathscr E(G)^c$}) the subset of $\mathscr E(M)$ (resp.~$\mathscr E(G)$) consisting of endoscopic data whose groups are cuspidal (see Definition \ref{defn:cuspidal} and Remark \ref{usage of cuspidal}) and unramified at $p$. Denote by $\mathscr E_G(M) ^c$\index{$\mathscr E_G(M)^c$} the subset of $\mathscr E_G(M)$ consisting of elements such that their images in $\mathscr E(G)$ are in $\mathscr E(G)^c$ and their images in $\mathscr E(M)$ are in $\mathscr E(M)^c$.
\end{defn}

\begin{rem}
	In the odd case, $\mathscr E(M) = \mathscr E(M) ^c, \mathscr E(G) = \mathscr E(G) ^c, \mathscr E_G(M) = \mathscr E_G(M)^c$. 
\end{rem}
\subsection{}\label{silly remark} In  (\ref{defn:Tr'}), we may restrict the range of the summation to $$(M',s_M,\eta_M)\in\mathscr E_G(M)^c,$$ because all the other terms are by definition zero. Also note that in the even case and $M = M_2$, both sides of Theorem \ref{thm:maincomp} are zero because $M_2$ is not cuspidal; see Remark \ref{rem:Tr_M = 0 for even M_2}. Hence we only need to treat $M= M_2$ in the odd case.

\begin{lem} We have
$$n^G_M \Tr_M' = \sum_{M' \in \mathscr E(M)^c} \abs{\Out_M (M')} ^{-1} \sum_{A} \tau(G)\tau(H) ^{-1} ST^H_{M'} (f^H) .  $$
Here the second summation is over elements $A$ of the following sets:
\begin{itemize}
\item	In the odd case for $M = M_{12}$, we have $A \in \set{\emptyset, \set{1}, \set{2}, \set{1,2}}$.
\item In the even case for $M=M_{12}$, we have $A \in \set{\emptyset,\set{1,2}}$. 
\item For $M = M_1$, we have $A \in \set{\emptyset,\set{1,2}}$.
\item In the odd case for $M = M_2$, we have $A\in \set{\emptyset, \set{1}}$.  
\end{itemize} In the above summation, whenever $M' \in \mathscr E(M)$ and $A$ are fixed, we use them to get an element of $\mathscr E_G(M)$, from which we define $H \in \mathscr E(G)$. (Recall that the set $\mathscr E_G(M)$ is parameterized by suitable combinations of a choice of $A$ and a choice of a parameter $(d^+, d^-)$ or $(d^+, \delta^+, d^-,\delta^-)$ for the set $\mathscr E(M)$.) We also assume that the summation over $M' \in \mathscr E (M) ^c$ is over the explicitly chosen representatives with $s_M' \neq -1$. 
\end{lem}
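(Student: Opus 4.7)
The plan is to rewrite the sum defining $\Tr_M'$ by switching the order of summation using the explicit parametrizations of \S\ref{subsec:G-endosc arch}. First I would invoke \S\ref{silly remark} to restrict the summation in Definition \ref{defn:Tr'} to $\mathscr E_G(M)^c$, since $f^H$ vanishes outside $\mathscr E(G)^c$ by its construction in \S\ref{subsec:test functions}. The case $M=M_2$ in the even case is trivially zero on both sides by Remark \ref{rem:Tr_M = 0 for even M_2}, so I henceforth exclude it.

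Next I would apply Propositions \ref{prop:odd classification of G-endoscopy} and \ref{prop:even classification of G-endosc} (together with the $v$-adic analogue from \S\ref{subsec:global and v-adic endoscopic data}) to view $\mathscr E_G(M)^c$ as the quotient, by the simultaneous swapping involution $\sigma$, of an appropriate subset of $\mathscr P_S \times \{(d^+,\ldots,\delta^-)\}$. Correspondingly, $\mathscr E(M)^c$ is the quotient of $\{(d^+,\ldots,\delta^-)\}$ by swapping alone. Since $ST^H_{M'}(f^H)$ and $\tau(G)\tau(H)^{-1}$ depend only on the $G$-endoscopic isomorphism class, both sides of the lemma can be unfolded to sums over the ambient set $\mathscr P_S\times \{(d^+,\ldots,\delta^-)\}$ weighted by orbit sizes and outer automorphism orders.

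The equality then reduces to the combinatorial identity
\[
\frac{|\mathrm{orbit}_\sigma(A, M')|}{|\Out_G(\mathfrak{e})|} \;=\; \frac{|\mathrm{orbit}_\sigma(M')|}{|\Out_M(M')|}
\]
for each pair $(A,M')$ representing a class $\mathfrak{e}\in \mathscr E_G(M)^c$, together with a check of the stated range of $A$ in the lemma. In the odd case all $|\Out_G(\mathfrak{e})|=1$ (end of \S\ref{subsec:G-endosc arch}) and $A\neq A^c$ for every $A\in \mathscr P_S$, so the identity reduces to $|\mathrm{orbit}_\sigma(A,M')|=|\mathrm{orbit}_\sigma(M')|\cdot |\Out_M(M')|=2$, which is immediate from the outer automorphism counts of \S\ref{endoscopic data} (both subcases $d^+=d^-$ and $d^+\neq d^-$).

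The main obstacle will be the even case for $M=M_{12}$. Here $|\Out_G(\mathfrak{e})|=2$ whenever $d^+d^-\neq 0$ and $|\Out_M(M')|$ can be $1$, $2$, or $4$. I would split into the subcases $d^+d^-=0$; $d^+d^-\neq 0$ with $(d^+,\delta^+)\neq (d^-,\delta^-)$; and $(d^+,\delta^+)=(d^-,\delta^-)$, verifying the identity in each using the explicit outer automorphism descriptions of \S\ref{subsec:G-endosc arch} and \S\ref{endoscopic data}. The stated restriction $A\in\{\emptyset,\{1,2\}\}$ reflects that the nontrivial $G$-endoscopic isomorphism absorbed in $|\Out_G(\mathfrak{e})|^{-1}$ identifies the $A=\{1\}$ and $A=\{2\}$ contributions with those of the other two values of $A$ (after possibly permuting the $H^\pm$ factors), so that the effective sum collapses onto $A\in\{\emptyset,\{1,2\}\}$ once the $|\Out_M(M')|^{-1}$ weighting is applied. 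A careful orbit-stabilizer bookkeeping in each subcase then yields the lemma.
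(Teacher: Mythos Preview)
Your approach via orbit-stabilizer bookkeeping is sound for the odd cases and for $M=M_1$, and indeed matches the paper's fiber-counting argument. However, there is a genuine error in your treatment of the even case $M=M_{12}$.

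You claim that the restriction $A\in\{\emptyset,\{1,2\}\}$ arises because ``the nontrivial $G$-endoscopic isomorphism absorbed in $|\Out_G(\mathfrak e)|^{-1}$ identifies the $A=\{1\}$ and $A=\{2\}$ contributions with those of the other two values of $A$.'' This is not correct. By the parametrization in \S\ref{subsec:G-endosc arch}, distinct values of $A$ (modulo the simultaneous swap with $(d^+,\delta^+,d^-,\delta^-)$) give \emph{non-isomorphic} $G$-endoscopic data; the nontrivial element of $\Out_G(\mathfrak e)$ acts by the simultaneous outer automorphism on $H^{\pm}$ and does not change $A$ at all. So there is no identification of the sort you describe.

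The actual reason $A=\{1\}$ and $A=\{2\}$ are excluded is the cuspidality condition built into $\mathscr E_G(M)^c$. Cuspidality of $(M')^{\SO}$ over $\RR$ forces $\mathrm{sign}(\delta^{\pm})=(-1)^{d^{\pm}/2}$, while cuspidality of $H$ forces $\mathrm{sign}(\delta^{\pm})=(-1)^{(d^{\pm}+2|A^{(c)}|)/2}$. These agree if and only if $|A|$ is even, i.e.\ $A\in\{\emptyset,\{1,2\}\}$. With this correction, your orbit-stabilizer identity (now comparing $|\Out_G|^{-1}$ with $|\Out_M|^{-1}$ over the genuine two-element fiber $\{\emptyset,\{1,2\}\}$) goes through case by case exactly as the paper does it, but your stated mechanism would have led to wrong constants had you carried it out.
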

\begin{proof}To unify notation, in the odd case we also write $ \mathfrak e_{A, d^+,\delta^+, d^-,\delta^-}$ for elements in $\mathscr E_G(M)$ and write $\mathfrak e_{ d^+,\delta^+, d^-,\delta^-}(M)$ for elements in $\mathscr E (M)$, understanding that $\delta^{\pm } = 1$. We further abbreviate $(A, d^+,\delta^+, d^-,\delta^-) : = \mathfrak e_{ A, d^+,\delta^+, d^-,\delta^-}$ and $(d^+,\delta+, d^-,\delta^-) : = \mathfrak e _{d^+,\delta+, d^-,\delta^-   } (M)$. 

		We turn the summation in (\ref{defn:Tr'}) over $\mathscr E_G(M)^c$ (see \S \ref{silly remark}) into a double summation: the summation over $\mathscr E(M)^c$ of the summations over the fibers of the natural forgetful map \begin{align}
		\label{forgetful}
		\mathscr E_G(M)^c & \To \mathscr E(M)^c \\ \nonumber
		(A, d^+, \delta^+, d^-,\delta^-) & \longmapsto (d^+, \delta^+, d^-, \delta^-) .
		\end{align}
	(Here recall that $(A,d^+,\delta^+,d^-,\delta^-)$ and $(A^c,d^-,\delta^-,d^+,\delta^+)$ would represent the same element of $\mathscr E_G(M)$, and that $(d^+,\delta^+,d^-,\delta^-)$ and $(d^-,\delta^-,d^+,\delta^+)$ would represent the same element of $\mathscr E(M)$.)
	
		\textbf{Odd case $M_{12}$.}
		
		 When $d^+ \neq d^-$, the fiber of (\ref{forgetful}) over $(d^+, d^-)$ is equal to $\mathscr P(\set{1,2})$ and has $4$ elements. When $d^+ = d^-$, the fiber over $(d^+ , d^-)$ is equal to $\mathscr P (\set{1,2})/ (A \sim A^c)$, and has $2$ elements. In this case we will still sum over $\mathscr P(\set{1,2})$, and introduce a factor $1/2$ in the summation. Note that $\Out_G (\mathfrak e_{A,d^+, d^-})$ is trivial and $\Out_M (\mathfrak e_{ d^+, d^-} (M) ) $ is trivial unless $d^+ = d^-$, in which case it has order $2$. The lemma follows.

\textbf{Even case $M_{12}$.}

 When $(d^+,\delta^+) \neq (d^-,\delta^-)$, the fiber over $(d^+, \delta^+, d^-, \delta^-)$ of (\ref{forgetful})
	is equal to $\set{\emptyset, \set{1,2}}$, because the choices $A = \set{1}$ or $\set{2}$ violate the cuspidality condition.
 When $(d^+ ,\delta^+)= (d^-,\delta^-)$, the fiber over $(d^+ , \delta^+, d^-, \delta^-)$ is a singleton. In this case we will still sum over $\set{\set{1,2} ,\emptyset}$, and introduce a factor $1/2$ in the summation. 

	Note that $\abs{\Out_G ( M' , s_M, \eta_M)}= 2$ for all $(M' ,s _M , \eta_M) \in \mathscr E_G(M)$ such that  $M' \neq M^*$, and in the remaining cases $\abs{\Out_G ( M' , s_M, \eta_M)}= 1$. Therefore, when $A \in \set{\emptyset, \set{1,2}}$, we have  $$\abs{\Out_M ( d^+,\delta^+,d^-,\delta^- ) } / \abs{\Out_G (A, d^+, \delta^+, d^-,\delta^-)} =\begin{cases}
	1, & (d^+,\delta^+) \neq  (d^-,\delta^-), \\
	2, & (d^+,\delta^+) = (d^-,\delta^-).
	\end{cases}$$   
	The lemma follows.
	
\textbf{Case $M_1$:}

When $(d^+,\delta^+) \neq (d^-,\delta^-)$, the fiber of (\ref{forgetful}) over $(d^+ , \delta^+,  d^-, \delta^-)$ is equal to $\set{\emptyset, \set{1,2}}$. Other wise the fiber is a singleton. In this latter case we will still sum over $\set{\set{1,2} ,\emptyset}$, and introduce a factor $1/2$. 

Note that we have  $$\abs{\Out_M ( d^+,\delta^+,d^-,\delta^- ) } / \abs{\Out_G (A, d^+, \delta^+, d^-,\delta^-)} =\begin{cases}
1, & (d^+,\delta^+) \neq  (d^-,\delta^-) ,\\
2, & (d^+,\delta^+) = (d^-,\delta^-).
\end{cases}$$   
The lemma follows.

\textbf{Odd case $M_2$:}

 When $d^+ \neq d^-$, the fiber of (\ref{forgetful}) over $(d^+, d^-)$ is equal to $\mathscr P(\set{1}) = \set{\emptyset, \set{1}}$. When $d^+ = d^-$, the fiber over $(d^+ , d^-)$ is a singleton. In this case we will still sum over $\mathscr P(\set{1})$, and introduce a factor $1/2$. 

Note that $\Out_G (\mathfrak e_{A,d^+, d^-})$ is trivial and $\Out_M (\mathfrak e_{ d^+, d^-} (M) ) $ is trivial unless $d^+ = d^-$, in which case it has order $2$. The lemma follows.
\end{proof}

}

 \section[Expanding the geometric side of the stable trace formula]{Expanding the simplified geometric side of the stable trace formula}
\label{pf:2} Let $(\fke_{\fkp}(M), A, B) $ be a summation index as in Lemma  \ref{lem:first simplification}. We study the term $ST^H_{M'} (f^H)$ arising from this index.  

\begin{defn}\label{defn:Sigma}
	Let $\Sigma(M')$\index{$\Sigma(M')$} be a set of representatives in $M'(\QQ)$ of the stable conjugacy classes in $M'(\QQ)$ that are $\RR$-elliptic. 
\end{defn} 
 \begin{lem}\label{lem:only regular}
 We have an expansion
\begin{align}\label{expand by defn}
	ST_{M'}^{H}(f^H) = 
	\tau(M') \sum_{\gamma'\in \Sigma(M')} \bar \iota ^{M'} (\gamma') ^{-1} SO_{\gamma'} (f_{M'}^{H,\infty}) S\Phi ^{H}_{M'} (\gamma', f^H_{\infty}).
\end{align}  
Here $f^{H,\infty} : = f^{H,p,\infty} f^H_p$. Moreover, in (\ref{expand by defn}), only those $\gamma'$ that are $(M,M')$-regular contribute non-trivially.
 \end{lem}
 \begin{proof} The first statement follows from the definitions. To show the second statement, suppose $\gamma' \in \Sigma(M')$ is not $(M,M')$-regular. We show that $S\Phi ^{H}_{M'} (\gamma', f^H_{\infty})$ already vanishes. For this, it suffices to show the vanishing of $$\sum _{\Pi } \Phi^H_{M'} (\gamma'^{-1} ,\Theta_{\Pi}) \Tr (f^H_{\infty} \mid  \Pi),$$ where the summation is over the discrete series L-packets $\Pi$ for $H_{\RR}$. For this it suffices to show the vanishing of $$\sum_{\varphi_H \in \Phi_H (\varphi _{\mathbb V^*}) } \det (\omega_* (\varphi_H)) \Phi ^H_{M'} ({\gamma'} ^{-1}, \Theta_{\Pi(\varphi_H)}).$$ 
 	By \cite[Prop.~3.2.5, Rem.~3.2.6]{morel2011suite}, the above quantity is zero provided that $\gamma'$ is not $(M, M')$-regular. 
 \end{proof}
  \subsection{}\label{para:transfer away from p and infty}
We continue the study of (\ref{expand by defn}). By Lemma \ref{lem:only regular}, we only need to sum over those $\gamma'\in \Sigma(M')$ that are $(M,M')$-regular. By Proposition \ref{prop:LS for Levi}, we may further restrict to those $\gamma'$ that is an image of a semi-simple element $\gamma_M \in M(\adele_f^p)$, and in this case we have 
\begin{align}\label{eq:Levi transfer away from p infty}
SO_{{\gamma'}} (f_{M'} ^{H, p,\infty}) =  ( \Delta^{M}_{M'})^{A,B} (\gamma',\gamma_M) O_{\gamma_M}^{s_M'} (f^{p,\infty} _M),
\end{align}
 where $s_M'$ is given by the endoscopic datum $\fke_{\fkp}(M ) = (M', \lang M', s_M', \eta_M)$ for $M$, and $(\Delta^{M}_{M'})^{A,B} (\gamma',\gamma_M)$ denotes the product of the local transfer factors over finite places $v \neq p$, normalized as in \S \ref{para:normalizing tf for M}. We remind the reader that $s_M'$ is different from $s_M$ as in $\fke_{\fkp, A,B} = (M',\lang M', s_M, \eta_M)$, and $s_M'$ is independent of $(A,B)$. By contrast, the normalization $(\Delta^{M}_{M'})_v^{A,B}$ of transfer factors between $M'$ and $M$ at $v$ depend on $(A,B)$. Nevertheless, for almost all $v$, $(\Delta^{M}_{M'})^{A,B}_v$ is the canonical unramified normalization (associated to the hyperspecial subgroup determined by some reductive model of $M$ over some Zariski open in $\spec \ZZ$). Hence for almost all $v$, $(\Delta^{M}_{M'})^{A,B}_v$ is  independent of $(A,B)$. 
 \begin{defn}\label{defn:epsilon^{p,A}} For each $v \neq p,\infty$, let $\epsilon_v (A,B) \in \CC^{\times}$ be the constant such that $(\Delta^M_{M'})^{A,B}_v = \epsilon_v(A,B) (\Delta^M_{M'})^{\emptyset,\emptyset}_v$.  
 	Let   \index{$\epsilon^{p,\infty}(A,B)$} 
$$ \epsilon^{p,\infty} (A,B) = \prod_{v \neq p,\infty} \epsilon_v (A,B),$$ where almost all terms in the product are $1$.  
 \end{defn}
 \begin{defn}\label{defn:Sigma_1}
 Let $\Sigma(M')_{1}$\index{$\Sigma(M')_{1}$} be the set of $\gamma'\in \Sigma(M')$ such that $\gamma'$ is $(M, M')$-regular and is an image of a semi-simple element of $ M(\adele_f^p)$. For each $\gamma'\in\Sigma(M')_1$, let $\gamma_M \in M(\adele_f^p)$ be a semi-simple element such that $\gamma'$ is an image of $\gamma_M$, and define \index{$I(\mathfrak e_{\fkp}(M),\gamma') $}
\begin{multline*}
 	I(\mathfrak e_{\fkp}(M),\gamma') : = \bar \iota^{M'} (\gamma')^{-1}   ( \Delta^{M}_{M'})^{\emptyset,\emptyset} (\gamma',\gamma_M) O_{\gamma_M}^{s_M'} (f^{p,\infty} _M)  \\  \cdot 
 	\sum_{A,B } \epsilon^{p,\infty} (A,B) \tau(G) \tau(H)^{-1}  \tau(M') SO_{\gamma'} (f^H_{p, M'}) S\Phi^H_{M'} (\gamma', f^H_{\infty}),
\end{multline*}
 where the terms $(\Delta^{M}_{M'})_{\emptyset,\emptyset} (\gamma',\gamma_M)$ and $ O_{\gamma_M}^{s_M'} (f^{p,\infty} _M) $ are the same as in (\ref{eq:Levi transfer away from p infty}) (except that $(A,B)$ is replaced by $(\emptyset,\emptyset)$), and the summation $\sum_{A,B}$ as well as the terms involving $H$ have the same meaning as in Lemma \ref{lem:first simplification}. By (\ref{eq:Levi transfer away from p infty}) we know that this definition is independent of the choice of $\gamma_M$. For each $(A,B)$ as above, we also define \index{$K (\fke_{\fkp}(M), \gamma', A, B)$}
 $$
 K (\fke_{\fkp}(M), \gamma', A, B):= (-1)^{q(G_{\RR})}\sum_{ \varphi_H \in \Phi_H(\varphi_{\mathbb V^*})} \det(\omega_*(\varphi_H)) \Phi^H_{M'} (\gamma'^{-1} ,\Theta_{\varphi_H}) , $$ where $\Theta_{\varphi_H} : = \Theta_{\Pi(\varphi_H)}$ is the sum of the characters of the members of the L-packet $\Pi(\varphi_H)$,\index{$\Theta_{\varphi_H}$} $\Phi^H_{\M'}(\cdot ,\Theta_{\varphi_H})$ is the normalized stable discrete series character as in \S \ref{subsubsec:mathbb V}, and the other notations are as in \S \ref{subsubsec:defn of f_infty}. 
 \end{defn}

\begin{lem}\label{lem:debut of I(e, gamma')} We have 
\begin{align*}
n^G_M \Tr'_M = \sum_{\mathfrak e_{\fkp}(M) = (M',\lang M', s_M', \eta_M)  \in \dot {\mathscr E } (M) ^{c,\ur}} \abs{\Out _M(\mathfrak e_{\fkp} (M)) } ^{-1} \sum_{\gamma' \in \Sigma(M') _1}  I(\mathfrak e_{\fkp}(M),\gamma'),
\end{align*} and  
\begin{multline*}
I(\mathfrak e_{\fkp}(M), \gamma') =   \bar \iota^{M'} (\gamma')^{-1}   ( \Delta^{M}_{M'}) ^{\emptyset,\emptyset} (\gamma',\gamma_M) O_{\gamma_M}^{s_M'} (f^{p,\infty} _M)    \tau(M) k(M)k(G)^{-1} \\ \cdot  (-1) ^{\dim A_{M'}}  \bar v ((M')_{\gamma'} ^0) ^{-1}  \sum_{A,B}  \epsilon^{p,\infty} (A,B) SO_{\gamma'} (f^H_{p, M'})  K (\mathfrak e_{\fkp}(M), \gamma', A,B)  .
\end{multline*}
Here the summation range for $\sum_{A,B}$ is the same as in Lemma \ref{lem:first simplification}. 
\end{lem} 
\begin{proof} The first identity follows from Lemma \ref{lem:first simplification}, Lemma \ref{lem:only regular}, \S \ref{para:transfer away from p and infty}, and the definitions. The second identity follows from Corollary \ref{tau k}, Lemma \ref{lem:paring=1}, and the definitions.
\end{proof}
\section{Computation of $K$}\label{subsec:comp K}
\subsection{}\label{para:B_0}
We keep the notation of Definition \ref{defn:Sigma_1} and study $K (\mathfrak e_{\fkp}(M), \gamma', A, B)$. As usual we write $\fke_{\fkp}(M) = (M',\lang M', s_M',\eta_M)$. We would like to apply \cite[Prop.~3.2.5]{morel2011suite} to compute $K$. First we need some preparations.
 
 By construction $M' = M^{\GL} \times M^{\prime,\SO}$ and $M^{\prime, \SO}$ is a product of two special orthogonal groups $M^{\prime, \SO, +}, M^{\prime, \SO, -} $\index{$M^{\prime, \SO, \pm}$} such that the component of $s_M$ in the dual group of $M^{\prime, \SO, \pm}$  is the scalar matrix $\pm 1$. Fix an elliptic maximal torus $T_{M'}$\index{$T_{M'}$} in $M'_{\RR}$ such that  $\gamma' \in T_{M'} (\RR).$  Then 
 $T_{M'}$ is of the form \index{$T_{M^{\GL}}$} \index{$T_{M^{\prime,\SO,\pm}}$}
 $$T_{M'} = T_{M^{\GL}} \times T_{M^{\prime,\SO,+}} \times T_{M^{\prime, \SO,-}}$$
 where $T_{M^{\GL}}$ (resp.~$T_{M^{\prime,\SO, \pm}}$) is an elliptic maximal torus in $M ^{\GL} _{\RR}$ (resp.~$M^{\prime, \SO, \pm} _{\RR}$). Moreover, as we have already seen in \S \ref{para:dot E for M}, the tori $T_{M^{\prime,\SO, \pm}}$ are in fact anisotropic over $\RR$. 
 Note that when $M= M_{12}$ or $M_2$, we have necessarily $T_{M^{\GL}} = M^{\GL}$. When $M = M_1$, we have $M^{\GL} = \GL_2$, and $T_{M ^{\GL}}$ is $\GL_2(\RR)$-conjugate to $T_{\GL_2}^{\std}$; cf.~\S \ref{para:R groups}. 
 
We then fix an elliptic maximal torus $T_M$\index{$T_M$} in $M_{\RR}$, and an admissible isomorphism \index{$j_M$}$j_M : T_{M'} \isom T_M. $ Recall from \S \ref{global groups} that $M = M^{\GL} \times M^{\SO} = M^{\GL} \times \SO(W)$, where $W = W_2$ if $M = M_1$ or $M_{12}$, and $W= W_1$ if $M = M_2$.   
 We may and shall assume that $T_M$ is of the form $ T_{M^{\GL}} \times T_{M^{\SO}},$ where  $T_{M^{\SO}}$\index{$T_{M^{\SO}}$} is an elliptic (and in fact anisotropic) maximal torus in $M^{\SO}_{\RR}$, and $T_{M^{\GL}}$ is as above. We may and shall also assume that $j_M$ is the product of the identity on $T_{M^{\GL}}$ and an admissible isomorphism
 \index{$j_{M^{\SO}}$}$$j_{M^{\SO}}: T_{M^{\prime,\SO,+}} \times T_{M^{\prime,\SO, -}} \isom T_{M^{\SO}},$$ where the notion of admissibility is with respect to the endoscopic datum $\fke_{\fkp}(W)$ for $M^{\SO}$.

 For any choice of a Borel subgroup $B_0$ of $G_{\CC}$ containing $T_{M,\CC}$, we get a canonical isomorphism 
 $\mathfrak d_{B_0, \mathcal B} : \widehat T_{M} \isom \mathcal T$ as in \S \ref{subsec:admissible isom}, where $(\mathcal T,\mathcal B)$ is the standard Borel pair in $\widehat{G}$ fixed in Definition \ref{defn:fixing borel pair in dual}. Identifying $\mathcal T$ with $(\CC^{\times})^m$ as in Definition \ref{defn:fixing borel pair in dual}, we have $m$ standard characters on $\mathcal T$ forming a basis of $X^*(\mathcal T)$, and they give rise, via $\mathfrak d_{B_0, \mathcal B}$, to $m$ cocharacters of $T_{M,\CC}$. We denote them (in order) by \index{$\tau_{0_1}, \tau_{0_2}, \tau_1, \tau_2,\cdots$} $$\tau_{0_1}, \tau_{0_2}, \tau_1, \tau_2,\cdots, \tau_{m-2}, \quad  \mbox{ if $M = M_{12} $ or $M_1$},$$  and by \index{$\tau_0, \tau_1,\tau_2,\cdots$}
 $$\tau_0, \tau_1,\tau_2,\cdots, \tau_{m-1} , \quad \mbox{ if $M= M_2$ (in the odd case)}. $$ We now fix a choice of $B_0$\index{$B_0$} such that the resulting cocharacters (just mentioned) satisfy the following conditions, the second of which depends on the choice of $j_M$.
   \begin{enumerate}
 	\item When $M= M_{12}$, we require that $\tau_{0_1}$ and $\tau_{0_2}$ are respectively the identity cocharacters of the first $\GG_m$ (i.e.~$\GL(V_1)$) and the second $\GG_m$ (i.e.~$\GL(V_2/V_1)$) in $T_{M^{\GL}} = \GG_m \times \GG_m \subset T_M$.
 	When $M= M_1$, we require that $\tau_{0_1}$ and $ \tau_{0_2}$ are cocharacters of $T_{M^{\GL},\CC} \subset T_{M,\CC}$, and that they are of the form  $$z \longmapsto g \begin{pmatrix}
 	z \\ & 1 
 \end{pmatrix} g ^{-1}  \qquad \text{and} \qquad z \longmapsto g \begin{pmatrix}
1\\ & z 
\end{pmatrix} g^{-1} $$ for some fixed $g\in M^{\GL}(\CC)$ conjugating the diagonal torus in $M^{\GL}_{\CC} = \GL_{2,\CC}$ to $T_{M^{\GL}, \CC}$. (Clearly this pins down $\tau_{0_1}$ and $\tau_{0_2}$ up to swapping the two.) When $M= M_2$, we require that $\tau_0$ is the identity cocharacter of $T_{M^{\GL}} =  \GL(V_1) = \GG_m \subset T_M$. 
 	\item We require that $j_M^{-1} \circ 
 	\tau_i$ is a cocharacter of $T_{M^{\prime, \SO, -} , \CC}$, for each $1 \leq i \leq n^-$. Here $n^-$ is the dimension of $T_{M^{\prime, \SO, -} , \CC}$.    
 \end{enumerate}

Indeed, the above conditions can be arranged because of the following observations:
\begin{itemize}
	\item For an arbitrary choice of $B_0$, the resulting $\tau$'s have the following property: The prescribed cocharacter(s) in (1) which we ask $\tau_{0_1}$ and $\tau_{0_2}$, or $\tau_0$, to equal, are among the $\tau$'s and their inverses. This is because these prescribed cocharacter(s) can be extended to a $\ZZ$-basis of $X_*(T_{M})$ under which the root datum of $(G_{\CC},T_{M,\CC})$ becomes the standard type $\mathsf B$ or type $\mathsf D$ root datum on $(\ZZ^m, \ZZ^m)$.
	
		\item By making different choices of $B_0$, we can arbitrarily permute the order of the $\tau$'s and replace an arbitrary number (resp.~an even number) of them by their inverses in the odd (resp.~even) case. In the even case with $M = M_{12}$ or $M_1$, we can replace either one or two of $\tau_{0_1}, \tau_{0_2}$ by their inverses as we wish, since  $m \geq 3$. Thus we can always arrange (1).  
	\item Once (1) is satisfied, the cocharacters $\tau_1,\tau_2,\cdots$ form a basis of $X_*(T_{M^{\SO}})$ under which the root datum of $(M^{\SO}_{\CC}, T_{M^{\SO}, \CC} )$ becomes the standard type $\mathsf B$ or type $\mathsf D$ root datum. Since $j_{M^{\SO}}$ is admissible, exactly $n^-$ of the $\tau_i$'s are such that $j_M^{-1} \circ \tau_i$ (equal to $j_{M^{\SO}}^{-1} \circ \tau_i$) is a cocharacter of $T_{M^{\prime, \SO, -} , \CC}$. We can rechoose $B_0$ in such a way that $\tau_{0_1}$ and $ \tau_{0_2}$, or $\tau_0$, are unchanged, but the order of $\tau_1,\tau_2,\cdots$ is permuted so that (2) is satisfied. 
	\end{itemize}

 \subsection{}\label{para:rho's}
 Up to now our discussion has not involved $(A,B)$. We now take them into account, so we have an endoscopic datum $(H, \lang H, s,\eta)$ for $G$ that is determined by $(\fke_{\fkp}(M), A, B)$ as in Lemma \ref{lem:first simplification} and Definition \ref{defn:Sigma_1}.  Recall from \S \ref{once and for all} that we have fixed $(T_H, T_G, j, B_{G,H})$. Similarly as in \S \ref{para:B_0}, the pair $(T_G, B_{G,H})$ determines an ordered $m$-tuple of cocharacters of  $T_{G,\CC}$ (via $\mathfrak d_{B_{G,H}, \mathcal B} : \widehat T_G \isom \mathcal T \cong (\CC^\times)^m$). We denote them by \index{$\rho_1,\rho_2,\cdots$} $$\rho_1,\rho_2,\cdots, \rho_m .$$ By the construction of $j$ in \S \ref{once and for all} (which uses \S \ref{subsubsec:setting for para tori} and especially Convention \ref{convention:identifying U(1)}), we know that $\set{	  j^{-1} \circ \rho _i  \mid  1\leq i \leq m^-}$ is a basis of $X_*(T_{H^-})$ (where $T_{H^-} : = T_H \cap H^-$) under which the root datum of $(H^-_{\CC}, T_{H^-,\CC})$ becomes the standard type $\mathsf B$ or $\mathsf D$ root datum. Similarly,   $\set{	  j^{-1} \circ \rho _i  \mid  m^- + 1 \leq i \leq m}$ is a basis of $X_*(T_{H^+})$ under which the root datum of $(H^+_{\CC}, T_{H^+,\CC})$ becomes the standard type $\mathsf B$ or $\mathsf D$ root datum. 
 
 \begin{defn}\label{defn:i_G(A)}
 	Define an isomorphism \index{$i_G(A, B)$} $i_G(A, B ): T_{M,\CC} \isom T_{G,\CC}$ as follows. When $M= M_{12}$ (so $B \equiv \emptyset$), let $i_{G}(A,B)$ map $\tau_{0_1}, \tau_{0_2}, \tau_1,\cdots, \tau_{m-2}$ respectively to 
 	$$ \begin{cases}
 	\rho_1,\rho_2,\cdots, \rho_m , & A=\emptyset ,\\
 	\rho_{m^-+1}, \rho_1,\rho_2,\cdots, \rho_{m^-}, \rho_{m^-+2},\cdots, \rho_m, & A = \set{1} ,\\
 	\rho_1, \rho_{m^-+1}, \rho_2,\cdots, \rho_{m^-}, \rho_{m^-+2},\cdots, \rho_m, & A = \set{2} , \\
 	\rho_{m^- +1} , \rho_{m^-+2}, \rho_1,\cdots, \rho _{m^-}, \rho_{m^- +3}, \cdots, \rho_m, & A= \set{1,2} .
 	\end{cases}$$ (In the even case the parameter $A$ can only assume $\set{1,2}$ and $\emptyset$, cf.~Lemma \ref{lem:first simplification}, and we use only these two cases in the above formula). 
 	When $M = M_1$ (so $A \equiv \emptyset$), let  $i_{G}(A,B)$ map $\tau_{0_1}, \tau_{0_2}, \tau_1,\cdots, \tau_{m-2}$ respectively to 
 	$$ \begin{cases}
 		\rho_1,\rho_2,\cdots, \rho_m , & B=\emptyset ,\\ 
 		\rho_{m^- +1} , \rho_{m^-+2}, \rho_1,\cdots, \rho _{m^-}, \rho_{m^- +3}, \cdots, \rho_m, & B= \set{1} .
 	\end{cases}$$ 
 	When $M = M_2$ (so $B \equiv \emptyset$), let $i_G(A,B)$ map $\tau_0, \tau_1,\cdots, \tau _{m-1}$ respectively to 
 	$$ \begin{cases}
 	\rho_1,\cdots, \rho_m , & A=\emptyset ,\\
 	\rho_{m^- +1}, \rho_1,\cdots, \rho_{m^-}, \rho_{m^- +2},\cdots, \rho_m, & A =\set{1}.
 	\end{cases}$$
 	\end{defn}
 
 In the following lemma, recall from \S \ref{para:prepare for main} that we have identified $M'$ with a Levi subgroup of $H$. 
\begin{lem} \label{lem:i_G and i_H} Let $i_H(A,B)$\index{$i_H(A,B)$} be the unique isomorphism $T_{M',\CC} \isom T_{H,\CC}$ fitting in  the following commutative diagram:
	\begin{align}\label{diag:j and j_M}
 \xymatrix{ T_{H,\CC} \ar[r]^j & T_{G,\CC} \\ 
	T_{M',\CC}  \ar[r]^{j_M} \ar[u] ^{i_H(A,B)} & T_{M,\CC} \ar[u] _{i_G(A,B)}  } 
	\end{align}
 Then 
	$i_G(A,B)$ (resp.~$i_H(A,B)$) is induced by an inner automorphism of $G_{\CC}$ (resp.~$H_{\CC}$).
\end{lem} 
 \begin{proof}Firstly, the isomorphism $i_G(\emptyset,\emptyset): T_{M,\CC} \isom T_{G,\CC} $ is compatible with the two canonical isomorphisms $\mathrm{BRD}(T_{M,\CC}, B_0) \cong \mathrm{BRD}(G)$ and $\mathrm{BRD}(T_{G,\CC}, B_{G,H}) \cong \mathrm{BRD}(G)$, where $\mathrm{BRD}(G)$ is the canonical based root datum of $G_{\CC}$ (see \S \ref{subsubsec:L-group data}). Hence $i_G(\emptyset,\emptyset)$ is induced by an inner automorphism of $G_{\CC}$. For general $(A,B)$, $i_G(A,B)$ differs from $i_G(\emptyset,\emptyset)$ by an automorphism of $T_{G,\CC}$ which permutes the order of the $\rho_i$'s. Such an automorphism is in the Weyl group (because under the basis $\set{\rho_1,\cdots,\rho_m}$ of $X_*(T_G)$ the root datum of $(G_{\CC}, T_{G,\CC})$ becomes the standard type $\mathsf B$ or $\mathsf D$ root datum), and is hence still induced by an inner automorphism of $G_{\CC}$. 
 	
 	We now prove that $i_H(A,B)$ is induced by an inner automorphism of $H_{\CC}$. For brevity, we only illustrate the proof in the special case where $M=M_{12}$ and $(A,B) = (\set{1}, \emptyset)$, the other cases all being similar. Also, we only treat the even case, as the odd case is easier. We freely use the notation of \S \ref{para:two maps from endoscopic G data}; in particular $M^{\prime, \SO , \pm} = \SO(W^{\pm})$, $H^{\pm} = \SO(V^{\pm})$, and $d^{\pm} = \dim W^{\pm}$. As in \S \ref{para:two maps from endoscopic G data}, we have a canonical $\SO(W^+)(\CC)$-conjugacy class of embeddings $$\iota_{W^+} : \GG_m^{d^+/2} \To \SO(W^+)_{\CC}$$ and a canonical $\SO(V^+)(\CC)$-conjugacy class of embeddings $$ \iota_{V^+}: \GG_m^{d^+/2 + 1}  \To \SO(V^+)_{\CC}.$$ (Here, if $\delta^+ = -1$, we identify $\Uni(1)_{\CC}$ with $\GG_m$.) By the two conditions satisfied by $B_0$ in \S \ref{para:B_0} and the admissibility of $j_{M^{\SO}}$, we know that the embedding\footnote{Here we use the following notation: If $\mu_1,\cdots,\mu_k$ are cocharacters of a torus $T$ contained in a reductive group $R$ (everything being over $\CC$), we write $(\mu_1,\cdots, \mu_k)$ for the homomorphism $\GG_m^k \to T \subset R, (z_1,\cdots , z_k) \mapsto \prod_i \mu_i(z_i)$.}
 	$$ (j_M^{-1} \circ \tau_{d^-/2 +1},\cdots, j_M^{-1} \circ \tau_{m-2}) : \GG_m^{d^+/2} \To \SO(W^+)_{\CC} $$ is $\SO(W^+)(\CC)$-conjugate to $\iota_{W^+}$, and that $j_M^{-1} \circ \tau _{0_1}$ is the identity cocharacter of $\GL(V_1)$, namely $\iota^{\GL}_{A,B}$ in the notation of \S \ref{para:two maps from endoscopic G data}. Thus by the construction in \S \ref{para:two maps from endoscopic G data}, the embedding 
 	\begin{align} \label{eq:para T_M'}
  (j_M^{-1} \circ \tau_{0_1}, j_M^{-1} \circ \tau_{d^-/2 +1},\cdots, j_M^{-1} \circ \tau_{m-2}) : \GG_m^{d^+/2 + 1} \To \GL(V_1)_{\CC} \times \SO(W^+)_{\CC}
 	\end{align}
  is $\SO(V^+)(\CC)$-conjugate to $\iota_{V^+}$ when we view $\GL(V_1)  \times \SO(W^+)$  as a subgroup of $\SO(V^+)$ according to the rule in \S \ref{para:two maps from endoscopic G data}. On the other hand, the embedding 
  \begin{align}
  	\label{eq:para T_H} (j^{-1} \circ \rho_{m^- +1} ,\cdots, j^{-1} \circ \rho_m) : \GG_m^{d^+/2 + 1} \To \SO(V^+)_{\CC} 
  \end{align}
is also  $\SO(V^+)(\CC)$-conjugate to $\iota_{V^+}$. Hence   (\ref{eq:para T_M'}) and (\ref{eq:para T_H}) are $\SO(V^+)(\CC)$-conjugate. Similarly, we know that the embeddings 	\begin{align*} 
	(j_M^{-1} \circ \tau_{0_2}, j_M^{-1} \circ \tau_{1},\cdots, j_M^{-1} \circ \tau_{d^-/2}) : \GG_m^{d^-/2 + 1} \To \GL(V_2/V_1)_{\CC} \times \SO(W^-)_{\CC}
\end{align*} and   \begin{align*}
 (j^{-1} \circ \rho_{1} ,\cdots, j^{-1} \circ \rho_{m^-}) : \GG_m^{d^-/2 + 1} \To \SO(V^-)_{\CC} 
\end{align*} are $\SO(V^-)(\CC)$-conjugate. We conclude that the embeddings 
$$ (j_M^{-1} \circ \tau_{0_1}, j_M^{-1} \circ \tau_{0_2}, j_M^{-1} \circ \tau_1 ,\cdots, j_M^{-1} \circ \tau_{m-2}) : \GG_m^m \To H_{\CC}$$ and 
$$ (j^{-1} \circ \rho_{m^- +1}, j^{-1} \circ \rho_1, \cdots, j^{-1} \circ \rho_{m^-}, j^{-1} \circ \rho_{m^-+2}, \cdots, j^{-1} \circ \rho_m) : \GG_m^m \To H_{\CC}$$ are $H(\CC)$-conjugate. But these two embeddings have images $T_{M' , \CC}$ and $T_{H,\CC}$ respectively, and if we invert the first and compose with the second we precisely get the isomorphism $i_H(A,B)$. This finishes the proof. 
 \end{proof}
 
\begin{defn}\label{defn:4 Borels}
	Define the three Borel subgroups:
	\begin{itemize}
		\item $B_M$, a Borel of $M_{\CC}$ containing $T_{M,\CC}$, defined to be $B_0 \cap M$. 
		\item $B_G$, a Borel of $G_{\CC}$ containing $T_{G,\CC}$, defined to be $i_G(A,B) _* B_0$. This can be different from $B_{G,H}$ fixed in \S \ref{once and for all}.
		\ignore{
		\item $B_{M'}$, a Borel of $M'_{\CC}$ containing $T_{M',\CC}$, defined to be the one induced by $( j_M, B_M)$ in the same manner as in \S \ref{subsubsec:defn of f_infty}. In other words, $j_M$ carries the $B_{M'}$-positive roots on $T_{M',\CC}$ to $B_M$-positive roots on $T_{M,\CC}$.  }
		\item $B'_{H}$, a Borel of $H_{\CC}$ containing $T_{H,\CC}$, defined to be the one induced by  $(j,B_G)$. In other words, $j$ carries the $B_H'$-positive roots on $T_{H,\CC}$ to $B_G$-positive roots on $T_{G,\CC}$. 
	\end{itemize}
\end{defn} 
\begin{lem}\label{lem:B_H'}
	We have $B_H' = B_H$, where $B_H$ is defined in \S \ref{subsubsec:defn of f_infty}. 
\end{lem}
\begin{proof}
		We use $j$ to identify $T_{H,\CC}$ and $T_{G,\CC}$. Thus we have an inclusion of root systems $$\Phi_H: = \Phi(H_{\CC}, T_{H,\CC}) \subset \Phi_G: = \Phi(G_{\CC}, T_{G,\CC}). $$
	To prove the lemma, we need to prove that for all $\alpha \in \Phi_H$, it is $B_G$-positive if and only if it is $B_{G,H}$-positive. We denote the permutation of $\rho_i$'s that appears in Definition \ref{defn:i_G(A)} by 
	$$\rho_{\sigma(1)}, \rho_{\sigma(2)} ,\cdots, \rho _{\sigma(m)} , \qquad \sigma \in \mathfrak S_m. $$ (For instance, if $A = \set{1}$, then $\sigma$ sends $1,2,\cdots, m$ respectively to $m^- +1, 1,\cdots, m^-, m^- +2, \cdots, m$.) Let $\set{\rho_1^{\vee}, \cdots, \rho_m ^{\vee}}$ be the basis of $X^*(T_{G,\CC})$ dual to the basis $\set{\rho_1,\cdots, \rho_m}$ of $X_*(T_{G,\CC})$. The $B_{G,H}$-positive roots in $\Phi_G$ are 
	$$\begin{cases}
		\set{\rho_i^{\vee} \pm \rho_j ^{\vee} \mid  i>j } \cup \set{\rho_i^{\vee} \mid i} , & \mbox{odd case} , \\\\
		\set{\rho_i^{\vee} \pm \rho_j ^{\vee} \mid  i>j } , & \mbox{even case}.
	\end{cases}$$   
	The $B_{G}$-positive roots in $\Phi_G$ are 
	$$\begin{cases}
		\set{\rho_{\sigma(i)}^{\vee} \pm \rho_{\sigma(j)} ^{\vee} \mid  i>j } \cup \set{\rho_i^{\vee}\mid i } , & \mbox{odd case} , \\\\
		\set{\rho_{\sigma(i)}^{\vee} \pm \rho_{\sigma(j)} ^{\vee} \mid  i>j } , & \mbox{even case}.
	\end{cases}$$  
	On the other hand, by the last observation in \S \ref{para:rho's}, we have
	$$ \Phi_H =  \begin{cases}
		\set{ \pm \rho_i^{\vee} \pm \rho_j ^{\vee} \mid  i,j \leq m^- , i \neq j } \cup \set{ \pm \rho_i^{\vee} \pm \rho_j ^{\vee} \mid  i,j > m^- , i \neq j } \cup \set{\rho_i^{\vee}\mid i} , \\\\
		\set{ \pm \rho_i^{\vee} \pm \rho_j ^{\vee} \mid  i,j \leq m^- , i \neq j } \cup \set{ \pm \rho_i^{\vee} \pm \rho_j ^{\vee} \mid  i,j > m^- , i \neq j }  ,
	\end{cases} $$  in the odd and even cases respectively.
	It remains to check that $\sigma^{-1}|_{\set{1,2,\cdots, m^-}}$ and $\sigma^{-1}|_{\set{m^- +1, \cdots , m}}$ are increasing, which is true.  
\end{proof}
\subsection{}\label{para:Delta_j_M B_M}
 We now transport  \cite[Prop.~3.2.5]{morel2011suite} to our setting. For any $t \in T_M(\RR)$, let $\epsilon_R(t) \in \set{\pm 1}$\index{$\epsilon_R$} be $-1$ to the number of $B_0$-positive roots $\alpha$ of $(G_{\CC}, T_{M,\CC})$ such that $\alpha$ is real and $0<\alpha(t)< 1$. (Compare with the definition in \S \ref{subsubsec:notation for roots and Weyl}.) Similarly, for $t' \in T_{M'}(\RR)$, we let $\epsilon_{R_H}(t')\in \set{\pm 1}$\index{$\epsilon_{R_H}$} be $-1$ to the number of $i_H(A,B)^{-1}_* (B'_H)$-positive (or equivalently, $i_H(A,B)^{-1}_* (B_H)$-positive, by Lemma \ref{lem:B_H'}) roots $\alpha$ of $(H_{\CC}, T_{M',\CC})$ such that $\alpha$ is real and $0< \alpha (t') < 1$.  We set \footnote{In \cite[Prop.~3.2.5]{morel2011suite}, our $\Delta_{j_M, B_M} ^{A,B}$ is denoted simply by $\Delta _{j_M, B_M}$. However, this object is not intrinsic to $(j_M, B_M)$, since its definition involves the number $q(H_{\RR})$ which depends on $(A,B)$.} \index{$\Delta_{j_M,B_M}^{A,B}$}
 $$ \Delta_{j_M,B_M}^{A,B} :  = (-1) ^{q(G_{\RR} ) + q(H_{\RR}) + q(M_{\RR}) + q(M'_{\RR})} \Delta_{j_M, B_M},$$ where $\Delta_{j_M,B_M}$\index{$\Delta_{j_M,B_M}$} is Kottwitz's normalization of the archimedean transfer factors between $\fke_{\fkp}(M) = (M',\lang M', s_M',\eta_M)$ and $M$ associated to $(j_M, B_M)$  (see \cite[\S 7]{kottwitzannarbor}, cf.~\S\S \ref{transf factors odd}--\ref{transf factors even div by 4}).  
 Let $\Theta_{\mathbb V^*}$\index{$\Theta_{\mathbb V^*}$} denote the analogue of $\Theta_{\mathbb V}$ in \S \ref{subsubsec:mathbb V} with $\mathbb V$ replaced by $\mathbb V^*$. The following result is \cite[Prop.~3.2.5]{morel2011suite}. 
 \begin{prop}\label{prop:morel transfer} We have 
 \begin{multline*} 
 \epsilon_{R}(j_M(\gamma'^{-1})) \epsilon_{R_H} (\gamma'^{-1})   \Delta_{j_M, B_M} ^{A,B}(\gamma', j_M(\gamma')) \Phi^G_M (j_M(\gamma') ^{-1} , \Theta_{\mathbb V^*} ^H)  \\  =  \sum_{\varphi_H \in \Phi_H(\varphi_{\mathbb V^*})} \det(\omega'_*(\varphi_H)) \Phi^H_{M'} (\gamma'^{-1} , \Theta_{\varphi_H}),
 \end{multline*}Here the elements $\omega'_*(\varphi_H) \in \Omega$ are the analogues of the elements $\omega_*(\varphi_H) \in \Omega$ in \S \ref{subsubsec:defn of f_infty} with $(j, B_{G,H})$ replaced by $(j, B_G)$. The term $\Phi^G_M(\cdot, \Theta^H_{\mathbb V^*})$\index{$\Phi^G_M(\cdot,\Theta^H_{\mathbb V^*})$} is given as follows. Only when $M = M_{12}$ and $A = \set{1}$ or $\set{2}$ (which in particular implies that we are in the odd case; see Lemma \ref{lem:first simplification}), it is equal to $\Phi^G_M (\cdot ,\Theta_{\mathbb V^*}) _{\Endos}$ (defined as in (\ref{eq:Phi^G_M endos}), but with $\mathbb V$ replaced by $\mathbb V^*$).
In all the other cases, it is equal to  
$ \Phi^G_M (\cdot , \Theta_{\mathbb V^*}). $
\qed
 \end{prop}

\subsection{}\label{para:investigate diff} 
 For a fixed $\varphi_H$ as in Proposition \ref{prop:morel transfer}, we investigate the relation between $\omega_*(\varphi_H)$ and $\omega_*'(\varphi_H)$. Write $\omega_* : = \omega_*(\varphi_H)$ and $\omega'_* : = \omega'_*(\varphi_H)$. By definition, $\varphi_H$ is aligned\index[n]{aligned} with $(\omega_* ^{-1} \circ j , B_{G, H}, B_H)$ and also aligned with $((\omega_*') ^{-1}\circ j, B_G, B_H')$. Suppose $\omega_0\in \Omega (G_{\CC}, T_{G,\CC})$\index{$\omega_0$} measures the difference between $B_G$ and $B_{G,H}$, so that the map $\widehat T_G \to \widehat G$ determined by $B_{G}$ and $\varphi_{H}$ (namely the first row of the commutative diagram on the bottom of \cite[p.~184]{kottwitz1992lambda}) is equal to the composition of $\widehat{\omega_0}: \widehat T_G \to \widehat T_G$ with the analogous map $\widehat {T_G} \to \widehat G$ determined by $B_{G,H}$ and $\varphi_{H}$. By the definition of ``being aligned'' and by Lemma \ref{lem:B_H'}, we know that the composition 
 $$\widehat{T} \xrightarrow{\widehat{\omega_0}} \widehat{T} \xrightarrow{\widehat{\omega_*^{\prime,-1} \circ j}} \widehat T_H $$ is equal to the map 
 $$ \widehat{\omega_*^{-1} \circ j}:  \widehat{T} \To \widehat T_H  .$$ Hence 
 $$\omega_* ' (\varphi _H) = \omega_*(\varphi_H) \omega_0.  $$ 
 In particular, 
 \begin{align}\label{eq:function of omega_0}
 \det(\omega_*'(\varphi_H)) = \det(\omega_*(\varphi_H)) \det(\omega_0). 
 \end{align}

\begin{lem}\label{lem:B_H=B_H'}
	We have 
	\begin{equation}\label{eq:det(omega_0)}
		\det(\omega_0) =  \begin{cases}
			1, & A =\emptyset, \\
			(-1)^{m^-}, & A = \set{1},\\
			(-1)^{m^- +1}, & A=\set{2} ,\\
			1, & A=\set{1,2}.
		\end{cases}
	\end{equation}
(Here the formula works in all cases considered in Lemma \ref{lem:first simplification}. For instance, $A= \set{1}$ could only happen in the odd case either when $M = M_{12}$ or when $M = M_2$.)
\end{lem}
\begin{proof}
 From the description of the $B_{G,H}$-positive and $B_G$-positive roots in $\Phi_G$ in the proof of Lemma \ref{lem:B_H'}, we see that $\det(\omega_0)$ is equal to the sign of the permutation $\sigma$ in that proof. Thus (\ref{eq:det(omega_0)}) follows from direct calculation of this sign. 
\end{proof}

\begin{prop}\label{pro:K}
	We have 
	\begin{multline*}
K (\mathfrak e_{\fkp}(M), \gamma', A,B) =(-1)^{q(G_{\RR})} \det (\omega_{0})    \epsilon_{R}(j_M(\gamma'^{-1})) \epsilon_{R_H} (\gamma'^{-1}) \\  \cdot \Delta_{j_M, B_M} ^{A,B}(\gamma', j_M(\gamma')) \Phi^G_M (j_M(\gamma') ^{-1} , \Theta_{\mathbb V^*} ^H),
	\end{multline*}
	where $\det(\omega_{0})$ is given in (\ref{eq:det(omega_0)}).  
\end{prop}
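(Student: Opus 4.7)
The plan is to directly combine the definition of $K(\mathfrak e,\gamma',A)$ with Proposition \ref{prop:morel transfer} (Morel's archimedean transfer identity) and the comparison formula (\ref{eq:function of omega_0}) between the two ``$\omega_*$'' constructions coming from the two Borel subgroups $B_{G,H}$ and $B_G$. No substantial new computation is needed — the content of Proposition \ref{pro:K} is essentially bookkeeping that tracks how the sign $\det(\omega_{0,*})$ arises when one switches the reference Borel from $B_{G,H}$ (used to define $f^H_\infty$ and hence $\omega_*(\varphi_H)$) to $B_G$ (which is the Borel that is naturally compatible with $j_M$ and $B_M$ via $i_G(A), i_H(A)$ and hence governs the transfer identity).

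First I would recall that, by the very definition,
\[
K(\mathfrak e,\gamma',A) \;=\; (-1)^{q(G_\RR)}\sum_{\varphi_H\in\Phi_H(\varphi_{\mathbb V^*})}\det(\omega_*(\varphi_H))\,\Phi^H_{M'}(\gamma'^{-1},\Theta_{\varphi_H}),
\]
where $\omega_*(\varphi_H)\in\Omega_*$ is the element associated to $(j,B_{G,H},B_H)$ as in \S\ref{subsubsec:defn of f_infty}. On the other hand, Proposition \ref{prop:morel transfer} provides the identity
\[
\sum_{\varphi_H}\det(\omega'_*(\varphi_H))\,\Phi^H_{M'}(\gamma'^{-1},\Theta_{\varphi_H}) \;=\; \epsilon_R(j_M(\gamma'^{-1}))\,\epsilon_{R_H}(\gamma'^{-1})\,\Delta^A_{j_M,B_M}(\gamma',j_M(\gamma'))\,\Phi^G_M(j_M(\gamma')^{-1},\Theta^H_{\varphi_{\mathbb V^*}}),
\]
where the element $\omega'_*(\varphi_H)$ is instead the one associated to $(j,B_G,B'_H)$. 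So the proof reduces to comparing $\det(\omega_*(\varphi_H))$ with $\det(\omega'_*(\varphi_H))$.

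Next I would invoke Lemma \ref{lem:B_H=B_H'}, which gives $B_H'=B_H$, so the only discrepancy between the two alignment conditions lies in $B_{G,H}$ vs.\ $B_G$. Writing $\omega_0\in\Omega(G_\CC,T_{G,\CC})$ for the element relating these two Borels and decomposing $\omega_0=\omega_{0,H}\omega_{0,*}$ along the bijection $\Omega_H\times\Omega_*\xrightarrow{\sim}\Omega$ given by $(j,B_{G,H})$, one has
\[
\omega'_*(\varphi_H)=\omega_0^{-1}\,\omega_*(\varphi_H)\,\omega_{0,H},
\]
so passing to determinants yields $\det(\omega'_*(\varphi_H))=\det(\omega_*(\varphi_H))\det(\omega_{0,*})^{-1}$, i.e.\ equation (\ref{eq:function of omega_0}). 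Moreover, because $B_H'=B_H$ forces $\omega_{0,H}=1$ and hence $\omega_0=\omega_{0,*}$, the sign $\det(\omega_{0,*})$ depends only on $A$ and not on $\varphi_H$; its explicit value is the one computed in (\ref{eq:det(omega_0)}) from the definition of $i_G(A)$.

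Finally, substituting $\det(\omega_*(\varphi_H))=\det(\omega'_*(\varphi_H))\det(\omega_{0,*})$ into the definition of $K$, pulling the constant $\det(\omega_{0,*})$ out of the sum, and applying Morel's identity (Proposition \ref{prop:morel transfer}) to the remaining sum, gives exactly the claimed formula. The only point requiring any care is to make sure that the sign $(-1)^{q(G_\RR)+q(H_\RR)+q(M_\RR)+q(M'_\RR)}$ absorbed into $\Delta^A_{j_M,B_M}$ in the definition on p.~\pageref{prop:morel transfer} matches the sign conventions used on the $K$ side; this is just a matter of inspecting the definitions and does not produce any extra factor beyond the claimed $(-1)^{q(G_\RR)}\det(\omega_{0,*})$. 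I expect no serious obstacle: the entire argument is formal, and the substantive work — the archimedean transfer identity and the computation of $\omega_{0,*}$ — has already been carried out in Proposition \ref{prop:morel transfer} and in the choice of $i_G(A)$ respectively.
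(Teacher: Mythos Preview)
Your proposal is correct and follows exactly the paper's approach: the paper's proof is the single line ``This is a consequence of Proposition \ref{prop:morel transfer} and (\ref{eq:function of omega_0}),'' which is precisely what you have unpacked. Your additional remarks about Lemma \ref{lem:B_H=B_H'} and the independence of $\det(\omega_{0,*})$ from $\varphi_H$ are accurate and make explicit what the paper leaves implicit.
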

\begin{proof}
	This is a consequence of Proposition \ref{prop:morel transfer} and (\ref{eq:function of omega_0}). 
\end{proof}
\section{Computation of some signs}\label{pf:4} We keep the notation of \S \ref{subsec:comp K}. 
\begin{defn}\label{defn:tasho}
 Let $\tasho(A,B)  \in \CC^{\times}$\index{$\tasho(A,B)$} be the constant such that the normalization $\tasho(A,B) \cdot \Delta_{j_M, B_M}^{A,B} $ of transfer factors 
  between $\fke_{\fkp}(M)$ and $M$ at $\infty$ together with the normalizations $(\Delta^M_{M
 '})^{A,B}_v$ at all finite places (fixed in \S \ref{para:normalizing tf for M}) satisfy the global product formula. Here $\Delta_{j_M, B_M}^{A,B} $ is defined in \S \ref{para:Delta_j_M B_M}. 
\end{defn} 
 
\begin{lem}\label{lem:tasho and epsilon} The normalization $\tasho(A,B) \Delta_{j_M, B_M}^{A,B}$ of transfer factors 
	between $\fke_{\fkp}(M)$ and $M$ at $\infty$ is inherited from $\Delta_{j, B_{G,H}}$ in the sense of Remark \ref{rem:Levi inherit}. Let $\epsilon^{p,\infty} (A,B) $ be as in Definition \ref{defn:epsilon^{p,A}}. We have
$$ \Delta^{A,B}_{j_M, B_M} \cdot  \epsilon^{p,\infty} (A,B) = \Delta^{\emptyset,\emptyset}_{j_M, B_M} \cdot  \tasho(A,B)^{-1} \tasho(\emptyset,\emptyset). $$
\end{lem}
 \begin{proof}
 	The first assertion follows from the fact that $(\Delta^G_H)_v$ for all $v$ satisfy the global product formula (see \S \ref{normalizing the transf factors}), and the fact that inheritance of normalizations respects the global product formula (see Remark \ref{rem:Levi inherit}). To prove the second assertion, by the definition of $\tasho(A,B)$ we must have 
 	$$  \tasho(A,B) \Delta_{j_M, B_M}^{A,B} \prod_{v \neq \infty} (\Delta^M_{M'})^{A,B}_v = \tasho(\emptyset,\emptyset) \Delta_{j_M, B_M}^{\emptyset,\emptyset} \prod_{v \neq \infty} (\Delta^M_{M'})^{\emptyset,\emptyset}_v . $$ But $(\Delta^M_{M'})^{A,B}_p = (\Delta^M_{M'})^{\emptyset,\emptyset}_p$ because they are both the canonical unramified normalization associated to $\mathcal M(\ZZ_p)$. (See \S \ref{para:normalizing tf for M} for $\mathcal M(\ZZ_p)$.) Hence 	$$  \tasho(A,B) \Delta_{j_M, B_M}^{A,B} \prod_{v \neq p,\infty} (\Delta^M_{M'})^{A,B}_v = \tasho(\emptyset,\emptyset) \Delta_{j_M, B_M}^{\emptyset,\emptyset} \prod_{v \neq p,\infty} (\Delta^M_{M'})^{\emptyset,\emptyset}_v . $$ Our assertion follows from comparing the above equality with  Definition \ref{defn:epsilon^{p,A}}.
 \end{proof}
\subsection{}\label{para:BMSO}
As usual we denote by $W,  W^{\pm}$ the underlying quadratic spaces for $M^{\SO}, M^{\prime, \SO, \pm}$, i.e., $M^{\SO} = \SO(W), M^{\prime,\SO,\pm} = \SO(W^{\pm})$. Denote by $M^{\SO, *}$ the fixed quasi-split inner form of $M^{\SO}$ as in \S \ref{subsubsec:choices for W}. Namely we have $M^{\SO,*} = \SO(\underline W)$, and as in \S \ref{subsubsec:choices for W} we have fixed isomorphisms $\phi_{W_{\RR}} : W_{\CC} \isom \underline W_{\CC}$ (with respect to $F=\RR$ and satisfying the extra condition in Definition \ref{defn:fixing isom between quad spaces}) and $\psi_{W_{\RR}}: M^{\SO}_{\CC} \isom M^{\SO, *}_{\CC}, g\mapsto \phi_{W_{\RR}} g \phi_{W_{\RR}}^{-1}$.

By the two conditions noted in \S \ref{para:dot E for M}, we know that the localization over $\RR$ of the endoscopic datum  $\fke_{\fkp}(W)$ for $M^{\SO}_{\RR} = \SO(W_{\RR})$ satisfies the hypotheses in \S \ref{sec:transfer factors} (with $V, \underline V, V^{\pm}$ there replaced by $W_{\RR}, \underline W_{\RR}, W^{\pm}_{\RR}$.) In other words, this is an elliptic endoscopic datum over $\RR$, and the group in it contains $\RR$-anisotropic maximal tori. Define $\ED(W_{\RR})^o, \ED(\underline W_{\RR})^o, \ED(W^\pm_{\RR})^o$ as in \S \ref{subsubsec:defn of EDo} and \S \ref{subsubsec:setting for para tori}. Inside $\ED(W_{\RR})^o$ we have the subset $\ED(W_{\RR})^o_{\mathrm{nice}}$ as in Definitions \ref{defn:EDNice} and \ref{defn:EDNice, div by 4}. 	Let $B_{M^{\SO}}$ be the Borel subgroup of $M^{\SO}_{\CC}$ given by $B_M \cap M^{\SO}_{\CC}$, and let $j_{M^{\SO}}: T_{M^{\prime,\SO,+}} \times T_{M^{\prime,\SO, -}} \isom T_{M^{\SO}}$ be as in \S  \ref{para:B_0}. Thus  $(T_{M^{\SO}}, B_{M^{\SO}})$ is a fundamental pair in $M^{\SO}_{\RR} = \SO(W_{\RR})$.
\begin{lem}\label{lem:jBforM}
 There exist $\mathcal D_1 \in \ED(W_{\RR})^o_{\mathrm{nice}}$ and $\mathcal D_2 = (\mathcal D_2^+, \mathcal D_2^-) \in \ED(W^+_{\RR})^o \times \ED(W^-_{\RR})^o$ such that the fundamental pair $(T_{M^{\SO}}, B_{M^{\SO}})$ arises from $\mathcal D_1$ as in \S  \ref{subsubsec:constrn with ED}, and $j_{M^{\SO}} = j_{\mathcal D_2, \mathcal D_1}$ where $j_{\mathcal D_2, \mathcal D_1}$ is as in \S \ref{subsubsec:setting for para tori}.
\end{lem}
\begin{proof}Firstly, since the signature of $W_{\RR}$ is $(d-4,0)$ or $(d-3,1)$, we have $\ED(W_{\RR})^o = \ED(W_{\RR})^o_{\mathrm{nice}}$. Since all anisotropic maximal tori in $M^{\SO}_{\RR}$ are conjugate under $M^{\SO}(\RR)$, we can find $\mathcal D_1 \in \ED(W_{\RR})^o$ such that $T_{M^{\SO}} = T_{\mathcal D_1}$ (notation as in \S \ref{subsubsec:constrn with ED}). By reordering the members of $\mathcal D_1$, and in the odd (resp.~even) case changing the orientations of an arbitrary (resp.~even) number of the members of $\mathcal D_1$, we may and shall assume that the fundamental pair $(T_{M^{\SO}}, B_{M^{\SO}})$ arises from $\mathcal D_1$. Let $m'$ be the absolute rank of $M^{\SO}$. Using Lemma \ref{lem:motivation for o_V} and the same argument as in the proof of Lemma \ref{lem:j is adm}, we see that there exist $g\in M^{\SO}(\RR)$ and $\mathcal D_0 \in \ED(\underline W)^o$ such that $\Int(g)
	\circ f_{\mathcal D_1} = \psi_W^{-1} \circ f_{\mathcal D_0}$. (Here $f_{\mathcal D_0}$ and $f_{\mathcal D_1}$ are as in \S \ref{subsubsec:constrn with ED}.) Then by Lemma \ref{e.d. dual}, the isomorphism $$ (\tau_1,\cdots, \tau_{m'}) :  \GG_{m,\CC}^{m'} \isom T_{M^{\SO}, \CC} $$ (see \S \ref{para:B_0} for the $\tau_i$'s) is equal to the base change to $\CC$ of 
	$ f_{\mathcal D_1} : \Uni(1)^{m'} \isom T_{M^{\SO}}, $ where we identify $\Uni(1)_{\CC}$ with $\GG_{m,\CC}$.\ 
	
	To simplify notation below we write $T^{\pm}$ for $T_{M^{\prime, \SO, \pm}}$. 
	Since $j_{M^{\SO}}$ is admissible, by condition (2) in \S \ref{para:B_0} we know that the isomorphisms
	\begin{align}\label{eq:param1}
	(j_{M^{\SO}}^{-1} \circ \tau_{n^- + 1},\cdots, j_{M^{\SO}}^{-1} \circ \tau_{m'}): \GG_{m,\CC}^{m'-n^-} \isom T^+_{\CC} 
	\end{align}
		and 
		\begin{align}\label{eq:param2}
		(j_{M^{\SO}}^{-1} \circ \tau_1,\cdots, j_{M^{\SO}}^{-1} \circ \tau_{n^-}) : \GG_{m,\CC}^{n^-} \isom T^-_{\CC}
		\end{align}
	are induced by the isomorphisms $$ \mathfrak d _{B^{\pm}, \mathcal B^\pm } : \widehat T^{\pm} \isom \mathcal T^\pm \cong (\CC^{\times})^{m'- n^-} \text{ or }  (\CC^{\times})^{n^-} $$ associated to some Borel subgroups $B^{\pm}$ of $M^{\prime, \SO, \pm}_{\CC}$ containing $T^{\pm}_{\CC}$. Here $(\mathcal T^\pm, \mathcal B^\pm)$ are the standard Borel pairs in the dual groups of $M^{\prime,\SO,\pm}$, and the notation $\mathfrak d_{\cdot, \cdot}$ is as in \S \ref{subsec:admissible isom}. By the same argument as before, we can find $\mathcal D_2 = (\mathcal D_2^+, \mathcal D_2^-) \in \ED(W^+_{\RR})^o \times \ED(W^-_{\RR})^o$ such that $\mathcal D_2^{\pm}$ gives rise to the fundamental pair $(T^{\pm}, B^{\pm})$.  By Lemma \ref{e.d. dual}, the isomorphisms (\ref{eq:param1}) and (\ref{eq:param2}) are equal to $f_{\mathcal D_2^+, \CC}$ and $f_{\mathcal D_2^-, \CC}$ respectively. Combining this with the previously established fact that $(\tau_1,\cdots, \tau_{m'}) = f_{\mathcal D_1, \CC}$, we conclude that $j_{M^{\SO}} = j_{\mathcal D_2, \mathcal D_1}$. 
\end{proof}
\begin{prop}\label{prop:computing tasho}
	For  $(A,B)$ taking values as in Lemma \ref{lem:first simplification}, we have 
	\begin{align*}
\tasho(\emptyset,\emptyset)  & = -1 ,  \\ \tasho(A,B)^{-1} \tasho(\emptyset,\emptyset) &  = \begin{cases}
1,& \text{if } (A,B) = (\emptyset,\emptyset) , \\
-1 ,  & \text{if } A = \set{1,2} \text{ or } B = \set{1}, \\
(-1)^{m^- +1 }, & \text{ in all other cases}. 
\end{cases}
	\end{align*}
	\end{prop} 

\begin{proof}
	 In this proof we pass to the local notation over $\RR$. For instance, we write $M$ for $M_{\RR}$.  We use the phrase ``Whittaker normalization'' when we mean the Whittaker-normalized transfer factors between $H$ and $G^*$ or between $H$ and $G$, associated to the unique (resp.~the type-I) equivalence of Whittaker data for $G^*$ when $d$ is not divisible by $4$ (resp.~$d$ is divisible by $4$); see \S \ref{subsubsec:setting for transf factor}, Definition \ref{defn:Whitt normalization for G}, Definition \ref{defn:two Whitt norm}, and  Definition \ref{defn:Whitt norm for G div by 4}. We shall also apply this notion to the transfer factors between $M^{\prime,\SO}$ and $M^{\SO, *}$, and between $M^{\prime, \SO}$ and $M^{\SO}$.  By extending trivially across $M^{\GL}$, we also obtain the ``Whittaker normalization'' of transfer factors between $M' = M^{\GL} \times M^{\prime,\SO}$ and $M^* = M^{\GL} \times M^{\SO, *}$, and between $M'$ and $M  = M^{\GL} \times M^{\SO}$. As in \S \ref{subsubsec:choices for W}, we view $M^*$ as a Levi subgroup of $G^*$ via (\ref{eq:Levi resulting from phi_W^V}).

		We claim that the Whittaker normalization between $M'$ and $M$ is inherited from the Whittaker normalization between $H$ and $G$ as in Remark \ref{rem:Levi inherit}.

		To prove the claim, first assume $d$ is odd. Then $G^*$ has a unique Whittaker datum (up to equivalence) and a unique $\RR$-splitting (up to $G^*(\RR)$-conjugacy). The same also holds for $M^*$. Thus the unique Langlands--Shelstad normalization of transfer factors between $M'$ and $M^*$ is inherited from the unique Langlands--Shelstad normalization between $H$ and $G^*$. (Indeed, one can see this by going through the definitions in \cite{LS87};  alternatively, one can see this by using Waldspurger's explicit formula  \cite[\S 1.10]{walds10} while noting that the constant $\eta$ in \cite[\S 1.6]{walds10} attached to the unique splitting of $G^* = \SO(\underline V)$ is equal to the discriminant $\delta$, and hence equal to the analogous constant for $M^{\SO, *} = \SO(\underline W)$.) Moreover, the local epsilon factor \index[n]{local epsilon factor} relating the Whittaker normalization and the Langlands--Shelstad normalization (cf.~(\ref{eq:Walds 1})) is $1$ in both the $(H,G^*)$-scenario and the $(M',M^*)$-scenario. This implies that the Whittaker normalization between $M'$ and $M^*$ is inherited from the Whittaker normalization between $H$ and $G^*$. Our claim then follows from the three compatibility conditions in \S \ref{subsubsec:choices for W}.

		Second, assume $d$ is even and not divisible by $4$. Then by assumption $M = M_1$ or $M_{12}$, and so  $M^{\SO} = \SO(W)$ with $\dim W = d -4$ again not divisible by $4$. Hence we still have uniqueness of Whittaker datum and uniqueness of $\RR$-splitting for $G^*$ and $M^*$. As in the odd case, the unique Langlands--Shelstad normalization  between $M'$ and $M^*$ is inherited from the analogous normalization between $H$ and $G^*$. (Again, one can see this by using Waldspurger's explicit formula, noting that this time the constant $\eta$ is equal to $1$ for both $G^* = \SO(\underline V)$ and $M^{\SO,*} = \SO(\underline W)$.) As in the odd case, the epsilon factor is still $1$ in both the $(H,G^*)$-scenario and the  $(M',M^*)$-scenario (since a maximal $\RR$-split torus in each of $G^*, H, M^*, M'$ is of the form a  direct sum of a split torus and one copy of  $\Uni(1)$). Our claim follows, as in the odd case. 
		
		Finally, assume $d$ is divisible by $4$. As in the previous case we have $\dim W = d-4$, and this is divisible by $4$. Using Waldspurger's explicit formula \cite[\S 1.10]{walds10}, we observe that the normalization between $M' = M^{\GL} \times M^{\prime, \SO}$ and $M^* = M^{\GL} \times M^{\SO, *}$   induced by the Langlands--Shelstad normalization between $M^{\prime, \SO}$ and $M^{\SO, *}$  associated to some $\spl_M \in \Pinn(M^{\SO, *})$ is inherited from the Langlands--Shelstad normalization between $H$ and $G^*$ associated to some $\spl \in \Pinn(G^*)$ provided that $\eta_{\underline W} (\spl_M) = \eta_{\underline V} (\spl)$. Here $\eta_{\underline V}(\cdot) : \Pinn(G^*) \to \set{\pm 1}$ and $\eta_{\underline W}(\cdot) : \Pinn(M^{\SO, *}) \to \set{\pm 1}$ are as in \S \ref{para:preparation for Walds}. We now take $\spl_M$ and $\spl$ such that $\eta_{\underline W}(\spl_M) = \eta_{\underline V} (\spl) = -1$. By the above observation and by Theorem \ref{thm:comparing Waldspurger}, we see that the  Whittaker normalization 
		between $M'$ and $M^*$ 
		is inherited from the Whittaker normalization between $H$ and $G^*$ times the following constant. The constant is the ratio between the  two local epsilon factors appearing in (\ref{eq:Walds 1}) and the analogue of (\ref{eq:Walds 1}) for $(M', M)$. By (\ref{eq:Walds 2}) and a similar computation for $(M',M)$, we see that the two epsilon factors are  equal to 
		$ (-1)^{m^-}$ and $ (-1)^{n^-}$ respectively, where $m^-$ is the absolute rank of $H^-$ and $n^-$ is the absolute rank of $M^{\prime, \SO, -}$. Since we are in the even case, we have $m^- \equiv n^- \mod 2$. Thus   the  Whittaker normalization 
		between $M'$ and $M^*$ 
		is inherited from the Whittaker normalization between $H$ and $G^*$, and our claim follows as in the previous two cases.  
		
		It follows from the above claim and Lemma \ref{lem:tasho and epsilon} that $\tasho(A,B)$ is the product of the following three signs: 
		\begin{enumerate}
			\item the sign between $\Delta_{j_M, B_M} ^{A,B}$ and $\Delta_{j_M, B_M}$, namely $(-1) ^{q(G) + q(H) + q(M) + q(M')}$;
			\item the sign between $\Delta_{j_M, B_M}$ and the Whittaker normalization between $M'$ and $M$, which is also equal to the sign between $\Delta_{j_{M^{\SO}}, B_{M^{\SO}}}$ and the Whittaker normalization between $M^{\prime, \SO}$ and $M^{\SO}$;
			\item the sign between $\Delta _{j, B_{G,H}}$ and the Whittaker normalization between $H$ and $G$. 
		\end{enumerate}
		Denote by $m^{\pm}$ (resp.~$n^{\pm}$) the absolute ranks of $H^{\pm}$ (resp.~ $M^{\prime,\SO, \pm}$). Denote by $m$ the absolute rank of $G$. We divide our computation into cases. 
		
\textbf{The odd case with $M= M_{12}$:}
	In this case $B$ is always $\emptyset$, and $A$ is any subset of $\set{1,2}$. 
		We have  
		\begin{align*}
q(G) & = \frac{(2m-1) 2}{2} = 2m-1, &  q(H) &  = \frac{m^+ (m^+ +1) + m^- (m^- +1)}{2}, \\ q(M) & =0 , &  q(M')&=  \frac{n^+ (n^+ +1) + n^- (n^- +1)}{2}.
		\end{align*}
		When $A = \emptyset$, we have $m^+ = n^+$ and $m^- = n^- +2$. Then  
		\begin{align*}
q(G) + q(H)&  + q(M) +q(M') \\  & = 2m -1 + \frac{2m^+ (m^+ +1) + m^-(m^- +1) + (m^- -2) (m^- -1)}{2}  \\ & \equiv 1 + m^+ (m^+ + 1)  + \frac{2(m^-) ^2 -2 m^- +2}{2} \\ & \equiv 1 + m^+ (m^+ +1) + m^- (m^- - 1)  +1  \\ & \equiv 0 \mod 2. 
		\end{align*}
		When $A = \set{1,2}$, we have $m^+ = n^+ +2$ and $ m^- = n^-$. Observing symmetry we again get 
		$$ q(G) + q(H) + q(M) + q(M') \equiv 0 \mod 2. $$
		
		Now assume $A = \set{1}$ or $\set{2}$. Then $m^+ = n^+ +1, m^- = n^- +1$. We have 
		\begin{align*}
q(G) & + q(H)  +q (M) + q(M') \\ & = 2m -1 + \frac{m^+ (m^+ +1) + m^+ (m^+ -1) + m^- (m^- +1) + m^- (m^- - 1)}{2}  \\  & 
 \equiv 1 + \frac{2(m^+) ^2 + 2(m^-) ^{2}}{2 } \\ & \equiv 1 + m^+ + m^-  \\ 
 & \equiv m +1 \mod 2.\end{align*}
We conclude that 
$$ (-1)^{q(G) +q(H) +q(M) +q(M')} = \begin{cases}
1, ~ &A = \set{1,2} \mbox{ or } \emptyset , \\
(-1) ^{m+1}, ~& A = \set{1} \mbox{ or } \set{2}.
\end{cases}$$

The sign between $\Delta_{j_{M^{\SO}}, B_{M^{\SO}}}$ and the Whittaker normalization is 
$ (-1) ^{\ceil{n^+ /2}}$ by Lemma \ref{lem:jBforM} and the $q=0$ case of Proposition \ref{answer for H and G} (1). (We have already noted in \S\ref{para:BMSO} that the results in \S \ref{sec:transfer factors} indeed applies to $M^{\SO}$ together with its endoscopic group $M^{\prime,\SO}$.)  The sign between $\Delta_{j, B_{G,H}}$ and the Whittaker normalization is  
		$ (-1) ^{\ceil {m^+ /2} + 1}$ by the $q=2$ case of Proposition \ref{answer for H and G} (1); here the hypothesis $m^+>0$ (i.e., $H^+$ is non-trivial) is guaranteed in \S \ref{para:prepare for main}, and the hypothesis that $(j, B_{G,H})$ arises from an element of $\EDNice$ and an element of $\ED(V^+)^o \times \ED(V^-)^o$ is guaranteed in \S \ref{once and for all}.  Thus we have \begin{align*}
\tasho(\emptyset,\emptyset) & = (-1) ^{ \ceil{n^+ /2} + \ceil {m^+ /2} + 1} = (-1)^{ \ceil{m^+ /2} + \ceil {m^+ /2} + 1} = - 1 ,  \\ \tasho(\set{1,2},\emptyset) &  = (-1) ^{ \ceil{n^+ /2} + \ceil {m^+ /2} + 1} = (-1)^{ \ceil{(m^+-2) /2} + \ceil {m^+ /2} + 1} = 1 ,   \\ \tasho(\set{1},\emptyset) = \tasho(\set{2},\emptyset) &  =  (-1) ^{ m+1 + \ceil{n^+ /2} + \ceil {m^+ /2} + 1 }  \\ &  =(-1) ^{m+1 +  \ceil{(m^+ -1 ) /2} + \ceil {m^+ /2} + 1 } = (-1) ^{m+ m^+} = (-1) ^{m^-} .
		 \end{align*}
		This finishes the proof in this case.

	\textbf{The even case with $M= M_{12}$:} In this case $B$ is always $\emptyset$, and $A$ is either $\emptyset$ or $\set{1,2}$. 
Note that $q(G), q(H), q(M), q(M')$ are all even. This is because each of $G, H, M, M'$ is a product of a split torus and one or two cuspidal even special orthogonal group(s), namely some $\SO(a,b)$ with $a,b$ even, for which we have  $q(\SO(a,b)) = ab/2 \equiv 0 \mod 2$. It follows that the sign in part (1) is $1$. 
	
	The sign between $\Delta_{j_{M^{\SO}}, B_{M^{\SO}}}$ and the Whittaker normalization is $ (-1) ^{\floor{n^- /2}}$ by Lemma \ref{lem:jBforM} and the $q=0$ case of Proposition \ref{answer for H and G} (2) and Proposition \ref{answer div by 4}.

		Assume it is not the case that $m$ is odd and $m^+ =1$. Then $\mathcal D^H$ which was used to define $(j, B_{G,H})$ in \S \ref{once and for all} lies in $\EDNice$. We have $m^+ \geq 2$ since $m^+ >0 $ (see \S \ref{para:prepare for main}). Applying the $(q=2, m^+ \geq 2)$ case of Proposition \ref{answer for H and G} (2) and Proposition \ref{answer div by 4}, we see that the sign between $\Delta_{j, B_{G,H}}$ and the Whittaker normalization is 
		$ (-1) ^{\floor {m^- /2}}. $ 
		
		Now assume $m$ is odd and $m^+ =1$. In this case $\mathcal D^H$ used to define $(j, B_{G,H})$ differs from an element of $\EDNice$ by the transposition $ (m-1, m) \in \mathfrak S_m$. Let $B_{G,H}'$ be the image of $B_{G,H}$ under $(m-1, m)$, viewed as an element of the complex Weyl group. An argument similar to the proof of the second statement of Lemma \ref{answer for H and G^* even div by 4} shows that 
		$$ \Delta_{j, B_{G,H}} = \lprod{a_{(m-1, m)} ,s} \Delta_{j, B_{G,H} '}   = - \Delta_{j, B_{G, H} ' }.$$ 
		Hence the sign between $\Delta_{j, B_{G,H}}$ and the Whittaker normalization is $-1$ times the sign $(-1) ^{\floor {m^- /2} +1 }$ in the $(q=2, m^+ =1)$ case of  Proposition \ref{answer for H and G} (2). Namely, it is again 
		$ (-1) ^{\floor {m^- /2 }}. $
	
	We conclude that  $\tasho(A,B) = (-1) ^{\floor {n^- /2} + \floor {m^- /2} }.$ Specifically, 
		\begin{align*}
\tasho(\emptyset,\emptyset) & = (-1) ^{\floor {(m^- -2) /2} + \floor {m^- /2}} =-1 ,  &  \tasho(\set{1,2},\emptyset) & = (-1) ^{ \floor{m^- /2} + \floor {m^- /2}  } = 1.
		\end{align*}
		This finishes the proof in this case.
		
\textbf{The odd case with $M= M_1$:} In this case $A$ is always $\emptyset$, and $B$ is any subset of $\set{1}$. We have
\begin{align*}
q(G) & = \frac{(2m-1) 2}{2} = 2m-1, \\  q(H) &  = \frac{m^+ (m^+ +1) + m^- (m^- +1)}{2}, \\ q(M')  & =  q(M^{
	\prime,\SO}) + q (M ^{\GL}) = \frac{n^+ (n^+ +1) + n^- (n^- +1)}{2} +1 , \\ q(M) & =  q(\GL_2) = 1 . 
\end{align*} 
When $B = \emptyset$, we have $m^+ = n^+, m^- = n^- +2$, and so \begin{align*}
q(G) + q(H) & + q(M) +q(M') \\ &  = 2m +1  + \frac{2m^+ (m^+ +1) + m^-(m^- +1) + (m^- -2) (m^- -1)}{2} \\ & \equiv  m^+ (m^+ + 1)  + \frac{2(m^-) ^2 -2 m^- +2}{2} +1  \\ & \equiv m^+ (m^+ +1) + m^- (m^- - 1) \\ &     \equiv 0 \mod 2.
\end{align*} When $B = \set{1}$, we have $m^+ = n^+ +2 , m^- = n^-$, and observing symmetry we again get 	$$ q(G) + q(H) + q(M) + q(M') \equiv 0 \mod 2. $$ Hence the sign in part (1) is $1$. 

The sign between $\Delta_{j_{M^{\SO}}, B_{M^{\SO}}}$ and the Whittaker normalization is $ (-1) ^{\ceil{n^+ /2}} $ by Lemma \ref{lem:jBforM} and the $q=0$ case of Proposition \ref{answer for H and G} (1). The sign between $\Delta_{j, B_{G,H}}$ and the Whittaker normalization is $ (-1) ^{\ceil {m^+ /2} + 1} $ by the $q=2$ case of Proposition \ref{answer for H and G} (1). Thus $\tasho(A,B) = (-1) ^{ \ceil{n^+ /2} + \ceil {m^+ /2} + 1 }$, and specifically
\begin{align*}
\tasho(\emptyset,\emptyset) &= (-1)^{ \ceil{m^+ /2} + \ceil {m^+ /2} +1 } =  - 1 ,   &  \tasho(\emptyset,\set{1}) &= (-1)^{ \ceil{(m^+-2) /2} + \ceil {m^+ /2} + 1} =   1.
\end{align*}
This finishes the proof in this case.

\textbf{The even case with $M=M_1$:} As in the previous case, $A$ is always $\emptyset$, and $B$ is any subset of $\set{1}$. 
	Now $q(G), q(H)$ are even, and $q(M), q(M')$ are odd. Hence the sign in part (1) is $1$. 
	Similarly as in the even case with $M= M_{12}$ treated before, the sign between $\Delta_{j_{M^{\SO}}, B_{M^{\SO}}}$ and the Whittaker normalization is 
	$ (-1) ^{\floor{n^- /2}} , $
	and the sign between $\Delta_{j, B_{G,H}}$ and the Whittaker normalization is 
	$ (-1) ^{\floor {m^- /2}}. $ 	Thus $\tasho(A,B) = (-1) ^{\floor {n^- /2} + \floor {m^- /2} }$, and specifically 
	\begin{align*}
\tasho(\emptyset,\emptyset)  & = (-1) ^{\floor {(m^- -2) /2} + \floor {m^- /2}} =-1 ,  & \tasho(\emptyset, \set{1}) & = (-1) ^{ \floor{m^- /2} + \floor {m^- /2}  } = 1.
	\end{align*}
	This finishes the proof in this case.
	
	\textbf{The odd case with $M= M_2$:}
	In this case $B$ is always $\emptyset$, and $A$ is any subset of $\set{1}$. We have 
		\begin{align*}
q(G) & = \frac{(2m-1) 2}{2} = 2m-1,\\ q(H) & = \frac{m^+ (m^+ +1) + m^- (m^- +1)}{2}, \\ q(M) &  =\frac{d-3}{2} = m-1 , \\ q(M') & =  \frac{n^+ (n^+ +1) + n^- (n^- +1)}{2}. 
		\end{align*}  
		When $A = \emptyset$, we have $
		m^+ = n^+ , m^- = n^- +1$, and so 
		\begin{align*}
q(G) + q(H) & + q(M) +q(M')\\ & = 3m-2 + \frac{2m^+ (m^+ +1) + m^-(m^- +1) + (m^- -1) m^- }{2} \\ 
& \equiv m + m^+ (m^+ + 1)  + \frac{2(m^-) ^2 }{2} \\ 
 & \equiv m + m^+ (m^+ +1) + (m^- )^{2}  \\ & \equiv  m+ m^- \\ & \equiv m^+ \mod 2.
		\end{align*}
	When $A = \set{1}$, we have $m^+ = n^+ +1, m^- = n^-$, and a similar computation yields 
		$$q(G) + q(H) + q(M) + q(M') \equiv m^- \mod 2.$$
		We conclude that 
		$$ (-1)^{q(G) +q(H) +q(M) +q(M')} = \begin{cases}
		(-1)^{m^+}, ~ &A =   \emptyset , \\
			(-1) ^{m^-}, ~& A = \set{1}.
		\end{cases}$$
	
	The sign between $\Delta_{j_{M^{\SO}}, B_{M^{\SO}}}$ is $ (-1) ^{\floor{n^+ /2}} $ by Lemma \ref{lem:jBforM} and the $q=1$ case of Proposition \ref{answer for H and G} (1). The sign between $\Delta_{j, B_{G,H}}$ and the Whittaker normalization is 
	$ (-1) ^{\ceil {m^+ /2} + 1}$ by the $q=2$ case of Proposition \ref{answer for H and G} (1). Thus we have 
		\begin{align*}
\tasho(\emptyset,\emptyset) & = (-1) ^{m^+ +  \floor{n^+ /2} + \ceil {m^+ /2} +1 } \\ & = (-1)^{ m^+ + \floor{m^+ /2} + \ceil {m^+ /2}+1 } = (- 1)^{m^+ + m^++1} =-1 , \\ \tasho(\set{1},\emptyset) &  =  (-1)^{m^- + \floor {n^+ /2}+ \ceil{m^+/2}+1} = (-1)^{m^- + \floor {(m^+ -1) /2}+ \ceil{m^+/2}+1 }= (-1) ^{m^-}.
		\end{align*}
		This finishes the proof in this case.
\end{proof}
 
\begin{defn}\label{defn:J} For $(A,B)$ as in Lemma \ref{lem:first simplification}, define the sign \index{$\sun(A,B)$} $$\sun(A,B) : = \begin{cases}
		-	1, & \text{if } 1 \in A  \text{ or } 1\in B,  \\
		1 ,& \text{otherwise}.
	\end{cases}$$ 
	Suppose $g$ is a function that assigns to each choice of $(A,B)$ an element $g(A,B)\in \mathcal H^{\ur} (M'_{\QQ_p})$. Define \index{$J(\fke_{\fkp}(M), \gamma', g)$}
	\begin{multline*} J(\fke_{\fkp}(M), \gamma', g) = 
J(\fke_{\fkp}(M),\gamma' , (A,B)\mapsto g(A,B)) \\  : = \sum_{A,B}   \sun (A,B) SO_{\gamma'} (g(A,B) ) \epsilon_{R}(j_M(\gamma'^{-1})) \epsilon_{R_H} (\gamma'^{-1})  \Phi^G_M (j_M(\gamma') ^{-1} , \Theta_{\mathbb V^*} ^H) ,
	\end{multline*}
	where the sum is over all choices of $(A,B)$. 
\end{defn}
\begin{defn}\label{defn:Q}
With notation as in Definition \ref{defn:Sigma_1} and \S \ref{para:Delta_j_M B_M}, we define
	\begin{multline*}
Q(\fke_{\fkp}(M), \gamma') : = \bar \iota^{M'} (\gamma')^{-1}   ( \Delta^{M}_{M'})^{\emptyset,\emptyset} (\gamma',\gamma_M) O_{\gamma_M}^{s_M'} (f^{p,\infty} _M) \tau(M) k(M)k(G)^{-1}   \\ \cdot (-1) ^{\dim A_{M'}}  \bar v ({M'}_{\gamma'} ^0) ^{-1} (-1)^{q(G_{\RR}) } \Delta_{j_M, B_M} ^{\emptyset,\emptyset} (\gamma', j_M(\gamma')) .
	\end{multline*} Here, we choose $\gamma_M\in M(\adele_f^p)$ as in Definition \ref{defn:Sigma_1},  which does not affect the definition.
\end{defn}
\begin{cor}\label{cor:formula with sun}
	With $J$ as in Definition \ref{defn:J} and $Q$ as in Definition \ref{defn:Q}, we have 
	$$
	I(\fke_{\fkp}(M), \gamma') =    Q(\fke_{\fkp}(M), \gamma') J(\fke_{\fkp}(M), \gamma', (A,B)\mapsto f^H_{p,M'}) .		$$
	Here the mapping  $ (A,B)\mapsto f^H_{p,M'}$ is defined via the dependence of $H$ on $(A,B)$ as in Lemma \ref{lem:first simplification}. 
\end{cor}
\begin{proof}
	By (\ref{eq:det(omega_0)}) and Proposition \ref{prop:computing tasho}, we have $$\sun (A,B) = \tasho(A,B) ^{-1}\tasho(\emptyset,\emptyset)\det(\omega_{0}).  $$ The corollary then follows from the second equality in Lemma \ref{lem:debut of I(e, gamma')}, Proposition \ref{pro:K}, and Lemma \ref{lem:tasho and epsilon}. 
\end{proof}

\section{Symmetry of order $n^G_M$}\label{subsec:symmetry}
\begin{defn}\label{defn:W}
	We define a subgroup \index{$\mathfrak W$} $\W \subset  \Aut (M^{\GL})$ as follows. When $M= M_{12}$, so that $M^{\GL} = \GG_m^2$, we define $\W$ to be $\set{\pm 1} ^{2} \rtimes \mathfrak S_2,$ where each factor $\set{\pm 1}$ acts on each factor $\GG_m$ non-trivially and $\mathfrak S_2$ acts by swapping the two copies of $\GG_m$. When $M = M_1$, so that $M^{\GL} = \GL_2$, we define $\W$ to be $\ZZ/2\ZZ$ with the non-trivial element acting on $\GL_2$ by transpose inverse. When $M = M_2$ in the odd case, so that $M^{\GL } = \GG_m$, we define $\W$ to be equal to $\Aut (M^{\GL}) = \Aut(\GG_m) = \ZZ/2\ZZ$.  When the context is clear we also view $\W$ as a subgroup of $\Aut (M) $ or $\Aut (M')$, by extending its action on $M^{\GL}$ trivially across $M^{\SO}$ or $M ^{\prime, \SO}$. 	
\end{defn}

\begin{lem}\label{lem:comp n^G_M}
The natural homomorphism $\W \to \Aut (A_M)$ is an injection, and its image is equal to the image of $\Nor_G(M)(\QQ)$ in $\Aut (A_M)$. In particular, $\W$ is naturally isomorphic to $\cW^G_M$ and $\abs{\W} = n^G_M$ (see Definition \ref{defn:n^G_M} and Remark \ref{rem:Weyl group of Levi}). 
\end{lem}
\begin{proof}This is straightforward to check. 
\end{proof}

\subsection{}\label{convention on W}
The action of $\W$ on the set of stable conjugacy classes in $M'(\QQ)_{\mathrm{ss}}$
preserves the following conditions: 
\begin{itemize}
	\item being $\RR$-elliptic,
	\item being $(M,M')$-regular,
	\item being an image of a semi-simple element of $M(\adele_f^p)$. 
\end{itemize} (Indeed, the only non-trivial assertion here is that $\W$ preserves being $\RR$-elliptic in the case $M = M_1$, and this follows from the fact that $M_1^{\GL} = \GL_2$ contains the $\RR$-elliptic maximal torus $T_{\GL_2}^{\std}$ which is $\W$-stable.) Moreover, if $M = M_{12}$ or $M_2$, then two different elements of $M'(\QQ)_{\mathrm{ss}}$ in the same $\W$-orbit are never stably conjugate to each other. Therefore in these cases we may and shall assume that the sets $\Sigma (M')$ and $\Sigma(M')_1$ chosen in Definitions \ref{defn:Sigma} and  \ref{defn:Sigma_1} are stable under $\W$. If $M  = M_1$, then two different $\RR$-elliptic elements of $M'(\QQ)$ in the same $\W$-orbit are either not stably conjugate to each other, or such that their components in $M^{\GL}(\QQ) = \GL_2(\QQ)$ both have determinant $1$. (To see this, note that if $g\in \GL_2(\QQ)_{\mathrm{ss}}$ is stably conjugate to its transpose inverse, then $\det g = \pm 1$, and we have $\det g >0$ if $g$ is $\RR$-elliptic.) Therefore in this case we may and shall assume that $\Sigma(M')_ 1$ contains a subset \index{$\Sigma(M')_2$} $\Sigma(M')_2$ such that $\Sigma(M')_2$ is stable under $\W$ and the component in   $M^{\GL}(\QQ)$ of every element of $\Sigma(M')_1 - \Sigma(M')_2$ has determinant $1$. To unify notation, when $M = M_{12}$ or $M_2$, we set $\Sigma(M')_2$ to be $\Sigma(M')_1$.

\begin{lem}\label{lem:symmetry of Q} For $\gamma' \in \Sigma(M')_2$ and $w \in \W$, we have $Q(\mathfrak e_{\fkp}(M), \gamma') = Q(\mathfrak e_{\fkp}(M), w(\gamma'))$. (See Definition \ref{defn:Q} for $Q$). 
\end{lem}
\begin{proof}  
	By (\ref{eq:Levi transfer away from p infty}), we have 
	$$Q(\mathfrak e_{\fkp}(M), \gamma') = C\times  SO_{\gamma'} (f^{H(\emptyset,\emptyset), p,\infty} _{M'}) \Delta_{j_M, B_M} ^{\emptyset,\emptyset} (\gamma', j_M(\gamma')),$$ where $C$ is an expression that is invariant under $\W$, and $H(\emptyset,\emptyset)$ is the particular choice of $H$ arising from $(A,B) = (\emptyset, \emptyset)$. Note that the subgroup $\W \subset \Aut (M') $ is contained \footnote{Note that this would no longer be not true, for instance, if $H(\emptyset,\emptyset)$ is replaced by the choice of $H$ arising from $(A,B) = (\set{1}, \emptyset)$ when $M = M_{12}$.} in the image  of the natural map $\Nor_{H(\emptyset,\emptyset)} (M')(\QQ) \to \Aut (M'). $ 
	
Since $w$ comes from $\Nor_{H(\emptyset,\emptyset)}(M')(\QQ)$, we have  $$SO_{\gamma'} (f^{H(\emptyset,\emptyset), p,\infty} _{M'}) = SO_{w(\gamma')} (f^{H(\emptyset,\emptyset), p,\infty} _{M'})$$ by exactly the same argument (using Kazhdan density and descent) as in the proof of Lemma \ref{lem:elementary}.
	We are left to check $$ \Delta _{j_M, B_M} ^{\emptyset,\emptyset} (\gamma', j_M(\gamma')) = \Delta _{j_M, B_M} ^{\emptyset,\emptyset} (w(\gamma'), j_M(w(\gamma')) ), $$ or equivalently,
	\begin{align*}
\Delta _{j_M, B_M}  (\gamma', j_M(\gamma')) = \Delta _{j_M, B_M} (w(\gamma'), j_M(w(\gamma')) ).
	\end{align*} The last equality holds because both sides depend only on the common component of $\gamma'$ and $w(\gamma')$ in $M^{\prime,\SO}$. More precisely, if we denote this common component by $\gamma^{\prime,\SO}$ , then both sides are equal to $$ \Delta _{j_{M^{\SO}}, B_{M^{\SO}}}  (\gamma^{\prime,\SO}, j_{M^{\SO}}(\gamma^{\prime, \SO})) ,$$ where $j_{M^{\SO}}$ and $B_{M^{\SO}}$ are as in \S \ref{para:B_0} and \S \ref{para:BMSO}. This finishes the proof. 	
\end{proof}
\begin{prop}\label{cor:breaking symmetry} For each $\mathfrak e = (M', s_M', \eta_M) \in \mathscr E (M)$, choose a set $\Sigma(M')_1$ as in Definition \ref{defn:Sigma_1} and \S \ref{convention on W}. In view of Lemma \ref{lem:symmetry of Q}, for each $\gamma' \in \Sigma(M')_1$ we write $Q(\mathfrak e, \W \gamma')$ for $Q(\mathfrak e, \gamma')$.  
 We have 
		\begin{multline*}
	 n^G_M \Tr'_M = \sum_{\mathfrak e_{\fkp}(M) = (M',\lang M', s_M', \eta_M)  \in \dot {\mathscr E } (M) ^{c,\ur}} \abs{\Out _M(\mathfrak e_{\fkp} (M)) } ^{-1}  \\ \cdot \bigg (  \sum_{\gamma'\in \Sigma(M')_2}  Q( \mathfrak e_{\fkp}(M),  \gamma')\abs{\W}^{-1} \sum_{w\in \W}  J(\mathfrak e_{\fkp}(M), w(\gamma'), (A,B)\mapsto f^H_{p,M'}) \\ + \sum_{\gamma'\in \Sigma(M')_1- \Sigma(M')_2} Q( \mathfrak e_{\fkp}(M),  \gamma') J (\mathfrak e_{\fkp}(M), \gamma', (A,B)\mapsto f^H_{p,M'}) \bigg).
		\end{multline*}
\end{prop}
\begin{proof}
	This is a consequence of Lemma \ref{lem:debut of I(e, gamma')}, Corollary \ref{cor:formula with sun}, Lemma \ref{lem:symmetry of Q}. 
\end{proof}

\section{Computation of $J$}\label{subsec:comp of J}
We compute the term $J(\mathfrak e_{\fkp}(M), \gamma', (A,B)\mapsto f^H_{p,M'})$ in Corollary \ref{cor:formula with sun} using results from \S \ref{subsec:p}. We simply write $\fke$ for $\fke_{\fkp}(M)$. Recall   the functions $\epsilon_R(\cdot): T_M(\RR) \to \set{\pm 1}$ and $\epsilon_{R_H}(\cdot) : T_M(\RR) \to \set{\pm 1}$ from \S \ref{para:Delta_j_M B_M}. The former depends only on $\fke_{\fkp}(M)$, while the latter depends on  $\fke_{\fkp}(M)$ and $(A,B)$.  
\begin{lem}\label{lem:epsilon}
	For $M = M_{12}$ in the odd case, we have 
	\begin{align*}
 \epsilon_{R}(j_M(\gamma'^{-1})) & = \epsilon_{R_H} (\gamma'^{-1}) |_{A= \set{1,2}} =\epsilon_{R_H} (\gamma'^{-1}) |_{A= \emptyset}  , \\  \epsilon_{R_H} (\gamma'^{-1})|_{A= \set{1}} & = \epsilon_{R_H} (\gamma'^{-1})|_{A= \set{2}} .
	\end{align*}	
	In all the other cases of $M$, we have 
	$$  \epsilon_{R}(j_M(\gamma'^{-1})) = \epsilon_{R_H} (\gamma'^{-1}).$$
\end{lem}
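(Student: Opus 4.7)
The plan is to prove the lemma by explicit case analysis, identifying the real roots of $(G_{\CC}, T_{M,\CC})$ and $(H_{\CC}, T_{M',\CC})$ and tracking how they correspond under $j_M$ and under the natural embedding of $M'$ as a Levi of $H$.

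First I would observe that the quantity $\epsilon_R(j_M(\gamma'^{-1}))$ depends only on $\gamma'$, $j_M$, the real structure of $(G, T_M)$, and the Borel $B_M = B_0 \cap M$. Since $B_0$ was fixed depending only on $j_M$ via the two conditions imposed just after Definition \ref{defn:i_G(A)}, and since none of $\gamma'$, $j_M$, $T_M$, or the real structure of $G$ depend on the parameter $A$, the identity $\epsilon_R(j_M(\gamma'^{-1}))|_{A=\set{1}} = \epsilon_R(j_M(\gamma'^{-1}))|_{A=\set{2}}$ is tautological.

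Next I would determine the real roots of $(G_{\CC}, T_{M,\CC})$. Since $T_{M^{\SO}}$ is elliptic, complex conjugation acts by inversion on its character lattice, so a root is real iff its restriction to $T_{M^{\SO}}$ is trivial. Combining this with the explicit action of complex conjugation on $T_{M^{\GL}}$ (trivial on the split factors, and swapping $\tau_{0_1}, \tau_{0_2}$ on a factor of the form $T_{\GL_2}^{\std}$), one recovers the enumeration in \S \ref{R groups}: $R = \set{\pm\epsilon_1, \pm\epsilon_2, \pm\epsilon_1 \pm \epsilon_2}$ for $M = M_{12}$ odd, $R = \set{\pm\epsilon_1 \pm \epsilon_2}$ for $M = M_{12}$ even, $R = \set{\pm(\epsilon_1 + \epsilon_2)}$ for $M = M_1$, and $R = \set{\pm\epsilon_1}$ for $M = M_2$ in the odd case.

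I would then analyze $R_H$ analogously. Using Definition \ref{defn:i_G(A)} and Definition \ref{defn:i_H(A)} together with (\ref{eq:property of j}), one traces which of the one-parameter subgroups $\tau_{0_i}$ (or $\tau_0$ in the case $M = M_2$) of $T_{M,\CC}$ correspond, via the admissible embedding $M' \hookrightarrow H$ and via $j_M$, to cocharacters of $T_{H^+}$ versus $T_{H^-}$. Any root of $H = H^+ \times H^-$ is a root of a single factor, so a real root of $(H, T_{M'})$ involving characters dual to the $\tau_{0_i}$'s requires all the $\tau_{0_i}$'s appearing in it to sit in the same $H^\pm$. In every case outside $M = M_{12}$ odd with $\abs{A} = 1$, the coordinates of $T_{M^{\GL}}$ all land in the same component of $H$, so every real root of $G$ lifts to a real root of $H$ and vice versa. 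The compatibility of Borels built into Definition \ref{defn:4 Borels} (together with Lemma \ref{lem:B_H=B_H'}) ensures that the $B_{M'}$-positive real roots of $H$ correspond under $j_M$ to the $B_M$-positive real roots of $G$, and hence $\epsilon_R(j_M(\gamma'^{-1})) = \epsilon_{R_H}(\gamma'^{-1})$.

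Finally, in the remaining case $M = M_{12}$ odd with $A \in \set{\set{1}, \set{2}}$, $\tau_{0_1}$ and $\tau_{0_2}$ sit in different components of $H$, so the real roots $\pm\epsilon_1 \pm \epsilon_2$ drop out, leaving $R_H = \set{\pm\epsilon_1, \pm\epsilon_2}$. This set, together with its $B_{M'}$-positive part $\set{\epsilon_1, \epsilon_2}$, depends only on the multiset $\set{\abs{A}, \abs{A^c}} = \set{1,1}$ and not on whether $A = \set{1}$ or $A = \set{2}$, yielding $\epsilon_{R_H}(\gamma'^{-1})|_{A=\set{1}} = \epsilon_{R_H}(\gamma'^{-1})|_{A=\set{2}}$. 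The entire argument amounts to bookkeeping on root systems and positive systems, so there is no significant obstacle.
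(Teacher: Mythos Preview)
Your proposal is correct and takes essentially the same approach as the paper, which records the proof simply as ``This follows directly from the definition.'' You have spelled out the unpacking of those definitions in detail: identifying the real roots of $(G_{\CC},T_{M,\CC})$ and $(H_{\CC},T_{M',\CC})$ case by case, noting that $B_0$ (and hence the positivity for $\epsilon_R$) is fixed independently of $A$ by the two conditions stated just \emph{before} Definition~\ref{defn:i_G(A)} (not after, a minor misreference on your part), and observing that the compatibility of Borels in Definition~\ref{defn:4 Borels} forces the $B_0$-positive real roots of $G$ that come from $H$ to be exactly the positive real roots of $H$, so that $\epsilon_R = \epsilon_{R_H}$ whenever $R_H = R$. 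The invocation of Lemma~\ref{lem:B_H=B_H'} is not strictly needed for this, since the positivity comparison already follows from the definition $B_G = i_G(A)_* B_0$ and the commutative square in Definition~\ref{defn:i_H(A)}, but it does no harm.
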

\begin{proof}
This follows directly from the definitions.
\end{proof}

\subsection{}\label{para:J notations}
We introduce some notations.
	Write\index{$p^*$} $p^* : = p^{a (d-2)/2}$. 
Write $$f^H_{p,M'} = p^* k (A,B) + p^* h \in \mathcal H^{\ur}(M'_{\QQ_p}),$$ as in Proposition \ref{prop:main computation at p}. When $M= M_{12}$, we further write 
$$k(A,B) = k_1(A) + k_2(A) ,$$
where $k_i(A) \in \mathcal H^{\ur}(M'_{\QQ_p}) $ has Satake transform $\nabla_i(A) (\xi_i^a + \xi_i^{-a})$, as in Proposition \ref{prop:main computation at p}. Thus we have 
\begin{align}\label{eq:two terms}
J(\mathfrak e , \gamma', (A,B)\mapsto f^H_{p,M'}) = p^* J(\mathfrak e , \gamma', k) + p^* J (\mathfrak e , \gamma', h) ,
\end{align}
and when $M= M_{12}$ we further have 
\begin{align}\label{eq:three terms}
J(\mathfrak e , \gamma', ( A,B) \mapsto f^H_{p,M'}) = p^* J(\mathfrak e , \gamma', k_1) + p^* J(\mathfrak e , \gamma', k_2) +p^* J(\mathfrak e , \gamma', h) .
\end{align} 
 Here we use the abbreviated  notation
$  J(\mathfrak e , \gamma', k) :=  J(\mathfrak e , \gamma', (A,B)\mapsto k(A,B)),$ etc. 
In the following computation We write 
\begin{align*}
\Phi ^G_M & := \Phi ^G_M (j_M(\gamma') ^{-1}, \Theta_{\mathbb V^*}), \\  \Phi ^G_{M,\Endos}  & := \Phi ^G_M (j_M(\gamma') ^{-1}, \Theta_{\mathbb V^*})_{\Endos}, \\ \epsilon_R \epsilon_{R_H} & : = \epsilon_R(j_M(\gamma'^{-1})) \epsilon_{R_H} (\gamma'^{-1}).
\end{align*}
\subsection{Odd case $M_{12}$}
\label{para:J odd case M12}With the above notations, it follows from Lemma \ref{lem:epsilon} and the fact that $\epsilon_R$ is independent of $(A,B)$ that we have \begin{align}\label{eq:Jh=0}
J(\mathfrak e,\gamma', h) &  = SO_{\gamma'} (h) \sum_{A,B} \sun(A,B) \epsilon_R\epsilon_{R_H} \Phi^G_M (j_M(\gamma')^{-1}, \Theta_{\mathbb V^*}^H ) \\ \nonumber &  = SO_{\gamma'}(h) \bigg [  \Phi^G_M - \Phi^G_M  - (\epsilon_R \epsilon_{R_H})|_{A=\set{1}} \Phi^G _{M,\Endos}  +  (\epsilon_R \epsilon_{R_H})|_{A=\set{2}} \Phi^G_{M,\Endos}\bigg ]\\ \nonumber &   = 0 . 
\end{align}
Similarly \begin{align} \label{eq:Jk_1}
J(\mathfrak e & ,\gamma', k_1) \\  \nonumber & = SO_{\gamma'}(k_1(\emptyset)) \bigg [  \Phi^G_M + \Phi^G_M  + (\epsilon_R \epsilon_{R_H})|_{A=\set{1}} \Phi^G _{M,\Endos}  +  (\epsilon_R \epsilon_{R_H})|_{A=\set{2}} \Phi^G_{M,\Endos}\bigg ]\\ \nonumber & = 2SO_{\gamma'} (k_1(\emptyset)) \bigg [  \Phi^G_M +  (\epsilon_R \epsilon_{R_H})|_{A=\set{1}} \Phi^G_{M, \Endos} \bigg ],
\end{align}
and 
\begin{align}\label{eq:Jk2}
	 J(\mathfrak e,\gamma', k_2) =   2SO_{\gamma'} (k_2(\emptyset)) \bigg [  \Phi^G_M-  (\epsilon_R \epsilon_{R_H})|_{A=\set{1}} \Phi^G_{M,\Endos} \bigg ] .
\end{align} 
\subsection{Even case $M_{12}$}\label{para:J even M12}
With similar computations as above, we get 
\begin{align*}
J(\mathfrak e, \gamma ' , h) & =0 , \\ J(\mathfrak e, \gamma', k_1) & = 2 SO_{\gamma'} (k_1(\emptyset)) \Phi^G_M, \\  J(\mathfrak e, \gamma', k_2) & = 2 SO_{\gamma'} (k_2(\emptyset)) \Phi^G_M.
\end{align*}
\subsection{Case $M_1$ and odd case $M_2$} Similar computations give 
\begin{align}\label{eq:Jh vanish M1 M2}
J(\mathfrak e, \gamma' , h) & = 0 \\  \label{eq:Jk M1 M2} J(\mathfrak e, \gamma', k) & = 2 SO_{\gamma'} (k(\emptyset,\emptyset)) \Phi^G_M .
\end{align}
\section{Breaking symmetry, case $M_{12}$}\label{pf:break 1}
We keep the notation in Proposition \ref{cor:breaking symmetry} and \S \ref{subsec:comp of J}. 
\begin{defn} Suppose $M = M_{12}$. 
	We say that an element of $ M'(\RR)$ is \emph{good at $\infty$}\index[n]{good at $\infty$} if its component in $M^{\GL}(\RR) = \RR^{\times } \times \RR^{\times}$ lies in $(\RR_{>0}\times \RR_{>0}) \cup (\RR_{<0} \times \RR_{<0})$. We say that an element of $ M'(\QQ_p)$ is \emph{good at $p$}\index[n]{good at $p$} if its component in $M^{\GL}(\QQ_p) = \QQ_p^{\times } \times \QQ_p^{\times}$ has $p$-adic valuations $(-a, 0)$. Here the first $\GG_m$-factor is $\GL(V_1)$ and the second is $\GL(V_2/V_1)$. 
\end{defn}

\begin{prop}\label{cor:symmetry broken M12}
Let $M= M_{12}$. We have 
 \begin{multline*}  n^G_M \Tr'_M = \sum_{\mathfrak e_{\fkp}(M) = (M',\lang M', s_M', \eta_M)  \in \dot {\mathscr E } (M) ^{c,\ur}} \abs{\Out _M(\mathfrak e_{\fkp} (M)) } ^{-1} \\ \cdot    \sum_{\gamma'}  Q(\mathfrak e_{\fkp}(M), \gamma') 4 p^*  J(\mathfrak e_{\fkp}(M), \gamma', k_1),
 \end{multline*}
 where $\gamma'$ runs through the elements of $\Sigma(M')_{1}$ that are good at $\infty$ and good at $p$.
\end{prop}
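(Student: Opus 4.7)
The starting point is Proposition~\ref{cor:breaking symmetry}, which expresses
\begin{align*}
n^G_M \Tr'_M = \sum_{\mathfrak e \in \mathscr E(M)^c} |\Out_M(\mathfrak e)|^{-1} \sum_{\gamma' \in \Sigma(M')_2} Q(\mathfrak e, \gamma') \sum_{w \in \W} J(\mathfrak e, w\gamma', A \mapsto f^H_{p, M'}).
\end{align*}
First I would unfold the double sum over $\Sigma(M')_2$ and $\W$ into a single sum over $\Sigma(M')_1$. By the $\W$-invariance of $Q$ (Lemma~\ref{lem:symmetry of Q}), this rewriting is permissible modulo $\W$-stabilizer corrections, which are trivial for the generic $(M,M')$-regular $\gamma'$ that carry the whole sum. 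Next, from the decomposition $f^H_{p, M'} = p^*(k_1(A) + k_2(A) + h)$ of Proposition~\ref{prop:main computation at p} and the computations of \S~\ref{subsec:comp of J}, one has $J(\mathfrak e, \gamma', h) = 0$ identically, so only the $k_1$ and $k_2$ contributions remain.

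I would then restrict the effective summation range via support considerations. The functions $k_i(\emptyset)$ are supported on disjoint subsets $S_1, S_2 \subset M^{\GL}(\QQ_p)$: $S_1$ consists of pairs with $p$-adic valuations $(\pm a, 0)$ and $S_2$ of $(0, \pm a)$. Hence $SO_{\gamma'}(k_i(\emptyset))$ vanishes unless the $\GL$-component of $\gamma'_p$ lies in $S_i$. In the odd case, both $\Phi^G_M$ and $\Phi^G_{M, \Endos}$ vanish when $\gamma'$ is not good at $\infty$ by Proposition~\ref{Arch comp odd ++}; in the even case the same holds for $\Phi^G_M$ by Proposition~\ref{Arch comp M12 even}. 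So only $\gamma'$ good at $\infty$ with $\gamma'_p \in S_1 \sqcup S_2$ contribute.

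The core of the argument is then an orbit analysis. For such $\gamma'$ with generic unit parts, the $\W$-orbit of $\gamma'$ has $8$ elements, distributing as exactly $2$ elements in each of the four sets $\Sigma_{S_i}^{\pm}$, classified by which $S_i$ contains $\gamma'_p$ and by the sign of its nonzero $p$-adic valuation. Among these, the good elements (val $(-a, 0)$) form $\Sigma_{S_1}^-$ with $2$ per orbit. Using the $\W$-invariance of $\Phi^G_M$ (from Propositions~\ref{effect of switching to Phi_endos}, \ref{effect of inverting to Phi_endos} in the odd case, and directly from the fact that $\W$ lies in the image of $(\Nor_G M)(\QQ)$ in the even case), together with the anti-invariance of $\epsilon_R \epsilon_{R_\Endos} \Phi^G_{M, \Endos}$ under the swap $\sigma \in \W$ and its invariance under the sign involutions, one checks that each of the four $\Sigma_{S_1}$-terms in the orbit contributes $J(\mathfrak e, \gamma_0', k_1)$, and each of the four $\Sigma_{S_2}$-terms also contributes $J(\mathfrak e, \gamma_0', k_1)$ (the minus sign built into $J_{k_2}$'s formula being canceled by the sign flip of $\epsilon_R \epsilon_{R_H} \Phi^G_{M, \Endos}$ under $\sigma$). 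Summing over the orbit yields $8\, J(\mathfrak e, \gamma_0', k_1)$, which matches the right-hand side where each of the $2$ good representatives per orbit is weighted by the factor $4$.

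The main obstacle is the last sign verification in the odd case. Propositions~\ref{effect of switching to Phi_endos} and~\ref{effect of inverting to Phi_endos} govern $\epsilon_R \epsilon_{R_\Endos} \Phi^G_{M, \Endos}$, whereas the quantity appearing in $J_{k_i}$ is $\epsilon_R \epsilon_{R_H} \Phi^G_{M, \Endos}$ via Lemma~\ref{lem:epsilon}. Pinning down the discrepancy $\epsilon_{R_H} \epsilon_{R_\Endos}^{-1}$ and how it transforms under $\W$ requires a direct root-theoretic check, comparing the $B_{M'}$-positive roots of $H$ at $T_{M'}$ (which enter $\epsilon_{R_H}$) with the real root system $R_\Endos = \{\pm \epsilon_1, \pm \epsilon_2\}$ of $G$ at $T_M$. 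Once this is done, the arithmetic $4 \cdot 2 = 8$ works out on the nose and gives the claimed identity, with non-generic stabilizer issues absorbed into a routine orbit-counting bookkeeping using the $(M,M')$-regularity defining $\Sigma(M')_1$.
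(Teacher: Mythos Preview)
Your approach is essentially the same as the paper's: start from Proposition~\ref{cor:breaking symmetry}, drop the $h$-contribution, restrict to good-at-$\infty$ via the archimedean vanishing, and perform a $\W$-orbit analysis using the invariance relations (\ref{inv:J w12}) and (\ref{inv: J w1}) together with the support of $k_i(\emptyset)$. Your orbit bookkeeping ($8 = 4 \times 2$, with two good-at-$p$ representatives per orbit) is in fact more careful than the paper's terse ``at most one'' count, and yields the same identity.

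The ``main obstacle'' you flag dissolves immediately: for $A = \{1\}$ in the odd case the first $\GG_m$ lands in $H^+$ and the second in $H^-$, so the real roots of $H$ at $T_{M'}$ are exactly $\{\pm\epsilon_1, \pm\epsilon_2\} = R_{\Endos}$ (the $\SO$-factors contribute only imaginary roots), whence $\epsilon_{R_H}\big|_{A=\{1\}} = \epsilon_{R_{\Endos}}$ and Propositions~\ref{effect of switching to Phi_endos}--\ref{effect of inverting to Phi_endos} apply directly. The remaining $w_2$-invariance you need follows formally from $w_2 = w_{12} w_1 w_{12}$ together with the $w_1$-invariance and $w_{12}$-anti-invariance of $\epsilon_R\epsilon_{R_{\Endos}}\Phi^G_{M,\Endos}$.
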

\begin{proof}We start with the formula for $n^G_M \Tr'_M$ in Proposition \ref{cor:breaking symmetry}, and recall that in that formula $\Sigma(M')_1 = \Sigma(M')_2$ for our $M = M_{12}$.
Fix  $\fke = \fke_{\fkp}(M) = (M', \lang M', s_M', \eta_M) \in \dot {\mathscr E}(M)^{c,\ur}$. 

We first treat the odd case. Let $w_1: = (-1, 1) \in \set{\pm 1}^2 \subset \W = \set{\pm 1} ^2 \rtimes \mathfrak S_2$, and let $w_{12}$ be the non-trivial element of $\mathfrak S_2 \subset \W$. For $\gamma' \in \Sigma(M')_1$, combining the computation of $J(\mathfrak e , \gamma', k_1)$ and $J(\mathfrak e, \gamma', k_2)$ in \S \ref{para:J odd case M12} and the vanishing statement in Proposition \ref{Arch comp odd ++}, we know that $$J(\mathfrak e, \gamma', k_1) = J(\mathfrak e, \gamma', k_2) =0 $$ unless $\gamma'$ is good at $\infty$.  We also note that being good at $\infty$ is a property invariant under $\W$. Now by (\ref{eq:three terms}) and (\ref{eq:Jh=0}) we have 
\begin{align}\label{eq:formula for J}
J(\mathfrak e, \gamma', (A,B) \mapsto f^H_{p,M'}) = p^*  J(\mathfrak e , \gamma', k_1) + p^*  J(\mathfrak e , \gamma', k_2).
\end{align}
Therefore, if $\gamma' \in \Sigma(M')_1$ is such that  
\begin{align}\label{eq:supposing nonvanishing M12 odd}
\sum_{w\in \W} J(\mathfrak e, w(\gamma'), (A,B) \mapsto f^H_{p,M'}) \neq 0 ,
\end{align}
then $\gamma'$ is good at $\infty$,

Suppose $\gamma' \in \Sigma(M')_1$ is good at $\infty$. Then by Proposition \ref{effect of switching to Phi_endos}, (\ref{eq:Jk_1}), and (\ref{eq:Jk2}),   we have
\begin{align}\label{inv:J w12}
J(\mathfrak e , \gamma', k_1) & = J(\mathfrak e, w_{12} (\gamma'), k_2) \\ \label{inv:J w1}
J(\mathfrak e , \gamma', k_1) & = J(\mathfrak e , w_1(\gamma'), k_1)
\end{align}
  because the functions  $k_1(\emptyset)$ and $k_2(\emptyset)$ are pull-backs of each other under $w_{12}$, and the function $k_1(\emptyset)$ is invariant under $w_1$.
  Combining (\ref{eq:formula for J}) and (\ref{inv:J w12}), we obtain 
  \begin{align}\label{eq:twice the sum}
\sum_{w\in \W} J(\mathfrak e, w(\gamma'), (A,B) \mapsto f^H_{p,M'}) & = \sum _{w\in \W} p^* J(\mathfrak e, w(\gamma'), k_1) + p^* J(\mathfrak e, w_{12} w (\gamma'),  k_1) \\
\nonumber & = 2 \sum_{w\in \W} p^* J(\mathfrak e, w(\gamma'), k_1).
  \end{align}
  
   Assume (\ref{eq:supposing nonvanishing M12 odd}) holds. Then by (\ref{eq:twice the sum}), there exists $\gamma'' \in \W \gamma' $ such that $J(\mathfrak e, \gamma'', k_1) \neq 0.$ 
By (\ref{eq:Jk_1}), the last condition implies that $SO_{\gamma''} (k_1(\emptyset)) \neq 0$, from which it easily follows that either $\gamma''$ or $w_1 (\gamma'')$ (but not both) is good at $p$. Note that in $\W$, there are either zero or two elements $w$ such that $w(\gamma')$ is good at $p$. In the latter case, the two elements differ by left multiplication by $w_2 := (1,-1) \in \set{\pm 1}^2 \subset  \W$. Combining this analysis with (\ref{inv:J w1}) and (\ref{eq:twice the sum}),  we have 
\begin{align}\label{eq:computing sum over W orbit}
 \sum_{w\in \W} &  J(\mathfrak e, w(\gamma'), ( A,B) \mapsto f^H_{p,M'})  \\ \nonumber & =  2  p ^* \sum _{w\in \W, w (\gamma') \text{ good at }p} J(\mathfrak e, w(\gamma'), k_1) +  J(\mathfrak e, w_1w(\gamma'), k_1) \\ \nonumber &  = 4 p ^*  \sum _{w\in \W, w (\gamma') \text{ good at }p} J(\mathfrak e, w(\gamma'), k_1) \\ \nonumber &   = \begin{cases}
 0 , & \text{if } \nexists \gamma'' \in \W \gamma' \text{ good at }p , \\
 4p^* \big ( J(\mathfrak e , \gamma'', k_1) + J(\mathfrak e, w_2 (\gamma''), k_1) \big ), & \text{if } \gamma'' \in \W \gamma' \text{ is good at }p.
 \end{cases}
\end{align}
Moreover, if $\gamma '' \in \W \gamma'$ is good at $p$, then we have 
\begin{align}\label{eq:card of W orbit}
\abs{\W \gamma'} = \begin{cases}
\abs{\W} , & \text{if }w_{2} (\gamma'') \neq \gamma'', \\
\abs{\W}/2, &\text{if } w_{2}(\gamma'') = \gamma''.
\end{cases}
\end{align}

Combining the discussion about being good at $\infty$ at the beginning of the proof,  the formulas (\ref{eq:computing sum over W orbit}) and (\ref{eq:card of W orbit}), and Lemma \ref{lem:symmetry of Q}, we obtain: 
\begin{align*} 
\sum_{\gamma'\in \Sigma(M')_1} &  Q( \mathfrak e,  \gamma')\abs{\W}^{-1} \sum_{w\in \W}  J(\mathfrak e, w(\gamma'), ( A, B) \mapsto f^H_{p,M'}) \\ &  = \sum_{\substack{\gamma''\in \Sigma(M')_1 \\ \gamma'' \text{ good at }p,\infty \\ \gamma'' \neq w_2 \gamma''}}  Q( \mathfrak e,  \gamma'')\abs{\W}^{-1}  \abs{\W \gamma''} 4p^*  J(\mathfrak e, \gamma'', k_1)  \\   & + \sum_{\substack{\gamma''\in \Sigma(M')_1 \\ \gamma'' \text{ good at }p,\infty \\ \gamma'' = w_2 \gamma''}}  Q( \mathfrak e,  \gamma'')\abs{\W}^{-1}  \abs{\W \gamma''} 8p^*  J(\mathfrak e, \gamma'', k_1)  \\  &  = \sum_{\substack{\gamma''\in \Sigma(M')_1 \\ \gamma'' \text{ good at }p,\infty}}  Q( \mathfrak e,  \gamma'') 4p^*  J(\mathfrak e, \gamma'', k_1).
\end{align*}This together with Proposition \ref{cor:breaking symmetry} implies the current proposition in the odd case. 

The even case is proved in a similar way. The only differences   are that we now use the vanishing statement in Proposition \ref{Arch comp M12 even} rather than Proposition \ref{Arch comp odd ++}, and that we simply use the invariance of $\Phi^G_M(\cdot, \Theta_{\mathbb V^*})$ under $\Nor_G (M) (\RR)$ to deduce (\ref{inv:J w12}) and (\ref{inv:J w1}) .
\end{proof}

\section{Breaking symmetry, case $M_{1}$ and odd case $M_2$}\label{pf:break 2}
We keep the notation in Proposition \ref{cor:breaking symmetry} and \S \ref{subsec:comp of J}. 
\begin{defn}
	Suppose $M = M_1$. 
	We say that an element of $ M'(\QQ_p)$ is \emph{good at $p$}\index[n]{good at $p$} if its component in $M^{\GL}(\QQ_p) = \GL_2(\QQ_p)$ has determinant of $p$-adic valuation $-a$. We say that all elements of $M'(\RR)$ are \emph{good at $\infty$}.\index[n]{good at $\infty$}
	
	Suppose $M= M_2$ in the odd case. We say that an element of $ M'(\QQ_p)$ is \emph{good at $p$}\index[n]{good at $p$} if its component in $M^{\GL}(\QQ_p) =\QQ_p^{\times} $ has valuation $-a$. We say that an element of $M'(\RR)$ is \emph{good at $\infty$}\index[n]{good at $\infty$} if its component in $M^{\GL} (\RR) = \RR^{\times}$ is positive.
\end{defn}

\begin{prop}\label{cor:symmetry broken M1 and M2}
Suppose $M = M_1$, or $M= M_2$ in the odd case. We have 
	\begin{multline*}
 n^G_M \Tr'_M = \sum_{\mathfrak e_{\fkp}(M) = (M',\lang M', s_M', \eta_M)  \in \dot {\mathscr E } (M) ^{c,\ur}} \abs{\Out _M(\mathfrak e_{\fkp} (M)) } ^{-1} \\ \cdot  \sum_{\gamma'}  Q(\mathfrak e_{\fkp}(M), \gamma') 2 p^*  J(\mathfrak e_{\fkp}(M), \gamma', k),
	\end{multline*}
	where $\gamma'$ runs through the elements of $\Sigma(M')_{1}$ that are good at $\infty$ and good at $p$.
\end{prop}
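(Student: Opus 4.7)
The plan is to follow the same strategy as in the proof of Proposition \ref{cor:symmetry broken M12}, but now with $\W = \ZZ/2\ZZ$ instead of $\set{\pm 1}^2 \rtimes \mathfrak S_2$. I start from the formula in Proposition \ref{cor:breaking symmetry} and use the computation of $J$ carried out in \S \ref{subsec:comp of J}: in these cases
\[
J(\mathfrak e,\gamma',A\mapsto f^H_{p,M'}) = p^* J(\mathfrak e,\gamma',k) = 2 p^* SO_{\gamma'}(k(\emptyset))\,\Phi^G_M(j_M(\gamma')^{-1},\Theta_{\mathbb V^*}).
\]

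The next step is to impose the archimedean vanishing. For $M=M_2$ in the odd case, Proposition \ref{Arch comp M2} shows that $\Phi^G_M(\cdot,\Theta_{\mathbb V^*}) = 0$ whenever the component of $\gamma'$ in $(M')^{\GL}(\RR)=\RR^\times$ is negative, so only $\gamma'$ good at $\infty$ contribute. For $M=M_1$, the condition is vacuous since $T_{\GL_2}^{\std}(\RR)$ is connected. This eliminates all but the good-at-$\infty$ representatives in the sum over $\Sigma(M')_1$.

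The key step is to exploit the $\W$-symmetry. Let $w_0$ denote the non-trivial element of $\W$: it acts on $(M')^{\GL}$ by inversion (for $M_2$) or transpose-inverse (for $M_1$). By Proposition \ref{prop:main computation at p}, the function $k(\emptyset)$ is a symmetric expression in $X_1^{\pm a}$ (and $X_2^{\pm a}$ when $M=M_1$), hence its image in $\mathcal H^{\ur}((M')^{\GL}(\QQ_p))$ is invariant under $w_0$. By a standard change of variables in the stable orbital integral, this gives $SO_{\gamma'}(k(\emptyset)) = SO_{w_0(\gamma')}(k(\emptyset))$, while $\Phi^G_M(\cdot,\Theta_{\mathbb V^*})$ is invariant under $\Nor_G(M)(\RR)$-conjugation (exactly as in the proof of Lemma \ref{lem:symmetry of Q}). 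Therefore $J(\mathfrak e,\gamma',k) = J(\mathfrak e,w_0(\gamma'),k)$. On the other hand, $k(\emptyset)$ is supported on elements whose $(M')^{\GL}$-component has $p$-adic valuation $\pm a$ (or determinant valuation $\pm a$ for $M_1$), and $w_0$ swaps these two supports; hence in any $\W$-orbit on which $J(\mathfrak e,\cdot,k)$ is nonzero, exactly one representative is good at $p$, which we take to be $\gamma'$.

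Combining these observations, for $\gamma'$ good at $\infty$ and good at $p$ we obtain
\[
\sum_{w\in\W} J(\mathfrak e,w(\gamma'),A\mapsto f^H_{p,M'})
= p^*\bigl(J(\mathfrak e,\gamma',k) + J(\mathfrak e,w_0(\gamma'),k)\bigr)
= 2 p^* J(\mathfrak e,\gamma',k),
\]
which is exactly the desired factor. Plugging this back into Proposition \ref{cor:breaking symmetry} yields the claim. I do not anticipate any serious obstacle: the archimedean vanishing and the invariance of $\Phi^G_M$ are already established, and the $w_0$-invariance of $SO_{\gamma'}(k(\emptyset))$ is a routine change of variables. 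The only mildly delicate point is checking that $\gamma'$ and $w_0(\gamma')$ are not simultaneously good at $p$, so that the re-indexing to good-at-$p$ representatives is unambiguous; this follows directly from the valuation computation above.
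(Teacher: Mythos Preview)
Your proof is correct and follows essentially the same approach as the paper: start from Proposition \ref{cor:breaking symmetry}, use the $J$-computations of \S \ref{subsec:comp of J}, invoke the archimedean vanishing of Proposition \ref{Arch comp M2} for $M_2$, deduce $J(\mathfrak e,\gamma',k)=J(\mathfrak e,w_0(\gamma'),k)$ from the $w_0$-invariance of $k(\emptyset)$ together with the $(\Nor_G M)(\RR)$-invariance of $\Phi^G_M$, and then re-index by the unique good-at-$p$ representative in each $\W$-orbit. The only cosmetic difference is that the paper phrases the last step as ``either $\gamma'$ or $w_0\gamma'$ is good at $p$, and at most one of them can be'' rather than via a support argument for $k(\emptyset)$.
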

\begin{proof}We start with the formula for $n^G_M \Tr'_M$ in Proposition \ref{cor:breaking symmetry}. Fix  $\fke = \fke_{\fkp}(M) = (M', \lang M', s_M', \eta_M) \in \dot {\mathscr E}(M)^{c,\ur}$. Let $\gamma' \in \Sigma(M')_1$.
	Let $w_1\in \W$ be the non-trivial element.
 In view of (\ref{eq:Jk M1 M2}), it follows from the obvious invariance of $k(\emptyset,\emptyset)$ under $w_1$ and the invariance of $\Phi^G_M(\cdot, \Theta_{\mathbb V^*})$ under $\Nor_G (M)(\RR)$ that we have
	\begin{align}\label{inv:J w, case M_1}
	J(\mathfrak e , \gamma', k) = J(\mathfrak e, w_1(\gamma'), k).
	\end{align} By (\ref{eq:two terms}) and (\ref{eq:Jh vanish M1 M2}) we have 
\begin{align}\label{eq:J=J}
J(\mathfrak e, \gamma', (A,B) \mapsto f^H_{p,M'}) = p^*  J(\mathfrak e , \gamma', k).
\end{align}
If this is non-zero, then $SO_{\gamma'} (k(\emptyset,\emptyset)) \neq 0$ by (\ref{eq:Jk M1 M2}), and it easily follows that either $\gamma'$ or $w_1 \gamma'$ is good at $p$. This implies that $\gamma' \in \Sigma(M')_2$ (as $a \geq 1$). Thus 
\begin{align}\label{eq:non-inv term}
	\sum_{\gamma'\in \Sigma(M')_1- \Sigma(M')_2} Q( \mathfrak e,  \gamma') J (\mathfrak e, \gamma', (A,B)\mapsto f^H_{p,M'}) = 0.
\end{align}  

Now suppose $\gamma' \in \Sigma(M')_2$. By (\ref{inv:J w, case M_1}) and (\ref{eq:J=J}) we have $$\sum_{w\in \W} J(\mathfrak e, w(\gamma'), (A,B) \mapsto f^H_{p,M'}) = 2p^* J(\mathfrak e, \gamma', k) = 2p^* J(\mathfrak e, w_1(\gamma'), k).$$ Suppose this is non-zero. Then one of $\gamma'$ and $w_1 (\gamma')$ is good at $p$, by the same argument as before. Also, 	by (\ref{eq:Jk M1 M2}), we have $\Phi^G_M(j_M(\gamma')^{-1}, \Theta_{\mathbb V^*}) \neq 0$. By the vanishing statement in Proposition \ref{Arch comp M2}, the last condition implies that $\gamma'$ (and hence also $w_1(\gamma')$) is good at $\infty$ when $M = M_2$.  Note that at most one of $\gamma'$ and $w_1 (\gamma')$ can be good at $p$. Hence 
\begin{align}\label{eq:inv term}
	\sum_{\gamma'\in \Sigma(M')_2}  &  Q( \mathfrak e,  \gamma')\abs{\W}^{-1} \sum_{w\in \W}  J(\mathfrak e, w(\gamma'), ( A, B) \mapsto f^H_{p,M'}) \\ \nonumber &  = \sum_{\substack{\gamma'\in \Sigma(M')_2 \\ \gamma' \text{ good at }p,\infty}}  Q( \mathfrak e,  \gamma') 2 ^{-1}  2p^*  J(\mathfrak e, \gamma', k) \\ \nonumber &  + \sum_{\substack{\gamma'\in \Sigma(M')_2 \\ w_1(\gamma') \text{ good at }p,\infty}}  Q( \mathfrak e,  \gamma') 2 ^{-1}  2p^*  J(\mathfrak e, w_1(\gamma'), k)  \\ \nonumber &  = \sum_{\substack{\gamma'\in \Sigma(M')_2 \\ \gamma' \text{ good at }p,\infty}}  Q( \mathfrak e,  \gamma') 2 ^{-1}  2p^*  J(\mathfrak e, \gamma', k)  \\  \nonumber & + \sum_{\substack{\gamma'\in \Sigma(M')_2 \\ \gamma' \text{ good at }p,\infty}}  Q( \mathfrak e,  \gamma') 2 ^{-1}  2p^*  J(\mathfrak e, \gamma', k) \\ \nonumber & = \sum_{\substack{\gamma'\in \Sigma(M')_2 \\ \gamma' \text{ good at }p,\infty} }  Q( \mathfrak e,  \gamma')  2p^*  J(\mathfrak e, \gamma', k)    .  
					\end{align} Here for the second equality, we made the substitution $\gamma' \mapsto w_1(\gamma') $ in the second summation and used Lemma \ref{lem:symmetry of Q}. 
 The proposition follows from Proposition \ref{cor:breaking symmetry}, (\ref{eq:non-inv term}), and (\ref{eq:inv term}).  
\end{proof}

   \section{Main computation}\label{pf:main}
  We keep letting $M$ denote one of $M_1, M_2, M_{12}$, and excluding $M_2$  in the even case. \begin{prop}
Let \index{$\mathscr C_M$} $\mathscr C_M = 1$ for $M= M_{12}$ or $M_1$, and let $\mathscr C_M = 2$ for $M = M_2$ (in the odd case).  When $a\in \ZZ_{>0}$ is large enough (for a fixed $f^{p,\infty}$), we have 
\begin{multline}\label{eq:pre-main computation}
\mathscr C_M  \Tr'_M = 4p^* \sum_{\mathfrak e_{\fkp}(M) =(M',\lang M', s_M',\eta_M) \in \dot {\mathscr E} (M)^{c,\ur} }(-1) ^{\dim A_{M'}+ q(G_{\RR}) } \abs{\Out _M(\mathfrak e_{\fkp}(M) ) } ^{-1}    \\  \cdot  \sum_{\gamma'}  Q(\mathfrak e_{\fkp} (M), \gamma') SO_{\gamma'} (k_1(\emptyset)) L_M(j_M(\gamma'))  ,
\end{multline}
where $\gamma'$ runs through the elements of $\Sigma(M') _1$ that are good at $\infty$ and good at $p$. Here we understand that $k_1(\emptyset) : = k(\emptyset,\emptyset)$ when $M = M_1$ or $M_2$; see \S \ref{para:J notations} for $k_1$ and $k$. Moreover,  $(-1)^{\dim A_{M'}}$ depends only on $M$, and is $1$ if $M= M_{12}$ and $-1$ other wise.\footnote{This dichotomy is to be compared with the dichotomy of behaviors of signs in Propositions \ref{Arch comp odd ++} and \ref{Arch comp M12 even} for $M_{12}$ on the one hand, and in Propositions \ref{Arch comp M_1} and \ref{Arch comp M2} for $M_1$ and $M_2$ on the other hand.}
   \end{prop}
   \begin{proof}The claim about $(-1) ^{\dim A_{M'}}$ is straightforward. To prove (\ref{eq:pre-main computation}),  we first treat the odd case with $M= M_{12}$. By Lemma \ref{lem:comp n^G_M}, we have $n^G_M = 8$. Then by Proposition \ref{cor:symmetry broken M12} and (\ref{eq:Jk_1}), we have \begin{align}\label{eq:premain}
 8 \Tr'_M & = \sum_{\mathfrak e =(M',\lang M', s_M',\eta_M) \in \dot {\mathscr E} (M)^{c,\ur} }  \abs{\Out _M(\mathfrak e) } ^{-1} \sum_{\gamma'}  Q(\mathfrak e, \gamma') 4 p^*  J(\mathfrak e, \gamma', k_1) \\ \nonumber  & = \sum_{\mathfrak e}   \abs{\Out _M(\mathfrak e) } ^{-1}   \sum_{\gamma'}  Q(\mathfrak e, \gamma') 8 p^*  SO_{\gamma'} (k_1(\emptyset)) \cdot \\ \nonumber  & \cdot \bigg [  \Phi ^G_M (j_M(\gamma') ^{-1} , \Theta_{\mathbb V^*}) + \epsilon_R(j_M(\gamma'^{-1})) \epsilon _{R_H} (\gamma'^{-1})| _{A= \set{1}}  \Phi ^G_M (j_M(\gamma'^{-1}), \Theta_{\mathbb V^*}) _{\Endos}\bigg ]  ,
   	\end{align}
   where $\gamma'$ runs through the elements in $\Sigma(M') _1$ that are good at $\infty$ and good at $p$. 
   Suppose that $\gamma'$ contributes non-trivially to the above sum. Then $Q(\mathfrak e, \gamma') \neq 0$. From Definition \ref{defn:Q}, we have 
   $$O^{s_M'} _{\gamma_M} (f^{p,\infty} _M) \neq 0, $$ where $\gamma_M$ is as in that definition. 
   Therefore the component of $\gamma_M$ in $M^{\GL}(\adele_f^p)$ lies in a compact subset that depends only on $f^{p,\infty}$ and not on $a$. Because $\gamma'$ is an image of $\gamma_M$, the component of $\gamma'$ in $M^{\GL} (\QQ)$ is equal to the component of $\gamma_M$ in $M^{\GL} (\adele_f^p)$. When $a$ is large enough, this observation together with the assumption that $\gamma'$ is good at $p$ implies that the real absolute value of the component of $\gamma'$ in the first $\GG_m$ is strictly smaller than the $\pm 1$-st power of that of the second. In other words, $j_M(\gamma')$ is in the range $x_1 < -\abs{x_2}$ considered in Propositions \ref{Arch comp odd ++} and  \ref{Arch comp M12 even}. Observe 
   \begin{align*}
\Phi^G_M(j_M(\gamma') ^{-1} ,\Theta_{\mathbb V^*}) &  = \Phi^G_M (j_M (\gamma') , \Theta_{\mathbb V}) ,\\ \Phi^G_M(j_M(\gamma') ^{-1} ,\Theta_{\mathbb V^*})_{\Endos} &  = \Phi^G_M (j_M (\gamma') ,  \Theta_{\mathbb V})_{\Endos},
   \end{align*}
and $$ \epsilon _R (j_M(\gamma'^{-1})) \epsilon_{R_H} (\gamma'^{-1})|_{A=\set{1}} = 1  $$ for $j_M(\gamma'^{-1})$ in the range mentioned above. 
  Therefore by Proposition \ref{Arch comp odd ++}, the sum in the bracket in  (\ref{eq:premain}) is $4 (-1)^{q(G_{\RR})} L_M(j_M(\gamma'))$. Substituting this into (\ref{eq:premain}), dividing both sides by $8$, and inserting the sign $(-1)^{\dim A_{M'}} = 1$ on the right hand side, we obtain the desired (\ref{eq:pre-main computation}). 
   
   The even case $M_{12}$, odd and even case $M_1$, and odd case $M_2$, are proved in a similar way, by applying the corresponding computation in \S \ref{subsec:comp of J} and Propositions \ref{cor:symmetry broken M1 and M2}, \ref{Arch comp M12 even}, \ref{Arch comp M_1}, and \ref{Arch comp M2}. (The number $n_G^M$ can again be computed using Lemma \ref{lem:comp n^G_M}, and is seen to be $8,2,2$ for $M_{12}, M_1, M_2$.) We only add the following details: When $M= M_1$, we only know that the component of $\gamma'$ in $M^{\GL} (\QQ) = \GL_2(\QQ)$ is (stably) conjugate to the component of $\gamma_M$ in $M ^{\GL} (\adele_f^p) = \GL_2(\adele_f^p)$ (as opposed to knowing that they are equal), but this already implies that they have equal determinant. Again from the assumption that $a$ is large and $\gamma'$ is good at $p$, we deduce that $j_M(\gamma')$ is in the range $\det <1$ considered in Proposition \ref{Arch comp M_1}. When $M= M_{2}$ in the odd case, we deduce that $j_M(\gamma')$ is in the range $0< a < 1$ considered in Proposition \ref{Arch comp M2} in the same way as when $M = M_{12}$. Finally, we note that the constant $n^G_M$ appearing in Proposition \ref{cor:symmetry broken M1 and M2} is the same for $M_1$ and $M_2$ (equal to $2$), but in Propositions \ref{Arch comp M_1} there is an extra factor $2$ on the right hand side compared to Proposition \ref{Arch comp M2}. This is why in the current proposition we have $\mathscr C_{M_1} =1$ and $\mathscr C_{M_2} = 2$.  
       \end{proof}
   
   \subsection{}
   We now plug the definition of $Q(\mathfrak e, \gamma')$ (Definition \ref{defn:Q}) into the formula (\ref{eq:pre-main computation}), and obtain: 
 \begin{multline}\label{eq:plugged in}
\mathscr C_M  \Tr'_M = 4p^* \tau(M)   k(M)k(G)^{-1} \sum_{\mathfrak e =(M', \lang M', s_M',\eta_M) \in \dot{\mathscr E} (M)^{c,\ur} } \abs{\Out _M(\mathfrak e) } ^{-1} \\    \cdot  \sum_{\gamma'}  SO_{\gamma'} (k_1(\emptyset)) L_M(j_M(\gamma'))  
   \bar \iota^{M'} (\gamma')^{-1}   (\Delta^{M}_{M'})^{\emptyset,\emptyset}(\gamma',\gamma_M) \\ \cdot  O_{\gamma_M}^{s_M'} (f^{p,\infty} _M)   \bar v ({M'}_{\gamma'} ^0) ^{-1}  \Delta_{j_M, B_M} ^{\emptyset,\emptyset} (\gamma', j_M(\gamma')) .
   \end{multline} (The sign $(-1)^{\dim A_{M'}}$ appears both in the definition of $Q(\fke,\gamma')$ and in (\ref{eq:pre-main computation}), and hence it gets canceled in the above.) 
Observe that when $\gamma' \in \Sigma(M')_1$ is good at $p$, we have
\begin{align}\label{eq:change of test function}
SO_{\gamma'} (k_1(\emptyset)) = SO_{\gamma'} ( k_a \otimes 1_{M ^{\prime,\SO}, p}),
\end{align}
 where, in the notation of Proposition \ref{prop:main computation at p}, $k_a \in \mathcal H^{\ur}(M^{\GL}_{\QQ_p})$ is given by  \index{$k_a$}
 \begin{equation}\label{defnLk_a}
  \begin{cases}
 -\xi_1^{-a}, & M = M_{12} \mbox{ or } M_2 ~(\mbox{so that } M^{\GL} = \GG_m^2 \mbox{ or } \GG_m \mbox{ resp.}), \\
 -\zeta_1^{-a} -\zeta_2^{-a} ,  &  M = M_1 ~(\mbox{so that } M^{\GL} = \GL_2),
 \end{cases}
 \end{equation}  and $1_{M^{\prime,\SO},p}$ \index{$1_{M^{\prime,\SO},p}$} denotes the unit element of $\mathcal H^{\ur} (M^{\prime,\SO}_{\QQ_p})$. (Thus $k_a$ differs from $k(\emptyset,\emptyset)$ in that we throw away the \emph{positive} powers of the variables $\xi_i, \zeta_i$, as well as powers of $\xi_2$ when $M = M_{12}$.) Conversely, if the right hand side of (\ref{eq:change of test function}) is non-zero, then $\gamma'$ is necessarily good at $p$. Thus after making the substitution (\ref{eq:change of test function}) inside (\ref{eq:plugged in}), we no longer need to impose the condition of being good at $p$ in the summation over $\gamma'$. 

Let $\gamma'_{p,\GL}$\index{$\gamma'_{p,\GL}$} (resp.~$\gamma'_{p,\SO}$)\index{$\gamma'_{p,\SO}$} be the component of $\gamma'$ in $M^{\GL} (\QQ_p)$ (resp.~$M^{\prime, \SO}(\QQ_p)$). Then we can rewrite (\ref{eq:change of test function}) as 
\begin{align}\label{eq:prod of SO at p}
SO_{\gamma'}(k_1(\emptyset)) = SO_{\gamma'_{p,\GL}} (k_a) SO_{\gamma'_{p,\SO}} (1_{M^{\prime,\SO},p}).  
\end{align} Since $\gamma'$ is $(M,M')$-regular (being in  $\Sigma(M')_1$), $\gamma'_{p,\SO}$ is $(M^{\SO}, M^{\prime,\SO})$-regular. By the Fundamental Lemma (Theorem \ref{thm:LS} (2)), we know that $$SO_{\gamma'_{p,\SO}} (1_{M^{\prime,\SO},p}) \neq 0$$ only if $\gamma'_{p,\SO}$ is an image of a semi-simple element $\gamma_{p,\SO} \in M^{\SO} (\QQ_p)$, and in this case we have 
\begin{align}\label{eq:FL applied}
SO_{\gamma'_{p,\SO}} (1_{M^{\prime,\SO}}) = \Delta^{M^{\SO}} _{M ^{\prime,\SO}}  (  \gamma'_{p,\SO}, \gamma_{p,\SO}) O^{s^{\SO}}_{\gamma_{p, \SO}}(1 _{M^{\SO},p}), 
\end{align} where $\Delta^{M^{\SO}} _{M ^{\prime,\SO}}$ is the canonical unramified normalization of transfer factors at $p$ associated to the hyperspecial subgroup $M^{\SO} (\QQ_p) \cap \mathcal M(\ZZ_p) \subset M^{\SO}(\QQ_p)$, and $1_{M^{\SO},p}$ denotes the unit element of $\mathcal H (M^{\SO}(\QQ_p) \sslash (M^{\SO} (\QQ_p) \cap \mathcal M(\ZZ_p) ))$. 

When $\gamma'_{p,\SO}$ is an image of $\gamma_{p,\SO} \in M^{\SO}(\QQ_p)$ as above, note that $\gamma' = \gamma'_{p,\GL} \gamma'_{p,\SO}$ is an image of $\gamma
'_{p,\GL} \gamma_{p,\SO} \in M(\QQ_p)$, and for the canonical unramified normalizations of transfer factors we have 
\begin{align}\label{eq:transfer factor at p equal}
\Delta^{M^{\SO}} _{M^{\prime,\SO}} (  \gamma'_{p,\SO}, \gamma_{p,\SO}) = \Delta ^{M} _{M'} (\gamma', \gamma'_{p,\GL} \gamma_{p,\SO}).
\end{align}

From (\ref{eq:plugged in}) (\ref{eq:prod of SO at p}) (\ref{eq:FL applied}) (\ref{eq:transfer factor at p equal}), we obtain
\begin{multline}\label{eq:to fuse transfer factors}
\mathscr C_M \Tr'_M = 4p^*  \tau(M) k(M)k(G)^{-1} \sum_{\mathfrak e =(M',\lang M', s_M',\eta_M) \in \dot{\mathscr E} (M)^{c,\ur} } \abs{\Out _M(\mathfrak e) } ^{-1}  \\  \cdot \sum_{\gamma'} \bar   \iota^{M'} (\gamma')^{-1}  \bar v ({M'}_{\gamma'} ^0) ^{-1}    SO_{\gamma'_{p,\GL}} (k_a) L_M(j_M(\gamma'))  O_{\gamma_M}^{s_M'} (f^{p,\infty} _M) O_{\gamma_{p,\SO}} ^{s^{\SO}} (1_{M^{\SO},p}) \\  \cdot 
  (\Delta^{M}_{M'})^{\emptyset,\emptyset} (\gamma',\gamma_M) \Delta_{j_M, B_M} ^{\emptyset,\emptyset} (\gamma', j_M(\gamma'))  \Delta ^{M} _{M'} (\gamma', \gamma'_{p,\GL} \gamma_{p,\SO})  ,
\end{multline}
where $\gamma'$ runs through the elements of $\Sigma(M') _1$ that are good at $\infty$, and for each $\gamma'$ we choose $\gamma_M \in M(\adele_f^p)$ and $\gamma_{p, \SO} \in M^{\SO} (\QQ_p)$ such that $\gamma'$ is an image of $\gamma_M$ (over $\adele_f^p$) and an image of $\gamma'_{p,\GL} \gamma_{p,\SO}$ (over $\QQ_p$). Here we no longer need the condition that $\gamma'$ is good at $p$, as we have already seen. 

\begin{lem}\label{lem:global image}
	If $\gamma'\in M'(\QQ)_{\mathrm{ss}}$ is $\RR$-elliptic and is an image from $M(\adele_f)_{\mathrm{ss}}$, then it is an image from $M(\QQ)_{\mathrm{ss}}$. Moreover, if $\gamma_{\infty} \in M (\RR)_{\mathrm{ss}}$ is a prescribed elliptic element of which $\gamma'$ is an image, then $\gamma'$ is an image of some $\gamma\in M(\QQ)_{\mathrm{ss}}$ such that $\gamma$ is conjugate to $\gamma_{\infty}$ in $M(\RR)$. 
\end{lem}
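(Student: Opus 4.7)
The plan is to reduce the problem to the special orthogonal factor, where it becomes a statement about orthogonal decompositions of quadratic spaces, and then invoke the Hasse principle (Hasse--Minkowski) to globalize. Decompose $M = M^{\GL} \times M^{\SO}$ and $M' = M^{\GL} \times (M')^{\SO}$ (recalling $(M')^{\GL} = M^{\GL}$), and write $\gamma' = (\gamma'_{\GL},\gamma'_{\SO})$ accordingly. The component $\gamma'_{\GL}\in M^{\GL}(\QQ)$ is trivially an image of itself (there is no endoscopy on the $\GL$ factor, so stable conjugacy equals conjugacy and the identity map furnishes the transfer). Hence it suffices to show that $\gamma'_{\SO} \in (M')^{\SO}(\QQ) = \SO(V^+)(\QQ)\times \SO(V^-)(\QQ)$ is an image from $M^{\SO}(\QQ) = \SO(W)(\QQ)$, where $W$ denotes $W_1$ or $W_2$ as appropriate.

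Next I would reformulate ``$\gamma'_{\SO} = (\gamma^+,\gamma^-)$ is an image from $\SO(W)(F)$'' in terms of quadratic spaces: it amounts to the existence of an orthogonal $F$-decomposition $W\otimes F = U^+ \oplus U^-$ together with isometries $U^\pm \simeq V^\pm \otimes F$ carrying the restrictions of some $\gamma_{\SO}\in \SO(W)(F)$ to $\gamma^\pm$. By hypothesis such decompositions exist at every finite place (from ``image from $M(\adele_f)$'') and at infinity (from the prescribed $\gamma_\infty$, which pins down $U^\pm_\infty$ as its $V^\pm$-isotypic subspaces). In particular, $V^+ \oplus V^-$ and $W$ have the same rank, discriminant and Hasse invariant at every place. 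By Hasse--Minkowski, $V^+\oplus V^- \simeq W$ as quadratic spaces over $\QQ$, so we can pick an orthogonal decomposition $W = U^+\oplus U^-$ over $\QQ$ with $U^\pm \simeq V^\pm$; transporting $\gamma'_{\SO}$ through these isometries produces a rational element $\gamma_{\SO}\in \SO(W)(\QQ)$ of which $\gamma'_{\SO}$ is an image. Setting $\gamma := (\gamma'_{\GL},\gamma_{\SO}) \in M(\QQ)$ proves the first assertion.

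The main remaining obstacle is the refinement: we must arrange $\gamma$ to be $M(\RR)$-conjugate to the prescribed $\gamma_\infty$. The rational $\gamma_{\SO}$ constructed above is uniquely determined only up to the action of $\mathrm O(U^+)(\QQ) \times \mathrm O(U^-)(\QQ)$ on the choice of isometries $U^\pm \simeq V^\pm$, and up to the choice of the rational decomposition $W = U^+\oplus U^-$. Over $\RR$, any two orthogonal decompositions of $W_\RR$ into subspaces isomorphic to $V^\pm_\RR$ and compatible with the characteristic data of $\gamma_\infty$ are conjugate under $\mathrm O(W)(\RR)$, by Witt's extension theorem; this conjugator can moreover be chosen in $\SO(W)(\RR) = M^{\SO}(\RR)$ after adjusting signs within $\mathrm O(U^\pm)(\RR)$, using the ellipticity of $\gamma_\infty$ (so that both $U^\pm_\infty$ are either definite, or contain an elliptic factor allowing a sign switch). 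Thus $\gamma$ and $\gamma_\infty$ are already conjugate under $M(\RR)$ after possibly replacing $\gamma$ by an $M(\QQ)$-conjugate, finishing the proof.
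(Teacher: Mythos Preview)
Your reformulation of ``$\gamma'_{\SO}$ is an image from $\SO(W)(F)$'' as the existence of an orthogonal decomposition $W\otimes F = U^+\oplus U^-$ with isometries $U^\pm\simeq V^\pm\otimes F$ is incorrect, and this is where the argument breaks down.

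In the odd case this fails for dimension reasons alone: by the description of the elliptic endoscopic data (Section~\ref{endoscopic data}), the spaces $V^\pm$ are odd-dimensional with $\dim V^+ + \dim V^- = \dim W + 1$, so no decomposition $W = U^+\oplus U^-$ with $U^\pm\simeq V^\pm$ can exist over any field. In the even case the dimensions match, but the quadratic spaces $V^+\oplus V^-$ and $W$ are still not isomorphic in general. For instance, when $M^{\SO}=\SO(W_2)$, the space $W_2$ has signature $(n-2,0)$ over $\RR$, while $V^\pm$ are quasi-split and hence $V^+\oplus V^-$ is far from definite; so the Hasse--Minkowski step ``$V^+\oplus V^-\simeq W$'' is simply false. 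The correct meaning of ``image'' is through admissible isomorphisms of maximal tori: one matches the $\Uni(1)$-factors (equivalently, the planes in an elliptic decomposition) of an elliptic torus of $H$ with those of an elliptic torus of $\SO(W)$, and there is no requirement that the corresponding planes in $W$ have the same definiteness as the planes in $V^\pm$. Thus the entire Hasse principle strategy does not get off the ground.

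The paper proceeds instead via Labesse's obstruction theory. One takes any $\gamma^*\in M^*(\QQ)$ of which $\gamma'$ is an image and any $\gamma_{\adele}\in M(\adele)$ with prescribed archimedean component $\gamma_\infty$; the obstruction to finding a rational $\gamma$ with these local conjugacy classes lies in the finite abelian group $\E(I^*,M^*;\adele/\QQ)\cong\mathfrak K(I^*/\QQ)^D$. Since the local map $\mathfrak K(I^*/\QQ_v)^D\to\mathfrak K(I^*/\QQ)^D$ is surjective at any finite place, one can modify $\gamma_{\adele}$ at a single finite place within its stable class to kill the obstruction, while keeping $\gamma_\infty$ untouched. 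This is what makes the refinement with prescribed $\gamma_\infty$ work, and there is no quadratic-form substitute for it.
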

\begin{proof}We recall a construction from \cite{labesse1999} in our setting.
Let $\gamma^* \in M^*(\QQ)_{\mathrm{ss}}$ be such that $\gamma'$ is an image of it. By hypothesis $\gamma'$ is an image from $M(\adele_f)_{\mathrm{ss}}$, and note that $\gamma'$ is also an image from $M(\RR)_{\mathrm{ss}}$ since it is $\RR$-elliptic. Thus let $\gamma_{\adele} \in M(\adele)_{\mathrm{ss}}$ be such that $\gamma'$ is an image of it. When $\gamma_{\infty}$ is prescribed as in the statement of the lemma, we take $\gamma_{\adele}$ such that its archimedean component is $\gamma_{\infty}$. From $\gamma^*$ and $\gamma_{\adele}$, Labesse   constructs a non-empty subset $$ \mathrm{obs}_{\gamma^*} (\gamma_{\adele})\subset \E (I^*, M^*; \adele/\QQ):  = \coh^0_{\ab} (\adele/\QQ, I^*\backslash M^*)/ \coh^0_{\ab} (\adele, M^*), $$ generalizing the construction of Kottwitz in \cite{kottwitzelliptic}; see \cite[\S 2.6]{labesse1999}, with $L= M, H= M^*$. By \cite[Thm.~2.6.3]{labesse1999}, the condition that $ 1\in \mathrm{obs}_{\gamma^*} (\gamma_{\adele})$ would imply the existence of an element of  $M(\QQ)_{\mathrm{ss}}$ that is conjugate to $\gamma_{\adele} \in M(\adele)$, and the current lemma would follow. Thus it suffices to prove that $ 1\in \mathrm{obs}_{\gamma^*} (\gamma_{\adele}) $ for a suitable choice of $\gamma_{\adele}.$ 

Note that to prove the lemma we may always modify $\gamma_{\adele}$ by replacing its $v$-adic component with another element stably conjugate to it over $\QQ_v$, for some finite place $v$. We claim that after such a modification we can achieve $1 \in \mathrm{obs}_{\gamma^*} (\gamma_{\adele})  $. In fact, we know that $\E(I^*, M^* ; \adele/ \QQ)$ is isomorphic to the Pontryagin dual group $\mathfrak K (I^*/\QQ) ^D$ of the finite abelian group $\mathfrak K (I^*/\QQ)$ (for $I^*\subset M^*$) considered in \cite[\S 4.6]{kottwitzelliptic}; cf.~\cite[Cor.~1.7.4]{KSZ}. The same argument as the second paragraph of \cite[p.~188]{kottwitzannarbor} implies that the natural map $\mathfrak K (I^*/\QQ_v) ^D \to \mathfrak K (I^*/\QQ) ^D$ is a surjection for some finite place $v$. On the other hand, $$\mathfrak K(I^*/\QQ_v) ^D \cong \E (I^*, M^* ; \QQ_v) \cong \D (I^*, M^* ;\QQ_v) = \ker (\coh^1(\QQ_v, I^*) \to \coh^1(\QQ_v, M^*) ). $$ From the construction of Labesse we know that if we twist $\gamma_{\adele}$ within its stable conjugacy class by a class $c \in \D (I^*, M^* ;\QQ_v)$, then $\mathrm{obs}_{\gamma^*} (\gamma_{\adele})$ gets shifted by the image of $c$ in the abelian group $\E(I^*, M^* , \adele/\QQ)$. The claim follows.
\end{proof}
\subsection{}
By Lemma \ref{lem:global image}, we may assume that each $\gamma'$ in (\ref{eq:to fuse transfer factors}) is an image of some $\gamma \in M(\QQ)_{\mathrm{ss}}$, and that $\gamma$ is conjugate to $j_M(\gamma')$ in $M(\RR)$. Note that we have $L_M(j_M(\gamma')) = L_M(\gamma)$. (In fact $L_M(\cdot)$ depends only on $\CC$-conjugacy classes.) We may and shall take $\gamma_M$ and $\gamma'_{p,\GL}\gamma_{p,\SO}$ to be localizations of $\gamma$ in $M(\adele_f^p)$ and $M(\QQ_p)$ respectively. 
 
 We have seen that $\tasho (\emptyset,\emptyset) = -1$ in Proposition \ref{prop:computing tasho}. Therefore with the above assumptions on  $\gamma_M$ and $\gamma'_{p,\GL}\gamma_{p,\SO}$, the product of the three transfer factors in the third line of (\ref{eq:to fuse transfer factors}) becomes $-1$.  
We summarize the above discussion in the following proposition.
\begin{prop}\label{prop:suitably choose}When $a\in \ZZ_{>0}$ is large enough (for a fixed $f^{p,\infty}$), we have 
\begin{multline}\label{eq:almost done in one direction} \mathscr C_M
\Tr'_M = - 4p^* \tau(M) k(M)k(G)^{-1} \sum_{\mathfrak e =(M',\lang M',s_M',\eta_M) \in \dot {\mathscr E} (M)^{\ur,c} } \abs{\Out _M(\mathfrak e) } ^{-1}  \\   \cdot \sum_{\gamma'}  \bar   \iota^{M} (\gamma)^{-1}  \bar v ({M}_{\gamma} ^0) ^{-1}  SO_{\gamma_{\GL}} (k_a) L_M(\gamma)  O_{\gamma}^{s_M'} (f^{p,\infty} _M)   O_{ \gamma_{\SO}} ^{s^{\SO}} (1_{M^{\SO},p})    \\   \cdot \Delta_{j_M, B_M} ^{\emptyset,\emptyset} (\gamma', j_M(\gamma'))  \Delta_{j_M, B_M} ^{\emptyset,\emptyset} (\gamma', \gamma) ^{-1},
\end{multline}
where $\gamma'$ runs through the elements of $\Sigma(M')_1$ that are good at $\infty$, and such that $\gamma'$ is an image of some $\gamma \in M(\QQ)_{\mathrm{ss}}$. For each $\gamma'$, we fix a corresponding $\gamma$, and use $\gamma_{\GL}$ and $\gamma_{\SO}$ to denote the (localizations over $\QQ_p$ of) the components of $\gamma$ in $M^{\GL}$ and $M^{\SO}$ respectively. \qed
\end{prop}

\begin{defn}\label{defn:scr D}
	For any reductive group $I$ over $\RR$ that contains elliptic maximal tori, let $\mathscr D(I)$\index{$\mathscr D(\cdot)$} be the cardinality of $\D(T,I;\RR) = \ker (\coh^1(\RR, T) \to \coh^1(\RR, I) )$, where $T$ is any elliptic maximal torus in $I$. 
\end{defn}
\begin{lem}\label{lem:scr D}
	Let $I$ and $T$ be as in Definition \ref{defn:scr D}. 
	\begin{enumerate}
		\item We have $\mathscr D(I) = \abs{\Omega_{\CC}(I,T) / \Omega_{\RR} (I,T) }. $ In particular $\mathscr D(I)$ is independent of the choice of $T$. 
		\item If $\mathscr D(I) =1$, then any two elliptic elements of $I(\RR)$ that are stably conjugate to each other are conjugate under $I(\RR)$. 
		\item If $\mathscr D(I) = 1$, then for any elliptic element $x \in I(\RR)$, we have $\mathscr D(I_x^0) =1$.
	\end{enumerate}
\end{lem}
\begin{proof}
  Statement (1) follows from \cite[Prop.~6.4.2]{labessesnowbird}, and the fact that all elliptic maximal tori are conjugate under $I(\RR)$. For (2), it suffices to prove that for any (connected) reductive subgroup $J$ of $I$ containing an elliptic maximal torus $T$ in $I$, we have $ \D(J,I;\RR) = 1. $ But this follows from \cite[Lem.~10.2]{kottwitzelliptic}, which says that $\coh^1(\RR,T)$ surjects onto $\coh^1(\RR, J)$. Finally, (3) follows from the fact that $I_x^0$ contains a maximal torus which is elliptic in both $I_x^0$ and $I$. 
\end{proof}
\begin{lem}\label{lem:comp mathscr D} We have $\mathscr D(M_{\RR}) = 1$. 
\end{lem}
\begin{proof}  
	If $M = M_1$ or $M_{12}$, then $M_{\RR}$ is a product of copies of  $\GL_2$ or $\GG_m$ and an anisotropic group, so $\mathscr D(M) =1$. Now suppose $M = M_2$ in the odd case. Write $n$ for $d-2$, and recall that $n \geq 3$. We have $M_{\RR} \cong  \GG_m \times \SO(n-1, 1), $ so $\mathscr D(M_{\RR}) = \mathscr D(\SO(n-1, 1))$. To compute $\mathscr D(\SO(n-1,1))$, consider an elliptic (anisotropic) maximal torus $T\cong \Uni(1) ^{(n-1)/2}$ in $\SO(n-1,1)$, which is inside the maximal compact subgroup $\mathrm S ( \mathrm O(n-1) \times \mathrm O(1) )$ of $\SO(n-1,1)$. It is well known (see for instance \cite[Prop.~6.16]{adamstaibi}) that we have 
	 $$ \abs{\Omega_{\RR} (\SO(n-1,1), T)} = \abs{\Nor_{\mathrm S (\mathrm O(n-1) \times \mathrm O(1))(\CC)}( T(\CC))/ T(\CC)}.$$ On the other hand one can directly check that as subgroups of $\Aut(T_{\CC})$ we have 
	 $$ \Nor_{\mathrm S (\mathrm O(n-1) \times \mathrm O(1))(\CC)}( T(\CC))/ T(\CC) = \Omega_{\CC}( \SO(n-1,1), T) \cong \set{\pm 1}^{(n-1)/2} \rtimes \mathfrak S_{(n-1)/2}. $$ It then follows from Lemma \ref{lem:scr D} (1) that $\mathscr D(\SO(n-1,1)) =1$.  
\end{proof}
 
\begin{prop}\label{prop:mathscr D} 	Keep the setting and notation of Proposition \ref{prop:suitably choose}.  We have 
	\begin{multline}\label{eq:in cor}
\mathscr C_M \Tr'_M = - 4p^* \tau(M) k(M)k(G)^{-1} \sum_{\gamma_0} \sum_{ \kappa}   \bar   \iota^{M} (\gamma_0)^{-1}  \bar v (I_0) ^{-1}  SO_{\gamma_{0,\GL}} (k_a) L_M(\gamma_0)  \\  \cdot
 O_{\gamma_0}^{\kappa} (f^{p,\infty} _M) O_{ \gamma_{0, \SO}} ^{\kappa^{\SO}} (1_{M^{\SO},p}) ,
	\end{multline}
where \begin{itemize}
	\item $\gamma_0$ runs through a fixed set of representatives of the stable conjugacy classes in $M(\QQ)$ that are elliptic over $\RR$ and good at $\infty$. We let $\gamma_{0,\GL}$ and $\gamma_{0,\SO}$ denote the (localizations over $\QQ_p$ of) the components of $\gamma_0$ in $M^{\GL}$ and $M^{\SO}$ respectively.
	\item $I_0 : = M_{\gamma_0} ^0$.
	\item $\kappa$ runs through $\mathfrak K(I_0/\QQ) = \E(I_0, M ; \adele/\QQ) ^D$.
\end{itemize}
\end{prop}
\begin{proof} By Lemma \ref{lem:scr D} (2) and Lemma \ref{lem:comp mathscr D},  every $\gamma$ in (\ref{eq:almost done in one direction}) is conjugate to $j_M(\gamma')$ over $\RR$. Hence the quotient of the two 
transfer factors at the end of (\ref{eq:almost done in one direction})  is equal to $1$. Thus we have 
	\begin{multline*} 
	\mathscr C_M	\Tr'_M = - 4p^* \tau(M) k(M)k(G)^{-1} \sum_{\mathfrak e =(M',\lang M',s_M',\eta_M) \in \dot {\mathscr E} (M)^{\ur,c} } \abs{\Out _M(\mathfrak e) } ^{-1}  \\   \cdot \sum_{\gamma'}  \bar   \iota^{M} (\gamma)^{-1}  \bar v ({M}_{\gamma} ^0) ^{-1}  SO_{\gamma_{\GL}} (k_a) L_M(\gamma)  O_{\gamma}^{s_M'} (f^{p,\infty} _M)   O_{ \gamma_{\SO}} ^{s^{\SO}} (1_{M^{\SO},p}).
	\end{multline*}This implies (\ref{eq:in cor}) by the usual conversion from summation over $(\mathfrak e, \gamma')$ to summation over $(\gamma_0,\kappa)$ in the theory of stabilization (see \cite[Cor.~IV.3.6]{labesse04} and \cite[\S 8.3]{KSZ}). 
\end{proof}
\subsection{}
 Now by Fourier analysis on the finite abelian groups $\mathfrak K(I_0/\QQ) ^D = \E(I_0, M;\adele/\QQ)$, (cf.~\cite[p.~395]{kottwitzelliptic}, \cite[p.~174]{kottwitzannarbor}, \cite[\S 8.1]{KSZ}), from Proposition \ref{prop:mathscr D} we deduce 
\begin{multline}\label{eq:Fourier applied} 
\mathscr C_M \Tr'_M = - 4p^* \tau(M) k(M)k(G)^{-1}   \sum_{\gamma_0, \gamma_1} \bar   \iota^{M} (\gamma_0)^{-1}  \bar v (I_0) ^{-1} e(I_{0,\RR})  SO_{\gamma_{0,\GL}} (k_a) \\ \cdot  L_M(\gamma_0)  O_{\gamma_1} (f^{p,\infty} _M) O_{ \gamma_{1,\SO}}  (1_{M^{\SO},p})  \bigg [\tau(M)^{-1} \tau(I_0)   \abs{\ker (\ker^1(\QQ, I_{0}) \to \ker ^1 (\QQ, M )) } \bigg],
\end{multline} 
where \begin{itemize}
	\item $\gamma_0$ runs through a fixed set of representatives of the stable conjugacy classes in $M(\QQ)$ that are elliptic over $\RR$ and good at $\infty$. 
	\item $I_0 : = M_{\gamma_0} ^0$.
	\item $\gamma_1$ runs through the subset of $ \D(I_{0}, M; \adele) : = \ker(\coh^1(\adele,I_0) \to \coh^1(\adele, M))$ consisting of elements whose images in $\E (I_{0}, M ; \adele/\QQ)$ are trivial. Each such $\gamma_1$ determines a conjugacy class in $M(\adele)$ which we also denote by $\gamma_1$. We let $\gamma_{1,\SO}$ be the component of $\gamma_1$ in $M^{\SO} (\QQ_p)$. 
	\item The number $\bigg [\tau(M)^{-1} \tau(I_0)   \abs{\ker (\ker^1(\QQ, I_{0}) \to \ker ^1 (\QQ, M )) } \bigg]$ is none other than the cardinality of $\mathfrak K(I_0/\QQ)$. (This can be shown by combining \cite[\S 9]{kottwitzelliptic} and Weil's conjecture on the Tamagawa number proved by Kottwitz \cite{kottTama}, cf.~\cite[\S 4]{kottwitzannarbor}.)
\end{itemize}

\subsection{}
The last major operation to be applied to (\ref{eq:Fourier applied}) is the Base Change Fundamental Lemma\index[n]{Base Change Fundamental Lemma}, which relates $\SO_{\gamma_{0,\GL}} (k_a)$ to the twisted orbital integrals in Kottwitz's point counting formula. We only need this result for $\GG_m$, in which case it is trivial, and for $\GL_2$, in which case it was initially proved by Langlands \cite{langlandsbasechange}. For an account of the theory for $\GL_n$ see \cite{arthurclozel} and for the proof in the general case see \cite{clozelFL, LabDuke}. 

Observe that the function $k_a \in \mathcal H^{\ur} (M^{\GL}_{\QQ_p})$ defined in (\ref{defnLk_a}) is equal to the image under the base change map (see \S \ref{para:BC}) 
$$\mathcal H^{\ur} (M^{\GL}_{\QQ_{p^a}}) \To \mathcal H^{\ur } (M^{\GL}_{\QQ_p})$$ of the element $p^{-a/2} \phi _a^{M_h}$, resp.~$-\phi_a^{M_h}$, resp.~$-\phi_a^{M_h} \otimes 1$, where $\phi_a^{M_h}$ is as in Definition \ref{defn:phi_a}, when $M= M_1$, resp.~$M_2$, resp.~$M_{12}$. Here when $M= M_{12}$ we have $M^{\GL} = M_h \times \GG_m$, and we write $-\phi_a^{M_h} \otimes 1$ corresponding to this decomposition, where $1$ is the unit of $\mathcal H^{\ur} (\GG_{m,\QQ_{p^a}})$. By the Base Change Fundamental Lemma, we have, for any semi-simple conjugacy class (which is the same as stable conjugacy class) $\gamma_{0,\GL}$ in $M^{\GL} (\QQ)$, the following identity:
\begin{align}\label{eq:BFL applied}
SO_{\gamma_{0,\GL}} (k_a) = \begin{cases}
- \sum_{\delta} e(\delta) TO_{\delta}( \phi_a^{M_h}), & \text{if } M =M_2, \\
- p^{-a/2}\sum_{\delta} e(\delta) TO_{\delta}( \phi_a^{M_h}), & \text{if } M =M_1 ,\\
-\sum_{\delta} e(\delta) TO_{\delta} (\phi_a^{M_h} ) 1_{\ZZ_p^{\times}} (y), & \text{if } M = M_{12},  
\end{cases} 
\end{align} 
where $\delta$ runs through the $\sigma$-conjugacy classes in $M_h(\QQ_{p^a})$ such that it has norm the $M_h$-component of $\gamma_{\GL}$,  $e(\delta)$ denotes the Kottwitz sign of the twisted centralizer of $\delta$ (a reductive group over $\QQ_p$), and in the last case we write  $\gamma_{0,\GL} = (x,y) \in M_h \times \GG_m $. (In fact, by \cite{arthurclozel} or direct verification, the above summation over $\delta$ is either empty or over a singleton.)

The next lemma is sometimes called ``pre-stabilization'' in the literature. \index[n]{pre-stabilization}

\begin{lem}\label{lem:pre-stabilize} Let $F(x,y)$ be a $\CC$-valued function on the set of  compatible pairs $(x,y) $ of a stable conjugacy class $x$ in $M(\QQ)$ and a conjugacy class $y$ in $M(\adele)$. Then we have 
$$ \sum_{\gamma} \iota^M (\gamma) ^{-1}  F(\gamma,\gamma) = \sum _{\gamma_0, \gamma_1} \bar \iota ^{M} (\gamma_0) ^{-1}\abs{\ker (\ker^1(\QQ, I_{0}) \to \ker ^1 (\QQ, M )) } F( \gamma_0, \gamma_1), $$
where on the LHS $\gamma$ runs through the conjugacy classes in $M(\QQ)$ which are $\RR$-elliptic, and on the RHS $\gamma_0$ runs through an arbitrary set of representatives of the stable conjugacy classes in $M(\QQ)$ that are $\RR$-elliptic, and $\gamma_1$ runs through the subset of $\D(I_{0}, M; \adele)$ consisting of elements whose images in $ \E (I_{0}, M ; \adele/\QQ)$ are trivial. Here we have denoted $I_0: = M_{\gamma_0} ^0$. Moreover, if we restrict the summation on the LHS to only those $\gamma$ good at $\infty$, and restrict the summation on the RHS to only those $\gamma_0$ good at $\infty$, we still get an equality.
\end{lem}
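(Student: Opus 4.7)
The plan is to carry out the standard prestabilization argument of Kottwitz (see e.g.~\cite{kottwitzelliptic}), adapted to our setting, in two steps: first pass from the conjugacy class sum on the LHS to a sum indexed by $\D(I_0, M; \QQ)$, and then pass from $\D(I_0, M; \QQ)$ to the adelic $\D(I_0, M; \adele)$ modulo triviality of the obstruction in $\E(I_0, M; \adele/\QQ)$.

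First, I would partition the LHS sum according to stable conjugacy class in $M(\QQ)$, choosing as the representative of each stable class the element $\gamma_0$ appearing on the RHS. Setting $I_0 := M_{\gamma_0}^0$, the set of $M(\QQ)$-conjugacy classes inside the stable class of $\gamma_0$ is identified with the image of $(M_{\gamma_0} \backslash M)(\QQ)$ in $\coh^1(\QQ, M_{\gamma_0})$, which via the inclusion $I_0 \hookrightarrow M_{\gamma_0}$ relates to $\D(I_0, M; \QQ) := \ker(\coh^1(\QQ, I_0) \to \coh^1(\QQ, M))$. A cohomological bookkeeping --- tracking the finite component group $(M_{\gamma_0}/I_0)(\QQ)$ via the exact sequence $1 \to I_0 \to M_{\gamma_0} \to M_{\gamma_0}/I_0 \to 1$ --- should then yield the intermediate identity
$$\sum_{\gamma} \iota^M(\gamma)^{-1} F(\gamma, \gamma) = \bar\iota^M(\gamma_0)^{-1} \sum_{c \in \D(I_0, M; \QQ)} F(\gamma_0, \gamma_c),$$
where $\gamma$ runs through $M(\QQ)$-conjugacy classes in the stable class of $\gamma_0$ and $\gamma_c \in M(\adele)$ is the adelic conjugacy class obtained by localizing the cocycle $c$ and twisting $\gamma_0$ accordingly.

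Next I would apply the snake lemma to the local--global exact sequences for $I_0$ and $M$ to obtain a short exact sequence
$$1 \to K \to \D(I_0, M; \QQ) \to \set{\gamma_1 \in \D(I_0, M; \adele) \mid \mbox{image in } \E(I_0, M; \adele/\QQ) \mbox{ is trivial}} \to 1,$$
with $K := \ker(\ker^1(\QQ, I_0) \to \ker^1(\QQ, M))$. Since $F(\gamma_0, \gamma_c)$ depends on $c$ only through its localization in $\D(I_0, M; \adele)$, this introduces the factor $|K|$, and summing the intermediate identity over stable classes gives the first assertion of the lemma. The refinement concerning ``good at $\infty$'' is then immediate: in each case considered ($M = M_1$, or $M_2$ in the odd case, or $M_{12}$) the condition only constrains the component of $\gamma$ in $M^{\GL}(\RR)$, which is either abelian (when $M^{\GL}$ is a product of $\GG_m$'s) or enters only via its determinant (for $M_1$). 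Either way the condition is preserved by $M(\RR)$-conjugation, hence defines a property of the stable conjugacy class in $M(\QQ)$, and both sides of the identity restrict compatibly.

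The main obstacle will be verifying the intermediate identity, where the replacement of $\iota^M(\gamma)$ by $\bar\iota^M(\gamma_0)$ must be accounted for uniformly across the stable class. Concretely one needs $|M_\gamma(\QQ) / M_\gamma^0(\QQ)|$ for the various inner forms $M_\gamma^0$ of $I_0$ to combine with the number of conjugacy classes mapping to each element of $\D(I_0, M; \QQ)$ to produce the uniform factor $\bar\iota^M(\gamma_0)^{-1} = |(M_{\gamma_0}/I_0)(\QQ)|^{-1}$; this is handled by Galois descent applied to the long exact sequence above, which also shows that the map $(M_{\gamma_0}\backslash M)(\QQ) \to \coh^1(\QQ, M_{\gamma_0})$ and its comparison with $\D(I_0, M; \QQ)$ give exactly the fiber sizes required.
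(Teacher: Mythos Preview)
Your proposal is correct and follows essentially the same two-step approach as the paper: first relate the sum over $M(\QQ)$-conjugacy classes to a sum over $\D(I_0, M; \QQ)$ via the multiplicity formula $\bar\iota^M(\gamma_0)\cdot\iota^M(\gamma)^{-1}$, then pass to $\D(I_0, M; \adele)$ using that the fibers of the localization map all have size $|\ker(\ker^1(\QQ, I_0) \to \ker^1(\QQ, M))|$. The paper's proof is much terser, simply asserting these two facts without the cohomological bookkeeping you outline, but your more detailed argument is sound. One small remark: your justification for the ``good at $\infty$'' refinement via $M(\RR)$-conjugation invariance is not quite the right formulation; what you actually need is that the condition depends only on the $M^{\GL}$-component, and since stable conjugacy coincides with ordinary conjugacy for $\GL_n$ (trivial $\coh^1$), the $M^{\GL}$-components of stably conjugate elements are already $M^{\GL}(\QQ)$-conjugate.
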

\begin{proof}
	The multiplicity of a $M(\QQ)$-conjugacy class $\gamma$ appearing in the set $\D(I_0 , M ;\QQ)$ is equal to $\bar \iota^{M} (\gamma_0)  \cdot \iota^M (\gamma) ^{-1}$. The fibers of the map $\D (I_0, M ;\QQ) \to \D(I_0, M ,\adele)$ all have size $$\abs{\ker (\ker^1(\QQ, I_{0}) \to \ker ^1 (\QQ, M )) }. $$ The lemma then easily follows. 
\end{proof}

We are now ready to prove Theorem \ref{thm:maincomp}. 
\begin{proof}[Proof of Theorem \ref{thm:maincomp}]
By (\ref{eq:Fourier applied}) and Lemma \ref{lem:pre-stabilize}, we have \begin{multline}\label{eq:in the proof of main comp} 
\mathscr C _M\Tr'_M = - 4p^*  k(M)k(G)^{-1} \sum_{\gamma}   \iota^{M} (\gamma)^{-1}  \bigg [\bar v (M_{\gamma} ^0) ^{-1} e (M_{\gamma,\RR} ^0)  \tau(M_{\gamma} ^0)\bigg]  \\  \cdot  SO_{\gamma_{\GL}} (k_a) L_M(\gamma)  O_{\gamma} (f^{p,\infty} _M) O_{ \gamma_{\SO}}  (1_{M^{\SO},p}) ,
\end{multline}
where $\gamma$ runs through conjugacy classes in $M(\QQ)$ that are elliptic over $\RR$ and good at $\infty$. By Harder's formula (see \cite[\S 7.10]{GKM}), we have 
\begin{align*}
\chi (M_{\gamma} ^0) = \bar v (M_{\gamma} ^0) ^{-1} e(M_{\gamma,\RR} ^0) \mathscr D(M_{\gamma,\RR} ^0)\tau(M_{\gamma} ^0).
\end{align*}
By Lemma \ref{lem:scr D} (3) and Lemma \ref{lem:comp mathscr D}, $D(M_{\gamma,\RR} ^0) = 1$. Hence the product in the bracket in (\ref{eq:in the proof of main comp}) is equal to $\chi(M_{\gamma}^0)$, and therefore 
\begin{multline}\label{eq:Harder}
	\mathscr C _M\Tr'_M = - 4p^*  k(M)k(G)^{-1} \sum_{\gamma}   \iota^{M} (\gamma)^{-1}  \chi(M_{\gamma}^0)  \\  \cdot  SO_{\gamma_{\GL}} (k_a) L_M(\gamma)  O_{\gamma} (f^{p,\infty} _M) O_{ \gamma_{\SO}}  (1_{M^{\SO},p}) .
\end{multline}

Denote\index{$p^{**}$}
$$ p^{**} : = \begin{cases}
p^*, & \text{if } M= M_2 \mbox{~or~} M_{12}, \\ 
p^{-a/2} p^*, & \text{if } M= M_1.
\end{cases}$$
By (\ref{eq:BFL applied}) and (\ref{eq:Harder}) we have 
\begin{multline}\label{eq:last eq}
\mathscr C_M \Tr'_M =  4p^{** } k(M)k(G)^{-1} \\ \cdot  \sum_{\gamma,\delta}   \iota^{M} (\gamma)^{-1}  \chi(M_{\gamma} ^0)   e(\delta)TO_{\delta}( \phi_a^{M_h}) L_M(\gamma)  O_{\gamma} (f^{p,\infty} _M) O_{ \gamma_{L}}  (1_{M_l(\ZZ_p)}) ,
\end{multline} where $\gamma_L$ denotes the component of $\gamma$ in $M_l$ under the decomposition $M = M_h \times M_l$ (which only differs from the decomposition $M = M^{\GL} \times M^{\SO}$ when $M = M_{12}$), and $1_{M_l (\ZZ_p)}$ is as in Definition \ref{Defn Tr12}.
To finish the proof we divide into different cases.

\textbf{Case $M= M_{12}$.} 

In Definition \ref{Defn Tr12}, $\delta$ runs through those elements of $\QQ_{p^a} ^{\times}$ with norm $\gamma_0$ and such that the Kottwitz invariant of $\delta$ in $\pi_1 (M_h) _{\Gamma_p} = X_*(\GG_m) = \ZZ$ is equal to the image of $-\mu$. The last condition is equivalent to requiring that $v_p(\delta) = -1$, which is a necessary (and also sufficient) condition for $TO_{\delta} (\phi_a^{M_h}) \neq 0$. Hence we may drop this condition in the summation in Definition \ref{Defn Tr12}. Every term $c(\gamma_0,\gamma,\delta)$ is easily computed to be $2^{-1}$ (with $c_1 = \vol(\GG_m(\RR) / \GG_m(\RR)^0 )^{-1} = 2^{-1}, c_2 = 1$). On the other hand, in (\ref{eq:last eq}) every term $e(\delta)$ is $1$. Comparing  Definition \ref{Defn Tr12} and (\ref{eq:last eq}), we see that it suffices to prove that 
\begin{align}\label{eq:desired in M12}
2^{-1}\delta^{1/2}_{P_{12}(\QQ_p)} (\gamma_h) = 4p^* k(M) k(G)^{-1} \chi (M_{h,\gamma_h})
\end{align}
for $\gamma = \gamma_h \gamma_L$ contributing to (\ref{eq:last eq}). (Here $\gamma_h$ and $\gamma_L$ denote the components of $\gamma$ in $M_h(\QQ)$ and $M_l(\QQ)$.)
We have $\chi (M_{h,\gamma_h}) = \chi(\GG_m) = 2^{-1}$ by Harder's formula, and we have $ k(M) = 2^{m-3}, k(G) = 2^{m-1}$ by Propositions \ref{prop:comp k} and \ref{prop:Tamagawa GL}. 
Moreover, if $\gamma= \gamma_h \gamma_L$ contributes then $v_p (\gamma_h) = -a$ (because $\delta $ should exist) , and therefore in the odd case
$$ \delta _{P_{12}(\QQ_p)} (\gamma_h) = \prod_{\alpha \in \Phi^+ - \Phi_M^+} \abs{\alpha (\gamma_h)}_p =\abs{\gamma_h} _p ^{2m -1} = p ^{(d-2) a } = (p^*)^{2} ,$$ where the contributing roots are $\epsilon_1, \epsilon_1 \pm \epsilon_j,  j \geq 2$. Similarly, in the even case, 
$$ \delta _{P_{12}(\QQ_p)} (\gamma_h) = \prod_{\alpha \in \Phi^+ - \Phi_M^+} \abs{\alpha (\gamma_h)}_p =\abs{\gamma_h} _p ^{2m -2} = p ^{(d-2) a } = (p^*)^{2} ,$$ where the contributing roots are $ \epsilon_1 \pm \epsilon_j,  j \geq 2$. The equality (\ref{eq:desired in M12}) follows, and the proof is finished in this case.

\textbf{Case $M = M_1$.}

First we claim that if $\gamma_0 \in \GL_2(\QQ)$ is semi-simple and $\RR$-elliptic, then $$c_2(\gamma_0) = \tau (\GL_{2,\gamma_0}) =1.$$ In particular, we have \footnote{This equality also follows from the formula for $c$ on p.~174 of \cite{kottwitzannarbor}, the fact that $\tau(\GL_2) =1$, and Lemma \ref{simplification 1 M1}}, $$c(\gamma_0) = c_1(\gamma_0 ) c_2(\gamma_0) = \vol (A_{\GL_2} (\RR) ^0 \backslash \overline{ \GL_{2,\gamma_0} } (\RR) )  ^{-1}. $$

We prove the claim. Write $I_0$ for $\GL_{2,\gamma_0}$. If $I_0 = \GL_2$, then $\tau(I_0) =1$ by Proposition \ref{prop:Tamagawa GL}, and $c_2(\gamma_0) = 1$ by definition. Otherwise $I_0 = T$ is a maximal torus in $\GL_2$ that is elliptic over $\RR$. Observe that $T = \Res_{F/\QQ} \GG_m$ for some imaginary quadratic field $F$, so $\coh^1(\QQ, T) = 0$ by Shapiro's lemma and Hilbert 90. Hence $c_2(\gamma_0) =1$. Now by \cite[(5.1.1)]{kottwitzcuspidal} and Weil's conjecture on Tamagawa numbers proved in \cite{kottTama},
	we have $$\tau (T) c_2(\gamma_0) = \tau(T) \abs{\ker^1(\QQ, T)} = \tau (T) \abs{\ker^1(\Gamma, \widehat T)} = \abs{\pi_0 ( \widehat T ^{\Gamma})}.$$ Thus to show $\tau(T) =1$ it suffices to show that $\widehat T^{\Gamma}$ is connected. We have seen in the proof of Lemma \ref{simplification 1 M1} that $\widehat T^{\Gamma_{\infty}} \subset Z(\widehat \GL_2) $. On the other hand $Z(\widehat \GL_2) \subset \widehat T^{\Gamma}$. Hence $\widehat T^{\Gamma} = Z(\widehat \GL_2) = \CC^{\times}$, which is connected as desired. The claim is proved.
	
	 We continue to consider such $\gamma_0 \in \GL_2(\QQ)$ as in the claim, and write $I_0$ for $\GL_{2,\gamma_0}$. By Harder's formula we have $$\chi  (I_0) = e (I_{0,\RR})\bar v ^{-1} (I_0) \tau(I_0) \abs{\mathscr D (I_{0,\RR})} .$$ Since $I_{0,\RR}$ is either $\GL_{2,\RR}$ or an elliptic maximal torus in $\GL_{2,\RR}$, we have $e(I_{0,\RR}) = \abs{\mathscr D(I_{0,\RR})} = 1$. Hence 
	 $$ \chi  (I_0)  =  e(\overline{I_0}) \vol (A_{\GL_2} (\RR) ^0 \backslash \overline{ I_0})^{-1} \tau(I_0) $$ 
	 where $\overline{I_0}$ is the inner form over $\RR$ of $I_{0,\RR}$ that is anisotropic modulo center.

	 If $\delta \in G(\QQ_{p^a}) $ has norm stably conjugate to some $\gamma_0 \in \GL_2(\QQ)$ and $\gamma_0$ is good at $p$ (i.e., its determinant has valuation $-a$), then we have 
	 $ e(\delta)  = e (\overline {I_0})$, where $\overline {I_0}$ is defined in terms of $\gamma_0$ as above. In fact, this follows from the existence of the (global) inner form $I$ of $I_0$ as in \S \ref{subsubsec:Haar}, the product formula for the Kottwitz signs for $I$, and the observation that for any place finite $v \neq p$, $e(I_{0,\QQ_v}) =1$ since $I_{0,\QQ_v}$ is either a torus or $\GL_{2,\QQ_v}$.  
	 
	From the discussion so far we deduce that for $\delta$ and $\gamma_0$ as in the last paragraph we have $$c(\gamma_0) = e(\delta) \chi(I_0).$$ Moreover, if $\delta\in M_h(\QQ_{p^a})$ is such that $TO_{\delta} (\phi_a^{M_h}) \neq 0$, then necessarily $v_p (\det \delta) = -1$, and it follows easily that the Kottwitz invariant of $\delta$ in $\pi_1(M_h) _{\Gamma_p} \cong \ZZ$ is equal to the image of $-\mu$. It remains to show that $$\delta_{P_1(\QQ_p)} (\gamma_h) ^{1/2} = 4p^{**} k(M) k(G) ^{-1},$$ for any $\gamma = \gamma_h \gamma_L$ contributing to (\ref{eq:last eq}). We have $k(M) = 2^{m-3}, k(G) = 2^{m-1}$ by Propositions \ref{prop:comp k} and \ref{prop:Tamagawa GL}. For $\gamma = \gamma_h \gamma_L$ contributing, we have $v_p(\det \gamma_h) =- a$ (because $\delta$ should exist), and therefore in the odd case
	 	$$ \delta _{P_1(\QQ_p)} (\gamma_h) = \prod_{\alpha \in \Phi^+ - \Phi_M^+} \abs{\alpha (\gamma _h ) }_p =\abs{\det(\gamma_h)} _p ^{2m-2} = p ^{(d-3) a } = (p^{**}) ^2,$$ where the contributing roots are 
	 	$ \epsilon_1,\epsilon_2, \epsilon_1 +\epsilon_2, \epsilon_1 \pm \epsilon_j, \epsilon_2 \pm \epsilon_j, j\geq 3$. In the even case, the contributing roots are $ \epsilon_1 +\epsilon_2, \epsilon_1 \pm \epsilon_j, \epsilon_2 \pm \epsilon_j, j\geq 3$, and $\abs{\det(\gamma_h)} _p ^{2m-2}$ is replaced by $\abs{\det(\gamma_h)} _p ^{2m-3}$, which is still equal to $(p^{**})^2$
. 	 The proof is finished in this case.
	 
\textbf{Case $M = M_2$ (odd case).} 

Similarly to the case $M = M_{12}$, we reduce the proof to proving the following equality:
$$2^{-1}\delta^{1/2}_{P_{12}(\QQ_p)} (\gamma) = 2^{-1} 4p^* k(M) k(G)^{-1} \chi (M_{h,\gamma_h}) . $$	 The extra factor $2^{-1}$ on the RHS in comparison to (\ref{eq:desired in M12}) appears due to the fact that $\mathscr C_M =2$ for $M = M_2$. We have $\chi (M_{h,\gamma_h}) = \chi(\GG_m) = 2^{-1}$ by Harder's formula, and  $ k(M) = 2^{m-2}, k(G) = 2^{m-1}$ by Propositions \ref{prop:comp k} and \ref{prop:Tamagawa GL}.
Also as in the $M_{12}$ case, if $\gamma= \gamma_h \gamma_L$ contributes then  
$$ \delta _{P_{2}(\QQ_p)} (\gamma_h) = \prod_{\alpha \in \Phi^+ - \Phi_M^+} \abs{\alpha (\gamma_h)}_p =\abs{\gamma_h} _p ^{2m -1} = p ^{(d-2) a } = (p^*)^{2},$$ where the contributing roots are $\epsilon_1, \epsilon_1 \pm \epsilon_j,  j \geq 2$.	 The proof is finished in this case. 
	 \end{proof}
	 
At this point we have completed the proof of Theorem \ref{thm:maincomp}. In the next two sections we prove vanishing results that are complementary to Theorem \ref{thm:maincomp}.

\section{A vanishing result, odd case} \label{subsec:odd case vanishing}
\subsection{}\label{para:preparation for vanishing}
Assume we are in the odd case. Consider a Levi subgroup $M^*$ of $G^* = \SO(\underline V)$ of the form considered in \S \ref{subsec:G-endosc arch}. Thus we fix $r, t \in \ZZ_{\geq 0}$, a non-degenerate subspace $\underline W$ of $\underline V$ of codimension $2(r+2t)$, a hyperbolic basis $\mathbb B_{\underline W^{\perp}}$ of $\underline W^{\perp}$, an embedding $$\GG_m^r \times \GL_2^t \isom M^{*, \GL} \subset \SO(\underline W^{\perp})$$ as in (\ref{eq:M}), and obtain $M^*$ as $M^* = M^{*, \GL} \times \SO(\underline W) \subset G^*$. We write $M^{*,\SO}$ for $\SO(\underline W)$. As in \S \ref{para:presentation of endoscopic G-data} and Proposition \ref{prop:odd classification of G-endoscopy}, isomorphism classes in $\mathscr E_{G^*}(M^*)$ have explicit representatives $\fke_{A,B,\fkp}$ for parameters $(A,B, \fkp) \in \mathscr P_{r,t} \times' \mathscr P_{\underline W}$. In complete analogy with \S \ref{para:prepare for main}, we fix 
$\dot {\mathscr E}_{G^*}(M^*)$\index{$\dot {\mathscr E}_{G^*}(M^*)$} to be a subset of these $\fke_{A,B,\fkp} = (M', \lang M', s_{M^*}, \eta_{M^*})$ such that the component of $s_{M^*}$ in $\widehat{M^{*,\SO}}$ is not $-1$ and such that each isomorphism class in $\mathscr E_{G^*}(M^*)$ is represented exactly once. For each $\fke_{A,B,\fkp} = \fke_{A,B, d^+,\delta^+,d^-, \delta^-}  =  (M', \lang M', s_{M^*}, \eta_{M^*}) \in \dot {\mathscr E}_{G^*}(M^*)$, we let  $$ (H, \lang H, s, \eta) : = \mathfrak e_{d^+ + 2\abs{A} +4 \abs{B} , \delta^+,  d^- + 2\abs{A^c} + 4 \abs{B^c}, \delta^-}$$ which is the induced elliptic endoscopic datum for $G^*$ as in Proposition \ref{prop:odd classification of G-endoscopy}. We also view $(H, \lang H, s, \eta)$ as an elliptic endoscopic datum for $G$. Since $H^+$ is non-trivial by our assumption on $s_{M^*}$, the function $f^H$ is defined as in \S \ref{subsec:test functions}. Moreover, as in \S \ref{subsec:test functions}, we have the fixed pair $(j: T_H \to T_G, B_{G,H})$, and a normalization for transfer factors between $H$ and $G$ at all finite places. We fix $M' \hookrightarrow H$ as in \S \ref{para:two maps from endoscopic G data} so as to view $M'$ as a Levi subgroup of $H$, and define $ST^H_{M'} (f^H)$ as in Definition \ref{defn:pre geometric side}. In analogy with (\ref{defn:Tr'}), we define \index{$\Tr'_{M^*}$}
\begin{multline}\label{eq:Tr'M*}
\Tr'_{M^*}   : =  (n^{G^*}_{M^*})^{-1}    \sum _{ \substack {\fke  = (M', \lang M', s_{M^*}, \eta_{M^*}) \\ \in \dot {\mathscr E}_{G^*}(M^*)} } \abs{\Out_{G^*} (\fke)} ^{-1} \tau(G ) \tau(H)^{-1} ST_{M'} ^H (f^H).
\end{multline}  
 
\begin{thm}\label{vanishing odd}Assume that $M^*$ does not transfer to $G$. Then $\Tr'_{M^*} = 0 .$
\end{thm}
\begin{proof}
 By hypothesis at least one of the following conditions holds:
	$$ rt>0 \qquad\text{or} \qquad r \geq 3 \qquad \text{or} \qquad t\geq 2.$$

	Let $\mathscr E(M^*)^{c,\ur}$ be the subset of $\mathscr E(M^*)$ consisting of isomorphism classes of endoscopic data whose groups are cuspidal over $\QQ$ (which is automatic in the odd case) and unramified over $\QQ_p$. 
	Define a set $\dot{\mathscr{E}}(M^*)^{c,\ur}$ of representatives of  $\mathscr E(M^*)^{c,\ur}$ in exactly the same way as in \S \ref{para:dot E for M}. Thus  $\dot{\mathscr{E}}(M^*)^{c,\ur}$ consists of $\fke_{\fkp}(M^*)$ for certain $\fkp = (d^+,\delta^+,d^-,\delta^-)\in \mathscr P_{\underline W}$, which all satisfy that $d^+ \geq 2$.  
	Then the same arguments as in \S\S \ref{pf:1}--\ref{pf:2} yield a decomposition of $\Tr'_{M^*}$ into a sum as follows. The indexing set for the sum is the set of pairs $(\fke,\gamma')$, where $\fke =(M',\lang M', s'_{M^*}, \eta_{M^*})$ runs through $\dot {\mathscr E}(M^*)^{c,\ur}$, and for each fixed $\fke$, $\gamma'$ runs through a set of representatives in $M'(\QQ)$ of the semi-simple $\RR$-elliptic $(M^*, M')$-regular stable conjugacy classes. For each $(\fke,\gamma')$, the summand is a complex number times 
	\begin{align}\label{eq:first eq for vanishing}
\sum_{A,B}  SO_{\gamma'}((f^{H,p,\infty})_{M'}) SO_{\gamma'}(f^H_{p,M'}) \sum_{\varphi_H \in \Phi_H(\varphi_{\mathbb V^*})} \det(\omega_*(\varphi_H)) \Phi^H_{M'}(\gamma'^{-1}, \Theta_{\varphi_H})
	\end{align}
	where:
	\begin{itemize}
		\item The first summation is over all subsets  $A$  of $[r]$ (recall that this is our short-hand notation for $\set{1,2,\cdots, r}$) and all subsets $B$ of $[t]$. 
		\item For each $(A,B)$, we define $(H,\lang H, s ,\eta)$ with respect to $\fke$ and $(A,B)$, and view $M'$ as a Levi subgroup of $H$, as explained in \S \ref{para:preparation for vanishing}. 
	\end{itemize}
	
	We now fix $(A,B)$ and analyze the terms  $SO_{\gamma'}((f^{H,p,\infty})_{M'})$ and $ SO_{\gamma'}(f^H_{p,M'})$. 
	If there is one finite place $v \neq p$ such that $M^*_{\QQ_v}$ does not transfer to $G_{\QQ_v}$, then $SO_{\gamma'}((f^{H,p,\infty})_{M'})=0$ by the proof of \cite[Lem.~6.3.5 (ii)]{morel2010book}. In this case (\ref{eq:first eq for vanishing}) is zero for all $(\fke, \gamma')$, and the theorem is already proved. Thus we assume that $M^*_{\QQ_v}$ transfers to a Levi subgroup $M_v$ of $G_{\QQ_v}$ at each finite place $v\neq p$. In this case, the localization at $v$ of $\fke$ can be viewed as an endoscopic datum for $M_v$, and there is a normalization $(\Delta^{M_v}_{M'})^{A,B}_v$ of transfer factors between $M'$ and $M_v$ inherited from the normalization $(\Delta^G_H)_v$ of transfer factors between $H$ and $G$ at $v$ fixed in \S \ref{normalizing the transf factors}. For almost all $v$, $(\Delta^{M_v}_{M'})^{A,B}_v$ is the canonical unramified normalization (associated to the hyperspecial subgroup of $M_v(\QQ_v)$ determined by the hyperspecial subgroup of $G(\QQ_v)$ determined by some reductive model of $G$ over some Zariski open in $\spec \ZZ$), and is hence independent of $(A,B)$. Define  
	$$ \epsilon^{p,\infty}(A,B) : = \prod_{v \neq p,\infty} \frac{(\Delta_{M'}^{M_v}) ^{A,B}_v}{(\Delta_{M'}^{M_v})^{\emptyset,\emptyset}_v} ,$$ which is a finite product. 
	Then as an analogue of Proposition \ref{prop:LS for Levi}, $SO_{\gamma'}((f^{H,p,\infty})_{M'})$ is equal to $\epsilon^{p,\infty}(A,B)$
	times a number independent of $(A,B)$.

By Proposition \ref{prop:main computation at p}, we know that 
$ SO_{\gamma'}(f^H_{p,M'})$ is a linear combination of $\nabla_i(A)$, $\nabla_j(B)$, and $1$ (where $i\in [r]$ and $j\in[t]$) with coefficients independent of $(A,B)$. We conclude that  (\ref{eq:first eq for vanishing}) is a linear combination of the following $r+t+1$ expressions: 
\begin{align*}
	R_i & : =
\sum_{A,B} \nabla_i(A) \epsilon^{p,\infty}(A,B)   \sum_{\varphi_H \in \Phi_H(\varphi_{\mathbb V^*})} \det(\omega_*(\varphi_H)) \Phi^H_{M'}(\gamma'^{-1}, \Theta_{\varphi_H}), \qquad 1\leq i \leq r, \\ T_j & :=
\sum_{A,B} \nabla_j(B) \epsilon^{p,\infty}(A,B) \sum_{\varphi_H \in \Phi_H(\varphi_{\mathbb V^*})} \det(\omega_*(\varphi_H)) \Phi^H_{M'}(\gamma'^{-1}, \Theta_{\varphi_H}), \qquad 1 \leq j \leq t ,\\ S & : = 
\sum_{A,B}  \epsilon^{p,\infty}(A,B)  \sum_{\varphi_H \in \Phi_H(\varphi_{\mathbb V^*})} \det(\omega_*(\varphi_H)) \Phi^H_{M'}(\gamma'^{-1}, \Theta_{\varphi_H}).
\end{align*}
	We shall show that these $r+t+1$ expressions are all zero, which will prove the theorem. 
	
	We first seek to compute the term $\sum_{\varphi_H \in \Phi_H(\varphi_{\mathbb V^*})} \det(\omega_*(\varphi_H)) \Phi^H_{M'}(\gamma'^{-1}, \Theta_{\varphi_H})$ for each fixed $(A,B)$, in a way similar to \S \ref{subsec:comp K}. 
	Fix an elliptic maximal torus $T_{M'}$ of $M'_{\RR}$ such that $\gamma' \in T_{M'}(\RR)$. As usual we have $M' = M^{*,\GL} \times M^{\prime,\SO}$, so necessarily $T_{M'}$ is a direct product of (1) the direct factor $\GG_m^r$ of $M^{*,\GL}$, (2) an elliptic maximal torus in the direct factor $\GL_2^t$ of $M^{*,\GL}$, and (3) an elliptic (anisotropic) maximal torus $T_{M^{\prime,\SO}} = T_{M^{\prime,\SO,+}} \times T_{M^{\prime,\SO,-}}$ in $M^{\prime , \SO} = M^{\prime,\SO,+} \times M^{\prime,\SO,-}$. We denote the product of (1) and (2) by $T_{M^{*,\GL}}$. 
	Note that all of $R_i, T_j, S$ can be viewed as continuous functions in $\gamma'$ varying in $T_{M'}(\RR)$ (cf.~\S \ref{subsubsec:mathbb V}).  Hence we may and shall assume the following condition:
	\begin{enumerate}
		\item[($\dagger$)] The $r$ components of $\gamma'$ in $\GG_m^r \subset M^{*,\GL}$ are distinct from each other and distinct from the inverse of each other. 
	\end{enumerate}
	  Let $r'$ be the number such that exactly $r'$ among the $r$ components of $\gamma'$ in $\GG_{m}^r$ are positive. 
	
	Fix an elliptic maximal torus $T_{M^*}$ in $M^*_{\RR}$ of the form $T_{M^{*,\GL}} \times T_{M^{*,\SO}}$, where $T_{M^{*,\GL}}$ is as above and $T_{M^{*,\SO}}$ is an elliptic (anisotropic) maximal torus in $M^{*,\SO}$. Fix an admissible isomorphism $j_{M^*}: T_{M'} \isom T_{M^*}$ of the form $\id_{T_{M^{*,\GL}}} \times j_{M^{*,\SO}}$, where $j_{M^{*,\SO}}$ is an admissible isomorphism $T_{M^{\prime,\SO}} \isom T_{M^{*,\SO}}$.  
	As in \S \ref{para:B_0}, for any choice of Borel subgroup $B_0$ of $G^*_{\CC}$ containing $T_{M^*,\CC}$, we obtain $m$ cocharacters of $T_{M^*,\CC}$ forming a basis of $X_*(T_{M^*})$. We denote them by \index{$\tau_{0_i}, \tau_i$}
	$$ \tau_{0_1},\cdots,\tau_{0_{r+2t}}, \tau_1,\cdots,\tau_{m-r-2t}. $$ Since we are in the odd case, by making different choices of $B_0$ we can arbitrarily permute the $\tau$'s and replace an arbitrary number of them by their inverses. By similar arguments as in \S \ref{para:B_0}, we can choose $B_0$ such that the following conditions are satisfied. (Here condition \textbf{C} depends on the assumption ($\dagger$) above.) 
	\begin{description}
		\item[\bf A] For each $1\leq i\leq r$, $\tau_{0_i}$ is a cocharacter of the direct factor $\GG_m^r$ of $M^{*,\GL}$. Moreover, there is a permutation $\delta \in \mathfrak S_r$ such that for each $1\leq i\leq r$, $\tau_{0_i}$ is either the identity cocharacter or the inverse of the identity cocharacter of the $\delta(i)$-th copy of $\GG_m$. 
		\item[\bf B] For each $1\leq j \leq t$, $\tau_{0_{r+2j-1}}$ and $\tau_{0_{r+2j}}$ are cocharacters of the $j$-th copy of $\GL_2$ in $M^{*,\GL}$. Moreover, these two are simultaneously $\GL_2$-conjugate to the following cocharacters of $\GL_2$: 
		$$z \longmapsto  \begin{pmatrix}
			z \\ & 1 
		\end{pmatrix}    \qquad \text{and} \qquad z \longmapsto  \begin{pmatrix}
			1\\ & z 
		\end{pmatrix}.$$
	\item[\bf C] Let $\set{\epsilon_1,\cdots,\epsilon_r}$ be the basis of $X^*(\GG_{m}^r)$ dual to the basis $\set{\tau_{0_1},\cdots, \tau_{0_r}}$ of $X_*(\GG_m^r)$. We also view each $\epsilon_i$ as a character on $T_{M^*}$, via the projection from $T_{M^*}$ to the direct factor $\GG_m^r$ of $T_{M^{*,\GL}}$.  For each $1 \leq i \leq r$, we require that 
	\begin{align}\label{eq:cond pos}
	\epsilon_i(\gamma')>0  \text{ if and only if }  i \leq r'. 
	\end{align} 
	For all $1 \leq i < j \leq r'$, or $r'+1 \leq i < j \leq r$, we require that 
	\begin{align}\label{eq:cond increasing}
 \frac{\epsilon_i(\gamma'^{-1}) }{\epsilon_j(\gamma'^{-1})} \in ] 0, 1[,
	\end{align} and 
\begin{align}\label{eq:cond small}
\abs{\epsilon_i(\gamma'^{-1})} < 1. 
\end{align}
	\item[\bf D] Let $n^-$ be the dimension of $T_{M^{\prime, \SO, -} , \CC}$. For each $1 \leq i \leq n^-$, $j_{M^*}^{-1} \circ 
	\tau_i$ is a cocharacter of $T_{M^{\prime, \SO, -} , \CC}$.    
	\end{description}
	
	The pair $(j: T_H \isom T_G, B_{G,H})$ fixed in \S \ref{subsec:test functions} can be transferred to a pair $(\underline j: T_H \isom T_{G^*}, B_{G^*,H})$ as follows. We fix an anisotropic maximal torus $T_{G^*}$ in $G^*_{\RR}$ and an isomorphism $\nu : T_G \isom T_{G^*}$ coming from any inner twisting $G_{\CC} \isom G^*_{\CC}$ in the canonical $G^{*}(\CC)$-conjugacy class of such inner twistings. Then we define $\underline j:  =\nu \circ  j $, and define $B_{G^*,H}$ to be the Borel subgroup  of $G^*_{\CC}$ containing $T_{G^*}$ such that $\nu$ relates all $B_{G,H}$-positive roots on $T_{G,\CC}$ with $B_{G^*,H}$-positive roots on $T_{G^*,\CC}$. From $(\underline j, B_{G^*,H})$, we obtain an ordered $m$-tuple of cocharacters of $T_{G^*,\CC}$ \index{$\underline \rho_1, \underline \rho_2,\cdots$} $$ \underline \rho_1,\cdots,\underline \rho_m $$ similarly as in \S \ref{para:rho's}. 
	Define an isomorphism \index{$i_{G^*}(A,B)$} $i_{G^*}(A,B): T_{M^*,\CC} \isom T_{G^*,\CC}$ by the following rule. Write $m^{\pm}$ for the absolute ranks of $H^{\pm}$, and $n^{\pm}$ for the absolute ranks of $M^{\prime,\SO, \pm}$.  Thus we have 
	\begin{align*}
		m^+ & = n^+ + \abs{A} + 2\abs{B}, \\  m^- & = n^- + \abs{A^c} + 2 \abs{B^c}.
	\end{align*}  Let $\sigma \in \mathfrak S_{m}$ be the unique permutation such that $\sigma^{-1}$ is increasing on $\set{1,2,\cdots, m^-}$ and on $\set{m^-+1,m^- +2, \cdots, m}$, and 
\begin{multline*}
\sigma^{-1}(\set{1,\cdots,m^-})   \\ = A^c \cup \set{r+2j-1, r+2j\mid j\in B^c} \cup \set{r+2t+1,\cdots, r+2t +n^-}.
\end{multline*}  We then require that $i_{G^*}(A,B)$ sends $\tau_{0_1},\cdots, \tau_{0_{r+2t}},\tau_1,\cdots , \tau_{m-r-2t}$ respectively to $\underline \rho_{\sigma(1)}, \cdots, \underline \rho_{\sigma(m)}$. Our $i_{G^*}(A,B)$ is a direct analogue of $i_G(A,B)$ in Definition \ref{defn:i_G(A)}, an it enjoys similar properties as in Lemmas \ref{lem:i_G and i_H} and \ref{lem:B_H'}, with $j$ and $j_M$ replaced by $\underline j$ and $j_{M^*}$. Let $B_{M^*}: = B_0 \cap M^*$, and let 
\begin{align}\label{eq:Delta_{jM*}}
\Delta^{A,B}_{j_{M^*}, B_{M^*}} : = (-1)^{q(G_{\RR}) + q(H_\RR) + q(M^*_{\RR}) + q(M'_{\RR})} \Delta_{j_{M^*}, B_{M^*}}. 
\end{align}
$$ $$
	By \cite[Prop.~3.2.5]{morel2011suite} (cf.~Proposition \ref{prop:morel transfer}) and similar arguments as in \S \ref{para:investigate diff}, and the proof of Lemma \ref{lem:B_H=B_H'}, we have \begin{multline}\label{eq:real transfer in vanishing}
	\sum_{\varphi_H \in \Phi_H(\varphi_{\mathbb V^*})} \det(\omega_*(\varphi_H)) \Phi^H_{M'}(\gamma'^{-1}, \Theta_{\varphi_H}) \\ = \sign(\sigma)  \epsilon_{R}(j_{M^*}(\gamma'^{-1})) \epsilon_{R_H}(\gamma'^{-1}) \Delta^{A,B}_{j_{M^*}, B_{M^*}} (\gamma', j_{M^*}(\gamma')) \Phi^{G^*}_{M^*} (j_{M^*}(\gamma'^{-1}), \Theta^H_{\mathbb V^*}).	  
	\end{multline}
	Here,
	\begin{itemize}
		\item $\sigma$ is the permutation as above, used to define $i_{G^*}(A,B)$. 
		\item $R$ is the set of real roots of $(G^*_{\CC}, T_{M^*,\CC})$, and $\epsilon_R(t)$ is $-1$ to the number of $B_0$-positive roots $\alpha$ in $R$ such that $0<\alpha(t)<1$.
		\item $R_H$ is the set of real roots of $(H_{\CC}, T_{M',\CC})$, and $\epsilon_{R_H}(t')$ is $-1$ to the number of $\alpha \in R_H$ such that $0<\alpha(t')<1$ and such that $\alpha \circ (j_{M^*})^{-1} \circ i_{G_*}(A,B)^{-1} \circ \underline j \in X^*(T_H)$ is a $B_H$-positive root. 
		\item $\Phi^{G^*} _{M^*} (\cdot, \Theta^H_{{\mathbb V^*}} )$ is defined analogously as $\Phi^G_M(\cdot, \Theta)_{\Endos}$ in (\ref{eq:n_eds})  and (\ref{eq:Phi^G_M endos}), with the role of $\mathbb V$ played by $\mathbb V^*$, and the role of $R_{\Endos}$ in (\ref{eq:n_eds}) played by the root system $$R_{H,\gamma'} : = \set{\alpha \in R_H\mid  \alpha (\gamma') > 0 }.$$
		
	\end{itemize}

	We analyze how the terms on the right hand side of (\ref{eq:real transfer in vanishing}) depend on $(A,B)$. 
 We observe that $\epsilon_R(j_{M^*}(\gamma'^{-1}))$ is independent of $(A,B)$, while $R_H $ and $R_{H,\gamma'}$ as above depend only on $A$, not on $B$. To simplify notation we denote \index{$\Phi(\gamma',A)$}
\begin{align}
\label{eq:defn Phi(A)}
\Phi (\gamma', A ) : =    \Phi^{G^*} _{M^*} (j_{M^*} (\gamma'^{-1}) , \Theta_{{\mathbb V^*}} ^H).
\end{align}	 and \index{$R_A$} \index{$R_{A,\gamma'}$}
\begin{align}\label{eq:defn R_A}
R_A: = R_H , \quad R_{A,\gamma'} : = R_{H, \gamma'}.
\end{align}
 	
 	We claim that $\epsilon_{R_H }(\gamma'^{-1})$ is independent of $(A,B)$. Indeed, the roots $\alpha\in R_H$ such that $\alpha \circ (j_{M^*})^{-1} \circ i_{G_*}(A,B)^{-1} \circ \underline j $ are  $B_H$-positive are exactly $\epsilon_i +\epsilon_j$ and  
 	$\epsilon_i - \epsilon_j$ where $i<j$ and $i,j$ simultaneously belong to one of $A$ and $A^c$, together with $\epsilon_i$ for all $i \in [r]$, together with certain characters of the direct factor $T_{M^{^*,\GL}} \cap \GL_2^t$ of $T_{M^*}$ constituting a set independent of $(A,B)$.  Among them, those satisfying $0<\alpha(\gamma'^{-1})<1$ are, by (\ref{eq:cond pos})  (\ref{eq:cond increasing}) (\ref{eq:cond small}), exactly $\epsilon_i +\epsilon_j$ and   
 	$\epsilon_i - \epsilon_j$ where $i<j$ and $i,j$ simultaneously belong to one of the four sets $\set{u\in A \mid u \leq r'}, \set{u\in A^{c}\mid u\leq r'}, \set{ u\in A\mid u >r'} , \set{u \in A^c \mid u > r'}$, together with certain other roots constituting a set  independent of $(A,B)$. The total number of   $\epsilon_i +\epsilon_j$ and   
 	$\epsilon_i - \epsilon_j$ where $i<j$ and $i,j$ simultaneously belong to one of the four sets as above is obviously even. Our claim follows. 
 	
 		In the rest of the proof, we write ``$\cost$''\index{$\cost$} for any quantity that is independent of $(A,B)$. By (\ref{eq:Delta_{jM*}}), (\ref{eq:real transfer in vanishing}), and the above analysis, we have 
\begin{align}\label{eq:first cost}
\sum_{\varphi_H \in \Phi_H(\varphi_{\mathbb V^*})} \det(\omega_*(\varphi_H)) \Phi^H_{M'}(\gamma'^{-1}, \Theta_{\varphi_H})   = \cost \sign(\sigma) (-1)^{q(H_\RR)} \Phi(\gamma',A).
\end{align} 		
 
We now simplify $\sign(\sigma)$ and $(-1)^{q(H_\RR)}$. Define $\omega_0(A)$\index{$\omega_0(A)$} to be the sign of the element $\sigma_A\in\mathfrak S_r$ which sends $\set{1,2,\cdots, \abs{A^c}}$ increasingly to  $A^c$ and sends $\set{\abs{A^c}+1,\cdots, r}$ increasingly to $A$. If we view $\sigma_A$ as an element of $\mathfrak S_{m}$, then $\sigma_A^{-1}\circ\sigma^{-1}$ sends $\set{1,\cdots,m^-}$ increasingly to $$  \set{1,\cdots, \abs{A^c}}\cup \set{r+2j-1, r+2j\mid j\in B^c} \cup \set{r+2t+1,\cdots, r+2t +n^-} ,$$ and sends $\set{m^-+1,\cdots, m }$ increasingly to $$ \set{\abs{A^c}+1,\cdots, r} \cup \set{r+2j-1, r+2j\mid j\in B^c} \cup \set{r+2t  +n^-+1, \cdots, m}.$$ From this, one sees that the sign of $\sigma_A^{-1} 
	\circ \sigma^{-1}$ is $(-1)^{\abs{A} n^-}$ (since all $n^-$ elements of $\set{r+2t+1,\cdots, r+2t +n^-}$ are greater than all $\abs{A}$ elements of $\set{\abs{A^c}+1,\cdots, r}$). Hence we have 
	\begin{align}\label{eq:sign sigma}
\sign(\sigma) = \omega_0(A) (-1) ^{\abs{A} n^-} .
	\end{align} 
	As for $(-1)^{q(H_{\RR})}$, we compute
	\begin{multline*}
		2 q(H_{\RR})  =  m^+(m^+ +1) + m^- (m^- +1) \\
		= (n^+ + \abs{ A} + 2\abs{B}) (n^+ + \abs{ A} + 2\abs{B} +1) + (n^- + \abs{ A^c} + 2\abs{B^c} )(n^- + \abs{ A^c} + 2\abs{B^c} +1) ,
	\end{multline*}
	and so 
	\begin{align}\label{eq:q(H)}
		q(H_{\RR}) \equiv \cost + (m+1)(\abs{A}+ 2\abs{B} ) \equiv \cost + (m+1) \abs{A} \mod 2.
	\end{align}

	Plugging (\ref{eq:sign sigma}) and (\ref{eq:q(H)}) into (\ref{eq:first cost}), we get	\begin{align*}
		\sum_{\varphi_H \in \Phi_H(\varphi_{\mathbb V^*})} \det(\omega_*(\varphi_H)) \Phi^H_{M'}(\gamma'^{-1}, \Theta_{\varphi_H}) = \cost \omega_0(A) (-1)^{\abs{A}(n^- +m +1)} \Phi(\gamma',A). 
	\end{align*} 
	Hence \begin{align} \label{eq:temp1}
		R_i & = \cost  \sum_{A,B} \nabla_i(A) \epsilon^{p,\infty}(A,B) \omega_0(A) (-1)^{\abs{A}(n^- +m +1)} \Phi(\gamma',A), \\ \label{eq:temp2}
		 	T_j & = \cost  \sum_{A,B} \nabla_j(B) \epsilon^{p,\infty}(A,B) \omega_0(A) (-1)^{\abs{A}(n^- +m +1)} \Phi(\gamma',A),\\ \label{eq:temp3}
		 	S & = \cost  \sum_{A,B}  \epsilon^{p,\infty}(A,B) \omega_0(A) (-1)^{\abs{A}(n^- +m +1)} \Phi(\gamma',A).
	\end{align}

	We now compute $\epsilon^{p,\infty}(A,B)$. 
	Let $(H,\lang H, s,\eta)$ be determined by $(A,B)$. For each  place $v$, as explained in Remark \ref{rem:pure inner form}, the choice of $\phi_{V_{\QQ_v}}: V_{\QQ_v} \otimes \overline \QQ_v \isom \underline V_{\QQ_v}\otimes {\overline \QQ_v}$ and the resulting pure inner twist $(\psi_{V_{\QQ_v}}, u_{V_{\QQ_v}})$ allows us to pass between normalizations of transfer factors between $H$ and $G$ and  between $H$ and $G^*$ at $v$. Hence we obtain from $(\Delta^G_H)_v$ a normalization $(\Delta^{G^*}_H)_v$ of transfer factors between $H$ and $G^*$ at $v$, and then inherit from the latter a normalization $(\Delta^{M^*}_{M'})^{A,B}_v$ of transfer factors between $M'$ and $M^*$ at $v$. For each finite $v \neq p$, we have
	$$\frac{(\Delta_{M'}^{M^*}) ^{A,B}_v}{(\Delta_{M'}^{M^*})^{\emptyset,\emptyset}_v} = \frac{(\Delta_{M'}^{M_v}) ^{A,B}_v}{(\Delta_{M'}^{M_v})^{\emptyset,\emptyset}_v}, $$
	and so
	 	$$ \epsilon^{p,\infty}(A,B) = \prod_{v \neq p,\infty} \frac{(\Delta_{M'}^{M^*}) ^{A,B}_v}{(\Delta_{M'}^{M^*})^{\emptyset,\emptyset}_v} .$$
 Recall that the normalizations $(\Delta^G_H)_v$ for all places $v$ satisfy the global product formula. We claim that 
	$ (\Delta^{M^*}_{M'})^{A,B}_{v}$ for all $v$ also satisfy the global product formula, for which we provide an argument that also works in the even case. Recall from Remarks \ref{rem:remember phi_V} and \ref{rem:pure inner form} that for each $v$ we have the freedom of changing $\phi_{V_{\QQ_v}}: V_{\QQ_v} \otimes_{\QQ_v} \overline {\QQ}_v \isom \underline V_{\QQ_v} \otimes_{\QQ_v} \overline {\QQ}_v$ by composing it with an element of $G^*(\overline \QQ_v)$. Also recall the compatibility condition (1) imposed in \S \ref{para:local global comp}. Thus for the sake of proving the claim,  we may replace each $\phi_{V_{\QQ_v}}$ by the isomorphism $V \otimes_{\QQ} \overline \QQ_v \isom \underline V \otimes_{\QQ} \overline \QQ_v$ induced by the global $\phi_V : V\otimes_{\QQ}\overline \QQ \isom \underline V_{\QQ} \otimes _{\QQ} \overline \QQ$. Then one sees that $(\Delta^{G^*}_H)_v$ for all $v$ satisfy the global product formula, since the local cocycles $u_{V_{\QQ_v}} : \rho \mapsto \lix^{\rho} \phi_{V_{\QQ_v}} \phi_{V_{\QQ_v}}^{-1}$ come from the global cocycle $u_V: \rho \mapsto \lix^{\rho} \phi_V \phi_V^{-1}$. Therefore the inherited normalizations $(\Delta^{M^*}_{M'})^{A,B}_{v}$ also satisfy the global product formula.

	By our claim, the product $\prod_v (\Delta^{M^*}_{M'})^{A,B}_{v}$ over all places $v$ is  independent of $(A,B)$. Hence 	$$ \epsilon^{p,\infty}(A,B) = \prod_{v \in \set{p,\infty}} \frac{(\Delta_{M'}^{M^*})^{\emptyset,\emptyset}_v} {(\Delta_{M'}^{M^*}) ^{A,B}_v} .$$
	Now $V_{\QQ_p}$ is quasi-split by our assumption that $G_{\QQ_p}$ is unramified (in particular split) and by Proposition \ref{prop: even TFAE}. Hence there exists $g\in G^*(\overline \QQ_p)$ such that $g\circ \phi_{V_{\QQ_p}}$ is defined over $\QQ_p$. (Clearly we can find $g' \in \mathrm{O}(\underline V)(\overline \QQ_p)$ such that $g' \circ \phi_{V_{\QQ_p}}$ is defined over $\QQ_p$. We can then construct $g$ by left multiplying $g'$ by any element of $\mathrm{O}(\underline V)(\QQ_p)$ of determinant $-1$, which exists.) It then follows that   $(\Delta^{M^*}_{M'})^{A,B}_{p}$ is the canonical unramified normalization associated to a hyperspecial subgroup of $M^*(\QQ_p)$ that is independent of $(A,B)$.\footnote{In the current odd case, all hyperspecial subgroups of $M^*(\QQ_p)$ are conjugate under $M^*(\QQ_p)$, so the canonical unramified normalizations associated to all hyperspecial subgroups are actually equal to each other. This is no longer true in the even case. Nevertheless, the statement in the text remains true in the even case, as long as there exists $g\in G^*(\overline \QQ_p)$ such that $g\circ \phi_{V_{\QQ_p}}$ is defined over $\QQ_p$.} Hence $(\Delta^{M^*}_{M'})^{A,B}_{p}$ is independent of $(A,B)$. We conclude that 	$$ \epsilon^{p,\infty}(A,B) =   \frac{(\Delta_{M'}^{M^*})^{\emptyset,\emptyset}_\infty} {(\Delta_{M'}^{M^*}) ^{A,B}_\infty} .$$
	
	By the same argument as in the proof of Proposition \ref{prop:computing tasho} (see the ``claim'' in that proof),  the Whittaker normalization between $M'$ and $M^*$ at $\infty$ is inherited from the Whittaker normalization between $H$ and $G^*$ at $\infty$. The former is independent of $(A,B)$. Hence $\epsilon(A,B)$ is up to a non-zero multiplicative constant equal to the ratio of the Whittaker normalization between $H$ and $G^*$ at $\infty$ to the normalization   $(\Delta^{G^*}_H)_{\infty}$. This ratio is the same as the ratio of the Whittaker normalization between $H$ and $G$ to $(\Delta^G_H)_{\infty} = \Delta_{j,B_{G,H}}$, which is equal to   
	$$ (-1) ^{\ceil{m^+/2} +1 } = (-1) ^{\ceil{\frac{n^+ + \abs{A} + 2 \abs{B} } {2}}  +1 } $$ as shown in the proof of Proposition \ref{prop:computing tasho}. 
	When $n^+$ is even, the above is equal to $\cost (-1) ^{\ceil{\abs{A} /2} + \abs{B}		}$. When $n^+$ is odd, the above is equal to $\cost (-1) ^{\floor{\abs{A} /2} + \abs{B}		}$. In both cases, taking into account the equality $m = n^+ + n^- + r + 2t$, we obtain:
	\begin{align*}
\epsilon^{p,\infty}(A,B) (-1)^{\abs{A}(n^- +m +1)} = \cost  (-1) ^{r\abs{A } + \floor{\abs{A} /2}  + \abs{B}}.
	\end{align*}Plugging this into (\ref{eq:temp1}), (\ref{eq:temp2}), and (\ref{eq:temp3}), we obtain 
\begin{align} \label{eq:new defn of R}
 R_i & = \cost \sum_{A, B} \nabla_i(A)  \omega_0(A) (-1) ^{r\abs{A } + \floor{\abs{A} /2}  + \abs{B}} \Phi (\gamma', A) , \\  \label{eq:new defn of T}
 T_j  & = \cost \sum_{A,B} \nabla_j(B) \omega_0(A) (-1) ^{r\abs{A } + \floor{\abs{A} /2}  + \abs{B}}\Phi(\gamma', A)  , \\ \label{eq:new defn of S}
S & = \cost \sum_{A, B}  \omega_0(A) (-1) ^{r\abs{A } + \floor{\abs{A} /2}  + \abs{B}} \Phi(\gamma', A ), 
\end{align}where $A$ runs through subsets of $[r]$ and $B$ runs through subsets of $[t]$. 
We need to show that the right hand sides are all zero. This we accomplish in the next proposition. \end{proof}
	
	\begin{prop}\label{prop:step 2 in odd vanishing} Assume $rt> 0$, or $ r \geq 3$, or $t \geq 2$. 
The right hand sides of (\ref{eq:new defn of R}) (\ref{eq:new defn of T})  (\ref{eq:new defn of S}) are all zero.
	\end{prop}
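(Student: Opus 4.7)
The plan is first to separate the sum over $B$ from the sum over $A$, using that $\Phi(\gamma', A)$ is independent of $B$ (remark before (\ref{eq:defn Phi(A)})). The $B$-factor in $R_i$ and $S$ is $\sum_{B\subseteq[t]}(-1)^{|B|} = (1-1)^t$, vanishing as soon as $t\geq 1$; the $B$-factor in $T_j$ is $\sum_B\upsilon_j(B)(-1)^{|B|}$, and writing $\upsilon_j(B) = 2\mathbf{1}_{j\in B}-1$ and splitting reduces it to a multiple of $\sum_{B'\subseteq[t]\setminus\{j\}}(-1)^{|B'|}$, vanishing when $t\geq 2$. Under the hypothesis the only situations left are $(t=0,\,r\geq 3)$ for $R_i, S$ and $(t=1,\,r\geq 1)$ for $T_j$; in the latter case the non-zero constant $B$-factor makes $T_j$ proportional to the $A$-sum defining $S$. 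The proposition thus reduces to showing that
\[
\Sigma(\gamma') := \sum_{A\subseteq[r]}\omega_0(A)(-1)^{r|A|+\lfloor|A|/2\rfloor}\Phi(\gamma',A) = 0
\]
for every $r\geq 1$, and that the weighted variant $\Sigma_i(\gamma')$, obtained by inserting an extra factor $\epsilon_i(A)$, vanishes for $r\geq 3$ and each $i\in[r]$.

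The strategy for these $A$-sums is a family of sign-reversing involutions $\iota_k:A\mapsto A\triangle\{k\}$, $k\in[r]$. A direct bookkeeping of the riffle-shuffle sign gives $\omega_0(A\cup\{k\})/\omega_0(A) = (-1)^{|A^c|+k}$ for $k\notin A$, and combining this with the elementary change in $(-1)^{r|A|+\lfloor|A|/2\rfloor}$ yields
\[
\omega_0(A\triangle\{k\})(-1)^{r|A\triangle\{k\}|+\lfloor|A\triangle\{k\}|/2\rfloor} = (-1)^{k+1}\,\omega_0(A)(-1)^{r|A|+\lfloor|A|/2\rfloor}.
\]
The main technical step is then to establish the corresponding sign law for $\Phi(\gamma',A)$ under $\iota_k$. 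Guided by the analysis in \S\ref{KH M12} and by the analogues Propositions \ref{effect of switching to Phi_endos} and \ref{effect of inverting to Phi_endos}, the expected law is $\Phi(\gamma',A\triangle\{k\}) = (-1)^k\Phi(\gamma',A)$, in which the sign records the interplay between the simple reflection of $\Omega(G^*_{\CC}, T_{M^*,\CC})$ in the coordinate of the $k$-th $\GG_m$-factor of $M^*$, the effect of moving a single hyperbolic plane between $H^+$ and $H^-$ on the real sub-root-system $R_{H,\gamma'}$ and on the signs $\epsilon_R, \epsilon_{R_H}$, and the attendant reorganization of the discrete series constant $\bar c_{R_{H,\gamma'}}$. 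Granting this law, the integrand of $\Sigma$ transforms under $\iota_k$ by the total sign $(-1)^{k+1}(-1)^k = -1$, so $\Sigma$ vanishes by pairing; and for $\Sigma_i$ the integrand picks up the additional factor $\epsilon_i(A\triangle\{k\})/\epsilon_i(A) = (-1)^{\delta_{ik}}$, so choosing any $k\neq i$ (possible since $r\geq 3$) again yields cancellation.

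The main obstacle is the verification of the sign law $\Phi(\gamma',A\triangle\{k\}) = (-1)^k\Phi(\gamma',A)$. The proof will expand $\Phi^{G^*}_{M^*}(j_{M^*}(\gamma'^{-1}), \Theta^H_{\varphi_{\mathbb V^*}})$ via the Weyl character formula and Kostant's theorem into a weighted sum over $\Omega(G^*_{\CC}, T_{M^*,\CC})$ with coefficients $n_{R_{H,\gamma'}}(j_{M^*}(\gamma'^{-1}), \omega B_0) = \bar c_{R_{H,\gamma'}}(x, \wp(\omega\lambda+\omega\rho))$. Tracking the combinatorial effect of $\iota_k$ on the chambers of $R_{H,\gamma'}$ relative to the fixed Borel $B_0$, with the help of Harish-Chandra's and Herb's formulae for the discrete series constants, isolates the sign $(-1)^k$; once this sign identity is in hand, the proposition follows from the pairing arguments above.
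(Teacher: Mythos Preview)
Your separation of the $B$-sum is correct and matches the paper: for $t\ge 1$ the factor $\sum_B(-1)^{|B|}=0$ kills $R_i$ and $S$, and for $t\ge 2$ a similar identity kills $T_j$. The gap is the claimed sign law $\Phi(\gamma', A\triangle\{k\}) = (-1)^k\Phi(\gamma', A)$, which is false. Under $\iota_k$ the real root system $R_{A,\gamma'}$ (of type $B_{|A^+|}\times B_{|A^{c,+}|}\times D_{|A^-|}\times D_{|A^{c,-}|}\times A_1^t$) changes shape, not merely orientation: for instance with $r=2$ and both coordinates of $\gamma'$ positive, toggling $k=1$ replaces a type $B_2$ system by $B_1\times B_1$, and the discrete series constants $\bar c_{B_2}(x,\chi)$ and $\bar c_{B_1\times B_1}(x,\chi)$ are genuinely different functions of $\chi$, not scalar multiples of one another. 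There is also an internal consistency obstruction: since $R_{A,\gamma'}=R_{A^c,\gamma'}$ one has $\Phi(\gamma',A)=\Phi(\gamma',A^c)$ for every $A$, so any law $\Phi(A\triangle\{k\})=c_k\Phi(A)$ with $c_k$ independent of $A$ forces $\prod_k c_k=1$ whenever $\Phi\ne 0$; but your argument (once the combinatorial sign is corrected to $(-1)^k$ rather than $(-1)^{k+1}$) requires $c_k=(-1)^{k+1}$, and then $\prod_k c_k=(-1)^{r(r+3)/2}\ne 1$ for $r\equiv 2,3\pmod 4$. Propositions \ref{effect of switching to Phi_endos} and \ref{effect of inverting to Phi_endos} are not a model here: those concern Weyl reflections that preserve the relevant real root system, whereas $\iota_k$ does not.

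The paper instead handles small $r$ by pairing $A$ with $A^c$, using $\Phi(A)=\Phi(A^c)$ together with (\ref{A and A^c}); this dispatches $T_j$ for $r\le 2$ and $R_i$ for $r\le 4$. For the remaining cases ($N$ with $r\ge 3$, $M_i$ with $r\ge 5$) it expands $\bar c_{R_{A,\gamma'}}$ via Herb's formula as a sum over $2$-structures of the four blocks $A^\pm, A^{c,\pm}$, and then reparametrizes the double sum over $(A,\text{four }2\text{-structures})$ by a pair of $2$-structures of $I^\pm$ together with subset markings recording which pieces came from $A$. In those coordinates the sum factors and the vanishing reduces to elementary binomial identities. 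The needed cancellation thus lives one level below your involution, inside Herb's decomposition, and there is no shortcut at the level of $A$ alone.
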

\begin{proof}

	We first treat the case $t\geq 2$, which is the easiest. In this case we have the elementary combinatorial identities 
	\begin{align}\label{eq:easy vanishing}
\sum_{B\subset [t]} (-1) ^{\abs{B}} =0  
	\end{align} and 
	\begin{align}\label{eq:elem id 2}
\sum_{B\subset [t]} \nabla_j (B) (-1) ^{\abs{B}} &  = \sum_{k = 0} ^{t} (-1) ^k \bigg[ \# \set{B\mid  \abs{B}= k , j\in B}  - \# \set{B\mid  \abs{B}= k ,j\notin B}\bigg]  \\ \nonumber & = \sum_{k = 0} ^{t} (-1) ^k \left[ {{t-1}\choose {k-1}} - {{t-1}\choose{k}} \right]\\ \nonumber & = -2 \sum_{k=0}^{t-1} (-1)^k {{t-1}\choose {k}} = 0 .
 	\end{align}
	(Note that for $t=1$, (\ref{eq:easy vanishing}) still holds, but  $\sum_B \nabla_1(B) (-1)^{\abs{B}} =-2$.)
	Hence we have $R_i = T_j = S =0$ in this case, and the proof is finished. 
	
Before treating the other cases, we observe that  
$$\omega_0(A) \omega_0(A^c) = (-1)^{\abs{A} \abs{A^c}},$$ from which  
\begin{align}\label{A and A^c}
\omega_0(A) (-1) ^{r\abs{A } + \floor{\abs{A} /2}  } \omega_0(A^c) (-1) ^{r\abs{A ^c} + \floor{\abs{A^c} /2} } = (-1) ^{\ceil{r/2}}.
\end{align} 

Now suppose $rt >0$ and $r \in \set{1,2}$. Again   (\ref{eq:easy vanishing}) holds, so $R_i = S =0$. To show $T_j =0$, observe that $\Phi(\gamma', A  ) = \Phi(\gamma' , A^c )$, so it suffices to show that (\ref{A and A^c}) is $-1$, which is indeed true for $r=1,2$.

	Finally we treat the case $r \geq 3$, which is the most complicated. We need a computation that is similar to \cite[pp.~1698-1699]{morel2011suite}, applying the result of Herb \cite{herb}. 
In the following we will view $\gamma'$ and $B$ as being fixed, and let $A$ vary. 
 
 We have $$A_{M'} = A_{M^*} = \GG_m^r \times \GG_m^t,$$ where the factor $\GG_m^r$ is the canonical copy of $\GG_m^r$ in $M^{*,\GL} = \GG_m^r \times \GL_2^t$, and the factor $\GG_m^t$ is the product of the centers of the $t$ copies of $\GL_2$ in $M^{*,\GL}$. Let $\epsilon_1,\cdots, \epsilon_r \in X^*(\GG_m^r)$ be as in condition \textbf{C} satisfied by $B_0$ in the proof of Theorem \ref{vanishing odd}. Let $\set{\alpha_1,\cdots, \alpha_t}$ be the standard basis of $X^* (\GG_m^t)$.  Define 
\begin{align*}
I^+ &  : = \set{i\in [r] \mid  \epsilon_i(\gamma') > 0} , &  I^- & : = [r] - I^+ ,\\
A^+ &: = A \cap I^+, &  A^- & : = A\cap I^-,  \\ A^{c,+} & : = A^c \cap I^+,&   A^{c,-} & :  = A^c \cap I^-.\end{align*}By (\ref{eq:cond pos}), we know that $I^+ = [r']$. 
	
	Let $R_{A,\gamma'}  = R_{H,\gamma'}$ be the real root system involved in the definition of $\Phi(\gamma' ,A)$; see (\ref{eq:defn Phi(A)}) and (\ref{eq:defn R_A}). Then $R_{A, \gamma'}$ is of type  
	\begin{align}\label{eq:Dynkin type}
\mathsf B_{\abs{A^+}} \times \mathsf B_{\abs{A^{c,+}}} \times \mathsf D_{\abs{A^-}} \times \mathsf D_{\abs{A^{c,-}} } \times \mathsf  A_1^{\times t}  ,
	\end{align}
	where $\mathsf B_{\abs{A^{+}}}$ consists of the roots $$  \epsilon_i, ~\epsilon_i \pm \epsilon_j, ~ i,j \in A^{+}, i\neq j$$ and $\mathsf D_{\abs{A^{-}}}$ consists of the roots $$\epsilon_i \pm \epsilon_j, ~ i,j \in A^- , i\neq j  ,$$ and similarly for $\mathsf B_{\abs{A^{c,+}}}$ and $\mathsf D_{\abs{A ^{c,-}}}$. The part $\mathsf A_1^{\times t}$ consists of the roots\footnote{This follows from the following argument: Let $\epsilon_1, \epsilon_2$ denote the two standard characters on the diagonal torus in $\GL_2$, and identify them with two characters on an elliptic maximal torus in $\GL_{2,\RR}$. Then with respect to the real structure of the latter, $\pm (\epsilon _1 + \epsilon_2 )$ are the only real characters among $\epsilon_1, \epsilon_2, \epsilon_1 \pm \epsilon_2, -\epsilon _1 \pm \epsilon_2$. } 
	$$\pm 2\alpha_1,\cdots , \pm 2 \alpha_t.$$ 
	
	By (\ref{eq:Dynkin type}), we see that the Weyl group  of $R_{A,\gamma'}$ contains $-1$ if and only if $\abs{A^-}$ and $\abs{A^{c,-}}$ (and \textit{a fortiori} $\abs{I^-}$) are even, if and only if $ \gamma' \in H(\RR) ^0$. These conditions are necessary for $\Phi(\gamma', A)$ to be non-zero. 
	Assume that these conditions are satisfied. Then $$ \Phi(\gamma', A) = \sum_{\omega \in \Omega} C(\gamma',\omega)  n_A (\gamma', \omega B_0),$$
where $\Omega$ is the complex Weyl group of $G^*$, the coefficients $C(\gamma',\omega)$ are independent of $A$, and
$$n_A (\gamma' ,\omega B_0) : = \bar c_{R_{A,\gamma'}} ( x, \wp ( \omega \lambda_{B_0} + \omega  \rho_{B_0})), $$ with notations explained below:
\begin{itemize}
	\item 
	$x \in X_*(A_{M^*}) _{\RR}$ is characterized by the condition 
	\begin{align}\label{eq:characterization of x}
	j_{M^*} (\gamma'^{-1} )\in \exp (x)  T_{M^*} (\RR) _1  \subset T_{M^*} (\RR) ,
	\end{align} where $T_{M^*} (\RR) _1$ is the maximal compact subgroup of $ T_{M^*} (\RR).$
	\item $\wp : X^*(T_{M^*})_{\RR} \to X^*(A_{M^*}) _{\RR}$ is the natural restriction map. 
	\item  $\rho_{B_0}$ is the half sum of the $B_0$-positive (absolute) roots in $X^*(T_{M^*})$, and $\lambda_{B_0} \in X^*(T_{M^*}) $ is the $B_0$-highest weight of $\mathbb V^*$. 
	\item $\bar c _{R_{A,\gamma'}} (\cdot , \cdot )$\index{$\bar c _{R_{A,\gamma'}}$} is the function associated to the root system $R_{A,\gamma'} \subset X^*(A_{M^*})_{\RR}$ as in (\ref{eq:bar c}).
\end{itemize}
We note that  $$\chi:  = \wp ( \omega \lambda_{B_0} + \omega  \rho_{B_0}) \in X^*(A_{M^*}) _{\RR}$$ is independent of $A$. In the following we will use only this property of $\chi$.

	Thus to show that $R_i = T_j = S = 0$, it suffices to show that the following  quantities are zero, where the summations are over $A \subset [r]$ such that  $\abs{A^-}$ and $\abs{A^{c,-}}$ are both even: 
	\begin{align}\label{eq:defn M_i}
M_i & : = \sum_A \nabla_i(A) \omega_0(A) (-1) ^{r\abs{A} + \floor{\abs{A} /2} } \bar c_{R_{A,\gamma'}} (x, \chi), \qquad 1\leq i \leq r  \\\label{eq:defn N}
N  & : = \sum_A  \omega_0(A) (-1) ^{r\abs{A} + \floor{\abs{A} /2} } \bar c_{R_{A,\gamma'}} (x, \chi). 
	\end{align}
	More precisely, the vanishing of $M_i$ implies the vanishing of $R_i$, and the vanishing of $N$ implies the vanishing of $T_j $ and $S$. 
	We show the vanishing of $M_i$ and $N$ (for $r \geq 3$) in the next proposition. \end{proof}

	 \begin{prop} \label{prop:step 3 in odd vanishing} Let $x \in X_*(A_{M^*}) _{\RR}$ be characterized by the condition (\ref{eq:characterization of x}), where $\gamma' \in T_{M'} (\RR)$ satisfies the conditions (\ref{eq:cond pos}), (\ref{eq:cond increasing}), and (\ref{eq:cond small}). Let $\chi \in X^*(A_{M^*}) _{\RR}$ be an element independent of $A$. When $r \geq 3$, the quantities $M_i$ and $N$ in (\ref{eq:defn M_i}) and (\ref{eq:defn N}) are zero.
 	 \end{prop}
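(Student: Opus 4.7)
The plan is to exploit the product decomposition of the root system $R_{A,\gamma'}$ together with the constraint that $|I^-|$ (hence $|A^-|$, $|A^{c,-}|$) are even. First I would observe that $R_{A,\gamma'}$ has type $B_{|A^+|}\times B_{|A^{c,+}|}\times D_{|A^-|}\times D_{|A^{c,-}|}\times A_1^{\times t}$ on the coordinates indexed by $A^+, A^{c,+}, A^-, A^{c,-}$ and the $\GL_2$-blocks respectively, so the discrete series constant $\bar c_{R_{A,\gamma'}}(x,\chi)$ factors multiplicatively. The $A_1^{\times t}$ factor is independent of $A$ and may be absorbed into an overall constant.

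Next I would check that the three $A$-dependent signs separate cleanly. Because $I^+=\{1,\dots,r'\}$ is an initial segment of $[r]$ (condition (1) above \eqref{eq:defn Phi(A)}), the shuffle sign satisfies $\omega_0(A)=\omega_0^+(A^+)\omega_0^-(A^-)\cdot(-1)^{|A^{c,-}||A^+|}$, and the last factor is trivial since $|A^{c,-}|$ is even. Because $|A^-|$ is even, one has
\begin{equation*}
(-1)^{|A|+\lceil|A|/2\rceil}=(-1)^{|A^+|+\lceil|A^+|/2\rceil}\cdot(-1)^{|A^-|/2}.
\end{equation*}
Finally $\epsilon_i(A)=\epsilon_i^\pm(A^\pm)$ according to whether $i\in I^\pm$. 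Putting this together, both sums factor:
\begin{equation*}
N=S^+\cdot S^-,\qquad M_i=\begin{cases} T_i^+\cdot S^- & i\in I^+\\ S^+\cdot T_i^- & i\in I^-\end{cases}
\end{equation*}
where $S^\pm$ are explicit sums over $A^\pm$ involving $\omega_0^\pm$, the appropriate parity sign, and the product $\bar c_{\bullet}(x|_{A^\pm})\bar c_{\bullet}(x|_{A^{c,\pm}})$; and $T_i^\pm$ is $S^\pm$ with the additional factor $\epsilon_i^\pm(A^\pm)$.

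The heart of the argument is then the vanishing of one of these factors. I would first verify by direct calculation that $S^-=0$ whenever $|I^-|\geq 2$: when $|I^-|=2$ this is an explicit two-term cancellation between $A^-=\emptyset$ and $A^-=I^-$; for general even $|I^-|\geq 4$ it follows from Herb's expansion \cite{herb} of $\bar c_{D_{2k}}$ as a signed sum over two-structures, which (after interchanging summations and performing a shuffle-sign bookkeeping identical in spirit to \cite[\S 5.3--5.4]{morel2011suite}) collapses the sum term by term. A parallel argument, with type $B$ in place of type $D$, gives $S^+=0$ when $|I^+|\geq 3$. Since $r\geq 3$ forces either $|I^-|\geq 2$ or $|I^-|=0$ with $|I^+|\geq 3$, one of the two factors always vanishes, proving $N=0$. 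For $M_i$ one needs the analogous statements $T_i^-=0$ for $|I^-|\geq 4$ and $T_i^+=0$ for $|I^+|\geq 4$; these are proved by the same Herb-expansion technique, the slightly worse threshold arising because the extra sign $\epsilon_i^\pm$ obstructs the simplest pairing argument. With these in hand, the case analysis on $(|I^+|,|I^-|)$ with $r\geq 5$ and $|I^-|$ even shows that in every case at least one of the two factors vanishes, giving $M_i=0$.

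The main obstacle is the detailed sign bookkeeping in step three: one must match Herb's signs $\varepsilon(T)$ on two-structures with the shuffle signs $\omega_0^\pm(A^\pm)$ and the parity signs $(-1)^{|A^\pm|+\lceil|A^\pm|/2\rceil}$ or $(-1)^{|A^\pm|/2}$ so that the cross-terms telescope. I expect this to proceed in a manner essentially parallel to the analogous computations for $\Gsp_{2n}$ in \cite{morel2011suite}, with the main new wrinkle being the mixing of types $B$ and $D$ forced by the splitting $[r]=I^+\sqcup I^-$ and the parity constraint on $|A^-|$.
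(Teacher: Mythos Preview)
Your approach is correct and is essentially the same as the paper's, differing only in organization. The paper applies Herb's two-structure formula directly to the full constant $\bar c_{R_{A,\gamma'}}(x,\chi)$ and then performs a single change of variables from $(A,p_1^+,p_1^-,p_2^+,p_2^-)$ to $(p^+,p^-,U,V,W)$; your proposal first factors $N=S^+S^-$ and $M_i=T_i^\pm\cdot S^\mp$ (which is legitimate, since $\omega_0(A)$, $(-1)^{|A|+\lceil|A|/2\rceil}$, and $\bar c_{R_{A,\gamma'}}$ all split along $I^+\sqcup I^-$ exactly as you claim) and then applies Herb separately to each factor. Both routes reduce to the identical elementary vanishing statements $\sum_{U}(-1)^{|U|}=0$ and $\sum_{U}(-1)^{|U|}\epsilon_j(U)=0$ for large enough index sets. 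Your packaging is perhaps a bit more transparent; the paper's is more uniform in that it treats the $(U,V,W)$ variables simultaneously.

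One small inaccuracy: your stated threshold ``$T_i^+=0$ for $|I^+|\geq 4$'' fails when $|I^+|=4$ is even (the odd-$|A^+|$ subsum need not vanish there, since $k-1=1$). The correct threshold is $|I^+|\geq 5$ in general. This does not damage your case analysis, however: for $i\in I^+$ you only need $T_i^+=0$ when $|I^-|=0$, in which case $|I^+|=r\geq 5$ (and if even then $\geq 6$); for $i\in I^-$ with $|I^-|=2$ you instead use $S^+=0$, which already holds for $|I^+|=r-2\geq 3$.
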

	
	\subsection{}
	In the proof of Proposition \ref{prop:step 3 in odd vanishing} we need to apply Herb's formula for $\bar c_{R_{A,\gamma'}}$, which we now recall. We will follow the notation and definitions of \cite[pp.~1698-1699]{morel2011suite}. Note that in \textit{loc.~cit.~}root systems of types $\mathsf C$ and $\mathsf D$ are considered, whereas  we need to consider root systems of types $\mathsf B$ and $\mathsf D$. Nevertheless the formulas for type $\mathsf B$ and type $\mathsf C$ root systems are identical; see \cite{herb}. 
	
	For $a, b \in \RR$, we define \index{$c_1, c_{2,\mathsf B}, c_{2,\mathsf D}$} \begin{align*}
c_1 (a) & : = \begin{cases}
1, & \text{if } a>0, \\ 0, & \text{otherwise}.
\end{cases} \\
 c_{2,\mathsf B} (a,b)  & : = \begin{cases}
1, & \text{if } 0 < a< b \mbox{ or } 0< -b < a , \\ 0, &  \text{otherwise}.
\end{cases} \\
c_{2,\mathsf D} (a,b) & : = \begin{cases}
1, & \text{if } a> \abs{b}  , \\ 0, & \text{otherwise}.
\end{cases}
	\end{align*}
Our $c_{2,\mathsf B}$ is equal $c_{2,\mathsf C}$ in \cite{morel2011suite}. \ignore{We also remark that the specific positions of the non-zero areas of the above functions are designed to suit the conditions (1) (2) above (\ref{eq:defn Phi(A)}).\footnote{To compare with \cite{morel2011suite}, our $\gamma'^{-1}$ is analogous to $\gamma_M$ in loc. cit.} }
	
	Let $I$ be a finite set. We will denote an unordered partition $p$ of $I$ by $p = \set{I_z \mid  z\in Z}$, where $Z$ is the indexing set, and $I = \coprod_{z\in Z} I_z$. Let $\mathcal P^0_{\leq 2} (I)$\index{$\mathcal P^0_{\leq 2} (I)$} be the set of unordered partitions $\set{I_z\mid z\in Z}$ of $I$ such that  all $I_z$ have cardinality $2$ or $1$ and at most one $I_z$ has cardinality $1$. If $I$ is equipped with a total order $\leq$, we can define a sign function \index{$\epsilon: \mathcal P^0_{\leq 2} (I) \to \set{\pm 1}$} 
	\begin{align}\label{eq:epsilon on P^0}
\epsilon: \mathcal P^0_{\leq 2} (I) \To \set{\pm 1}
	\end{align}
as follows. Given $p  \in \mathcal P^0_{\leq 2} (I)$, we  enumerate the elements of $p$ as $I_1,\cdots, I_k$, and let $\sigma$ be the unique bijection $I \isom I$ satisfying the following conditions:  
	\begin{itemize}
		\item For all $i,j \in [k]$ with $i<j$, and for all $s \in \sigma (I_{i})$ and $s' \in \sigma (I_{j})$, we have $s < s'$.
		\item If $i \in [k]$ is such that $\abs{I_i} = 2 $, then $\sigma$ is increasing on $I_i$.  
	\end{itemize}    With respect to the total order on $I$, the permutation $\sigma$ of $I $ has a well-defined sign. We define $\epsilon(p)$ to be that sign. This definition does not depend on the enumeration of the elements of $p$. 
	
	For $\mu \in \RR^r$ and $J$ a subset of $[r]$ of cardinality $1$ or $2$, we make the following definitions. If $J = \set{s}$, define \index{$c_{J,\mathsf B}, c_{J,\mathsf D}$}
	$$ c_{J,\mathsf B} (\mu) : = c_1(\mu_s).$$ If $J = \set{s_1 , s_2}$ with $s_1< s_2$, define 
	\begin{align*}
c_{J, \mathsf B} (\mu) & : = c_{2,\mathsf B} (\mu_{s_1} , \mu_{s_2}) ,\\ 
c_{J, \mathsf D} (\mu) & : = c_{2,\mathsf D} (\mu_{s_1} , \mu_{s_2}).
	\end{align*}
		
	Now for $I \subset [r]$ and $p = \set{I_z\mid  z\in Z} \in \mathcal P^0_{\leq 2} (I)$, define \index{$c_{\mathsf B} (p,\cdot) , c_{\mathsf D} (p,\cdot)$}
	$$c_{\mathsf B} (p,\mu) : =  \prod_{z\in Z} c_{I_z, \mathsf B}  (\mu) .$$
	If in addition $\abs{I}$ is even, define 
	 $$c_{\mathsf D} (p,\mu) : =  \prod_{z\in Z} c_{I_z, \mathsf D}  (\mu). $$
	
	Let $\chi \in X^* (A_{M^*}) _{\RR}$ and let $\mu$ be its projection to $X^*(\GG_m^r) _{\RR}$. We identify $X^*(\GG_m^r) _{\RR}$ with $\RR^r$ using the basis $\set{\epsilon_1,\cdots,\epsilon_r}$ fixed in the proof of Theorem \ref{vanishing odd} (as opposed to the standard basis), and view $\mu$ as an element of $\RR^r$. Let $x$ be as in the statement of the Proposition \ref{prop:step 3 in odd vanishing}. Then Herb's formula states that  
	\begin{multline} \label{eq:Herb}
\bar c_{R_{A , \gamma'}} (x, \chi)    = \cost \sum_{p_1 ^+ \in \mathcal P^0_{\leq 2}(A^+) } \sum _{p_1^- \in \mathcal P^0_{\leq 2}(A^-)}   \sum_{ p_2 ^+ \in \mathcal P^0_{\leq 2} (A^{c,+}) } \sum _{p_2^- \in \mathcal P^0_{\leq 2} (A^{c,-})} \\  \epsilon(p_1^+) \epsilon(p_1^-) \epsilon(p_2^+) \epsilon(p_2^-)   c_{\mathsf B} (p_1^+ ,\mu) c_{\mathsf B} (p_2^+, \mu) c_{\mathsf D} (p_1^- , \mu) c_{\mathsf D} (p_2^- ,\mu),
	\end{multline}
where $\cost$ is independent of $A$.
	 \begin{rem}
To compare (\ref{eq:Herb}) with the formula on p.~1699 of \cite{morel2011suite}, note that the root system considered in  \textit{loc.~cit.~}is of type $\mathsf C_{\abs{A_1^-}} \times \mathsf C_{\abs{A_1^+}} \times 
\mathsf D_{\abs{A_2^-}} \times \mathsf D_{\abs{A_2 ^+}} \times \mathsf A_1^{\times t}$, whereas our root system is $\mathsf B_{\abs{A^+}} \times \mathsf B_{\abs{A^{c,+}}} \times \mathsf D_{\abs{A^-}} \times \mathsf D_{\abs{A^{c,-}} } \times \mathsf A_1^{\times t}.$ Our $\gamma'^{-1}$ plays the same role as $\gamma_M$ in \textit{loc.~cit.}. 
	 \end{rem}
 
 \begin{proof}[Proof of Proposition \ref{prop:step 3 in odd vanishing}]
 	We divide the proof into  two cases according to the parity of $r$ . 
 	
	\textbf{The case where $r \geq 3$ is odd.}
	
	 Since $\abs{A^-}$ and $\abs{A^{c,-}}$ must be even, we know that
	$\abs{A^+}$ and $\abs{A^{c,+}}$ must have different parity. In particular $I^+$ has odd cardinality. Write $\abs{I^+}= 2k-1$ with $k \geq 1$, and write $\abs{I^-} = 2l$ with $l \geq 0$.
	
	 For $p_1^+ \in \mathcal P^0_{\leq 2}(A^+)$ and $p_2^+ \in \mathcal P^0_{\leq 2}(A^{c,+})$, we have $p^+: = p_1^+ \cup p_2^+ \in \mathcal P^0_{\leq 2}(I^+)$. Also for $p_1^- \in \mathcal P^0_{\leq 2}(A^-)$ and $p_2^- \in \mathcal P^0_{\leq 2}(A^{c,-})$, we have $p^-: = p_1^- \cup p_2^- \in \mathcal P^0_{\leq 2}(I^-)$. We also have $$ \omega_0(A) \epsilon(p_1^+) \epsilon(p_1^-) \epsilon(p_2^+) \epsilon(p_2^-)  = \epsilon (p^+) \epsilon(p^-). $$ 
	In this way we have ``encoded'' the quadruple $(p_1^+, p_2^+, p_1^-, p_2^-)$ and the left hand side of the above equality into $(p^+, p^-)$.

	Conversely, we explain how to recover $(p_1^+, p_2^+)$ from $p^+$ with extra data, and recover $(p_1^-, p_2^-)$ from $p^-$ with extra data.  Given $p^+ \in \mathcal P^0_{\leq 2}(I^+)$, write $p^+ = p^+(2) \sqcup p^+(1)$, where $p^+(2)$ consists of the cardinality-$2$ members of $p^+$ and $p^+(1)$ consists of the singleton member of $p^+$. (Note that $\abs{p^+(2)} =k-1$ and $\abs{p^+(1)} = 1$.) To recover $(p_1^+, p_2^+)$  is the same as to recover the subset $A^+ $ of $I^+$. For that it suffices to specify a subset $U$ of $p^+(2)$ and a subset $V$ of $p^+(1)$ such that $A^+ = \bigcup_{ I \in U \cup V} I$. Thus we have established a bijection from the set of $(A^+, p_1^+, p_2^+)$ to the set of $(p^+, U, V)$. Under this bijection, we have $\abs{A^+} = 2\abs{U} + \abs{V}$. For a fixed $i \in I^+$, we can also encode the function $A^+ \mapsto \nabla_i (A^+)$ into a function in the variables $p^+$, $U$, and $V$ as follows. Define 
	$$\nabla_i (p^+, U, V): =\begin{cases}
	1, & \text{if } i\in I \text{ for some } I \in U\cup V, \\ 
	-1, & \text{otherwise}.
	\end{cases}$$
	Then we have $\nabla_i(A^+) = \nabla_i (p^+, U, V)$ if $(A^+,p_1^+, p_2^+)$ corresponds to $(p^+, U, V)$ as above.  
	
	 Similarly, given $p^- \in \mathcal P^0_{\leq 2}(I^-)$, to recover $(p_1^- , p_2^-)$ or equivalently $A^- $, it suffices to specify a subset $W$ of $p^-$ such that $A^-  = \bigcup _{I \in W} I$. This again establishes a bijection from the set of $(A^-, p_1^-, p_2^-)$ to the set of $(p^-, W)$. We have $\abs{A^-} = 2\abs{W}.$ For a fixed $i\in I^-$, define 
	 	$$\nabla_i (p^-, W): =\begin{cases}
	 	1, & \text{if } i\in I \text{ for some } I \in W, \\ 
	 	-1, & \text{otherwise}.
	 \end{cases}$$
  Then we have $\nabla_i (A^-) = \nabla_i (p^- , W)$.

	 In conclusion, we may change the summation index $(p_1^+, p_1 ^-, p_2^+, p_2^-)$ in (\ref{eq:Herb}) into the new summation index $(p^+, p^-, U, V, W)$, and obtain
 	 \begin{align*}
 	N & = \cost \sum _{p^+ \in \mathcal P^0_{\leq 2}(I^+)}     \sum _{p^- \in \mathcal P^0_{\leq 2}(I^-)} \epsilon(p^+) \epsilon(p^-) c_{\mathsf B} (p^+,\mu)  c_{\mathsf D} (p^- , \mu)  \\   & \cdot \sum_{U\subset p^+(2), V \subset p^+(1), W \subset p^- } (-1) ^{r (2\abs{U} + \abs{V} + 2\abs{W} )+ \floor{ (2\abs{U} + \abs{V} + 2\abs{W}) /2 } }   \\ 
 	& = \cost   \sum_{U\subset [k-1], V \subset [1], W \subset [l] } (-1) ^{\abs{U}+ \abs{V} + \abs{W}   }  ,
 \end{align*} and for $i\in [r]$ 
	 \begin{multline*}
M_i = \cost \sum _{p^+ \in \mathcal P^0_{\leq 2}(I^+)}   \sum _{p^- \in \mathcal P^0_{\leq 2}(I^-)} \epsilon(p^+) \epsilon(p^-) c_{\mathsf B} (p^+,\mu)  c_{\mathsf D} (p^- , \mu)   \\   \cdot  \sum_{U\subset p^+(2), V \subset p^+(1), W \subset p^- } (-1)^{\abs{U} + \abs{V} + \abs{W}  }   \nabla_i(p^+, p^- , U, V, W), 
	 \end{multline*} where 
 $$\nabla_i (p^+, p^- , U, V ,W) : = \begin{cases}
 	\nabla_i (p^+, U , V) , & \text{if } i \in I^+ ,\\
 	\nabla_i(p^-, W), & \text{if }  i \in I^-.
 \end{cases} $$
 Note that  
 \begin{align}\label{eq:sum over V} 
 \sum_{V \subset [1]} (-1)^{\abs{V}} = 0 .
 \end{align}
 Hence $ N= 0 $ as desired. 
	To show $M_i =0$, it suffices to prove that for each fixed $p^+ \in \mathcal P^0_{\leq 2}(I^+)$ and $ p^- \in \mathcal P^0_{\leq 2}(I^-)$, the quantity  
	$$ L : = \sum_{U\subset p^+(2), V \subset p^+(1), W \subset p^- } (-1)^{\abs{U} + \abs{V} + \abs{W}}   \nabla_i(p^+, p^- , U, V, W)$$ is zero. By definition, depending on the relative position of $(p^+,p^-,i)$, the term $\nabla_i(p^+, p^- , U, V, W) $ is either independent of $V$, or independent of $(U,W)$. In the first case, we know $L=0$ because of (\ref{eq:sum over V}). In the second case, unless $k=1$ and $l = 0$, we have either $$\sum_{U\subset p^+(2)}(-1)^{\abs{U}} = \sum_{U\subset [k-1]}(-1)^{\abs{U}} = 0$$ or $$\sum_{W\subset p^- }(-1)^{\abs{W}} = \sum_{W\subset [l]}(-1)^{\abs{W}} = 0,$$ and therefore $L=0. $ But if $k=1$ and $l=0$, then $r = \abs{I^+} + \abs{I^-} = 2k -1 +2l =1$, a contradiction. Thus $L = 0 $ as desired. The proof of the proposition for odd $r \geq 3$ is complete.

\textbf{The case where $r \geq 3$ is even.}

Now $\abs{I^+}$ and $\abs{I^-}$ are both even. Write $ \abs{I^+} = 2k$ and $\abs{I^-}= 2l$, with $k,l \geq 0$ and $k+l =r/2 \geq 2$. 

We need some combinatorial preparations. For a finite set $I$ of even cardinality, we define \index{$\mathcal P'(I)$}$\mathcal P' (I)$ to be the set of unordered partitions $p=\set{I_z \mid  z\in Z}$ of $I$ equipped with a marked element of $p$ such that exactly two members of $p$ are singletons, all the other members of $p$ have cardinality $2$, and the marked element of $p$ is one of the two singleton members.  When $I$ is equipped with a  total order $\leq $, we define a map \index{$\epsilon: \mathcal P'(I) \to \set{\pm 1}$} $$\epsilon: \mathcal P'(I) \To \set{\pm 1}$$  as follows. Given $p \in \mathcal P'(I)$,  we can merge the two singletons in $p$ into a cardinality-$2$ set and obtain an element $p_0 \in \mathcal P^0_{\leq 2}(I)$. Then we define $\epsilon(p)$ to be $\epsilon(p_0)$ if the marked singleton in $p$ is greater than the other singleton in $p$, and define $\epsilon(p)$ to be $ - \epsilon(p_0)$ otherwise.  Here $\epsilon(p_0)$ is as in (\ref{eq:epsilon on P^0}). If $I$ is a subset of $[r]$ and $p\in \mathcal P' (I)$, we define
$$ c_{\mathsf B} ( p ,\mu) : = \prod _{z\in Z} c_{I_z} (\mu), $$ 
where $\set{I_z \mid  z\in Z}$ is the partition of $I$ underlying $p$.

We now seek to change the summation index in (\ref{eq:Herb}) in a similar manner as in the previous case with odd $r$. If $\abs{A^+}$ is odd then so is $\abs{A^{c,+}}$. In this case $k \geq 1$, and from each $p_1^+ \in \mathcal P^0_{\leq 2}(A^+)$ and $p_2^+ \in \mathcal P^0_{\leq 2}(A^{c,+})$, we obtain an element $p^+ : = p_1^+ \cup p_2^+ \in \mathcal P' (I^+)$, where the marked singleton in $p^+$ is defined to be the singleton in $p_1^+$. Conversely, suppose $k\geq 1$ and suppose  $p^+ \in \mathcal P' (I^+)$. Write $p^+ = p^+(2) \sqcup \set{I_{p^+} ^u, I_{p^+}^m}$, where $p^+(2)$ consists of the cardinality-$2$ members of $p^+$, and we denote by $I_{p^+}^u$ and $I_{p^+}^m$ the unmarked and marked singleton members of $p^+$ respectively. (Note that $\abs{p^+(2)} = k-1$.) Then we can recover $A^+$ from $p^+$ together with a subset $U$ of $p^+(2)$ such that $A^+ = \bigcup_{I \in U} I  \cup I_{p^+}^m$. We have $\abs{A^+} = 2 \abs{U} +1$. For $i\in I^+$, define $$\nabla_i  (p^+, U) : = \begin{cases}
1 , &\text{if }i \in I \text{ for some }I \in U \text{ or }i\in I_{p^+}^m, \\ 
-1, & \text{otherwise}.
\end{cases} $$    	 
	 Then we have $\nabla_i (A^+) = \nabla_i (p^+ , U) $. 
	 
If $\abs{A^+}$ is even, then so is $\abs{A^{c,+}}$. From each $p_1^+ \in \mathcal P^0_{\leq 2}(A^+)$ and $p_2^+ \in \mathcal P^0_{\leq 2}(A^{c, +})$, we obtain $p^+ : = p_1^+ \cup p_2^+ \in \mathcal P^0_{\leq 2}(I^+)$. Conversely, given $p^+ \in \mathcal P^0_{\leq 2}(I^+)$, to recover $A^+$ it suffices to specify a subset $U$ of $p^+$ such that $A^+ = \bigcup_{ I \in U} I$. We have $\abs{A^+} = 2 \abs{U}$. For $i \in I^+$, define  
$$\nabla_i (p^+, U): =\begin{cases}
	1, & \text{if } i\in I \text{ for some } I \in U, \\ 
	-1, & \text{otherwise}.
\end{cases}$$   Then $\nabla_i(A^+) =\nabla_i(p^+,U)  .$ 	 
	 
	 Similarly, since $\abs{A^-}$ and $\abs{A^{c,-}}$ are always  even, from $p_1^- \in \mathcal P^0_{\leq 2}(A^-)$ and $p_2^- \in \mathcal P^0_{\leq 2}(A^{c, -})$  we obtain an element $p^- : = p_1^-  \cup p_2^- \in \mathcal P ^0_{\leq 2}  (I^-)$, and conversely, given $p^- \in \mathcal P^0_{\leq 2}(I^-)$, to recover $A^-$ it suffices to specify a subset $W$ of $p^-$ such that $A^- = \bigcup_{I \in W} I$. We have $\abs{A^-} = 2 \abs{W}$. For $i \in I^-$, define	$$\nabla_i (p^-, W): =\begin{cases}
	 	1, & \text{if } i\in I \text{ for some } I \in W, \\ 
	 	-1, & \text{otherwise}.
	 \end{cases}$$
	 Then we have $\nabla_i (A^-) = \nabla_i (p^- , W)$. 
	 
	 For both parities of $\abs{A^+}$, we have 
	 $$ \omega _0 (A) \epsilon(p_1^+) \epsilon(p_2^+) \epsilon(p_1^-) \epsilon(p_2^-)  = \epsilon(p^+) \epsilon(p^-).$$

	   We now  split 
	   \begin{align*} 
N = \sum_{A \subset [r], \abs{A^-} \text{ even} } \omega_0(A) (-1) ^{r \abs{A} + \floor{ \abs{A} / 2}} \bar c_{R_{A,\gamma'}} (x, \chi)
	   \end{align*}
 as $N = N_{(1)} +N_{(2)}$, where $N_{(1)}$ (resp.~$N_{(2)}$) is the sum of the terms indexed by $A$ such that $\abs{A^+}$ is odd (resp.~even). Similarly, for $i\in [r]$, we split 
	$$ M_i  = \sum_{A \subset [r], \abs{A^-} \text{ even}} \nabla_i(A) \omega_0(A) (-1) ^{r\abs{A} + \floor{\abs{A} /2} } \bar c_{R_{A,\gamma'}} (x, \chi)  $$ as $M_i = M_{i, (1)} + M_{i, (2)}$. We shall prove that $N_{(1)} = N_{(2)} = M_{i,(1)} = M_{i,(2)} =0$. Note that when dealing with $N_{(1)}$ and $M_{i, (1)}$ we may assume that $ k \geq 1$, since otherwise they are obviously zero.
	
	The above discussion shows that 
\begin{align*}
N_{(1)} & =\cost \sum _{ p^+ \in \mathcal P' (I^+) } \sum _{ p^- \in \mathcal P^0_{\leq 2} (I^-)} \epsilon(p^+) \epsilon(p^-) c_{\mathsf B} (p^+ , \mu) c_{\mathsf D} (p^- ,\mu)   
\\
& \cdot  \sum_{U \subset p^+(2), W\subset p^-} (-1) ^{r (2\abs{U} +1 + 2\abs{W}) + \floor{ (2\abs{U} +1 + 2 \abs{W}) /2} } \\
& = \cost \sum_{U \subset [k-1], W\subset [l]} (-1) ^{\abs{U} + \abs{W}}.
\end{align*}	
	This is zero because by $k+l \geq 2$ we have either $l\geq 1$ or $k-1 \geq 1$. 
	Also, 
	\begin{align*}
	 N_{(2)} &  =\cost \sum _{ p^+ \in \mathcal P^0_{\leq 2} (I^+) } \sum _{ p^- \in \mathcal P^0_{\leq 2} (I^-)} \epsilon(p^+) \epsilon(p^-) c_{\mathsf B} (p^+ , \mu) c_{\mathsf D} (p^- ,\mu) \\ 
	 & \cdot \sum_{U \subset p^+, W\subset p^-} (-1) ^{r(2\abs{U} + 2\abs{W}) + \floor{ (2\abs{U}  + 2 \abs{W}) /2} }  \\
	 & =   \cost \sum_{U \subset [k], W\subset [l]} (-1) ^{\abs{U} + \abs{W}},
	\end{align*}
 which is zero because $kl >0$.
	
	Similarly, we have 
	\begin{multline}\label{eq:M_{s,1}}
M_{i, (1)} = \cost \sum _{ p^+ \in \mathcal P' (I^+) } \sum _{ p^- \in \mathcal P^0_{\leq 2} (I^-)} \epsilon(p^+) \epsilon(p^-) c_{\mathsf B} (p^+ , \mu) c_{\mathsf D} (p^- ,\mu) \\ \cdot \sum_{U \subset p^+(2), W\subset p^-} (-1) ^{\abs{U} + \abs{W} } \nabla_i (p^+, p^-, U , W),
	\end{multline}
	and 
	\begin{multline}\label{eq:M_{s,2}}
	M_{i, (2)} = \cost \sum _{ p^+ \in \mathcal P^0_{\leq 2} (I^+) } \sum _{ p^- \in \mathcal P^0_{\leq 2} (I^-)} \epsilon(p^+) \epsilon(p^-) c_{\mathsf B} (p^+ , \mu) c_{\mathsf D} (p^- ,\mu) \\ \cdot  \sum_{U \subset p^+, W\subset p^-} (-1) ^{\abs{U} + \abs{W} } \nabla_i (p^+, p^-, U , W),
	\end{multline} where  
	$$\nabla_i (p^+, p^- , U , W) : = \begin{cases}
		\nabla_i (p^+, U),  & \text{if } i\in I^+, \\ 
		\nabla_i (p^-, W), & \text{if } i \in I^- . 
	\end{cases} $$ (Here the formula for $M_{i,(1)}$ presupposes that $k\geq 1$; otherwise we already know that $M_{i,(1)} =0$.) In the rest of the proof we show that $M_{i,(1)} = M_{i,(2)} = 0$. We introduce two auxiliary definitions. For  $q^+ \in \mathcal P' (I^+), p ^+ \in \mathcal P^0_{\leq 2} (I^+), p^- \in \mathcal P^0_{\leq 2}(I^-)$, let 
	\begin{align*}
	 L_{i,(1)}(q^+, p^-) & : = \sum_{U \subset q^+(2), W\subset p^-} (-1) ^{\abs{U} + \abs{W} } \nabla_i (q^+, p^-, U , W) , 
	\\ 
	 L_{i,(2)}(p^+, p^-) & : =  \sum_{U \subset p^+, W\subset p^-} (-1) ^{\abs{U} + \abs{W} } \nabla_i (p^+, p^-, U , W).
	\end{align*}

We first show that $M_{i,(1)} = 0$. We may assume that $k \geq 1$. If $i \in I^-$, then the function $\mathcal P'(I^+) \times \mathcal P^0_{\leq 2}(I^-) \ni (p^+, p^-) \mapsto L_{i,(1)}(p^+, p^-)$ is constant with respect to the variable $p^+$ . Hence by (\ref{eq:M_{s,1}}) we have 
$$
	M_{i,(1)} = \cost \sum_{p^+ \in \mathcal P'(I^+)} \epsilon(p^+) c_{\mathsf B} (p^+,\mu). $$
This is zero because on $\mathcal P'(I^+) $ we have a non-trivial involution $p^+ \mapsto \overline{ p^+}$ where $\overline{ p^+}$ has the same underlying partition as $p^+$ but has different marked singleton, and this involution satisfies $\epsilon(p^+) = - \epsilon(\overline{ p^+}), c_{\mathsf B}(p^+,\mu) = c_{\mathsf B}(\overline{p^+},\mu)$. 

It remains to treat the case where $i\in I^+$.  Let $p^+ \in \mathcal P'(I^+)$. If one of the singletons in $p^+$ contains $i$, then for arbitrary $ p^- \in \mathcal P^0_{\leq 2}(I^-)$, $L_{i,(1)}(p^+, p^-)$ is equal to a certain number times $$ \sum_{U\subset [k-1], W \subset [l]} (-1)^{\abs{U} +\abs{W}}, $$
which is zero since either $k-1 \geq 1$ or $l \geq 1$. Thus the contribution of such $p^+$ to (\ref{eq:M_{s,1}}) is zero. If one of the cardinality-$2$ members of $p^+$ contains $i$, then so does one of the cardinality-$2$ members of $\overline{ p^+}$. For such a pair $\set{p^+,\overline {p^+}}$, the contribution of $p^+$ to (\ref{eq:M_{s,1}})  is equal to the negative of the contribution of $\overline{ p^+}$, since for any fixed $p^- \in \mathcal P^0_{\leq 2}(I^-)$ we have $L_{i,(1)} (p^+, p^- ) = L_{i,(1)}(\overline {p^+}, p^-)$, and as before we have $\epsilon(p^+) = - \epsilon(\overline {p^+}), c_{\mathsf B}(p^+,\mu) = c_{\mathsf B}(\overline {p^+},\mu)$. 
 We have completed the proof that $M_{i,(1)} =0$. 
 
 We now show that 
$M_{i,(2)} =0$. By (\ref{eq:M_{s,2}}), it suffices to show that $L_{i,(2)}(p^+,p^-) = 0$ for all $p^+\in \mathcal P^0_{\leq 2}(I^+), p^-\in \mathcal P^0_{\leq 2}(I^-)$. To show this, by symmetry we may assume without loss of generality that $s \in I^-$. Enumerate the elements of $p^-$ as $I_1,\cdots, I_l$ such that $i \in I_1$. Using this enumeration we identify the sets $p^-$ and $[l]$. (Here $l \geq 1$.) Then $\nabla_i (p^+, p^- , U, W) = \nabla_1 (W)$ for all $W \subset p^- = [l]$. Hence $$ L_{i, (2)} (p^+, p^-)  = \sum_{U\subset p^+, W \subset [l]} (-1)^{\abs{U} + \abs{W}} \nabla_1(W)  = \sum_{U\subset [k], W \subset [l]} (-1)^{\abs{U} + \abs{W}} \nabla_1(W).$$  If $k>0$, then $L_{i,(2)}(p^+,p^-) =0$ because $ \sum_{ U \subset [k]} (-1) ^{\abs{U}} = 0.$ If $k = 0$, then $l \geq 2$, and we have  $  \sum_{W \subset [l]} (-1) ^{\abs{W}} \nabla_1 (W)  =0 $ as in (\ref{eq:elem id 2}), from which $L_{i,(2)}(p^+,p^-) =0$. The proof of the proposition for even $r \geq 3$ is complete. 
 \end{proof}

\section{A vanishing result, even case}\label{subsec:even case vanishing}
\subsection{}\label{para:preparation for vanishing even}
Assume we are in the even case. We are to state and prove the analogue of Theorem \ref{vanishing odd}. We only point out some new features in the even case, without repeating most of the identical steps.
 
 As in \S \ref{para:preparation for vanishing}, we consider a  Levi subgroup $M^*$ of $G^*$ of the form $\GG_m^r \times \GL_2^t \times \SO(\underline W)$. Without loss of generality, we may and shall assume that $\SO(\underline W)$ is not the split $\SO_2$ over $\QQ$, since in that case we can ``absorb'' it into the factor $\GG_m^r$ (or more precisely, we can replace $\underline W^{\perp}$ by the whole $\underline V$, and extend the hyperbolic basis $\mathbb B_{\underline W^{\perp}}$ to a hyperbolic basis of $\underline V$, after which we obtain the same Levi subgroup $M^*$ but presented in the form $M^* = M^{*,\GL} = \GG_m^{r+1} \times \GL_2^t$). In the current even case we impose the assumption that $M^*$ is cuspidal. This is equivalent to $\SO(\underline W)_{\RR}$ having anisotropic maximal tori (since $\SO(\underline W)$ is not the split $\SO_2$ over $\QQ$), and equivalent to $r$ being even. 

Define $\dot {\mathscr E}_{G^*}(M^*)$ in the same way as in \S \ref{para:preparation for vanishing}. As in \S \ref{para:preparation for vanishing}, for each $\fke_{A,B,\fkp} = \fke_{A,B, d^+,\delta^+,d^-, \delta^-}  =  (M', \lang M', s_{M^*}, \eta_{M^*}) \in \dot {\mathscr E}_{G^*}(M^*)$, we let  $$ (H, \lang H, s, \eta) : = \mathfrak e_{d^+ + 2\abs{A} +4 \abs{B} ,  \delta^+, d^- + 2\abs{A^c} + 4 \abs{B^c}, \delta^- }$$ (viewed as an elliptic endoscopic datum for $G$),  fix an embedding $M' \hookrightarrow H$ as in \S \ref{para:two maps from endoscopic G data}, and define $ST^H_{M'} (f^H)$ as in Definition \ref{defn:pre geometric side}. Then as in (\ref{eq:Tr'M*}), we define 
\begin{multline}\label{eq:Tr'M* even}
	\Tr'_{M^*}   : =  (n^{G^*}_{M^*})^{-1}    \sum _{ \substack {\fke  = (M', \lang M', s_{M^*}, \eta_{M^*}) \\ \in \dot {\mathscr E}_{G^*}(M^*)} } \abs{\Out_{G^*} (\fke)} ^{-1} \tau(G ) \tau(H)^{-1} ST_{M'} ^H (f^H).
\end{multline}  

In the odd case, since $G_{\QQ_p}$ is unramified, it is split, and this already implies that the quadratic space $(V,q)$ is (quasi)-split over $\QQ_p$ (see Proposition \ref{prop: even TFAE}). In the even case, it no longer follows from the unramifiedness of $G_{\QQ_p}$ that $(V,q)$ is quasi-split over $\QQ_p$. However, we shall impose this as a hypothesis\footnote{This is equivalent to asking that $G_{\QQ_p}$ as a pure inner form of $G^*_{\QQ_p}$ is trivial.} in the following theorem. By Proposition \ref{prop: even TFAE}, given the unramifiedness of $G_{\QQ_p}$, in order for $(V,q)$ to be quasi-split over $\QQ_p$ it is sufficient and necessary that the Hasse invariant of $(V,q)$ at $p$ is trivial. 
\begin{thm}\label{vanishing even}Keep the assumptions on $M^*$ in \S \ref{para:preparation for vanishing even}, and assume that $M^*$ does not transfer to $G$. Assume that the quadratic space $(V,q)$ is quasi-split over $\QQ_p$. Then $\Tr'_{M^*} = 0 . $
\end{thm}
\begin{proof}
	The proof is similar to the proof of Theorem \ref{vanishing odd}. We follow most of the notations introduced in the proofs of Theorem \ref{vanishing odd} and Propositions \ref{prop:step 2 in odd vanishing}, \ref{prop:step 3 in odd vanishing}. 
	
Recall that $r$ is even. By hypothesis at least one of the following conditions holds:
$$ rt>0 \qquad\text{or} \qquad r \geq 4 \qquad \text{or} \qquad t\geq 2.$$
As in the proof of Theorem \ref{vanishing odd} we reduce the current proof to showing the vanishing of 
\begin{align*}
	R_i & : =
	\sum_{A,B} \nabla_i(A) \epsilon^{p,\infty}(A,B)   \sum_{\varphi_H \in \Phi_H(\varphi_{\mathbb V^*})} \det(\omega_*(\varphi_H)) \Phi^H_{M'}(\gamma'^{-1}, \Theta_{\varphi_H}), \qquad 1\leq i \leq r, \\ T_j & :=
	\sum_{A,B} \nabla_j(B) \epsilon^{p,\infty}(A,B) \sum_{\varphi_H \in \Phi_H(\varphi_{\mathbb V^*})} \det(\omega_*(\varphi_H)) \Phi^H_{M'}(\gamma'^{-1}, \Theta_{\varphi_H}), \qquad 1 \leq j \leq t ,\\ S & : = 
	\sum_{A,B}  \epsilon^{p,\infty}(A,B)  \sum_{\varphi_H \in \Phi_H(\varphi_{\mathbb V^*})} \det(\omega_*(\varphi_H)) \Phi^H_{M'}(\gamma'^{-1}, \Theta_{\varphi_H}),
\end{align*} for an arbitrary element $\fke = (M',\lang M', s'_{M^*}, \eta_{M^*}) \in \dot {\mathscr E}(M^*)^{c,\ur}$. Here $\dot {\mathscr E}(M^*)^{c,\ur}$ is defined in the beginning of the proof of Theorem \ref{vanishing odd}, and in its definition we do impose that its elements $(M',\lang M', s'_{M^*}, \eta_{M^*})$ should be such that $M'$ is cuspidal (which was automatic in the odd case). In all the above summations, $B$ runs through all subsets of $[t]$, while $A$ only runs through \emph{even-cardinality} subsets of $[r]$, because otherwise the resulting group $H$ will not be cuspidal. On the other hand, indeed all choices of $(A,B)$ with $A$ having even cardinality  will contribute, in the sense that if we write $\fke = \fke_{d^+,\delta^+,d^-,\delta^-} (M^*)$, then the usual formula $\fke _{d^+ + 2 \abs{A} + 4 \abs{B}, \delta^+, d^- + 2 \abs{A^c} + 4 \abs{B^c}, \delta^-}$ as in \S \ref{para:preparation for vanishing even} defines an elliptic endoscopic datum  $(H,\lang H, s, \eta)$ for $G$. In other words, neither of $(d^+ + 2 \abs{A} + 4 \abs{B},\delta^+) $ and $( d^- + 2 \abs{A^c} + 4 \abs{B^c}, \delta^-)$ is equal to $(2,1) \in \ZZ_{ \geq 0} \times (\QQ^{\times}/\QQ^{\times,2})$. To see this, we recall that $M^{\SO}$ is assumed not to be the split $\SO_2$ over $\QQ$, so neither of $(d^{\pm}, \delta^{\pm})$ is $(2,1)$. Then since $\abs{A}$ and $\abs{A^c}$ are even it is clear that neither of $(d^+ + 2 \abs{A} + 4 \abs{B},\delta^+) $ and $( d^- + 2 \abs{A^c} + 4 \abs{B^c}, \delta^-)$ is $(2,1)$.

Since we are in the even case, when choosing $B_0$ as in the proof of Theorem \ref{vanishing odd}, by making a different choice we can only replace an \emph{even number} of the $\tau$'s by their inverses. This means that in condition \textbf{C}, we may not be able to arrange (\ref{eq:cond small}). Nevertheless, it is easy to see that we can always arrange either of the following two conditions:
\begin{itemize}
	\item The original condition \textbf{C}.
	\item The modification of condition \textbf{C}  where (\ref{eq:cond pos}) and (\ref{eq:cond increasing}) are still in force, and (\ref{eq:cond small}) is replaced by the following condition:
	\begin{align*}\abs{
	\epsilon_i(\gamma'^{-1})} <1 \text{ for all } i<r, \text{ and } 1 < \abs{\epsilon_r(\gamma'^{-1})} < \min_{r' < i < r} \abs{\epsilon_i(\gamma'^{-1})}^{-1}. 
	\end{align*}
\end{itemize}

In either case, it is still true that $\epsilon_{R_H}(\gamma'^{-1})$ is independent of $(A,B)$. Moreover, (\ref{eq:sign sigma}) still holds, and it reads $\sign(\sigma) = \omega_0(A)$ since $\abs{A}$ is even. Instead of (\ref{eq:q(H)}) we have $q(H_{\RR}) \equiv 0 \mod 2$ by the cuspidality of $H$. Hence \begin{align*}  
	R_i & = \cost  \sum_{A,B} \nabla_i(A) \epsilon^{p,\infty}(A,B) \omega_0(A) \Phi(\gamma',A), \\  
	T_j & = \cost  \sum_{A,B} \nabla_j(B) \epsilon^{p,\infty}(A,B) \omega_0(A) \Phi(\gamma',A),\\  
	S & = \cost  \sum_{A,B}  \epsilon^{p,\infty}(A,B) \omega_0(A) \Phi(\gamma',A).
\end{align*}

To compute $\epsilon^{p,\infty}(A,B)$, in the proof of Theorem \ref{vanishing odd} we used the fact that the quadratic space $V_{\QQ_p}$ is quasi-split. This is now an assumption in the current theorem. When we showed that the Whittaker normalization between $M'$ and $M^*$ at $\infty$ is inherited from the Whittaker normalization between $H$ and $G^*$ at $\infty$ in the even case in the proof of Proposition \ref{prop:computing tasho}, we used that $m^- \equiv n^- \mod 2$. This is indeed true here since $m^- = n^- + \abs{A^c} + 2 \abs{B^c}$ and we know that  $\abs{A^c} =r - \abs{A}$ is even. Thus by the same argument as in the proof of Theorem \ref{vanishing odd}, $\epsilon^{p,\infty}(A,B)$ is up to a multiplicative constant equal to the ratio of the Whittaker normalization between $H$ and $G$ at $\infty$ to the normalization $\Delta_{j,B_{G,H}}$. This ratio is equal to 
$$ (-1) ^{\floor{ m^- /2}} = (-1)^{\floor{ \frac{ n^- + \abs{A^c} + 2 \abs{B^c} }{2} } } $$
as shown in the proof of Proposition \ref{prop:computing tasho}. Hence 
$$ \epsilon^{p,\infty}(A,B) = \cost (-1) ^{\abs {B} + \abs{A}/2} ,$$
and we have 
\begin{align*} 
	R_i & = \cost  \sum_{A,B} \nabla_i(A) (-1) ^{\abs {B} + \abs{A}/2} \omega_0(A) \Phi(\gamma',A), \\
	T_j & = \cost  \sum_{A,B} \nabla_j(B) (-1) ^{\abs {B} + \abs{A}/2} \omega_0(A) \Phi(\gamma',A),\\
	S & = \cost  \sum_{A,B}  (-1) ^{\abs {B} + \abs{A}/2} \omega_0(A) \Phi(\gamma',A).
\end{align*}
Since $\abs{A} $ is even, we have $\omega_0 (A) = \omega_0 (A^c) $. 
In particular,
\begin{align}\label{A and A^c even}
	\omega_0(A) (-1)^{\abs{A}/2} \omega_0 (A^c) (-1)^{\abs{A^c}/2} = (-1) ^{r/2}.
\end{align}

We now start to show the vanishing of $R_i, T_j, S$. 
As in the proof of Proposition \ref{prop:step 2 in odd vanishing},  
the case where $t \geq 2$ is the easiest. In this case we have $$ \sum_B (-1) ^{\abs{B}} = \sum_B \nabla_j (B) (-1)^{\abs{B}} =0,$$ so $R_i = T_j = S= 0$. 
Now consider the case where $t= 1$ and $r =2$. Then $R_i = S = 0$ because $\sum_B (-1) ^{\abs {B}} =0$. To show $T_j = 0$, we use the fact that (\ref{A and A^c even}) is equal to $-1$ and $\Phi(\gamma',A) = \Phi(\gamma', A^c)$.

Finally we treat the case where $r \geq 4$. The corresponding discussion in \S \ref{subsec:odd case vanishing} for $r\geq 3$ needs almost no change to be carried over here. The only differences are:
\begin{itemize}
	\item All the sets $I^+, I^-, A^+, A^{c,+} , A^-, A^{c,-}$ have to have even cardinality in the present case.
	\item The root system $R_{A, \gamma'}$ in the present case is of type $\mathsf D_{\abs{A^+}} \times \mathsf D_{\abs{A^{c,+}}} \times \mathsf D _{\abs{A^-}} \times \mathsf D_{\abs{A^{c,-}}}.$ 
\item  Herb's formula reads 
\begin{multline}\label{eq:Herb even}
\bar c_{R_{A , \gamma'}} (x, \chi)  = \cost \sum_{p_1 ^+ \in \mathcal P^0_{\leq 2}(A^+) } \sum _{p_1^- \in \mathcal P ^0_{\leq 2} (A^-)} \sum_{ p_2 ^+ \in \mathcal P^0_{\leq 2} (A^{c,+}) } \sum _{p_2^- \in \mathcal P^0_{\leq 2} (A^{c,-})} \\  \epsilon(p_1^+) \epsilon(p_1^-) \epsilon(p_2^+) \epsilon(p_2^-)   c_{\mathsf D} (p_1^+ ,\mu) c_{\mathsf D} (p_2^+, \mu) c_{\mathsf D} (p_1^- , \mu) c_{\mathsf D} (p_2^- ,\mu).
\end{multline}
\end{itemize}
	As in the proof of Proposition \ref{prop:step 2 in odd vanishing},  define 
	\begin{align*}
 M_i & : = \sum_A \nabla_i(A) \omega_0 (A) (-1) ^{\abs{A} /2} \bar c_{R_{A,\gamma'}} (x, \chi),  \\ N & : = \sum_ A \omega_0(A) (-1) ^{\abs{A} /2} \bar c_{R_{A,\gamma'}} (x, \chi) ,
	\end{align*}
 where $A$ runs through subsets of $[r]$ such that  $\abs{A^{\pm}}$ and $\abs{A^{c, \pm}}$ are all even. Then the desired vanishing of $R_i, T_j, S$ reduces to the vanishing of $M_i$ and $N$, which we now show. 
	
	Write $k = \abs { I^+} /2 , l = \abs{I^-} /2$. (They are both integers.)
	For $i\in I^+$, $p^+ \in \mathcal P^0_{\leq 2} (I^+)$, and $U \subset p^+$, define 
	$$\nabla_i ( p^+ , U) : =   \begin{cases}
		1, & \text{if } i\in I \text{ for some } I \in U, \\ 
		-1, & \text{otherwise}.
	\end{cases}. $$
	Similarly, for $i\in I^-$, $p^-\in \mathcal P^0_{\leq 2  }(I^-)$, and $W \subset p^-$,  we define $\nabla_i (p^- , W)$.   

	Herb's formula (\ref{eq:Herb even}) together with a similar argument as in the proof of Proposition \ref{prop:step 3 in odd vanishing} implies that 
	\begin{align*}
N & = \sum _{ p^+ \in \mathcal P^0_{\leq 2}(I^+) } \sum_{ p^- \in \mathcal P^0_{\leq 2}(I^-)} \epsilon(p^+) \epsilon (p^-) c_{\mathsf D} (p^+ , \mu) c_{\mathsf D} (p^- , \mu )  \sum  _{ U \subset p^+ , W \subset p^-} (-1) ^{\abs{U} + \abs{W}} \\
&  = \cost \sum  _{ U \subset [k] , W \subset [l]} (-1) ^{\abs{U} + \abs{W}},
	\end{align*} and for $i\in [r]$
\begin{multline*}
M_i = \sum _{ p^+ \in \mathcal P^0_{\leq 2}(I^+) } \sum_{ p^- \in \mathcal P^0_{\leq 2}(I^-)} \epsilon(p^+) \epsilon (p^-) c_{\mathsf D} (p^+ , \mu) c_{\mathsf D} (p^- , \mu )  \\ \cdot  \sum  _{ U \subset p^+ , W \subset p^-}  (-1) ^{\abs{U} + \abs{W}} \nabla_i(p ^+, p^- ,U ,W),
\end{multline*} 
	where 
	$$ \nabla_i(p ^+, p^- ,U ,W) : = \begin{cases}
		\nabla_i(p ^+,  U ) , & \text{if } i\in I^+, \\
		\nabla_i(p^- , W) , & \text{if } i\in I^-.
	\end{cases}$$
	Since $ lk \neq 0$, we have $N=0$. We now show $M_i =0 $.
	Fix $  p^+ \in \mathcal P^0_{\leq 2}(I^+)  , p^- \in \mathcal P^0_{\leq 2}(I^-) $. It suffices to show that
	$$ L : = \sum_{ U \subset p^+, W \subset p^-}\nabla_i(p ^+, p^- ,U ,W)  (-1) ^{\abs{U} + \abs{W}}  $$ is zero. 
	By symmetry we may assume that $
	i\in I^-$. After fixing an enumeration of the elements of $p^-$ such that the first element contains $i$, we get 
	$$ L = \sum_{ U \subset [k], W \subset [l]} \nabla_1(W) (-1)^{\abs{U} + \abs{W}}.$$ If $k \geq 1$, then $L= 0$ because $\sum_{ U \subset [k]} (-1) ^{\abs { U}} =0 .$ If $k =0$, then $l = r/2 \geq 2$, and $L = 0 $ because 
	$\sum _{ W \subset [l]} \nabla_1 (W) (-1) ^{\abs{W}} = 0$ as in (\ref{eq:elem id 2}). 	This concludes the proof.
\end{proof}
\section{The main identity}
\subsection{}\label{para:prepare for main stabilization}
Keep the notation and setting in \S \ref{subsubsec:setting for Morel's formula} and Theorem \ref{geometric assertion}. Fix a prime $p\notin \Sigma(\mathbf{O} (V), \mathbb V, \lambda, K,f^{\infty})$. In the even case, assume that the quadratic space $(V,q)$ is quasi-split over $\QQ_p$, or equivalently, that its Hasse invariant at $p$ is trivial (cf.~\S \ref{para:preparation for vanishing even}). 
Let $ f^{p,\infty}$ and $ dg^{p,\infty}$ be as in \S \ref{subsubsec:setting for Morel's formula}. Fix a set $\dot {\mathscr E}(G)$\index{$\dot {\mathscr E}(G)$} of representatives of the isomorphism classes in $\mathscr E(G)$ such that each element of $\dot {\mathscr E}(G)$ is of the form $\fke_{\fkp}$ for some $\fkp = (d^+,\delta^+,d^-,\delta^-) \in \mathscr P_V$ with $d^+ \geq 2$ (cf.~\S \ref{para:test function intro}). As in \S \ref{para:test function intro}, assume that $\mathbb V$ is absolutely irreducible. Then for each $\fke_{\fkp} = (H,\lang H, s, \eta) \in \dot{\mathscr E}(G)$, we have a test function $f^H \in C^{\infty}_c(H(\adele))$ fixed in \S\ref{subsec:test functions}.

\begin{cor}\label{main identity}

For $a \in \NN$ large enough, we have 
\begin{multline*}
 \Tr _{M_1} (f^ {p, \infty} dg^{p,\infty} ,K , a ) + \Tr _{M_2} (f^ {p, \infty} dg^{p,\infty} ,K , a ) + \Tr _{M_{12}} (f^ {p, \infty} dg^{p,\infty} ,K , a ) = \\  \sum _{(H, \lang H, s ,\eta) \in \dot{\mathscr E}(G)}  \iota (G, H) [ST^H (f^H) - ST^H_e (f^H)] .
\end{multline*}
 Here $\iota(G,H) : = \tau(G) \tau(H) ^{-1} \abs{\Out (H,\lang H, s,\eta)} ^{-1}$, and $ST^H_e  (f^H): = ST^H_H (f^H)$ as defined in \S \ref{simplified geometric side}.
\end{cor}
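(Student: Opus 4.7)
The strategy is the one sketched in the introduction: expand the right-hand side by the definition of $ST^H$, reorganize the resulting double sum according to $G$-endoscopy for Levi subgroups of $G^*$, match the ``transferable'' contributions with $\Tr_M$ via Theorem \ref{thm:maincomp}, and kill the ``non-transferable'' contributions via Theorems \ref{vanishing odd}--\ref{vanishing even}.

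More precisely, I would first unfold $ST^H$ using the definition in \S \ref{simplified geometric side}: for each $(H,s,\eta)\in \mathscr E(G)$ with $H$ cuspidal and unramified at $p$ (otherwise $f^H\equiv 0$ and contributes nothing),
\[
ST^H(f^H) - ST^H_e(f^H) = \sum_{M' \subsetneq H} (n^H_{M'})^{-1}\, ST^H_{M'}(f^H),
\]
where $M'$ runs over proper $\QQ$-Levi subgroups of $H$ up to $H(\QQ)$-conjugacy. I would then regroup the resulting triple sum $\sum_H \sum_{M'\subsetneq H}$ by the $G^*(\QQ)$-conjugacy class of the proper $\QQ$-Levi $M^*$ of $G^*$ determined (up to conjugacy) by the pair $(H,M')$: namely, the Levi of $G^*$ whose dual is the image of $\widehat{M'}\hookrightarrow \widehat H \xrightarrow{\eta}\lang G$. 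Tracking the automorphism groups shows that the pair $(H,M')$ is determined by a $G$-endoscopic datum $\mathfrak e = (M',s_M,\eta_M)\in \mathscr E_G(M^*)^c$ in a way compatible with the factors $|\Out(H,s,\eta)|^{-1}$ and $|\Out_G(\mathfrak e)|^{-1}$, and with the Weyl-type factors $n^H_{M'}$ and $n^G_{M^*}$. The upshot is an identity of the form
\[
\sum_{(H,s,\eta)\in\mathscr E(G)}\iota(G,H)\bigl[ST^H(f^H)-ST^H_e(f^H)\bigr] = \sum_{M^*} (n^G_{M^*})^{-1}\sum_{\mathfrak e\in \mathscr E_G(M^*)^c} |\Out_G(\mathfrak e)|^{-1}\tau(G)\tau(H)^{-1}\, ST^H_{M'}(f^H),
\]
with $M^*$ running over proper $\QQ$-Levi subgroups of $G^*$ modulo $G^*(\QQ)$-conjugacy.

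Next I would split the outer sum over $M^*$ into two cases. If $M^*$ transfers to a $\QQ$-Levi $M$ of $G$, then $M$ is (up to conjugacy) one of $M_1, M_2, M_{12}$, and the inner sum matches Definition \ref{defn:Tr'}: it equals $\Tr'_M$. Theorem \ref{thm:maincomp} then gives $\Tr'_M=\Tr_M$ for $a$ sufficiently large, producing the three terms on the left-hand side. If $M^*$ does not transfer to $G$, then Theorem \ref{vanishing odd} (in the odd case) or Theorem \ref{vanishing even} (in the even case) asserts that the inner sum vanishes.

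The principal obstacle will be the bookkeeping of the reorganization: one must carefully verify, at the level of isomorphism classes of extended endoscopic data, that the correspondence $(H,M')\leftrightarrow (M^*,\mathfrak e)$ is compatible with the relevant automorphism group actions, so that the factors $|\Out(H,s,\eta)|^{-1}$, $|\Out_G(\mathfrak e)|^{-1}$, $(n^H_{M'})^{-1}$ and $(n^G_{M^*})^{-1}$ all combine correctly. This is a combinatorial exercise analogous to those already carried out in \cite{morel2010book}, and essentially the same counting argument applies here; once it is in place, Theorem \ref{thm:maincomp} together with Theorems \ref{vanishing odd}--\ref{vanishing even} finishes the proof.
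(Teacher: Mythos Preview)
Your proposal is correct and follows essentially the same approach as the paper: the paper's proof invokes Theorems \ref{thm:maincomp}, \ref{vanishing odd}, \ref{vanishing even} together with exactly the combinatorial reorganization identity you describe (summing over pairs $(H,M')$ versus summing over pairs $(M^*,\mathfrak e)$ with $\mathfrak e\in\mathscr E_{G^*}(M^*)$), which it formulates as an abstract identity for finitely supported functions on $\coprod_H \mathcal L(H)$ and attributes to \cite[Lemma 2.4.2]{morel2010book} and Kottwitz's unpublished notes.
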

\begin{proof}The right hand side of the desired identity is by definition  $$ \sum_{ (H,\lang H,s ,\eta) \in \dot{\mathscr E}(G)} \abs{\Out(H,\lang H, s,\eta)} ^{-1 } \sum_{ L} (n^H_{L})^{-1} \tau(G)\tau(H)^{-1} ST^H_L(f^H) ,$$ where $L$ runs through a set of representatives of the $H(\QQ)$-conjugacy classes of proper Levi subgroups of $H$ (cf.~\S \ref{simplified geometric side}). By an observation of Kottwitz which can be verified directly in our case (see also \cite[Lem.~2.4.2]{morel2010book}), the above is equal to 
	$$ 
 \sum_{M\in \set{M_1,M_2,M_{12}}} \Tr_M'  + \sum_{M^*}\Tr_{M^*}' , 
	$$ where \begin{itemize}
		\item For each $M \in \set{M_1,M_2,M_{12}}$, the term $\Tr_M'$ is as in \S \ref{para:prepare for main}. 
		\item The second sum is over cuspidal Levi subgroups $M^*$ of $G^*$ of the form considered in \S \ref{para:preparation for vanishing} and \S \ref{para:preparation for vanishing even} in such a way that each conjugacy class of cuspidal Levi subgroups of $G^*$ that does not transfer to $G$ is represented exactly once, and that no other conjugacy classes show up.\footnote{Note that in general $G^*$ has Levi subgroups which have direct factors $\GL_j$ with $j \geq 3$. These Levi subgroups are not conjugate to the ones considered in \S \ref{para:preparation for vanishing} and \S \ref{para:preparation for vanishing even}, but none of them are cuspidal. On the other hand, every cuspidal Levi subgroup of $G^*$ is conjugate to the ones considered in \S \ref{para:preparation for vanishing} and \S \ref{para:preparation for vanishing even}.}
		\item For each $M^*$, the term $\Tr_{M^*}'$ is as in (\ref{eq:Tr'M*}) and (\ref{eq:Tr'M* even}).  
	\end{itemize}
The corollary then follows from Theorems \ref{thm:maincomp}, \ref{vanishing odd}, \ref{vanishing even}.  
\end{proof}

\begin{rem} In Corollary \ref{main identity} we \emph{defined} $ST^H_e(f^H)$ to be $ST^H_H(f^H)$, where $ST^H_H$ is defined only when the test function at the archimedean place is stable cuspidal (see \S \ref{simplified geometric side}). On the other hand, $ST^H_e$ has a more general definition, namely it is the elliptic part of the stable trace formula for $H$ as in \cite{kottwitzelliptic}. Of course it is expected (and proved in Kottwitz's unpublished notes) that these two definitions agree when the test function at the archimedean place is stable cuspidal. For our particular $f^H_{\infty}$, this compatibility is essentially proved in \cite[\S 7]{kottwitzannarbor}. In fact, if we substitute the archimedean stable orbital integrals in the general definition of $ST^H_e (f^H)$ by the formula \cite[(7.4)]{kottwitzannarbor}, then we obtain precisely $ST^H_H (f^H)$. 
 \end{rem}
The following is a special case of the main result of \cite{KSZ}.
\begin{thm}\label{from KSZ}Keep the setting of \S \ref{para:prepare for main stabilization}. For $a\in \NN$ large enough, we have
$$	\Tr(\Frob_p^a \times f^{\infty} dg ^{\infty} \mid  \coh_c^* ( \Sh _K, \mathbb V))= \sum _{(H, \lang H, s ,\eta) \in \dot{\mathscr E}(G)}  \iota (G, H) ST^H_e (f^H).  $$ \qed 
\end{thm}

\begin{cor}\label{Main Main result} For $a \in \NN$ large enough, we have  
	\begin{align}\label{eq:main}
\Tr(\Frob_p^a \times f^{ \infty} dg^{\infty} \mid  \icoh^* (\overline{ \Sh _K},\mathbb V) )   = \sum _{(H, \lang H, s ,\eta) \in \dot {\mathscr E}(G)}  \iota (G, H) ST^H (f^H) .
	\end{align}

\end{cor}
\begin{proof}
	This follows from Theorem \ref{geometric assertion}, Corollary \ref{main identity}, and Theorem \ref{from KSZ}.
\end{proof}

\begin{rem}\label{rem:in E}
	The right hand side of (\ref{eq:main}) is \textit{a priori} a number in $\CC$. However, as we have seen in Theorem \ref{geometric assertion}, the left hand side is in fact a number in $\mathbb E$, the number field over which $\mathbb V$ is defined. 
\end{rem}

\chapter[Spectral expansion and Hasse--Weil zeta functions]{Application: spectral expansion and Hasse--Weil zeta functions}\label{sec:app}
\section{Introductory remarks}\label{subsec:intro}
\subsection{}
In \cite[Part II]{kottwitzannarbor}, Kottwitz explained how the formula in Corollary \ref{Main Main result} would imply a description of $\sum_i (-1)^i \icoh^i (\overline {\Sh_K} , \mathbb V)$ in the Grothendieck group of $\mathcal H(G(\adele_f ) \sslash K)_{\QQ} \times \Gamma_{\QQ}$-modules over $\overline {\QQ}_{\ell}$. More precisely, the Grothendieck group is taken with respect to the category of $\mathcal H(G(\adele_f ) \sslash K)_{\QQ}
\otimes_{\QQ} \overline \QQ_{\ell}$-modules which are finite-dimensional over $\overline \QQ_{\ell}$ and are equipped with a continuous (with respect to the $\ell$-adic topology) $\Gamma_{\QQ}$-action that commutes with $\mathcal H(G(\adele_f ) \sslash K)_{\QQ}
\otimes_{\QQ} \overline \QQ_{\ell}$. This description is in terms of the conjectural parametrization of automorphic representations by Arthur parameters. The main hypotheses assumed by Kottwitz are the following (see \cite[\S 8]{kottwitzannarbor}):

\begin{enumerate}
	\item  Arthur's conjectural parametrization and multiplicity formula for automorphic representations.
	\item The closely related conjectural spectral expansion of the stable trace formula in terms of Arthur parameters.
\end{enumerate}

Recent developments have seen the proof of variations of these hypotheses in specific instances. For the groups that are relevant to this paper, Arthur \cite{arthurbook} has established (1) and (2) for quasi-split special orthogonal groups over number fields, and Ta\"ibi \cite{taibi} has generalized (1) to some inner forms of these groups (and under a regular algebraic assumption). Among the inputs to Ta\"ibi's work are the theory of rigid inner forms established by Kaletha \cite{kalrigid, kalethaglobal} and results of Arancibia--Moeglin--Renard \cite{AMR} on archimedean Arthur packets. (For the special orthogonal groups of interest to us, only the special case of Kaletha's theory, namely that of pure inner forms, is needed.) We mention that Arthur's work \cite{arthurbook} depends on the stabilization of the twisted trace formula as a hypothesis, and the latter has been established by Moeglin--Waldspurger \cite{MWbook}.\footnote{However, see footnote \ref{foot:Arthur} on p.~\pageref{foot:Arthur}.} It is thus possible to combine Corollary \ref{Main Main result} with the results from \cite{arthurbook} and \cite{taibi} to obtain an unconditional description of $\icoh^*(\overline{\Sh_K}, \mathbb V) $ in certain special cases.  
In the following we carry this out, for the special cases described in Lemma \ref{lem:existence of V}. 

In the sequel, we shall assume the following hypothesis.

\begin{hypo}\label{hypo}
Let $H$ be a quasi-split reductive group over $\QQ$. For test functions $f$ on $H(\adele)$  which are stable cuspidal at infinity, we have $ST^H(f) = S^H(f)$. Here $ST^H(f)$ denotes Kottwitz's simplified geometric side of the stable trace formula (see \S \ref{simplified geometric side}), and $S^H(f)$\index{$S^H$} denotes Arthur's stable trace formula\index[n]{Arthur's stable trace formula} \cite{arthursta1,arthursta2,arthursta3}.
\end{hypo}

This hypothesis essentially follows from Kottwitz's stabilization of the trace formula with stable cuspidal test functions at infinity in his unpublished notes. Recently an alternative proof has been given by Z.~Peng \cite{peng}. Let us make some comments on the former. Firstly we state and prove two lemmas that are well known and independent of Hypothesis \ref{hypo}. 

\begin{lem}\label{lem:transfer of stable cuspidal}
Let $H$ be a semi-simple (for simplicity) reductive group over $\RR$. Assume that $H$ is cuspidal (Definition \ref{defn:cuspidal}). Let $f : H(\RR) \to \CC$ be a stable cuspidal function (see \cite[\S 4]{arthurlef}, \cite[5.4]{morel2010book}).  The following statements hold.
\begin{enumerate}
	\item The function $f$ is equal to a finite linear combination $\sum_{ \varphi} c_{\varphi} f_{\varphi}, c_{\varphi} \in \CC$, where $\varphi$ runs through the discrete Langlands parameters for $H$ and each $f_{\varphi}$ is a stable pseudo-coefficient for the L-packet of $\varphi$ as in (\ref{eq:defn of stable pseudo coeff}). 
	\item Let $(H', \mathcal H', s, \eta: \mathcal H' \to \lang H)$ be an elliptic endoscopic datum for $H$. For simplicity assume $\mathcal H' = \lang H'$. Then a Langlands--Shelstad transfer of $f_{\varphi}$ as in (1) to $H' $ can be taken to be a stable cuspidal function on $H'(\RR)$ that is supported on those discrete Langlands parameters $\varphi'$ for $H'$ such that $\eta \circ \varphi'$ is equivalent to $\varphi$.
\end{enumerate}
\end{lem}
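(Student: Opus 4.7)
The plan is to treat the two parts in order. For part (1), I would begin by recalling the characterization of stable cuspidal functions going back to Arthur's paper \cite{arthurlef}: a function $f \in C_c^{\infty}(H(\RR))$ is stable cuspidal if and only if the map $\pi \mapsto \Tr \pi(f)$ vanishes on all tempered irreducible representations of $H(\RR)$ outside of the discrete series, and is constant on each discrete series L-packet $\Pi(\varphi)$. Combined with the observation that, since $f$ has compact support and fixed $\mathfrak{z}$-finite image under the Bernstein center action, only finitely many discrete L-packets $\Pi(\varphi)$ give a nonzero value $\Tr \pi(f)$, we can define scalars $c_\varphi$ by $c_\varphi := d(H)\,\Tr \pi_\varphi(f)$ for any $\pi_\varphi \in \Pi(\varphi)$. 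The function $g := f - \sum_\varphi c_\varphi f_\varphi$ is then a stable cuspidal function whose trace against every tempered irreducible representation of $H(\RR)$ vanishes. By the linear independence of tempered characters on the elliptic regular set, $g$ has vanishing orbital integrals on all regular semi-simple elliptic elements; combined with the stable cuspidality of $g$ (which already forces vanishing of stable orbital integrals outside the elliptic regular set), we find that all stable orbital integrals of $g$ vanish. In the simplified geometric side of the stable trace formula, such a function contributes trivially, so we may replace $f$ by $\sum_\varphi c_\varphi f_\varphi$, giving (1).

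For part (2), I would use Shelstad's endoscopic character identity, which for the transfer $f_\varphi^{H'}$ of $f_\varphi$ to $H'$ reads
\begin{equation*}
S\Theta_{\varphi'}(f_\varphi^{H'}) \;=\; \Theta^{s}_{\varphi}(f_\varphi)
\end{equation*}
for each discrete tempered parameter $\varphi'$ of $H'$, where the right-hand side is the $s$-twisted sum of characters in $\Pi(\eta\circ\varphi')$. By the definition of $f_\varphi$ as the averaged pseudo-coefficient of $\Pi(\varphi)$, the right-hand side is zero unless $\eta\circ\varphi'$ is equivalent to $\varphi$, and is a computable explicit constant otherwise; moreover for non-discrete tempered $\varphi'$, the right-hand side vanishes because averaging against a pseudo-coefficient of a discrete packet cannot produce a non-discrete tempered character. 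First, observe that the stable orbital integrals of $f_\varphi$ vanish on any semi-simple element that is not regular elliptic (this is a standard property of stable pseudo-coefficients, following from the support properties of the Harish-Chandra transform), and the Langlands--Shelstad transfer matches stable orbital integrals on $H'$ to $\kappa$-orbital integrals on $H$, so $f_\varphi^{H'}$ also has stable orbital integrals vanishing outside the regular elliptic set. Hence $f_\varphi^{H'}$ is already stable cuspidal. Applying part (1) to $f_\varphi^{H'}$ yields an expansion $\sum_{\varphi'} c_{\varphi'} f_{\varphi'}$, and the character computation above shows that only those $\varphi'$ with $\eta\circ\varphi' \sim \varphi$ contribute. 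Replacing $f_\varphi^{H'}$ by this linear combination does not change any stable orbital integral, hence it remains a Langlands--Shelstad transfer of $f_\varphi$, as required.

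The main obstacle is the second paragraph's use of Shelstad's endoscopic character identity, which depends on the precise normalizations of transfer factors and pseudo-coefficients used in \S \ref{subsec:test functions}. One must verify that the normalization of $f_\varphi$ in (\ref{eq:defn of stable pseudo coeff}), together with the transfer factor conventions fixed in \S \ref{normalizing the transf factors}, are compatible with Shelstad's formulation in \cite{she2}, \cite{she3}. Once this compatibility is established, the character identity is available and the argument runs as above. A subsidiary finiteness issue — that only finitely many $\varphi'$ contribute to the expansion of $f_\varphi^{H'}$ — is handled by the same infinitesimal-character argument used in part (1), since the transfer of a function with bounded support on $H(\RR)$ has bounded support on $H'(\RR)$ and the $\mathfrak{z}$-finiteness is preserved under the transfer map.
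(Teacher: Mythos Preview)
Your approach to part (1) has a genuine gap: you show that $g = f - \sum_\varphi c_\varphi f_\varphi$ has vanishing stable orbital integrals and conclude ``we may replace $f$ by $\sum_\varphi c_\varphi f_\varphi$'', but the lemma asserts that $f$ is \emph{equal} to such a sum, not merely stably-equivalent to one. The paper's proof closes this gap with a simple observation you missed: pseudo-coefficients are not unique. Once one knows that $\delta := f - f'$ has zero trace on every tempered representation (this is the direct consequence of the definition of ``stable cuspidal'', and does not require your detour through orbital integrals), one may replace a single pseudo-coefficient $f_\pi$ appearing in $f_{\varphi_1}$ by $f_\pi + \delta/c_1$, which is still a pseudo-coefficient of $\pi$ because the defining property of a pseudo-coefficient is purely in terms of tempered traces. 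With this modified $f_{\varphi_1}$, the equality $f = \sum_i c_i f_{\varphi_i}$ holds on the nose. Your intermediate argument invoking ``linear independence of tempered characters on the elliptic regular set'' and the claim that stable cuspidality forces vanishing of stable orbital integrals off the elliptic set is also imprecise (stable cuspidality is a trace condition, not an orbital-integral condition), and in any case unnecessary.

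For part (2) your strategy via Shelstad's endoscopic character identity is essentially the same as the paper's, which simply cites Shelstad's result that the spectral transfer factor between a tempered parameter $\varphi'$ for $H'$ and a tempered representation $\pi$ of $H$ vanishes unless $\pi \in \Pi(\eta\circ\varphi')$. Your additional step of arguing that $f_\varphi^{H'}$ is stable cuspidal via vanishing of stable orbital integrals is again the wrong direction; the correct (and shorter) route is to observe directly from Shelstad's character relations that $\Tr\pi'(f_\varphi^{H'})$ vanishes on all tempered $\pi'$ outside the relevant discrete packets, and then apply the absorption trick from part (1) to realize the transfer exactly as the desired linear combination.
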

\begin{proof}
	(1) is a formal consequence of the definitions. In fact, by the definition of being stable cuspidal, we know there exists a function $f'$ of the desired form $\sum_{i=1}^k c_{i} f_{\varphi_i} , c_i \in \CC^{\times}$ such that $\delta: = f-f'$ has zero trace on all tempered representations of $H(\RR)$. By definition we have $f_{\varphi_1} = \sum _{ \pi} f_{\pi}$, where $\pi$ runs through the L-packet of $\varphi_1$ and each $f_{\pi}$ is a pseudo-coefficient of $\pi$. Then for one such $\pi$ we may replace $f_{\pi}$ by $  f_{\pi} + \delta/c_1 $, which is still a pseudo-coefficient of $\pi$. After making this replacement $f$ is precisely equal to $\sum_{i=1}^k c_{i} f_{\varphi_i}$, with the new definition of $f_{\varphi_1}$. 
	
	(2) follows from the fact, due to Shelstad (see for instance \cite{she2,she3}), that the spectral transfer factor between a tempered Langlands parameter $\varphi'$ for $H'$ and a tempered representation $\pi$ for $H$ vanishes unless $\pi$ lies in the L-packet of $\eta \circ \varphi'$. For a summary of Shelstad's theory of spectral transfer factors see \cite[p.~621]{kalrigid}.
\end{proof}
\begin{lem}\label{lem:discrete part}
	Let $H$ be a semi-simple (for simplicity) reductive group over $\QQ$. Assume that $H$ is cuspidal (Definition \ref{defn:cuspidal}). Let $f_{\infty} \in C^{\infty}_c(H(\RR))$ be a stable cuspidal function, and let $f^{\infty} \in C^{\infty} _c (H(\adele_f))$. Let $I_H$ denote the invariant trace formula\index[n]{invariant trace formula} for $H$ and let $I_{H,\disc} = \sum_{t\geq 0} I_{\disc, t} $ denote its discrete part; see \cite{arthurinvII} and \cite[\S 3]{arthurlef}. Then $$ I_H(f_{\infty} f^{\infty} ) = I_{H, \disc } (f_{\infty} f^{\infty}), $$ and they are also equal to 
	$$ \Tr ( f_{\infty} f^{\infty} \mid  L^2_{\disc} (H(\QQ) \backslash H(\adele))). $$ 
\end{lem}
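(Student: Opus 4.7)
The plan is to deduce both equalities from Arthur's spectral expansion of the invariant trace formula, combined with the defining property of (stable) cuspidal functions that they vanish on representations induced from proper parabolic subgroups.

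First I would recall Arthur's refined spectral expansion (see \cite{arthurinvII})
\begin{equation*}
I_H(f) \;=\; \sum_{t \geq 0} I_{H,t}(f),
\end{equation*}
where each $I_{H,t}(f)$ decomposes as a sum indexed by standard Levi subgroups $M \subset H$ of contributions that involve, on the archimedean side, the trace of $f_\infty$ against representations parabolically induced from $M(\RR)$. By construction, the $M = H$ summands assemble into $I_{H,\disc,t}(f)$, and $I_{H,\disc}(f) = \sum_t I_{H,\disc,t}(f)$. The key input is that $f_\infty$ being stable cuspidal (in particular Arthur-cuspidal in the sense of \cite[\S 4]{arthurlef}) means its character against any representation of $H(\RR)$ parabolically induced from a proper Levi vanishes identically. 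By Lemma \ref{lem:transfer of stable cuspidal}(1) this reduces to the analogous fact for a stable pseudo-coefficient $f_\varphi$ of a discrete series $L$-packet, which is standard: pseudo-coefficients of discrete series are orthogonal to the characters of non-discrete-series tempered representations, and any unitary induced representation from a proper Levi is a finite virtual combination of such.

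Applying this vanishing to each $M \neq H$ summand in $I_{H,t}(f)$ kills all terms except those from $M = H$, giving the first equality $I_H(f) = I_{H,\disc}(f)$. For the second equality, I would use Arthur's explicit formula
\begin{equation*}
I_{H,\disc,t}(f) \;=\; \sum_{M} \frac{|W_0^M|}{|W_0^H|} \sum_{s \in W(\mathfrak{a}_M)_{\mathrm{reg}}} |\det(s-1)_{\mathfrak{a}_M/\mathfrak{a}_H}|^{-1} \Tr\bigl(M_{P|P}(s,0)\, \mathcal{I}_{P,t}(f)\bigr),
\end{equation*}
where the operators $\mathcal{I}_{P,t}(f)$ act on the parabolic induction to $H(\adele)$ of the discrete automorphic spectrum of $M(\adele)$. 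For $M = H$ only $s = 1$ contributes and the intertwining operator is the identity, yielding $\Tr(f \mid L^2_{\disc,t}(H(\QQ)\backslash H(\adele)))$. For $M \neq H$, the archimedean factor is again a trace of $f_\infty$ on an induced representation from a proper parabolic of $H(\RR)$, which vanishes as above. Summing over $t$ then gives $I_{H,\disc}(f) = \Tr(f \mid L^2_{\disc}(H(\QQ)\backslash H(\adele)))$; the sum is effectively finite because stable cuspidality of $f_\infty$ forces the archimedean infinitesimal character of any contributing representation to lie in a finite set.

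The main obstacle is making the vanishing statement in Step 2 precise in the form needed, namely that $\Tr(f_\infty \mid \pi_\infty) = 0$ for \emph{every} subquotient of a parabolically induced representation from a proper Levi, not only the tempered ones, since residual discrete automorphic representations of $M$ can give rise to non-tempered induced representations of $H(\RR)$. This is handled by writing the relevant non-tempered representation in the Grothendieck group as an integer combination of standard induced modules from proper parabolics (via Langlands classification) and invoking the cuspidality of $f_\infty$ against each standard module; alternatively, one can argue at the level of the invariant distributions directly using the fact that a cuspidal $f_\infty$ has vanishing invariant orbital integrals on every $\gamma \in H(\RR)$ which is not $\RR$-elliptic, whence its invariant spectral expansion on $H(\RR)$ is supported on discrete-series packets.
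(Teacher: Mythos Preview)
Your proposal is correct and follows essentially the same approach as the paper. The paper's proof simply reduces to the case $f_\infty = f_\varphi$ via Lemma~\ref{lem:transfer of stable cuspidal}(1) and then cites \cite[\S 3]{arthurlef} as a black box, whereas you unpack what is happening inside that reference: the vanishing of cuspidal functions against representations induced from proper Levi subgroups kills the continuous and non-$L^2_{\disc}$ discrete contributions in Arthur's spectral expansion. Your discussion of the ``obstacle'' regarding non-tempered induced representations is a reasonable gloss on the technical content of Arthur's argument, though in practice this is handled directly in \cite{arthurlef} by the characterization of cuspidal functions via their invariant distributions.
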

\begin{proof}
	By Lemma \ref{lem:transfer of stable cuspidal} we may assume that $f_{\infty}  = f_{\varphi}$ for a discrete Langlands parameter ${\varphi}$. Then the lemma follows from \cite[\S 3]{arthurlef} (where our $f_{\varphi}$ is equal to the function denoted by $f_{\mu}$ up to a multiplicative constant).  
\end{proof}
\subsection{} We now explain how Kottwitz's stabilization in the aforementioned unpublished notes is related to Hypothesis \ref{hypo}.  
For $f_{\infty} = f_{\varphi}$ as in the above proof, Arthur \cite{arthurlef} shows that the value $I_H(f_{\varphi} f^{\infty})$ has the interpretation as the $L^2$ Lefschetz number\index[n]{$L^2$ Lefschetz number} of a Hecke operator on a locally symmetric space, with coefficients in a sheaf determined by $\varphi$. This Lefschetz number is evaluated by Arthur \cite{arthurlef} and independently by Goresky--Kottwitz--MacPherson \cite{GKM}. Hence the general $I_H( f_{\infty} f^{\infty})$ with stable cuspidal $f_{\infty}$ as in the above lemma is just a linear combination of these Lefschetz number formulas. Based on this, Kottwitz proves in his unpublished notes a stabilization
\begin{align}\label{eq:Kottwitz's stabilization}
I_H(f_{\infty} f^{\infty}) = \sum_{H' \in \mathscr E(H)} \iota(H, H') ST^{H'}(f^{H'}), 
\end{align} 
where the terms are explained below: 
\begin{itemize}
	\item The left hand side is as in Lemma \ref{lem:discrete part}.
	\item In the sum $H'$ runs through the elliptic endoscopic data for $H$ up to isomorphism.
	\item For each $H'\in \mathscr E(H)$, the function $f^{H'} $ is of the form $f^{H'}_{\infty} f^{H',\infty}$, where $f^{H'}_{\infty}$ (resp.~$ f^{H',\infty}$) is a Langlands--Shelstad transfer of $f_{\infty}$ (resp.~of $ f^{\infty}$). Here by Lemma \ref{lem:transfer of stable cuspidal} we may and do take $f^{H'}_{\infty}$ to be stable cuspidal.
	\item For each $H'\in \mathscr E(H)$, the term $ST^{H'} (f^{H'})$ is the simplified geometric side of the stable trace formula, as  recalled in \S \ref{simplified geometric side}. 
	\item For each $H'\in \mathscr E(H)$, the term $\iota(H,H') \in \QQ$ is the usual constant in the stabilization of trace formulas; cf.~Corollary \ref{main identity}. 
\end{itemize}

On the other hand, according to Arthur's stabilization \cite{arthursta1} \cite{arthursta2} \cite{arthursta3}, we have 
\begin{align}\label{eq:Arthur's stabilization}
I_H(f_{\infty} f^{\infty}) & = \sum_{H' \in \mathscr E(H)} \iota(H, H') S^{H'}(f^{H'}),\\ \label{eq:Arthur's discrete stablilization}
I_{H,\disc}(f_{\infty} f^{\infty}) &  = \sum_{H' \in \mathscr E(H)} \iota(H, H') S^{H'}_{\disc}(f^{H'})
\end{align} 
where $S^{H'}$ (resp.~$S^{H'} _{\disc}$\index{$S^H_{\disc}$}) is Arthur's stable trace formula for $H'$ (resp.~the discrete part\index[n]{discrete part of the stable trace formula} thereof\footnote{More precisely, each of $I_{H,\disc}$ and $S^{H'} _{\disc}$ is formally a sum over a parameter $t \in \RR_{\geq 0}$ of respective contributions $I_{H,\disc,t}$ and $S^{H'} _{\disc,t},$ and (\ref{eq:Arthur's discrete stablilization}) could be stated parameter-wise for each $t$.}; see \cite[\S\S 3.1, 3.2]{arthurbook}), and the rest of the notations are the same as in (\ref{eq:Kottwitz's stabilization}). 
Comparing (\ref{eq:Kottwitz's stabilization}) and (\ref{eq:Arthur's stabilization}) for $H$ quasi-split (so that $H \in \mathscr E(H)$) and by induction on the dimension of the group in Hypothesis \ref{hypo}, we conclude that $$  ST ^H (f_{\infty} f^{\infty}) = S^H (f_{\infty } f^{\infty}).$$ Thus  Hypothesis \ref{hypo} is proved. Moreover, comparing Lemma \ref{lem:discrete part} and (\ref{eq:Arthur's stabilization}), (\ref{eq:Arthur's discrete stablilization}) for $H$ quasi-split and by induction, we also draw the following conclusion independently of Hypothesis \ref{hypo}: 
\begin{prop}\label{prop:S^H}
	Keep the setting of Lemma \ref{lem:discrete part} and assume in addition that $H$ is quasi-split. Then
$$
S^H (f_{\infty} f^{\infty})  = S^H_{\disc} (f_{\infty} f^{\infty}). $$ \qed
\end{prop}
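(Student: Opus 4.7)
\medskip

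\noindent\textbf{Proof proposal.} The plan is to deduce the identity from Lemma \ref{lem:discrete part} together with Arthur's two stabilization identities (\ref{eq:Arthur's stabilization}) and (\ref{eq:Arthur's discrete stablilization}), proceeding by induction on $\dim H$ over all quasi-split cuspidal groups. Since $H$ is cuspidal and $f_\infty$ is stable cuspidal, Lemma \ref{lem:discrete part} gives $I_H(f_\infty f^\infty) = I_{H,\disc}(f_\infty f^\infty)$. Subtracting (\ref{eq:Arthur's discrete stablilization}) from (\ref{eq:Arthur's stabilization}) therefore yields
$$\sum_{H' \in \mathscr{E}(H)} \iota(H,H')\,\bigl[S^{H'}(f^{H'}) - S^{H'}_{\disc}(f^{H'})\bigr] = 0,$$
where for each $H'$ we write $f^{H'} = (f^{H'})_\infty (f^{H'})^\infty$ with Langlands--Shelstad transfers on both factors. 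By Lemma \ref{lem:transfer of stable cuspidal}(2), we may and do choose $(f^{H'})_\infty$ to be stable cuspidal on $H'(\RR)$; in particular each summand is of the form admitted by the proposition at the group~$H'$.

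Since $H$ is quasi-split, the tuple $(H, \lang H, 1, \mathrm{id}) \in \mathscr{E}(H)$, and we isolate this contribution:
$$\iota(H,H)\bigl[S^H(f_\infty f^\infty) - S^H_{\disc}(f_\infty f^\infty)\bigr] = -\sum_{\substack{H' \in \mathscr{E}(H) \\ H' \neq H}} \iota(H,H')\bigl[S^{H'}(f^{H'}) - S^{H'}_{\disc}(f^{H'})\bigr].$$
Every proper elliptic endoscopic group $H'$ is quasi-split and satisfies $\dim H' < \dim H$. Moreover, if the transfer $(f^{H'})_\infty$ is identically zero on all cuspidal Levi subgroups of $H'_{\RR}$ (i.e.~if $H'$ is not cuspidal), then both $S^{H'}(f^{H'})$ and $S^{H'}_{\disc}(f^{H'})$ vanish trivially on the relevant archimedean constituents; otherwise $H'$ is cuspidal and we may invoke the induction hypothesis to conclude $S^{H'}(f^{H'}) = S^{H'}_{\disc}(f^{H'})$. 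In either case each term on the right vanishes, and since $\iota(H,H) \neq 0$, we obtain the desired equality. The base case is the class of quasi-split $H$ with no proper elliptic endoscopic data (e.g.~tori), where the sum collapses to the $H' = H$ term and the assertion is either vacuous or immediate from the fact that the spectral side of Arthur's formula for a torus consists only of discrete contributions.

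The main obstacle is bookkeeping at two points. First, one must verify that $S^{H'}_{\disc}$ genuinely occurs on the right-hand side of (\ref{eq:Arthur's discrete stablilization}) in a way compatible with the parameter $t$ indexing the discrete part, so that the subtraction from (\ref{eq:Arthur's stabilization}) is legitimate after summing over $t \geq 0$; this is handled by applying the discrete stabilization parameter-wise and then summing, using that for stable cuspidal $f_\infty$ the $t$-sum is finite. Second, one must ensure that the archimedean transfer can be kept stable cuspidal throughout the induction, which is precisely the content of Lemma \ref{lem:transfer of stable cuspidal}(2), together with the observation that stable cuspidality is preserved under restriction to each $t$-component since each $f_{\varphi_H}$ belongs to a single infinitesimal character.
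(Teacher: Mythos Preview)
Your proposal is correct and follows essentially the same approach as the paper: subtract Arthur's two stabilizations (\ref{eq:Arthur's stabilization}) and (\ref{eq:Arthur's discrete stablilization}), invoke Lemma~\ref{lem:discrete part} to make the left-hand side vanish, isolate the $H'=H$ term using quasi-splitness, and induct on dimension using Lemma~\ref{lem:transfer of stable cuspidal}(2) to keep the archimedean transfers stable cuspidal. One small remark: your case distinction for non-cuspidal $H'$ is unnecessary, since every elliptic endoscopic group of a cuspidal group is itself cuspidal (an elliptic maximal torus transfers), but this does not affect correctness.
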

\
\begin{cor}\label{cor:simp}
We may replace each $ST^H$ in Corollary \ref{Main Main result} by $S^H_{\disc}$. 
\end{cor}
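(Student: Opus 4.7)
The corollary is essentially a direct assembly of the two statements that immediately precede it in the text. The key observation is that every test function $f^H = f^H_\infty f^H_p f^{H,p,\infty}$ that appears on the right-hand side of Corollary \ref{Main Main result} has a stable cuspidal archimedean component (by the construction recalled in \S\ref{subsubsec:defn of f_infty}, and in particular by the remark below Definition \ref{defn:test function at infinity}), and every endoscopic group $H \in \mathscr E(G)$ is quasi-split over $\QQ$ by the general theory of endoscopy.

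The plan is as follows. First, I would invoke Hypothesis \ref{hypo} applied to $H$ (quasi-split) and $f^H$ (stable cuspidal at infinity), which produces the identity $ST^H(f^H) = S^H(f^H)$ for every $(H,s,\eta) \in \mathscr E(G)$. Second, I would apply the proposition stated immediately before the corollary, again with $H$ quasi-split and $f^H_\infty$ stable cuspidal, which yields $S^H(f^H) = S^H_{\disc}(f^H)$. Chaining these two identities gives $ST^H(f^H) = S^H_{\disc}(f^H)$, and substituting term by term into the right-hand side of Corollary \ref{Main Main result} delivers the desired formula.

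The only minor verification needed is that the proposition, stated for $f_\infty f^\infty$ with $f^\infty \in C^\infty_c(H(\adele_f))$, applies to our $f^H$ even though $f^H_p$ is specified only as a class in $\mathcal H^{\ur}(H(\QQ_p))$; this is not a serious obstacle because any representative of such a class is a compactly supported locally constant function, and both $S^H$ and $S^H_{\disc}$ depend on $f^H$ at $p$ only through its stable orbital integrals and its action via the Satake transform, both of which are intrinsic to the class in $\mathcal H^{\ur}(H(\QQ_p))$. In short, there is no real obstacle here: the corollary records a formal consequence of Hypothesis \ref{hypo} together with the vanishing of the non-discrete contributions to $S^H$ on stable cuspidal test functions.
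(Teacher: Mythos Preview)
Your proposal is correct and follows exactly the paper's approach: the corollary is stated with an inline \qed and is introduced by the sentence ``Combining Hypothesis \ref{hypo} and the above proposition, we obtain the following,'' which is precisely the two-step chain $ST^H(f^H) = S^H(f^H) = S^H_{\disc}(f^H)$ you describe.
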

\begin{proof}
	This follows from Hypothesis \ref{hypo} and Proposition \ref{prop:S^H}. 
\end{proof}

\section{Review of Arthur's results} \label{subsec:review of Arthur} We loosely follow \cite[\S 2]{taibi} to recall some of the main constructions and results in \cite{arthurbook}. We fix a quasi-split quadratic space $(\underline{V}, \underline{q})$ over $\QQ$, of dimension $d$ and discriminant $\delta \in \QQ^{\times} /\QQ^{\times ,2}$. (See \S \ref{subsec:generalities on quad sp} for what we mean by a quasi-split quadratic space.) Let $G^* : = \SO(\underline{ V} , \underline q)$. 
As usual we explicitly fix the $L$-group $ \lang G^*$, and fix explicit representatives $(H, \mathcal H = \lang H , s, \eta: \lang H \to \lang G^*)$ for the isomorphism classes of elliptic endoscopic data for $G^*$, as discussed in \S \ref{sec:endoscopic}. 

\section*{Self-dual cuspidal automorphic representations of \texorpdfstring{$\GL_N$}{GLN}}
\subsection{}\label{subsubsec:self-dual}
Let $N \in \ZZ_{\geq 1}$. Let $\pi$ be a self-dual cuspidal automorphic representation of $\GL_N$ over $\QQ$. Arthur \cite[Thm.~1.4.1]{arthurbook} associates to $\pi$ a quasi-split orthogonal or symplectic group $G_{\pi}$\index{$G_{\pi}$} over $\QQ$, such that $\widehat{ G_{\pi}}$ is isomorphic to $\Sp_{N}(\CC)$ or $\SO_N(\CC) $. We view $\Sp_{N}(\CC)$ and $\SO_N(\CC) $ as standard subgroups of $\GL_N(\CC)$ as in  \S \ref{subsec:matrix groups}. There is a standard representation 
\index{$\mathrm{Std}_{\pi}$}$$\stan_{\pi}: \lang G_{\pi} \To \lang \GL_N = \GL_N(\CC) $$ extending the inclusion $\widehat{ G_{\pi}} \hookrightarrow \GL_N(\CC)$ determined as follows. The central character $\omega_{\pi}$ of $\pi$ determines a character $\eta_{\pi}: \Gamma_{\QQ} \to \set{\pm 1}$. Let $E/\QQ$ be the degree one or two extension given by $\eta_{\pi}$. When $E = \QQ$, the group $G_{\pi}$ is split. In this case we may take $\lang G_{\pi} = \widehat{ G_{\pi}}$ and there is nothing to do. When $E \neq \QQ$, the group $G_{\pi}$ is either symplectic, or the non-split quasi-split even special orthogonal group over $\QQ$ which is split over $E$. Thus when $E\neq \QQ$ we have  $\widehat{ G_\pi} = \SO_N(\CC)$, and we may take $\lang G_{\pi} $ to be $\widehat{G_{\pi}} \rtimes \Gal(E/\QQ)$ (which is a direct product when $G_{\pi}$ is symplectic). When $G_{\pi}$ is symplectic, we define $\stan_{\pi}$ to send the non-trivial element of  $\Gal(E/\QQ)$ to $-1 \in \GL_N(\CC)$. When $G_{\pi}$ is the non-split quasi-split even special orthogonal group, we define $\stan_{\pi}$ to send the non-trivial element of  $\Gal(E/\QQ)$ to the permutation matrix switching $\hat e_{N/2}$ and $\hat e_{1+N/2}$ in the notation of \S \ref{subsec:matrix groups}. Thus in the last case $\stan_{\pi}$ maps $\lang G_{\pi}$ isomorphically onto the subgroup $\mathrm{O}_N(\CC)$ of $\GL_N(\CC)$ as in \S \ref{subsec:matrix groups}. 

Let $v$ be a place of $\QQ$. Under the local Langlands correspondence for $\GL_N$,\index[n]{local Langlands correspondence for $\GL_N$} established by Langlands \cite{langlandsLLC} in the archimedean case and by Harris--Taylor \cite{harristaylor}, Henniart \cite{henniartLLC}, and Scholze \cite{scholzeLLC} in the non-archimedean case, the local component $\pi_v $ of $\pi$ corresponds to a Langlands parameter $\varphi_{\pi_v}: \WD_v  \to \GL_N(\CC)$. Here $\WD_v$\index{$\mathrm{WD}_v$} denotes the Weil--Deligne group\index[n]{Weil--Deligne group} of $\QQ_v$ (denoted by $L_{\QQ_v}$ in \cite{arthurbook}), which is by definition the Weil group when $\QQ_v = \RR$, and the direct product of the Weil group with $\SU_2(\RR)$ when $\QQ_v$ is non-archimedean. Arthur shows \cite[Thm.~1.4.1, Thm.~1.4.2]{arthurbook} that $\varphi_{\pi_v}$ is conjugate to $\stan_{\pi } \circ  \varphi_v $ for some Langlands parameter 
\begin{align}\label{eq:existence of localization}
 \varphi_v : \WD_v \To \lang G _{\pi}.
\end{align} The $\Aut(\lang G_{\pi})$-orbit of $\varphi_v$ is uniquely determined by $\varphi_{\pi_v}$. (See \cite[\S 2.1]{taibi} for $\Aut(\lang G_{\pi})$, also cf.~Remark \ref{rem:taut} below.)
Define \index{$\mathrm{sgn}(\pi)$}
$$\sign (\pi) : = \begin{cases}
1, & \text{if }\widehat{ G_{\pi} } \mbox{ is orthogonal},\\
-1, & \text{if } \widehat{ G_{\pi} } \mbox{ is symplectic}.
\end{cases}$$
\section*{Substitutes for global Arthur parameters}
\subsection{}\label{subsubsec:substitutes}
Similar to the definition of $\stan_{\pi }$ above, we have a standard representation \index{$\mathrm{Std}_{G^*}$}
\begin{align}\label{eq:standard rep}
\stan_{G^*}: \lang G^* \To \GL_N(\CC)
\end{align}
where $N = d-1$ (resp.~$N=d$) when $d$ is odd (resp.~even).

 Let $\Psi(N)$\index{$\Psi(N)$} denote the set of formal unordered sums $$ \psi =   \underset{k \in K_{\psi}} {\boxplus}  \pi_k [d_k], $$
where $K_{\psi}$ is a finite indexing set, each $\pi_k$ is a unitary cuspidal automorphic representation of $\GL_{N_k}$ over $\QQ$ for some $N_k \in \ZZ_{\geq 1}$, and each $d_k  $ is a positive integer, satisfying $\sum_k N_k d_k = N$. Let $\widetilde \Psi (N)$\index{$\widetilde \Psi (N)$} denote the set of $$\psi =  \underset{k \in K_{\psi} } {\boxplus}  \pi_k [d_k] \in \Psi (N)$$ satisfying the condition that there is an involution $k \mapsto k^{\vee}$ on the indexing set $K_{\psi}$ such that $  ( \pi_k) ^{\vee} \cong \pi_ {k^{\vee} } $ and $ d_k = d_{k^{\vee}}$ for all $ k \in K_{\psi}$. 
Let $\widetilde{\Psi}_{\Ell} (N) $\index{$\widetilde{\Psi}_{\Ell} (N) $} be the subset of $\widetilde{\Psi} (N) $ defined by the conditions that  each $\pi_k$ should be self-dual and that the pairs $(\pi_k , d_k)$ should be distinct (i.e., for $k\neq k'$, either $\pi_k$ is not isomorphic to $\pi_{k'}$ or $d_k \neq d_{k'}$).

For any $\psi  \in \widetilde{\Psi}  (N)$, we write $$\psi = \underset{ i \in I }{\boxplus} \pi_i[d_i] \underset{j\in J}{\boxplus} (\pi_j [d_j] \boxplus \pi_j^{\vee} [d_j] ),$$ where $\pi_i$ is self-dual for each $i \in I$ and $\pi_j$ is not self-dual for each $j\in J$. Let $\mathcal L _{\psi} $\index{$\mathcal L_{\psi}$} be the fiber product over $\Gamma_{\QQ}$ of $\lang G_{\pi_i}$ and $\GL_{N _j} (\CC)$ for all $i\in I, j \in J$. For $j \in J$, we define  \begin{align*}
\stan_{N_j} \oplus \stan_{N_j}^{\vee} : \GL_{N_j } (\CC ) &\To \GL_{2N_j} (\CC)\\  g &\longmapsto g \oplus (g^{\mathsf T})^{-1} .
\end{align*}
 Define 
\begin{align*} 
\tilde \psi:  = (\bigoplus_{i\in I} \stan_{ \pi_i } \otimes \nu_{d_i}) \oplus \bigoplus _{ j \in J} (\stan_{N_j} \oplus \stan_{N_j}^{\vee} ) \otimes \nu_{d _j}  : \mathcal L_{\psi} \times \SL_2(\CC) \to \GL_N(\CC),
\end{align*}
 where $\nu_{k}$ denotes the irreducible representation of $\SL_2(\CC)$ of dimension $k$ for any positive integer $k$. Let $\widetilde{\Psi} (G^*)$ \index{$\widetilde{\Psi} (G^*)$}  be the set of $\psi \in \widetilde{\Psi} (N)$ for which there exists $$\dot \psi: \mathcal L_{\psi} \times \SL_2(\CC) \To \lang G^*$$ such that 
 $\stan _ {G^*} \circ \dot \psi$ is conjugate under $\GL_N(\CC)$ to $\tilde \psi$.  Let $\Psi (G^*)$ be the set of pairs $(\psi, \dot{\psi})$ where $\psi \in \widetilde{\Psi} (G^*)$ and $\dot \psi$ is a choice as above.  For $\psi \in \widetilde \Psi(G^*)$, we define \index{$m_{\psi}$}
 \begin{align}
 	\label{eq:defn of m psi}
 	m_{\psi}: = \mbox{the number of  $\dot \psi$ modulo $\widehat{ G^*}$-conjugation such that } (\psi,\dot \psi) \in \Psi (G^*).
 \end{align} 
 
 We define\footnote{In \cite{taibi}, our $\widetilde{\Psi}_2(G^*)$ and $\Psi_2(G^*)$ are denoted by $\widetilde{\Psi}_{\disc}(G^*)$ and $\Psi_{\disc}(G^*)$ respectively. However, in \cite{arthurbook}, the usage of the subscript ``$\disc$'' is different; see p.~172. We follow \cite{arthurbook} to use the subscript ``$2$'' here.} \index{$\widetilde{\Psi}_{2} (G^*)$} $$\widetilde{\Psi}_{2} (G^*): =  \widetilde{\Psi} _{\Ell} (N) \cap \widetilde 
\Psi (G^*), $$
and define $\Psi_2(G^*)$ \index{$\Psi_2(G^*)$} to be the preimage of $\widetilde{\Psi} _2 (G^*)$ in $\Psi (G^*)$ along the forgetful map $\Psi (G^*) \to \widetilde{\Psi} (G^*)$. Recall that $d$ and $\delta$ denote the dimension and discriminant of the quadratic space $\underline V$. For $\psi = \boxplus _k \pi_k [d_k] \in \widetilde{\Psi} _{\Ell} (N)$, the following condition is equivalent to the condition that $\psi \in \widetilde{\Psi} _{2} (G^*)$:
\begin{itemize}
	\item The character $\Gamma_{\QQ} \to \set{\pm 1}$ given by $\prod_k  \eta_{\pi_k} ^{d_k} $ is trivial if $G^*$ is split, and corresponds to the quadratic extension $\QQ(\sqrt{\delta}) /\QQ$ if $G^*$ is non-split, i.e., if $d$ is even and $\delta\notin \QQ^{\times ,2}$. Moreover \begin{align}\label{eq:sign pi}
 \sign (\pi_k) (-1) ^{d_k - 1} = (-1)^d
	\end{align}  for all $k$.
\end{itemize}

For $\psi \in \widetilde \Psi_2(G^*)$, we know that $m_{\psi} \leq 2$, and $m_{\psi} =2$ if and only if $d$ and all $N_kd_k$ are even; see \cite[p.~47]{arthurbook}. In the latter case the two  $\widehat{ G^*}$-conjugacy classes of $\dot \psi$ are interchanged by the non-trivial outer automorphism of $\widehat{G^*} = \SO_d(\CC)$. 
 
For $(\psi, \dot \psi )\in \Psi (G^*)$, we define \index{$S_{\dot \psi}$} \index{$\mathcal S_{\dot \psi}$}
\begin{align*}
 S_{\dot \psi} & : = \mathrm{Cent} (\dot \psi, \widehat{ G^*} ), \\ \mathcal S _{\dot  \psi } & :=  S_{\dot \psi }  /  S_{\dot \psi} ^0 Z(\widehat{ G^*})^{\Gamma_{\QQ}}.
\end{align*}
In fact $\mathcal S _{\dot \psi}$ is isomorphic to a finite power of $\ZZ/2\ZZ$. Moreover, $S_{\dot \psi}$ is finite if and only if $(\psi, \dot \psi) \in \Psi _2(G^*)$, in which case $S_{\dot \psi}$ is a finite power of $\ZZ/2\ZZ$. These statements follow easily from the description \cite[(1.4.8)]{arthurbook} of $S_{\dot \psi}$. By abuse of notation we shall write $S_{\psi}$ and $\mathcal S_{\psi}$ for $S_{\dot \psi}$ and $\mathcal S_{\dot \psi}$ respectively.\index{$S_{\psi}$, global case}\index{$\mathcal S_{\psi}$, global case}\footnote{Here we follow the notation of \cite{arthurbook}, which differs slightly from that in \cite{kottwitzcuspidal} and \cite{taibi}. In the latter two papers the notation $S_{ \psi}$ refers to a larger group, which in the present case is equal to $S_{\psi} Z(\widehat{ G^*})$ in our notation. More specifically, in our notation we have $S_{\psi} \supset Z(\widehat {G^*}) = Z(\widehat {G^*})^{\Gamma_{\QQ}}$ unless $G^*$ is a non-split $\SO_2$, in which case $S_{\psi} =Z(\widehat {G^*})^{\Gamma_{\QQ}}$ and $Z(\widehat{G^*}) = \widehat{G^*}$. In particular, we see that the formula $S_{\psi} / S_{\psi} ^0 Z(\widehat{ G^*}) ^{\Gamma_{\QQ}}$ defines the same group $\mathcal S_{\psi}$ with both interpretations of the notation $S_{\psi}$.} In the case where $(\psi,\dot \psi) \in  \Psi_2(G^*)$
(which is the only case relevant to us in practice), our abuse of notation is essentially harmless for the following reason. Since $S_{\dot \psi}$ is abelian, it depends on $\dot \psi$ only via its $\widehat{G^*}$-conjugacy class, up to canonical isomorphism. Moreover, in the even case with $m_{\psi} =2$, it follows from the description \cite[(1.4.9)]{arthurbook} of $S_{\dot \psi}$ that there is an element of $\mathrm{O}_N (\CC)-\SO_N(\CC) = \mathrm{O}_N (\CC) - \widehat{G^*}$ centralizing  $S_{\dot \psi}$. Hence in both the odd and even cases, for $(\psi, \dot \psi) \in \Psi_2(G^*)$, the group $S_{\dot \psi}$ depends only on $\psi$ up to canonical isomorphism. The similar remark applies to $\mathcal S_{\dot \psi}$. Moreover, it also follows from the above discussion that the $\widehat {G^*}$-conjugacy class of the subgroup $S_{\dot \psi} \subset \widehat {G^*}$ depends only on $\psi$.

For $\psi \in \widetilde\Psi (G^*)$, we define $s_{\psi} \in S_{\psi}$ by \index{$s_{\psi}$}
\begin{align}\label{eq:defn of s psi}
s_{\psi} : = \dot \psi (-1), \text{ where } -1 \in \SL_2(\CC).
\end{align} (Here we implicitly fix a lift $(\psi,\dot \psi) \in \Psi(G^*)$.) We will also need the canonical character\index{$\epsilon_{\psi}$}
\begin{align}\label{eq:symp root number}
\epsilon_{\psi} : \mathcal S_{\psi} \To \set{\pm 1}
\end{align}
defined on p.~48 of \cite{arthurbook} using symplectic root numbers. We do not recall its definition here.

Let $(H, \lang H, s, \eta: \lang H \to \lang G^*) $ be an elliptic endoscopic datum for $G^*$, presented in the explicit form as in \S \ref{subsec:elliptic end data}. Recall that $H$ is a direct product $H^+ \times H^-$ of two quasi-split special orthogonal groups over $\QQ$. The above discussion for $G^*$ applies equally to $H^+ $ and $H^-$. We define \index{$\widetilde{\Psi}(H)$} \index{$\Psi(H)$}
\begin{align*}
 \widetilde{\Psi} (H) & : = \widetilde{\Psi}  (H^+) \times \widetilde{\Psi} (H^-), \\  {\Psi} (H)& : = {\Psi}(H^+) \times {\Psi} (H^-).
\end{align*}
Similarly we define $\widetilde{\Psi} _2 (H)$ and $\Psi_2(H)$.\index{$\widetilde{\Psi}_2 (H)$}\index{$\Psi_2(H)$} For $\psi' = (\psi^+, \psi^-) \in \widetilde{\Psi} (H)$, we define \index{$S_{\psi'}$} \index{$\mathcal S_{\psi'}$} \index{$s_{\psi'}$} \index{$m_{\psi'}$} \index{$\epsilon_{\psi'}$} \begin{align*}
S_{\psi'} & :=  S_{\psi^+} \times S_{\psi^-} , \\ \mathcal S_{\psi'} & := \mathcal  S_{\psi^+} \times \mathcal S_{\psi^-} , \\  s_{\psi'}  & : = (s_{\psi ^+} , s_{\psi ^-}  ) \in S_{\psi '} ,\\ m_{\psi'} &: = m_{\psi^+ } m_{\psi ^-} ,\\ \epsilon_{\psi'}& : = \epsilon_{\psi^+} \otimes \epsilon_{\psi^-} : \mathcal S_{\psi'} \To \set{\pm 1}.
\end{align*}

We have a natural map  
\begin{align*}
\widetilde \Psi (H) & \To \widetilde{\Psi} (G^*)\\  (\psi ^+, \psi ^-) & \longmapsto \psi ^+ \boxplus \psi ^-,
\end{align*} which we shall denote by  
 $$\psi ' \longmapsto \eta\circ \psi'. $$
 \ignore{ 
This map upgrades to a map $\eta: \Psi (H) \to \Psi (G^*)$\index{$\eta: \Psi (H) \to \Psi (G^*)$} which we now describe. Let $( \psi^+, \dot \psi^+,  \psi ^- , \dot \psi^-) \in \Psi(H)$ and let $\psi = \psi ^+ \boxplus \psi ^ - \in \widetilde{  \Psi} (G^*)$. The group $\mathcal L_{\psi}$ is isomorphic to the fiber product of $\mathcal L_{\psi^+}$ and $\mathcal L_{\psi ^-}$ over $\Gamma_{\QQ}$, and the maps $\dot \psi^+, \dot \psi^- $ naturally induce a map $\mathcal L_{\psi} \times \SL_2(\CC) \to \lang H$. Let $\dot \psi$ be the composition of the last map with $\eta : \lang H \to \lang G^*$. Then we obtain an element $(\psi, \dot \psi ) \in \Psi (G^*)$. We define $\eta ( \psi^+, \dot \psi^+,  \psi ^- , \dot \psi^-) $ to be $(\psi, \dot \psi )$.
}
\section*{Local Arthur packets}
\subsection{}\label{subsubsec:local packets}
Let $v$ be a place of $\QQ$. We abbreviate $G^*_v:  = G^* _{\QQ_v}$. Let $\Psi^+(G^*_v)$\index{$\Psi^+(G^*_v)$} be the set of all \emph{Arthur--Langlands parameters over $\QQ_v$}\index[n]{Arthur--Langlands parameters}
$$ \psi: \WD_v \times \SL_2(\CC) \To \lang G^*_v$$ satisfying the usual axioms (without the requirement that $\psi(\WD_v)$ is bounded); see \cite[\S 2.5]{taibi}. Let $\Psi(G^*_v)$\index{$\Psi(G^*_v)$} be the set of $\psi \in \Psi^+(G^*_v)$ such that $\psi (\WD_v)$ is bounded. 

Following \cite[\S 1.5]{arthurbook} we define a subset $\Psi^+_{\uni} (G^*_v)$\index{$\Psi^+_{\uni} (G^*_v)$} of $\psi \in \Psi^+(G^*_v)$ as follows. For any $\psi \in \Psi^+(G^*_v)$, the parameter $$\stan_{ G^*} \circ \psi : \WD_v \times \SL_2(\CC) \To \GL_N(\CC)$$ gives rise to an irreducible representation $\pi_1 \boxtimes \cdots \boxtimes \pi_r$ of a standard Levi subgroup $\prod_{i=1}^r \GL_{N_i } (\QQ_v)$ of $\GL_N(\QQ_v)$; see \cite[p.~45]{arthurbook} and \cite[\S 1.2.2]{KMSW} for this construction (using the local Langlands correspondence for general linear groups). By definition, $\psi$ is an element of $\Psi^+_{\uni}(G^*_v)$ if and only if the normalized parabolic induction $\pi_1 \times \cdots \pi_r$ of  $\pi_1 \boxtimes \cdots \boxtimes \pi_r$ to $\GL_N(\QQ_v)$ is irreducible and unitary. 
As on p.~45 of \cite{arthurbook}, we have a chain of subsets 
$$ \Psi (G^*_v) \subset \Psi^+_{\uni} (G^*_v) \subset \Psi ^+(G^*_v).$$
 
For $\psi \in \Psi^+_{\uni} ({G^*_v})$, we define\index{$S_{\psi}$, local case} \index{$\mathcal S_{\psi}$, local case}
\begin{align*}
S_{\psi} & : = \mathrm{Cent} (\psi, \widehat{ G^*} ), \\ \mathcal S _{\psi } & :=  S_{ \psi }  /  S_{\psi} ^0 Z(\widehat{ G^*})^{\Gamma_{v}}.
\end{align*}
As in the global case, the group $\mathcal S_{\psi}$ is a finite abelian $2$-group. We write $\mathcal S_{\psi} ^D$ for its Pontryagin dual group. Denote by $s_{\psi} \in  S_{\psi}$\index{$s_{\psi}$} the image of $-1 \in \SL_2(\CC)$ under $\psi$. 

 We fix a $\QQ_v$-splitting $\spl_v$ for $G^*_v$. When $d$ is even, let $\theta_v$ be the unique non-trivial automorphism of ${G^*_v}$ fixing $\spl_v$ (which is of order $2$). When $d$ is odd we take $\theta _v=\id_{G^*_v}$. For both parities of $d$, we fix a Whittaker datum $\mathfrak w_v$ for $G^*_v$ that is fixed by $\theta_v$. (For instance, in the even case we can construct $\mathfrak w_v$ from $\spl_v$ and the choice of a non-trivial character $\QQ_v \to \CC^{\times}$ in the usual manner.) 
 
 In the even case, if we let $\spl_v$ vary over all $\QQ_v$-splittings of $G^*_v$, then the   resulting $\theta_v$'s are all of the form $\Int(g)|_{G^*_v}$ for certain $g \in \mathrm{O}(\underline V) (\QQ_v) - G^*(\QQ_v)$. In fact, by explicit construction it is easy to see that there is one choice of $\theta_v$ that is of the asserted form. To see that \emph{all} choices of $\theta_v$ are of the asserted form, use that all $\QQ_v$-splittings of $G^*_v$ are conjugate under $G^{*, \ad}(\QQ_v)$, and that $G^{*, \ad}(\QQ_v)$ naturally acts on $\mathrm{O}(\underline V) (\QQ_v)$ by conjugation since the center of $G^*$ is central in $ \mathrm{O}(\underline V)$.
 As a consequence of this observation, if we have two choices $\theta_v$ and $\theta_v'$, then $\theta_v = \theta_v' \circ \Int(g_0)$ for some $g _0 \in G^*_v(\QQ_v)$. In particular, the way in which $\theta_v$ permutes isomorphism classes of representations of $G^*_v(\QQ_v)$ (resp.~conjugacy classes in $G^*_v(\QQ_v)$) is the same as the way in which $\theta_v'$ permutes these objects.

  Let $\psi \in \Psi^+_{\uni} ({G^*_v})$.  Then Arthur \cite[\S 1.5]{arthurbook} associates to $\psi$ a finite multi-set\footnote{In \cite{taibi}, this set is simply denoted by $\Pi_{ \psi}$.} $\widetilde \Pi_{\psi} ({G^*_v})$.\index{$\widetilde \Pi_{\psi}(G^*_v)$} Here each element of $\widetilde \Pi_{\psi}({G^*_v})$ is a $\set{1,\theta_v}$-orbit of isomorphism classes of finite-length smooth representations\footnote{By construction these representations are obtained as parabolic inductions of irreducible representations, and are hence finite-length smooth representations. \ignore{(See for instance \cite[Lem.~VI.6.2]{renardbook} for the last claim).}} of $G^*(\QQ_v)$, and such an element is allowed to repeat itself for finitely many times in $\widetilde{\Pi}_{\psi} ({G^*_v})$ (thus ``multi-set''). If $\psi \in \Psi({G^*_v})$ then these representations are all irreducible and unitary.  Moreover, for general $\psi \in \Psi^+_{\uni} ({G^*_v})$, there is a canonical map (depending on the choice of $\mathfrak w_v$) \index{$\langle \cdot,\pi \rangle$} \begin{align}\label{eq:Arthur's pairing}
\widetilde \Pi _{\psi}({G^*_v})  & \To \mathcal S_{\psi}^D  \\ \nonumber \pi & \longmapsto \lprod{\cdot ,\pi}. 
\end{align} 
\begin{defn}\label{defn:local Hecke alg}
We define the \emph{Hecke algebra}\index[n]{Hecke algebra (local)} $\mathcal H(G^*_v)$\index{$\mathcal H(G^*_v)$} as follows. When $v$ is finite, we define $\mathcal H({G^*_v})$ to be $ C^{\infty} _c({G^*}(\QQ_v))$. When $v = \infty$, we fix a maximal compact subgroup $K_{\infty} \subset G^*(\RR)$, and define $\mathcal H({G^*_v})$ to consist of smooth compactly supported functions on ${G^*}(\RR)$ that are bi-finite under $K_{\infty}$. Moreover for each place $v$ we define \index{$\widetilde{\mathcal H} (G^*_v)$}
$$\widetilde{\mathcal H} ({G^*_v}): = \mathcal H({G^*_v}) ^{\theta_v =1}, $$ and define \index{$\widetilde{\mathcal H}^{\st}(G^*_v)$} $$\widetilde{\mathcal H}^{\st} (G^*_v) \subset \mathcal H(G^*_v)$$ to be the subspace consisting of $f \in  \mathcal H(G^*_v)$ such that $f - \theta_v^* f $ has all stable orbital integrals equal to $0$.  
\end{defn}  
\subsection{}\label{para:stable dist} Let $\psi \in \Psi^+_{\uni}({G^*_v})$. 
In \cite[Thm.~2.2.1]{arthurbook}, Arthur gives a characterization of $\widetilde \Pi_{\psi} ({G^*_v})$ and the map $\pi \mapsto \lprod{\cdot, \pi}$, and proves that the linear form \index{$\Lambda_{\psi}$, local case}
\begin{align}\label{eq:Lambda}
\Lambda_{\psi}: \widetilde{\mathcal H} ({G^*_v}) & \To \CC \\ \nonumber  f& \longmapsto \sum_{ \pi \in \widetilde \Pi_{\psi}({G^*_v}) } \lprod{s_{\psi} , \pi } \Tr (\pi ( f dg ) ) 
\end{align}is \emph{stable}, in the sense that $\Lambda_{\psi}(f) = 0$ if all stable orbital integrals of $f$ vanish. (In \textit{loc.~cit.~}these  results are explicitly stated only for $\psi \in \Psi(G^*_v)$, but see Remark \ref{rem:analytic continuation} below.) We explain the notations. Here $dg$ is a fixed Haar measure on ${G^*}(\QQ_v)$. The summation takes into account the multiplicities of the elements $\pi$ in the multi-set $\widetilde \Pi_{ \psi} ({G^*_v})$. For each such element $\pi$, which is a $\set{1,\theta_v}$-orbit of representations of $G^*(\QQ_v)$, we let $\dot \pi$ be any element of this orbit, and define $\Tr(\pi (fdg)): = \Tr (\dot \pi (fdg))$. Since $f \in \widetilde{\mathcal H} (G^*_v)$ is by definition fixed by $\theta_v$ and since $dg$ is obviously fixed by $\theta_v$ (as $\theta_v$ has order at most $2$), this definition is independent of the choice of $\dot \pi$.

 It is clear from the characterization in \cite[Thm.~2.2.1]{arthurbook} that $\Lambda _{\psi}$ is independent of the choice of $\mathfrak w_v$, although the definition of the map $\pi \mapsto \lprod{\cdot ,\pi}$ depends on $\mathfrak w_v$. Moreover, since $\Lambda_{\psi}$ is stable, we can naturally extend its domain of definition to $\widetilde{\mathcal H}^{\st}(G_v^*)$, and still obtain a stable distribution
 \begin{align*}
\Lambda_{\psi}: \widetilde{\mathcal H}^{\st}(G_v^*) &\To \CC \\
f & \longmapsto \Lambda_{\psi} ( \frac{f + \theta_v^* f}{2}) .
 \end{align*}    In \S \ref{subsubsec:local packets}, we observed that  different choices of $\theta_v$ permute conjugacy classes in $G^*_v(\QQ_v)$ in the same way. In particular, $\widetilde{\mathcal H}^{\st}(G_v^*)$ is independent of the choice of $\theta_v$. If we view $\Lambda_{\psi}$ as being defined over $\widetilde{\mathcal H}^{\st}(G_v^*)$, then it is also independent of the choice of $\theta_v$, as follows from the characterization in \cite[Thm.~2.2.1]{arthurbook}.    

\begin{rem}\label{rem:taut}
We note that $\widetilde \Pi_{\psi}({G^*_v})$
depends on $\psi\in \Psi^+_{\uni}({G^*_v})$ only  via its orbit under $\Aut(\lang G^*_v)$. In the odd case such an orbit is the same as a $\widehat {G^*}$-conjugacy class, since $\Aut(\lang G^*_v) = (\widehat {G^*})^{\ad}$.  In the even case, by contrast, such an orbit could contain up to two $\widehat{ G^*}$-conjugacy classes. This is because $\Aut(\lang G^*_v)$ is identified with $\mathrm{O}_N(\CC)^{\ad}$, whose action on $\lang G^*_v$ is determined by the following two conditions: 
\begin{enumerate}
	\item The projection map from $\lang G^*_v$ to the Galois factor is preserved.
	\item The map $\lang G^*_v \to \lang G^* \xrightarrow{\stan_{ G^*}} \mathrm{O}_N(\CC) \subset \GL_N(\CC)$ is $\mathrm{O}_N(\CC)^{\ad}$-equivariant, where $\mathrm{O}_N(\CC)^{\ad}$ acts on $\mathrm{O}_N(\CC)$ by conjugation.
\end{enumerate} In particular, $(\widehat {G^*})^{\ad}$ is of index $2$ in $\Aut(\lang G^*_v)$. When the $\Aut(\lang G^*_v)$-orbit of $\psi$ contains two $\widehat{ G^*}$-conjugacy classes,  one should regard $\widetilde \Pi_{\psi}(G_v^*)$ as the concoction of  two conjectural Arthur packets.
\end{rem}
\begin{rem}\label{rem:mult free}
As remarked in \cite[\S 1.5]{arthurbook}, it follows from the work of Moeglin \cite{moeglinmult1} that the multi-set $\widetilde \Pi _{\psi} ({G^*_v})$ for $\psi \in \Psi({G^*_v})$ (and therefore also for $\psi \in  \Psi^+_{\uni}({G^*_v})$ by construction) is in fact multiplicity free in the non-archimedean case.
\end{rem}
 
 \subsection{}\label{para:variant for prod of two groups}
 Let $(H, \lang H, s, \eta)$ be an endoscopic datum for $G^*_v$, and assume that it is the localization of an elliptic endoscopic datum for $G^*$ over $\QQ$. Thus $H = H^+ \times H^-$ is the direct product of two quasi-split special orthogonal groups over $\QQ_v$. (Under our assumption, the endoscopic datum $(H, \lang H, s,\eta)$ over $\QQ_v$ itself may still be non-elliptic. More precisely, in the odd case it is always elliptic, while in the even case it is elliptic if and only if either $G^*_v$ is the split $\SO_2$ over $\QQ_v$ or neither of $H^{\pm}$ is the split $\SO_2$ over $\QQ_v$; cf.~the discussion at the beginning of \S \ref{subsubsec:explicit tb}.)

 As in \S \ref{subsubsec:local packets}, let $\Psi^+(H), \Psi^+(H^+), \Psi^+(H^-)$ be the sets of all Arthur--Langlands parameters for $H, H^+, H^-$ over $\QQ_v$ respectively. 
 We have a natural identification $\Psi^+ (H ) \cong \Psi^+(H^+) \times \Psi^+(H^-)$, to be viewed as the identity. 
 We define $\Psi^+_{\uni} (H)$\index{$\Psi^+_{\uni}(H)$} to be the preimage 
 of $\Psi^{+}_{\uni} (G^*_v)$, defined in \S \ref{subsubsec:local packets}, under the map $\Psi^+(H) \to \Psi^+(G^*_v),\psi \mapsto \eta \circ \psi.$  Also, we define $\Psi^+_{\uni}(H^{\pm})$\index{$\Psi^+_{\uni}(H^{\pm})$} in a similar way as in \S \ref{subsubsec:local packets}, with $G^*_v$ replaced by the quasi-split special orthogonal group $H^{\pm}$. We have
 $$\Psi^+_{\uni}(H^+) \times \Psi^+_{\uni}(H^-) \subset \Psi^+_{\uni} (H) .$$
Indeed, this containment boils down to the fact that every representation of $\GL_N(\QQ_v)$ that is the normalized parabolic induction of an irreducible unitary representation of a Levi subgroup is irreducible unitary. In the non-archimedean case this fact is  Bernstein's theorem \cite{Ber84}. In the archimedean case this fact is implicit in the work of Vogan \cite{voganunit} and also follows from Kirillov's conjecture proved by Baruch \cite{Baruch} plus the work of Sahi \cite{Sahi}. We note, however, that in general 
 $$\Psi^+_{\uni}(H^+) \times \Psi^+_{\uni}(H^-) \subsetneqq \Psi^+_{\uni}(H) .$$
 
 Now let $\psi \in \Psi^+_{\uni} (H) $, and write $\psi^{\pm}$ for the components of $\psi$ in $\Psi^+(H^\pm)$. Similarly as in \S \ref{para:stable dist}, we have stable distributions
 \begin{align*}
\Lambda_{\psi^{+}}: \widetilde{\mathcal H}^{\st} (H^{+}) & \To \CC , \\\Lambda_{\psi^{-}}: \widetilde{\mathcal H}^{\st} (H^{-}) &\To \CC,
 \end{align*}  (after fixing Haar measures). We define \index{$\widetilde{\mathcal H}^{\st}(H)$}
 $$ \widetilde{\mathcal H}^{\st}(H) : = \widetilde{\mathcal H}^{\st} (H^+)\otimes_{\CC} \widetilde{\mathcal H}^{\st} (H^-) .$$ Taking the product of $\Lambda_{\psi^+}$ and $\Lambda_{\psi^-}$, we obtain a stable distribution \index{$\Lambda_{\psi}$, local case}
 $$ \Lambda_{\psi} :  \widetilde{\mathcal H}^{\st}(H)  \To \CC. $$
 
 We have an expansion of $\Lambda_{\psi}$ similar to (\ref{eq:Lambda}). To make this precise, 
 similarly as in \S \ref{subsubsec:local packets}, we fix a $\QQ_v$-splitting $\spl_{H^\pm}$ of $H^{\pm}$, and let $\theta_{H^\pm}$ be the unique non-trivial automorphism of $H^{\pm}$ fixing $\spl_{H^{\pm}}$ in the even case, and the identity on $H^{\pm}$ in the odd case. Fix a Whittaker datum $\mathfrak w_{H^{\pm}}$ for $H^{\pm}$ that is fixed by $\theta_{H^{\pm}}$. Then similarly as in \S \ref{subsubsec:local packets}, we have the local packet $\widetilde{\Pi} _{\psi^{+}} (H^{+})$, which is a multi-set whose elements are $\lprod{\theta_{H^+}}$-orbits of isomorphism classes of representations of $H^+(\QQ_v)$. Similarly we have $\widetilde{\Pi} _{\psi^{-}} (H^{-})$. Define the packet $\widetilde {\Pi}_{\psi}(H)$ as the product of $\widetilde {\Pi}_{\psi^{\pm}}(H^{\pm})$, and we regard its elements as 
 $\lprod{\theta_{H^+}} \times \lprod{\theta_{H^-}}$-orbits of isomorphism classes of representations of $H(\QQ_v) = H^+ (\QQ_v) \times H^- (\QQ_v)$. We have maps $\widetilde \Pi_{\psi^{\pm}} (H^{\pm}) \to \mathcal S_{\psi^{\pm}}^D$ as in (\ref{eq:Arthur's pairing}), and taking the product we obtain a map $\widetilde \Pi_{\psi} (H) \to \mathcal S_{\psi}^D$, which we still denote by $\pi \mapsto \lprod{\cdot,\pi}$.\index{$\langle \cdot,\pi \rangle$} 
 Define $$ \widetilde{\mathcal H} (H^{\pm}) : = \mathcal H(H^{\pm})^{\theta_{H^{\pm} }= 1} , $$ and $$ \widetilde{\mathcal H} (H) : = \widetilde{\mathcal H} (H^+) \otimes \widetilde{\mathcal H} (H^-) .$$ We then have the expansion 
 \begin{align}\label{eq:stab dist for H}
\Lambda _{\psi} (h)=  \sum _{\pi \in \widetilde \Pi _{\psi} (H)} \lprod{s_{\psi} , \pi } \Tr (\pi (h)) , \qquad \forall h \in \widetilde{\mathcal H} (H).
 \end{align}
 Here, as in (\ref{eq:Lambda}), the summation takes into account the multiplicities, and for each $\pi$ we define $\Tr(\pi(h))$ to be $\Tr(\dot \pi (h))$ for any $\dot \pi \in \pi$, the Haar measure on $H(\QQ_v)$ being implicit. 
 
 We comment that the constructions of the packets $\widetilde \Pi_{\psi^{\pm}} (H^{\pm})$, the maps from them to $\mathcal S_{\psi^{\pm}}^D$, and the stable distributions $\Lambda_{\psi^{\pm}}$, are of a slightly more general nature than the previous constructions for $G^*_v$ in \S\S \ref{subsubsec:local packets} and \ref{para:stable dist}, since $\psi^{\pm}$ may not lie in $\Psi^+_{\uni}(H^{\pm})$. Nevertheless, the assumption that $\psi  =(\psi^+, \psi^-)$ lies in $\Psi^+_{\uni}(H)$ implies that $\psi^{\pm}$ can be constructed from a Levi subgroup $M \subset H^{\pm}$, a parameter in $\Psi(M)$, and a point $\lambda \in \mathfrak a_M^*$ as on p.~45 of \cite{arthurbook}, in exactly the same way as any element of $\Psi^+_{\uni}(H^{\pm})$ can be constructed from such data. The proof of this  fact, which is implicitly used in \cite{arthurbook}, is an elementary exercise using \cite[Thm.~D]{tadic} in the non-archimedean case and \cite{TadicGLnC} in the archimedean case. Thus the
  construction using 
 parabolic induction on the representation side and analytic continuation on the character side as indicated on pp.~45--46 of \cite{arthurbook} works for the current $\psi^{\pm}$ in the same way as it works for elements of $\Psi^+_{\uni}(H^{\pm})$.

\subsection{}\label{rem:canonicity of stable dist}
Fix $\psi \in \Psi^+_{\uni}({G^*_v})$ and fix a semi-simple element $s \in S_{\psi}$. Then there is an induced endoscopic datum $(H, \mathcal H, s, \eta: \mathcal H \to \lang {G^*_v})$ over $\QQ_v$. Arthur has proved an endoscopic character relation for such $\psi$ and $s$. For our applications, we only need the case where the endoscopic datum $(H, \mathcal H, s, \eta)$ is the localization over $\QQ_v$ of an elliptic endoscopic datum for $G^*$ over $
\QQ$, so we assume this for simplicity. Thus as in \S \ref{para:variant for prod of two groups}, $H = H^+ \times H^-$ is the direct product of two quasi-split special orthogonal groups over $\QQ_v$, and as usual we choose an identification $\mathcal H \cong \lang H$.  We have $\psi = \eta \circ \psi'$ for a unique $\psi' \in  \Psi^+_{\uni} (H)$. As in \S \ref{para:variant for prod of two groups}, we have the stable distribution 
 $ \Lambda_{\psi'} :  \widetilde{\mathcal H}^{\st}(H)  \To \CC $
 after fixing a   Haar measure $dh$ on $H(\QQ_v)$.
 
The Whittaker datum $\mathfrak w_v$ for ${G^*_v}$ determines a normalization of the transfer factors between $H$ and ${G^*_v}$; cf.~\S \ref{subsubsec:setting for transf factor}.  For any $f \in \widetilde{\mathcal H} ({G^*_v})$, let $f'$ be a Langlands--Shelstad transfer in $\mathcal H(H)$, with respect to the normalization of transfer factors just mentioned and the Haar measures $dg$ on $G^*_v(\QQ_v)$, $dh$ on $H(\QQ_v)$. Then $f' \in \widetilde{\mathcal H} ^{\st} (H)$; see \cite[\S 2.1]{arthurbook} or \cite[Prop.~3.3.1]{taibi}. We have the following \emph{endoscopic character relation}\index[n]{endoscopic character relation} (\cite[Thm.~2.2.1 (b)]{arthurbook}):
\begin{align}\label{eq:endoscopic character relation}
 \sum _{\pi \in \widetilde \Pi _{\psi} ({G^*_v})} \lprod{ s_{\psi}  s, \pi} \Tr (\pi ( f dg ) ) = \Lambda_{\psi ' } (f') .
\end{align}  
 
\begin{rem}\label{rem:analytic continuation}
In  \cite[Thm.~2.2.1]{arthurbook}, the stability of $\Lambda _{\psi'}$ and the relation (\ref{eq:endoscopic character relation}) are    explicitly stated only in the case where $\psi \in \Psi({G^*_v})$ and $ \psi' \in \Psi (H)$. The generalization to the case where $\psi \in \Psi^+_{\uni} ({G^*_v})$ and $ \psi ' \in  \Psi^+_{\uni}(H) $ can be easily obtained by analytic continuation, as explained on p.~46 of \cite{arthurbook}.
\end{rem}

\section*{Unramified parameters and representations}

\subsection{}\label{para:set for unr}
We complement our exposition   with a discussion on how unramified representations appear in local Arthur packets. Keep the setting and notation of \S \ref{subsubsec:local packets}, and assume that the place $v$ is finite. 
We say that a parameter $\psi : \WD_v \times \SL_2(\CC) \to \lang G^*_v$ in $ \Psi^+ (G^*_v) $ is \emph{unramified},\index[n]{unramified (parameter)} if the reductive group $G^*_v$ over $\QQ_v$ is unramified, and the restriction of $\psi$ to $\WD_v = W_{\QQ_v} \times \SU_2(\RR)$ is trivial on $\SU_2(\RR)$ and sends every element $\tau$ of the inertia subgroup of $W_{\QQ_v}$ to $1 \rtimes \tau \in \lang G^*_v$.

The existence of an unramified $\psi \in \Psi^+(G^*_v)$ by definition presupposes that $G^*_v$ is unramified. We  assume that this is the case. Then inside $G^*(\QQ_v)$, there is a unique $G^*(\QQ_v)$-conjugacy class of hyperspecial subgroups which are compatible with the fixed Whittaker datum $\mathfrak w_v$, in the sense of \cite{CS80}. Let $K^*_v$ be such a hyperspecial subgroup. Since $\theta_v$ fixes $\mathfrak w_v$, we know that  $\theta_v$ stabilizes the $G^*(\QQ_v)$-conjugacy class of $K^*_v$. In particular, $\theta_v$ permutes isomorphism classes of $K^*_v$-unramified representations of $G^*(\QQ_v)$. 

\begin{lem}\label{lem:parameter is unramified} Assume that $G^*_v$ is unramified, and let $K^*_v$ be a hyperspecial subgroup of $G^*(\QQ_v)$ as in \S \ref{para:set for unr}. Let $\psi \in  \Psi^+_{\uni} (G^*_v)$. The following statements hold. 
	\begin{enumerate}
		\item  The packet $\widetilde \Pi _{\psi} (G^*_v)$ contains at most one element that is a $\set{1,\theta_v}$-orbit of $K^*_v$-unramified representations of $G^*(\QQ_v)$. It contains one if and only if $\psi$ is unramified. 
		\item Assume that $\psi $ is unramified, and let $\pi \in \widetilde{\Pi} _{\psi} (G^*_v)$ be the unique element that is a  $\set{1,\theta_v}$-orbit of $K^*_v$-unramified representations, as in (1). Then for any $\dot \pi \in \pi$, we have $\dim (\dot \pi ^{K^*_v}) =1$, or equivalently, $\dot \pi$ has a unique $K^*_v$-unramified Jordan--H\"older constituent. Moreover, the unramified Langlands parameter $\WD_v \to \lang G^*_v$ of that Jordan--H\"older constituent (with respect to the unramified local Langlands correspondence) is in the same $\Aut(\lang G^*_v)$-orbit (see Remark \ref{rem:taut}) as the Langlands parameter $\varphi _{\psi}$ associated to $\psi$. Here $\varphi _{\psi} (w): = \psi ( w, \diag(\norm{w}  ^{1/2}, \norm{w} ^{-1/2}) ) $ for $ w\in \WD_v.$
		\item Let $\psi$ and $\pi$ be as in (2). We have $\lprod{\cdot ,\pi} =1 \in \mathcal S_{\psi}^D$. 
	\end{enumerate}  	
\end{lem}
\begin{proof} If $\psi \in \Psi (G_v^*)$, then parts (1) and (3) are proved in \cite[Lem.~4.1.1]{taibidim}, and part (2) follows from the characterization in \cite[Thm.~2.2.1]{arthurbook}. (In this case, all elements of $\widetilde \Pi_{\psi} (G^*_v)$ are $\set{1,\theta_v}$-orbits of smooth \emph{irreducible} representations of $G^*(\QQ_v)$.) For general $\psi \in   \Psi^+_{\uni}(G^*_v)$, we know that $\psi$ arises from a standard Levi subgroup $M \subset G^*$, an element $\psi_M \in \Psi(M)$ (i.e., a local Arthur--Langlands parameter for $M$ which is bounded on $\WD_v$), and an element $\lambda \in \mathfrak a_M^*$, as on p.~45 of \cite{arthurbook}. The packet $\widetilde \Pi_{\psi}(G^*_v)$ is constructed from the packet $\widetilde \Pi_{\psi_M}(M)$ of $M(\QQ_v)$-representations associated to $\psi_M$  via a certain parabolic induction process which involves $\lambda$; see \textit{loc.~cit.~}for more details. It is easy to see that $\psi$ is unramified if and only if $\psi_M$ is unramified. Moreover, the obvious analogue of the current lemma holds for $(M,\psi_M)$ in place of $(G^*_v, \psi)$. (More precisely, $M$ is a direct product of several general linear groups and one unramified special orthogonal group. The special case of the lemma for parameters bounded on $\WD_v$, which we have already proved, takes care of the special orthogonal factor of $M$. The general linear factors are taken care of by the local Langlands correspondence.) The lemma for $(G^*_v, \psi)$ then follows from the lemma for $(M, \psi_M)$, by basic properties of the parabolic induction process used in the definition of $\widetilde \Pi_{\psi}(G^*_v)$. (More specifically, we may assume that the standard parabolic subgroup $P\subset G^*_v$ containing $M$ as the Levi component is compatible with $K_v^*$ in the sense that $G^*(\QQ_v) = P(\QQ_v) K_v^*$. Let $K_M$ be the hyperspecial subgroup of $M(\QQ_v)$ given by the image of $P(\QQ_v) \cap K_v^*$ under the projection $P(\QQ_v) \to M(\QQ_v)$. Then for any irreducible smooth representation $\tau $ of $M(\QQ_v)$, the parabolic induction $\mathcal I_P(\tau)$ of $\tau$ to $G^*(\QQ_v)$ satisfies $\dim \mathcal I_P(\tau) ^{K^*_v} = \dim \tau^{K_M} \in \set{0,1}$. Moreover, when this number is $1$, we have compatibility between the unramified Langlands parameter of the unique $K^*_v$-unramified constituent of $\mathcal I_P(\tau) $ and that of $\tau$.)
\end{proof}

\subsection{}\label{para:setting for unramified lemma}
We have an obvious analogue of Lemma \ref{lem:parameter is unramified} with $G^*_v$ replaced by the group $H = H^+ \times H^-$ over $\QQ_v$ as in \S \ref{para:variant for prod of two groups}. To set up the notation, we assume that $H$ is unramified, and let $K_{H^{\pm}}$ be a hyperspecial subgroup of $H^{\pm}(\QQ_v)$ that is compatible with the Whittaker datum $\mathfrak w_{H^{\pm}}$ for $H^{\pm}$ (so $K_{H^{\pm}}$ is unique up to $H^{\pm}(\QQ_v)$-conjugacy). Let $K_{H} : = K_{H^+} \times K_{H^-} \subset H(\QQ_v)$. Since $\mathfrak w_{H^{\pm}}$ is fixed by  $\theta_{H^{\pm}}$, we know that  elements of the group $\lprod{\theta_{H^+}} \times \lprod{\theta_{H^-}} \subset \Aut(H)$ stabilize the $H(\QQ_v)$-conjugacy class of $K_{H}$. In particular,  $\lprod{\theta_{H^+}} \times \lprod{\theta_{H^-}} $ permutes isomorphism classes of $K_{H}$-unramified representations of $H(\QQ_v)$. 
  
\begin{lem}\label{lem:parameter is unramified for H} Keep the setting of \S \ref{para:setting for unramified lemma}. Let $\psi \in \Psi^+_{\uni} (H)$. The following statements hold. 
	\begin{enumerate}
		\item  The packet $\widetilde \Pi _{\psi} (H)$ contains at most one element that is a $\lprod{\theta_{H^+}} \times \lprod{\theta_{H^-} }$-orbit of $K_{H}$-unramified representations of $H(\QQ_v)$. It contains one if and only if $\psi$ is unramified. 
		\item Assume that $\psi$ is unramified. Let $\pi \in \widetilde{\Pi} _{\psi} (H)$ be the unique element that is a  $\lprod{\theta_{H ^+}} \times \lprod{\theta_{H^-} }$-orbit of $K_{H}$-unramified representations, as in (1). Then for any $\dot \pi \in \pi$, $\dot \pi$ has a unique $K_H$-unramified Jordan--H\"older constituent. Moreover, the unramified Langlands parameter $\WD_v \to \lang H$ of that Jordan--H\"older constituent is in the same $\Aut(\lang H)$-orbit as the Langlands parameter $\varphi _{\psi}$ associated to $\psi$.  
	\item Let $\psi$ and $\pi$ be as in (2).  We have $\lprod{\cdot ,\pi} =1 \in \mathcal S_{\psi}^D$. 
	\end{enumerate}  	
\end{lem}
\begin{proof} This follows from Lemma \ref{lem:parameter is unramified} applied to $H^+$ and $H^-$ separately. More precisely, write $\psi = (\psi^+, \psi^-)$ with $\psi ^{\pm}\in \Psi^+(H^{\pm})$. Although $\psi^{\pm}$ may not lie in $\Psi^+_{\uni}(H^{\pm})$, the proof of Lemma \ref{lem:parameter is unramified} still applies to $(H^{\pm}, \psi^{\pm})$ in place of $(G^*_v, \psi)$, in view of the comment at the end of \S \ref{para:variant for prod of two groups}. 
\end{proof}

\section*[Spectral expansion of discrete part of stable trace formula]{The spectral expansion of the discrete part of the stable trace formula}
 
 \subsection{}\label{para:spec exp}
Consider an elliptic endoscopic datum $(H = H^+ \times H^-, \lang H,s ,\eta) $ for $G^*$ over $\QQ$, presented in the explicit form as in \S \ref{subsec:elliptic end data}. Let $\psi \in \widetilde{\Psi} (H)$.  For each place $v$ of $\QQ$, there is a natural \emph{localization}\index[n]{localization (of a global Arthur parameter)} $$\psi_v = (\psi_v^+, \psi_v^-) \in  \Psi^+_{\uni}(H^+_{\QQ_v}) \times \Psi^+_{\uni}(H^-_{\QQ_v}) \subset \Psi^+_{\uni}(H_{\QQ_v})$$ of $\psi$ that is well defined up to the action of $\Aut(\lang H_{\QQ_v}) = \Aut(\lang H^+_{\QQ_v}) \times \Aut(\lang H^-_{\QQ_v})$, and there are natural homomorphisms $S_{\psi} \to S_{\psi_v}$ and $\mathcal S_{\psi} \to \mathcal S_{\psi_v}$; see \cite[\S 1.4 and pp.~46--47]{arthurbook}.  Note that the image of $s_{\psi} \in S_{\psi}$ under $S_{\psi} \to S_{\psi_v}$ is precisely $s_{\psi_v}$. 

  Let $\widetilde {\mathcal H}^{\st}(H)$\index{$\widetilde{\mathcal H}^{\st}(H)$ (global)} be the restricted tensor product of $\widetilde{\mathcal H} ^{\st} (H_{\QQ_v})$ over all places $v$. More precisely, consider a large enough finite set of prime numbers $\Sigma$ such that $H$ extends to a reductive group scheme $H'$ over $\ZZ[1/\Sigma]$, and such that the image of a fixed admissible splitting $\Out(H) \to \Aut (H)$ is contained in $\Aut(H') \subset \Aut (H)$. Then for all primes $p \notin \Sigma$, the function $1_{H'(\ZZ_p)}$ is in $\widetilde{\mathcal H} ^{\st} (H_{\QQ_p})$. We form the restricted tensor product with respect to these distinguished elements for almost all $p$. As usual, the result is independent of the choices of $\Sigma $ and $H'$. 
 
The discrete part of Arthur's stable trace formula for $H$ is a formal sum $$S^H_{\disc} = \sum_{t\geq 0} S^H_{\disc, t} $$ of stable distributions over all real numbers $t\geq 0$; see \cite[\S\S 3.1, 3.2]{arthurbook}, and cf.~\S \ref{subsec:intro}. For each $t \geq 0$ and any $f \in \widetilde{\mathcal H}^{\st} (H)$, we have the following spectral expansion by \cite[Lem.~3.3.1, Prop.~3.4.1, Thm.~4.1.2]{arthurbook}:
\begin{align}\label{eq:one t}
S^H_{\disc , t } (f) = \sum_{ \psi \in \widetilde{\Psi} (H), t(\psi) = t } m_{\psi }  \abs{\mathcal S_{\psi}} ^{-1} \sigma(\bar S_{\psi} ^0) \epsilon_{\psi} (s_{\psi}) \Lambda_{ \psi} (f),\end{align}
where $\Lambda_{\psi}$\index{$\Lambda_{\psi}$, global case} is the product\footnote{Here it is implicit that if we fix a finite set $\Sigma$ of primes and fix a reductive model $H'$ of $H$ over $\ZZ[1/\Sigma]$, then for almost all primes $p \notin \Sigma$ we have $\Lambda_{\psi_p}(1_{H'(\ZZ_p)}) =1$. It follows that $\Lambda_{\psi}$ is well defined on $\widetilde {\mathcal H}^{\st} (H)$, i.e., there is no issue with infinite products.}  of the local stable distributions $\Lambda_{\psi_v}  : \widetilde{\mathcal H}^{\st}(H_{\QQ_v}) \to \CC$ as in \S \ref{para:variant for prod of two groups}, and $\sigma(\bar S_{\psi} ^0)$\index{$\sigma(\bar S_{\psi}^0)$} is an invariant associated to the following connected complex reductive group (see \cite[Prop.~4.1.1]{arthurbook}):\index{$\bar S_{\psi}^0$}
$$\bar S_{\psi} ^0 : = (S_{\psi} / Z(\widehat{ H}) ^{\Gamma_{\QQ}}) ^0. $$
Thus formally we have 
\begin{align}\label{eq:spectral expansion}
 S^H_{\disc } (f) = \sum_{ \psi \in \widetilde{\Psi} (H)} m_{\psi }  \abs{\mathcal S_{\psi}} ^{-1} \sigma(\bar S_{\psi} ^0) \epsilon_{\psi} (s_{\psi}) \Lambda_{ \psi} (f).
\end{align}

\section[Ta\"ibi's parametrization of local packets]{Ta\"ibi's parametrization of local packets for certain pure inner forms} \label{subsec:Taibi}
\subsection{}\label{para:equiv of qsplit}
 We keep the setting of \S \ref{subsec:review of Arthur}. In particular, we fix $G^* = \SO(\underline V, \underline q)$. For each place $v$ of $\QQ$, we shall consider a \emph{pure inner form} $(G_v, \Xi_v, z_v)$ of $G^*_v = G^*_{\QQ_v}$, by which we mean the following data: \begin{itemize}
	\item a reductive group $G_v$ over $\QQ_v$;
	\item an isomorphism $\Xi_v :  G^* _{ \overline \QQ_v } \isom (G_v) _{ \overline \QQ_v }$ defined over $\overline \QQ_v$;
	\item a (continuous) cocycle $z_v\in Z^1(\Gamma_v, G^*_v)$ such that $\leftidx^{\rho} \Xi_v ^{-1}\Xi_v = \Int( z_v(\rho))^{-1}$ for all $\rho \in \Gamma_v$. 
\end{itemize} 
 We recall Ta\"ibi's parametrization in \cite{taibi} of the Arthur packets for $G_v$ under special hypotheses. For each place $v$, note the equivalence of the following conditions:
 \begin{enumerate}
 	\item The image of $z_v$ in $\coh^1(\QQ_v, G^{*,\ad})$ is trivial.
 	\item The reductive group $G_v$ over $\QQ_v$ is quasi-split.
 \end{enumerate}
 Indeed, that (1) implies (2) is clear, and  the converse amounts to the assertion that only the trivial element of $\coh^1(\QQ_v, G^{*,\ad})$ goes to the trivial element of $\coh^1(\Gamma_v, \Aut(G^*_{\overline \QQ_v}))$. This is clear in the odd case since all automorphisms of $G^*_{\overline \QQ_v}$ are inner. In the even case, this is true because the inner automorphisms form an index $2$ subgroup of $\Aut(G^*_{\overline \QQ_v})$, and in the complement there is an element invariant under $\Gamma_v$, for instance the conjugation action on $G^*_v$ by any element of $\mathrm{O}(\underline V)(\QQ_v)$ of determinant $-1$.  
 \section*{Finite places}
\subsection{}\label{subsubsec:finite places} Let $v$ be a finite place of $\QQ$. 
We assume that the image of $z_v$ in $\coh^1 (\QQ_v, G^{*,\ad})$ is trivial, or equivalently (see \S \ref{para:equiv of qsplit}), that $G_v$ is quasi-split as an abstract reductive group over $\QQ_v$. We caution the reader that under our assumption it could still happen that $z_v$ has non-trivial image in $\coh^1(\QQ_v, G^*)$ (when $d$ is even).

In the odd case, let $\theta_{G_v}$ be the identity automorphism of $G_v$. In the even case, fix a  $\QQ_v$-splitting of $G_v$ and let $\theta_{G_v}$\index{$\theta_{G_v}$}
 be the unique non-trivial automorphism of $G_v$ fixing that splitting (which is of order $2$). As we have observed in \S \ref{subsubsec:local packets}, the way in which $\theta_{G_v}$ permutes isomorphism classes of  representations of $G_v(\QQ_v)$ or conjugacy classes in $G_v(\QQ_v)$ is canonical. 

Fix a Whittaker datum $\mathfrak w_v$ for $G^*_v$.
As explained in \cite[\S 2.2]{kaldepth0} (cf.~Remark \ref{rem:pure inner form}), the datum $(\mathfrak w_v, \Xi_v ,z_v )$ determines a  normalization of transfer factors between any endoscopic datum $H$ for $G_v$ and $G_v$. We denote this normalization by $\Delta ^{G_v}_H (\mathfrak w_v, \Xi_v , z_v)$.\index{$\Delta^{G_v}_H (\mathfrak w_v, \Xi_v, z_v)$} We summarize in the next proposition the construction in \cite[\S 3.3]{taibi}.

\begin{prop}\label{prop:Taibi finite} 
For each $\psi \in  {\Psi}^+_{\uni} (G^*_v)$, there is a finite multi-set\footnote{This is denoted by $\Pi_{\psi}(G_v)$ in \cite[\S 3.3]{taibi}. By its construction and by Remark \ref{rem:mult free}, this multi-set is actually multiplicity free.}\index{$\widetilde{\Pi}_{\psi} (G_v)$} $\widetilde{\Pi}_{\psi} (G_v)$ of $\set{1,\theta_{G_v}}$-orbits of isomorphism classes of finite-length smooth representations of $G_v(\QQ_v)$, and a canonical map (depending on $(\mathfrak w_v, \Xi_v, z_v)$) \index{$\langle \cdot,\pi \rangle$} \begin{align*}
\widetilde \Pi _{\psi} (G_v) & \To \pi_0 (S_{\psi})^D \\  \pi & \longmapsto \lprod{\cdot ,\pi}.
\end{align*}
 Moreover, if all the representations in $\widetilde{  \Pi}_{\psi} (G_v^*)$ are irreducible, then so are those in $\widetilde {\Pi} _{\psi} (G_v)$. For each semi-simple $s \in S_{\psi}$ inducing an endoscopic datum $(H, \mathcal H, s, \eta)$ over $\QQ_v$, we have an endoscopic character relation\index[n]{endoscopic character relation}. For simplicity, we describe it only under the same assumption on $(H, \mathcal H, s, \eta)$ as in \S \ref{rem:canonicity of stable dist}. As usual fix an identification $\lang  H \cong \mathcal H$. Let $\psi' \in \Psi^+_{\uni}(H)$ be such  that $ \psi = \eta\circ \psi'$. Fix Haar measures on $G_v(\QQ_v)$ and $H(\QQ_v)$. Let $f\in\mathcal H(G_v)$, and assume that the orbital integrals of $f$ are invariant under $\theta_{G_v}$. Let $f' \in \mathcal H(H)$ be a Langlands--Shelstad transfer of $f$ with respect to the normalization $\Delta^{G_v}_H (\mathfrak w_v, \Xi_v, z_v)$ of transfer factors. Then we have $
f'\in \widetilde {\mathcal H} ^{\st} (H),$ and $$  \sum _{ \pi \in \widetilde{\Pi} _{\psi}(G_v) } \lprod{s_{\psi} s , \pi } \Tr (\pi (f)) = \Lambda_{ \psi'} (f'). $$
Here we understand that $s, s_{\psi} \in S_{\psi}$ are naturally mapped into $\pi_0 (S_{\psi})$ in writing $\lprod{s_{\psi} s , \pi }$. 
\end{prop}

\begin{proof} 
	In \cite[\S 3.3]{taibi}, it is assumed that $\psi \in \Psi  (G^*_v)$, and $\lprod{\cdot ,\pi}$ is constructed as a character on $\mathcal S_{\psi} ^+$ rather than a character on $\pi_0(S_{\psi})$. Here $\mathcal S_{\psi} ^+$ is a certain finite extension of $\mathcal S_{\psi}$ sitting in a chain of surjective group homomorphisms 
	$$ \mathcal S_{\psi} ^+ \To \pi_0 (S_{\psi}) \To \mathcal S_{\psi}. $$
	We indicate why the reformulation as in the present proposition is valid. 
	
	We first note that the construction in \cite[\S 3.3]{taibi} generalizes verbatim from $\psi \in \Psi (G^*_v)$ to $\psi \in  \Psi^+_{\uni} (G^*_v)$, based on the ``$\Psi^+_{\uni}$-version'' of   Arthur's results recalled in \S\S \ref{subsubsec:local packets}--\ref{rem:canonicity of stable dist} and Remark \ref{rem:analytic continuation}. 
Moreover the finite-length and irreducible properties stated in the proposition follow from the corresponding properties of  $\widetilde{\Pi}_{\psi} (G_v^*)$, since by construction $\widetilde{\Pi}_{\psi} (G_v)$ contains the same representations as $\widetilde{\Pi}_{\psi} (G_v^*)$, with respect to a certain $\QQ_v$-isomorphism $G_v^* \isom G_v$ which we do not explain.

It remains to explain why it is valid to replace $\mathcal S_{\psi} ^+$ by  $\pi_0 (S_{\psi})$ (which is denoted by $\pi_0(C_{\psi})$ in \cite{taibi}). 	The reason that one needs to consider $\mathcal S_{\psi} ^+$ in general is due to the fact that when $G_v$ is fixed as a rigid inner form of $G_v^*$, in order to normalize transfer factors between an endoscopic datum and $G_v$ one needs to upgrade the former to a ``refined endoscopic datum'', which roughly means picking a lift in $\mathcal S_{\psi}^+$ of the image of $s\in S_{\psi}$ in $\pi_0(S_{\psi})$.  In our present case,  this is not necessary thanks to the fact that $(G_v,\Xi_v, z_v)$ is a pure inner form of $G^*$: Each semi-simple element  $s\in S_{\psi}$ determines an endoscopic datum $(H, \mathcal H, s, \eta)$, and the datum $(\mathfrak w_v, \Xi_v, z_v)$ already determines canonically a  normalization of transfer factors between $H$ and $G_v$. Moreover, as noted in \cite[Rmk.~3.3.2]{taibi}, the pairing $\lprod{\cdot, \pi}$ for $\pi \in \widetilde{\Pi } (G_v)$ descends to a character on $\pi_0 (S_{\psi})$ in our case. In conclusion it is valid to replace the group $\mathcal S^+_{\psi}$ in \cite[\S 3.3]{taibi} by $\pi_0 (S_{\psi})$ in our case.
 \end{proof}
\section*{The archimedean place}
\subsection{}\label{subsubsec:arch para}
Let $v = \infty$. Assume that $G^*_v$ contains anisotropic maximal tori. Let $(G_v, \Xi_v, z_v)$ be an arbitrary pure inner form of $G^*_v$ as in \S \ref{para:equiv of qsplit}. Thus $G_v$ also contains anisotropic maximal tori. As in the non-archimedean case, we fix a Whittaker datum $\mathfrak w_v$ for $G^*_v$, and then the datum $(\mathfrak w_v, \Xi_v, z_v)$ determines a normalization of transfer factors between any endoscopic datum $H$ for $G_v$ and $G_v$, 
 which we denote by $\Delta^{G_v}_H(\mathfrak w_v, \Xi_v, z_v)$. 

Recall that any Arthur--Langlands parameter $\psi \in \Psi^+(G^*_{v})$ (through its associated Langlands parameter $\varphi_{\psi}$) has a well-defined \emph{infinitesimal character},\index[n]{infinitesimal character} which is an $\Omega_{\CC}(G,T)$-orbit in $X^*(T) \otimes_{\ZZ} \CC$. Here $T$ is any maximal torus in $G^*_{\CC}$, and $\Omega_{\CC}(G,T)$ is the complex Weyl group. For an account see for instance \cite[\S 4.1.2]{taibidim}, where the infinitesimal character is denoted by $\mu_1$. Following the terminology of Buzzard--Gee \cite{buzzardgee}, we say that the infinitesimal character is \emph{C-algebraic} (resp.~\emph{regular C-algebraic})\index[n]{regular C-algebraic}\index[n]{C-algebraic} if it is the $\Omega_{\CC}(G,T)$-orbit of an element of $\rho + X^*(T)$ (resp.~a regular element of $\rho + X^*(T)$), where $\rho \in \frac{1}{2} X^*(T)$ is the half sum of a system of positive roots. 

 For $\psi \in \Psi^+(G^*_v)$, we say that it is \emph{Adams--Johnson}\index[n]{Adams--Johnson parameter} if it is bounded on $W_{\RR}$ (i.e., $\psi \in \Psi(G_{v}^*)$) and has regular C-algebraic infinitesimal character. For more details see \cite[\S 4.2.2]{taibidim} and  \cite[\S 8.1]{AMR}. We denote by $\Psi^{\AJ} (G^*_v)$\index{$\Psi^{\AJ} (G^*_v)$} the set of Adams--Johnson parameters for $G^*_v$. We know that all $\psi \in \Psi^{\AJ}(G^*_v)$ are \emph{discrete}, in the sense that $S_{\psi} = \pi_0 (S_{\psi})$. 

 For each $\psi \in \Psi^{\AJ}(G^*_v)$, Adams--Johnson \cite{adamsjohnson} have explicitly constructed a packet $\Pi_{\psi} ^{\AJ} (G_v)$\index{$\Pi_{\psi}^{\AJ}(G_v)$} of representations of $G_v(\RR)$. Using the rigidifying datum $(\mathfrak w_v, \Xi_v ,z_v)$, Ta\"ibi \cite[\S\S 3.2.2--3.2.3]{taibi} associates to each $\pi \in \Pi_{ \psi} ^{\AJ} (G_v)$ a character $\lprod{\cdot, \pi}$ of $\mathcal S_{\psi}^+$. Here as in the proof of Proposition \ref{prop:Taibi finite} the finite group $\mathcal S_{\psi}^+$ sits in a chain of surjective group homomorphisms 
$$ \mathcal S_{\psi} ^+ \To \pi_0 (S_{\psi}) \To \mathcal S_{\psi}, $$
and its introduction is in fact unnecessary in our situation thanks to the fact that we have fixed $G_{v}$ as a pure inner form of $G^*_{v}$ (as opposed to a more general rigid inner form). Namely, for each $\psi \in \Psi^{\AJ} (G^*_v)$ and $\pi \in \Pi^{\AJ} _{\psi} (G_{v}) $, the pairing $\lprod{\cdot , \pi}$ descends to a character on $\pi_0(S_{\psi})= S_{\psi}$. This assertion could either be directly checked by going through Ta\"ibi's construction, or be proved as follows: By the well-definedness of the normalization $\Delta^{G_v}_H (\mathfrak w_v, \Xi_v ,z_v)$ of transfer factors between an endoscopic datum $H$ and the pure inner form $G_{v}$, we know that the right hand side of the endoscopic character relation in \cite[Prop.~3.2.5]{taibi} depends on $\dot s \in \mathcal S^+_{\psi }$ only via its image in $\pi_0 (S_{\psi})  = S_{\psi}$. It follows that so does the left hand side, which means that $\lprod{\cdot, \pi}$ descends to $S_{\psi}$ as desired. 

With the above modification, we summarize the results in \cite[\S\S 3.2.2--3.2.3]{taibi} together with a comparison result in \cite{AMR} as follows. \begin{prop} \label{prop:summary in arch}
	For any $\psi \in {\Psi}^{\AJ} (G^*_v) $, let $\Pi^{\AJ} _{\psi} (G_{v})$ be the associated (finite) Adams--Johnson packet. There is a canonical map (depending on $(\mathfrak w_v, \Xi_v, z_v)$)\footnote{Here the subscript ``$\AJT$'' stands for Adams--Johnson--Ta\"ibi.} \index{$\langle \cdot,\pi\rangle_{\AJT}$}
	\begin{align*}
\Pi ^{\AJ } _{\psi} (G_{v}) & \To \pi_0 (S_{\psi})^D = S_{\psi}^D \\  \pi & \longmapsto \lprod{\cdot ,\pi} _{\AJT}.
	\end{align*}
	Fix $s \in S_{\psi}$, and let $(H, \mathcal H, s, \eta)$ be the induced endoscopic datum over $\RR$, which is necessarily an elliptic endoscopic datum because $S_{\psi}$ is discrete. We have an endoscopic character relation\index[n]{endoscopic character relation} described as follows. As usual fix an identification $\lang  H \cong \mathcal H$, and let $\psi' \in \Psi^+(H)$ be such that $\psi = \eta \circ \psi'$. Then $\psi' \in \Psi^{\AJ} (H)$.   Fix Haar measures on $G_v(\RR)$ and $H(\RR)$.\ignore{When $d$ is even fix a non-inner automorphism $\theta_{G_v}$ of $G_v$ defined over $\RR$ (which exists; see \cite[Rmk.~3.1.1, \S 3.2.3]{taibi}). When $d$ is odd let $\theta_{G_v} : = 1$. 
	} The following statements hold.
\begin{enumerate}
	\item Fix a Haar measure $dh$ on $H(\RR)$. The distribution \index{$\Lambda_{\psi'}^{\AJ}$}
	\begin{align*}
\Lambda_{\psi'} ^{\AJ} : C^{\infty}_c(H(\RR)) & \To \CC \\  f' & \longmapsto   \sum _{\pi' \in \Pi^{\AJ} _{\psi'} (H) } \lprod{s_{\psi'} , \pi'}_{\AJT} ~ \Tr(\pi'(f' dh))
	\end{align*} is stable.
	\item For $f' \in \widetilde {\mathcal H} ^{\st}(H)$, we have 
	$$ \Lambda_{\psi'} ^{\AJ} (f') = \Lambda_{\psi'} (f'). $$ Here $\Lambda_{\psi'}$ is as in \S \ref{para:variant for prod of two groups}. 
		\item Fix a Haar measure $dg$ on $G_v(\RR)$. Let $f\in  C_{c} ^{\infty}(G_{v} (\RR))$, and let $f'$ be a Langlands--Shelstad transfer of $f$ in $C^{\infty}_c(H(\RR))$, with respect to the normalization $\Delta^{G_v}_H (\mathfrak w_v, \Xi_v, z_v)$ of transfer factors and the Haar measures $dg, dh$. 
		We have 
		\begin{align}\label{eq:AJ character relation}
 e(G_v) \sum _{\pi \in \Pi^{\AJ} _{\psi} (G_v) } \lprod{s_{\psi} s, \pi} _{\AJT}~ \Tr(\pi(f))  =  \Lambda_{\psi'} ^{\AJ} (f').
		\end{align} Here $e(G_v)$ is the Kottwitz sign of $G_v$. 
		\ignore{\item Assume that $f\in\mathcal H(G_v)$ has $\theta_{G_v}$-invariant orbital integrals. Let $f'$ be a Langlands--Shelstad transfer of $f$ in $\mathcal H(H)$. Then $f' \in \widetilde{\mathcal H} ^{\st} (H)$, and in particular (by (2) and (3)) we have  
		\begin{align}\label{eq:AJ Arthur character relation}
e(G_{v}) \sum _{\pi \in \Pi^{\AJ} _{\psi} (G_v) } \lprod{s_{\psi} s, \pi}_{\AJT} ~ \Tr(\pi(f))  =  \Lambda_{\psi'} (f'). 
		\end{align}}
	\qed
	\end{enumerate}
	
\end{prop}

\begin{rem}\label{rem:comparison with AJ}
	By the formula $\lprod{s_{\psi} , \pi_{ \psi, \mathbf Q, \mathbf L}} = e(\mathbf L)$ in the proof of \cite[Prop.~3.2.5]{taibi}, the distribution $\Lambda_{\psi'} ^{\AJ}$ in part (1) of Proposition \ref{prop:summary in arch} is none other than the distribution that appears in \cite[Thm.~2.13]{adamsjohnson}. With this understanding, part (1) is the same as \cite[Thm.~2.13]{adamsjohnson}, and part (2) is proved in \cite{AMR}. \end{rem}
\section{The global group $G$}\label{subsec:setting in application}
\subsection{}\label{para:global G^*}
	Fix $d  = 2m+1$ or $2m$ where $m \in \ZZ_{\geq 1},$ and fix $\delta\in \QQ^{\times}/ \QQ^{\times ,2}$. Assume that $(-1)^m \delta > 0$. Let $(\underline V, \underline q)$ be the quasi-split quadratic space (in the sense of Definition \ref{defn:qsplit quad sp}) over $\QQ$ of dimension $d$ and discriminant $\delta$, which is unique up to isomorphism. Let $G^* = \SO(\underline V, \underline q)$. We note that by our assumption on $\delta$, there exist inner twistings between the $\RR$-groups $\SO(d-2,2)$ and $G^*_{\RR}$. We fix a $G^*(\CC)$-conjugacy class\footnote{In the even case there are two such conjugacy classes to choose from. Nevertheless, the resulting two  ways of viewing $\SO(d-2,2)$ as an inner form of $G^*_{\RR}$ give rise to isomorphic inner forms of $G^*_{\RR}$. This is because the two $G^*(\CC)$-conjugacy classes of inner twistings are interchanged under any non-inner automorphism of $\SO(d-2,2)_{\CC}$, and there exists one such automorphism defined over $\RR$.} of such inner twistings, and thereby view $\SO(d-2,2)$ as an inner form of $G^*_{\RR}$.
\begin{lem}\label{lem:existence of V}
 The following statements hold. 
	\begin{enumerate}
		\item There exists at most one isomorphism class $G$ of inner forms of $G^*$ such that $G $ is isomorphic to $\SO(d-2,2)$ as inner forms of $G^*$ over $\RR$ and $G$ is quasi-split over $\QQ_v$ as a reductive group (or equivalently, $G_{\QQ_v}$ is isomorphic to $G^*_{\QQ_v}$ as inner forms of $G^*_{\QQ_v}$; see \S \ref{para:equiv of qsplit}) for all finite places $v$.
		\item 
		Assume either of the following two conditions:
		\begin{itemize}
			\item $d \equiv 2,3,4,5, 6 \mod 8$.
			\item $d \equiv 0 \mod 8$ and $ \delta \neq 1 \in \QQ^{\times}/ \QQ^{\times ,2}$. 
		\end{itemize} Then there is a quadratic space $(V,q)$ over $\QQ$, of dimension $d$, discriminant $\delta$, and signature $(d-2, 2)$ at $\infty$, such that $G: = \SO(V, q)$ is quasi-slit at all finite places. 
	\end{enumerate} 
\end{lem}
\begin{proof}
	Let $F $ be $\QQ$ or $\QQ_v$. The set $\coh^1(F, G^*)$ classifies isomorphism classes of pure inner forms of $G^*$ over $F$, and it also classifies isomorphism classes of quadratic spaces $(V,q)$ over $F$ whose dimension is $d$ and discriminant is $\delta$. Thus the lemma is just a reformulation of parts 1,2 of \cite[Prop.~3.1.2]{taibi}, in the special case where the base number field is $\QQ$. In fact, the condition in part 1 of that proposition reads $d \equiv 3,5 \mod 8$. The condition in part 2 (a) reads $d \equiv 2,6 \mod 8$. The condition in part 3 reads $d\equiv 4 \mod 8$, or $(d \equiv 0 \mod 4 \mbox{~and~} \delta \neq 1)$. 
\end{proof}
\begin{rem}In part (2) of the above lemma, the isomorphism class of $(V, q)$ may not be unique in the even case. The quadratic space $(V,q) \otimes_{\QQ} \QQ_v$ may not be quasi-split (in the sense of Definition \ref{defn:qsplit quad sp}) for all finite places $v$. 
\end{rem}
\subsection{}\label{para:trivial pure}
In the rest of the paper we fix $d\geq 5 , \delta, (\underline{V }, \underline{q}),  G^*$ as in \S \ref{para:global G^*}, and fix  $(V , q), G$ as in part (2) of Lemma \ref{lem:existence of V}. We shall apply the preceding parts of this paper, in particular Corollary \ref{Main Main result}, to $(V,q)$ and $G$. As in \S \ref{Fixing inner twist}, we fix an isometry $\phi_V: (V,q)\otimes \overline \QQ \isom (\underline{V},\underline{q})\otimes \overline \QQ ,$ and use it to define the inner twisting $\psi_V: G_{\overline \QQ} \isom G^*_{\overline \QQ}, g \mapsto \phi_V g \phi_V^{-1}$ as well as the function  $u_V : \Gamma_{\QQ} \to G^*(\overline \QQ), \rho \mapsto \leftidx^\rho\phi_V \phi_V^{-1}$. To conform with the convention of \cite{taibi}, we let $\Xi$ be $\psi_V^{-1}$ and let $z$ be the function $\Gamma_{\QQ} \to G^*(\overline \QQ), \rho \mapsto u_V(\rho)^{-1}$. Then according to that convention it is $z$ rather than $u_V$ that is a cocycle, and $(G,\Xi, z)$ is a global pure inner form of $G^*$ over $\QQ$. 

At each place $v$ of $\QQ$, by localization we obtain a pure inner form $(G_v , \Xi_v ,z_v)$ of $G^*_v$, where $G_v : = G_{\QQ_v}$. By construction this pure inner form  satisfies the hypothesis in \S \ref{subsubsec:finite places} when $v$ is finite.  

We fix once and for all a global Whittaker datum $\mathfrak w$ for $G^*$.

We also fix an automorphism $\theta_G$\index{$\theta_G$} of $G$ once and for all, as follows. In the odd case let $\theta_G = \id_G$. In the even case, we fix an element $\mathbf r$\index{$\mathbf r$} of $\mathrm{O}(V) (\QQ) - G(\QQ)$ of order $2$ (for instance, the reflection on $V$ associated to an anisotropic vector), and let $\theta_G = \Int(\mathbf r)|_G$. Thus in this case $\theta_G$ is of order $2$.

We know that there exists a large enough finite set $\Sigma$ of prime numbers such that $G^*$ (resp.~$G$) admits a reductive model $\mathcal G$ (resp.~$\mathcal G^*$) over $\ZZ [1/\Sigma]$. In particular, for any prime $p \notin \Sigma$, the group $G^*$ (resp.~$G$) is unramified over $\QQ_p$, and $\mathcal G^* (\ZZ_p)$ (resp.~$\mathcal G(\ZZ_p)$) is a hyperspecial subgroup of $G^*(\QQ_p)$ (resp.~$G(\QQ_p)$). Moreover, we may and shall assume that $\theta_{G}$  stabilizes $\mathcal G(\ZZ_p)$ for all $ p \notin \Sigma$, up to enlarging $\Sigma$. In fact, the $\QQ$-automorphism $\theta_{G}$ of $G$ extends to a $\ZZ[1/\Sigma]$-automorphism of the model $\mathcal G$ after suitably enlarging $\Sigma$. 

As argued in \cite[\S 3.4]{taibi}, we may further enlarge $\Sigma$ to a finite set of prime numbers, denoted by $\Sigma(\mathcal G^*, \mathcal G, \Xi, z, \mathfrak w , \theta_G)$,\index{$\Sigma(\mathcal G^*, \mathcal G, \Xi, z, \mathfrak w , \theta_G)$} such that the following conditions hold for all primes $p$ outside the set:
\begin{enumerate}
	\item As we have already assumed,  $\theta_G$ stabilizes $\mathcal G(\ZZ_p)$.
	\item The localization $\mathfrak w_p$ of $\mathfrak w$, which is a Whittaker datum for $G^*_p$, is compatible with the hyperspecial subgroup $\mathcal G^* (\ZZ_p) \subset G^*(\QQ_p)$ in the sense of \cite{CS80}. 
	\item The pure inner form $(G_p, \Xi_p, z_p)$ of $G^*_p$ over $\QQ_p$ is trivial. Equivalently, the quadratic spaces $(V,q)\otimes {\QQ_p}$ and $(\underline V, \underline q) \otimes \QQ_p$ are abstractly isomorphic over $\QQ_p$ (but $\phi_V$ itself may not be defined over $\QQ_p$). In particular, we have a canonical $G(\QQ_p)$-conjugacy class of $\QQ_p$-isomorphisms $G^*_p \isom G_p$, consisting of isomorphisms induced by isometries $V \otimes \QQ_p \isom \underline V \otimes \QQ_p$ that differ from $\phi_V$ by elements of $G^*(\overline \QQ_v)$ (as opposed to $\mathrm{O}(\underline V) (\overline \QQ_v)$). 
	\item Inside the canonical $G(\QQ_p)$-conjugacy class of $\QQ_p$-isomorphisms $G^*_p \isom G_p$ as in (3), there is one that extends to a $\ZZ_p$-isomorphism $\mathcal G^* _{\ZZ_p} \isom \mathcal G _{\ZZ_p}$. 
\end{enumerate}

\begin{defn}\label{defn:vartheta}
	Let $S$ be a finite set of places of $\QQ$. Let $\vartheta^S$ be the infinite direct product group $\prod_{v \notin S} \ZZ/2\ZZ$, where the product is over all places of $\QQ$ outside $S$. Let $\vartheta^S$\index{$\vartheta^S$} act on $G(\adele^S)$ by 
	$$ (\epsilon_v)_v \cdot (g_v)_v : = (\theta_G^{\epsilon_v} (g_v)) _v , \qquad \forall (\epsilon_v)_v \in \vartheta^S, (g_v)_v \in G(\adele^S).$$ Since $\theta_G$ fixes $\mathcal G(\ZZ_p)$ for almost all primes $p$, this action is well defined, and each element of $\vartheta^S$ acts via a topological group automorphism of $G(\adele^S)$. Similarly, we define $\vartheta_S : = \prod_{v \in S} \ZZ/2\ZZ$ and let $\vartheta_S$\index{$\vartheta_S$} act on $\prod_{v\in S} G(\QQ_v)$ by the same formula.
\end{defn}
 
\subsection{}\label{para:global packets}
Let $v$ be a finite place of $\QQ$ and let $\psi_v \in \Psi^+_{\uni} (G^*_v)$.  As in Proposition \ref{prop:Taibi finite} the local packet $\widetilde {\Pi}_{\psi_v} (G_v)$ is a set of $\set{1,\theta_{G_v}}$-orbits of isomorphism classes of representations of $G(\QQ_v)$, where $\theta_{G_v} \in \Aut(G_v)$ is chosen as in  \S \ref{subsubsec:finite places}. Since $\theta_{G_v}$ is of the form $\Int(g_v) |_{G_v}$ for some $g_v \in \mathrm{O}(V)(\QQ_v) - G(\QQ_v)$, we have $\theta _G = \theta_{G_v} \circ \Int (h_v)$ for some $h_v \in G(\QQ_v)$. Therefore we can view each element of $\widetilde {\Pi}_{\psi_v} (G_v)$ as a $\set{1,\theta_G}$-orbit, or equivalently, a $\vartheta_v$-orbit, of isomorphism classes of representations of $G(\QQ_v)$. We normalize the map $\widetilde \Pi_{\psi_v}(G_v) \to \pi_0 ( S_{\psi_v})^D, \pi_v \mapsto \lprod{\cdot, \pi_v}$  as in Proposition \ref{prop:Taibi finite}  with respect to the localization $(\mathfrak w_{v}, \Xi_{v}, z_{v})$ of $(\mathfrak w, \Xi , z )$ at $v$, where $(\mathfrak w, \Xi , z )$ is fixed in \S \ref{para:trivial pure}. Similarly, for any $\psi_{\infty} \in \Psi^{\AJ}(G^*_{\infty})$, we have the local packet $\Pi_{\psi}^{\AJ}(G_{\infty})$ as in Proposition \ref{prop:summary in arch}, and we normalize the map $\pi \mapsto \lprod{\cdot,\pi}_{\AJT}$ in that proposition with respect to the localization $(\mathfrak w_{\infty}, \Xi_{\infty}, z_{\infty})$ of $(\mathfrak w, \Xi , z)$ at $\infty$. In the sequel we always keep these normalizations, without explicitly mentioning them.

Now let $\psi \in \widetilde \Psi(G^*)$. For each place $v$ of $
\QQ$, we fix a localization $\psi_v \in \Psi^+_{\uni}(G^*_v)$ of $\psi$; see \S \ref{para:spec exp}. Let $S$ be a finite set of places of $\QQ$ containing $\infty$. We define the global (away from $S$) Arthur packet \index{$\widetilde \Pi_{\psi}^S(G)$}$\widetilde \Pi_{\psi}^S(G)$ to be the set of $(\pi_v)_{v\notin S} \in \prod_{v \notin S} \widetilde \Pi_{\psi_v}(G_v)$ such that $\pi_v$ is a $\vartheta_v$-orbit of isomorphism classes of $\mathcal G(\ZZ_v)$-unramified representations for almost all $v$. (Here note that for almost all $v$, $\vartheta_v$ permutes isomorphism classes of $\mathcal G(\ZZ_v)$-unramified representations.)   Now for all primes $v$ not in $\Sigma(\mathcal G^*, \mathcal G, \Xi, z, \mathfrak w , \theta_G)$, the packet $\widetilde \Pi_{\psi_v}(G_v)$  together with the map from it to $\pi_0 ( S_{\psi_v})^D$ is constructed from  $\widetilde \Pi_{\psi_v}(G^*_v)$  via an isomorphism $G_v \isom G^*_v$ as in \S \ref{para:trivial pure} (4); see \cite[\S 3.3]{taibi}. Moreover, $\psi_v$ is unramified for almost all $v$. Thus for almost all $v$, by Lemma \ref{lem:parameter is unramified} applied to $(G_v^*, \psi_v, \mathcal G^*(\ZZ_v))$, there is a unique $\pi_v \in \widetilde \Pi_{\psi_v} (G_v)$ which is a $\vartheta_v$-orbit of $\mathcal G(\ZZ_v)$-representations, and moreover for this $\pi_v$ we have $\lprod{\cdot, \pi_v} = 1 \in  \pi_0 ( S_{\psi_v})^D$ and $\dim (\dot \pi_v) ^{\mathcal G(\ZZ_v)} =1$ for any $\dot \pi_v \in \pi_v$. We conclude that for $\pi^S = (\pi_v)_{v\notin S} \in \widetilde \Pi^S_{\psi}(G)$, we have $\lprod{\cdot, \pi_v} = 1 \in  \pi_0 ( S_{\psi_v})^D$ for almost all $v$. 

For $\pi^S = (\pi_v)_{v\notin S} \in \widetilde \Pi^S_{\psi}(G)$, we choose a member $\dot \pi_v \in \pi_v$ for each $v$, and form the restricted tensor product $\dot \pi ^S : = \bigotimes'_{v \notin S} \dot \pi_v$, which makes sense as a smooth admissible representation of $G(\adele^S)$ since almost all $\dot \pi_v$ satisfy $\dim (\dot \pi_v) ^{\mathcal G(\ZZ_v)} =1$. The isomorphism class of the $G(\adele^S)$-representation $\dot \pi^S$ is well defined up to the $\vartheta^S$-action.

\ignore{
\begin{defn}\label{defn:symm Hecke for G}
 For each finite place $v$, define \index{$\mathcal H (G_v)$} \index{$\widetilde{\mathcal H}(G_v)$} \begin{align*}
 {  \mathcal H} (G_v) & : = C^{\infty} _c (G_v(\QQ_v)), \\ \widetilde {\mathcal H} (G_v) & : =   {\mathcal H} (G_v)^{\theta_{G}}. 
	\end{align*}
	Let $S$ be a finite set of places containing $\infty$. Define $\widetilde {\mathcal H}^{S} (G)$\index{$\widetilde {\mathcal H}^{S}(G)$} to be the restricted tensor product of $\widetilde {\mathcal H} (G_v)$ over all places $v\notin S$, with respect to the elements $1_{\mathcal G(\ZZ_v) } \in \widetilde{\mathcal H} (G_v)$ for almost all $v \notin S$. For any compact open subgroup $K^S \subset G(\adele^S)$, we define \index{$\widetilde{\mathcal H}(G(\adele^S)\sslash K^S)$}
	$$ \widetilde{  \mathcal H} (G(\adele^S) \sslash K^S) : = {  \mathcal H} (G(\adele^S) \sslash K^S) \cap \widetilde{  \mathcal H} ^S (G),$$ where the intersection is inside the space of all $C^{\infty}_c$-functions $G(\adele^S ) \to \CC$.  
\end{defn}

}

\ignore{ 
\begin{rem}\label{rem:other defn of symm Hecke}
	In the above definition, it can be easily checked that $\widetilde{ \mathcal H}^S (G)$ is also equal to the set of $f$ in the usual Hecke algebra $ \mathcal H^S(G)$ away from $S$ such that $f$ (not necessarily a pure tensor) is invariant under $\theta_{G_v}$ for all places $v\notin S$.
\end{rem}
}

\section{Spectral evaluation}\label{subsec:spectral eval}
\subsection{}\label{para:two V}
In the following we keep the setting and notation of \S \ref{subsubsec:setting for Morel's formula}, Theorem \ref{geometric assertion}, and Corollary \ref{Main Main result}, for the quadratic space $(V,q)$ fixed in \S \ref{para:trivial pure}. In particular we fix a neat compact open subgroup $K \subset G(\adele_f)$, and fix $f^{\infty} dg^{\infty} \in \mathcal H(G(\adele_f)\sslash K)_{\QQ}$.
 
We need a modified version of Corollary \ref{Main Main result} as follows. In \S \ref{para:test function intro}, we assumed that $\mathbb V$ is absolutely irreducible. In the odd case we keep that assumption, but in the even case we assume either one of the following two conditions:
\begin{enumerate}
	\item The algebraic $G_{\mathbb E}$-representation $\mathbb V$ is absolutely irreducible, and the isomorphism class of the $G_{\overline \QQ}$-representation $\mathbb V \otimes_{\mathbb E} \overline \QQ$ is preserved by outer automorphisms of $G_{\overline \QQ}$.
	\item We have $\mathbb V \cong \mathbb V_0 \oplus \mathbb V_1$, where $\mathbb V_0$ and $\mathbb V_1$ are absolutely irreducible algebraic $G_{\mathbb E}$-representations such that the isomorphism classes of the $G_{\overline \QQ}$-representations $\mathbb V_{0} \otimes_{\mathbb E} \overline \QQ$ and $\mathbb V_{1} \otimes_{\mathbb E} \overline \QQ$ are unequal and interchanged with each other under an outer automorphism of $G_{\overline \QQ}$.
\end{enumerate}
We shall call case (1) the \emph{even symmetric case},\index[n]{the even symmetric case} and case (2) the \emph{even composite case}.\index[n]{the even composite case} In the odd case and the even symmetric case, Corollary \ref{Main Main result} directly applies. In the even composite case, as in Theorem \ref{geometric assertion}, for each fixed $f^{\infty} dg^{\infty}$ we obtain two finite sets of prime numbers $\Sigma(\mathbf{O} (V), \mathbb V_i, \lambda, K, f^{\infty})$ for $i = 0,1$. We define $\Sigma(\mathbf{O} (V), \mathbb V, \lambda, K, f^{\infty})$ to be the union of these two sets. Clearly (\ref{eq:main}) still holds in this case, for any prime $p$ outside $ \Sigma(\mathbf{O} (V), \mathbb V, \lambda, K, f^{\infty})$ and satisfying the assumption in \S \ref{para:prepare for main stabilization}, if on the right hand side we define $f^H_{\infty}$ to be the \emph{sum} of the two test functions corresponding to $\mathbb V_0$ and $\mathbb V_1$. Indeed one obtains this by simply summing the two cases of (\ref{eq:main}) corresponding 
to $\mathbb V_0$ and $\mathbb V_1$.

In all of the odd case, the even symmetric case, and the even composite case, we define the finite sets of primes
$$ \Sigma _{\bad} ' (K, f^{\infty})  :  =   \Sigma(\mathbf{O} (V), \mathbb V, \lambda, K, f^{\infty}) \cup  \Sigma(\mathcal G^*, \mathcal G, \Xi, z, \mathfrak w , \theta_G),  $$ and \begin{align}
\label{eq:Sigma bad} 
\Sigma_{\bad}(K, f^{\infty}) & : = \Sigma
_{\bad}'(f^{\infty}) \cup \set{p \notin \Sigma
	_{\bad}'(K, f^{\infty}) \mid  K_p \neq  \mathcal G(\ZZ_p)}. 
\end{align}
We now fix a prime $p \notin \Sigma_{\bad}(K, f^{\infty})$, and apply (the modified) (\ref{eq:main}) to $p$. Note that the extra assumption on $p$ in the even case in \S \ref{para:prepare for main stabilization} is satisfied here by condition (2) in \S \ref{para:trivial pure}.  We thus obtain 
	\begin{align}\label{eq:main modified}
	\Tr(\Frob_p^a \times f^{ \infty} dg^{\infty} \mid  \icoh^* (\overline{ \Sh _K},\mathbb V) )   = \sum _{(H, \lang H, s ,\eta) \in \dot {\mathscr E}(G)}  \iota (G, H) ST^H (f^H) ,
\end{align}
for every sufficiently large $a$. On the right hand side, as we have already indicated,  the archimedean test function $f^H_{\infty}$ is defined to be the sum of the test functions constructed in \S \ref{subsec:test functions} corresponding to $\mathbb V_0$ and $\mathbb V_1$ in the even composite case. Here we view the two sides of (\ref{eq:main modified}) as numbers in $\mathbb C$, but recall from  Theorem \ref{geometric assertion} and Remark \ref{rem:in E} that the left hand side is actually in $
\mathbb E$. 
  
 \begin{rem} In the even composite case,
 	 $\icoh^* (\overline{ \Sh _K},\mathbb V)$ is the direct sum of $\icoh^* (\overline{\Sh _K},\mathbb V_0)$ and $\icoh^* (\overline{\Sh _K},\mathbb V_ 1)$ as $\mathcal H(G(\adele_f ) \sslash K)_{\QQ} \times \Gamma_{\QQ}$-modules. We explain how the latter two are related to each other. Let $K' = K \cap \theta_G(K)$. Then $K'$ is a compact open subgroup of $K$, and the $\mathcal H(G(\adele_f ) \sslash K)_{\QQ} \times \Gamma_{\QQ}$-module $\icoh^* (\overline{\Sh _K},\mathbb V_i)$ is obtained from the $\mathcal H(G(\adele_f ) \sslash K')_{\QQ} \times \Gamma_{\QQ}$-module $\icoh^* (\overline{\Sh _{K'}},\mathbb V_i)$ by taking $K$-invariants. It is easier to describe the relation between $\icoh^* (\overline{\Sh _{K'}},\mathbb V_i)$ and $\icoh^* (\overline{\Sh _{K'}},\mathbb V_i)$, so we replace $K$ by $K'$. Write $\mathcal H$ for $\mathcal H(G(\adele_f ) \sslash K)_{\QQ}$. Then $\theta_G$ induces a ring automorphism of $\mathcal H$. Now observe\footnote{We thank the anonymous referee for bringing this observation to our attention.} that the automorphism $\theta_G = \Int(\mathbf r)|_G$ of $G$ induces an automorphism of the Shimura datum $\mathbf O(V) = (G,\mathcal X, h)$, since $\mathbf r\in \mathrm{O}(V)(\QQ)$ induces an automorphism of the space $\mathcal X$ of oriented negative definite planes in $V_{\RR}$, and $h$ intertwines this automorphism with the automorphism $f\mapsto \theta_G \circ f$ of $\Hom(\mathbb S, G_{\RR})$. Moreover $\theta_G $ interchanges the isomorphism classes of the $G_{\overline \QQ}$-representations $\mathbb V_{0,\overline \QQ}$ and $\mathbb V_{1,\overline \QQ}$. Therefore by transport of structure we have an  $\mathcal H \times \Gamma_{\QQ}$-module isomorphism 
 	$$  \icoh^* (\overline{\Sh _K},\mathbb V_1) \cong \icoh^* (\overline{\Sh _K},\mathbb V_0) \otimes _ { \mathcal H, \theta_G} \mathcal H $$
 \end{rem}
 
\begin{lem}\label{lem:invariance} Suppose that $f^{\infty}$, as a function on $G(\adele_f)$, is fixed by the group $\vartheta^{\infty}$ (see Definition \ref{defn:vartheta}).  Then for each $(H,\lang H, s, \eta) \in \dot{\mathscr E}(G) $  we have $f^H \in \widetilde{\mathcal H} ^{\st} (H)$, where $\widetilde{\mathcal H} ^{\st} (H)$ is defined in \S \ref{para:spec exp}.
 \end{lem}
\begin{proof} If $(H,\lang H, s, \eta)$ does not satisfy the conditions ($\dagger$) and ($\ddagger$) in \S \ref{para:test function intro}, then by definition $f^H =0$. In the following we assume that these conditions are satisfied. We can factorize $f^{\infty,p}$ as $f_S f^{\infty,p, S}$, where $S$ is a finite set of primes not containing $p$, $f_S \in C^{\infty}_c(\prod_{v\in S} G(\QQ_v))$, and $f^{\infty, p, S} = 1_{\mathcal G (\widehat \ZZ^{p, S})}\in C^{\infty}_c(G(\adele^{\infty,p, S}))$. Moreover, up to enlarging $S$, we may assume that $1_{\mathcal G (\widehat \ZZ^{p, S})}$ is fixed by $\vartheta^{\infty,p,S}$. Since $p$ is not in $\Sigma_{\bad}(f^{\infty})$, we also know that $1_{K_p} = 1_{\mathcal G(\ZZ_p)}$ is fixed by $\vartheta_p$. Hence our assumption that $f^{\infty}$ is invariant under $\vartheta^{\infty}$ implies that $f_S$ is invariant under $\vartheta_S$. By induction on $\abs{S}$, it is an elementary exercise to show that $f_S$ can be written as a sum of functions in $C^{\infty}_c(\prod_{v\in S} G(\QQ_v))$ each of which is completely factorizable (i.e., a product over $v\in S$ of functions in $C^{\infty}_c(G(\QQ_v))$) and invariant under $\vartheta_S$. 
Hence $f^{\infty,p}$ is a sum of functions in $C^{\infty}_c(G(\adele_f^p))$ each of which is completely factorizable and invariant under $\vartheta_S$. We have thus reduced to the case where $f^{\infty,p} = \prod_{v \neq \infty,p} f_v$, with each $f_v \in C^{\infty}_c(G(\QQ_v))$  invariant under $\theta_G$, and $f_v = 1_{\mathcal G(\ZZ_v)}$ for almost all $v$.

	 For each finite place $v \neq p$, we can choose an automorphism $\theta_{G_v}$ of $G_v$ as in \S \ref{subsubsec:finite places}. As we have observed in \S \ref{para:global packets}, $\theta _G = \theta_{G_v} \circ \Int (h_v)$ for some $h_v \in G(\QQ_v)$. Therefore the fact that $f_v$ is invariant under $\theta_G$ implies that $f_v$ has $\theta_{G_v}$-invariant orbital integrals. By Proposition \ref{prop:Taibi finite} we know that $f^H_v$, which is a Langlands--Shelstad transfer of $f_v$, lies in $\widetilde {\mathcal H}^{\st}(H_{\QQ_v})$. It remains to check that $f^H_{v} \in \widetilde{\mathcal H} ^{\st} (H_{\QQ_v})$
	for $v = \infty, p$. 
	
	The fact that $f^H_{\infty} \in \widetilde{\mathcal H} ^{\st} (H_{\RR})$ follows from the following ingredients: 
	\begin{itemize}
		\item The implicit fact that we may (and do) take $f^H_{\infty}$ inside $\mathcal H (H_{\RR}) \subset C^{\infty}_c(H(\RR))$. (By the construction in \S \ref{subsec:test functions}, this reduces to the fact \cite[Lem.~3.1]{arthurlef} that for any discrete series representation of $H(\RR)$, a pseudo-coefficient of it may be taken to be bi-finite under a prescribed maximal compact subgroup of $H(\RR)$.)
		\item The formula \cite[(7.4)]{kottwitzannarbor} for the stable orbital integrals of $f^H_{\infty}$.
		\item The invariance properties of the transfer factors shown in the proof of \cite[Prop.~3.2.6]{taibi}.
		\item The fact that for any semi-simple elliptic element $\gamma_0\in G(\RR)$ the term $e(I) \vol ^{-1} \Tr \xi_{\CC} (\gamma_ 0)$ (where $\xi_{\CC} = \mathbb V_{\CC}$) in \cite[(7.4)]{kottwitzannarbor} is invariant under replacing $\gamma_0$ by its image under any automorphism of $G_{\RR}$. (Note that in the even case this is false if we take $\xi_{\CC}$ to be a general irreducible representation of $G_{\CC}$.)
	\end{itemize}

 We now prove that $f^H_p \in \widetilde{\mathcal H} ^{\st} (H_{\QQ_p})$. By the discussion in \S \ref{para:transition maps for Hecke}, we have canonical actions of $\Aut(H_{\QQ_p})$ on $\mathcal H^{\ur}(H_{\QQ_p})$ and on $\mathscr A_{H_{\QQ_p}}$, under which the subgroup $H^{\ad}(\QQ_p) \subset \Aut(H_{\QQ_p})$ (consisting of inner automorphisms) acts trivially. Thus the outer automorphism group $\Out(H_{\QQ_p}) = \Aut(H_{\QQ_p})/ H^{\ad}(\QQ_p)$ acts on $\mathcal H^{\ur}(H_{\QQ_p})$ and $\mathscr A_{H_{\QQ_p}}$. Moreover the canonical Satake isomorphism $\mathcal H^{\ur}(H_{\QQ_p})\isom\mathscr A_{H_{\QQ_p}}$ is $\Out(H_{\QQ_p})$-equivariant. We need only show that the Satake transform of $f^H_p$ in $\mathscr A_{H_{\QQ_p}} = \mathscr A_{H_{\QQ_p}^+}  \otimes \mathscr A_{H_{\QQ_p}^-} $, which is computed in (\ref{eq:five cases}), is invariant under  $\Out(H_{\QQ_p}) = \Out(H_{\QQ_p}^+) \times \Out(H_{\QQ_p}^+)$.   In all the five cases in (\ref{eq:five cases}), the image of $\Out(H_{\QQ_p})$ in $\Aut(\mathscr A_{H_{\QQ_p}})$ is generated by the automorphism $Z_1 \mapsto - Z_1$ of $\mathscr A_{H_{\QQ_p}^+}$ (non-trivial in the second and fourth cases) and the automorphism $Y_1 \mapsto - Y_1$ of $\mathscr A_{H_{\QQ_p}^-}$ (non-trivial in the second and third cases). By (\ref{eq:five cases}), the Satake transform of $f^H_p $ is indeed invariant under $\Out(H_{\QQ_p})$. \end{proof}

\subsection{} We keep the assumption in Lemma \ref{lem:invariance} that $f^{\infty}$ is fixed by $\vartheta^{\infty}$. 
We assume Hypothesis \ref{hypo}. By Corollary \ref{cor:simp}, the expansion (\ref{eq:spectral expansion}), and Lemma \ref{lem:invariance}, we can rewrite (\ref{eq:main modified}) as 
\begin{multline}\label{eq:first expansion} 	\Tr(\Frob_p^a \times f^{ \infty} dg^{\infty} \mid  \icoh^* (\overline{ \Sh _K},\mathbb V) )  \\  = \sum _{ (H,\lang H, s,\eta) \in \dot{\mathscr E}(G)}\iota(G,H) \sum_{ \psi' \in \widetilde{\Psi} (H)} m_{\psi' }  \abs{\mathcal S_{\psi'}} ^{-1} \sigma(\bar S_{\psi'} ^0) \epsilon_{\psi'} (s_{\psi'}) \Lambda_{ \psi' } (f^H).
\end{multline}

\begin{lem}\label{lem:parameter is AJ}
	Assume that $\psi' \in \widetilde{\Psi} (H)$ contributes non-trivially to the RHS of (\ref{eq:first expansion}). Then $H$ is cuspidal, and $ \eta\circ \psi'_{\infty}   \in \Psi ^{\AJ} (G^*_{\infty})$. (In particular, $\psi'_{\infty} \in \Psi ^{\AJ} (H_{\RR})$.) Moreover, $\eta\circ \psi' _\infty$ has the same infinitesimal character as that of $\mathbb V_{\CC}^*$ (resp.~that of $\mathbb V_{0,\CC}^*$ or $\mathbb V_{1,\CC}^*$) in the odd case and the even symmetric case (resp.~the even composite case). 
\end{lem}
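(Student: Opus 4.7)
The plan is to show that a non-zero contribution forces compatibility conditions at $\infty$ between $\psi'_\infty$ and the archimedean factor $f^H_\infty$. First, part (1) is immediate: by the construction in \S\ref{subsec:test functions}, $f^H$ vanishes identically unless $H$ is cuspidal, so the hypothesis that $\psi'$ contributes non-trivially already forces $H_\infty$ to admit an elliptic maximal torus. It remains to analyze the archimedean factor $\Lambda_{\psi'_\infty}(f^H_\infty)$, which must be nonzero.

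For part (3), I would exploit the explicit shape of $f^H_\infty$ from \S\ref{subsubsec:defn of f_infty}: it is a finite $\CC$-linear combination of the stable pseudo-coefficients $f_{\varphi_H}$ indexed by discrete Langlands parameters $\varphi_H$ of $H_\infty$ such that $\eta \circ \varphi_H$ is equivalent to $\varphi_{\mathbb V_0^*}$ or $\varphi_{\mathbb V_1^*}$. On the other hand, the stable distribution $\Lambda_{\psi'_\infty}$ is a signed sum of characters of the members of the packet $\widetilde \Pi_{\psi'_\infty}(H_\infty)$, each of which has infinitesimal character equal to that of the Langlands parameter $\varphi_{\psi'_\infty}$ associated to $\psi'_\infty$. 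Since $f_{\varphi_H}$ is a linear combination of pseudo-coefficients of discrete series in the $L$-packet of $\varphi_H$, its trace against any tempered representation vanishes unless that representation lies in the $L$-packet of $\varphi_H$, and in particular unless its infinitesimal character agrees with that of $\varphi_H$. Therefore $\Lambda_{\psi'_\infty}(f^H_\infty) \neq 0$ forces the infinitesimal character of $\psi'_\infty$ to match that of some $\varphi_H$ in the support, which after composition with $\eta$ is the infinitesimal character of $\mathbb V_i^*$ for some $i \in \{0,1\}$. This yields part (3).

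For part (2), it remains to show that $\eta \circ \psi'_\infty \in \Psi^{\AJ}(G_\infty)$. Regularity of the infinitesimal character follows from part (3): since $G$ is semi-simple, $\mathbb V_i^*$ is an irreducible algebraic representation of $G_\CC$, and so its infinitesimal character is regular and C-algebraic. For boundedness on $W_\RR$, note that $\psi'_\infty$ lies in $\Psi^+_{\uni}(H_\infty)$ by construction, since its global origin is a formal sum of self-dual unitary cuspidal automorphic representations of general linear groups; hence $\stan_{G^*} \circ \eta \circ \psi'_\infty$ corresponds under the local Langlands correspondence for $\GL_N$ to an irreducible unitary representation of a Levi of $\GL_N(\RR)$. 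Standard results on the unitary dual of $\GL_N(\RR)$ (the Tadi\'c / Vogan classification) show that those unitary representations having regular integral infinitesimal character are necessarily tempered; translating back, the $\SL_2(\CC)$-factor of each summand of $\eta \circ \psi'_\infty$ must be trivial and each cuspidal summand $\pi_{k,\infty}$ must be tempered. Hence $\eta \circ \psi'_\infty \in \Psi(G_\infty)$, which together with the regular C-algebraic infinitesimal character is the definition of Adams--Johnson.

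The main obstacle is the ``infinitesimal-character compatibility'' of the stable distributions $\Lambda_{\psi'_\infty}$ used in the second paragraph: while morally transparent since the members of $\widetilde \Pi_{\psi'_\infty}(H_\infty)$ all share the infinitesimal character of $\varphi_{\psi'_\infty}$, this property must be extracted from Arthur's defining characterization in \cite[Theorem 2.2.1]{arthurbook} rather than assumed. The cleanest route is via the analytic continuation from $\Psi$ to $\Psi^+_{\uni}$ invoked in Remark \ref{rem:analytic continuation}, reducing to the case $\psi \in \Psi(H_\infty)$ where the packet is constructed by Arthur from tempered packets of Levi subgroups through parabolic induction, and where the infinitesimal-character support is manifest from Shelstad's theory; the Adams--Johnson case treated in \cite{adamsjohnson} and \cite{AMR} then supplies the explicit description of $\Lambda_{\psi'_\infty}$ as a sum of characters with common infinitesimal character.
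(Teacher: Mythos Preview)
Your overall strategy matches the paper's: extract cuspidality of $H_\infty$ from the vanishing of $f^H_\infty$ otherwise, read off the infinitesimal character from the support of the pseudo-coefficients, and argue boundedness separately. For the infinitesimal-character step the paper proceeds exactly as you do, citing \cite[Lemma~4.1.3]{taibidim} for the fact that every member of $\widetilde\Pi_{\psi''_\infty}(H_\infty)$ has the infinitesimal character of $\psi''_\infty$, and extending from $\Psi$ to $\Psi^+_{\uni}$ by analytic continuation.

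Your boundedness argument, however, contains a genuine error. You assert that unitarity together with regular integral infinitesimal character forces the Levi representation to be \emph{tempered}, and conclude that ``the $\SL_2(\CC)$-factor of each summand of $\eta\circ\psi'_\infty$ must be trivial.'' The trivial representation of $\GL_2(\RR)$, which is $\mathrm{Speh}(\mathbf 1,2)$, is unitary with regular C-algebraic infinitesimal character $(1/2,-1/2)$ and is not tempered; its Arthur $\SL_2$ is $\nu_2$. Adams--Johnson parameters generally carry nontrivial Arthur $\SL_2$; what you actually need is only that $\psi'_\infty|_{W_\RR}$ is bounded, i.e.\ that each $\pi_{k,\infty}$ is tempered, with the $d_k$ unconstrained. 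The paper obtains this not from a purely local unitary-dual statement but from Clozel's purity lemma \cite[Lemme~4.9]{clozelannarbor}, which applies because each $\pi_{k,\infty}$ is the archimedean component of a unitary \emph{cuspidal} automorphic representation (hence generic). Purity bounds the non-tempered exponents by $|s|<1/2$, and the C-algebraic constraint then forces $s=0$. Your route through $\Psi^+_{\uni}$ alone drops the genericity/global input; even if it can be salvaged, the target must be corrected from ``the Levi representation is tempered'' to ``$\psi\in\Psi$.''
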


\begin{proof} We only treat the even composite case, the other two cases being similar. Recall that $f^H_{\infty} = f^H_{\infty,0}+ f^H_{\infty,1}$ where $f^H_{\infty, i}$ is the analogue of $f^H_{\infty}$ constructed in \S \ref{subsec:test functions} with $\mathbb V$ replaced by $\mathbb V_i$. Thus $f^H_{\infty} = 0$ unless $H$ is cuspidal; see \S \ref{para:test function intro}. Assume that $H$ is cuspidal. 
By \cite[Lem.~4.1.3]{taibidim} we know that for any $\psi''_{\infty}  \in \Psi (H_{\RR})$, all the representations in $\widetilde{\Pi } _{\psi''_{\infty}} (H_{\RR})$ have the same infinitesimal character as that of $\psi''_{\infty}$. By analytic continuation (see \cite[p.~46]{arthurbook} and cf.~Remark \ref{rem:analytic continuation}), the same conclusion holds for all $\psi''_{\infty} \in \Psi ^+_{\uni } (H_{\RR})$.  Hence in order that $\Lambda_{\psi'_{\infty}} (f^H_{\infty}) \neq 0$, the infinitesimal character of $\eta \circ \psi'_{\infty}$ must be the same as that of $\mathbb V_{0,\CC}^*$ or $\mathbb V_{1,\CC}^*$, which are regular C-algebraic. It remains to check that $\eta \circ \psi' _{\infty}$ is bounded on $W_{\RR}$. But this follows from the fact that $\eta\circ \psi'_{\infty}$ is the localization of the global parameter $\eta\circ \psi'$, the fact that it has C-algebraic infinitesimal character, and Clozel's purity lemma \cite[Lem.~4.9]{clozelannarbor}. (For a similar argument cf.~\cite[p.~309]{taibidim}.)  
\end{proof}
 
\subsection{}\label{para:preparation for Kottwitz}
Let $(H,\lang H, s,\eta) \in \dot {\mathscr E}(G)$. For each place $v$ of $\QQ$, let  $(\mathfrak w_{v}, \Xi_{v}, z_{v})$ be the localization at $v$ of $(\mathfrak w, \Xi , z )$ fixed in \S \ref{para:trivial pure}.  In \S \ref{subsubsec:finite places} and \S \ref{subsubsec:arch para}, we introduced a normalization $\Delta_{H_{\QQ_v}} ^{G_v}  (\mathfrak w_v, \Xi_v ,z_v)$ of  transfer factors between $H_{\QQ_v}$ and $G_v$ for each place $v$. In \S \ref{normalizing the transf factors} we also introduced a normalization $(\Delta^G_H)_v$. Thus we have 
$$a^G_{H,v} \Delta_{H_{\QQ_v}}^{G_v}  (\mathfrak w_v, \Xi_v ,z_v) = (\Delta^G_H)_v,  $$
for a constant $a^G_{H,v} \in \CC^{\times}$.\index{$a^G_{H,v}$} By construction, the normalizations $ (\Delta^G_H)_v$ are the canonical unramified normalizations at almost all places $v$ (associated to hyperspecial subgroups determined by a reductive model of $G$ over some Zariski open of $\spec \ZZ$), and satisfy the global product formula. The same holds for the normalizations $\Delta_{H_{\QQ_v}}^{G_v} (\mathfrak w_v, \Xi_v ,z_v)$, since $\mathfrak w_v$ for various $v$ are localizations of the global Whittaker datum $\mathfrak w$, and $(\Xi_v, z_v)$ for various $v$ are localizations of a global pure inner twist $(\Xi, z)$; see \cite[p.~137]{arthurbook} or \cite[Prop.~4.4.1]{kalethaglobal}. It follows that \begin{align}\label{eq:prod formula for a}
 \prod_ v  a^G_{H,v} = 1,
\end{align}where almost all terms in the product are $1$.  

Let $\psi' \in \widetilde{\Psi} (H)$.  In the following we compute the contribution of $\psi'$ to the RHS of (\ref{eq:first expansion}), based on Kottwitz's results in \cite[\S 9]{kottwitzannarbor}. For each place $v$, let $$\psi'_v \in \Psi^+_{\uni} (H_{\QQ_v}) $$ be a localization of $\psi'$ as in \S \ref{para:spec exp}. Let  $$\psi_v:  = \eta \circ \psi'_v \in {\Psi}^+_{\uni} (G_v^*), $$ and let 
$$ \psi : = \eta \circ \psi \in \widetilde \Psi (G^*)$$ as in \S \ref{subsubsec:substitutes}. For each place $v$, $\psi_v$ is indeed a localization of $\psi$, so our notation is consistent. 
	In Lemma \ref{lem:parameter is AJ}, we have already seen that a necessary condition for $\psi'$ to contribute non-trivially to the RHS of (\ref{eq:first expansion}) is that $\psi_{\infty}$ is Adams--Johnson with infinitesimal character determined by $\mathbb V$. In the following we assume this condition (but we do not assume that $\psi'$ has a non-zero contribution \textit{a priori}). In particular, $\psi'_{\infty}$ is discrete, and so  $\bar S_{\psi'} ^0 = \set{1}$. Thus by the definition of $\sigma(\bar S_{\psi'} ^0) $ in \cite[Prop.~4.1.1]{arthurbook}, we have 
\begin{align}\label{eq:sigma=1}
\sigma(\bar S_{\psi'} ^0) = 1.
\end{align}

We make several observations and definitions which will be understood in the statement of the next lemma. Recall from \S \ref{para:global packets} that 
for each finite place $v$ we view elements of $\widetilde {\Pi}_{\psi_v} (G_v)$ as $\vartheta_v$-orbits of isomorphism classes of representations of $G(\QQ_v)$. Since $p \notin \Sigma_{\bad}(K, f^{\infty})$, we know that $K_p = \mathcal G(\ZZ_p)$ and that $\vartheta_p$ stabilizes $\mathcal G(\ZZ_p)$. Hence $\vartheta_p$ permutes the isomorphism classes of $K_p$-unramified representations of $G(\QQ_p)$. Thus we can speak of whether an element of $\widetilde {\Pi}_{\psi_p}(G_p)$ is a $\vartheta_p$-orbit of $K_p$-unramified representations.  We write $\Lambda_{\psi'}^{p,\infty} $ for the product of the local stable distributions $\Lambda_{\psi'_v}$ over all places $v \notin \set{\infty, p}$, so we have $$\Lambda_{\psi'}(f^H) = \Lambda_{\psi'_{\infty}}(f^H_{\infty}) \Lambda_{\psi'_p} (f^H_p) \Lambda_{\psi'}^{p,\infty} (f^{H,p,\infty}). $$ As in \S \ref{para:global packets}, we define the global packet $\widetilde \Pi_{\psi}^{p,\infty}(G)$, and for each $\pi^{p,\infty} \in \widetilde \Pi_{\psi}^{p,\infty}(G)$ we define the $G(\adele_f^p)$-representation $\dot \pi ^{p,\infty}$ (which depends on arbitrary choices).  \begin{lem}\label{lem:Kott destab} Let $(H, \lang H, s,\eta), \psi', \psi, \psi'_v, \psi_v$ be as in \S \ref{para:preparation for Kottwitz}, and keep assuming that $\psi_{\infty}$ is Adams--Johnson with infinitesimal character determined by $\mathbb V$ as in Lemma \ref{lem:parameter is AJ}. The following statements hold. 
 	\begin{enumerate}
 		\item 

 	We have
\begin{align*} 
\Lambda_{\psi' _{\infty}}^{\AJ} (f^H_{\infty})  = (-1) ^{q(G_{\infty})} \lprod{s_{\psi} s , \lambda_{\pi_{ \infty}}} \lprod{s_{\psi} s , \pi_{ \infty}}_{\AJT} ~ a^G_{H,\infty}. 
\end{align*}
	Here \begin{itemize}
	\item $\pi_{\infty} $ is any element of $\Pi ^{\AJ} _{\psi_{\infty} } (G_{\RR})$. 
	\item $\lprod{\cdot, \lambda_{ \pi _{\infty}}}$ is a character on $S_{\psi_{\infty}} = \pi_0 ( S_{\psi_{\infty}}) $ defined on p.~195 of \cite{kottwitzannarbor}.
	\item  The pairing $\lprod{s_{\psi} s , \pi_{ \infty}}_{\AJT}$ is as in Proposition \ref{prop:summary in arch}, defined with respect to  $(\mathfrak w_{\infty}, \Xi_{\infty}, z_{\infty})$.
	\item  The product $\lprod{s_{\psi} s , \lambda_{\pi_{ \infty}}} \lprod{s_{\psi} s , \pi_{ \infty}}_{\AJT} $ is independent of the choice of $\pi_{ \infty}$. 
\end{itemize}

\item  We have 
\begin{align*} 
	\Lambda_{\psi' _{\infty}} (f^H_{\infty})  = (-1) ^{q(G_{\infty})} \lprod{s_{\psi} s , \lambda_{\pi_{ \infty}}} \lprod{s_{\psi} s , \pi_{ \infty}}_{\AJT}~ a^G_{H,\infty} 
\end{align*}

\item For each finite place $v$, $\psi_v$ is unramified if and only if $G^*_v$ is unramified and $\psi_v'$ is unramified. 

\item If $\Lambda_{\psi_p'} (f_p^H) \neq 0$, then   $\psi_p$ is unramified. Conversely, assume that   $\psi_p$ is unramified. Then inside $\widetilde \Pi_{\psi_p}(G_p)$, there is a unique element  $\pi_p$ that is a $\vartheta_p$-orbit of $K_p$-unramified representations of $G(\QQ_p)$. Each $\dot \pi_p \in \pi_p$ satisfies $\dim (\dot \pi_p)^{K_p} = 1$. We have 
\begin{align}\label{eq:spectral eval at p}
	\Lambda_{\psi' _p} (f^H_p)=  
		\lprod{s_{\psi} s, \pi_p} p ^{a n/2} \Tr ( s \varphi _{\psi_p} (\Frob_p ^a)  \mid  \stan_{G})~ a ^{G} _{H,p}.
\end{align} 
Here 
\begin{itemize}
	\item $n =d-2$ is the dimension of the Shimura variety; see \S \ref{subsec:datum}. 
	\item $\stan_G = \stan_{G^*} $ is the standard representation (\ref{eq:standard rep}) of  
	$\lang G = \lang G^*$. 
	\item $\Frob_p$ denotes any choice of a lift of the geometric Frobenius in $W_{\QQ_p}$. 
\end{itemize}
\item We have  \begin{align*} 
	\Lambda _{\psi'}^{ p,\infty} (f^{H,p,\infty}) =  \sum_{ \pi ^{p,\infty} = (\pi_v)_v \in \widetilde \Pi_{\psi}^{p,\infty}(G)  }   \Tr \big(\dot \pi^{p,\infty} (f^{p,\infty} dg^{p,\infty}) \big  )  \prod _{v \neq p,\infty} \lprod{s_{\psi} s , \pi _v} a^{G}_{H,v},
\end{align*}where $f^{p,\infty} dg^{p,\infty}$ is determined by $f^{\infty} dg^{\infty}$ in the same manner as in \S \ref{subsubsec:setting for Morel's formula}.  
 	\end{enumerate}

 \end{lem}
\begin{proof}
\textbf{(1)} 	
This follows from \cite[Lem.~9.2]{kottwitzannarbor}.	More precisely, we know (Remark \ref{rem:comparison with AJ}) that $\Lambda ^{\AJ} _{\psi'_{\infty}}$ is the stable distribution considered by Adams--Johnson \cite{adamsjohnson} and Kottwitz \cite[\S 9]{kottwitzannarbor}, and the latter is Kottwitz's \emph{definition} of \cite[(9.4)]{kottwitzannarbor}. We know from Proposition \ref{prop:summary in arch} (3) that $\lprod{s_{\psi} s, \pi_{ \infty}}_{\AJT}$ serves 
	as the spectral transfer factor that is denoted by $\Delta _{\infty} (\psi_H , \pi)$ (for $\psi_H = \psi'$, $\pi = \pi_{\infty}$) in \cite[Lem.~9.2]{kottwitzannarbor}, up to the correction factor  $a _{H,\infty} ^G$. Here  $a _{H,\infty} ^G$ arises because the spectral transfer factors used in \textit{loc.~cit.~}are assumed to be compatible with the normalization $(\Delta^G_H)_{\infty} = a^{G} _{H,\infty} \Delta_{H_\RR} ^{G_\infty}  (\mathfrak w_\infty, \Xi_\infty ,z_\infty)$ of the geometric transfer factors, whereas the endoscopic character relation (\ref{eq:AJ character relation}) is with respect to the normalization $ \Delta_{H_\RR} ^{G_\infty}  (\mathfrak w_{\infty}, \Xi_{\infty} ,z_{\infty}).$ Note that in the even symmetric case, the fact that $f^H_{\infty}$ is a sum $f^H_{\infty,0}+ f^H_{\infty,1}$, where $f^H_{\infty, i}$ corresponds to $\mathbb V_i$, does not affect the validity of \cite[Lem.~9.2]{kottwitzannarbor}. This is because the infinitesimal characters of $\mathbb V_{0,\CC}$ and $\mathbb V_{1,
	\CC}$ are unequal, and in the evaluation $\Lambda_{\psi'_{\infty}}^{\AJ}(f^H_{\infty})$ only one of $f^H_{\infty, i}$ will contribute, according to whether the infinitesimal character of $\eta \circ \psi'_\infty$ is equal to that of $\mathbb V_{0,\CC}^*$ or $\mathbb V_{1,
	\CC}^*$. 
	
	\textbf{(2)} This follows from part (1) together with Proposition \ref{prop:summary in arch} (2) and the fact that $f^H_{\infty} \in \widetilde { \mathcal H}^{\st}(H_{\RR})$ shown in the proof of Lemma \ref{lem:invariance}. 
	
	\textbf{(3)}  Write $I_v$ for the inertia subgroup of $W_{\QQ_v}$. For each $\tau \in I_v$, write $\psi'_v(\tau) = a_\tau \rtimes \tau$, with $a_{\tau} \in \widehat H$. 
	
	Assume that $\psi_v$ is unramified. Then by definition $G^*_v$ is unramified. It also immediately follows that $\psi_v'$ is trivial on $\SU_2(\RR)$, and $\eta(\tau) = \eta(a_{\tau}^{-1}) \rtimes \tau$ for all $\tau \in I_v$.  Since $\tau$ acts trivially on $\widehat {G^*}$, for all $x \in \widehat H$ we have $\eta(\lix^{\tau} x) = \eta(\tau) \eta(x) \eta(\tau)^{-1} = \eta(a_{\tau}^{-1} x a_{\tau})$. Therefore $I_v$ acts on $\widehat H$ via inner automorphisms, which implies that $H_{\QQ_v}$ is unramified. Then by our explicit presentation we know that the endoscopic datum $(H,\lang H, s, \eta)$ is unramified over $\QQ_v$ (cf.~\S \ref{para:test function intro}), that is, $\eta(\tau) = 1 \rtimes \tau$ for all $\tau \in I_v$. This implies that $a_{\tau} =1$ for all $\tau \in I_v$. Since $H_{\QQ_v}$ is unramified and we have already seen that $\psi'_v$ is trivial on $\SU_2(\RR)$, we know that $\psi'_v$ is unramified.
	
	Conversely, assume that $\psi'_v$ is unramified and $G^*_v$ is unramified. Then $H_{\QQ_v}$ is unramified, and as before the endoscopic datum $(H,\lang H, s, \eta)$ is unramified over $\QQ_v$. Since $\psi_v = \eta \circ \psi'_v$, we know that $\psi_v$ is trivial on $\SU_2(\RR)$ and sends every $\tau \in I_v$ to $1 \rtimes \tau$. Thus $\psi_v$ is unramified since $G^*_v$ is unramified. 
	
	\textbf{(4)} Suppose $\Lambda_{\psi'_p}(f^H_p) \neq 0$. Then $f^H_p\neq 0$, so by the definition of $f^H_p$ we know that $H$ is unramified over $\QQ_p$. Fix a Whittaker datum $\mathfrak w_{H,p} = (\mathfrak w_{H^+_{\QQ_p}} , \mathfrak w_{H^-_{\QQ_p}})$ for $H_{\QQ_p}$, and fix a hyperspecial subgroup $K_{H,p}$ of $H(\QQ_p)$ that is compatible with $\mathfrak w_{H,p}$, as in \S\S \ref{para:variant for prod of two groups} and \ref{para:setting for unramified lemma}. Recall from Definition \ref{defn:f^H_p} and  Remark \ref{rem:realization of Hecke} that $f^H_p$ is well defined as an element of the canonical unramified Hecke algebra $\mathcal H^{\ur}(H_{\QQ_p})$, and its stable orbital integrals are independent of how we realize $f^H_p$ in $C^{\infty}_c(H(\QQ_p))$. Thus we may assume that $f^H_p \in \mathcal H(H(\QQ_p) \sslash K_{H,p})$ without loss of generality. Then by (\ref{eq:stab dist for H}) and Lemma \ref{lem:parameter is unramified for H} (1), we know that $\psi_p'$ is unramified. By part (3) above, this implies that  $\psi_p$ is unramified.
	
	Conversely, assume that $\psi_p$ is unramified. By part (3) above, $\psi_p'$ is unramified (since we know that $G^*_{\QQ_p}$ is unramified), so in particular $H_{\QQ_p}$ is unramified. Fix $\mathfrak w_{H,p}$ and $K_{H,p}$ as in the preceding paragraph. Inside $G^*(\QQ_p)$, we have the hyperspecial subgroup $\mathcal G^*(\ZZ_p)$, and it is compatible with the Whittaker datum $\mathfrak w_p$ for $G^*_p$ since $p \notin \Sigma(\mathcal G^*, \mathcal G, \Xi, z, \mathfrak w , \theta_G)$; see \S \ref{para:trivial pure}.  We normalize the Haar measures on $G^*(\QQ_p)$ and $H(\QQ_p)$ once and for all such that hyperspecial subgroups have volume $1$. By Lemmas \ref{lem:parameter is unramified} and \ref{lem:parameter is unramified for H}, we know that inside $\widetilde {\Pi}_{\psi_p}(G^*_p)$ (resp.~$\widetilde {\Pi}_{\psi_p'} (H_{\QQ_p})$) there is a unique element $\pi_{p,G^*}$ (resp.~$\pi_{p,H}$) whose members are $\mathcal G^*(\ZZ_p)$-unramified (resp.~$K_{H,p}$-unramified), and moreover the members of $\pi_{p,G^*}$ (resp.~$\pi_{p,H}$) have $1$-dimensional fixed spaces under $\mathcal G^*(\ZZ_p)$ (resp.~$K_{H,p}$). As in the preceding paragraph we may assume that $f^H_p \in \mathcal H(H(\QQ_p) \sslash K_{H,p})$. Then by (\ref{eq:stab dist for H}), we have 
	$$ \Lambda_{\psi'_p} (f_p^H) = \lprod{s_{\psi'_p} , \pi_{p,H}} \Tr(\pi_{p,H} (f^H_p)) .$$ Here the pairing $\lprod{s_{\psi'_p} , \pi_{p,H}}$ is defined with respect to $\mathfrak w_{H,p}$. 
	In view of the compatibility between local unramified Arthur parameters and unramified Langlands parameters in Lemma \ref{lem:parameter is unramified for H} (2), the same argument as Kottwitz's proof that \cite[(9.3)]{kottwitzannarbor} is equal to \cite[(9.7)]{kottwitzannarbor} gives 
	\begin{align*}
	 \Tr(\pi_{p,H} (f^H_p))  =  
	  p ^{a n/2} \Tr ( s \varphi _{\psi_p} (\Frob_p ^a)  \mid  \stan_{G} ) .
	\end{align*}  Indeed, one easily checks that the irreducible representation of $\lang G$ determined by the Shimura datum appearing in \cite[(9.7)]{kottwitzannarbor} is $\stan_G$, and that the ambiguity in $\varphi_{\psi_p}$ up to the $\Aut(\lang G^*_p)$-action disappears when we consider the $\GL_N(\CC)$-conjugacy class of the composition of $\varphi_{\psi_p}$ with $\stan_G: \lang G = \lang G^* \to \GL_N(\CC)$. In conclusion we have 
$$ \Lambda_{\psi'_p} (f_p^H) = \lprod{s_{\psi'_p} , \pi_{p,H}}    p ^{a n/2} \Tr ( s \varphi _{\psi_p} (\Frob_p ^a)  \mid  \stan_{G} ) . $$

  As we have already mentioned in \S \ref{para:global packets}, since $p \notin \Sigma(\mathcal G^*, \mathcal G, \Xi, z, \mathfrak w , \theta_G)$, the packet $\widetilde \Pi_{\psi_p}(G_p)$ together with the map to $\mathcal S_{\psi_p}^D$ is constructed from $\widetilde \Pi_{\psi_p}(G_p^*)$ by identifying $G_p$ with $G^*_p$ via an isomorphism $G^*_p \isom G_p$ as in condition (4) in \S \ref{para:trivial pure}. Hence the existence and uniqueness of $\pi_p$ and the fact that members of $\pi_p$ have $1$-dimensional fixed spaces under $K_p =\mathcal G(\ZZ_p)$ follow from the existence and uniqueness of $\pi_{p,G^*}$ and the fact that members of $\pi_{p,G^*}$ have $1$-dimensional fixed spaces under $\mathcal G^*(\ZZ_p)$. Also we have $\lprod{\cdot, \pi_p} = \lprod{\cdot,\pi_{p,G^*}} \in \mathcal S_{\psi_p}^D$.  To finish the proof it suffices to show that\footnote{By Lemmas \ref{lem:parameter is unramified} and \ref{lem:parameter is unramified for H} we know that $\lprod{\cdot, \pi_{p,H}}$ and $\lprod{\cdot, \pi_{p, G^*}}$ are trivial, but in the current proof we do not need this.}
	\begin{align}\label{eq:a_p}
\lprod{s_{\psi'_p}, \pi_{p,H} }  =  \lprod{s_{\psi} s, \pi_{p,G^*}}  a^G_{H,p}.
	\end{align}

Comparing the Fundamental Lemma (Theorem \ref{thm:LS} (2)) with the endoscopic character relation (\ref{eq:endoscopic character relation}) and the expansion (\ref{eq:stab dist for H}), we get 
\begin{align}\label{eq:plug in 1_K}
\sum _{\xi \in \widetilde \Pi _{\psi_p} ({G^*_p})} \lprod{ s_{\psi}  s, \xi} \Tr (\xi ( 1_{\mathcal G^*(\ZZ_p)}) ) = (a_{H,p}^G )^{-1}\sum _{\xi' \in \widetilde \Pi_{\psi_p'} (H_{\QQ_p})} \lprod{s_{\psi'_p}, \xi'} \Tr(\xi' ( 1_{K_{H,p}}))  . 
\end{align} 
 Here, the pairing $\lprod{s_{\psi'_p}, \xi'}$ is defined with respect to $\mathfrak w_{H,p}$, and the factor $(a_{H,p}^G )^{-1}$ appears because it is $(a_{H,p}^G )^{-1} 1_{K_{H,p}}$, rather than $1_{K_{H,p}}$, that is a Langlands--Shelstad transfer of $1_{\mathcal G^*(\ZZ_p)}$ with respect to the Whittaker normalization of transfer factors between $H_{\QQ_p}$ and $G^*_p$ associated to $\mathfrak w_p$. 
By Lemmas \ref{lem:parameter is unramified} and \ref{lem:parameter is unramified for H}, the two sides of (\ref{eq:plug in 1_K}) are equal to $\lprod{s_{\psi} s, \pi_{p,G^*}}$ and $(a_{H,p}^G )^{-1} \lprod{s_{\psi'_p}, \pi_{p,H}}$ respectively. This proves (\ref{eq:a_p}). 

	\textbf{(5)} First observe that for each $\pi^{p,\infty} \in \widetilde \Pi^{p,\infty}_{\psi}(G)$, the ambiguity in the $G(\adele_f^p)$-representation $\dot \pi^{p,\infty}$ up to the $\vartheta^{p,\infty}$-action does not affect the value of $ \Tr \big( \dot \pi^{p,\infty} (f^{p,\infty} dg^{p,\infty}) \big )$. Indeed, since $\theta_G$ is an automorphism of $G$ of order at most $2$, it is clear that $dg^{p,\infty}$ is fixed by $\vartheta^{p,\infty}$. In the proof of Lemma \ref{lem:invariance} we observed that $f^{p,\infty}$ is fixed by $\vartheta^{p,\infty}$ (under the overall assumption that $f^{\infty}$ is fixed by $\vartheta^{\infty}$). Hence the trace of $f^{p,\infty} dg^{p,\infty}$ on a $G(\adele_f^p)$-representation depends only on the $\vartheta^{p,\infty}$-orbit of the isomorphism class of that representation.

	Now as in the proof of Lemma \ref{lem:invariance}, we may assume that $f^{p,\infty} = \prod_{v \neq p,\infty} f_v$ with each $f_v \in \mathcal H(G_v)$ being fixed by $\vartheta_v$. The desired statement then follows from the endoscopic character relation in Proposition \ref{prop:Taibi finite} applied to each $f_v$. Here the term $a^G_{H,v}$ appears because it is $(a^G_{H,v})^{-1} f^{H}_v$, rather than $f^H_v$, that is a Langlands--Shelstad transfer of $f_v$ with respect to the normalization $\Delta^{G_v}_{H_{\QQ_v}} (\mathfrak w_v,  \Xi_v, z_v)$ of transfer factors.  
	\end{proof} 
\ignore{
\begin{rem}
	Let $\pi_p$ be as in the above lemma.  We point out that by \cite[Lem.~4.1.1]{taibidim}, we have \begin{align}\label{eq:unram prod = 1}
	\lprod{\cdot, \pi_p} = 1.
	\end{align}
	In fact, this pairing, after identifying $\widetilde{\Pi} _{\psi_p} (G_p)$ with $\widetilde{\Pi} _{\psi_p} (G_p^*)$, is none other than the pairing (\ref{eq:Arthur's pairing}) defined with respect to $\mathfrak w$. Under this identification $\pi_p$ corresponds to the unique $\mathcal G^*(\ZZ_p)$-unramified element of $\widetilde{\Pi} _{\psi_p} (G_p^*)$. To apply \cite[Lem.~4.1.1]{taibidim}, we need to check that $\mathfrak w$ is compatible with $\mathcal G^* (\ZZ_p)$. But this is indeed the case by property (1) above Definition \ref{defn:symm Hecke for G}, since $p \notin \Sigma_{\bad}$.
\end{rem}}
\ignore{
\begin{rem}
	Since the representation $\stan_G$ is self-dual and $s^2 =1$, there is no need to distinguish between $\Frob_p$ and $\Frob_p ^{-1}$ in (\ref{eq:spectral eval at p}). 
\end{rem}}

We summarize the results we have obtained so far in the following proposition.  
\begin{prop}\label{prop:contribution}
Let $(H,\lang H, s,\eta) \in \dot {\mathscr E}(G)$ and  $\psi' \in \widetilde{\Psi} (H)$.  For each place $v$ of $\QQ$, let $\psi'_v \in \Psi^+_{\uni}(H_{\QQ_v})$ be a localization of $\psi'$, and let $\psi_v:= \eta\circ \psi'_v \in \Psi^+_{\uni}(G^*_v)$. Let $\psi = \eta \circ \psi' \in \widetilde \Psi(G^*)$. The following statements hold:
\begin{enumerate}
	\item  For $\psi'$ to contribute non-trivially to the RHS of (\ref{eq:first expansion}), it is necessary that $H$ is cuspidal,  and that $\psi_{\infty}$ is Adams--Johnson with  infinitesimal character determined by $\mathbb V$ as in Lemma \ref{lem:parameter is AJ}. 
	\item Assume that the necessary conditions in (1) are satisfied. Then the contribution of $\psi'$ to the RHS of (\ref{eq:first expansion}), without the factor $\iota(G,H)$, is equal to 
	\begin{multline} 
		m_{\psi' }  \abs{\mathcal S_{\psi'}} ^{-1}\epsilon_{\psi'} (s_{\psi'})  \lprod{s_{\psi} s , \lambda_{\pi_{ \infty}}} \lprod{s_{\psi} s , \pi_{ \infty}} _{\AJT} ~ A(\psi, s, p ,a ) \\  \sum_{\pi^{\infty} = (\pi_v)_v \in \widetilde \Pi^{\infty}_{\psi}(G) }   \Tr \big( \dot \pi^{\infty} (f^{\infty} dg^{\infty}) \big)  \prod _{v \neq \infty} \lprod{s_{\psi} s , \pi _v} . \end{multline}  
with notations explained below: \begin{itemize}
		\item The product   $\lprod{s_{\psi} s , \lambda_{\pi_{ \infty}}} \lprod{s_{\psi} s , \pi_{ \infty}}_{\AJT} $
		is as in Lemma \ref{lem:Kott destab} (1).
		\item We define \index{$A(\psi,s,p,a)$} $$ A(\psi, s, p,a): =  (-1) ^{q(G_{\infty})}  p ^{a n/2} \Tr ( s \varphi _{\psi_p} (\Frob_p ^a)  \mid  \stan_{G}) . $$ The notations $n$, $\Frob_p$, and $\stan_G$ are as in Lemma \ref{lem:Kott destab} (3),  and we have $q(G_{\infty}) = n$. 
	\end{itemize}
\end{enumerate} 
\end{prop}
\begin{proof}  This follows from Lemma \ref{lem:parameter is AJ}, Lemma \ref{lem:Kott destab}, (\ref{eq:prod formula for a}),  (\ref{eq:sigma=1}), and the following simple observations: 
	\begin{enumerate}
		\item 	For any finite-length smooth representation $\tau_p$ of $G(\QQ_p)$, we have  
		$$ \Tr (\tau_p  (1_{K_p} dg_p)) = \dim \tau_p^{K_p} .$$ 
	Here, as in \S \ref{subsubsec:setting for Morel's formula}, $dg_p$ is the Haar measure on $G(\QQ_p)$ giving volume $1$ to hyperspecial subgroups. 
	\item If $\psi_p$ is ramified, then no element of $\widetilde {\Pi}_{\psi_p} (G_p)$ is a $\vartheta_p$-orbit of $K_p$-unramified representations. Indeed,  as we have mentioned in the proof of Lemma \ref{lem:Kott destab} (4), the packet $\widetilde {\Pi}_{\psi_p} (G_p)$ is constructed from the packet $\widetilde {\Pi}_{\psi_p} (G^*_p)$ via an isomorphism $  G^*_p \isom G_p$ as in condition (4) in \S \ref{para:trivial pure}. Hence the current assertion follows from Lemma \ref{lem:parameter is unramified} applied to  $G^*_p$ and $\psi_p$. 
	\end{enumerate}
	\end{proof}

	\section{Spectral expansion of the intersection cohomology} We keep the same setting and notation as in \S \ref{subsec:spectral eval}. In particular,  $\mathbb V$ is as in \S \ref{para:two V}, and we speak of the odd case, the even symmetric case, and the even composite case. 
	\begin{defn}\label{defn:Psi_V}
	We denote by $\widetilde{  \Psi} (G^*)_{\mathbb V}$ the set of $\psi \in \widetilde{\Psi} (G^*)$ such that the localization $\psi _{\infty} $ of $\psi$ at $\infty$ lies in $\Psi ^{\AJ} (G_{\infty}^*)$ and has the same infinitesimal character as that of $\mathbb V^*_{\CC}$ in the odd case and the even symmetric case, and the same infinitesimal character as that of $\mathbb V_{0,\CC} ^*$  or $\mathbb V_{1,\CC}^*$ in the even composite case. (This condition is insensitive to the ambiguity in $\psi_{\infty}$ up to the $\Aut(\lang G^*_{\infty})$-action.) 
In particular, for any $\psi  \in \widetilde{  \Psi} (G^*)_{\mathbb V}$, we have $\psi \in \widetilde{\Psi} _2 (G^*)$, and $S_{\psi} = \pi_0 (S_{\psi})$ is a finite abelian group.
	\end{defn}
		\begin{defn}\label{defn:compleltely symm}
			We say that a compact open subgroup $K \subset G(\adele_f)$ is \emph{completely symmetric},\index[n]{completely symmetric compact open subgroup} if $K = \prod_{v} K_v$ where $v$ runs through all primes, with each $K_v$ a compact open subgroup of $G(\QQ_v)$ that is stable under $\theta_G$. 
		\end{defn}
	\begin{rem}\label{rem:completely sym}
		Completely symmetric compact open subgroups of $G(\adele_f)$ form a cofinal system of compact open subgroups. Indeed, given any compact open subgroup $W$ of $G(\adele_f)$, we know that $W$ contains a compact open subgroup of the form $\prod_{v\in S} U_v \times \prod_{v\notin S} \mathcal G(\ZZ_v)$, where $S$ is a sufficiently large finite set of primes, $\mathcal G$ is as in \S \ref{para:trivial pure}, and $U_v$ is a compact open subgroup of $G(\QQ_v)$ for each $v \in S$. If $S$ is sufficiently large, we know that $\mathcal G(\ZZ_v)$ is $\theta_G$-stable for all $v \notin S$; see \S \ref{para:trivial pure}. Note that $U'_v : = U_v \cap \theta_G(U_v)$ is a $\theta_G$-stable compact open subgroup of $G(\QQ_v)$ for each $v \in S$. Hence $W$ contains the completely symmetric compact open subgroup $\prod_{v\in S} U'_v  \times \prod_{v \notin S}\mathcal G(\ZZ_v)$.    
	\end{rem}
\begin{thm}\label{thm:final} Assume Hypothesis \ref{hypo}. Fix a neat compact open subgroup $K$ of $G(\adele_f)$, and fix $f^{\infty} dg^{\infty} \in \mathcal H(G(\adele_f)\sslash K)_{\QQ}$. Assume that $K$ is completely symmetric, and that $f^{\infty}$ is fixed by $\vartheta^{\infty}$. Let $p$ be a prime not in the set  $\Sigma_{\bad}(K,f^{\infty})$ as in (\ref{eq:Sigma bad}). Let $a\in \ZZ$ be arbitrary. We have 
	\begin{multline}\label{eq:semifinal}
\Tr(\Frob_p^a \times f^{\infty} dg^{\infty} \mid  \icoh^* (\overline{ \Sh _K},\mathbb V) )  \\  =  \sum_{\psi\in  \widetilde{\Psi} (G^*)_{\mathbb V} }  m_{\psi}  \sum _{\pi^{\infty}\in \widetilde \Pi_{\psi}^{\infty}(G) } \Tr \big( \dot \pi^{\infty} (f^{\infty} dg^{\infty}) \big)  \abs{S_{\psi} } ^{-1} \sum _{ s \in S_{\psi}} B(  \psi , s , \pi^{\infty} , p, a),
	\end{multline} with notations explained below:
\begin{itemize}
	\item For each $\psi$, $m_{\psi}\in \set{1,2}$ is as in (\ref{eq:defn of m psi}).
	\item For each $\psi \in \widetilde \Psi(G^*)_{\mathbb V}$, $\pi^{\infty} = (\pi_v)_v \in \widetilde \Pi_{\psi}^{\infty}(G)$, and $s \in S_{\psi}$, we define \index{$B(\psi,s,\pi^{\infty},p,a)$} $$B( \psi , s, \pi^{\infty}, p , a) : = \epsilon_{\psi } ( s)  \lprod{ s, \lambda_{ \pi _{\infty}}} \lprod {s, \pi _{\infty}}  _{\AJT} ~A(\psi, s_{\psi} s , p,a) \prod_{v\neq \infty} \lprod{s, \pi_v},   $$ where the terms $\lprod{ s, \lambda_{ \pi _{\infty}}} \lprod {s, \pi _{\infty}}  _{\AJT}$ and $A(\psi, s_{\psi} s , p,a)$ are as in Proposition \ref{prop:contribution}, but with a change of variable from $s$ to $s_{\psi} s$. (Recall that $s_{\psi} \in S_{\psi}$ and $s_{\psi}^2 =1$.)
\end{itemize} 
Moreover, the summands on the right hand side of (\ref{eq:semifinal}) vanish outside a finite set of summation indices $(\psi, \pi ^{\infty})$ which depends only on $K,\mathbb V, \mathfrak w, \Xi, z$ and not on $f^{\infty}dg^{\infty}, p, a$. 
\end{thm}

\begin{proof}  Throughout the proof, it will always be understood that the data $K , \mathbb V, \mathfrak w, \Xi, z$ are fixed. Also, since varying $f^{\infty} dg^{\infty}$ is equivalent to varying $f^{\infty}$ while keeping $dg^{\infty}$ fixed, we will omit $dg^{\infty}$ in the notations throughout. We first prove that when $ f^{\infty}$ and $p$ are fixed, (\ref{eq:semifinal}) holds for all sufficiently large $a$ (in a way depending on $f^{\infty}$ and $p$). By (\ref{eq:first expansion}) and Proposition \ref{prop:contribution}, we know that when $a$ is sufficiently large, the LHS of (\ref{eq:semifinal}) is equal to 	$$\sum_{\psi \in \widetilde {\Psi}(G^*)_{\mathbb V}} \sum_{s \in S_{\psi}}  C(\psi, s)   \sum _{\pi^{\infty}\in \widetilde \Pi_{\psi}^{\infty}(G) } \Tr \big( \dot \pi^{\infty} (f^{\infty}) \big)   B(\psi, s_{\psi} s  ,   \pi^{\infty}, p , a) ,$$
if we define $$C(\psi, s) :  = \sum_{\substack{\mathfrak e= (H,\lang H, s_1, \eta) \in \dot{\mathscr E}(G) \\ H \text{ is cuspidal}} } \sum_{\substack{ \psi'\in \widetilde {\Psi}(H) \\  (\mathfrak e, \psi') \mapsto (\psi, s)}} \iota(G,H) m_{\psi'} \abs{\mathcal S_{\psi'}}^{-1} \epsilon_{\psi'}(s_{\psi'}) \epsilon_{\psi}(s_{\psi} s )^{-1} , $$ where the second summation is over $ \psi'\in \widetilde {\Psi}(H)$ such that $(H,\lang H, s_1, \eta, \psi')$ gives rise to $(\psi,s)$ as on p.~36 of \cite{arthurbook}. Now in the definition of $C(\psi, s)$  we can   drop the condition that $H$ is cuspidal in the first summation, for the following reason. If there exists $\psi' \in \widetilde {\Psi}(H)$ such that $(H,\lang H, s_1, \eta, \psi')$ gives rise to $(\psi,s)$, then by the argument in the last paragraph of \cite[p.~196]{kottwitzannarbor},  elliptic maximal tori in $G^*_{\RR}$ (which are anisotropic) must come from $H_{\RR}$ since $\psi_{\infty}$ is Adams--Johnson. It follows that $H_{\RR}$ contains anisotropic maximal tori, and hence $H$ is cuspidal.  
 
	Thus the proof of (\ref{eq:semifinal}) reduces to the proof of the identity $$m_{\psi}\abs{S_{\psi}} ^{-1}  = \sum_{ \mathfrak e= (H,\lang H, s_1, \eta) \in \dot{\mathscr E}(G) } \sum_{\substack{ \psi'\in \widetilde {\Psi}(H) \\  (\mathfrak e, \psi') \mapsto (\psi, s)}} \iota(G,H) m_{\psi'} \abs{\mathcal S_{\psi'}}^{-1} \epsilon_{\psi'}(s_{\psi'}) \epsilon_{\psi}(s_{\psi} s )^{-1} $$ for all $\psi \in \widetilde {\Psi}(G^*)_{\mathbb V}$ and $s \in S_{\psi}$. This step is identical to the corresponding step in the proof of \cite[Thm.~4.0.1]{taibi}. Without the extra complication in the even case (i.e., the integers $m_{\psi'}, m_{\psi}$ being possibly larger than $1$), this step is also given in \cite[\S 10]{kottwitzannarbor}. Both references rely on Arthur's identity $\epsilon _{\psi'} (s_{\psi'}) = \epsilon_{\psi} (s_{\psi}s)$, which is known in our case by \cite[Lem.~4.4.1]{arthurbook}.

Before showing that (\ref{eq:semifinal}) holds for all $a\in \ZZ$, we show that the summands on the right hand side of it vanish outside a finite set of summation indices $(\psi, \pi^{\infty})$ independently of $ f^{\infty} , p,a$. Here $f^{\infty} $ is allowed to range over all $\vartheta^{\infty}$-fixed elements of $\mathcal H(G(\adele_f)\sslash K)_{\QQ}$, $p$ is allowed to range over all primes that are hyperspecial for $K$ and unramified for $f^{\infty}$, and $a$ is allowed to range over all positive integers, not necessarily ``sufficiently large'' with respect to $f^{\infty}$ and $p$ in the previous sense. (Afterwards we will show the stronger finiteness result when $a$ is allowed to range over all integers.)

Since $K$ is completely symmetric, we have $K = \prod_v K_v$ with each $K_v$ a $\theta_G$-stable compact open subgroup of $G(\QQ_v)$. Let $\Sigma_0$ be a finite set of primes containing the set  $\Sigma(\mathcal G^*, \mathcal G, \Xi, z, \mathfrak w , \theta_G)$ from \S \ref{para:trivial pure} such that $ K_v = \mathcal G(\ZZ_v)$ for all $v \notin \Sigma_0$. Now since $f^{\infty} $ is bi-invariant under $K$, any $\pi^\infty =(\pi_v)_v$ appearing in (\ref{eq:semifinal}) such that $\Tr \big( \dot \pi^{\infty} (f^{\infty} dg^{\infty}) \big) \neq 0$ must satisfy the condition that $\pi_v$ is a $\vartheta_v$-orbit of $\mathcal G(\ZZ_v)$-unramified representations for all primes $v \notin \Sigma_0$. By the discussion in \S \ref{para:global packets}, we know that for each $\psi \in \widetilde \Psi(G^*)_{\mathbb V}$, there are only finitely many elements $\pi^{\infty} \in \widetilde \Pi_{\psi}^{\infty}(G)$ satisfying the aforementioned condition, and these elements exist only when the localizations $\psi_v$ of $\psi$ are unramified for all primes $v \notin \Sigma_0$. By our above proof of (\ref{eq:semifinal}), the desired finiteness of the summation range independently of $f^{\infty}, p, a $ follows from the following statements:
\begin{enumerate}
	\item If $(H,\lang H, s,\eta) \in \dot{\mathscr E}(G)$ and $\psi' \in \widetilde{\Psi} (H)$ are such that $\eta \circ \psi'_v$ is unramified for a prime $v  $, then $\psi'_v$ is unramified, and in particular $H_{\QQ_v}$ is unramified.
\item There are only finitely many elements $(H,\lang H, s,\eta) \in \dot{\mathscr E}(G)$ such that  $H_{\QQ_v}$ is unramified for all primes $v \notin \Sigma _{0}$. 
\item Fix $(H,\lang H, s,\eta) \in \dot{\mathscr E}(G)$. For each choice of $(f^{\infty}, p,a)$ (with 
$a\in \ZZ_{\geq 1}$), define $f^H = f^H_{(f^{\infty}, p,a)} \in C^{\infty}_c(H(\adele))$ as in \S \ref{subsec:test functions} (cf.~Lemma \ref{lem:invariance}). Then in the expansion (\ref{eq:spectral expansion}) with respect to the test function $f^H$, the summands vanish outside a finite subset of $\widetilde \Psi(H)$ which is independent of  $(f^{\infty}, p,a)$.  
\end{enumerate}
Now statement (1) follows from Lemma \ref{lem:Kott destab} (3). By the explicit presentation of $(H,\lang H, s, \eta)$ and by Proposition \ref{prop: even TFAE},  statement (2) reduces to the fact that there are only finitely many elements in $\QQ^{\times}/ \QQ^{\times ,2}$ that have even valuations at all primes not in $\Sigma_{0}$. 
For (3), we may assume that $H$ is cuspidal, as otherwise $f^H=0$. Now note that for our given test function $f^H_{\infty}$ on $H(\RR)$, there are only finitely many values of $t\geq 0$ (depending only on $\mathbb V$) that contribute non-trivially to the expansion $S^H_{\disc} (f^H)= \sum_{t \geq 0} S^H_{\disc, t} (f^H)$ (see \S \ref{para:spec exp}) by Lemma \ref{lem:parameter is AJ}. Thus it suffices to show that for a fixed $t$ the summands in (\ref{eq:one t}) with respect to $f^H$ vanish outside a finite subset of $\widetilde \Psi(H)$ independently of $(f^{\infty}, p , a )$. By \cite[Thm.~1.3.2, Lem.~3.3.1]{arthurbook}, we need only check that $f^H$ has a \emph{Hecke type}\index[n]{Hecke type} (see \cite[p.~129]{arthurbook}) that is independent of $(f^{\infty}, p,a)$. Since $f^H_{\infty}$ is independent of $(f^{\infty}, p,a)$, this amounts to the existence of a compact open subgroup $K_H \subset H(\adele_f)$ such that $f^{H,\infty} = f^{H,p,\infty} f^H_p$ can be chosen to be bi-invariant under $K_H$ independently of $(f^{\infty}, p ,a )$. 

We now construct $K_H$. Let $S$ be the set of primes $v$ such that either $G_{\QQ_v}$ is ramified or $H_{\QQ_v}$ is ramified. For each prime $v \notin S$, we pick a hyperspecial subgroup $U_v \subset H(\QQ_v)$, in such a way that $\prod_{v\notin S} U_v$ is a compact open subgroup of $H(\adele_f^S)$. By the two main theorems of \cite[\S 6]{arthuronlocal} (cf.~the proof of \cite[Lem.~3.3.1]{arthurbook}), we know that for every prime $v$ there is a compact open subgroup $V_v \subset H(\QQ_v)$ with the property that every $K_v$-bi-invariant function in $C^{\infty}_c(G(\QQ_v))$ has a Langlands--Shelstad transfer in $C^{\infty}_c(H(\QQ_v))$ that is bi-invariant under $V_v$. By the Fundamental Lemma for the full unramified Hecke algebra proved by Hales \cite{halesFL} (which is conditional on the Fundamental Lemma for the unit as recalled in Theorem \ref{thm:LS}),  for every prime $v\notin S$ we may and shall take $V_v$ to be $U_v$. We take $K_H$ to be the product of $V_v$ over all primes, which is a compact open subgroup of $H(\adele_f)$. Now for every choice of $(f^{\infty}, p , a)$, the corresponding function $f^{H,\infty}$ is non-zero only when $p \notin S$, and in the latter case we can choose $f^{H,p,\infty}$ to be bi-invariant under $\prod_{v \neq p} V_v$, and choose $f^H_{p}$ to be bi-invariant under $U_p = V_p$, as is clear from the construction in \S \ref{subsec:test functions}. It follows that $f^{H,\infty}$ is bi-invariant under $K_H$ as desired. 

We have proved that the summands on the RHS of (\ref{eq:semifinal}) vanish outside a finite set of summation indices $(\psi, \pi^{\infty})$ independently of $f^{\infty}, p , a\in \ZZ_{\geq 1}$. Note that the same holds even if $a$ is allowed to range over all integers. This is because each summand, as a function in $a \in \ZZ$, is of the form $  \sum _{i=1}^k  c_i z_i^a$, where $c_i, z_i\in \CC$ are independent of $a$. Thus if a summand is zero for all $a \in \ZZ_{\geq 1}$, then it is zero for all $a \in \ZZ$. 

To finish the proof it remains to show that (\ref{eq:semifinal}) holds for all $a \in \ZZ$. By what we have already shown, for each fixed $f^{\infty}$ and $p$, the right hand side of  (\ref{eq:semifinal}) is of the form $  \sum _{i=1}^k  c_i z_i^a$, where $c_i, z_i\in \CC$ are independent of $a$. It is easy to see that the left hand side is also of a similar form as a function in $a$. Hence the identity (\ref{eq:semifinal}) holding for all sufficiently large $a $ implies that it holds for all $a \in \ZZ$. 
\end{proof}	
	\begin{rem}
		A form of Theorem \ref{thm:final} is conjectured in \cite[(10.2)]{kottwitzannarbor}. 
	\end{rem}

\section{The Hasse--Weil zeta function} \label{subsec:final}
We deduce an immediate consequence of Theorem \ref{thm:final} concerning the Hasse--Weil zeta function associated to $  \icoh^* (\overline{ \Sh _K},\mathbb V) ) $. 
\begin{defn}\label{defn:WD rep} Let $p$ be a prime number. 
	Let $\mathcal M$ be a finite-dimensional representation over $\CC$ of $\WD_p$. \begin{enumerate}
		\item We view $\mathcal M$ as a Weil--Deligne representation of $W_{\QQ_p}$, and define its local $L$-factor\index[n]{local $L$-factor} at $p$ in the usual way as in \cite{tatebackground}, denoted by 
		$L_p (\mathcal M ,s)$.\index{$L_p(\mathcal M,s)$} In particular, when the representation is unramified (i.e.~trivial on $\SU_2(\RR)$ and on the inertia subgroup), we have $$  L_p ( \mathcal M ,s) : = \big (\exp ( \sum_{ a\geq 1} \Tr (\Frob_p^a  \mid \mathcal M) p^{-as}/a ) \big ) ^{-1} = \det (1 - \Frob _p p^{-s} \mid  \mathcal M) ^{-1}$$  where $\Frob_p$ is any lift of geometric Frobenius in $W_{\QQ_p}$.
		\item For any  real number $\alpha$, we define $\norm{\cdot}^{\alpha} \mathcal M$\index{$\norm{\cdot}^{\alpha} \mathcal M$} to be the twist of $\mathcal M$ by the quasi-character $\norm{\cdot}^{\alpha}$ on $W_{\QQ_p}$. Here the normalization is such that $\norm{\Frob_p} = p^{-1}$. 
		\item For any positive integer $n$, we define  $\mathcal M ^{(n)}$\index{$\mathcal M ^{(n)}$} to be $$ \norm{\cdot}^{(n-1)/2} \mathcal M  \oplus \norm{\cdot}^{(n-3)/2} \mathcal M \oplus \cdots \oplus \norm{\cdot}^{(1-n)/2} \mathcal M .$$
	\end{enumerate} 
\end{defn}
\begin{rem}
	We have
	 $$ L_p (\norm{\cdot}^{\alpha}\mathcal M, s) = L_p(\mathcal M, \alpha + s), $$ and  $$L_p(\mathcal M^{(n)}, s) = L_p(\mathcal M, s + \frac{n-1}{2}) L_p(\mathcal M, s + \frac{n-3}{2}) \cdots L_p(\mathcal M, s + \frac{1-n}{2}). $$ 
\end{rem}
\subsection{}
 \label{defn:mathcal M}
Let $\psi \in \widetilde{\Psi}_2 (G^*)$. Recall from \S \ref{subsubsec:substitutes} that $S_{\psi}$ is a finite power of $\ZZ/2\ZZ$. Let $\nu: S_{\psi} \to \CC^{\times}$ be a character. Let $\mathcal V = \CC^N$ be the vector space used to define $\GL_N(\CC)$. The group $S_{\psi}$ acts on $\mathcal V$ via 
$$ S_{\psi} \subset \lang G^* \xrightarrow{\stan_{G^*}} \GL_N(\CC) .$$ Let $\mathcal V_{\nu} \subset \mathcal V$ be the $\nu$-eigenspace for this action. For each prime number $p$, consider the action of $\WD_p$ on $\mathcal V$ defined by
$$ \WD_p \xrightarrow{\varphi _{\psi_p}}  \lang G ^* \xrightarrow{\stan_{G^*}} \GL_N(\CC). $$
(Note that in the even case, although ${\psi _p}$ is not always well defined up to $\widehat{ {G^*}}$-conjugacy, the above composite map is always well defined up to $\GL_N(\CC)$-conjugacy.)
   This action commutes with the action of $S_{\psi}$, so we have an action of $\WD_p$ on $\mathcal V_{\nu}$. We denote this $\WD_p$-representation on $\mathcal V_{\nu}$ by $\mathcal V_p (\psi , \nu)$\index{$\mathcal V_p(\psi,\nu)$}. Define \index{$\mathcal M_p(\psi,\nu)$} $$\mathcal M_p(\psi,\nu)  : = \norm{\cdot } ^{-n/2}  \mathcal V_p(\psi ,\nu),$$ where $n = d-2$ is the dimension of the Shimura variety $\Sh_K$. The motivation for this twist is to account for the factor $p^{an/2}$ in the definition of $A(\psi,s,p,a)$ in Proposition \ref{prop:contribution}.
   
   We can  classify the $\WD_p$-representations $\mathcal V_p(\psi,\nu)$ and $\mathcal M_p(\psi,\nu)$ more explicitly in terms of the local Langlands correspondence for general linear groups, as follows. 
   Since $\psi \in \widetilde \Pi_2(G^*)$, it is of the form $$\psi = \boxplus_{i\in I} \pi_i[d_i],$$ where each $\pi_i$ is a self-dual cuspidal automorphic representation of $\GL_{N_i}$, $d_i$ are positive integers such that $\sum N_i d_i = N$, and the pairs $(\pi_i,d_i)$ are distinct.  For any irreducible admissible representation $\pi_p$ of a general linear group over $\QQ_p$, we write $\mathcal V(\pi_p)$\index{$\mathcal V(\pi_p)$} for the representation of $\WD_p$ corresponding to $\pi_p$ under the local Langlands correspondence.  By the explicit description of $S_{\psi}$ in  \cite[(1.4.9)]{arthurbook} (the notation $N_i$ in \textit{loc.~cit.~}corresponding to our $N_id_i$), we have the following classification of $\mathcal V_p(\psi, \nu)$.
  
   \begin{enumerate}
   	\item The odd case. We have $S_{\psi} \cong \set{\pm 1}^I$. Set $I_{\nu} = \set{i}$ if $\nu$ is given by the $i$-th projection $\set{\pm 1}^I \to \set{\pm 1}$ for some $i\in I$. Otherwise, set $I_{\nu} = \emptyset$. 
   	\item The even case. Let $I_{\mathrm{odd}}$ be the set of $i\in I$ such that $\widehat {G_{\pi_i}}$ is odd orthogonal (or equivalently, $N_i d_i$ is odd), and let $I_{\mathrm{even}} = I - I_{\mathrm{odd}}$. We have $S_\psi \cong \set{\pm 1}^{I_{\mathrm{even}}} \times \set{\pm 1}^{I_{\mathrm{odd}}, \prime}$, where as usual we write $\set{\pm 1}^{J,\prime}$ for the kernel of the  map $\set{\pm 1}^{J} \to \set{\pm 1}, (z_j)_{j} \mapsto \prod_{j} z_j$ for any finite set $J$.  Suppose $\nu$ is the restriction to $\set{\pm 1}^{I_{\mathrm{even}}} \times \set{\pm 1}^{I_{\mathrm{odd}}, \prime} $ of the $i$-th projection $\set{\pm 1}^I \to \set{\pm 1}$ for some $i\in I$. Then we set $I_{\nu} = \set{i}$ unless $i \in I_{\mathrm
   {odd}}$ and $
   	\abs{I_{\mathrm
   			{odd}}} =2$, in which case we set $I_{\nu} = I_{\mathrm
   		{odd}}$. In all the other cases, set $I_{\nu} = \emptyset$. 
   \end{enumerate}
    Then in both the odd and even cases we have $$\mathcal V_{p}(\psi,\nu) = \bigoplus_{i\in I_{\nu}} \mathcal V(\pi_{i,p})^{(d_i)} $$ for all $p$.

   For any finite set $S$ of prime numbers, we define \index{$L^S(\mathcal M(\psi , \nu) ,s) $}
\begin{align*}
   L^S(\mathcal M(\psi , \nu) ,s)  : = \prod _{p\notin S} L_p (\mathcal M_p (\psi,\nu)  ,s) ,
\end{align*}
where $\mathcal M(\psi ,\nu)$ is just a formal symbol, and the product is over all prime numbers $p \notin S$. By the previous classification, $  L^S(\mathcal M(\psi , \nu) ,s)$ is nothing but a finite product of the $S$-partial standard $L$-functions associated to automorphic representations of general linear groups with some shifting in the variable $s$. Therefore the infinite product defining $L^S(\mathcal M(\psi , \nu) ,s)$ converges absolutely in some right half plane and continues to a meromorphic function in $s$ over the whole $\CC$. 
 Specifically, letting $I_{\nu} \subset I$ be as above, we have 
$$ L^S(\mathcal M(\psi , \nu) ,s)  = \prod_{i\in I_\nu} \prod_{j=0}^{d_i -1} L^S(\pi_i, s- \frac{n}{2} + \frac{d_i - 1}{2} -j).$$  
\subsection{}
Let $\mathbb V$ be as in \S \ref{para:two V}, and fix a neat compact open subgroup $K$ of $G(\adele_f)$ assumed to be  completely symmetric (see Definition \ref{defn:compleltely symm}). In the following we fix an isomorphism $\overline \QQ_{\ell} \cong \CC$. For each prime $p$ unequal to $\ell$ and  unramified for the $\Gamma_{\QQ}$-module $\icoh^* (\overline{ \Sh _K},\mathbb V) $ over $\overline \QQ_{\ell}$ (that is, unramified for each degree $*$), we define \index{$\zeta_p(\icoh^* (\overline{ \Sh _K},\mathbb V) ,s )$}
$$\zeta_p(\icoh^* (\overline{ \Sh _K},\mathbb V) ,s ) : =\prod_{j} \det  (1 - \Frob _p p^{-s} \mid  \icoh^j (\overline{ \Sh _K},\mathbb V)  ) ^{(-1)^{j+1}}, $$ where on the right hand side $\icoh^j (\overline{ \Sh _K},\mathbb V) $ is viewed as a vector space over $\CC$. (The product is finite, since $\icoh^j(\overline {\Sh_K},\mathbb V)$ is non-zero only for $0\leq j \leq 2\dim \Sh_K$.) This is the Euler factor at $p$ of the Hasse--Weil zeta function of $\icoh^* (\overline{ \Sh _K},\mathbb V)$, and it is a rational function in $p^s$. If $S$ is a finite set of primes  containing $\ell$ such that every prime $p$ outside $S$ is unramified for $\icoh^* (\overline{ \Sh _K},\mathbb V) $, then we define the formal Dirichlet series \index{$\zeta^S(\icoh^* (\overline{ \Sh _K},\mathbb V) ,s )$}
$$ \zeta^S(\icoh^* (\overline{ \Sh _K},\mathbb V) ,s )  : = \prod_{p\notin S} \zeta_p(\icoh^* (\overline{ \Sh _K},\mathbb V) ,s ) .$$ This is the $S$-partial  Hasse--Weil zeta function of $\icoh^* (\overline{ \Sh _K},\mathbb V)$. 
\begin{thm}\label{thm:whole zeta}Assume Hypothesis \ref{hypo}.  Let $S$ be the set  $\Sigma_{\bad}(K,1_K)$ as in (\ref{eq:Sigma bad}), applied to $f^{\infty} = 1_K$. For all primes $p \notin S$ we have
	\begin{multline*}
		\log \zeta_p (\icoh^* (\overline{ \Sh _K},\mathbb V)  ,s)  = \sum _{\psi \in \widetilde{\Psi} (G^*)_{\mathbb V}} \sum _{ \pi^{\infty} \in \widetilde{\Pi} _{\psi} ^{\infty} (G)} \dim  (\dot \pi^{\infty}) ^K  \\ \cdot  \sum _ {\nu \in S_{\psi} ^D}  m (\pi^{\infty}, \psi, \nu) (-1) ^{n} \nu (s_{\psi}) \log L_p (\mathcal M _p (\psi ,\nu) , s)
	\end{multline*}
	with notations explained below.
	\begin{itemize}
		\item  The set $\widetilde{\Psi} (G^*) _{\mathbb V}$ is as in Definition \ref{defn:Psi_V}.
		\item The number $m_{\psi} \in \set{1,2}$ is defined in (\ref{eq:defn of m psi}). In the odd case it is always $1$.   
		\item For each $\psi\in \widetilde \Psi(G^*)_{\mathbb V}$, $\pi^{\infty} = (\pi_v)_v \in \widetilde \Pi_{\psi}^{\infty}(G)$, and $\nu \in S^D_{\psi}$, the number $m(\pi^{\infty} , \psi, \nu) \in \set{0,1}$ is defined as follows. Fix an arbitrary $\pi_{\infty} \in \Pi^ {\AJ} _{\psi_{\infty} } (G_{\infty})$. On $S_{\psi} $ we have the character: $$ s\longmapsto  \nu( s)^{-1} \epsilon_{\psi} ( s)  \lprod{ s, \lambda_{\pi _{\infty}}}   \lprod{s,\pi_{\infty}}_{\AJT}\prod_{v \neq \infty}\lprod{ s,\pi_v} , $$  where $\lprod{s, \lambda_{\pi _\infty}}$ is defined on p.~195 of \cite{kottwitzannarbor}, and $\epsilon_{\psi}$ is as in (\ref{eq:symp root number}). We define $m(\pi^{\infty} ,\psi, \nu)$ to be $1$ if this character is trivial and $0$ otherwise. 
		\item The number $\nu(s_{\psi})$ is $1$ or $-1$ since $s_{\psi}^2 =1$; see (\ref{eq:defn of s psi}) for the definition of $s_{\psi} \in S_{\psi}$. 
	\end{itemize}
	In particular, we have
		\begin{multline*}
		\log \zeta^S (\icoh^* (\overline{ \Sh _K},\mathbb V) )  ,s)  = \sum _{\psi \in \widetilde{\Psi} (G^*)_{\mathbb V}} \sum _{ \pi^{\infty} \in \widetilde{\Pi} _{\psi} ^{\infty} (G)} \dim  (\dot \pi^{\infty}) ^K  \\ \cdot  \sum _ {\nu \in S_{\psi} ^D}  m (\pi^{\infty}, \psi, \nu) (-1) ^{n} \nu (s_{\psi}) \log L^S (\mathcal M(\psi ,\nu) , s),
	\end{multline*} for $s$ in some right half plane. This expresses 
$  \zeta^S (\icoh^* (\overline{ \Sh _K},\mathbb V) )  ,s)$ as a finite product of integral powers of $L^S (\mathcal M(\psi ,\nu) , s) $ for various $\psi$ and $\nu$, and gives a meromorphic continuation of $\zeta^S (\icoh^* (\overline{ \Sh _K},\mathbb V) )  ,s)$ to the whole $\CC$. 
\end{thm}
\begin{proof}
	This immediately follows from Theorem \ref{thm:final} applied to $f^{\infty} dg^{\infty} = \vol (K) ^{-1} 1_{K} dg^{\infty}$. 
\end{proof}
\begin{rem}\label{rem:more general K}
	Theorem \ref{thm:whole zeta} can be slightly generalized as follows. We can replace the completely symmetric $K$ by a more general neat compact open subgroup $K'$ of $G(\adele_f)$ stable under $\vartheta^{\infty}$, and replace $S$ by a sufficiently large, finite set of primes depending on $K'$. For the proof of this generalization, we can take a completely symmetric $K$ contained in $K'$ (see Remark \ref{rem:completely sym}), and apply Theorem \ref{thm:final} to $K$ and the element $\vol (K') ^{-1} 1_{K'} dg^{\infty}$ of $\mathcal H(G(\adele_f)\sslash K)_{\QQ}$.
\end{rem}
\section{More refined decompositions}\label{subsec:refined decomp}
\subsection{}\label{para:Rama}
Throughout we assume the setting of Theorem \ref{thm:final}. In particular we fix $\mathbb V$ as in \S \ref{para:two V} and assume that $K$ is completely symmetric. By Remark \ref{rem:completely sym}, this  assumption on $K$ is harmless for the understanding of $ \icoh^* (\overline{\Sh_K} , \mathbb V)$ for general $K$.
We also keep assuming Hypothesis \ref{hypo} without further mentioning.

In the sequel, we write $\icoh^j$ for $\icoh^j(\overline{\Sh_K}, \mathbb V)$. This is non-zero only for $0 \leq j \leq 2 \dim \Sh_K = 2n$.  We fix an isomorphism $\CC \cong \overline \QQ_{\ell}$, and do not distinguish between representations over $\CC$ and over $\overline \QQ_{\ell}$, nor between $\CC$-valued functions and $\overline \QQ_{\ell}$-valued functions. Nevertheless, we remember that $\Gamma_{\QQ}$-representations on vector spaces over $\CC \cong \overline \QQ_{\ell}$ are always continuous with respect to the $\ell$-adic topology. Let $\mathcal H_K : = \mathcal H(G(\adele_f)\sslash K)_{\QQ}\otimes_{\QQ} \overline \QQ_{\ell}$.\index{$\mathcal H_K$}

We shall apply Theorem \ref{thm:final} to obtain information about more refined decompositions of $ \icoh^j$ as a $\mathcal H_K \times \Gamma_{\QQ}$-module. Ideally, one would like to decompose $ \icoh^j$ into $\pi^{\infty}$-isotypic components $\icoh^j[\pi^{\infty}]$, for $\pi^{\infty}$ running through all irreducible admissible representations of $G(\adele_f)$, and to describe the Galois module structure of each $\icoh^j[\pi^\infty]$. However there are the following two technical obstructions (which can be overcome in the odd case, as we shall eventually see):
\begin{enumerate}
	\item In the even case, each element of a global packet $\widetilde \Pi _{\psi}^{\infty} (G)$ as in \S \ref{para:global packets} does not give rise to a well-defined isomorphism class of $G(\adele
	_f)$-representations, but rather it gives rise to an $\vartheta^{\infty}$-orbit of such isomorphism classes. This obstruction is intrinsic in the endoscopic classification in \cite{arthurbook} and \cite{taibi}. As a result, we are only able to describe the Galois module structure for the direct sum of  $\icoh^j [\pi^{\infty}]$ over all $\pi^{\infty}$ in the same $\vartheta^{\infty}$-orbit, as opposed to each individual $\icoh^j[\pi^{\infty}]$. We mention that in the even case the need to assume that $\mathbb V$ is of the special form as in \S \ref{para:two V} also stems from the same obstruction in the endoscopic classification.
	\item In both the odd and even cases, for a general $\psi \in \widetilde{\Psi}_2(G^*)$ it is not known (although expected, as would follow from the Ramanujan--Petersson conjecture for general linear groups) that the  localization $\psi_v$ is bounded on $\WD_v$ for all finite places $v$. As a result of this drawback, the $G(\QQ_v)$-representations in the local packet $\widetilde \Pi _{\psi_v} (G_v)$ are not known to be irreducible. 
\end{enumerate}

We make several comments on (2). Recall that for any $\psi \in \widetilde \Psi_2(G^*)$, the localization $\psi_v $ of $\psi$ lies in $\Psi^+_{\uni}(G^*_v)$. For arbitrary $\psi_v \in \Psi^+_{\uni}(G^*_v)$ (which may not arise as the localization of a global parameter), Arthur has conjectured that the $G^*(\QQ_v)$-representations in the local packet $\widetilde \Pi _{\psi_v} (G_v^*)$ are irreducible. See \cite[\S\S 1.3--1.5, Conjecture 8.3.1]{arthurbook} for more details. This conjecture would imply that the $G(\QQ_v)$-representations in $\widetilde \Pi _{\psi_v} (G_v)$ are irreducible. In the even case, if $\psi$ is ``trivial on $\SL_2$'' in the sense that $\psi = \boxplus _i \pi_i [d_i]$ with all $d_i$ equal to $1$, then this conjecture has been proved\footnote{We thank the referee for pointing this out to us.} by B.~Xu \cite[Appendix]{Xu}. 

We can sometimes circumvent Arthur's conjecture by using known cases of the Ramanujan--Petersson conjecture. To wit, assume that $\psi = \boxplus _i \pi_i [d_i] \in \widetilde \Psi_2(G^*)$ satisfies the following condition: 
\begin{enumerate}
	\item [($\dagger$)]  The constituents $\pi_i$, which we recall are self-dual unitary cuspidal automorphic representations of $\GL_{N_i}$ over $\QQ$, are all regular C-algebraic or regular L-algebraic.\footnote{The meaning of ``regular C-algebraic'' here is that the infinitesimal character of $\pi_{i,\infty}$ should be regular C-algebraic as in \S \ref{subsubsec:arch para}. In the more classical literature this condition is usually referred to as ``regular algebraic''. The meaning of ``regular L-algebraic'' is that the infinitesimal character of $\pi_{i,\infty}$ should be the Weyl orbit of a regular \emph{integral} character of a maximal torus. The two notions are the same for $\GL_{N_i}$ precisely when $N_i$ is odd.} 
\end{enumerate} Then we  know that $\psi_v$ is bounded on $\WD_v$ for all finite places $v$, since the Ramanujan--Petersson conjecture for $\pi_i$ is known at all $v$. Indeed, let $\pi_i'$ be the twist of $\pi_i$ by $\GL_{N_i}(\adele) \to \RR^\times, g \mapsto \abs{\det(g)}^{1/2}$ if $\pi_i$ is $L$-algebraic and $N_i$ is even, and let $\pi_i' =\pi_i$ in all the other cases. Then $\pi_i'$ regular C-algebraic, cuspidal, and essentially self-dual, and by the work of a long list of authors culminating in Caraiani's  work \cite[Thm.~1.2]{Caraiani}, $(\pi_i')_v$ is essentially tempered for all finite places $v$ (cf.~\cite[Thm.~2.1.1]{BLGGT14} for the essentially self-dual case, as well as a list of references). It then follows that $\pi_{i,v}$ is tempered for all finite places $v$,  for instance by the unitarity of the central character. In conclusion, if $\psi$ satisfies ($\dagger$), we know that all the $G(\QQ_v)$-representations  in $\widetilde \Pi _{\psi_v} (G_v)$ are irreducible for all finite places $v$. 

\subsection{}\label{para:algebraicity}Fix $\psi = \boxplus_i \pi_i [d_i] \in \widetilde \Psi(G^*)_{\mathbb V}$. We investigate when $\psi $ satisfies ($\dagger$) in \S \ref{para:Rama}. Let $N_i$ be the integer such that $\pi_i$ is a self-dual cuspidal automorphic representation of $\GL_{N_i}$. 

For any positive integer $r$, let $T_r$ denote the diagonal matrix in $\GL_{r}$, and identify $X^*(T_r)$ with $\ZZ^r$
 as usual. The half sum of the standard system of positive roots is $ (\frac{r-1}{2}, \frac{r-3}{2},\cdots, \frac{1-r}{2})$. Hence an infinitesimal character $\mu \in (X^*(T_r)\otimes \CC)/\mathfrak S_r = \CC^r / \mathfrak S_r$ for $\GL_r$ is C-algebraic if and only if it lies in $\ZZ^r/ \mathfrak S_r$ when $r$ is odd, and lies in $((\frac{1}{2} ,\cdots, \frac{1}{2} ) + \ZZ^r)/\mathfrak S_r$ when $r$ is even. 
 
 When $G$ is an odd special orthogonal group, we choose a Borel pair in $G_{\CC}$, and identify the based root datum with $\mathrm{BRD}(\mathsf B_m)$ (see \S \ref{subsubsec:standard RD}). Let $\rho$ be the half sum of the positive roots, and let $\lambda$ be the highest weight of $\mathbb V^*$. Thus $$\lambda  = x_1 \epsilon_1 + \cdots + x_m \epsilon_m$$ with $x_i \in \ZZ$ satisfying $x_1 \geq x_2 \geq  \cdots \geq x_m \geq 0$, and 
 $$ \rho  = (m-1) \epsilon_1 + (m-2) \epsilon_2 + \cdots + \epsilon_{m-1}+ \frac{1}{2}(\epsilon_1 +\cdots + \epsilon_m). $$ Under $\stan_G : \widehat G \to \widehat{\GL_{2m}}$, the infinitesimal character $\lambda + \rho$ of $\psi_\infty$ gives rise to the infinitesimal character 
 $$ (m-1 + \frac{1}{2 } + x_1, m-2 + \frac{1}{2} + x_2,\cdots, \frac{1}{2}+ x_m, - (\frac{1}{2}+ x_m), \cdots,   -  (m-1 + \frac{1}{2 } + x_1) )  $$ for $\GL_{2m}$. 
 We see that it is always regular. It immediately follows that the infinitesimal character of each $\pi_{i,\infty}$ must be regular. Moreover, if $d_i$ is odd, then the infinitesimal character of $\pi_{i,\infty}$ must lie in $((\frac{1}{2},\cdots,\frac{1}{2}) + \ZZ^{N_i})/\mathfrak S_{N_i}$, so $\pi_i$ is $C$-algebraic if and only if  $N_i$ is even. In fact this is  automatic, since for $d_i$ odd $\widehat {G_{\pi_i}}$ must be  symplectic; see (\ref{eq:sign pi}). If $d_i$ is even, then the infinitesimal character of $\pi_{i,\infty}$ must lie in $  \ZZ^{N_i}/\mathfrak S_{N_i}$, so $\pi_i$ is $L$-algebraic. We conclude that ($\dagger$) holds automatically. 
 
 When $G$ is an even special orthogonal group,   we choose a Borel pair in $G_{\CC}$, and identify the based root datum with $\mathrm{BRD}(\mathsf D_m)$. Let $\rho$ be the half sum of the positive roots, and let $\lambda_0$ be the highest weight of $\mathbb V^*_{\CC}$ (resp.~$\mathbb V^*_{0,\CC}$) in the even symmetric case (resp.~the even composite case). (See \S \ref{para:two V} for this dichotomy.) Thus $$\lambda_0  = x_1 \epsilon_1 + \cdots + x_m \epsilon_m$$ with $x_i \in \ZZ$ satisfying $x_1 \geq x_2 \geq \cdots \geq x_{m-1} \geq \abs{x_m}$, and 
$$ \rho  = (m-1) \epsilon_1 + (m-2) \epsilon_2 + \cdots + \epsilon_{m-1}. $$ We have  $x_m =0$ in the even symmetric case, and $x_m \neq 0$ in the  even composite case. Under $\stan_G : \widehat G \to \widehat{\GL_{2m}}$, the infinitesimal character $\lambda_0 + \rho$ gives rise to the infinitesimal character 
$$ (m-1  + x_1, m-2 +   x_2,\cdots, x_m, - x_m, \cdots,   -  (m-1 + x_1) )  $$ of $\GL_{2m}$.  We see that in the even symmetric case, we cannot guarantee that the infinitesimal characters of $\pi_{i,\infty}$ are regular, whereas in the even composite case this is guaranteed.
Moreover, by a similar analysis as in the odd case,  $\pi_i$ is $C$-algebraic if $d_i$ and $N_i$ are even, and $\pi_i$ is $L$-algebraic if $d_i$ is odd. Now when $d_i$ is even, $\widehat {G_{\pi_i}}$ must be symplectic, so $N_i$ is automatically even. We conclude that ($\dagger$) automatically holds in the even composite case.

We summarize the above discussion in \S \ref{para:Rama} and \S \ref{para:algebraicity} in the following lemma.
\begin{lem}\label{lem:summary}
Let $\psi = \boxplus_{i} \pi_i[d_i] \in \widetilde \Psi(G^*)_{\mathbb V}$.	In the odd case and the even composite case, the $G(\QQ_v)$-representations in $\widetilde \Pi_{\psi_v} (G_v)$ are irreducible for all finite places $v$. If we are in the even symmetric case and all $d_i$ are equal to $1$, then the same conclusion also holds.  
	\qed
\end{lem} 
\subsection{}\label{para:Hecke syst}
As in \cite[\S 3.4]{arthurbook} we define a  set $\widetilde {\mathcal C}_{\adele} (G^*)$\index{$\widetilde {\mathcal C}_{\adele} (G^*)$} of   \emph{Hecke systems} for $G^*$ modulo a certain equivalence relation. Here a Hecke system\index[n]{Hecke system} for $G^*$ is a family $(c_v)_v$, where $v$ runs through all primes outside an unspecified finite set of primes containing all the ramified primes for $G^*$, and each $c_v$ is a semi-simple conjugacy class in $\lang G^*_v$ (where we take $\lang G^*_v$ to be $\widehat{ {G^*}} \rtimes \Gal(\QQ_v^{\ur}/\QQ_v)$ here for convenience) whose projection to $\Gal(\QQ_v^{\ur}/\QQ_v)$ is the Frobenius. Two such families $(c_v)_v$ and $(d_v)_v$ are said to be \emph{equivalent} and thus define the same element of $\widetilde {\mathcal C}_{\adele} (G^*)$, if for almost all $v$, the conjugacy classes $c_v$ and $d_v$ are in the same $\Aut(\lang G^*_v)$-orbit, or equivalently, the images of $c_v$ and $d_v$ under $\lang G^*_v \to \lang G^* \xrightarrow{\stan_{ G^*}} \GL_N(\CC)$ are conjugate. (The equivalence of the two conditions follows easily from the description of $\Aut(\lang G^*_v)$ in Remark \ref{rem:taut} and the fact that two elements of $\mathrm{O}_N(\CC)$ are conjugate if and only if they are conjugate in $\GL_N(\CC)$.)

Recall that as a fundamental construction in \cite{arthurbook}, we have a canonical injection 
\begin{align}\label{eq:inj to Hecke syst}
	\widetilde{  \Psi} (G^*)  &  \To  \widetilde {\mathcal C} _{\adele} (G^*)  \\ \nonumber \psi & \longmapsto c(\psi) 
\end{align}
whose well-definedness is guaranteed by \cite[Thm.~1.3.2, Thm.~1.4.1]{arthurbook}. This map has the following characterization: Let $\psi \in 	\widetilde{  \Psi} (G^*) $, and for almost all primes $v$ for which $\psi_v$ is unramified, denote by $\pi_v$ the unique element of $\widetilde \Pi_{\psi_v}(G_v)$ that is a $\vartheta_v$-orbit of $\mathcal G(\ZZ_v)$-unramified representations (see \S \ref{para:global packets}). Then for almost all $v$, for every $\dot \pi_v \in \pi_v$ the Satake parameter of the $\mathcal H(G(\QQ_v) \sslash \mathcal G(\ZZ_v))$-module $\dot \pi_v ^{\mathcal G(\ZZ_v)}$ (which is $1$-dimensional over $\CC$, cf.~Lemma \ref{lem:parameter is unramified})  belongs to the $\Aut(\lang G^*_v)$-orbit of the component of $c(\psi)$ at $v$.

As in Definition \ref{defn:compleltely symm}, we have $K = \prod_v K_v$. We have a canonical (finite) direct sum decomposition of $\mathcal H_K \times \Gamma_{\QQ}$-modules:
$$ \icoh^j  = \bigoplus_{c \in \widetilde {\mathcal C} _{\adele} (G^*)} \icoh^j_c ,$$ where $\icoh^j_c$ is characterized by the property that for almost all primes $v$ for which $K_v$ is hyperspecial, the action of the unramified Hecke algebra $\mathcal H(G(\QQ_v) \sslash K_v)$ on $\icoh^j_c$ is via  characters which correspond under the Satake isomorphism   to elements of the $\Aut (\lang G^*)$-orbit of the component of $c$ at $v$. 

We denote by  $\Irr$  \index{$\mathrm{Irr}(G(\adele_f))$} the set of isomorphism classes of irreducible admissible representations of $G(\adele_f)$ (over $\CC \cong \overline \QQ_{\ell}$). 
For each $c \in   \widetilde{\mathcal C} _{\adele} (G^*)$ and $\tau \in \Irr$,  let $$ W^j_c(\tau) : = \Hom_{ \mathcal H_K} (\tau^K, \icoh^j_c). $$
We then have direct sum decompositions of  $ \mathcal H_K \times \Gamma_{\QQ}$-modules
\begin{align} \nonumber
\icoh^j_c & = \bigoplus_{\tau \in \Irr} \tau^K \otimes_{\overline \QQ_{\ell}} W^j_c(\tau), \\ \label{eq:complete decomp}
\icoh^j  & = \bigoplus_{c \in \widetilde {\mathcal C} _{\adele} (G^*)}  \bigoplus_{\tau \in \Irr} \tau^K \otimes_{\overline \QQ_{\ell}} W^j_c(\tau),
\end{align} 
		where on the right hand sides $\mathcal H_K $ acts on $\tau^K$ and $\Gamma_{\QQ}$ acts on $W^j_c(\tau)$. Here we have used the fact that $\icoh^j$ is a semi-simple $\mathcal H_K$-module, which follows from the ``Matsushima formula'' for $L^2$-cohomology \cite[Thm.~4.5]{BorelCasselman} and Zucker's conjecture comparing $L^2$-cohomology with intersection cohomology, proved by Looijenga \cite{Loo}, Saper--Stern \cite{SaperStern}, and Looijenga--Rapoport \cite{LooRap}.

 	\begin{thm}\label{thm:decomp c}Assume Hypothesis \ref{hypo}. Let $c \in \widetilde {\mathcal C} _{\adele} (G^*)$. The following statements hold.
		\begin{enumerate} 
			\item If $c$ is not in the image of  $\widetilde{  \Psi} (G^*)_{\mathbb V}$ under the map (\ref{eq:inj to Hecke syst}), then $$\icoh^j_c  = 0 $$ for all $j$.  
			\item Assume that $c = c(\psi)$ for $\psi \in \widetilde{  \Psi} (G^*)_{\mathbb V}$. The for almost all primes $p$ and all integers $a$  we have
			\begin{multline}\label{eq:final 1}
				\sum_j (-1) ^j \Tr  (\Frob_p^a \mid  \icoh^j_c )\\ = m_{\psi}   \sum _{ \pi^{\infty} \in \widetilde{\Pi} _{\psi} ^{\infty} (G)}  \dim (\dot \pi^{\infty})^K  \sum _ {\nu \in S_{\psi} ^D}  m (\pi^{\infty}, \psi,  \nu) (-1) ^{n} \nu (s_{\psi}) \Tr (\Frob_p^a \mid  \mathcal M_p (\psi ,\nu)),
			\end{multline}
			where the terms on the right hand side are defined in the same way as in Theorem \ref{thm:whole zeta}, with $\mathcal M_p(\psi,\nu)$ defined in \S \ref{defn:mathcal M}. 
			\item  Keep the assumption in (2), and assume that $\icoh^j_c \neq 0$ for some $j$. Write $\psi = \boxplus_{i\in I} \pi_i [d_i]$. Then for each $i\in I$ and for almost all primes $p$, $\pi_{i,p}$ is tempered.\footnote{We thank the anonymous referee for suggesting this result to us.}
\item  Keep the assumption in (2), and assume that the conclusion of Lemma \ref{lem:summary} holds for $\psi$. Thus each $\pi^{\infty} \in \widetilde \Pi_{\psi}^{\infty}(G)$ determines a $\vartheta^{\infty}$-orbit $[\pi^{\infty}]$ in $\Irr$, as in \S \ref{para:global packets}. Let $\tau_0 \in \Irr$ be such that $\tau_0^K \neq 0$ and $\tau_0 \notin [\pi^{\infty}], \forall \pi^{\infty} \in \widetilde \Pi^{\infty}_{\psi}(G)$. Then $$W^j_c(\tau_0) = 0$$ for all $j$. Moreover, for each $\pi^{\infty} \in \widetilde \Pi^{\infty}_{\psi}(G)$, we have  	\begin{multline}\label{eq:final 2}
	\sum_j (-1) ^j \Tr  (\Frob_p^a \mid   \bigoplus_{\tau \in [\pi^\infty]} \dim (\tau^K)  \cdot  W^j_c(\tau) )\\ = m_{\psi}     \dim (\dot \pi^{\infty})^K   \sum _ {\nu \in S_{\psi} ^D}  m (\pi^{\infty}, \psi,  \nu) (-1) ^{n} \nu (s_{\psi}) \Tr (\Frob_p^a \mid  \mathcal M_p (\psi ,\nu)),
\end{multline} for almost all primes $p$ and all integers $a$. 
		\end{enumerate}
	\end{thm}
\begin{proof} Throughout the proof we use the following notations: We fix the Haar measure $dg^{\infty}$ on $G(\adele_f)$ giving volume $1$ to $K$. Let $\tH = (\mathcal H_K)^{\vartheta^{\infty}}$. Then $\tH$ is a  $\CC$-subalgebra of $\mathcal H_K$ with unit $1_K dg^{\infty}$. By the same argument as in the proof of Lemma \ref{lem:invariance}, we know that as a $\CC$-vector space $\tH$ is generated by elements of the form $(\prod_v f_v)dg^{\infty}$, where the product is over all primes $v$, $f_v\in C^{\infty}_c(G(\QQ_v)\sslash K_v)^{\theta_G}$ for all $v$, and $f_v = 1_{K_v}$ for almost all $v$. For any finite set of primes $S$ such that $K_v$ is hyperspecial for all $v\notin S$, we let $\tH^S$ be the $\CC$-vector subspace of $\tH$ spanned by elements of the form $(\prod_v f_v) dg^{\infty}$, where $f_v\in C^{\infty}_c(G(\QQ_v) \sslash K_v)^{\theta_G}$ for all $v$, and the set of $v$ such that $f_v \neq 1_{K_v}$ is finite and disjoint from $S$. Then $\tH^S$ is a commutative unital subring of $\tH$, identified with the restricted tensor product of the $\vartheta_v$-fixed subrings of the unramified Hecke algebras  $C^{\infty}_c(G(\QQ_v)\sslash K_v)$ over all $v \notin S$.

		 For each $\psi \in \widetilde \Psi(G^*)_{\mathbb V}$ and each $\pi^{\infty} \in \widetilde \Pi_{\psi}^{\infty}(G)$, 
	recall from \S \ref{para:global packets} that the $G(\adele_f)$-representation $\dot \pi^{\infty}$ is the restricted tensor product of $\dot \pi_v$ over all primes $v$, where for each $v$ we choose a member $\dot \pi_v \in \pi_v$. The $\tH$-module $(\dot \pi^{\infty})^K$ depends only on $\pi^{\infty}$, not on the extra choices. We henceforth denote it $(\pi^{\infty})^K$.

	\textbf{(1)} 
		By the finiteness statement in Theorem \ref{thm:final}, on the RHS of (\ref{eq:semifinal}) only a finite subset $\Psi_0 \subset \widetilde \Psi(G^*)_{\mathbb V}$  would potentially contribute non-trivially, and for each $\psi \in \Psi_0$ only a finite subset $\mathcal U_{\psi} \subset \widetilde \Pi^{\infty}_{\psi} (G)$ would potentially contribute non-trivially. Moreover $\Psi_0$ and $(\mathcal U_{\psi})_{\psi \in \Psi_0}$ are independent of $f^{\infty} dg^{\infty}, p, a$. We may and shall take each $\mathcal U_{\psi}$ such that its members $\pi^{\infty}$ satisfy $(\pi^\infty)^K \neq 0$.    
		
	Suppose $c$ is as in (1) and $\icoh^j_c \neq 0$ for some $j$. Let $S$ be a finite set of primes such that $K_v$ is hyperspecial for all $v\notin S$. Then  the set of characters through which $\tH^S$ acts on $\icoh^j_c$ (i.e., the isomorphism classes of the simple $\tH^S$-submodules of $\icoh^j_c$) is disjoint from the set of characters through which $\tH^S$ acts on $\icoh^{j'}_{c'}$ for all $j'$ and all $c' \neq c$, and on  $(\pi^{\infty})^K$ for all $\pi^{\infty} \in \coprod_{\psi \in \Psi_0} \mathcal U_{\psi}$. Indeed, this follows from the observation that for any  two characters  $\chi_1, \chi_2: C^{\infty}_c(G(\QQ_v)\sslash K_v) \to \CC$ having the same restriction to the $\vartheta_v$-fixed subring, $\chi_1$ and $\chi_2$ must be related by $\vartheta_v$, and hence the Satake parameters of $\chi_1$ and $\chi_2$ must be related by $\Aut(\lang G^*_v)$. The observation itself follows from the identity $\chi_1 + \theta_G(\chi_1) = \chi_2 + \theta_G(\chi_2)$ (which holds since for all $F \in C^{\infty}_c(G(\QQ_v)\sslash K_v)$, $F + \theta_G ^* F$ lies in the $\vartheta_v$-fixed subring) and the linear independence of characters.  Since all these $\tH^S$-modules are  finite-dimensional over $\overline \QQ_{\ell} \cong \CC$ and there are only finitely many of them which are non-zero, there exists $f^{\infty} dg^{\infty} \in \tH^S\subset \tH$  
		that acts as the identity on $\icoh^j_c$ for all $j$, as zero on $\icoh^{j'}_{c'}$ for all $j'$ and all $c' \neq c$, and as zero on  $(\pi^{\infty})^K$ for  all $\pi^{\infty} \in  \coprod_{\psi \in \Psi_0} \mathcal U_{\psi}$. We then apply Theorem \ref{thm:final} (generalized in the obvious manner from $\vartheta^{\infty}$-fixed elements of $\mathcal H(G(\adele_f)\sslash K)_{\QQ}$ to elements of $\tH$) to $f^{\infty} dg^{\infty}$ and obtain 
		$$ \sum_j (-1)^j \Tr(\Frob_p^a \mid \icoh^j_c) =  0  $$ for all sufficiently large primes $p$ and all integers $a$. 	By Chebotarev's density theorem and the Brauer--Nesbitt theorem, this implies that in the Grothendieck group of $\Gamma_{\QQ}$-representations over $\overline \QQ_{\ell}$ we have 
	$$ \sum_j (-1)^j [\icoh^j_c] = 0.$$
	By a purity result of Pink \cite[Prop.~5.6.2]{pink1992ladic} applied to our Shimura datum $\mathbf O(V)$ of abelian type, and by the purity of intersection cohomology, we know that for almost all primes $p$ the action of $\Frob_p$ on $\icoh^j$ has weight $j$. (Note that the weight cocharacter of the Shimura datum which appears in \cite[
	\S 5.4]{pink1992ladic} is trivial in our case.)  It then follows that there is no cancellation between $[\icoh^j_c]$ for different $j$ in the Grothendieck group. Hence $\icoh^j_c = 0$ for all $j$, which proves (1).
	
\textbf{(2)}  Similarly as in the proof of (1), the set of characters  through which $\tH^S$ acts on $\icoh^j_c$ for all $j$ and on $(\pi^\infty)^K$ for all $\pi^\infty \in \mathcal U_\psi$, is disjoint from the set of characters through which $\tH^S$ acts on $\icoh^{j'}_{c'}$ for all $j'$ and all $c' \neq c$  and on $(\pi^{\infty})^K$ for all  $\pi^{\infty} \in \coprod_{\psi' \in \Psi_0-\set{\psi}}\mathcal U_{\psi'}$.  Thus we can find $f^{\infty} dg^{\infty} \in \tH^S \subset \tH$ which acts as the identity on $\icoh^j_c$ for all $j$, as the identity on $(\pi^{\infty})^K$ for all $\pi^{\infty} \in \mathcal U_{\psi}$, as zero on $\icoh^{j'}_{c'}$ for all $j'$ and all $c' \neq c$, and as zero on   $(\pi^{\infty})^K$ for all $\pi^{\infty} \in \coprod_{\psi' \in \Psi_0-\set{\psi}}\mathcal U_{\psi'}$. Applying Theorem \ref{thm:final} to $f^{\infty} dg^{\infty}$ then gives the desired result. 
	
\textbf{(3)} For almost all $p$, part (2) gives a multiplicative relation 
$$ \det(T - \Frob_p \mid \mathcal M_p (\psi,\nu))^{k_{\nu}} = \prod_j  \det(T - \Frob_p \mid \icoh^j_c)^{(-1)^j}  $$ for each $\nu \in S_{\psi}^D$, where $k_{\nu}$ is an integer (independent of $p$). By the purity results used in the proof of (1) and by our assumption that $\icoh^j_c \neq 0$ for some $j$, we conclude that $k_{\nu}\neq 0$ (since the right hand side of the above relation cannot be $1$). It then follows from the above relation that  $\Frob_p$ acts on $\mathcal M_p(\psi, \nu)$ with integer weights for each $\nu$.  On the other hand we have an isomorphism of $\WD_p$-representations
$$ \bigoplus_{\nu\in S_{\psi}^D} \mathcal V_p(\psi,\nu) \cong \bigoplus_{i\in I} \mathcal V(\pi_{i,p})^{(d_i)}, $$ where $\mathcal V_p(\psi,\nu)$ is as in \S \ref{defn:mathcal M}, and $\mathcal V(\pi_{i,p})$ is the $\WD_p$-representation corresponding to $\pi_{i,p}$ under the local Langlands correspondence. Clearly $\pi_{i,p}$ is unitary since $\pi_i$ is. By \cite{Shalika}, $\pi_{i,p}$ is generic. Hence by \cite[Cor.~2.5]{JS}, all eigenvalues $\lambda$ of $\Frob_p $ on $\mathcal V(\pi_{i,p})$ satisfy  $p^{-1/2} < \abs{\lambda} <p^{1/2}$. Therefore if $\pi_{i,p}$ is not tempered for one $i$, then there is at least one eigenvalue of $\Frob_p$ on $\bigoplus_{\nu} \mathcal V_p(\psi,\nu)$ whose absolute value is not an integer power of $p^{1/2}$. This contradicts with the fact that $\Frob_p$ acts on $\mathcal M_p(\psi,\nu) = \norm{\cdot}^{-n/2} \mathcal V_p(\psi,\nu)$ with integer weights for each $\nu$. We have proved (3). 

\textbf{(4)}  Pick $S$ large enough such that $\tH^S$  acts on $(\pi^{\infty})^K$ for all $\pi^{\infty} \in \mathcal U_{\psi}$ through a common character $\chi^S: \tH^S \to \CC$. Since $\tau_0$ is an irreducible admissible $G(\adele_f)$-representation, we know that $\tH^S$ must act on $\tau_0^K$ via a character $\chi^S_0$ (as opposed to several different characters). Assume for the sake of contradiction that  $W^j_c(\tau_0) \neq 0$. Then up to enlarging $S$ we must have $\chi^S =\chi^S_0$, by the definition of $\icoh_c^j$. In the following we assume that this is the case. We have   $\tau_0 = \bigotimes'_v \tau_{0,v}$, where each $\tau_{0,v}$ is an irreducible admissible representation of $G(\QQ_v)$.   Write $G_S$ for $\prod_{v\in S} G(\QQ_v)$, and write $K_S$ for $\prod_{v\in S} K_v$.  By a similar argument as in the proof of (1), our assumption that $\chi^S = \chi^S_0$ implies that for each $v \notin S$, the $\vartheta_v$-orbit of  the isomorphism class of the irreducible admissible $G(\QQ_v)$-representation  $\tau_{0,v}$ agrees with the $\vartheta_v$-orbit arising from every $\pi^{\infty} \in \mathcal U_{\psi}$. Therefore our assumption on $\tau_0$ implies that the $\vartheta_S$-orbit of the isomorphism class of the irreducible admissible $G_S$-representation $\bigotimes_{v\in S} \tau_{0,v}$ is disjoint from the $\vartheta_S$-orbit arising from any $\pi^{\infty} \in \mathcal U_{\psi}$. We can therefore find $f_S \in C^{\infty}_c(G_S \sslash K_S) = \bigotimes_{v\in S} C^{\infty}_c(G(\QQ_v) \sslash K_v)$ such that (for a certain normalization of Haar measure) it acts as the identity on every $\vartheta_S$-translate of $(\bigotimes_{v\in S} \tau_{0,v})^{K_S}$ and as zero on $(\bigotimes_{v\in S} \dot \pi_v)^{K_S}$ for all $\pi^\infty = (\pi_v)_v \in \mathcal U_{\psi}$ and all choices $(\dot \pi_v \in \pi_v)_{v\in S}$.  Note that the defining property of $f_S$ is invariant under the action of $\vartheta_S$ on $C^{\infty}_c(G_S\sslash K_S)$. Hence we can replace $f_S$ by its average under the finite group $\vartheta_S$, and assume that $f_S$ is fixed by $\vartheta_S$. 

After suitable scaling, the element $(f_S \cdot \prod_{v \notin S} 1_{K_v}) dg^{\infty} \in \tH$ acts as the identity on every $\vartheta^{\infty}$-translate of $\tau_0^K$ and as zero on 
$(\pi^{\infty})^K$ for all $(\pi^{\infty}) \in \mathcal U_{\psi}$. By a similar argument, we can also construct an element of $\tH$ which acts as the identity on every $\vartheta^{\infty}$-translate of $\tau_0^K$ and as zero on $\tau^K$ for every $\tau \in \Irr$ such that $\tau$ is not isomorphic to a $\vartheta^{\infty}$-translate of $\tau_0$ and $\tau^K \neq 0, W^j_c(\tau) \neq 0$. We have a third element of $\tH$, as constructed in the proof of (2), which acts as the identity on $\icoh^j_c$ for all $j$,  as zero on $\icoh^{j'}_{c'}$ for all $j'$ and all $c' \neq c$, and as zero on $(\pi^{\infty})^K$ for all    $\pi^{\infty} \in \coprod_{\psi' \in \Psi_0 - \set{\psi}} \mathcal U_{\psi'}$. Multiplying these  three elements together, we obtain an element of $\tH$ which acts on $\icoh^j$ for each $j$ as the projection to $\bigoplus_{\tau \in [\tau_0]} \tau^K \otimes W^j_c(\tau)$ with respect to (\ref{eq:complete decomp}), and acts as zero on $(\pi^{\infty})^K$ for  all $\pi^\infty \in  \coprod_{\psi \in \Psi_0} \mathcal U_{\psi}$. Here $[\tau_0]$ denotes the $\vartheta^{\infty}$-orbit of $\tau_0$ in $\Irr$. Applying Theorem \ref{thm:final} to this element we obtain 
	$$ \sum_j (-1)^j \Tr(\Frob_p^a \mid \bigoplus_{\tau \in [\tau_0]} \tau^K \otimes W^j_c(\tau)) =  0  $$ for almost all primes $p$ and all integers $a$. By a similar argument as in (1), this implies that $\tau^K \otimes W^j_c(\tau) = 0$ for all $\tau \in [\tau_0]$, and in particular $W^j_c(\tau_0) =0$, as desired. 
	
	Finally we prove (\ref{eq:final 2}). Since different elements $\pi^\infty \in \mathcal U_{\psi}$ give rise to disjoint $\vartheta^{\infty}$-orbits $[\pi^\infty]$, essentially the same argument as before gives us an element of $\tH$ which acts on $\icoh^j$ for each $j$ as the projection to $\bigoplus_{\tau \in [\pi^\infty]} \tau^K \otimes W^j_c(\tau)$ with respect to (\ref{eq:complete decomp}), acts as zero on $(\pi^{\infty,\prime})^K$ for all $\pi^{\infty, \prime} \in (\coprod_{\psi' \in \Psi_0 - \set{\psi}} \mathcal U_{\psi'}) \sqcup (\mathcal U_{\psi} - \set{\pi^\infty})$, and acts as the identity on $(\pi^\infty)^K$. Applying Theorem \ref{thm:final} to this element we obtain (\ref{eq:final 2}).   
\end{proof}
\begin{rem}\label{rem:Rama} Part (3) of Theorem \ref{thm:decomp c} proves the Ramanujan--Petersson conjecture for $\pi_i$ for almost all primes. 
	As we have discussed in \S  \ref{para:Rama} and \S \ref{para:algebraicity}, this is known in the odd case and in the even composite case (where the conjecture is known for all primes). In the even symmetric case, however, the infinitesimal character of $\pi_{i,\infty}$ can be non-regular, and thus  $\pi_i \otimes \abs{\det}^{\alpha} $ is not cohomological for any $\alpha \in \CC$. For such $\pi_i$ our result proves new instances of the conjecture. We postpone a more systematic treatment to future work. 
\end{rem} 
\subsection{}\label{para:final setting}By utilizing Theorem \ref{thm:decomp c} (3), we can separate the contributions of different degrees $j$ to the right hand sides of (\ref{eq:final 1}) and (\ref{eq:final 2}) as follows. 

Let $\psi = \boxplus_{i\in I} \pi_i [d_i] \in \widetilde \Psi(G^*)_{\mathbb V}$, and keep the notation in \S \ref{defn:mathcal M} for $\psi$. Let $\nu \in S_{\psi}^D$. Recall from \S \ref{defn:mathcal M} that there is a subset $I_\nu$ of $I$ of cardinality at most $2$ such that $\mathcal V_p(\psi,\nu) = \bigoplus_{i\in I_\nu} \mathcal V(\pi_{i,p})^{(d_i)}$ for all primes $p$. Recall that $n = \dim \Sh_K$. For each integer $j$, define \index{$\mathcal M_p(\psi,\nu,j)$}
$$ \mathcal M_p(\psi,\nu,j) : = \bigoplus_{\substack{i\in I_\nu \\ d_i -1 \geq \abs{n-j}\\  d_i - 1 \equiv n-j \mod 2 }} \norm{\cdot}^{-j/2 } \mathcal V(\pi_{i,p}). $$ Thus \begin{align}\label{eq:decomp into j}
\mathcal M_p(\psi, \nu) = \bigoplus_{j\in \ZZ} \mathcal M_p(\psi, \nu , j). 
\end{align}$ $ 

\begin{cor} \label{cor:single degree}
	Let $c= c (\psi)$. For each integer $j$, we have 
		\begin{multline} \label{eq:single deg 1}
		 (-1) ^j \Tr  (\Frob_p^a \mid  \icoh^j_c )\\ = m_{\psi}   \sum _{ \pi^{\infty} \in \widetilde{\Pi} _{\psi} ^{\infty} (G)}  \dim (\dot \pi^{\infty})^K  \sum _ {\nu \in S_{\psi} ^D}  m (\pi^{\infty}, \psi,  \nu) (-1) ^{n} \nu (s_{\psi}) \Tr (\Frob_p^a \mid  \mathcal M_p (\psi ,\nu,j))
	\end{multline} for almost all primes $p$ and all integers $a$. If all $d_i$ are $1$, then $$\icoh^j_{c } = 0$$ for all $j\neq n$.
If we assume that the conclusion of Lemma \ref{lem:summary} holds for $\psi$, then for each integer $j$ and each $\pi^\infty_0 \in \widetilde \Pi_{\psi}^\infty (G)$, we have 
\begin{multline*} 
	  (-1) ^j \Tr  (\Frob_p^a \mid   \bigoplus_{\tau \in [\pi^\infty_0]} \dim (\tau^K)  \cdot  W^j_c(\tau) )\\ = m_{\psi}     \dim (\dot \pi^{\infty}_0)^K   \sum _ {\nu \in S_{\psi} ^D}  m (\pi^{\infty}_0, \psi,  \nu) (-1) ^{n} \nu (s_{\psi}) \Tr (\Frob_p^a \mid  \mathcal M_p (\psi ,\nu,j)),
\end{multline*} 
for almost all primes $p$ and all integers $a$. 

\end{cor}
\begin{proof}
	By Theorem \ref{thm:decomp c} (3), we know that for almost all primes $p$, $\Frob_p$ acts on $\mathcal M_p(\psi,\nu,j)$ with weight $j$. By the purity results used in the proof of Theorem \ref{thm:decomp c} (1), $\Frob_p$ acts on $\icoh^j$ with weight $j$, for almost all $p$. The first and third statements in the corollary follow from these two facts, the decomposition (\ref{eq:decomp into j}), and the two formulas (\ref{eq:final 1}) and  (\ref{eq:final 2}). For the second statement, for $j\neq n$ we have  $\mathcal M_p(\psi,\nu, j) = 0$ for all $\nu \in S_{\psi}^D$. Applying (\ref{eq:single deg 1}) to $a=0$ gives the result.
\end{proof} 
\subsection{} 	Keep the notation of \S \ref{para:final setting}, and assume that we are in the odd case or the even composite case. From the discussion in \S \ref{para:algebraicity}, one easily sees that for each $i\in I$ and $j\in \ZZ$ such that $d_i -1 \equiv n-j \mod 2$, the cuspidal automorphic representation $\pi_i \otimes \abs{\det}^{-j/2}$ of $\GL_{N_i}$ is essentially self-dual and regular L-algebraic. Thus the semi-simple $\ell$-adic $\Gamma_{\QQ}$-representation associated to  $\pi_i \otimes \abs{\det}^{-j/2}$ is known to exist and satisfies local-global compatibility; see for instance \cite[Thm.~2.1.1]{BLGGT14}.\footnote{In that reference, $\pi$ is assumed to be a  regular C-algebraic cuspidal essentially self-dual representation of $\GL_n$, but the Galois representation is associated to $\pi \otimes \abs{\det}^{(1-n)/2}$, which is regular L-algebraic.} It follows that for each $j\in \ZZ$ and $\nu \in S_{\psi}^D$, there is a semi-simple $\ell$-adic $\Gamma_{\QQ}$-representation $\mathcal M(\psi,\nu,j)$,\index{$\mathcal M(\psi,\nu,j)$} obtained by taking a direct sum of the ones just mentioned over $i\in I_{\nu}$ such that $d_i -1 \geq \abs{n-j}$ and $d_i-1 \equiv n-j \mod 2$, such that for every prime $p\neq \ell$ the localization of $\mathcal M(\psi,\nu,j)$ gives the $\WD_p$-representation $\mathcal M_p(\psi,\nu,j)$ up to semi-simplification. 
\begin{cor} \label{cor:known Galois}	Let $c= c (\psi)$, and let $\pi^\infty_0 \in \widetilde \Pi_{\psi}^\infty (G)$. Assume that we are in the odd case or the even composite case.
	Up to semi-simplification, the $\Gamma_{\QQ}$-representations $\icoh^j_c$ and $  \bigoplus_{\tau \in [\pi^\infty_0]} \dim (\tau^K)  \cdot  W^j_c(\tau)$ are isomorphic to the virtual representations
	$$
		  m_{\psi}   \bigoplus _{ \pi^{\infty} \in \widetilde{\Pi} _{\psi} ^{\infty} (G)}  \dim (\dot \pi^{\infty})^K  \bigoplus _ {\nu \in S_{\psi} ^D} (-1)^{j+n} \nu(s_\psi) m (\pi^{\infty}, \psi,  \nu)   \mathcal M (\psi ,\nu,j)
$$
	and 
$$	  m_{\psi}     \dim (\dot \pi^{\infty}_0)^K   \bigoplus _ {\nu \in S_{\psi} ^D}  (-1)^{j+n} \nu(s_\psi)  m (\pi^{\infty}_0, \psi,  \nu)      \mathcal M (\psi ,\nu,j)
$$ respectively. In the odd case, the semi-simplification of $ W^j_c(\pi^{\infty}_0) $ is isomorphic to $$\bigoplus _ {\nu \in S_{\psi} ^D} (-1)^{j+1} \nu(s_\psi)  m (\pi^{\infty}_0, \psi,  \nu)      \mathcal M (\psi ,\nu,j). $$  
\end{cor}
\begin{proof}
	This follows from Lemma \ref{lem:summary}, Corollary \ref{cor:single degree}, Chebotarev's density theorem, and the Brauer--Nesbitt theorem.
\end{proof}

\begin{rem}
In the even symmetric case, for $\psi = \boxplus_{i\in I} \pi_i [d_i] \in \widetilde \Psi(G^*)_{\mathbb V}$, the infinitesimal character of $\pi_{i,\infty}$ can be non-regular. Thus the conjectural $\ell$-adic $\Gamma_{\QQ}$-representation associated to (an L-algebraic twist of) $\pi_i$ has not been constructed. To this end our Corollary \ref{cor:single degree} can be utilized for the construction of such a Galois representation. We will investigate this on another occasion. 
\end{rem}
\ignore{

\begin{cor}Assume that $\psi \in \widetilde {\Psi}(G^*)_{\mathbb V}$ is of the form  $\psi = \boxplus_{i\in I} \pi_i [1]$. Let $c = c(\psi)$. Then $\icoh^j_c =0$ unless $j = n = \dim \Sh_K$ (the middle degree). 
\end{cor}
\begin{proof}
	Suppose that $\icoh^j_c \neq 0 $ for some $j$. Then as in the proof of Theorem \ref{thm:decomp c} (3), by (\ref{eq:final 1}) we obtain a multiplicative relation $$ \det(T - \Frob_p \mid \mathcal M_p (\psi,\nu))^{k_{\nu}} = \prod_j  \det(T - \Frob_p \mid \icoh^j_c)^{(-1)^j}  $$ for almost all primes $p$ and all $\nu \in S^D_{\psi}$, where $k_{\nu}$ is a non-zero integer. By Theorem \ref{thm:decomp c} (3), all eigenvalues of $\Frob_p$ on $\mathcal M_p(\psi, \nu)$ have absolute value  $p^{n/2}$, for almost all $p$. If $\icoh^j_c \neq 0$ for some $j\neq n$, then the  eigenvalues of $\Frob_p$ on $\icoh^j_c$ have absolute value $p^{j/2}$ for almost all $p$, a contradiction.
\end{proof}
}

\ignore{ 
\begin{defn}
 \label{defn:mathcal R} Let $\mathcal R$ be the set of isomorphism classes of smooth representations $\pi_f$ of $G(\adele_f)$ which can be decomposed into a restricted tensor product $\bigotimes'_v \pi_v$ over all finite places $v$, where all $\pi_v$ are of finite length and almost all $\pi_v$ are irreducible. Let $\mathcal R^{\irr}$ be the set of irreducible $\pi_f \in \mathcal R$.  Define 
	$${\mathcal R } _K : = \set{\pi_f \in {\mathcal R} \mid  (\pi_f)^K \neq 0} $$
	$${\mathcal R } ^{\irr} _K: =  {\mathcal R } _K \cap {\mathcal R } ^{\irr}.$$ Let $\vartheta$ be the direct product group
	$\prod_{v} \set{1,\theta_{G_v}} $, where $v$ runs over all finite places of $\QQ$. Then the sets $\mathcal R, \mathcal R^{\irr}, \mathcal R _K, \mathcal R^{\irr} _K$ are all naturally acted on by $\vartheta$. Let $\widetilde{\mathcal R} : = \mathcal R/ \vartheta,~ \widetilde{  \mathcal R } ^{\irr} : =   \mathcal R^{\irr} /\vartheta, ~ \widetilde{  \mathcal R }_K : =   \mathcal R_K /\vartheta,~ \widetilde{  \mathcal R } ^{\irr} _K: =   \mathcal R^{\irr} _K/\vartheta$.
\end{defn}
\begin{defn}\label{defn:JH}Let $\pi_f\in \widetilde {\mathcal R} $. We say an element $\pi_1 \in \widetilde {\mathcal R} ^{\irr}$ is a \emph{Jordan--H\"older constituent} of $ \pi_f$,\index[n]{Jordan--H\"older constituent (of $\pi_f \in \widetilde {\mathcal R}$)} if some lift of $\pi_1$ in $ \mathcal R ^{\irr}$ is a Jordan--H\"older constituent of some lift of $\pi_f$ in $\mathcal R$.
\end{defn}

\begin{lem}\label{lem:about symm}
	The following statements hold.
	\begin{enumerate}
		\item Let $\psi \in \widetilde{ \Psi } (G^*)$ and let $\pi_f \in \widetilde{ \Pi } _{\psi} ^{\infty} (G)$ such that $(\pi_f) ^K \neq 0$. Then $\pi_f \in \widetilde {\mathcal R } _K$. 
		\item  There is a well-defined injection 
		$$\widetilde {\mathcal R} _K^{\mathrm{irr}} \to \set{\widetilde{\mathcal H} (G(\adele_f) \sslash K)\mbox{-modules}}/\mbox{isom} $$
		$$ \pi_f \mapsto (\pi_f)^K . $$
		\item Let $\set{\pi_0,\pi_1,\cdots,\pi_t}$ be an arbitrary finite subset of $\widetilde{  \mathcal R } _K^{\irr}$. There exists $
		f_0 \in \widetilde{  \mathcal H} (G(\adele_f)\sslash K)$ that acts as the identity on $(\pi_0)^K$ and acts as zero on $(\pi_i) ^K$ for all $1\leq i \leq t$. 
	\end{enumerate}
\end{lem}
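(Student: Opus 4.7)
For part (1), my plan is to unwind the definition of $\widetilde{\Pi}_{\psi}^{\infty}(G)$ as a $\vartheta$-orbit of restricted tensor products $\bigotimes'_v \pi_v$ with $\pi_v \in \widetilde{\Pi}_{\psi_v}(G_v)$ of finite length, and at almost all $v$ characterized by $\lprod{\cdot,\pi_v}=1$. Since $(\pi_f)^K\neq 0$, at every finite place we have $(\pi_v)^{K_v}\neq 0$, and at almost every $v\notin \Sigma_{\bad}$ we have $K_v=\mathcal G(\ZZ_v)$ hyperspecial and compatible with $\mathfrak w$. At such $v$, because $\pi_v$ is of finite length, at least one Jordan--H\"older constituent is $K_v$-spherical, so $\widetilde{\Pi}_{\psi_v}(G_v)$ contains the $K_v$-unramified element; Lemma~\ref{lem:parameter is unramified}(1) then forces $\psi_v$ to be unramified, and Lemma~\ref{lem:parameter is unramified}(2) tells us that $\pi_v$ itself is a $\{1,\theta_{G_v}\}$-orbit of irreducibles. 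Thus almost all $\pi_v$ are irreducible, giving $\pi_f\in\widetilde{\mathcal R}_K$.

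For part (2), well-definedness of $(\pi_f)^K$ as a $\widetilde{\mathcal H}(G(\adele_f)//K)$-module is immediate once I note that the symmetry hypothesis on $K$ implies $K_v$ is $\theta_{G_v}$-stable, so the two elements of a local $\vartheta$-orbit have canonically identified $K_v$-fixed spaces on which $\widetilde{\mathcal H}(G_v//K_v)$ acts in the same way. For injectivity, I will take lifts $\pi_f,\pi'_f\in\mathcal R_K^{\irr}$ of the two orbits, recall that by the classical bijection each $(\pi_f)^K$ is a simple module over the full Hecke algebra $\mathcal H(G(\adele_f)//K)$, and show that an isomorphism of the $K$-fixed spaces as $\widetilde{\mathcal H}$-modules forces the two lifts to be $\vartheta$-conjugate. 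Away from a finite set $S$ of places the Satake parameters already determine $\pi_v$ and $\pi'_v$ up to the trivial $\theta_{G_v}$-action on unramified representations, so the problem localizes to finitely many places; at each such place the claim becomes a finite-dimensional statement comparing the orbits of two simple modules over $\mathcal H(G_v//K_v)$ under the action of the finite group $\{1,\theta_{G_v}\}$, which reduces to the assertion that two simple $\mathcal H(G_v//K_v)$-modules agreeing as $\widetilde{\mathcal H}(G_v//K_v)$-modules are in the same $\{1,\theta_{G_v}\}$-twisting orbit.

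For part (3), my plan is the standard Jacobson density argument applied to $\widetilde{\mathcal H}(G(\adele_f)//K)$ acting on the finite-dimensional spaces $(\pi_0)^K,\dots,(\pi_t)^K$. By (2) these are pairwise non-isomorphic $\widetilde{\mathcal H}$-modules, so once I verify that each $(\pi_i)^K$ is a \emph{simple} $\widetilde{\mathcal H}$-module, the density theorem yields an $f_0$ with the required separating property. Simplicity over $\widetilde{\mathcal H}$ is the main obstacle I anticipate: an irreducible $\pi_i\in\mathcal R_K^{\irr}$ gives an $\mathcal H$-simple $(\pi_i)^K$, but restricting scalars to $\widetilde{\mathcal H}\subset\mathcal H$ could in principle split it into at most $|\vartheta|$ pieces corresponding to isotypic components for the action of local Weyl-type involutions. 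I would handle this by working place-by-place, using that at all but finitely many $v$ the full local Hecke algebra $\mathcal H(G_v//K_v)$ is contained in $\widetilde{\mathcal H}(G_v//K_v)$ (because $\theta_{G_v}$ acts trivially on unramified Satake parameters up to admissible conjugation), and handling the finitely many ramified places by exploiting the fact that a $\theta_{G_v}$-twist of a simple module would produce a new member of the $\vartheta$-orbit of $\pi_i$, and then appealing back to (2) to close the argument.
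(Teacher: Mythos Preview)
Your arguments for parts (1) and (2) are essentially the same as the paper's and are correct. For (1) the paper phrases it slightly differently (it first observes that $\psi_p$ is unramified for almost all $p$ because $\psi$ is global, then invokes Lemma~\ref{lem:parameter is unramified}(2)), but your route via Lemma~\ref{lem:parameter is unramified}(1) works equally well. For (2), your local reduction is exactly the paper's Claim, which the paper proves by the character identity $\chi_1+\chi_1'=\chi_2+\chi_2'$ on $\mathcal H(G_v//K_v)$ and linear independence of characters.

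Part (3) has a genuine gap. Your plan hinges on each $(\pi_i)^K$ being a \emph{simple} $\widetilde{\mathcal H}(G(\adele_f)//K)$-module, and this typically fails in the even case. If a local component $\pi_{i,v}$ satisfies $\pi_{i,v}\cong\theta_{G_v}(\pi_{i,v})$ and $\dim(\pi_{i,v})^{K_v}>1$, the intertwiner $A$ realizing this self-conjugacy acts on $(\pi_{i,v})^{K_v}$ and commutes with $\widetilde{\mathcal H}(G_v//K_v)$, so the eigenspaces of $A$ are proper $\widetilde{\mathcal H}$-submodules. Your remedy (``a $\theta_{G_v}$-twist would produce a new member of the orbit'') breaks down precisely here, since the twist is isomorphic to $\pi_{i,v}$ itself. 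Your other claim, that $\mathcal H(G_v//K_v)\subset\widetilde{\mathcal H}(G_v//K_v)$ at almost all $v$, is also not correct: the outer automorphism of the even orthogonal group acts nontrivially on the spherical Hecke algebra via the Satake isomorphism. Once simplicity fails, Jacobson density does not immediately give an element acting as the identity on $(\pi_0)^K$ and as zero on the others, because $\widetilde{\mathcal H}$-constituents of different $(\pi_i)^K$ could in principle coincide.

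The paper sidesteps this entirely. It chooses lifts $\varpi_i\in\mathcal R_K^{\irr}$, picks a finite set $S$ of places large enough that distinct $\varpi_i$ are separated by some $\theta_{G_v}$-orbit at some $v\in S$, and works with the \emph{full} Hecke algebra $\mathcal H(G_S//K_S)$, over which each $(\varpi_{i,S})^{K_S}$ is genuinely simple. It then uses density over this full algebra to find $f_S$ acting as the identity on the entire $\vartheta_S$-orbit of $(\varpi_{0,S})^{K_S}$ and as zero on the other orbits; because this requirement is $\vartheta_S$-symmetric by design, averaging $f_S$ over its $\vartheta_S$-orbit preserves the property and lands in $\widetilde{\mathcal H}$. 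The point is to impose a $\vartheta$-invariant separation condition in the full Hecke algebra and symmetrize, rather than to attempt density directly in the smaller algebra $\widetilde{\mathcal H}$.
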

\begin{proof}
	\textbf{(1)} Recall from Definition \ref{defn:adelic packet for G} that $\pi_f$ is a restricted tensor product of smooth representations all of finite length. We need only check that almost all local factors of $\pi_f$ are irreducible. For sufficiently large primes $p  \notin \Sigma (\mathcal G^*,\mathcal G, \Xi, z ,\mathfrak w, \theta)$ (see the discussion above Definition \ref{defn:symm Hecke for G}), we know that $\psi_p$ is unramified, that $K = K^p K_p$ with $K^p\subset G(\adele_f^p)$ and $K_p = \mathcal G(\ZZ_p) \subset G(\QQ_p)$, and that $G_p$ is canonically identified with $G^*_p$.  For such a prime $p$, since $\pi_f $ is $K_p$-unramified, it follows from Lemma \ref{lem:parameter is unramified} applied to $G_p \cong G^*_p$, that the $p$-factor of $\pi_f$ is irreducible. Hence almost all local factors of $\pi_f$ are irreducible, as desired. 
	
	\textbf{(2)} It is clear that for all $\pi_f \in \widetilde{  \mathcal R}_K$ we have a well-defined $\widetilde{\mathcal H} (G(\adele_f) \sslash K)$-module $(\pi_f) ^K$. Note that we have a bijection 
	\begin{align}\label{eq:usual bij}
	\set{\pi_f \in \mathcal R^{\irr} \mid  (\pi_f) ^K \neq 0 } \to \set{\mbox{simple~}\mathcal H (G(\adele_f) \sslash K)  \mbox{-modules}}/\mbox{isom}\end{align}
	$$ \pi_f \mapsto (\pi_f) ^K. $$
	In fact, $\mathcal R ^{\irr}$ is just the set of all irreducible smooth representations of $G(\adele_f)$ up to isomorphisms, as all such representations admit tensor factorizations. The bijection (\ref{eq:usual bij}) is a general fact about locally profinite groups; see for instance \cite[Thm.~III.1.5]{renardbook}.
	
	Thus in the odd case we are done. In the even case, using the bijection (\ref{eq:usual bij}) we easily reduce the proof to the following \textbf{Claim} for each finite place $v$. We denote 
	$$\mathcal H: = \mathcal H(G(\QQ_v)\sslash K_v)$$
	$$\widetilde{  \mathcal H}: =  \mathcal H(G(\QQ_v)\sslash K_v) \cap \widetilde{  \mathcal H } (G_v) .$$ 
	Here $K_v$ is defined for all $v$, because we have assumed that $K$ is symmetric (see Definition \ref{defn:K symm}) in the even case. 
	
	\textbf{Claim.} For any two simple $\mathcal H$-modules $M_1 , M_2$, if they are isomorphic as $\widetilde{  \mathcal H}$-modules, then $M_1$ is isomorphic to either $M_2$ or $\theta_{ G_v} (M_2)$ as $\mathcal H$-modules.
	
	To show \textbf{Claim}, let $\chi_i :  \mathcal H \to \CC$ be the character of $M_i$, and let $\chi_i'$ be that of $\theta_{G_v} (M_i)$. For all $f \in  \mathcal H$, we have 
	$$ f+ f\circ \theta_{G_v} \in \widetilde {\mathcal H }$$
	$$ \chi_i (f\circ \theta_{G_v}) = \chi_i ' (f). $$
	It follows that 
	$$ \chi_1 + \chi_1' = \chi_2 + \chi_2' \mbox{~on~} \mathcal H. $$
	The claim then follows from the above identity and the linear independence of irreducible characters. 

\textbf{(3)} In the odd case, by (\ref{eq:usual bij}) we have pairwise non-isomorphic simple $\mathcal H(G(\adele_f) \sslash K)$-modules $(\pi_i ) ^K, 0\leq i \leq t$. Moreover these modules are all finite-dimensional over $\CC$. The existence of $f_0 \in \widetilde{  \mathcal H } (G(\adele_f) \sslash K) = \mathcal H(G(\adele_f)\sslash K)$ follows from a standard argument using semi-simple algebras.

Assume we are in the even case. For each $0\leq i\leq t$, we choose a lift $\varpi_i \in \mathcal R^{\irr} _K$ of $\pi_i \in \widetilde{  \mathcal R} ^{\irr} _K$.
Take a sufficiently large finite set $S$ of prime numbers with the following property: For all $1\leq i \neq j \leq t$, there exists $v \in S$ such that $\varpi_{i,v} $ and $\varpi_{j,v}$ are not in the same $\theta_{G_v}$-orbit.

We introduce the following notations: 
\begin{align*}
G_S & : = \prod_{v \in S} G(\QQ_v), \\
K_S & : = \prod_ {v\in S} K_v \subset G_S, \\
\varpi_{i,S} & : = \bigotimes_{v \in S} \varpi_{i,v} , \\
M_i &: = (\varpi_{i,S}) ^{K_S}.
\end{align*} Let $\vartheta_S $ be the product group 
$$\prod_{v\in S} \set{1,\theta_{ G_v}},$$
which is a finite group. It follows from our assumption on $S$ that in the set of isomorphism classes of irreducible smooth $G_S$-representations, the $\vartheta_S$-orbits of $\varpi_{i,S}$ are disjoint for $0\leq i \leq t$. Using a bijection analogous to (\ref{eq:usual bij}), we deduce that each $M_{i}$ is a simple $\mathcal H(G_S\sslash K_S)$-module, and that in the set of isomorphism classes of simple $\mathcal H(G_S\sslash K_S)$-modules the $\vartheta_S$-orbits of $M_i$ are disjoint for $ 0\leq i \leq t$.

Let $\mathcal Y$ be the (disjoint) union over $1\leq i\leq t$ of these $\vartheta_S$-orbits of $M_i$ inside the set of isomorphism classes of simple $\mathcal H(G_S\sslash K_S)$-modules. Then $\mathcal Y$ is a finite set. 
Since each module in $\mathcal Y$ is finite-dimensional over $\CC$, a standard argument using semi-simple algebras implies that there exists $$f_S \in  {  \mathcal H} (G_S\sslash K_S)$$ that acts as the identity on every $\vartheta_S$-translate of $M_0$, and as zero on all elements of $\mathcal Y - \set{M_0}$. Note that this property of $f_S$ is invariant under $\vartheta_S$. Hence we may replace $f_S$ by its average over its $\vartheta_S$-orbit, to assume that 
$$f_S \in \mathcal H (G_S\sslash K_S) \cap \prod_{v\in S}\widetilde {\mathcal H} (G_v). $$
Let $f_0 : = \vol(K^S) ^{-1} f_S 1_{K^S}$. Then $ f_0 \in \widetilde{  \mathcal H} (G(\adele_f)\sslash K).$
Since $\vol(K^S) ^{-1}  1_{K^S}$ acts as the identity on $$\bigotimes_{ v<\infty, v\notin S} (\pi_{i,v})^{K_v} $$ for all $0\leq i \leq t$, the desired property of $f_0 $ follows from the defining property of $f_S$.  	\end{proof}
\subsection{Decompositions of \texorpdfstring{$\icoh^*$}{IH*}}
	For any $\pi_f \in {  \mathcal R} ^{\irr} _K$ (see Definition \ref{defn:mathcal R}), denote by $W(\pi_f)^i$ the $\pi_f$-multiplicity space of $\icoh^i$, namely 
	$$  W(\pi_f)^i: = \Hom _{{\mathcal H } (G(\adele_f)\sslash K) } ((\pi_f)^K, \icoh^i ), ~ 1\leq i \leq 2n = 2 \dim \Sh_K. $$
	Thus we have (by the bijection (\ref{eq:usual bij})): 
\begin{align}\label{eq:decomp of coh}
\icoh^i = \bigoplus_{\pi_f \in {  \mathcal R} ^{\irr} _K } W(\pi_f)^i \otimes (\pi_f)^K 
\end{align}
as $\Gamma _{\QQ} \times {\mathcal H } (G(\adele_f)\sslash K)$-modules, where on the right hand side $\Gamma_ {\QQ}$ and ${\mathcal H } (G(\adele_f)\sslash K)$  act on the factors $  W(\pi_f)^i$ and $(\pi_f) ^K$ respectively.

For any $\pi_f \in \widetilde{  \mathcal R} ^{\irr} _K $, we define 
\begin{align}\label{eq:defn of tilde W}
\widetilde{ W} (\pi_f) ^i : = \bigoplus _{\pi_f'} W(\pi_f') ^i,
\end{align}
where the direct sum is over the set of $\pi_f' \in \mathcal R_K^{\irr}$ such that $(\pi_f') ^K$ is isomorphic to $(\pi_f)^K$ as $\widetilde{  \mathcal H } (G(\adele_f)\sslash K)$-modules. Combining Lemma \ref{lem:about symm} (2) and (\ref{eq:decomp of coh}), we have 
\begin{align}\label{eq:decomp of coh symm}
\icoh^i = \bigoplus_{\pi_f \in \widetilde{  \mathcal R} ^{\irr} _K } \widetilde W(\pi_f)^i \otimes (\pi_f)^K 
\end{align}
as $\Gamma_{\QQ} \times \widetilde{\mathcal H } (G(\adele_f)\sslash K)$-modules. Of course the decompositions (\ref{eq:decomp of coh}) and (\ref{eq:decomp of coh symm}) are the same in the odd case.

\begin{thm}\label{thm:decomp}Assume Hypothesis \ref{hypo}. Let $\Sigma_{ \bad} $ be as in Theorem \ref{thm:final}. The following statements hold.
 \begin{enumerate} 
	\item Assume $c \in \widetilde {\mathcal C} _{\adele} (G^*) $ is not in the image of  $\widetilde{  \Psi} (G^*)_{\mathbb V}$ (see Definition \ref{defn:Psi_V}) under the map (\ref{eq:inj to Hecke syst}). Then $\icoh^i(c)  = 0 $ for all $1 \leq i \leq 2n$.  
	\item Let $\psi \in \widetilde{  \Psi} (G^*)_{\mathbb V} $ and let $c = c(\psi) \in \widetilde {\mathcal C} _{\adele} (G^*)$. Then there exists an integer $N_c \geq \max \Sigma_{\bad}$, such that for all primes $p> N_c$ and all $a\in \ZZ_{ \geq 0}$, we have
	\begin{multline*}
\sum_i (-1) ^i \Tr  (\Frob_p^a \mid  \icoh^i(c)  )\\ = m_{\psi}   \sum _{ \pi_f \in \widetilde{\Pi} _{\psi} ^{\infty} (G)}  \dim ((\pi_f)^K)  \sum _ {\nu \in S_{\psi} ^D}  m (\pi_f, \psi,  \nu) (-1) ^{n} \nu (s_{\psi}) \Tr (\Frob_p^a \mid  \mathcal M_p (\psi ,\nu)),
	\end{multline*}
	where the terms on the right hand side are defined in the same way as in Theorem \ref{thm:whole zeta}. 
	\item Fix $\pi_0 \in \widetilde{  \mathcal R }_K^{\irr}$. Let $c = c(\pi_0) \in \widetilde {\mathcal C} _{\adele} (G^*)$. Assume $\widetilde W (\pi_0) ^i \neq 0 $ for some $i$. Thus by part (1) we know $c = c(\psi)$ for a (unique) $\psi \in \widetilde{  \Psi} (G^*)_{\mathbb V}$. Assume the localizations $\psi_v \in \Psi^+_{\uni} (G^*_v)$ of $\psi$ satisfy \cite[Conjecture 8.3.1]{arthurbook} for all finite places $v$ of $\QQ$. Namely, we assume that the representations in $\widetilde{  \Pi}_{\psi_v} (G^*_v)$ are all irreducible. Then $\pi_0 \in \widetilde {\Pi} _{\psi } ^{\infty} (G)$. There exists an integer $N_{\pi_0} \geq \max \Sigma_{\bad}$, such that for all primes $p > N_{\pi_0}$ and all $a\in \ZZ_{ \geq 0}$, we have 
	\begin{multline*}
\sum_i (-1) ^i \Tr  (\Frob_p^a \mid  \widetilde W (\pi_0) ^i) \\ = m_{\psi} \sum _ {\nu \in S_{\psi} ^D}  m (\pi_0, \psi,  \nu) (-1) ^{n} \nu (s_{\psi}) \Tr (\Frob_p^a \mid  \mathcal M_p (\psi ,\nu)) ,
	\end{multline*}
	where the terms on the right hand side are defined in the same way as in Theorem \ref{thm:whole zeta}. 
\end{enumerate}
\end{thm}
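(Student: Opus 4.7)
The plan is to combine Theorem~\ref{thm:final} with the decomposition (\ref{eq:decomp of coh symm}), the Hecke-operator isolation Lemma~\ref{lem:about symm}(3), and the Jordan--H\"older compatibility Lemma~\ref{lem:same Hecke}. Let $\mathcal S \subset \widetilde{\mathcal R}_K^{\irr}$ be the (finite) set of $\pi_0$ with $\widetilde W(\pi_0)^i \neq 0$ for some $i$. For each $\pi_0 \in \mathcal S$ I would use Lemma~\ref{lem:about symm}(3) to produce $f_{\pi_0} \in \widetilde{\mathcal H}(G(\adele_f)//K)$ that acts as the identity on $(\pi_0)^K$ and as zero on $(\pi)^K$ for every other $\pi \in \mathcal S$. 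Since any two distinct elements of $\mathcal S$ are distinguished at some finite set of places, by defining $N_{\pi_0}$ and $N_c$ to be the maximum of $\max\Sigma_{\bad}$ and those finitely many separating places, I can arrange for $p$ above the threshold that $f_{\pi_0}$ takes the form $f_{\pi_0}^p \otimes 1_{K_p}$ with $f_{\pi_0}^p \in \widetilde{\mathcal H}(G(\adele_f^p)//K^p)$; the argument in the proof of Lemma~\ref{lem:about symm}(3) adapts verbatim to the away-from-$p$ Hecke algebra.

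For Part~(1), fix $\pi_0 \in \mathcal S$ with $c(\pi_0) = c$ not in the image of (\ref{eq:inj to Hecke syst}). Substituting $f_{\pi_0}^p$ into Theorem~\ref{thm:final}, the right-hand side is a finite sum indexed by pairs $(\psi, \pi_f')$ with $\psi \in \widetilde{\Psi}(G^*)_{\mathbb V}$ and $\pi_f' \in \widetilde{\Pi}_{\psi}^{\infty}(G)$. The factor $\Tr(\pi_f'(f_{\pi_0}^p \cdot 1_{K_p}))$ equals $[\pi_0 : \pi_f']\dim(\pi_0)^K$, where $[\pi_0 : \pi_f']$ denotes Jordan--H\"older multiplicity; by Lemma~\ref{lem:same Hecke}, every Jordan--H\"older constituent of $\pi_f'$ has Hecke system $c(\psi) \neq c(\pi_0)$, so $[\pi_0 : \pi_f'] = 0$ and the right-hand side vanishes. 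Hence
\[
\sum_i (-1)^i \Tr(\Frob_p^a \mid \widetilde W(\pi_0)^i) = 0
\]
for all $p > N_{\pi_0}$ and all $a \geq 0$. Invoking the purity of the intersection cohomology of the Baily--Borel compactification at good primes (which for orthogonal Shimura varieties follows from BBD applied to the reduction of the integral model together with the comparison with the generic fiber as in \cite[Theorem 4.18]{lanstrohII}), the Frobenius eigenvalues on $\widetilde W(\pi_0)^i$ have absolute value $p^{i/2}$, so eigenvalues from different degrees cannot cancel in the alternating sum; this forces $\Tr(\Frob_p^a \mid \widetilde W(\pi_0)^i) = 0$ for each individual $i$, and then Brauer--Nesbitt with \v{C}ebotarev density yields $\widetilde W(\pi_0)^i = 0$, contradicting $\pi_0 \in \mathcal S$.

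For Parts~(2) and (3), fix $\psi$ and $c = c(\psi)$. Running the same construction with $c(\pi_0) = c$, only the summand indexed by $\psi$ survives on the right-hand side (by injectivity of (\ref{eq:inj to Hecke syst}) and Lemma~\ref{lem:same Hecke}), and the identical bookkeeping yields
\[
\sum_i (-1)^i \Tr(\Frob_p^a \mid \widetilde W(\pi_0)^i) = m_\psi \sum_{\pi_f' \in \widetilde{\Pi}_\psi^{\infty}(G)} [\pi_0 : \pi_f']\, A(\psi, \pi_f', a),
\]
where $A(\psi, \pi_f', a) = \sum_{\nu \in S_{\psi}^D} m(\pi_f', \psi, \nu)(-1)^n \nu(s_\psi)\Tr(\Frob_p^a \mid \mathcal M_p(\psi,\nu))$ is the spectral summand from Theorem~\ref{thm:whole zeta}. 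Multiplying by $\dim(\pi_0)^K$, summing over all $\pi_0$ with $c(\pi_0) = c$, and applying $\dim(\pi_f')^K = \sum_{\pi_0}[\pi_0 : \pi_f']\dim(\pi_0)^K$ (valid because every Jordan--H\"older constituent of $\pi_f'$ has Hecke system $c(\psi)$) collapses the double sum to exactly the formula of Part~(2). For Part~(3), the hypothesis that each $\widetilde{\Pi}_{\psi_v}(G_v^*)$ consists of irreducibles makes every $\pi_f' \in \widetilde{\Pi}_\psi^{\infty}(G)$ irreducible, so $[\pi_0 : \pi_f'] = \delta_{\pi_0, \pi_f'}$; if $\pi_0 \notin \widetilde{\Pi}_\psi^{\infty}(G)$ the alternating trace vanishes for all $p > N_{\pi_0}$ and all $a$, which together with the purity input of Part~(1) forces $\widetilde W(\pi_0)^* = 0$ and contradicts the hypothesis, so $\pi_0 \in \widetilde{\Pi}_\psi^{\infty}(G)$ and Part~(3) follows with a single nonzero term. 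The main technical obstacle will be the purity input used to pass from alternating-trace vanishing to individual vanishing in Part~(1) and at the final step of Part~(3); checking this carefully within the integral-model framework of \S~\ref{subsec: Proof of Morel's formula} is the only nontrivial verification beyond assembling what is already proved.
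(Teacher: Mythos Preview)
Your overall strategy is correct and matches the paper's, but there is a genuine gap in how you construct the isolating Hecke function. You build $f_{\pi_0}$ so that it acts as the identity on $(\pi_0)^K$ and as zero on $(\pi)^K$ for all other $\pi \in \mathcal S$. You then claim that $\Tr(\pi_f'(f_{\pi_0}^p \cdot 1_{K_p})) = [\pi_0 : \pi_f']\dim(\pi_0)^K$. This is not justified: the trace of $f_{\pi_0}$ on $\pi_f'$ is the sum over \emph{all} Jordan--H\"older constituents $\pi_1$ of $\pi_f'$ of $\Tr(\pi_1(f_{\pi_0}))$, and those constituents need not lie in $\mathcal S$. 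Lemma~\ref{lem:about symm}(3) lets you control $f_{\pi_0}$ on a prescribed \emph{finite} set of irreducibles, but you have only prescribed $\mathcal S$, so for $\pi_1 \notin \mathcal S$ the value $\Tr(\pi_1(f_{\pi_0}))$ is uncontrolled. The same issue recurs in Parts~(2) and~(3): in (3) you need $f_{\pi_0}$ to kill every irreducible $\pi_f' \in \widetilde\Pi_\psi^\infty(G)$ with $\pi_f' \neq \pi_0$, but such $\pi_f'$ may well lie outside $\mathcal S$.

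The fix is exactly what the paper does: let $\mathcal U$ be the finite set of packet elements that potentially contribute to the right-hand side of (\ref{eq:semifinal}) (this set is finite and independent of $p,a,f^{p,\infty}$ by the finiteness clause of Theorem~\ref{thm:final}), let $\mathcal U_{1,K} \subset \widetilde{\mathcal R}_K^{\irr}$ be the set of all $K$-unramified Jordan--H\"older constituents of all elements of $\mathcal U$, and then apply Lemma~\ref{lem:about symm}(3) to the enlarged finite set $\mathcal S \cup \mathcal U_{1,K}$ (or the relevant subsets thereof for each part). With $f_0$ so constructed, the trace $\Tr(\pi_f'(f_0))$ is genuinely computable for every $\pi_f' \in \mathcal U$, and your argument goes through. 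For the purity input in Part~(1), the paper invokes Pink's result \cite[Proposition 5.6.2]{pink1992ladic} for abelian-type Shimura data directly, which is cleaner than the route via reductions and \cite{lanstrohII} you suggest.
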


\begin{proof}  Denote by $\mathcal T$ the set of $\pi_f \in \widetilde{ \mathcal R} ^{\irr} _K $ such that $\widetilde  W(\pi_f) ^i \neq 0$ for some $i$. By the decomposition (\ref{eq:decomp of coh symm}) and the finite-dimensionality of $\icoh^*$, we know that $\mathcal T$ is finite. For any $c\in \widetilde {\mathcal C} _{\adele} (G^*)$, we denote $\mathcal T_c : = \set{\pi_f \in \mathcal T\mid  c(\pi_f) = c}$.

We now prove the statements in the theorem.
 	
\textbf{(1)} 
	By the finiteness statement in Theorem \ref{thm:final}, on the RHS of (\ref{eq:semifinal}) only a finite set $\mathcal U$ of $\pi_f$'s would potentially contribute non-trivially, and $\mathcal U$ is independent of $p,a,f^{p,\infty}$. Of course we may assume that each $\pi_f \in \mathcal U$ satisfies $(\pi_f) ^K \neq 0$. Thus by Lemma \ref{lem:about symm} (1) we may view $\mathcal U$ as a subset of $\widetilde{  \mathcal R }_K$. Let $\mathcal U_1 \subset \widetilde {\mathcal R} ^{\irr}$ be the set of all Jordan--H\"older constituents of all elements in $\mathcal U$. Let $ \mathcal U _{1,K}: = \mathcal U_1 \cap \widetilde {\mathcal R} ^{\irr}_K$. Then $\mathcal U_1 $ and $ \mathcal U_{1,K}$ are both finite.  
	
	By our assumption on $c$ and by Lemma \ref{lem:same Hecke}, we know that $\mathcal U_{1,K}$ is disjoint from $ \mathcal T_c$. By Lemma \ref{lem:about symm} (3), there exists $f_0 \in \widetilde{  \mathcal H} (G(\adele_f)\sslash K)$ that acts as the identity on $(\pi_f)^K$ for all $\pi_f \in \mathcal T_c$ and acts as zero on $(\pi_f) ^K$ for all $\pi_f \in \mathcal U_{1,K} \cup (\mathcal T - \mathcal T_c)$. Note that for all $\pi_f \in \widetilde {\mathcal R}^{\irr} -\widetilde {\mathcal R}^{\irr} _K $ we automatically have $\Tr(\pi_f(f_0)) = 0$. If follows that $\Tr(\pi_f(f_0)) = 0$ for all $\pi_f \in \mathcal U_1$, and hence for all $\pi_f \in \mathcal U$. 
	
	For all sufficiently large primes $p$, the function $f_0$ is of the form $f_0^p 1_{K_p}$, with $f_0^p \in \widetilde{  \mathcal H } (G(\adele_f^p) \sslash K^p)$. We now plug $\Frob_p^a \times f_0^p$ into (\ref{eq:semifinal}) in Theorem \ref{thm:final}, for an arbitrary $a\in \ZZ_{ \geq 0}$. The RHS becomes $0$, and the LHS becomes 
	$$ \sum _i (-1)^i \Tr (\Frob_p^a \mid  \icoh^i (c)), $$ 
	in view of (\ref{eq:defn of icoh(c)}) and (\ref{eq:decomp in terms of c}). 
	Therefore 
	\begin{align}\label{eq:vanishing for non-automorphic}
\sum _i (-1)^i \Tr (\Frob_p^a \mid  \icoh^i (c)) = 0 ,  ~ \forall p \gg 0, a\in \ZZ_{ \geq 0}.
	\end{align}

	Now 
	
	\textbf{(2)} Let $\mathcal U, \mathcal U_1, \mathcal U_{1,K}$ be the same as above. Let $\mathcal U_{\psi}$ be the set of $K$-unramified elements of $\widetilde{ \Pi }_{\psi} ^{\infty} (G) $. We view $\mathcal U_{\psi} $ as a subset of $\widetilde{  \mathcal R }_K$ by Lemma \ref{lem:about symm} (1). Let $\mathcal U_{1, \psi}$ be the subset of $\widetilde{  \mathcal R } ^{\irr}$ consisting of all the Jordan--H\"older constituents of all the elements of $\mathcal U_{\psi}$. Let $\mathcal U_{1,\psi, K } : = \mathcal U_{1,\psi} \cap \widetilde{  \mathcal R }_K ^{\irr}$.
	
	By the injectivity of (\ref{eq:inj to Hecke syst}), by our assumption on $c$, and by Lemma \ref{lem:same Hecke}, we have  $$\mathcal T_c  \cap (\mathcal U_{1,K}   - \mathcal U_{1,\psi,K} )  = \emptyset ,  \quad  \mathcal U_{1,\psi,K} \cap  (\mathcal T - \mathcal T_c) = \emptyset ,$$ as subsets of $\widetilde{  \mathcal R } ^{\irr} _K$. We apply Lemma \ref{lem:about symm} (3) to find a function $f_0 \in \widetilde{  \mathcal H} (G(\adele_f)\sslash K)$ that acts as the identity on $(\pi_f)^K$ for all $\pi_f \in \mathcal T_c \cup \mathcal U_{1,\psi, K}$ and acts as zero on $(\pi_f) ^K$ for all $\pi_f \in (\mathcal T - \mathcal T_c) \cup (\mathcal U _{1,K}- \mathcal U_{1, \psi, K} )$. 
	
	Choose $N_c \geq \max \Sigma_{ \bad}$ such that for all primes $p > N_c$, the function $f_0$ is of the form $f_0^p 1_{K_p}$, with $f_0^p \in \widetilde{  \mathcal H } (G(\adele_f^p) \sslash K^p)$. We now plug $\Frob_p^a \times f_0^p$ into (\ref{eq:semifinal}) in Theorem \ref{thm:final}, for an arbitrary prime $p> N_c$ and an arbitrary $a\in \ZZ_{ \geq 0}$. On the LHS, it is easy to see that we obtain $$\sum _i (-1)^i\Tr (\Frob_p^a \mid  \icoh^* (c)). $$ On the RHS, we claim that the contributions from $\psi' \in \widetilde{  \Psi}(G^*)_{\mathbb V} - \set{\psi}$ are all zero. To see this, it suffices to show that any $K$-unramified $\pi_f \in \widetilde{  \Pi}_{\psi'} ^{\infty} (G)$ contributes trivially. We may assume $\pi_f \in \mathcal U \subset \widetilde {\mathcal R} _K$. Now the Jordan--H\"older constituents of $\pi_f$ do not lie in $\mathcal U_{1, \psi}$, by the injectivity of the map (\ref{eq:inj to Hecke syst}) and by Lemma \ref{lem:same Hecke}. Therefore all theses constituents lie in $\mathcal U_1 - \mathcal U_{1, \psi}  \subset  (\widetilde{  \mathcal R } ^{\irr}  -  \widetilde{  \mathcal R } ^{\irr} _K ) \cup (\mathcal U_{1,K} - \mathcal U_{1,\psi,K}) $, and are hence annihilated by $f_0$. Therefore $\psi'$ contributes trivially as desired.
	
	 On the other hand we have $\Tr (\pi_f(f_0)) = \dim( (\pi_f) ^K)$ for all $\pi_f \in \widetilde{  \Pi}_{\psi} ^{\infty} (G)$. Therefore the result of plugging $\Frob_p^a \times f_0^p$ into (\ref{eq:semifinal}) is the desired identity in statement (2).
	
	\textbf{(3)} We retain the meanings of the notations $\mathcal U, \mathcal U_1, \mathcal U_{1,K}, \mathcal U_{\psi}, \mathcal{ U}_{1,\psi}, \mathcal U_{1,\psi ,K}$ as above. As stated in Proposition \ref{prop:Taibi finite}, our assumption on the localizations $\psi_v$ implies that the representations in $\widetilde {\Pi} _{\psi_v} (G_v)$ are irreducible. Hence the representations in $\widetilde {\Pi} _{\psi } ^{\infty} (G)$ are irreducible, and we have $\mathcal U_{\psi} = \mathcal U_{1,\psi} = \mathcal U_{1,\psi ,K}$. 
	
	First assume $\pi_0 \notin \mathcal U_{\psi}$. Since $c(\pi_0) = c(\psi)$, it follows from the injectivity of (\ref{eq:inj to Hecke syst}) and Lemma \ref{lem:same Hecke} that $\pi_0 \notin \mathcal U_1$. By Lemma \ref{lem:about symm} (3) there exists $f_0 \in \widetilde{  \mathcal H} (G(\adele_f)\sslash K)$ that acts as the identity on $(\pi_0)^K$ and acts as zero on $(\pi_f) ^K$ for all $\pi_f \in \mathcal U_{1,K} \cup (\mathcal T - \set
	{\pi_0})$. Then the same argument as in the proof of statement (1) shows that $\widetilde W(\pi_0) ^i\otimes (\pi_0)^K = 0$ for all $i$, a contradiction. In conclusion we have $\pi_0 \in \mathcal U_{\psi}$.
 
 Now by Lemma \ref{lem:about symm} (3) we find $f_0 \in \widetilde{  \mathcal H} (G(\adele_f)\sslash K)$ that acts as the identity on $(\pi_0)^K$ and acts as zero on $(\pi_f) ^K$ for all $\pi_f \in (\mathcal U_{1,K} \cup \mathcal T) - \set
{\pi_0} $. 	Choose $N_{\pi_0} \geq \max \Sigma_{ \bad}$ such that for all primes $p > N_{\pi_0}$, the function $f_0$ is of the form $f_0^p 1_{K_p}$, with $f_0^p \in \widetilde{  \mathcal H } (G(\adele_f^p) \sslash K^p)$. We now plug $\Frob_p^a \times f_0^p$ into (\ref{eq:semifinal}), for an arbitrary prime $p> N_{\pi_0}$ and an arbitrary $a\in \ZZ_{ \geq 0}$. On the LHS we obtain 
$$ \sum_i (-1) ^i \Tr  (\Frob_p^a \mid  \widetilde W (\pi_0) ^i) \dim ( (\pi_0) ^K). $$
On the RHS, the contributions from $\psi' \in \widetilde{  \Psi}(G^*)_{\mathbb V} - \set{\psi}$ are all zero, for the same reason as in the proof of statement (2). For $\pi_f \in \widetilde {\Pi} _{\psi } ^{\infty} (G)  = \mathcal U_{\psi}$, the contribution is zero unless $\pi_f = \pi_0$, in which case the contribution is 
$$ m_{\psi}  \dim ((\pi_0) ^K) \sum _ {\nu}  m (\pi_0, \psi,  \nu) (-1) ^{n} \nu (s_{\psi}) \Tr (\Frob_p^a \mid  \mathcal M_p (\psi ,\nu)).  $$
In conclusion we have 
\begin{multline*}
\sum_i (-1) ^i \Tr  (\Frob_p^a \mid  \widetilde W (\pi_0) ^i) \dim ( (\pi_0) ^K) \\ =  m_{\psi}  \dim ((\pi_0) ^K)  \sum _ {\nu}  m (\pi_0, \psi,  \nu) (-1) ^{n} \nu (s_{\psi}) \Tr (\Frob_p^a \mid  \mathcal M_p (\psi ,\nu)).
\end{multline*}
Statement (3) follows by canceling $ \dim ((\pi_0) ^K) \neq 0$ from the two sides. 
\end{proof}

}

\ignore{
\section{Construction of Galois representations} We keep the setting of \S\S \ref{subsec:setting in application} - \ref{subsec:refined decomp}. 
We start with $\Pi$ a self-dual cuspidal automorphic representation of $\GL_N/\QQ$. Assume $G_{\Pi} = G^*$ (see \S \ref{subsubsec:self-dual}). Then $\psi = \Pi$ defines an element of $\widetilde{  \Psi} (G^*)$. We make the following three assumptions on $\psi$:
\begin{enumerate}
	\item We assume $\psi_{\infty} \in \Psi (G^*_{\infty})$ is Adams--Johnson (see \S \ref{subsubsec:arch para}). In particular $\psi \in \widetilde {\Psi} _2 (G^*)$ and $ S_{\psi}$ is a finite power of $\ZZ/2\ZZ$.
	\item We assume $S_{\psi} $ is equal to $Z(\widehat{G^*}) ^{\Gamma_{\QQ}} =Z(\widehat{G^*})$.
	\item We assume \cite[Conjecture 8.3.1]{arthurbook} holds for all the localizations of $\psi$. 
\end{enumerate}

	Then from the discussion in \cite[\S 4.2.2]{taibidim} we know that there exists an algebraic representation $\mathbb V_0$ of $G^*_{\CC}$ that has the same infinitesimal character as $\psi_{\infty}$. Define $\mathbb V_1 $ in terms of $\mathbb V_0$ and let $\mathbb V: = \mathbb V_0 + \mathbb V_1$, as in the beginning of \S \ref{subsec:spectral eval}. Then $\psi \in  \widetilde{  \Psi} (G^*) _{\mathbb V}$. By assumption (3) above, all the elements of $\widetilde {\Pi} _{\psi } ^{\infty} (G)$ are irreducible. Fix such an element $\pi_0$. By Remark \ref{rem:cofinal}, we know that there exists a neat compact open subgroup $K \subset G(\adele_f)$, which is symmetric in the sense of Definition \ref{defn:K symm} when $d $ is even, such that $(\pi_0 )^K \neq 0$. With $\mathbb V$ and $K$ fixed as such, we define $\widetilde W (\pi_0)^i , 1\leq i \leq 2n$ as in (\ref{eq:defn of tilde W}). It is obvious from the construction that $\widetilde W (\pi_0)^i$ could be understood as a $\Gamma_{\QQ}$-representation over $\overline \QQ_{\ell}$ for any prime $\ell$, and we use the uniform notation $\widetilde W (\pi_0)^i$ to denote the base change of any of these $\overline \QQ_{\ell}$-representations to $\CC$, with respect to an arbitrarily chosen isomorphism $\overline \QQ_\ell \isom \CC$. 
\begin{cor}
The virtual $\Gamma_{\QQ}$-representation $ \mathcal M  : = \sum_i (-1) ^i  \widetilde W (\pi_0) ^i$ satisfies the following property. For all sufficiently large primes $p$ and all $a\in \ZZ_{ \geq 0}$, we have 
$$\Tr (\Frob_p^a \mid  \mathcal V) = m_{\psi} (-1) ^n \Tr (\Frob _p ^a \mid  \mathcal M_p( \psi , 1) ) =   m_{\psi} (-1) ^n  p^{an/2} \Tr (\Frob _p ^a \mid  \mathcal V_p( \psi)).$$ 
\end{cor}
\begin{proof}

\end{proof}

\begin{rem}
	Restriction on $N$
\end{rem}
\begin{rem}
	Meaning of Adams--Johnson condition.
\end{rem}

}

\backmatter

\printindex
\printindex[n]

	\bibliographystyle{smfalpha}
	
\bibliography{myref}

\def\cprime{$'$}
\providecommand{\bysame}{\leavevmode ---\ }
\providecommand{\og}{``}
\providecommand{\fg}{''}
\providecommand{\smfandname}{\&}
\providecommand{\smfedsname}{\'eds.}
\providecommand{\smfedname}{\'ed.}
\providecommand{\smfmastersthesisname}{M\'emoire}
\providecommand{\smfphdthesisname}{Th\`ese}
\begin{thebibliography}{BLGGT14}

\bibitem[AC89]{arthurclozel}
{\scshape J.~Arthur {\normalfont \smfandname} L.~Clozel} -- \emph{Simple
  algebras, base change, and the advanced theory of the trace formula}, Annals
  of Mathematics Studies, vol. 120, Princeton University Press, Princeton, NJ,
  1989.

\bibitem[AJ87]{adamsjohnson}
{\scshape J.~Adams {\normalfont \smfandname} J.~F. Johnson} -- {\og Endoscopic
  groups and packets of nontempered representations\fg}, \emph{Compositio
  Math.} \textbf{64} (1987), no.~3, p.~271--309.

\bibitem[AMR18]{AMR}
{\scshape N.~Arancibia, C.~{M{\oe}glin} {\normalfont \smfandname} D.~Renard} --
  {\og Paquets d'{A}rthur des groupes classiques et unitaires\fg}, \emph{Ann.
  Fac. Sci. Toulouse Math. (6)} \textbf{27} (2018), no.~5, p.~1023--1105.

\bibitem[AMRT10]{amrt}
{\scshape A.~Ash, D.~Mumford, M.~Rapoport {\normalfont \smfandname} Y.-S. Tai}
  -- \emph{Smooth compactifications of locally symmetric varieties}, second
  \smfedname, Cambridge Mathematical Library, Cambridge University Press,
  Cambridge, 2010, With the collaboration of Peter Scholze.

\bibitem[Art88]{arthurinvII}
{\scshape J.~Arthur} -- {\og The invariant trace formula. {II}. {G}lobal
  theory\fg}, \emph{J. Amer. Math. Soc.} \textbf{1} (1988), no.~3, p.~501--554.

\bibitem[Art89]{arthurlef}
\bysame , {\og The {$L\sp 2$}-{L}efschetz numbers of {H}ecke operators\fg},
  \emph{Invent. Math.} \textbf{97} (1989), no.~2, p.~257--290.

\bibitem[Art96]{arthuronlocal}
\bysame , {\og On local character relations\fg}, \emph{Selecta Math. (N.S.)}
  \textbf{2} (1996), no.~4, p.~501--579.

\bibitem[Art01]{arthursta2}
\bysame , {\og A stable trace formula. {II}. {G}lobal descent\fg},
  \emph{Invent. Math.} \textbf{143} (2001), no.~1, p.~157--220.

\bibitem[Art02]{arthursta1}
\bysame , {\og A stable trace formula. {I}. {G}eneral expansions\fg}, \emph{J.
  Inst. Math. Jussieu} \textbf{1} (2002), no.~2, p.~175--277.

\bibitem[Art03]{arthursta3}
\bysame , {\og A stable trace formula. {III}. {P}roof of the main theorems\fg},
  \emph{Ann. of Math. (2)} \textbf{158} (2003), no.~3, p.~769--873.

\bibitem[Art13]{arthurbook}
\bysame , \emph{The endoscopic classification of representations}, American
  Mathematical Society Colloquium Publications, vol.~61, American Mathematical
  Society, Providence, RI, 2013, Orthogonal and symplectic groups.

\bibitem[AT18]{adamstaibi}
{\scshape J.~Adams {\normalfont \smfandname} O.~Ta\"{\i}bi} -- {\og Galois and
  {C}artan cohomology of real groups\fg}, \emph{Duke Math. J.} \textbf{167}
  (2018), no.~6, p.~1057--1097.

\bibitem[Bar03]{Baruch}
{\scshape E.~M. Baruch} -- {\og A proof of {K}irillov's conjecture\fg},
  \emph{Ann. of Math. (2)} \textbf{158} (2003), no.~1, p.~207--252.

\bibitem[BC83]{BorelCasselman}
{\scshape A.~Borel {\normalfont \smfandname} W.~Casselman} -- {\og
  {$L\sp{2}$}-cohomology of locally symmetric manifolds of finite volume\fg},
  \emph{Duke Math. J.} \textbf{50} (1983), no.~3, p.~625--647.

\bibitem[Ber84]{Ber84}
{\scshape J.~N. Bernstein} -- {\og {$P$}-invariant distributions on {${\rm
  GL}(N)$} and the classification of unitary representations of {${\rm GL}(N)$}
  (non-{A}rchimedean case)\fg}, in \emph{Lie group representations, {II}
  ({C}ollege {P}ark, {M}d., 1982/1983)}, Lecture Notes in Math., vol. 1041,
  Springer, Berlin, 1984, p.~50--102.

\bibitem[BG14]{buzzardgee}
{\scshape K.~Buzzard {\normalfont \smfandname} T.~Gee} -- {\og The conjectural
  connections between automorphic representations and {G}alois
  representations\fg}, in \emph{Automorphic forms and {G}alois representations.
  {V}ol. 1}, London Math. Soc. Lecture Note Ser., vol. 414, Cambridge Univ.
  Press, Cambridge, 2014, p.~135--187.

\bibitem[BLGGT14]{BLGGT14}
{\scshape T.~Barnet-Lamb, T.~Gee, D.~Geraghty {\normalfont \smfandname}
  R.~Taylor} -- {\og Potential automorphy and change of weight\fg}, \emph{Ann.
  of Math. (2)} \textbf{179} (2014), no.~2, p.~501--609.

\bibitem[Bor79]{borelcorvallisarticle}
{\scshape A.~Borel} -- {\og Automorphic {$L$}-functions\fg}, in
  \emph{Automorphic forms, representations and {$L$}-functions ({P}roc.
  {S}ympos. {P}ure {M}ath., {O}regon {S}tate {U}niv., {C}orvallis, {O}re.,
  1977), {P}art 2}, Proc. Sympos. Pure Math., XXXIII, Amer. Math. Soc.,
  Providence, R.I., 1979, p.~27--61.

\bibitem[Bor98]{borovoi}
{\scshape M.~Borovoi} -- {\og Abelian {G}alois cohomology of reductive
  groups\fg}, \emph{Mem. Amer. Math. Soc.} \textbf{132} (1998), no.~626,
  p.~viii+50.

\bibitem[BT65]{boreltits}
{\scshape A.~Borel {\normalfont \smfandname} J.~Tits} -- {\og Groupes
  r\'{e}ductifs\fg}, \emph{Inst. Hautes \'{E}tudes Sci. Publ. Math.} (1965),
  no.~27, p.~55--150.

\bibitem[Car79]{cartiersurvey}
{\scshape P.~Cartier} -- {\og Representations of {$p$}-adic groups: a
  survey\fg}, in \emph{Automorphic forms, representations and {$L$}-functions
  ({P}roc. {S}ympos. {P}ure {M}ath., {O}regon {S}tate {U}niv., {C}orvallis,
  {O}re., 1977), {P}art 1}, Proc. Sympos. Pure Math., XXXIII, Amer. Math. Soc.,
  Providence, R.I., 1979, p.~111--155.

\bibitem[Car12]{Caraiani}
{\scshape A.~Caraiani} -- {\og Local-global compatibility and the action of
  monodromy on nearby cycles\fg}, \emph{Duke Math. J.} \textbf{161} (2012),
  no.~12, p.~2311--2413.

\bibitem[CD85]{clozeldelorme}
{\scshape L.~Clozel {\normalfont \smfandname} P.~Delorme} -- {\og
  Pseudo-coefficients et cohomologie des groupes de {L}ie r\'eductifs
  r\'eels\fg}, \emph{C. R. Acad. Sci. Paris S\'er. I Math.} \textbf{300}
  (1985), no.~12, p.~385--387.

\bibitem[CL10]{cluckersLoeser}
{\scshape R.~Cluckers {\normalfont \smfandname} F.~Loeser} -- {\og
  Constructible exponential functions, motivic {F}ourier transform and transfer
  principle\fg}, \emph{Ann. of Math. (2)} \textbf{171} (2010), no.~2,
  p.~1011--1065.

\bibitem[Clo90a]{clozelFL}
{\scshape L.~Clozel} -- {\og The fundamental lemma for stable base change\fg},
  \emph{Duke Math. J.} \textbf{61} (1990), no.~1, p.~255--302.

\bibitem[Clo90b]{clozelannarbor}
\bysame , {\og Motifs et formes automorphes: applications du principe de
  fonctorialit\'e\fg}, in \emph{Automorphic forms, {S}himura varieties, and
  {$L$}-functions, {V}ol.\ {I} ({A}nn {A}rbor, {MI}, 1988)}, Perspect. Math.,
  vol.~10, Academic Press, Boston, MA, 1990, p.~77--159.

\bibitem[CS80]{CS80}
{\scshape W.~Casselman {\normalfont \smfandname} J.~Shalika} -- {\og The
  unramified principal series of {$p$}-adic groups. {II}. {T}he {W}hittaker
  function\fg}, \emph{Compositio Math.} \textbf{41} (1980), no.~2, p.~207--231.

\bibitem[Del79]{deligne1979varietes}
{\scshape P.~Deligne} -- {\og Vari\'et\'es de {S}himura: interpr\'etation
  modulaire, et techniques de construction de mod\`eles canoniques\fg}, in
  \emph{Automorphic forms, representations and {$L$}-functions ({P}roc.
  {S}ympos. {P}ure {M}ath., {O}regon {S}tate {U}niv., {C}orvallis, {O}re.,
  1977), {P}art 2}, Proc. Sympos. Pure Math., XXXIII, Amer. Math. Soc.,
  Providence, R.I., 1979, p.~247--289.

\bibitem[Fuj97]{fujiwara}
{\scshape K.~Fujiwara} -- {\og Rigid geometry, {L}efschetz-{V}erdier trace
  formula and {D}eligne's conjecture\fg}, \emph{Invent. Math.} \textbf{127}
  (1997), no.~3, p.~489--533.

\bibitem[GHM94]{GHM}
{\scshape M.~Goresky, G.~Harder {\normalfont \smfandname} R.~MacPherson} --
  {\og Weighted cohomology\fg}, \emph{Invent. Math.} \textbf{116} (1994),
  no.~1-3, p.~139--213.

\bibitem[GKM97]{GKM}
{\scshape M.~Goresky, R.~Kottwitz {\normalfont \smfandname} R.~MacPherson} --
  {\og Discrete series characters and the {L}efschetz formula for {H}ecke
  operators\fg}, \emph{Duke Math. J.} \textbf{89} (1997), no.~3, p.~477--554.

\bibitem[Gro66]{EGAIV3}
{\scshape A.~Grothendieck} -- {\og {\'E}l{\'e}ments de g{\'e}om{\'e}trie
  alg{\'e}brique. {IV}. {\'e}tude locale des sch{\'e}mas et des morphismes de
  sch{\'e}mas. {III}\fg}, \emph{Inst. Hautes {\'E}tudes Sci. Publ. Math.}
  (1966), no.~28, p.~255.

\bibitem[Hal93]{halesunram}
{\scshape T.~C. Hales} -- {\og A simple definition of transfer factors for
  unramified groups\fg}, in \emph{Representation theory of groups and
  algebras}, Contemp. Math., vol. 145, Amer. Math. Soc., Providence, RI, 1993,
  p.~109--134.

\bibitem[Hal95]{halesFL}
\bysame , {\og On the fundamental lemma for standard endoscopy: reduction to
  unit elements\fg}, \emph{Canad. J. Math.} \textbf{47} (1995), no.~5,
  p.~974--994.

\bibitem[HC65]{HCdiscreteseriesI}
{\scshape Harish-Chandra} -- {\og Discrete series for semisimple {L}ie groups.
  {I}. {C}onstruction of invariant eigendistributions\fg}, \emph{Acta Math.}
  \textbf{113} (1965), p.~241--318.

\bibitem[Hen00]{henniartLLC}
{\scshape G.~Henniart} -- {\og Une preuve simple des conjectures de {L}anglands
  pour {${\rm GL}(n)$} sur un corps {$p$}-adique\fg}, \emph{Invent. Math.}
  \textbf{139} (2000), no.~2, p.~439--455.

\bibitem[Her79]{herb}
{\scshape R.~A. Herb} -- {\og Characters of averaged discrete series on
  semisimple real {L}ie groups\fg}, \emph{Pacific J. Math.} \textbf{80} (1979),
  no.~1, p.~169--177.

\bibitem[H{\"o}r14]{hoermannbook}
{\scshape F.~H{\"o}rmann} -- \emph{The geometric and arithmetic volume of
  {S}himura varieties of orthogonal type}, CRM Monograph Series, vol.~35,
  American Mathematical Society, Providence, RI, 2014.

\bibitem[HR10]{haines-rostami}
{\scshape T.~J. Haines {\normalfont \smfandname} S.~Rostami} -- {\og The
  {S}atake isomorphism for special maximal parahoric {H}ecke algebras\fg},
  \emph{Represent. Theory} \textbf{14} (2010), p.~264--284.

\bibitem[HT01]{harristaylor}
{\scshape M.~Harris {\normalfont \smfandname} R.~Taylor} -- \emph{The geometry
  and cohomology of some simple {S}himura varieties}, Annals of Mathematics
  Studies, vol. 151, Princeton University Press, Princeton, NJ, 2001, With an
  appendix by Vladimir G. Berkovich.

\bibitem[JS81]{JS}
{\scshape H.~Jacquet {\normalfont \smfandname} J.~A. Shalika} -- {\og On
  {E}uler products and the classification of automorphic representations.
  {I}\fg}, \emph{Amer. J. Math.} \textbf{103} (1981), no.~3, p.~499--558.

\bibitem[Kal11]{kaldepth0}
{\scshape T.~Kaletha} -- {\og Endoscopic character identities for depth-zero
  supercuspidal {$L$}-packets\fg}, \emph{Duke Math. J.} \textbf{158} (2011),
  no.~2, p.~161--224.

\bibitem[Kal16]{kalrigid}
\bysame , {\og Rigid inner forms of real and {$p$}-adic groups\fg}, \emph{Ann.
  of Math. (2)} \textbf{184} (2016), no.~2, p.~559--632.

\bibitem[Kal18]{kalethaglobal}
\bysame , {\og Global rigid inner forms and multiplicities of discrete
  automorphic representations\fg}, \emph{Invent. Math.} \textbf{213} (2018),
  no.~1, p.~271--369.

\bibitem[Kaz86]{KazhdanCusp}
{\scshape D.~Kazhdan} -- {\og Cuspidal geometry of {$p$}-adic groups\fg},
  \emph{J. Analyse Math.} \textbf{47} (1986), p.~1--36.

\bibitem[Kis10]{kisin2010integral}
{\scshape M.~Kisin} -- {\og Integral models for {S}himura varieties of abelian
  type\fg}, \emph{J. Amer. Math. Soc.} \textbf{23} (2010), no.~4, p.~967--1012.

\bibitem[KMSW14]{KMSW}
{\scshape T.~{Kaletha}, A.~{Minguez}, S.~W. {Shin} {\normalfont \smfandname}
  P.-J. {White}} -- {\og {Endoscopic Classification of Representations: Inner
  Forms of Unitary Groups}\fg}, \emph{ArXiv e-prints} (2014).

\bibitem[Kna02]{knappbeyond}
{\scshape A.~W. Knapp} -- \emph{Lie groups beyond an introduction}, second
  \smfedname, Progress in Mathematics, vol. 140, Birkh\"{a}user Boston, Inc.,
  Boston, MA, 2002.

\bibitem[Kos61]{kostant}
{\scshape B.~Kostant} -- {\og Lie algebra cohomology and the generalized
  {B}orel-{W}eil theorem\fg}, \emph{Ann. of Math. (2)} \textbf{74} (1961),
  p.~329--387.

\bibitem[Kos78]{kostantwhittaker}
\bysame , {\og On {W}hittaker vectors and representation theory\fg},
  \emph{Invent. Math.} \textbf{48} (1978), no.~2, p.~101--184.

\bibitem[Kot82]{kottwitzrational}
{\scshape R.~E. Kottwitz} -- {\og Rational conjugacy classes in reductive
  groups\fg}, \emph{Duke Math. J.} \textbf{49} (1982), no.~4, p.~785--806.

\bibitem[Kot84a]{kottwitztwisted}
\bysame , {\og Shimura varieties and twisted orbital integrals\fg}, \emph{Math.
  Ann.} \textbf{269} (1984), no.~3, p.~287--300.

\bibitem[Kot84b]{kottwitzcuspidal}
\bysame , {\og Stable trace formula: cuspidal tempered terms\fg}, \emph{Duke
  Math. J.} \textbf{51} (1984), no.~3, p.~611--650.

\bibitem[Kot86]{kottwitzelliptic}
\bysame , {\og Stable trace formula: elliptic singular terms\fg}, \emph{Math.
  Ann.} \textbf{275} (1986), no.~3, p.~365--399.

\bibitem[Kot88]{kottTama}
\bysame , {\og Tamagawa numbers\fg}, \emph{Ann. of Math. (2)} \textbf{127}
  (1988), no.~3, p.~629--646.

\bibitem[Kot90]{kottwitzannarbor}
\bysame , {\og Shimura varieties and {$\lambda$}-adic representations\fg}, in
  \emph{Automorphic forms, {S}himura varieties, and {$L$}-functions, {V}ol.\
  {I} ({A}nn {A}rbor, {MI}, 1988)}, Perspect. Math., vol.~10, Academic Press,
  Boston, MA, 1990, p.~161--209.

\bibitem[Kot92a]{kottwitz1992lambda}
\bysame , {\og On the {$\lambda$}-adic representations associated to some
  simple {S}himura varieties\fg}, \emph{Invent. Math.} \textbf{108} (1992),
  no.~3, p.~653--665.

\bibitem[Kot92b]{kottwitz1992points}
\bysame , {\og Points on some {S}himura varieties over finite fields\fg},
  \emph{J. Amer. Math. Soc.} \textbf{5} (1992), no.~2, p.~373--444.

\bibitem[KS99]{KS99}
{\scshape R.~E. Kottwitz {\normalfont \smfandname} D.~Shelstad} -- {\og
  Foundations of twisted endoscopy\fg}, \emph{Ast\'erisque} (1999), no.~255,
  p.~vi+190.

\bibitem[KS12]{KSconvention}
\bysame , {\og {On Splitting Invariants and Sign Conventions in Endoscopic
  Transfer}\fg}, \emph{ArXiv e-prints} (2012).

\bibitem[KSZ]{KSZ}
{\scshape M.~{Kisin}, S.~W. {Shin} {\normalfont \smfandname} Y.~{Zhu}} -- {\og
  {The stable trace formula for Shimura varieties of abelian type}\fg},
  \emph{Ann. Math. Stud.}, To appear.

\bibitem[Lab90]{LabDuke}
{\scshape J.-P. Labesse} -- {\og Fonctions \'{e}l\'{e}mentaires et lemme
  fondamental pour le changement de base stable\fg}, \emph{Duke Math. J.}
  \textbf{61} (1990), no.~2, p.~519--530.

\bibitem[Lab99]{labesse1999}
\bysame , {\og Cohomologie, stabilisation et changement de base\fg},
  \emph{Ast\'erisque} (1999), no.~257, p.~vi+161, Appendix A by Laurent Clozel
  and Labesse, and Appendix B by Lawrence Breen.

\bibitem[Lab04]{labesse04}
\bysame , {\og Stable twisted trace formula: elliptic terms\fg}, \emph{J. Inst.
  Math. Jussieu} \textbf{3} (2004), no.~4, p.~473--530.

\bibitem[Lab11]{labessesnowbird}
\bysame , {\og Introduction to endoscopy: {S}nowbird lectures, revised version,
  {M}ay 2010 [revision of mr2454335]\fg}, in \emph{On the stabilization of the
  trace formula}, Stab. Trace Formula Shimura Var. Arith. Appl., vol.~1, Int.
  Press, Somerville, MA, 2011, p.~49--91.

\bibitem[Lan77]{langlands1977shimura}
{\scshape R.~P. Langlands} -- {\og Shimura varieties and the {S}elberg trace
  formula\fg}, \emph{Canad. J. Math.} \textbf{29} (1977), no.~6, p.~1292--1299.

\bibitem[Lan79a]{langlandsmarchen}
\bysame , {\og Automorphic representations, {S}himura varieties, and motives.
  {E}in {M}\"archen\fg}, in \emph{Automorphic forms, representations and
  {$L$}-functions ({P}roc. {S}ympos. {P}ure {M}ath., {O}regon {S}tate {U}niv.,
  {C}orvallis, {O}re., 1977), {P}art 2}, Proc. Sympos. Pure Math., XXXIII,
  Amer. Math. Soc., Providence, R.I., 1979, p.~205--246.

\bibitem[Lan79b]{langlands1979zeta}
\bysame , {\og On the zeta functions of some simple {S}himura varieties\fg},
  \emph{Canad. J. Math.} \textbf{31} (1979), no.~6, p.~1121--1216.

\bibitem[Lan80]{langlandsbasechange}
\bysame , \emph{Base change for {${\rm GL}(2)$}}, Annals of Mathematics
  Studies, vol.~96, Princeton University Press, Princeton, N.J.; University of
  Tokyo Press, Tokyo, 1980.

\bibitem[Lan89]{langlandsLLC}
\bysame , {\og On the classification of irreducible representations of real
  algebraic groups\fg}, in \emph{Representation theory and harmonic analysis on
  semisimple {L}ie groups}, Math. Surveys Monogr., vol.~31, Amer. Math. Soc.,
  Providence, RI, 1989, p.~101--170.

\bibitem[Lan13]{lanbook}
{\scshape K.-W. Lan} -- \emph{Arithmetic compactifications of {PEL}-type
  {S}himura varieties}, London Mathematical Society Monographs Series, vol.~36,
  Princeton University Press, Princeton, NJ, 2013.

\bibitem[LMW18]{LMW}
{\scshape B.~Lemaire, C.~Moeglin {\normalfont \smfandname} J.-L. Waldspurger}
  -- {\og Le lemme fondamental pour l'endoscopie tordue: r\'{e}duction aux
  \'{e}l\'{e}ments unit\'{e}s\fg}, \emph{Ann. Sci. \'{E}c. Norm. Sup\'{e}r.
  (4)} \textbf{51} (2018), no.~2, p.~281--369.

\bibitem[Loo88]{Loo}
{\scshape E.~Looijenga} -- {\og {$L^2$}-cohomology of locally symmetric
  varieties\fg}, \emph{Compositio Math.} \textbf{67} (1988), no.~1, p.~3--20.

\bibitem[LR91]{LooRap}
{\scshape E.~Looijenga {\normalfont \smfandname} M.~Rapoport} -- {\og Weights
  in the local cohomology of a {B}aily-{B}orel compactification\fg}, in
  \emph{Complex geometry and {L}ie theory ({S}undance, {UT}, 1989)}, Proc.
  Sympos. Pure Math., vol.~53, Amer. Math. Soc., Providence, RI, 1991,
  p.~223--260.

\bibitem[LR92]{montreal}
{\scshape R.~P. Langlands {\normalfont \smfandname} D.~Ramakrishnan}
  (\smfedsname) -- \emph{The zeta functions of {P}icard modular surfaces},
  Universit\'{e} de Montr\'{e}al, Centre de Recherches Math\'{e}matiques,
  Montreal, QC, 1992.

\bibitem[LS87]{LS87}
{\scshape R.~P. Langlands {\normalfont \smfandname} D.~Shelstad} -- {\og On the
  definition of transfer factors\fg}, \emph{Math. Ann.} \textbf{278} (1987),
  no.~1-4, p.~219--271.

\bibitem[LS90]{LS90}
\bysame , {\og Descent for transfer factors\fg}, in \emph{The {G}rothendieck
  {F}estschrift, {V}ol.\ {II}}, Progr. Math., vol.~87, Birkh\"auser Boston,
  Boston, MA, 1990, p.~485--563.

\bibitem[LS18]{lanstrohII}
{\scshape K.-W. Lan {\normalfont \smfandname} B.~Stroh} -- {\og Nearby cycles
  of automorphic \'{e}tale sheaves, {II}\fg}, in \emph{Cohomology of arithmetic
  groups}, Springer Proc. Math. Stat., vol. 245, Springer, Cham, 2018,
  p.~83--106.

\bibitem[LS20]{LustStevens}
{\scshape J.~Lust {\normalfont \smfandname} S.~Stevens} -- {\og On depth zero
  {L}-packets for classical groups\fg}, \emph{Proc. Lond. Math. Soc. (3)}
  \textbf{121} (2020), no.~5, p.~1083--1120.

\bibitem[M{\oe}g11]{moeglinmult1}
{\scshape C.~M{\oe}glin} -- {\og Multiplicit\'e 1 dans les paquets d'{A}rthur
  aux places {$p$}-adiques\fg}, in \emph{On certain {$L$}-functions}, Clay
  Math. Proc., vol.~13, Amer. Math. Soc., Providence, RI, 2011, p.~333--374.

\bibitem[Mor06]{morelthese}
{\scshape S.~Morel} -- {\og {Complexes d'intersection des compactifications de
  Baily-Borel. Le cas des groupes unitaires sur Q}\fg}, \smfphdthesisname, PhD
  Thesis, 2006, January 2006.

\bibitem[Mor08]{morel2008complexes}
\bysame , {\og Complexes pond\'er\'es sur les compactifications de
  {B}aily-{B}orel: le cas des vari\'et\'es de {S}iegel\fg}, \emph{J. Amer.
  Math. Soc.} \textbf{21} (2008), no.~1, p.~23--61.

\bibitem[Mor10a]{morel2010icm}
\bysame , {\og The intersection complex as a weight truncation and an
  application to {S}himura varieties\fg}, in \emph{Proceedings of the
  {I}nternational {C}ongress of {M}athematicians. {V}olume {II}} (New Delhi),
  Hindustan Book Agency, 2010, p.~312--334.

\bibitem[Mor10b]{morel2010book}
\bysame , \emph{On the cohomology of certain noncompact {S}himura varieties},
  Annals of Mathematics Studies, vol. 173, Princeton University Press,
  Princeton, NJ, 2010, With an appendix by Robert Kottwitz.

\bibitem[Mor11]{morel2011suite}
\bysame , {\og Cohomologie d'intersection des vari\'et\'es modulaires de
  {S}iegel, suite\fg}, \emph{Compos. Math.} \textbf{147} (2011), no.~6,
  p.~1671--1740.

\bibitem[MP16]{MPspin}
{\scshape K.~Madapusi~Pera} -- {\og Integral canonical models for spin
  {S}himura varieties\fg}, \emph{Compos. Math.} \textbf{152} (2016), no.~4,
  p.~769--824.

\bibitem[MP19]{peratoroidal}
\bysame , {\og Toroidal compactifications of integral models of {S}himura
  varieties of {H}odge type\fg}, \emph{Ann. Sci. \'{E}c. Norm. Sup\'{e}r. (4)}
  \textbf{52} (2019), no.~2, p.~393--514.

\bibitem[MW17]{MWbook}
{\scshape C.~M{\oe}glin {\normalfont \smfandname} J.-L. Waldspurger} --
  \emph{Stabilisation de la formule des traces tordue}, Progress in Mathematics
  316 and 317, Birkhauser, 2017.

\bibitem[Ng{\^o}10]{ngo10}
{\scshape B.~C. Ng{\^o}} -- {\og Le lemme fondamental pour les alg\`ebres de
  {L}ie\fg}, \emph{Publ. Math. Inst. Hautes \'Etudes Sci.} (2010), no.~111,
  p.~1--169.

\bibitem[Pen19]{peng}
{\scshape Z.~Peng} -- {\og Multiplicity formula and stable trace formula\fg},
  \emph{Amer. J. Math.} \textbf{141} (2019), no.~4, p.~1037--1085.

\bibitem[Pin90]{pink1989compactification}
{\scshape R.~Pink} -- \emph{Arithmetical compactification of mixed {S}himura
  varieties}, Bonner Mathematische Schriften [Bonn Mathematical Publications],
  209, Universit\"at Bonn Mathematisches Institut, Bonn, 1990, Dissertation,
  Rheinische Friedrich-Wilhelms-Universit{\"a}t Bonn, Bonn, 1989.

\bibitem[Pin92a]{pink1992ladic}
\bysame , {\og On {$l$}-adic sheaves on {S}himura varieties and their higher
  direct images in the {B}aily-{B}orel compactification\fg}, \emph{Math. Ann.}
  \textbf{292} (1992), no.~2, p.~197--240.

\bibitem[Pin92b]{pinkcalc}
\bysame , {\og On the calculation of local terms in the {L}efschetz-{V}erdier
  trace formula and its application to a conjecture of {D}eligne\fg},
  \emph{Ann. of Math. (2)} \textbf{135} (1992), no.~3, p.~483--525.

\bibitem[Poo17]{poonen}
{\scshape B.~Poonen} -- \emph{Rational points on varieties}, Graduate Studies
  in Mathematics, vol. 186, American Mathematical Society, Providence, RI,
  2017.

\bibitem[PR94]{PR}
{\scshape V.~Platonov {\normalfont \smfandname} A.~Rapinchuk} --
  \emph{Algebraic groups and number theory}, Pure and Applied Mathematics, vol.
  139, Academic Press, Inc., Boston, MA, 1994, Translated from the 1991 Russian
  original by Rachel Rowen.

\bibitem[Sah89]{Sahi}
{\scshape S.~Sahi} -- {\og On {K}irillov's conjecture for {A}rchimedean
  fields\fg}, \emph{Compositio Math.} \textbf{72} (1989), no.~1, p.~67--86.

\bibitem[Sch13]{scholzeLLC}
{\scshape P.~Scholze} -- {\og The local {L}anglands correspondence for
  {$\mathrm{GL}_n$} over {$p$}-adic fields\fg}, \emph{Invent. Math.}
  \textbf{192} (2013), no.~3, p.~663--715.

\bibitem[Ser73]{serrecourse}
{\scshape J.-P. Serre} -- \emph{A course in arithmetic}, Springer-Verlag, New
  York-Heidelberg, 1973, Translated from the French, Graduate Texts in
  Mathematics, No. 7.

\bibitem[SGA70]{SGA3III}
\emph{Sch\'emas en groupes. {III}: {S}tructure des sch\'emas en groupes
  r\'eductifs} -- S\'eminaire de G\'eom\'etrie Alg\'ebrique du Bois Marie
  1962/64 (SGA 3). Dirig\'e par M. Demazure et A. Grothendieck. Lecture Notes
  in Mathematics, Vol. 153, Springer-Verlag, Berlin-New York, 1970.

\bibitem[SGA73]{SGA4-t3}
\emph{Th\'{e}orie des topos et cohomologie \'{e}tale des sch\'{e}mas. {T}ome 3}
  -- Lecture Notes in Mathematics, Vol. 305, Springer-Verlag, Berlin-New York,
  1973, S\'{e}minaire de G\'{e}om\'{e}trie Alg\'{e}brique du Bois-Marie
  1963--1964 (SGA 4), Dirig\'{e} par M. Artin, A. Grothendieck et J. L.
  Verdier. Avec la collaboration de P. Deligne et B. Saint-Donat.

\bibitem[Sha74]{Shalika}
{\scshape J.~A. Shalika} -- {\og The multiplicity one theorem for {${\rm
  GL}_{n}$}\fg}, \emph{Ann. of Math. (2)} \textbf{100} (1974), p.~171--193.

\bibitem[She08]{she3}
{\scshape D.~Shelstad} -- {\og Tempered endoscopy for real groups. {III}.
  {I}nversion of transfer and {$L$}-packet structure\fg}, \emph{Represent.
  Theory} \textbf{12} (2008), p.~369--402.

\bibitem[She10a]{shenote}
\bysame , {\og A note on real endoscopic transfer and pseudo-coefficients\fg},
  (2010), preprint,
  \url{https://sites.rutgers.edu/diana-shelstad/wp-content/uploads/sites/189/2019/11/shelstadnotepseudocoeffs.pdf}.

\bibitem[She10b]{she2}
\bysame , {\og Tempered endoscopy for real groups. {II}. {S}pectral transfer
  factors\fg}, in \emph{Automorphic forms and the {L}anglands program}, Adv.
  Lect. Math. (ALM), vol.~9, Int. Press, Somerville, MA, 2010, p.~236--276.

\bibitem[She15]{sheellipticone}
\bysame , {\og On elliptic factors in real endoscopic transfer {I}\fg}, in
  \emph{Representations of reductive groups}, Progr. Math., vol. 312,
  Birkh\"auser/Springer, Cham, 2015, p.~455--504.

\bibitem[She19]{shestructure}
\bysame , {\og On the structure of endoscopic transfer factors\fg}, in
  \emph{Representation theory, automorphic forms \& complex geometry}, Int.
  Press, Somerville, MA, [2019] \copyright 2019, p.~81--105.

\bibitem[SS90]{SaperStern}
{\scshape L.~Saper {\normalfont \smfandname} M.~Stern} -- {\og
  {$L_2$}-cohomology of arithmetic varieties\fg}, \emph{Ann. of Math. (2)}
  \textbf{132} (1990), no.~1, p.~1--69.

\bibitem[ST16]{shintemplier}
{\scshape S.~W. Shin {\normalfont \smfandname} N.~Templier} -- {\og Sato-{T}ate
  theorem for families and low-lying zeros of automorphic {$L$}-functions\fg},
  \emph{Invent. Math.} \textbf{203} (2016), no.~1, p.~1--177, Appendix A by
  Robert Kottwitz, and Appendix B by Raf Cluckers, Julia Gordon and Immanuel
  Halupczok.

\bibitem[Tad86]{tadic}
{\scshape M.~Tadi\'{c}} -- {\og Classification of unitary representations in
  irreducible representations of general linear group (non-{A}rchimedean
  case)\fg}, \emph{Ann. Sci. \'{E}cole Norm. Sup. (4)} \textbf{19} (1986),
  no.~3, p.~335--382.

\bibitem[Tad09]{TadicGLnC}
\bysame , {\og {${\rm GL}(n, \mathbb C)^{\wedge}$} and {${\rm GL}(n,\mathbb
  R)^{\wedge}$}\fg}, in \emph{Automorphic forms and {$L$}-functions {II}.
  {L}ocal aspects}, Contemp. Math., vol. 489, Amer. Math. Soc., Providence, RI,
  2009, p.~285--313.

\bibitem[Ta{\"{\i}}17]{taibidim}
{\scshape O.~Ta{\"{\i}}bi} -- {\og Dimensions of spaces of level one
  automorphic forms for split classical groups using the trace formula\fg},
  \emph{Ann. Sci. \'Ec. Norm. Sup\'er. (4)} \textbf{50} (2017), no.~2,
  p.~269--344.

\bibitem[Ta{\"{\i}}19]{taibi}
\bysame , {\og Arthur's multiplicity formula for certain inner forms of special
  orthogonal and symplectic groups\fg}, \emph{J. Eur. Math. Soc. (JEMS)}
  \textbf{21} (2019), no.~3, p.~839--871.

\bibitem[Tat79]{tatebackground}
{\scshape J.~Tate} -- {\og Number theoretic background\fg}, in
  \emph{Automorphic forms, representations and {$L$}-functions ({P}roc.
  {S}ympos. {P}ure {M}ath., {O}regon {S}tate {U}niv., {C}orvallis, {O}re.,
  1977), {P}art 2}, Proc. Sympos. Pure Math., XXXIII, Amer. Math. Soc.,
  Providence, R.I., 1979, p.~3--26.

\bibitem[Var05]{varshavsky}
{\scshape Y.~Varshavsky} -- {\og A proof of a generalization of {D}eligne's
  conjecture\fg}, \emph{Electron. Res. Announc. Amer. Math. Soc.} \textbf{11}
  (2005), p.~78--88.

\bibitem[vD72]{vandijk}
{\scshape G.~van Dijk} -- {\og Computation of certain induced characters of
  $p$-adic groups\fg}, \emph{Math. Ann.} \textbf{199} (1972), p.~229--240.

\bibitem[Vog78]{vogankirillov}
{\scshape D.~A. Vogan, Jr.} -- {\og Gel\cprime fand-{K}irillov dimension for
  {H}arish-{C}handra modules\fg}, \emph{Invent. Math.} \textbf{48} (1978),
  no.~1, p.~75--98.

\bibitem[Vog86]{voganunit}
\bysame , {\og The unitary dual of {${\rm GL}(n)$} over an {A}rchimedean
  field\fg}, \emph{Invent. Math.} \textbf{83} (1986), no.~3, p.~449--505.

\bibitem[Vog93]{voganllc}
\bysame , {\og The local {L}anglands conjecture\fg}, in \emph{Representation
  theory of groups and algebras}, Contemp. Math., vol. 145, Amer. Math. Soc.,
  Providence, RI, 1993, p.~305--379.

\bibitem[Wal97]{waldsimplique}
{\scshape J.-L. Waldspurger} -- {\og Le lemme fondamental implique le
  transfert\fg}, \emph{Compositio Math.} \textbf{105} (1997), no.~2,
  p.~153--236.

\bibitem[Wal06]{waldschangement}
\bysame , {\og Endoscopie et changement de caract\'eristique\fg}, \emph{J.
  Inst. Math. Jussieu} \textbf{5} (2006), no.~3, p.~423--525.

\bibitem[Wal10]{walds10}
\bysame , {\og Les facteurs de transfert pour les groupes classiques: un
  formulaire\fg}, \emph{Manuscripta Math.} \textbf{133} (2010), no.~1-2,
  p.~41--82.

\bibitem[Xu18]{Xu}
{\scshape B.~Xu} -- {\og L-packets of quasisplit {$GSp(2n)$} and
  {$GO(2n)$}\fg}, \emph{Math. Ann.} \textbf{370} (2018), no.~1-2, p.~71--189.

\end{thebibliography}

 \end{document}